\numberwithin{equation}{section}
\theoremstyle{plain}
\newtheorem{theorem}[equation]{Theorem}
\newtheorem{proposition}[equation]{Proposition}
\newtheorem{lemma}[equation]{Lemma}
\newtheorem{corollary}[equation]{Corollary}
\newtheorem{claim}{Claim}[subsection]
\newtheorem*{claim*}{Claim}
\newtheorem*{addendum-Thm15}{Addendum to Theorem~\ref{Thm_comparing_RF_weak_form}}
\theoremstyle{remark}
\theoremstyle{definition}
\newtheorem{definition}[equation]{Definition}
\newcommand{\si}{\sigma}
\newcommand{\wh}{\widehat}
\newcommand{\C}{{\mathcal C}}
\newcommand{\D}{\partial}
\newcommand{\DD}{\mathcal{D}}
\newcommand{\cald}{\mathcal{D}}
\newcommand{\M}{{\mathcal M}}
\newcommand{\N}{{\mathcal{N}}}
\newcommand{\R}{\mathbb R}
\renewcommand{\S}{{\mathcal S}}
\renewcommand{\t}{\mathfrak{t}}
\newcommand{\Z}{\mathbb Z}
\newcommand{\lb}{\linebreak[1]}
\newcommand{\al}{\alpha}
\newcommand{\ben}{\begin{enumerate}}
\newcommand{\de}{\delta}
\newcommand{\diam}{\operatorname{diam}}
\newcommand{\een}{\end{enumerate}}
\newcommand{\eps}{\epsilon}
\newcommand{\ga}{\gamma}
\newcommand{\id}{\operatorname{id}}
\newcommand{\im}{\operatorname{Im}}
\newcommand{\injrad}{\operatorname{InjRad}}
\newcommand{\la}{\lambda}
\newcommand{\lra}{\longrightarrow}
\newcommand{\ol}{\overline}
\newcommand{\ra}{\rightarrow}
\newcommand{\Rm}{\operatorname{Rm}}
\newcommand{\ul}{\underline}
\DeclareMathOperator{\Bry}{Bry}
\DeclareMathOperator{\CAP}{cap}
\DeclareMathOperator{\Image}{Im}
\DeclareMathOperator{\Int}{Int}
\DeclareMathOperator{\can}{can}
\DeclareMathOperator{\lin}{lin}
\DeclareMathOperator{\cut}{cut}
\DeclareMathOperator{\Cut}{Cut}
\DeclareMathOperator{\comp}{comp}
\DeclareMathOperator{\Sym}{Sym}
\DeclareMathOperator{\Ric}{Ric}
\DeclareMathOperator{\tr}{tr}
\DeclareMathOperator{\End}{End}
\DeclareMathOperator{\nn}{n}
\DeclareMathOperator{\bb}{b}
\DeclareMathOperator{\length}{length}
\DeclareMathOperator{\sd}{SD}
\DeclareMathOperator{\proj}{proj}
\newcommand{\ov}[1]{\overline{#1}}
\newcommand{\td}[1]{\widetilde{#1}}
\def\XXint#1#2#3{{\setbox0=\hbox{$#1{#2#3}{\int}$}
     \vcenter{\hbox{$#2#3$}}\kern-.5\wd0}}
\begin{document}

\author{Richard H. Bamler}
\address{Department of Mathematics, University of California, Berkeley, Berkeley, CA 94720}
\email{rbamler@berkeley.edu}
\author{Bruce Kleiner}
\address{Courant Institute of Mathematical Sciences, New York University,  251 Mercer St., New York, NY 10012}
\email{bkleiner@cims.nyu.edu}
\thanks{The first author was supported by a Sloan Research Fellowship and NSF grant DMS-1611906.
The research was in part conducted while the first author was in residence at the Mathematical Sciences Research Institute in Berkeley, California, during the Spring 2016 semester. \\
\hspace*{2.38mm} The second author was supported by NSF grants DMS-1405899 and DMS-1406394, and a Simons Collaboration grant.}

\title[Uniqueness and stability of Ricci flow]{Uniqueness and stability of Ricci flow through singularities}

\date{\today}

\begin{abstract}
We verify a conjecture of Perelman, which states that there exists a canonical Ricci flow through singularities starting from an arbitrary compact Riemannian $3$-manifold.  
Our main result is a uniqueness theorem for such flows, which, together with an earlier existence theorem of Lott and the second named author, implies Perelman's conjecture.  We also show that this flow through singularities depends continuously on its initial condition and that it may be obtained as a limit of Ricci flows with surgery.  

Our results have applications to the study of diffeomorphism groups of three manifolds --- in particular to the Generalized Smale Conjecture --- which will appear in a subsequent paper.

\end{abstract}

\maketitle

\tableofcontents

\section{Introduction}
\subsection{Overview} \label{subsec_motivation}

The understanding of many aspects of Ricci flow has advanced dramatically in the last 15 years.  
This has led to numerous applications, the most notable being Perelman's landmark proof of the Geometrization and Poincar\'e Conjectures.  Nonetheless, from an analytical viewpoint, a number of fundamental questions remain, even for $3$-dimensional Ricci flow.  One of these concerns the nature of Ricci flow with surgery, a modification of Ricci flow that was central to Perelman's proof.  Surgery, an idea initially  developed by Hamilton, removes singularities as they form, allowing one to continue the flow.  While Perelman's construction of Ricci flow with surgery was spectacularly successful, it is not entirely satisfying due to its ad hoc character and the fact that it depends on a number of non-canonical choices.  Furthermore, from a PDE viewpoint, Ricci flow with surgery does not provide a theory of solutions to the Ricci flow PDE  itself, since surgery violates the equation.  In fact, Perelman himself was aware of these drawbacks and drew attention to them in both of his Ricci flow preprints:

\begin{quote} \it
``It is likely that by passing to the limit in this construction [of Ricci flow with surgery] one would get a canonically defined Ricci flow through singularities, but at the moment I don't have a proof of that.'' --- \cite[p.37]{Perelman:2002um}
\end{quote}
\begin{quote} \it
``Our approach \ldots is aimed at eventually constructing a canonical Ricci flow \ldots a goal, that has not been achieved yet in the present work.'' --- \cite[p.1]{Perelman:2003tka}
\end{quote}
Motivated by the above, the paper \cite{Kleiner:2014wz} introduced a new notion of weak (or generalized) solutions to Ricci flow in dimension $3$ and proved the existence within this class of solutions for arbitrary initial data, as well as a number of results about their geometric and analytical properties.

In this paper we show that the weak solutions of \cite{Kleiner:2014wz} are uniquely determined by their initial data (see Theorem~\ref{thm_uniquness} below).  
In combination with  \cite{Kleiner:2014wz}, this implies that the associated initial value problem has a \emph{canonical} weak solution, thereby proving Perelman's conjecture (see Corollary~\ref{cor_existence_uniqueness}).
We also show that this weak solution depends continuously on its initial data, and that it is a limit of Ricci flows with surgery (see Corollary~\ref{cor_convergence_corollary}).
In summary, our results provide an answer to the long-standing problem of finding a satisfactory theory of weak solutions to the Ricci flow equation in the $3$-dimensional case.

From a broader perspective, it is interesting to compare the results in this paper  with work on weak solutions to other geometric PDEs.  

The theory of existence and partial regularity of such weak solutions has been studied extensively. As with PDEs in general, proving existence of solutions requires a choice of objects and a topology that is strong enough to respect the equation, but weak enough to satisfy certain compactness properties. Establishing the finer structure of solutions (e.g. partial regularity) requires, generally speaking, a mechanism for restricting  blow-ups.  For  minimal surfaces, harmonic maps and harmonic map heat flow, good notions of weak solutions with accompanying existence and partial regularity theorems were developed   long ago  \cite{almgren_interior_regularity,simons_minimal_varieties,schoen_uhlenbeck_regularity_harmonic_maps,chen_struwe_hmhf_partial_regularity}.  By contrast, the theory of weak solutions to mean curvature flow,  the Einstein equation and Ricci flow, are at earlier stages of development.  
For mean curvature flow, for instance, different approaches to weak solutions  (e.g. (enhanced) Brakke flows and level set flow) were introduced over the last 40 years  \cite{brakke_motion,evans_spruck,chen_giga_goto,ilmanen_elliptic_regularization}.  
Yet, in spite of deep results for the cases of mean convex or generic initial conditions \cite{white_size,white_nature,white_regularity,colding_minicozzi_generic}, to our knowledge, the best results known for flows starting from a general compact smooth surface in $\R^3$ are essentially those of \cite{brakke_motion}, which are presumably far from optimal.  
For the (Riemannian) Einstein equation many results have been obtained in the K\"ahler case and on limits of smooth Einstein manifolds, but otherwise progress toward even a viable definition of weak solutions has been rather limited.  
 Progress on Ricci flow has been limited to the study of specific models for an isolated singularity \cite{feldman_ilmanen_knopf,angenent_knopf_precise,angenent_caputo_knopf_minimally} and the K\"ahler case,  which has advanced rapidly in the last 10 years after the appearance of  \cite{song_tian}.

Regarding uniqueness of weak solutions, our focus in this paper, much less is known.    The paper \cite{ilmanen_lectures} describes a mechanism for non-uniqueness, stemming from the dynamical instability of cones, which is applicable to a number of geometric flows.
For example, for mean curvature flow of hypersurfaces in $\R^n$ this mechanism provides examples of non-uniqueness in high dimensions.   
Ilmanen and White \cite{white_icm} found  examples of non-uniqueness starting from compact smooth surfaces in $\R^3$.  Examples for harmonic map heat flow are constructed in \cite{germain_rupflin,germain_ghoul_miura}, and for Ricci flow in higher dimensions there are examples in \cite{feldman_ilmanen_knopf}, which suggest non-uniqueness.   Since any discussion of uniqueness must refer to a particular class of admissible solutions, the interpretation of some of the above examples is not entirely clear, especially in the case of higher dimensional Ricci flow, where a definition of weak solutions is lacking.   
In the other direction, uniqueness has been proven to hold in only a few cases:  harmonic map heat flow with $2$-dimensional domain \cite{struwe_hmhf_surfaces}, mean convex mean curvature flow \cite{white_nature} and K\"ahler-Ricci flow \cite{song_tian,eyssidieux_guedj_zeriahi}.  The proofs of these theorems rely on special features of these flows.  In \cite{struwe_hmhf_surfaces}, the flow develops singularities only at a finite set of times, and at isolated points.  The striking  proof of uniqueness in \cite{white_nature}  is based on comparison techniques for scalar equations and a geometric monotonicity property specific to mean convex flow (see also the recent paper \cite{Hershkovits_White}, which localizes the mean convexity assumption).  
Lastly, K\"ahler-Ricci flow has many remarkable features that play a crucial role in its uniqueness argument: the singularities, whose form is quite rigid, arise at a finite set of times determined by the evolution of the K\"ahler class; also, techniques specific to scalar equations play an important role.     

The method of proving uniqueness used in this paper is completely different in spirit from earlier work.  
Uniqueness is deduced by comparing two flows with nearby initial condition and estimating the rate at which they diverge from one another.  
Due to the nature of the singularities, which might in principle occur at a Cantor set of times, the flows can only be compared after the removal of their almost singular regions.  
Since one knows nothing about the correlation between the almost singular parts of the two flows, the crux of the proof is to control the  influence of effects emanating from the boundary of the truncated flows.  
This control implies a strong stability property, which roughly speaking states that both flows are close away from their almost singular parts if they are sufficiently close initially.
A surprising consequence of our analysis is that this strong stability result applies not just to Ricci flows with surgery and the weak solutions of \cite{Kleiner:2014wz}, but to flows whose almost singular parts are allowed to evolve in an arbitrary fashion, possibly violating the Ricci flow equation at small scales.

The main ideas of our proof may throw light on uniqueness problems in general.  
When distilled down to its essentials, our proof is based on the following  ingredients:
\ben
\item A structure theory for the almost singular part of the flow, which is based on a classification of all blow-ups, not just shrinking solitons.
\item Uniform strict stability for solutions to the linearized equation,  for all blow-ups.
\item An additional quantitative rigidity property for blow-ups that makes it possible to fill in missing data to the evolution problem, after recently resolved singularities.
\een 
This list, which is not specific to Ricci flow, suggests a tentative criterion for when one might expect, and possibly prove, uniqueness for weak solutions to a given geometric flow.
From a philosophical point of view, it is natural to expect (1) and (2) to be necessary conditions for uniqueness.  
However, implementation of even (1) can be quite difficult. 
 Indeed,  to date there are few situations where such a classification is known.  
 It turns out that (3) is by far the most delicate part of  the proof in our setting and it is responsible for much of the complexity in the argument (see the overview of the proof in Section~\ref{sec_overview} for more discussion of this point).   
Another context where the above criteria may be satisfied is  the case of mean curvature flow of $2$-spheres in $\R^3$, where uniqueness is conjectured to hold \cite{white_icm}.

We mention that our main result implies that weak solutions to Ricci flow behave well even when one considers continuous families of initial conditions.   
This continuous dependence leads to new results for diffeomorphism groups of $3$-manifolds, in particular for the Generalized Smale Conjecture, which will be discussed elsewhere \cite{BK2018}.

\subsection{Background and setup} \label{subsec_status_quo}
In preparation  for the statements of our main results, which will be presented in the next subsection, we now recall in greater detail some facts about Perelman's Ricci flow with surgery \cite{Perelman:2003tka,Kleiner:2008fh,morgan_tian_pc,besson_et_al} and the weak solutions from \cite{Kleiner:2014wz}, which will be needed for our setup.
As these constructions are generally very technical, we will continue in a relatively informal style.
The reader who is already familiar with this material may skip this subsection and proceed to the presentation of the main results in Subsection~\ref{subsec_statement_main_results}.

In his seminal paper \cite{Hamilton:1982ts}, Hamilton introduced the
Ricci flow equation 
\[ \partial_t g(t) = - 2 \Ric (g(t)), \qquad g(0) = g_0 \]
and showed that any Riemannian metric $g_0$ on a compact manifold can be evolved into a unique solution $(g(t))_{t \in [0,T)}$. 
This solution may, however, develop a singularity in finite time. In \cite{Perelman:2002um}, Perelman analyzed such finite-time singularities in the 3-di\-men\-sion\-al case and showed that those are essentially caused by two behaviors: 
\begin{itemize}
\item \emph{Extinction} (e.g. the flow becomes asymptotic to a shrinking round sphere). 
\item The development of \emph{neck pinches} (i.e. there are regions of the manifold that become more and more cylindrical, $\approx S^2 \times \R$, modulo rescaling, while the diameter of the cross-sectional 2-sphere shrinks to zero).
\end{itemize}
Based on this knowledge, and inspired by a program suggested by Hamilton, Perelman specified a surgery process in which the manifold is cut open along small cross-sectional 2-spheres,  the high  curvature part of the manifold and extinct components are removed, and the resulting spherical boundary components are filled in with $3$-disks endowed with a standard cap metric.  
This produces a new smooth metric on a closed manifold, from which the Ricci flow can be restarted. The process may then be iterated to yield a \emph{Ricci flow with surgery}.  
More specifically, a Ricci flow with surgery is a sequence of conventional Ricci flows $(g_1(t))_{t \in [0,T_1]}, (g_2(t))_{t \in [T_1,T_2]}, (g_3(t))_{t \in [T_2,T_3]}, \ldots$ on compact manifolds $M_1, M_2, M_3, \ldots$, where $(M_{i+1}, g_{i+1} (T_{i}))$ arises from $(M_i, g_i (T_i))$ by a surgery process, as described before.

\begin{figure}
\labellist
\small\hair 2pt
\pinlabel {surgery} at 460 295
\endlabellist
\centering
\includegraphics[width=145mm]{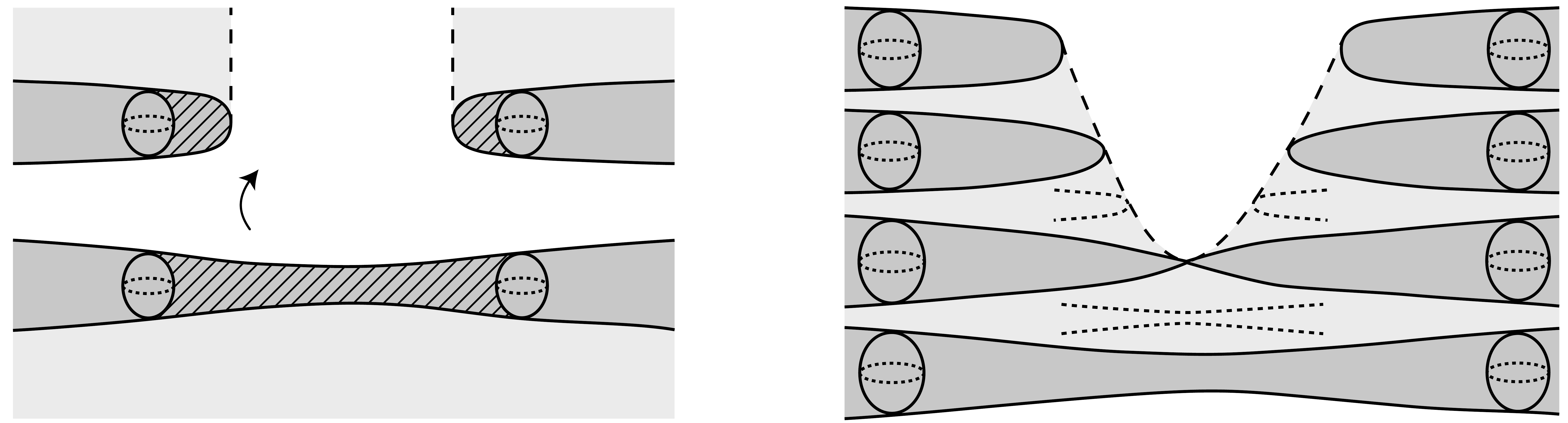}
\caption{In a Ricci flow with surgery (left figure) surgeries are performed at a positive scale, whereas a singular Ricci flow (right figure) ``flows through'' a singularity at an infinitesimal scale.
The hatched regions in the left figure mark the surgery points, i.e. the points that are removed or added during a surgery.
\label{fig_RF_surg_vs_spacetime}}
\end{figure}
As mentioned in Subsection~\ref{subsec_motivation}, the construction of a Ricci flow with surgery depends on a variety of auxiliary parameters, for which there does not seem to be a canonical choice, such as:
\begin{itemize}
\item The scale of the cross-sectional 2-sphere along which a neck pinch sin\-gu\-lar\-i\-ty is excised; this scale is often called the \emph{surgery scale}.
\item The precise position and number of these 2-spheres.
\item The standard cap metric that is placed on the 3-disks which are glued into the 2-sphere boundary components.
\item The method used to interpolate between this metric and the metric on the nearby necks.
\end{itemize}
Different choices of these parameters may influence the future development of the flow significantly (as well as the space of future surgery parameters).
Hence a Ricci flow with surgery cannot be constructed in a \emph{canonical} way or, in other words, a Ricci flow with surgery is not \emph{uniquely} determined by its initial metric.

It is therefore a natural question whether a Ricci flow with surgery can be replaced by a more canonical object, which one may hope is uniquely determined by its initial data.
This question was first addressed in \cite{Kleiner:2014wz}, where the notion of a \emph{singular Ricci flow}, a kind of weak solution to the Ricci flow equation, was introduced.  
In these flows, surgeries have been replaced by singular structure, i.e. regions with unbounded curvature, which may be thought of as ``surgery at an infinitesimal scale'' (see Figure~\ref{fig_RF_surg_vs_spacetime}).

\begin{figure}
\labellist
\small\hair 2pt
\pinlabel $0$ at 50 60
\pinlabel $T_1$ at 50 360
\pinlabel $T_1$ at 50 530
\pinlabel $T_2$ at 50 730
\pinlabel $T_2$ at 50 900
\pinlabel $T_3$ at 50 1060
\pinlabel {$M_1 \times [0,T_1]$} at 570 240
\pinlabel {$M_2 \times [T_1,T_2]$} at 580 657
\pinlabel {$M_3 \times [T_2,T_3]$} at 730 1010
\pinlabel {\smaller surgery} at 702 450
\pinlabel {\smaller surgery} at 415 830
\pinlabel $\partial_\t$ at 1850 350
\pinlabel $0$ at 1230 130
\pinlabel $T_1$ at 1230 465
\pinlabel $T_2$ at 1230 780
\pinlabel $T_3$ at 1230 1040
\pinlabel $\t$ at 1245 1140
\pinlabel {\Large $\rightsquigarrow$} at 1050 550
\pinlabel $\M^4$ at 1820 950
\endlabellist
\centering
\includegraphics[width=145mm]{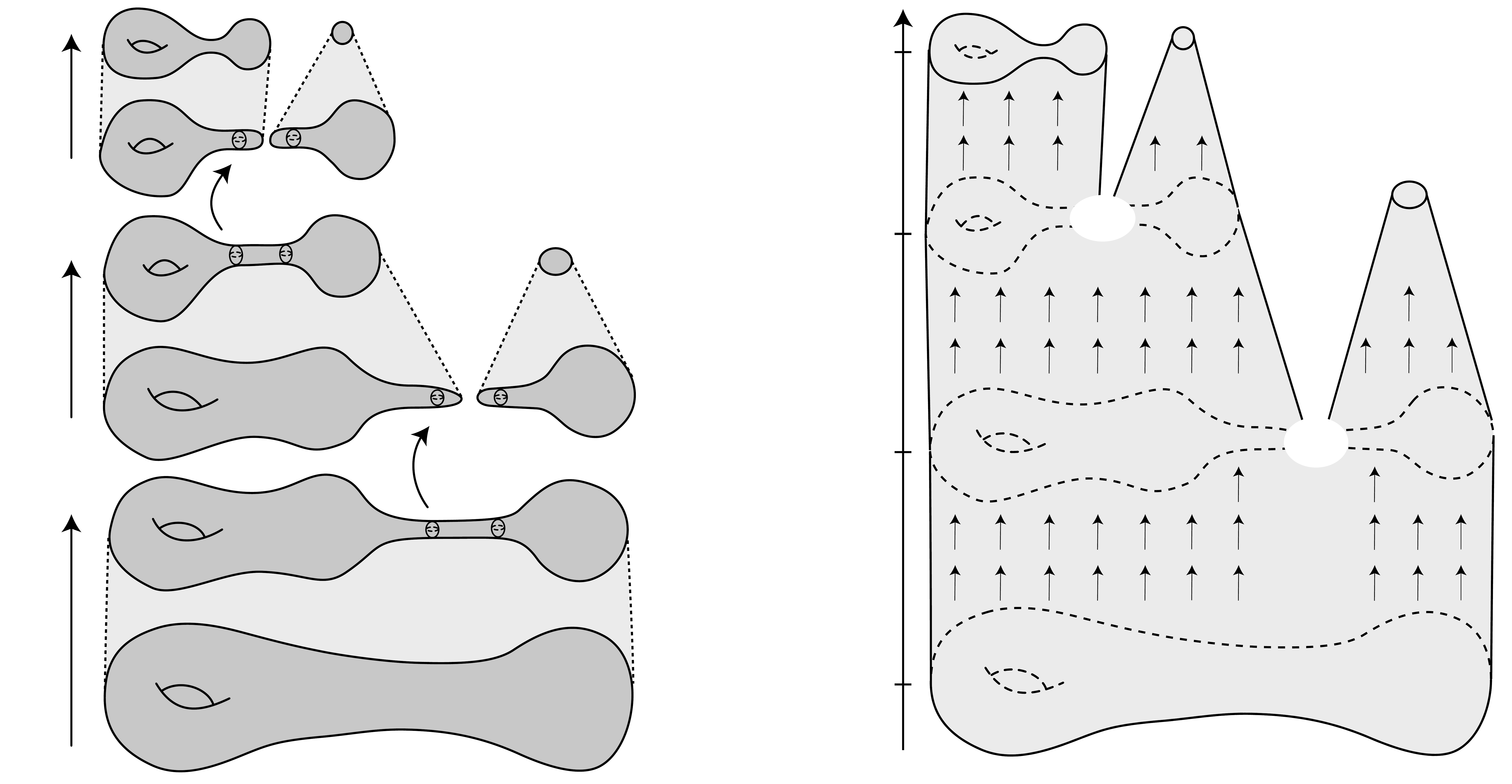}
\caption{A Ricci flow with surgery (left figure) can be converted to a Ricci flow spacetime (right figure) by identifying pre and post-surgery time-slices and removing surgery points.
The white circles in the right figure indicate that surgery points were removed at times $T_1$ and $T_2$.\label{fig_spacetime_construction}}
\end{figure}
In order to present the definition and summarize the construction of a singular Ricci flow, we need to introduce the spacetime picture of a Ricci flow or a Ricci flow with surgery.
For this purpose, consider a Ricci flow with surgery consisting of the conventional Ricci flows $(M_1, (g_1(t))_{t \in [0,T_1]})$, $(M_2, (g_2(t))_{t \in [T_1,T_2]}), \ldots$ and form the following 4-dimensional spacetime manifold (see Figure~\ref{fig_spacetime_construction} for an illustration):
\begin{equation} \label{eq_MM_construction_from_RFws}
 \M := \big( M_1 \times [0,T_1] \cup_{\phi_1} M_2 \times [T_1, T_2] \cup_{\phi_2} M_3 \times [T_2, T_3] \cup_{\phi_3} \ldots \big) \setminus \mathcal{S} 
\end{equation}
Here $\mathcal{S}$ denotes the set of surgery points, i.e. the set of points that are removed or added during a surgery step and $\phi_i : M_i \supset U_i \to U_{i+1} \subset M_{i+1}$ are isometric gluing maps, which are defined on the complement of the surgery points in $M_i \times \{T_i\}$ and $M_{i+1} \times \{ T_{i} \}$.
The above construction induces a natural \emph{time-function} $\t : \M \to [0,\infty)$, whose level-sets are called \emph{time-slices}, as well as a \emph{time-vector field} $\partial_{\t}$ on $\M$ with $\partial_{\t} \cdot \t = 1$.
The Ricci flows $(g_1(t))_{t \in [0,T_1]}, (g_2 (t))_{t \in [T_1,T_2]}, \ldots$ induce a metric $g$ on the horizontal distribution $\{ d \t = 0 \} \subset T\M$, which satisfies the Ricci flow equation
\[ \mathcal{L}_{\partial_{\t}} g = - 2 \Ric (g). \]
The tuple $(\M, \t, \partial_\t, g)$ is called a \emph{Ricci flow spacetime} (see Definition~\ref{def_RF_spacetime} for further details).
We will often abbreviate this tuple by $\M$.

Note that a Ricci flow spacetime $\M$ that is constructed from a Ricci flow with surgery by the procedure above is incomplete (see Definition~\ref{def_completeness} for more details).
More specifically, the time-slices corresponding to surgery times are incomplete Riemannian manifolds, because surgery points, consisting of necks near neck pinches or standard caps are not included in $\M$.
So these time-slices have ``holes'' whose ``diameters'' are $\leq C \delta$, where $\delta$ is the surgery scale and $C$ is a universal constant.
A Ricci flow with this property is called \emph{$C\delta$-complete} (see again Definition~\ref{def_completeness} for further details).

\begin{figure}
\labellist
\small\hair 2pt
\pinlabel $0$ at 430 80
\pinlabel {$\M^4$} at 1260 1005
\pinlabel $\t$ at 420 1050
\pinlabel $\partial_\t$ at 1350 340
\pinlabel $(M,g_0)$ at 1200 165
\endlabellist
\centering
\includegraphics[width=145mm]{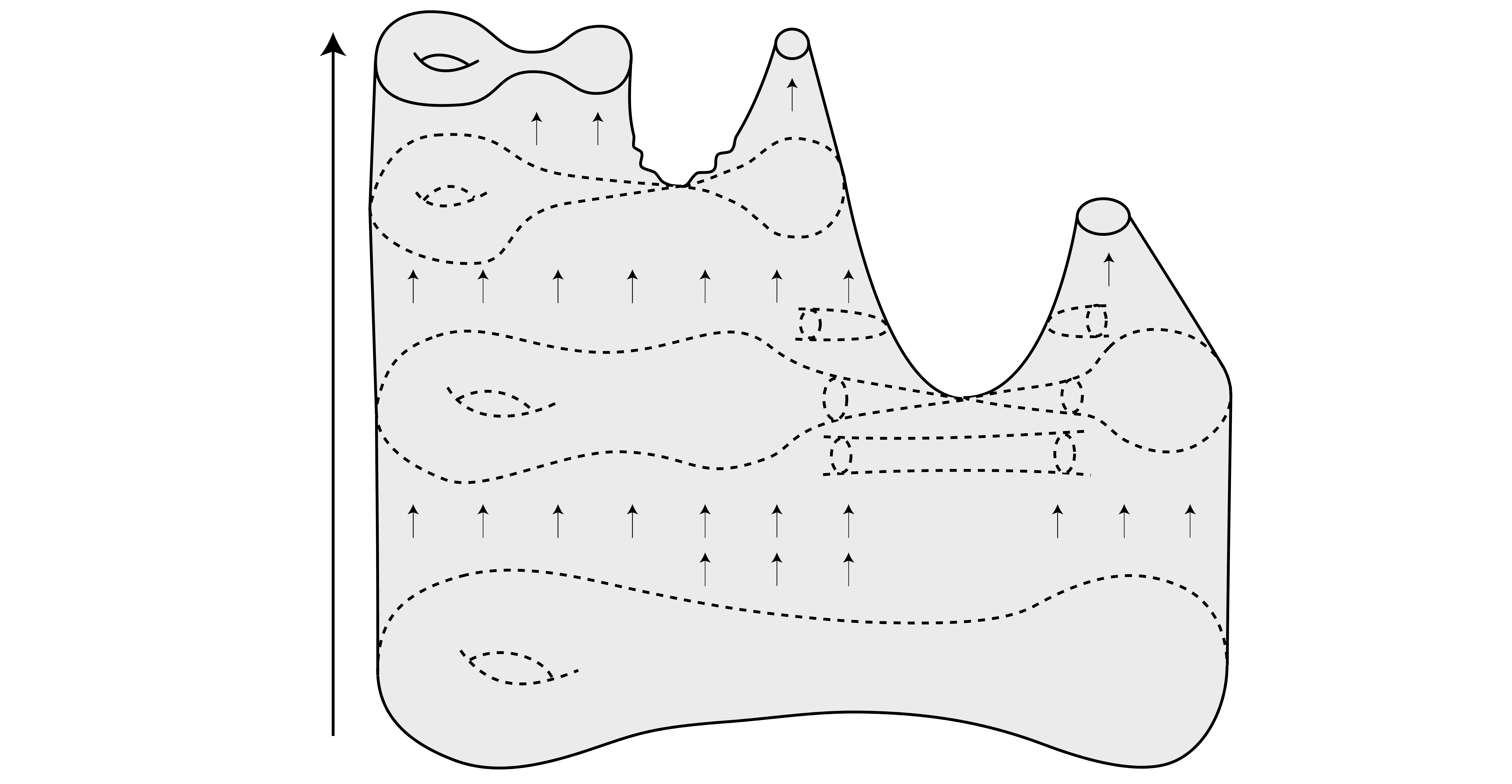}
\caption{Example of a $0$-complete Ricci flow spacetime with initial time-slice $(M, g_0)$.\label{fig_RF_spacetime_complete}}
\end{figure}
In \cite{Kleiner:2014wz} it was shown that every Riemannian manifold is the initial time-slice of a Ricci flow spacetime $\M$ whose time-slices are \emph{0-complete}, which we also refer to as \emph{complete} (see Figure~\ref{fig_RF_spacetime_complete} for an illustration).
This means that the time-slices of $\M$ may be incomplete, but each time-slice can be completed as a metric space by adding a countable set of points.
Note that since the curvature after a singularity is not uniformly bounded, we cannot easily control the time until a subsequent singularity arises.
In fact, it is possible --- although not known at this point --- that the set of singular times on a finite time-interval is infinite or even uncountable.
See \cite{kleiner_lott_singular_ricci_flows_ii} for a proof that this set has Minkowski dimension $\leq \frac12$.

We briefly review the construction of the (0-complete) Ricci flow spacetime $\M$ in \cite{Kleiner:2014wz}.
Consider a sequence of Ricci flows with surgery  with surgery scale $\delta_i \to 0$, starting from the same given initial metric, and construct the corresponding Ricci flow spacetimes $\M_{\delta_i}$ as in (\ref{eq_MM_construction_from_RFws}).
Using a compactness argument, it was shown in \cite{Kleiner:2014wz} that, after passing to a subsequence, we have convergence
\begin{equation} \label{eq_MM_delta_i_to_MM}
 \M_{\delta_i} \longrightarrow \M 
\end{equation}
in a certain sense.
The Ricci flow spacetime $\M$ can then be shown to be $0$-complete.

We remark that even though the surgery scale in this flow is effectively 0, which seems more canonical than in a Ricci flow with surgery, the entire flow may a priori not be canonical; i.e. the flow is a priori not uniquely determined by its initial data.

We also remind the reader that, while a Ricci flow spacetime describes a singular flow, the metric tensor field $g$ on $\M$ is not singular itself, since the spacetime manifold $\M$ does not ``contain the singular points''.
In other words, $\M$ describes the flow only on its regular part.
A flow that includes singular points can be obtained, for example, by taking the metric completion of the time-slices.
However, we do not take this approach, in order to avoid having to formulate the Ricci flow equation at the added singular points.
This is in contrast to weak forms of other geometric flows, such as the Brakke flow (generalizing mean curvature flow), which is defined at singular points and therefore not smooth everywhere.  

In lieu of an interpretation of the Ricci flow equation at the (nonexistent) singular points of a Ricci flow spacetime, it becomes necessary to characterize the asymptotic geometry in its almost singular regions.
This is achieved via the  \emph{canonical neighborhood assumption}, which states that regions of high curvature are geometrically close to model solutions --- \emph{$\kappa$-solutions} --- modulo rescaling (see Definition~\ref{def_canonical_nbhd_asspt} for more details).
Roughly speaking, this implies that these regions are either spherical, neck-like or cap-like. $\kappa$-solutions (see Definition~\ref{def_kappa_solution} for more details) arise naturally as blow-up limits of conventional 3-dimensional Ricci flows and have also been shown to characterize high curvature regions in Ricci flows with surgery.  
Moreover, the Ricci flow spacetimes constructed in \cite{Kleiner:2014wz} also satisfy the canonical neighborhood assumption in an even stronger sense (for more details see the discussion after Definition~\ref{def_canonical_nbhd_asspt}).

\subsection{Statement of the main results} \label{subsec_statement_main_results}
We now state the main results of this paper in their full generality.
Some of the terminology used in the following was informally introduced in the previous subsection.
For precise definitions and further discussions we refer the reader to Section~\ref{sec_preliminaries_I}.

Our first main result is the uniqueness of complete Ricci flow spacetimes that satisfy the canonical neighborhood assumptions.
These spacetimes were also sometimes called ``weak Ricci flows'' in the previous two subsections.

\begin{theorem}[Uniqueness of Ricci flow spacetimes, general form] \label{thm_uniquness}
There is a universal constant $\eps_{\can} > 0$ such that the following holds.

Let $(\M, \t, \partial_{\t}, g)$ and $(\M', \t', \partial_{\t'}, g')$ be two Ricci flow spacetimes that are both $(0, T)$-complete for some $T \in (0, \infty]$ and satisfy the $\eps_{\can}$-canonical neighborhood assumption at scales $(0,r)$ for some $r > 0$.
If the initial time-slices $(\M_0, g_0)$ and $(\M'_0, g'_0)$ are isometric, then the flows $(\M, \t, \partial_{\t}, g)$ and $(\M', \t', \partial_{\t'}, g')$ are isometric as well.

More precisely, assume that there is an isometry $\phi : (\M_0, g_0) \to (\M'_0, g'_0)$.
Then there is a unique smooth diffeomorphism $\wh\phi : \M_{[0,T]} \to \M'_{[0,T]}$ such that
\[ \wh\phi^* g' = g, \qquad
\wh\phi \big|_{\M_0} = \phi, \qquad
\wh\phi_* \partial_{\t} = \partial_{\t'}, \qquad
\t' \circ \wh\phi = \t. \]
\end{theorem}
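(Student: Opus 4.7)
The plan is to deduce uniqueness from a strong quantitative stability statement, roughly: if the initial time-slices are $\eta$-close in a suitable pointed sense, then one may construct a diffeomorphism on the truncated ``thick'' parts of the two flows which intertwines the geometries up to an error controlled by $\eta$. Uniqueness then follows by setting $\eta = 0$ and letting the truncation scale tend to $0$. The natural smooth tool for comparing two Ricci flows is DeTurck's trick: fix a background metric $g$, let $g'$ be pulled back by a time-dependent diffeomorphism obtained from a harmonic map heat flow, and study the perturbation $h := g' - g$, which satisfies a linear parabolic equation to leading order with strictly parabolic principal part. On any \emph{compact} region of the regular part, one has short-time estimates for $h$; the whole problem is to extract long-time, global control despite the singular set being potentially Cantor-like in time.

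First I would fix a small scale $r_0 \ll r$ and, for each flow, define the thick part $\M_{r_0} \subset \M$ where the canonical neighborhood assumption guarantees a lower bound on curvature scale. On $\M_{r_0}$ I would set up the harmonic-map/Ricci--DeTurck system and derive a local-in-time $C^k$ estimate for $h$ with constants independent of how $\M_{r_0}$ sits in $\M$. Because surgery/singularities may recur arbitrarily often, I cannot treat $\M_{r_0}$ as a fixed smooth domain; instead I would choose an exhausting family of ``evolving cutoff'' regions, at each stage updating the DeTurck diffeomorphism by prescribing it on newly-added thick points. The key analytic object is a weighted norm $\|h\|$ on the thick part, with the weight increasing rapidly toward the boundary $\partial \M_{r_0}$; one seeks an inequality of the schematic form $\|h\|(t+\Delta t) \le (1 - c\,\Delta t)\|h\|(t) + \Delta t \cdot (\text{boundary flux})$.

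The main work is controlling the boundary flux, and this is where the three ingredients listed in the introduction enter. I would first invoke a classification of blow-up limits at the boundary of the thick region: by the $\eps_{\can}$-canonical neighborhood assumption, a neighborhood of $\partial \M_{r_0}$ is modeled on a $\kappa$-solution, so the possible geometries reduce essentially to round shrinking cylinders, the Bryant soliton, and rescaled shrinking spheres (ingredient 1). On each of these model spacetimes, I would establish a uniform strict stability property for the linearized Ricci--DeTurck equation: any perturbation which is bounded on the ``tip'' side decays super-exponentially as one moves into the long cylindrical end (ingredient 2). This is what makes the weighted estimate closeable, since it says perturbations leak \emph{into} the singular region, not out of it. The remaining ingredient, and the one I expect to be the main obstacle, is quantitative rigidity sufficient to extend the DeTurck diffeomorphism across a resolved singularity: after a neckpinch extinction, new thick points appear in $\M$ with no a priori correspondent in $\M'$, and one must produce a local diffeomorphism matching them, with quantitative estimates, using only the geometry near the tip of the model. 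I would attack this by exploiting the essentially $1$-parameter structure of the Bryant and cap models (the tip is determined modulo a rotation and a parameter along the axis) and a careful bookkeeping that propagates the error forward in time without accumulating blow-up of constants.

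Once these three pieces are assembled, a Gronwall-type iteration on the weighted norm yields: if $h = 0$ at $t=0$ then $\|h\| \equiv 0$ on $\M_{r_0}$ for all $t \in [0,T)$, for every $r_0 > 0$. Taking $r_0 \to 0$ and gluing the resulting isometries on the nested exhaustion $\bigcup_{r_0 > 0} \M_{r_0} = \M_{[0,T]}$ produces the unique diffeomorphism $\wh\phi$ with the four compatibility properties stated in the theorem. The uniqueness of $\wh\phi$ itself follows from the uniqueness in the smooth Ricci flow on small spacetime regions, together with the fact that the regular part is connected to the initial time-slice.
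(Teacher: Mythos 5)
Your proposal follows the same overall strategy as the paper: compare two flows via a harmonic-map-heat-flow/Ricci--DeTurck transplantation, control the perturbation $h$ by a weighted norm with the weight improving toward regions of large scale, identify the main obstruction as extending the comparison map across freshly resolved singularities, and deduce uniqueness as a limit of a stability estimate. The three ingredients you list (blow-up classification, strict stability for the linearized equation, quantitative rigidity after resolution) are precisely the ones the paper isolates. However, there are two genuine gaps in how you propose to make this precise.

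First, you claim that the canonical neighborhood assumption reduces the boundary geometry to ``round shrinking cylinders, the Bryant soliton, and rescaled shrinking spheres.'' This invokes Perelman's conjectured classification of non-compact $\kappa$-solutions, which remains open. What is actually known is only that a point where the scale is nearly \emph{non-decreasing} in time must be modeled on a Bryant soliton (via the rigidity theorems of Hamilton and Brendle applied to the equality case $\partial_t R = 0$). The argument must therefore be arranged so that cap regions are only ever inserted into the comparison domain at times when the scale is nearly increasing --- this is a substantive structural choice, not a direct consequence of the canonical neighborhood assumption, and it is essential: without it, the Bryant-like geometry you want to exploit for the extension step is simply not available. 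Second, your Gronwall step draws the wrong conclusion from the inequality you write. For fixed truncation scale $r_0 > 0$, the boundary flux term does \emph{not} vanish even when $h_0 \equiv 0$, because the truncated regions in the two flows are chosen independently and their neck-like boundaries need not correspond exactly. The inequality $\|h\|(t+\Delta t) \le (1 - c\Delta t)\|h\|(t) + \Delta t\cdot(\text{flux})$ therefore yields a bound $\|h\| \lesssim (\text{flux})/c$, i.e., \emph{smallness} of $h$ in terms of a power of $r_0$, not vanishing. One must instead obtain a quantitative decay estimate (an interior improvement of the weighted norm on parabolic neighborhoods whose size grows as the desired decay factor shrinks), conclude that $|h|$ is bounded by $\delta(r_0)$ with $\delta(r_0) \to 0$ as $r_0 \to 0$, and only then take a subsequential limit of the maps $\wh\phi_{r_0}$. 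That limit argument is itself nontrivial: the maps need not agree on overlaps a priori, and one must use a drift/compactness argument to show the limit exists, is a Riemannian isometry on each time-slice, commutes with the time vector fields, and is bijective. Your proposal jumps past these steps.
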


A Ricci flow spacetime is ``$(0, T)$-complete'' if the 0-completeness property holds up to time $T$ (see Definition~\ref{def_completeness}).

Both properties that are imposed on $\M$ and $\M'$ in Theorem~\ref{thm_uniquness} hold naturally for the Ricci flow spacetimes constructed in \cite{Kleiner:2014wz}.
So we obtain the following corollary.

\begin{corollary} \label{cor_existence_uniqueness}
There is a universal constant $\eps_{\can} > 0$ such that the following holds.

For every compact Riemannian manifold $(M, g)$ there is a unique (i.e. canonical) Ricci flow spacetime $(\M, \t, \partial_{\t}, g)$ whose initial time-slice $(\M_0, g_0)$ is isometric to $(M, g)$ and that is $0$-complete, and such that for every $T > 0$ the time-slab $\M_{[0,T)}$ satisfies the $\eps_{\can}$-canonical neighborhood assumption at scales $(0,r_T)$ for some $r_T > 0$.
\end{corollary}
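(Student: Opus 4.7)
The plan is to deduce Corollary~\ref{cor_existence_uniqueness} by combining the existence result of Kleiner--Lott \cite{Kleiner:2014wz} recalled in Subsection~\ref{subsec_status_quo} with the uniqueness statement of Theorem~\ref{thm_uniquness}. In other words, existence is already done in \cite{Kleiner:2014wz}, uniqueness is Theorem~\ref{thm_uniquness}, and the only remaining task is a routine verification that the two outputs can be joined into a single well-defined canonical object.

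For the existence half, I would simply invoke the construction recalled in (\ref{eq_MM_construction_from_RFws})--(\ref{eq_MM_delta_i_to_MM}): starting from $(M,g)$, pick any sequence of Ricci flows with surgery with surgery scale $\delta_i \to 0$ whose common initial data is $(M,g)$, form the associated Ricci flow spacetimes $\M_{\delta_i}$, and pass to a subsequential limit $\M$. The paper \cite{Kleiner:2014wz} establishes that $\M$ is a Ricci flow spacetime whose initial time-slice is isometric to $(M,g)$, that it is $0$-complete, and that for every $T > 0$ the time-slab $\M_{[0,T)}$ satisfies the $\eps_{\can}$-canonical neighborhood assumption at some scale $r_T > 0$, in fact in the stronger form alluded to after Definition~\ref{def_canonical_nbhd_asspt}. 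This produces a Ricci flow spacetime with all the properties demanded in the statement of the corollary.

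For the uniqueness half, suppose $(\M, \t, \partial_{\t}, g)$ and $(\M', \t', \partial_{\t'}, g')$ are any two Ricci flow spacetimes satisfying the hypotheses of the corollary, with initial time-slices both isometric to $(M,g)$. Fix any isometry $\phi : (\M_0, g_0) \to (\M'_0, g'_0)$. For each finite $T > 0$ both spacetimes are $(0,T)$-complete and satisfy the $\eps_{\can}$-canonical neighborhood assumption at scales $(0, \min(r_T, r'_T))$, so Theorem~\ref{thm_uniquness} produces a unique smooth diffeomorphism $\wh\phi_T : \M_{[0,T]} \to \M'_{[0,T]}$ extending $\phi$ and intertwining the time functions, the time vector fields, and the metrics. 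The uniqueness clause of Theorem~\ref{thm_uniquness} forces the family $\{\wh\phi_T\}_{T > 0}$ to be compatible under increasing $T$, so these maps glue to a single smooth isometry $\wh\phi : \M \to \M'$ satisfying $\wh\phi|_{\M_0} = \phi$, $\wh\phi_*\partial_\t = \partial_{\t'}$, and $\t' \circ \wh\phi = \t$. Applying this with $\M' = \M$ and $\phi = \id$ shows that the constructed flow is canonical in the strongest sense.

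Since this argument is merely an assembly of two previously obtained statements (one proven in \cite{Kleiner:2014wz}, the other to be proven in the body of the paper), there is no real obstacle at this stage; all of the substantive work is concentrated in the proof of Theorem~\ref{thm_uniquness}, to which the remainder of the paper is devoted.
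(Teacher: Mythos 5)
Your argument is correct and matches exactly the route the paper takes (the paper gives no explicit proof, writing only that "both properties that are imposed on $\M$ and $\M'$ in Theorem~\ref{thm_uniquness} hold naturally for the Ricci flow spacetimes constructed in \cite{Kleiner:2014wz}"): existence from Kleiner--Lott, uniqueness from Theorem~\ref{thm_uniquness} applied on each finite time-slab, then gluing via the uniqueness clause. The only caveat worth stating explicitly is that since the $\eps_{\can}$-canonical neighborhood assumption in the corollary is only hypothesized on each time-slab $\M_{[0,T)}$ (with $r_T$ possibly degenerating as $T \to \infty$), one should formally restrict $\M$ and $\M'$ to $\M_{[0,T')}$, $\M'_{[0,T')}$ for some $T' > T$, view these as Ricci flow spacetimes in their own right, and apply Theorem~\ref{thm_uniquness} to the restrictions --- but this is exactly what your construction of $\wh\phi_T$ amounts to, so nothing is missing.
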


While we will not discuss this here, we remark that it is possible to modify the arguments in \cite{Bamler_LT_0, Bamler_LT_A, Bamler_LT_B, Bamler_LT_C, Bamler_LT_D} to show that the flow $\M$ becomes non-singular past some time $T > 0$ and we have a curvature bound of the form $|{\Rm}| < C / t$.   So the scale $r_T$ in Corollary~\ref{cor_existence_uniqueness} can even be chosen independently of $T$.

Coming back  to Theorem~\ref{thm_uniquness}, we draw attention to the fact that the time-slices of $\M$ and $\M'$, including the initial time-slices, may have infinite diameter or volume.  Also, they may have unbounded curvature even in bounded subsets, for instance when the flow starts from a manifold with finite diameter cuspidal ends.
 We also emphasize that the constant $\eps_{\can}$ is universal and does not depend on any geometric quantities.

Theorem~\ref{thm_uniquness} will follow from a stability result for Ricci flow spacetimes.
We first present a slightly less general, but more accessible version of this stability result.   In the following theorem, we only require the completeness and the canonical neighborhood assumption to hold above some small scale $\eps$, i.e. where the curvature is $\lesssim \eps^{-2}$.
As such, the theorem can also be used to compare two Ricci flows with surgery or a Ricci flow with surgery and a Ricci flow spacetime, via the construction (\ref{eq_MM_construction_from_RFws}). 
Furthermore, we only require the initial time-slices of $\M$ and $\M'$ be close in the sense that there is a sufficiently precise bilipschitz map $\phi$, which may only be defined on regions where the curvature is not too large.
As a consequence, the two Ricci flow spacetimes $\M, \M'$ can only be shown to be geometrically close.
More specifically, the map $\wh\phi$ that compares $\M$ with $\M'$ can only shown to be  bilipschitz and may not be defined on high curvature regions.
The map $\wh\phi$ is also not necessarily $\partial_{\t}$-preserving (see Definition~\ref{def_d_t_preserving}), but it satisfies the harmonic map heat flow equation (see Definition~\ref{def_RF_spacetime_harm_map_hf}).

\begin{theorem}[Stability of Ricci flow spacetimes, weak form] \label{Thm_comparing_RF_weak_form}
For every $\delta > 0$ and $T < \infty$ there is an $\eps = \eps (\delta, T) > 0$ such that the following holds.

Consider two $(\eps, T)$-complete Ricci flow spacetimes $\M, \M'$ that each satisfy the $\eps$-canonical neighborhood assumption at scales $(\eps, 1)$.

Let $\phi : U \to U'$ be a diffeomorphism between two open subsets $U \subset \M_0$, $U' \subset \M'_0$.
Assume that $|{\Rm}| \geq \eps^{-2}$ on $\M_0 \setminus U$ and
\[ | \phi^* g'_0 - g_0 | \leq \eps . \]
Assume moreover that the $\eps$-canonical neighborhood assumption holds on $U'$ at scales $(0,1)$.

Then there is a time-preserving diffeomorphism $\wh\phi : \wh{U} \to \wh{U}'$ between two open subsets $\wh{U} \subset \M_{[0, T]}$ and $\wh{U}' \subset \M'_{[0, T]}$ that evolves by the harmonic map heat flow and that satisfies $\wh\phi = \phi$ on $U \cap \wh{U}$ and
\[ | \wh\phi^* g' - g | \leq \delta. \]
Moreover, $|{\Rm}| \geq \delta^{-2}$ on $\M_{[0, T]} \setminus \wh{U}$.
\end{theorem}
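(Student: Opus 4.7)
The plan is to construct $\widehat\phi$ by evolving the initial comparison $\phi$ via the harmonic-map-heat-flow coupling used in DeTurck's trick: on a region where both flows are smooth with curvature below a controlled threshold, this coupling reduces the Ricci flow equation for $h := \widehat\phi^* g' - g$ to a strictly parabolic system whose linearization is of Lichnerowicz type. Short-time existence and uniqueness then follow from standard parabolic theory. The real issue is that the almost-singular parts of $\M$ and $\M'$ are a priori uncorrelated, so one cannot hope to extend $\widehat\phi$ globally; instead, the domain $\widehat U_t$ of the comparison must be carved out dynamically, and the main task is to show that boundary effects from the frontier $\partial \widehat U_t$ do not drive $|h|$ above $\delta$ in the interior before time $T$.

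I would set up a continuity argument in time: let $\widehat U_t$ consist of points in $\M_{[0,t]}$ where $|{\Rm}|$ on $\M$ (and correspondingly on $\M'$) is below some intermediate scale $r_1^{-2} \ll \eps^{-2}$, and where the harmonic map heat flow starting from $\phi$ remains defined. Assuming inductively $|h| \le \delta_1$ on $\widehat U_t$ with $\delta_1 \ll \delta$, parabolic interior estimates applied to the DeTurck equation, combined with the initial smallness $|h|_{t=0} \le \eps$ on $U$, improve this to $|h| \le \delta_1/2$ on the interior region at slightly later times. The definition of $\widehat U_t$ must then be updated to excise freshly formed almost-singular regions of $\M'$, which requires a quantitative comparison of the two singular sets.

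The hard part will be the frontier analysis. Along $\partial \widehat U_t$, both flows are geometrically modeled by $\kappa$-solution pieces — necks, caps, and spherical components — courtesy of the $\eps$-canonical neighborhood assumption. One must show that boundary contributions from $\partial \widehat U_t$ to the interior evolution of $h$ are controllably small over the full interval $[0,T]$. The mechanism will combine (a) a semi-local or weighted maximum principle adapted to the truncated spacetime, dampening propagation from the frontier inward, with (b) a uniform strict linearized stability estimate on all $\kappa$-solution models — the local neck and cap backgrounds exhibit no growing modes for the Lichnerowicz-type linearization, so the only modes that could propagate inward are suppressed at definite rate. Choosing $r_1$ small enough and $\eps$ sufficiently small relative to $\delta$ and $T$, this should yield a uniform bound $|h| \le \delta$ on $\widehat U_{[0,T]}$ along with the claimed exterior curvature bound $|{\Rm}| \ge \delta^{-2}$ off $\widehat U$.

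The cleanest way to close the argument is by contradiction and compactness. Suppose no such $\eps(\delta,T)$ exists; then one obtains sequences $\M^i, (\M')^i, \phi^i$ with parameters $\eps_i \to 0$ along which the conclusion fails at some $T_i \le T$. Choosing basepoints on the first frontier where failure occurs and rescaling appropriately, one extracts pointed smooth limits on the regular scale (which must agree by uniqueness of smooth Ricci flow with the given bilipschitz boundary condition) and $\kappa$-solution limits at singular scales. To derive a contradiction, one must show that the comparison map extends uniquely through the just-resolved singularity of the limiting $\kappa$-solution — this is the quantitative rigidity step, ingredient (3) in the introduction, and as the authors flag it is by far the most delicate: standard linearized stability does not by itself rule out a nontrivial jump in $\widehat\phi$ across the frontier, and one needs a rigidity property of $\kappa$-solution asymptotic geometry strong enough to force the extension to be canonical. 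Once that rigidity is in hand, the limiting $h$ must vanish identically, contradicting the assumed failure and establishing the required $\eps = \eps(\delta,T)$.
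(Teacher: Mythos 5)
Your sketch correctly identifies the major ingredients --- the DeTurck/harmonic-map-heat-flow reduction, the canonical-neighborhood dichotomy into necks and caps, a semi-local maximum principle for the resulting parabolic system, and a rigidity step at resolved singularities --- and at that level you have recovered the high-level outline of the paper's argument. But the actual proof of this statement in the paper is a short reduction (by a choice of parameters $E$, $r$, $\eps'$) to the stronger quantitative version, Theorem~\ref{Thm_comparing_RF_general_version}, and it is in that theorem's proof that the substance lives; your proposal is effectively attempting to re-derive that substance, and there are two structural gaps that would stop the argument from closing.

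First, the ``continuity argument in time'' cannot work as stated. The comparison domain must change \emph{discontinuously}: extension caps are added to the domain at discrete moments, precisely when the local geometry of both spacetimes near the cap is Bryant-like, and this insertion requires a non-trivial matching of comparison data across the inserted region. The paper handles this via discrete time steps $t_j = j\,r_{\comp}^2$ together with the a priori assumptions \ref{item_time_step_r_comp_1}--\ref{item_apa_13}, proved by a two-stage induction (Propositions~\ref{prop_comp_domain_construction} and~\ref{Prop_extend_comparison_by_one}); a continuously deforming $\widehat U_t$ has no natural way to accommodate the cap-insertion moments. Relatedly, your plan to close with a global compactness-and-contradiction argument does not work at that level: there is no canonical ``first failure scale'' to blow up, because the comparison map and its domain are themselves part of what is being constructed, so the extraction of a limiting configuration presupposes the very bounds one is trying to contradict. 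Compactness appears in the paper only in a localized form, to prove the vanishing theorem for the linearized Ricci--DeTurck equation on $\kappa$-solutions (Theorem~\ref{Thm_vanishing_lemma}), which then feeds the interior decay estimate (Propositions~\ref{Prop_semi_local_max}, \ref{Prop_interior_decay}).

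Second, and more importantly, you do not flag the mechanism that makes the ``rigidity at the frontier'' step quantitatively possible. The key device is the weighted quantity $Q = e^{H(T-\t)}\rho_1^E\,|h|$, with the \emph{freedom to take the exponent $E$ large}. The bound $Q \le \ov Q$, propagated by the semi-local maximum principle, gives decay $|h| \lesssim \rho^{-E}$ away from the singular set; when an extension cap must be inserted, the pre-existing comparison is known only on an annulus far from the tip, where the geometry is nearly cylindrical and the deviation from a cylinder is a polynomially small signal. Matching the scales of the two near-Bryant regions (the Bryant Extension Principle, Proposition~\ref{prop_bryant_comparison_general}) requires reading off this small signal to an accuracy that beats it, which is exactly what $E$ large buys. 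Without identifying this weight --- and the interplay between the two bounds $Q\le\ov Q$ and $Q^*\le\ov Q^*$, which trade off near caps versus near necks --- the rigidity step you describe remains a wish rather than a proof.
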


We remark that the condition that the $\eps$-canonical neighborhood assumption holds on $U'$ at scales $(0,1)$ is automatically satisfied if the curvature scale on $U'$ is $> \eps$, which is implied by a bound of the form $|{\Rm}| < c \eps^{-2}$ on $U'$ (see Definition~\ref{def_canonical_nbhd_asspt} for further details).

Theorem~\ref{Thm_comparing_RF_weak_form} is formulated using only $C^0$-bounds on the quantity $\phi^*g'-g$,  which measures the deviation from an isometry.
Using a standard argument involving local gradient estimates for non-linear parabolic equations, these bounds can be improved to higher derivative bounds as follows:

\begin{addendum-Thm15}
Let $m_0 \geq 1$ and $C < \infty$.
If in Theorem~\ref{Thm_comparing_RF_weak_form} we additionally require that
\[ \big| \nabla^m (\phi^* g'_0 - g_0 ) \big| \leq \eps \]
and
\[ |\nabla^m {\Rm}| \leq C \]
on $U$ for all $m = 0,\ldots, m_0 + 2$ and allow $\eps$ to depend on $m_0$, $C$, then
\[ \big| \nabla^m (\wh\phi^* g' - g ) \big| \leq \delta \]
on $\wh U$ for all $m = 0,\ldots, m_0$.
\end{addendum-Thm15}

A similar addendum applies to Theorem~\ref{Thm_comparing_RF_general_version} below.

Combining Theorem~\ref{Thm_comparing_RF_weak_form} with \cite[Thm. 1.2]{Kleiner:2014wz}  (see also \cite[p.6]{Kleiner:2014wz}) we obtain:

\begin{corollary}
\label{cor_convergence_corollary}
Let $(M,g)$ be a compact Riemannian manifold, and consider a sequence of Ricci flows with surgery starting from $(M,g)$, for a sequence  of surgery scales  $\delta_i\ra 0$.
Let $\M_{\delta_i}$ be the corresponding Ricci flow spacetimes, as defined in (\ref{eq_MM_construction_from_RFws}).
Then the $\M_{\delta_i}$ converge to a unique Ricci flow spacetime as in (\ref{eq_MM_delta_i_to_MM}).  
\end{corollary}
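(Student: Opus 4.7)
The plan is to combine the subsequential convergence of \cite[Thm.~1.2]{Kleiner:2014wz} --- which is the very statement that originally produced (\ref{eq_MM_delta_i_to_MM}) --- with the new stability Theorem~\ref{Thm_comparing_RF_weak_form} (equivalently, with the uniqueness Corollary~\ref{cor_existence_uniqueness}) in order to upgrade subsequential convergence to convergence of the full sequence. First I would unpack the hypotheses: each Ricci flow with surgery $\M_{\delta_i}$ starts from $(M,g)$, and by Perelman's structure theorem \cite{Perelman:2003tka} it is $(C\delta_i, T)$-complete and satisfies the $\eps_{\can}$-canonical neighborhood assumption at scales $(C\delta_i, r_T)$ on the time-slab $\M_{\delta_i, [0,T]}$, for a universal constant $C$ and some $r_T>0$ depending only on $(M,g)$ and $T$. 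After a $T$-dependent parabolic rescaling so that $r_T = 1$, the assumptions of Theorem~\ref{Thm_comparing_RF_weak_form} are met by each $\M_{\delta_i}$ once $C\delta_i < \eps(\delta, T)$.

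With this set-up in place, I would apply Theorem~\ref{Thm_comparing_RF_weak_form} to each pair $(\M_{\delta_i}, \M_{\delta_j})$ with $\phi = \id_M$ on the initial slice --- which is literally $(M,g)$ for both flows, so the $C^0$-smallness of $\phi^* g_0' - g_0$ is automatic --- to obtain time-preserving harmonic-map-heat-flow diffeomorphisms $\wh\phi_{ij} : \wh U_{ij} \to \wh U'_{ij}$ defined on the complement of $\{|{\Rm}|\ge \delta^{-2}\}$, satisfying $|\wh\phi_{ij}^* g_j - g_i|\le \delta$; by the addendum to Theorem~\ref{Thm_comparing_RF_weak_form} these $C^0$-bounds promote to $C^m$-bounds. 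Letting $i,j\to\infty$ shows that $\{\M_{\delta_i}\}$ is Cauchy for the convergence of (\ref{eq_MM_delta_i_to_MM}) on every finite time interval $[0,T]$. Combined with the precompactness from \cite[Thm.~1.2]{Kleiner:2014wz}, this yields convergence of the full sequence to a single limit $\M$ that is $0$-complete and satisfies the $\eps_{\can}$-canonical neighborhood assumption at all positive scales (both properties pass to the limit under smooth pointed convergence away from the blow-up locus). Uniqueness of $\M$ is then immediate from Corollary~\ref{cor_existence_uniqueness}.

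The main obstacle is bookkeeping rather than conceptual. One must verify that the scale $r_T$ appearing in the canonical neighborhood assumption can be chosen independently of the surgery scale $\delta_i$ (this is standard in the Ricci-flow-with-surgery literature but should be isolated), and one must reconcile the harmonic-map-heat-flow diffeomorphisms $\wh\phi_{ij}$ produced by Theorem~\ref{Thm_comparing_RF_weak_form}, which are not a priori $\partial_\t$-preserving, with the $\partial_\t$-preserving convergence underlying (\ref{eq_MM_delta_i_to_MM}); this is resolved by the standard observation that on the canonical-neighborhood region a small harmonic-map-heat-flow deformation differs from a $\partial_\t$-preserving diffeomorphism by a composition with a small time-dependent family of diffeomorphisms of a fixed time-slice, which can be absorbed without spoiling any of the estimates. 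Everything else is the familiar ``precompact sequence with unique subsequential limit converges'' dichotomy.
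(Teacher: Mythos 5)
Your proposal is correct and takes essentially the same approach the paper intends: combining the quantitative stability of Theorem~\ref{Thm_comparing_RF_weak_form} (applied to pairs $(\M_{\delta_i},\M_{\delta_j})$ with $\phi=\id$ on the common initial slice, using the addendum to upgrade to $C^m$-closeness) with the subsequential precompactness of \cite[Thm.~1.2]{Kleiner:2014wz}. The two bookkeeping points you flag --- the surgery-scale-independent lower bound on the canonical neighborhood scale $r_T$, and the passage from harmonic-map-heat-flow comparison maps to the $\partial_\t$-preserving topology of (\ref{eq_MM_delta_i_to_MM}) --- are indeed the only details that need attention, and your treatment of both is adequate.
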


We remark that in the case of mean curvature flow a similar result holds: In \cite{head,lauer} it was  shown that the $2$-convex mean curvature flow with surgery constructed in \cite{huisken_sinestrari3} converges to the level set flow as the surgery parameter tends to zero.  
However, their proofs, which are remarkably elementary, are entirely different from ours: they use a quantitative variant of the barrier argument from White's uniqueness theorem \cite{white_nature}.  A similar convergence result holds for mean convex mean curvature flow with surgery in $\R^3$, as constructed in \cite{brendle_huisken,haslhofer_kleiner}.

Lastly, we state the stability theorem for Ricci flow spacetimes in its full generality.
The following theorem is an improvement of Theorem~\ref{Thm_comparing_RF_weak_form} for the following reasons:
\begin{itemize}
\item It provides additional information on the bilipschitz constant and establishes a polynomial dependence on the curvature.
\item It states that the precision of the canonical neighborhood assumption can be chosen independently of time and bilipschitz constant.
\item In provides a condition under which the map $\wh\phi$ is almost surjective.
\end{itemize}

\begin{theorem}[Strong Stability of Ricci flow spacetimes] \label{Thm_comparing_RF_general_version}
There is a constant $\underline{E} < \infty$ such that for every $\delta > 0$, $T < \infty$ and $ \underline{E} \leq E < \infty$ there are constants $\eps_{\can} = \eps_{\can} (E), \eps = \eps (\delta, T, E) > 0$ such that for all $0 < r \leq 1$ the following holds.

Consider two $(\eps r, T)$-complete Ricci flow spacetimes $\M, \M'$ that each satisfy the $\eps_{\can}$-canonical neighborhood assumption at scales $(\eps r, 1)$.

Let $\phi : U \to U'$ be a diffeomorphism between two open subsets $U \subset \M_0$, $U' \subset \M'_0$.
Assume that $|{\Rm}| \geq (\eps r)^{-2}$ on $\M_0 \setminus U$ and
\[ | \phi^* g'_0 - g_0 | \leq \eps \cdot  r^{2E} (|{\Rm}| + 1)^{E} \]
on $U$.
Assume moreover that the $\eps_{\can}$-canonical neighborhood assumption holds on $U'$ at scales $(0,1)$.

Then there is a time-preserving diffeomorphism $\wh\phi : \wh{U} \to \wh{U}'$ between two open subsets $\wh{U} \subset \M_{[0, T]}$ and $\wh{U}' \subset \M'_{[0, T]}$ that evolves by the harmonic map heat flow, satisfies $\wh\phi = \phi$ on $U \cap \wh{U}$ and that satisfies
\[ | \wh\phi^* g' - g | \leq \delta \cdot r^{2E} (|{\Rm}| + 1)^{E} \]
on $\wh{U}$.
Moreover, we have $|{\Rm}| \geq r^{-2}$ on $\M_{[0, T]} \setminus \wh{U}$.

If additionally $|{\Rm}| \geq (\eps r)^{-2}$ on $\M'_0 \setminus U'$, then we also have $|{\Rm}| \geq r^{-2}$ on $\M'_{[0,T]} \setminus \wh{U}'$.
\end{theorem}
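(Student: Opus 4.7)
The plan is to construct $\wh\phi$ by solving harmonic map heat flow starting from $\phi$, with the background provided by the Ricci flow on $\M$, and to control the deviation tensor
\[
h := \wh\phi^* g' - g
\]
in the curvature-weighted $C^0$-norm
\[
\|h\|_E := \sup \frac{|h|}{r^{2E}(|\Rm|+1)^E}
\]
by a bootstrap continuity argument in time. At each $s \in [0,T]$ I would define $\wh U_s$ to be the largest open subset of $\M_s$ on which the harmonic map heat flow starting from $\phi$ exists smoothly into $\M'_{[0,s]}$ and satisfies the pointwise bound $|h| \leq \delta \cdot r^{2E}(|\Rm|+1)^E$, and let $\wh U = \bigcup_{s \in [0,T]} \wh U_s \times \{s\}$. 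Short-time existence for harmonic map heat flow on a Ricci flow background is classical; the entire task is to propagate the initial bound $\|h(0)\|_E \leq \eps$ into $\|h(s)\|_E \leq \delta$ for all $s \leq T$ on the region where $|\Rm| < r^{-2}$.

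The central analytic step is a differential inequality for $\|h\|_E$ coming from the Ricci-DeTurck linearization
\[
\partial_s h = \Delta_L h + \Rm * h + (\text{lower order}),
\]
combined with the evolution of $|\Rm|$. When $E$ is sufficiently large the polynomial weight $(|\Rm|+1)^E$ produces dissipation terms from the spatial weight that dominate the zeroth-order amplification by curvature, but only provided the local geometry is close enough to a $\kappa$-solution. I would therefore reduce propagation of the weighted estimate to a uniform strict linear stability statement on $\kappa$-solutions: any solution of the linearized Ricci-DeTurck equation on a $\kappa$-solution whose pointwise norm is dominated by $|\Rm|^E$ must vanish identically, once $E \geq \underline E$. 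This is proved by a standard contradiction-and-rescaling argument: if the weighted estimate failed on $\M$, blow up around a maximum point of $|h|/(r^{2E}(|\Rm|+1)^E)$ and pass to a $\kappa$-solution limit using the $\eps_{\can}$-canonical neighborhood assumption, producing a nontrivial limiting solution that contradicts strict stability. This is why $\eps_{\can}$ depends on $E$; crucially, neither $\eps_{\can}$ nor the dissipation constants depend on $\delta$ or $T$, since the blow-up limit does not.

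The delicate remaining ingredient is what happens on the boundary of $\wh U$, where points are about to enter, or have just emerged from, the high-curvature region. Here the canonical neighborhood assumption classifies the local geometry up to rescaling as a neck or cap inside a $\kappa$-solution, and I would use the corresponding quantitative rigidity (ingredient (3) in the authors' list) to extend $\wh\phi$ across this boundary in the harmonic map heat flow gauge, absorbing the matching error into the polynomial weight. The containment $\M_{[0,T]} \setminus \wh U \subset \{|\Rm| \geq r^{-2}\}$ is then automatic from the maximality of $\wh U$. For the almost surjectivity statement, I would run the analogous construction with $\M$ and $\M'$ interchanged to obtain a map $\wh\psi : \wh V' \to \wh V$; uniqueness of the harmonic map heat flow gauge applied to $\wh\psi \circ \wh\phi$ and $\wh\phi \circ \wh\psi$, controlled again via the weighted $C^0$ estimate, forces these compositions to be the identity on overlaps up to errors within the weighted budget, which yields $|\Rm| \geq r^{-2}$ on $\M'_{[0,T]} \setminus \wh U'$.

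The main obstacle, by a wide margin, is ingredient (3): verifying the quantitative rigidity needed to extend $\wh\phi$ across spacetime regions in which pieces of $\M$ or $\M'$ are in the process of being resolved from a singularity. Neck-pinch and degenerate neck-pinch models of $\kappa$-solutions carry nontrivial moduli — translations along the neck axis, rotations of the cross-sectional sphere, choice of cap — and matching these moduli between the two flows within the harmonic map heat flow gauge, using only a polynomial budget of slack in $|\Rm|$, is what forces $\underline E$ to be a definite universal constant rather than being absorbable into $\delta$ or $\eps_{\can}$. This matching problem is also what is responsible for the stronger form of Theorem~\ref{Thm_comparing_RF_general_version} over Theorem~\ref{Thm_comparing_RF_weak_form}: the polynomial weight is precisely what allows the argument to be pushed down to scale $\eps r$ while keeping $\eps_{\can}$ independent of time and of the final precision $\delta$.
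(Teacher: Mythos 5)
Your outline captures the global strategy (solve harmonic map heat flow, control the deviation in a curvature-weighted norm, prove an interior decay estimate by blow-up to a $\kappa$-solution and a vanishing theorem, then worry about what happens at the boundary of the truncated domain), which is indeed the shape of the paper's argument. But there is a concrete gap in the propagation scheme: you track a \emph{single} weighted quantity $\sup |h|/\bigl(r^{2E}(|\Rm|+1)^E\bigr)$, while the paper must track \emph{two} --- $Q \approx e^{H(T-\mathfrak{t})}\rho^E|h|$ and $Q^* \approx e^{H(T-\mathfrak{t})}\rho^3|h|$. The reason this is not cosmetic: the semilocal maximum principle (Proposition~\ref{Prop_semi_local_max}) requires the a priori \emph{unweighted} smallness $|h|\leq\eta_{\lin}$ on the parabolic ball, and a bound $\rho^E|h|\leq\ov Q$ with $E$ large and $\ov Q$ calibrated to hold near the neck boundary (where $\rho\approx r_{\comp}$) does \emph{not} imply $|h|\leq\eta_{\lin}$ at points with $\rho\approx\lambda r_{\comp}$. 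Conversely, shrinking $\ov Q$ to fix this breaks the bound at the boundary, and the required parameter ordering ($\lambda$ depends on $\delta_{\nn}$, which depends on $\eta_{\lin}$) makes this circular, as the paper explains after Definition~\ref{def_a_priori_assumptions_7_13}. The low-exponent quantity $Q^*$ is what supplies the $|h|\leq\eta_{\lin}$ control away from the boundary, and it is also what survives the degradation of the $Q$-bound (by the factor $W$) after each Bryant extension. A single-weight bootstrap as you propose would stall at exactly the step where you need to invoke the interior decay estimate.

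A second gap concerns the domain and the extension across caps. Defining $\wh U_s$ as ``the largest open set where the flow exists with the bound'' elides the real difficulty: the domain must sometimes \emph{grow} (extension caps appear when a singularity resolves), and at that moment you have no map defined on the new region and must manufacture one whose error fits inside the weighted budget. The paper does this via a discrete time-step comparison domain with spherical neck boundary, a threshold mechanism (a priori assumptions \ref{item_discards_not_too_thick_4} and \ref{item_geometry_cap_extension_5}) that forces extension caps to appear precisely when the scale is nearly nondecreasing --- so that by the Hamilton--Brendle rigidity both $\M$ and $\M'$ are Bryant-like there --- and then the Bryant Extension Proposition~\ref{prop_bryant_comparison_general}, which uses precise Bryant asymptotics to match scales and align maps. ``Absorbing the matching error into the polynomial weight'' names the goal but not the mechanism, and the mechanism is genuinely most of the work. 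Finally, your proposed proof of the last claim ($|\Rm|\geq r^{-2}$ on $\M'_{[0,T]}\setminus\wh U'$) by swapping $\M,\M'$ and using harmonic-map-heat-flow uniqueness on $\wh\psi\circ\wh\phi$ does not close: the two comparison domains are truncated by necks in $\M$ and $\M'$ respectively and are not mirror images, so the compositions need not be close to the identity on overlaps. The paper instead proves this by a connectivity argument (Lemma~\ref{lem_MM_connected}) in $\M'$ together with the scale-distortion estimate (Lemma~\ref{lem_scale_distortion}), which is a different and more robust route.
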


\subsection{A brief sketch of the proof, and further discussion}
We now give a very brief and informal outline of the proof. 
See Section~\ref{sec_overview} for a more detailed overview.

Theorem~\ref{thm_uniquness}, the main uniqueness theorem, is obtained from the Strong Stability Theorem~\ref{Thm_comparing_RF_weak_form} or \ref{Thm_comparing_RF_general_version} via a limit argument.
In Theorems~\ref{Thm_comparing_RF_weak_form} and \ref{Thm_comparing_RF_general_version} we are given a pair of Ricci flow spacetimes $\M$, $\M'$, and an almost isometry $\phi:\M_0\supset U\ra U'\subset \M_0'$ between open subsets of their initial conditions, and our goal is to construct an almost isometry $\wh\phi:\M\supset \wh{U} \ra \wh{U'}\subset \M'$ that extends $\phi$ forward in time.  The construction of $\wh\phi$ involves a procedure for choosing the domain $\wh{U}$ of $\wh\phi$, and the map $\wh\phi$ on this domain.  
These two procedures interact in a complex way, and for this reason they are implemented by means of a simultaneous induction argument.

We now indicate some of the highlights in the two steps of the induction. 

The domain $\wh{U}$ is chosen to contain all points in $\M$ whose curvature $|{\Rm}|$ lies (roughly) below a certain threshold and is obtained  from $\M$ by means of a delicate truncation argument.  The truncation uses the fact that, roughly speaking, the part of $\M$ with large curvature looks locally either like a neck, or like a cap region.  We cut along neck regions so that the time-slices of $\wh{U}$ have spherical boundary.  A critical complication stems from the occurrence of moments in time when the presence of cap regions interferes with the need to cut along neck regions.  
This occurrence necessitates modification of the domain by either insertion or removal of cap regions.

The map $\wh\phi$ is constructed by solving the harmonic map heat flow equation for its inverse $\wh\phi^{-1}$.  There are many interrelated issues connected with this step, of which the three most important are:
\begin{itemize}
\item The distortion of the map $\wh\phi$ must be controlled under the harmonic map heat flow.  For this, our main tool is an interior decay estimate, which may be applied away from the spacetime boundary of $\wh{U}$.
\item The presence of boundary in  $\wh{U}$ introduces boundary effects, which must be controlled.  
It turns out that the geometry of shrinking necks implies that the neck boundary recedes rapidly, which helps to stabilize the construction.
\item The insertion of the cap regions alluded to above necessitates the extension of the map $\wh\phi$ over the newly added region.  
The implementation of this extension procedure relies on a delicate interpolation argument, in which the geometric models for the cap regions must be aligned with the existing comparison map $\wh\phi$  within tolerances fine enough to prolong the construction.  This step hinges on several ingredients and their precise compatibility ---   rigidity theorems for the models of the cap regions \cite{Hamilton:1993em,Brendle:2013dd}, quantitative asymptotics of the models \cite{Bryant:gVzfj4nx}, and
strong decay estimates for the distortion of the map $\wh\phi$.
\end{itemize}

\subsection{Acknowledgements}
The first named author would like to thank Bennet Chow for pointing out his paper with Greg Anderson (see \cite{Anderson:2005cf}).
Both authors would also like to thank Yongjia Zhang for many valuable corrections.

\section{Overview of  the proof}\label{sec_overview}
 In this section we will describe  the proof of the main theorem.  
 Our aim here is to cover the most important ideas in an informal way, with many technicalities omitted.  
 The first subsection of this overview  provides an initial glimpse of the argument.  
 It is intended to be accessible to readers outside the field who would like to gain some sense of how the proof goes.  
 The remaining subsections delve into the proof in greater detail and are primarily intended for people working in the area. 

The main part of the paper is concerned with the proof of the Strong Stability Theorem, Theorem~\ref{Thm_comparing_RF_weak_form} or \ref{Thm_comparing_RF_general_version}, which asserts that two Ricci flow spacetimes are geometrically close, given that their initial data are geometrically close and we have completeness as well as the canonical neighborhood assumption in both spacetimes above a sufficiently small scale. 
All other results of this paper will follow from this theorem; in particular, the Uniqueness Theorem, Theorem~\ref{thm_uniquness}, will follow from Theorem~\ref{Thm_comparing_RF_weak_form} or \ref{Thm_comparing_RF_general_version} via a limit argument.  

In the Strong Stability Theorem, we consider two Ricci flow spacetimes $\M$ and $\M'$, whose initial time-slices, $(\M_0, g_0)$ and $(\M'_0, g'_0)$, are geometrically close or even isometric.
Our goal is the construction of a map $\phi : \M \supset U \to   \M'$, defined on a sufficiently large domain $U$, whose bilipschitz constant is sufficiently close to 1.
In Theorems~\ref{Thm_comparing_RF_weak_form} and~\ref{Thm_comparing_RF_general_version}, this map is denoted by $\wh\phi$.
However, in the main part of this paper, as well as in this overview, the hat will be omitted.

Our basic method for constructing $\phi$, which goes back to DeTurck \cite{DeTurck:1983jp}, is to solve the harmonic map heat flow equation for the inverse $\phi^{-1}$.  
In the nonsingular case when both Ricci flow spacetimes $\M$ and $\M'$ may be represented by ordinary smooth Ricci flows on compact manifolds $(M, g(t))$ and $(M', g'(t))$, this reduces to finding a solution $\phi (t) : M \to M'$ to the equation $\D_t(\phi^{-1})=\Delta (\phi^{-1})$. As DeTurck observed,  the family of difference tensors $h(t) := (\phi(t))^* g'(t) - g(t)$, which quantify the deviation of $\phi (t)$ from being an isometry, then satisfies the \emph{Ricci-DeTurck perturbation equation}:
\begin{equation} \label{eq_RdT_outline}
 \partial_t h(t) = \triangle_{g(t)} h(t) + 2\Rm_{g(t)} (h(t)) + \nabla h(t) * \nabla h(t) + h(t) * \nabla^2 h(t)\,.
\end{equation}
If $\phi(0)$ is an isometry, then $h(0)\equiv 0$.  So by the uniqueness of solutions to the strictly parabolic equation (\ref{eq_RdT_outline}) one gets that $h(t)\equiv 0$ for all $t \geq 0$,  and hence the two given Ricci flows are isometric.  In our case we are given that $h(0)$ is small, and want to show that it remains small.   Equation (\ref{eq_RdT_outline}) has several properties that are  important for maintaining control over of the size of the perturbation $h$, as the construction proceeds.

\subsection{The construction process, an initial glimpse} \label{subsec_constr_process_I_outline}
In the general case, in which  $\M$ and $\M'$ may be singular, the domain of the map $\phi$ will be the part of $\M$ that is not too singular, i.e. the set of points whose curvature is not too large.  Note that this means that we will effectively be solving the harmonic map heat flow equation with a boundary condition.  

The main objects of our construction are a subset $\N \subset \M$, called the \emph{comparison domain}, and a time-preserving diffeomorphism onto its image $\phi : \N \to \M'$, called the \emph{comparison (map)}.    We  construct $\N$ and $\phi$ by a simultaneous induction argument using discrete time increments    $[t_{j-1},t_{j} ]$.  The domain $\N$   is the union
\begin{equation*} \label{eq_NN_form_outline}
 \N = \N^1 \cup \N^2 \cup \ldots \cup \N^{J}\,,
\end{equation*}
where $\N^{j}$ lies in the time-slab of $\M$ corresponding to the time-interval $[t_{j-1}, t_j]$.  
The restriction of $\phi$ to each time-slab $\N^j$ is denoted by $\phi^j : \N^j \to \M'$.  In the induction step, we enlarge $\N$ and $\phi$ in two stages: in the first we determine  $\N^{J+1}$, and in the second we define the map $\phi^{J+1}:\N^{J+1}\ra \M'$.

Before proceeding, we introduce  the  \emph{curvature scale} $\rho$, which will be  used throughout the paper.  The precise definition may be found in Subsection \ref{subsec_curvature_scale}, but for the purposes of this overview, $\rho$ can be any function that agrees up to a fixed factor with $R^{-1/2}$ wherever $|{\Rm}|$ is sufficiently large.  Here $R$ denotes the scalar curvature.  Note that $\rho$ has the dimension of length.

\begin{figure}
\labellist
\small\hair 2pt
\pinlabel $\M$ at 400 70
\pinlabel $\M'$ at 1400 70
\pinlabel $\phi$ at 850 670
\pinlabel $\xrightarrow{\hspace{15mm}}$ at 850 630
\pinlabel {\small $\N^1$} at 90 268
\pinlabel {\small $\N^2$} at 90 440
\pinlabel {\small $\N^3$} at 90 620
\pinlabel {\small $\N^4$} at 90 795
\pinlabel {\small $\N^5$} at 90 945
\endlabellist
\centering
\includegraphics[width=145mm]{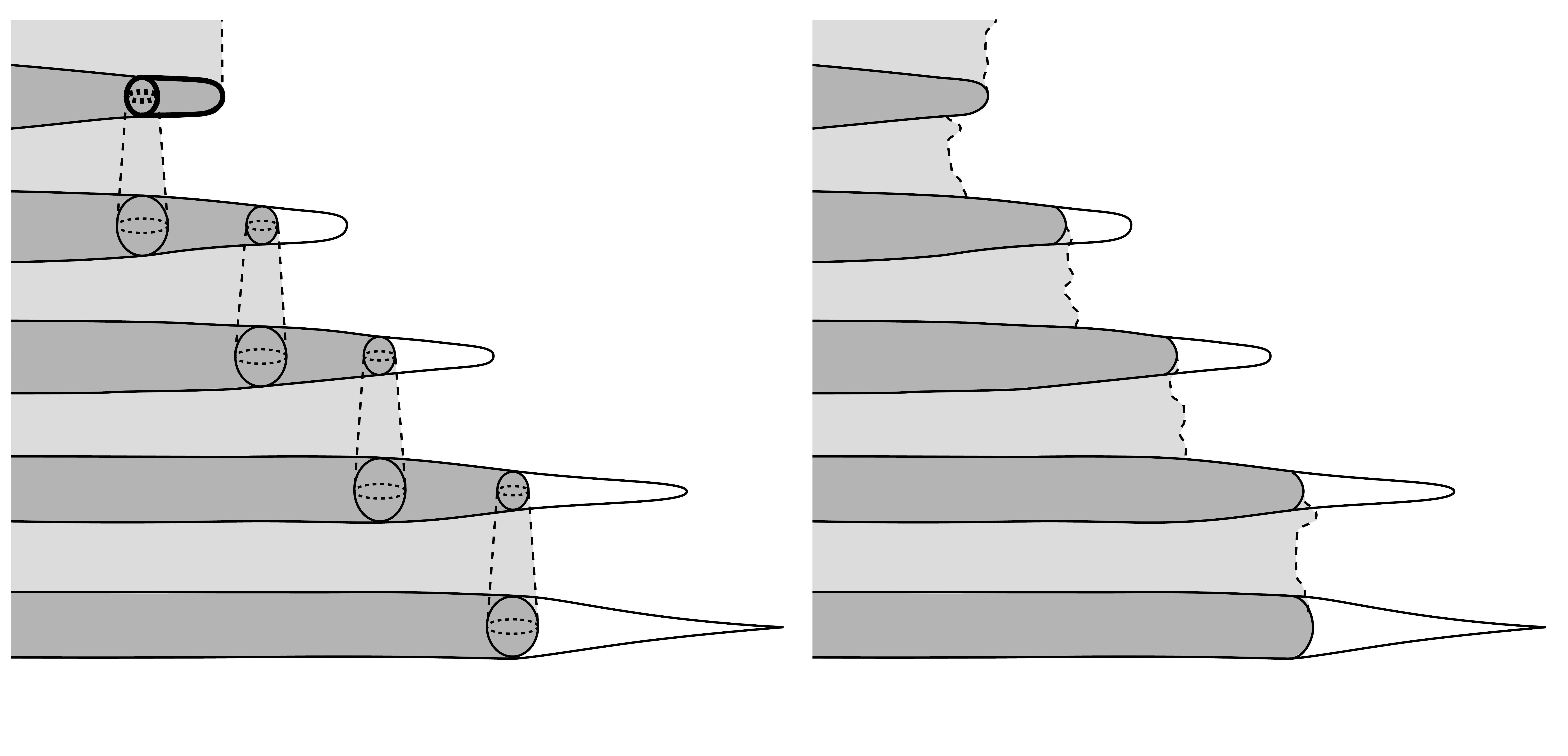}
\caption{Comparison domain $\N \subset \M$ and comparison $\phi$ between $\M$ and $\M'$.
The extension cap on the initial time-slice of $\N^5$ is outlined in bold.
\label{fig_comparison-schematic}}
\end{figure}

We will now provide further details on the geometry of $\N$ and $\phi$.

Fix a small \emph{comparison scale} $r_{\comp} > 0$.  Our goal is to choose the comparison domain $\N$ such that it roughly contains the points for which $\rho \gtrsim r_{\comp}$.  
So we will have $R \leq C r_{\comp}^{-2}$ on $\N$ and $R \geq c r_{\comp}^{-2}$ on $\M \setminus \N$ for some constants $C, c > 0$.  The constant $r_{\comp}$ will also determine the length of our time steps: we set $t_j=j r_{\comp}^2$, so that the time steps have duration $r_{\comp}^2$.

Each time-slab $\N^{j}$ will be chosen to be a product domain on the time-interval $[t_{j-1}, t_{j}]$.
That is, the flow restricted to $\N^{j}$ can be described by an ordinary Ricci flow parameterized by the time-interval $[t_{j-1}, t_{j}]$, on the initial time-slice of $\N^{j}$.  
We will sometimes denote this initial time-slice of $\N^{j}$ by $\N^{j}_{t_{j-1}}$ and the final time-slice by $\N^{j}_{t_{j}}$.  
Note that $\N^{j}_{t_{j-1}}$ and $\N^{j}_{t_{j}}$ are diffeomorphic, as $\N^{j}$ is a product domain.   
Each domain $\N^{j}$ will moreover be chosen in such a way that its time-slices $\N^{j}_t$ are bounded by 2-spheres of diameter $\approx r_{\comp}$ that are central 2-spheres of sufficiently precise necks (i.e. cylindrical regions) in $\M$. 

We now discuss the inductive construction of $\N$ and $\phi$. 
For this purpose, assume that $\N^1, \ldots, \N^J$ and $\phi^1, \ldots, \phi^J$ have already been constructed.
Our goal is now to construct $\N^{J+1}$ and $\phi^{J+1}$.

We first outline the construction of $\N^{J+1}$.
Our construction relies on  the canonical neighborhood assumption, which guarantees that the large curvature part of the Ricci flow looks, roughly speaking, locally either  neck-like or like a cap region diffeomorphic to a $3$-ball.  
Using this geometric characterization, the final time-slice $\N^{J+1}_{t_{J+1}}$  of $\N^{J+1}$ is obtained by truncating the time-$t_{J+1}$-slice $\M_{t_{J+1}}$ along a suitable collection of central 2-spheres of necks of scale $\approx r_{\comp}$.  
Due to the fact that a neck region shrinks substantially in a single time step and our neck regions have nearly constant scale, this process will ensure that the boundaries of successive time steps are separated by a distance $\gg r_{\comp}$. 
So our truncation process typically yields a rapidly receding ``staircase'' pattern (see Figure~\ref{fig_comparison-schematic}). 
However, it can happen that a cap region evolves in such a way that its scale increases slowly  over a time-interval of duration $\gg r_{\comp}^2$, so that at time $t_{J}$, this cap region is not contained in the final time-slice $\N^{J}_{t_J}$, but is contained in the initial time-slice $\N^{J+1}_{t_J}$.  
This behavior occurs, for instance, a short time after a generic neck pinch singularity.  
In such a situation, the comparison domain $\N$ is enlarged at time $t_J$ by a cap region, which we call an \emph{extension cap} (see again Figure~\ref{fig_comparison-schematic}).   
It then becomes necessary to extend the comparison map $\phi$ over the inserted region.

We now turn to the second stage of the induction step --- the construction of the comparison map $\phi^{J+1}:\N^{J+1}\ra \M'$.  

As mentioned above, we will construct $\phi^{J+1}$ by solving the harmonic map heat flow equation for the inverse diffeomorphism $(\phi^{J+1})^{-1}$.   For now, we will only provide a brief indication of a few of the obstacles that arise, leaving more detailed discussion to the subsequent subsections of this overview:
\begin{itemize}
\item \emph{(Controlling $h$, Subsection~\ref{subsec_controlling_the_perturbation})} \quad Since our objective is to produce a map that is almost an isometry, one of the key ingredients in our argument is a scheme for maintaining control on the size of the metric \emph{perturbation}  $h=\phi^*g'-g$ as the map $\phi$ evolves.  Our main tool for this is an interior decay estimate for $|h|$ with respect to a certain weight.
\item \emph{(Treatment of the boundary, Subsection~\ref{subsec_constr_process_II_outline})} \quad  The Ricci flow spacetime restricted to the product domain $\N^{J+1}$ is given by an ordinary Ricci flow on the manifold with boundary $\N^{J+1}_{t_{J+1}}$.  The process for solving the harmonic map heat flow equation must take this boundary into account and maintain control on any influence it may have on the rest of the evolution.  
\item \emph{(Extending the comparison, Subsections~\ref{subsec_constr_process_III_outline} and \ref{subsec_overview_bryant_extension_principle})} \quad  
As mentioned above, it may be necessary to extend the comparison map $\phi$ over an extension cap at time $t_J$.  This requires a careful analysis of the geometry of $\M$ and $\M'$ in neighborhoods of the cap and its image, showing that both are well approximated by rescaled Bryant solitons.  
Then the extension of $\phi$ is obtained by gluing the pre-existing comparison map with suitably normalized Bryant soliton ``charts''.  This gluing construction is particularly delicate, since it must maintain sufficient control over the quality of the comparison map.
\end{itemize}

The actual construction of the comparison map $\phi$ is implemented using a continuity argument.  The above issues interact with one another in a variety of different ways.  For instance, both the treatment of the boundary and the procedure for extending $\phi$ over cap regions are feasible only under  certain assumptions on the smallness of $h$, and both cause potential deterioration of $h$, which must be absorbed by the argument for controlling $h$. We defer further discussion of these interactions, and other points of a more technical nature, to Subsection~\ref{subsec_constr_process_IV_outline}.

\subsection{Controlling the perturbation \texorpdfstring{$h$}{h}}
\label{subsec_controlling_the_perturbation}
In order to control the perturbation $h = \phi^* g' -g$ in the inductive argument described above, we will consider the following weighted quantity:
\begin{equation}
\label{eqn_def_q_overview}
 Q \approx e^{- H  t} R^{-E/2} |h| \approx e^{- H  t} \rho^E |h|\,.
\end{equation}
Here $R$ denotes the scalar curvature and $H > 0$, $E > 2$. 
We will show that this quantity satisfies an \emph{interior decay estimate}, which may be thought of as a quantitative semi-local version of a maximum principle: rather than asserting that $Q$ cannot attain an interior maximum, it roughly states that $Q$, evaluated at a point $(x,t)$, must be a definite amount smaller than its maximum over a suitable parabolic neighborhood around $(x,t)$ (see below for a more precise statement). 
This interior decay estimate will allow us to promote, and sometimes improve, a bound of the form $Q \leq \ov{Q}$ forward in time. 
We emphasize that the presence of the factor $\rho^E$, and the fact that $E$ is strictly larger than $2$, are both essential for the interior decay estimate.  Moreover, the freedom to choose $E$ large ($>100$ say) will be of crucial importance at a later point in our proof (see Subsection~\ref{subsec_overview_bryant_extension_principle}).

Before providing further details on this estimate, we want to illustrate the function of the weights in the definition of $Q$.
The weight $e^{-Ht}$ serves a technical purpose, which we will neglect in this overview.
To appreciate the role  of the weight $R^{-E/2}$, consider for a moment a classical Ricci flow $(M, g(t))$ with a perturbation $h(t)$ that evolves by (\ref{eq_RdT_outline}).
Suppose that $h(0)$ is bounded and supported in a region of large scalar curvature.
So, due to the existence of the weight $R^{-E/2}$, the quantity $Q$ is small at time 0.
Our estimates will imply that $Q$ remains small throughout the flow.
Therefore, at any later time, the perturbation $h$ must be small at points where the curvature is controlled. In the following we will exploit this phenomenon, since, heuristically,  we are considering two Ricci flow spacetimes $\M$ and $\M'$ whose initial data is either equal or very similar away from the almost singular regions, where the scalar curvature is large.  
So even if $\M$ and $\M'$ were a priori significantly different at those almost singular scales --- resulting in a large perturbation $h(t)$ there --- then $Q$ would  still be small, initially.  
Thus the perturbation is expected to decay as we move forward in time and towards regions of bounded curvature, establishing an improved closeness there.
More specifically, as remarked in the previous subsection, $h$ may a priori only satisfy a rough bound near the neck-like boundary of each $\N^j$.
However, as $R \approx r_{\comp}^{-2}$ near such a boundary and $r_{\comp}$ is assumed to be small, our estimate suggests a significant improvement of this bound in regions where $R \approx 1$.

\begin{figure}
\labellist
\small\hair 2pt
\pinlabel {$t-r^2$} at 390 165
\pinlabel $t$ at 490 740
\pinlabel $r$ at 1750 850
\pinlabel $x$ at 1235 760
\endlabellist
\centering
\includegraphics[width=100mm]{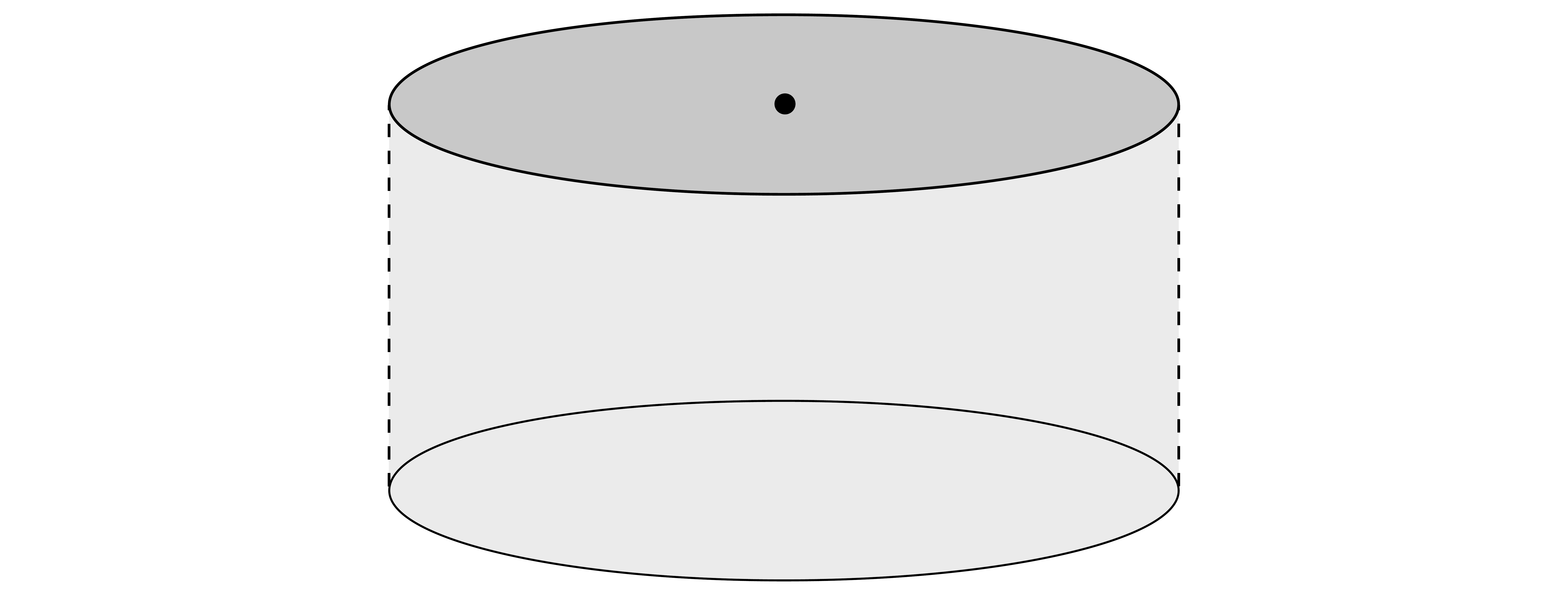}
\caption{A parabolic ball $B(x,t,r) \times [t-r^2,t]$ of radius $r$.
\label{fig_parabolic_ball}}
\end{figure}

We now explain the statement of the interior decay estimate in more detail, in the case of a classical Ricci flow on $M \times [0,T)$.
Assume that the perturbation $h$ is defined on a sufficiently large backwards parabolic region $P \subset M \times [0,T)$ around some point $(x,t)$.  
If $H$ is chosen sufficiently large and $|h| \leq \eta_{\lin}$ on $P$ for some sufficiently small $\eta_{\lin}$, where both $H$ and $\eta_{\lin}$ depend on $E$, then our estimate states that
\begin{equation} \label{eq_Q_110_sup_PQ_outline}
 Q(x,t) \leq \frac1{100} \sup_P Q. 
\end{equation}
Here ``$P$ sufficiently large'' means, roughly speaking, that the parabolic region $P$ contains a product domain of the form $B(x,t,r)\times [t-r^2,t]$ (a parabolic ball), where $B(x,t,r)$ is the $r$-ball  centered at $(x,t)$ in the time-$t$ slice $M\times\{t\}$ and $r$ is equal to a large constant times the scale $\rho(x,t)$ (see Figure~\ref{fig_parabolic_ball}).   

In fact, the choice of the factor $\frac1{100}$ in (\ref{eq_Q_110_sup_PQ_outline}) is arbitrary: for any $\alpha>0$  
we have the estimate
\begin{equation}
\label{eqn_stronger_interior_decay_estimate}
 Q(x,t) \leq \alpha \sup_P Q ,
\end{equation}
as long as we increase the size of the parabolic neighborhood $P$ accordingly.  An important detail here is that the constant $\eta_{\lin}$ in the bound $|h| \leq \eta_{\lin}$ can be chosen independently of $\alpha$.

The decay estimate (\ref{eq_Q_110_sup_PQ_outline}) will be used to propagate a bound of the form
\begin{equation} \label{eq_Q_Qbar_outline}
 Q \leq \ov{Q} 
\end{equation}
throughout most parts of the comparison domain $\N$.
Here we will choose the constant $\ov{Q}$ in such a way that (\ref{eq_Q_Qbar_outline}) holds automatically near the neck-like boundary of the $\N^j$ and such that (\ref{eq_Q_Qbar_outline}) implies $|h| \leq \eta_{\lin}$ wherever $\rho \geq r_{\comp}$.
Note  that at scales $\rho \gg r_{\comp}$, the bound (\ref{eq_Q_Qbar_outline}) implies a more precise bound on $|h|$, whose quality improves polynomially in $\rho$.

We will prove the interior decay estimate using a limit argument combined with a vanishing theorem for solutions of the linearized Ricci-DeTurck equation on $\kappa$-solutions, which uses an estimate of Anderson and Chow \cite{Anderson:2005cf}.  See Section~\ref{sec_semi_local_max} for more details.

\subsection{Treatment of the boundary} \label{subsec_constr_process_II_outline}
We now discuss aspects of the inductive construction of the map $\phi^{J+1}:\N^{J+1}\ra \M'$ (sketched in Subsection~\ref{subsec_constr_process_I_outline}) that are related to the presence of a boundary in the time-slices $\N^{J+1}_t$.  While the actual approach used in the body of the paper is guided by considerations that are beyond the scope of this overview, we will describe some of the main points in a form that is faithful to the spirit of the actual proof. 

Recall from Subsection~\ref{subsec_constr_process_I_outline} that  we wish to construct  $\phi^{J+1}$ by solving the harmonic map heat flow equation (for the inverse $(\phi^{J+1})^{-1}$), in such a way that  $\phi$ yields a perturbation $h=\phi^*g'-g$ satisfying the bound $Q\leq \ov Q$ near the neck-like boundary of $\N^j$, where $Q$ is as in Subsection~\ref{subsec_controlling_the_perturbation}.  Thus we need to specify boundary conditions so that the resulting evolution respects the bound $Q\leq \ov Q$.

Our strategy exploits the geometry of the boundary of $\N^{J+1}$.  
Recall from Subsection~\ref{subsec_constr_process_I_outline} that $\N^{J+1}$ is a product domain, and its boundary is collared by regions that look very close to shrinking round half-cylinders (half-necks) with scale comparable to $r_{\comp}$.  
Under a smallness condition on $h$ imposed in the vicinity of boundary components of $\N^{J+1}_{t_{J}}$, we argue that at time $t_{J}$, our map $\phi$ must map the  half-neck collar regions around the boundary to regions in the time-$t_{J}$-slice $\M'_{t_{J}}$ that are nearly isometric to half-necks. 
Moreover,  we will show that both half-necks evolve over the time-interval $[t_{J},t_{J+1}]$ nearly like round half-necks.  
We then use this characterization and a truncation procedure to find an approximate product domain $\N^{\prime J+1}\subset \M'$ that serves as the domain for the evolving inverse map $\phi^{-1}$.  It turns out that if the half-neck regions in $\M$ and $\M'$ are sufficiently cylindrical, and $\phi$ is initially (at time $t_{J}$) sufficiently close to an isometry near the collar regions, then the map $\phi^{J+1}$ produced by harmonic map heat flow remains sufficiently close to an isometry near the boundary of $\N^{J+1}$, in the sense that $Q\leq \ov Q$. 

\begin{figure}
\labellist
\small\hair 2pt
\pinlabel $\ldots$ at 790 710
\pinlabel $\ldots$ at 1340 520
\pinlabel $\ldots$ at 1830 320
\pinlabel { $\N^{J+1}$} at 120 620
\pinlabel { $\N^{J}$} at 95 425
\endlabellist
\centering
\includegraphics[width=145mm]{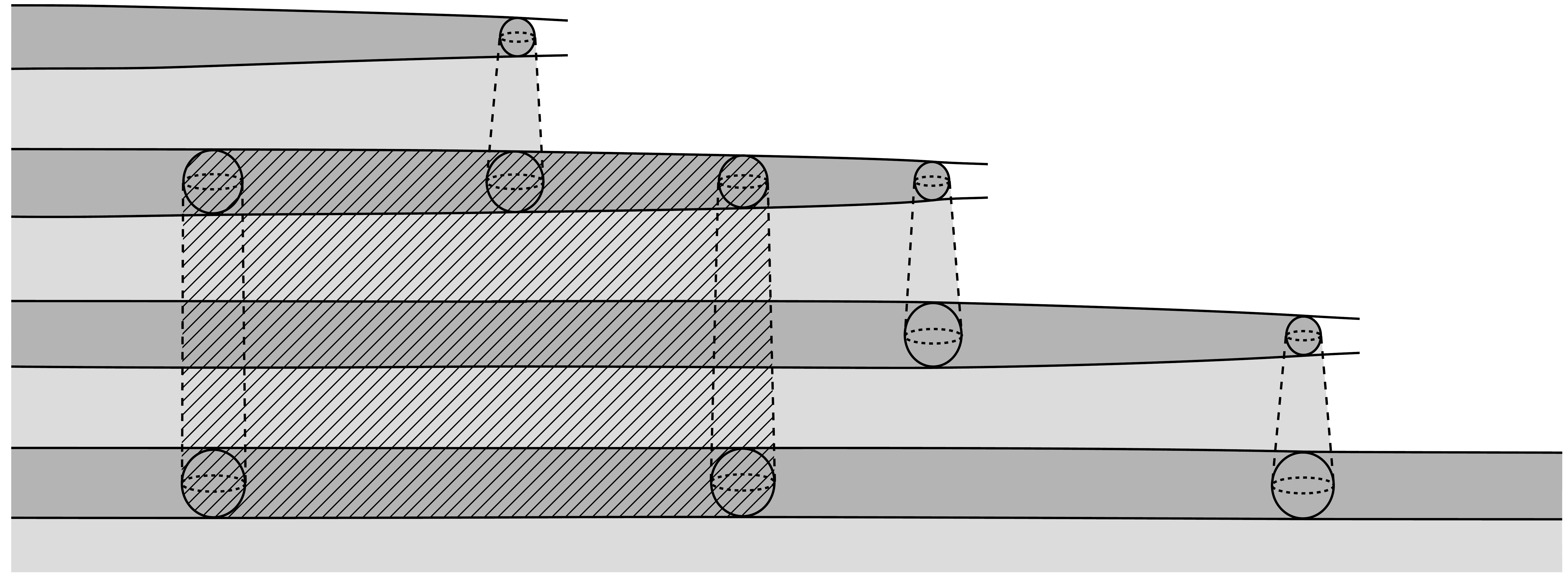}
\caption{A parabolic neighborhood (hatched region) inside a comparison domain (shaded region).
In order to apply the interior estimate at time $t_J$ near the boundary of $\N^{J+1}$, a large parabolic neighborhood must fit underneath the staircase pattern.
\label{fig_fit_under_staircase}}
\end{figure}
The above construction is feasible only under improved initial control on $|h|$, which necessitates an improved bound of the form $Q\leq \alpha \ov Q$, for some $\alpha \ll 1$, near the boundary components of $\N^{J+1}_{t_J}$.  To verify this improved bound, we apply the strong form of the interior decay estimate, (\ref{eqn_stronger_interior_decay_estimate}), using parabolic regions that are large depending on $\alpha$.  This requires the geometry of the staircase pattern of the comparison domain to be ``flat enough'' to create enough space for such a parabolic region ``under the staircase'' (see Figure~\ref{fig_fit_under_staircase}).  Such flatness can be guaranteed, provided the half-neck collars are sufficiently precise.

\subsection{Defining \texorpdfstring{$\phi$}{{\textbackslash}phi} on extension caps} \label{subsec_constr_process_III_outline}
We recall from Subsection~\ref{subsec_constr_process_I_outline} that in the inductive construction of the time slab $\N^{J+1}$, we sometimes encounter \emph{extension caps},  i.e. $3$-disks $\C$ in  the time-$t_{J}$-slice $\M_{t_{J}}$  of $\M$ that belong to time slab $\N^{J+1}$, but  that were not present in the preceding time slab $\N^{J}$.  In this and the next subsection, we discuss how these extension caps are handled in the second stage of the induction step, in which $\phi^{J+1}$ is defined on $\N^{J+1}$.

Recall that we assume inductively that the map $\phi^J$, as  constructed in the previous step,  restricts to an almost isometric map  of the final time-slice $\N^{J}_{t_{J}}$ of $\N^{J}$ into $\M'_{t_{J}}$.   
We would like to proceed with the construction of $\phi^{J+1}$ on $\N^{J+1}$ using harmonic map heat flow, as described in the previous subsection.  However, in order to do this, $\phi^{J+1}$ must be defined on the initial time-slice $\N^{J+1}_{t_{J}}$, whereas the previous induction step only determined $\phi^J$ on the complement of the extension caps.  
Thus we must first extend $\phi^J$ over the extension caps to an almost isometry defined on $\N^{J+1}_{t_{J}}$.

A priori, it is unclear why such an extension should exist;  after all, since $\phi$ has thus far only been defined on $\N^{J}_{t_{J}}$, one might not expect an extension cap $\C\subset\N^{J+1}_{t_{J}}$ to be nearly isometric to a corresponding $3$-ball region in $\M'$.   

To obtain such an extension, we will need to combine several ingredients. 
The first is the canonical neighborhood assumption, which asserts that the geometry of $\M$ and $\M'$ near any point of large curvature is well-approximated by a model Ricci flow ---  a $\kappa$-solution.  
For regions such as extension caps, the $\kappa$-soliton model is a Ricci flow on $\R^3$. 
Up to rescaling, the only known example of this type is the \emph{Bryant soliton}, a rotationally symmetric steady gradient soliton, which can be expressed as a warped product
\[ g_{\Bry} = dr^2 + a^2 (r) g_{S^2}, \]
where $a (r) \sim \sqrt{r}$ as $r \to \infty$.  Bryant solitons commonly occur as singularity models of Type-II blowups of singularity models, for example in the formation of a degenerate neck pinch \cite{gu_zhu_type_II,knopf_et_al_degenerate_neckpinches}.  Moreover, they also occur in Ricci flow spacetimes when a singularity resolves.  It is a well-known conjecture of Perelman that the Bryant soliton is the only $\kappa$-solution on $\R^3$, up to rescaling and isometry.  
This conjecture would imply that a Bryant soliton \emph{always}   describes singularity formation/resolution processes as above, and in particular the geometry of extension caps.  Although this conjecture remains open, by using a combination of rigidity results of Hamilton and Brendle \cite{Hamilton:1993em,Brendle:2013dd}, it is possible to show that Bryant solitons always describe the geometry at points where the curvature scale increases in time (i.e. where the scalar curvature decreases).  Such points are abundant near a resolution of a singularity, as the curvature scale increases from zero to a positive value.

The above observation will be central to our treatment of extension caps.
We will show that it is possible to choose the time slabs $\{\N^j\}$ so that each extension  cap arises ``at the right time'', meaning at a time when the geometry near the extension cap in $\M$ and its counterpart in $\M'$ is sufficiently close to the Bryant soliton --- at possibly different scales.   The main strategy behind this choice of time will be to choose two different thresholds for the curvature scale on $\N$, specifying when an extension cap \emph{may}, and when it \emph{must}, be constructed.
As curvature scales only grow slowly in time (with respect to the time scale corresponding to the curvature), this extra play will produce sufficiently many time-steps during which an extension cap may, but need not necessarily be constructed.
It can be shown that at one of these time-steps the geometry in both $\M$ and $\M'$ is in fact close to a Bryant soliton.
This time-step will then be chosen as the ``right time'' for the construction of the extension cap.

The fact that  the geometry near both the extension caps in $\M$ and the corresponding regions in $\M'$ can be described by the same singularity model (the Bryant soliton) is necessary in order to construct the initial time-slice of $\phi^{J+1}$.
However, it is not sufficient, as it is still not guaranteed that $\phi^{J}$ at time $t_J$ extends over the extension caps almost isometrically, due to the following reasons:
\begin{itemize}
\item The scales of the approximate Bryant soliton regions in $\M$ and $\M'$ may differ, so that they are not almost isometric.
\item Even if there is an almost isometry of the approximate Bryant soliton regions, in order to define a global map, there must be an almost isometry that is close enough to the existing almost isometry (given by $\phi^{J}$) on the overlap, so that the two maps may be glued together to form an almost isometry.  
\end{itemize}
These issues will be resolved by the \emph{Bryant Extension Principle}, which will be discussed in the next subsection.

\subsection{The Bryant Extension Principle}
\label{subsec_overview_bryant_extension_principle}

\begin{figure}
\labellist
\small\hair 2pt
\pinlabel $\M_{t_J}$ at 2210 980
\pinlabel $\M'_{t_J}$ at 2210 250
\pinlabel $\psi$ at 1470 630
\pinlabel $\phi^J$ at 620 630
\pinlabel {$W \setminus \C$} at 450 970
\pinlabel $V$ at 720 970
\pinlabel $\C$ at 1400 970
\endlabellist
\centering
\includegraphics[width=130mm]{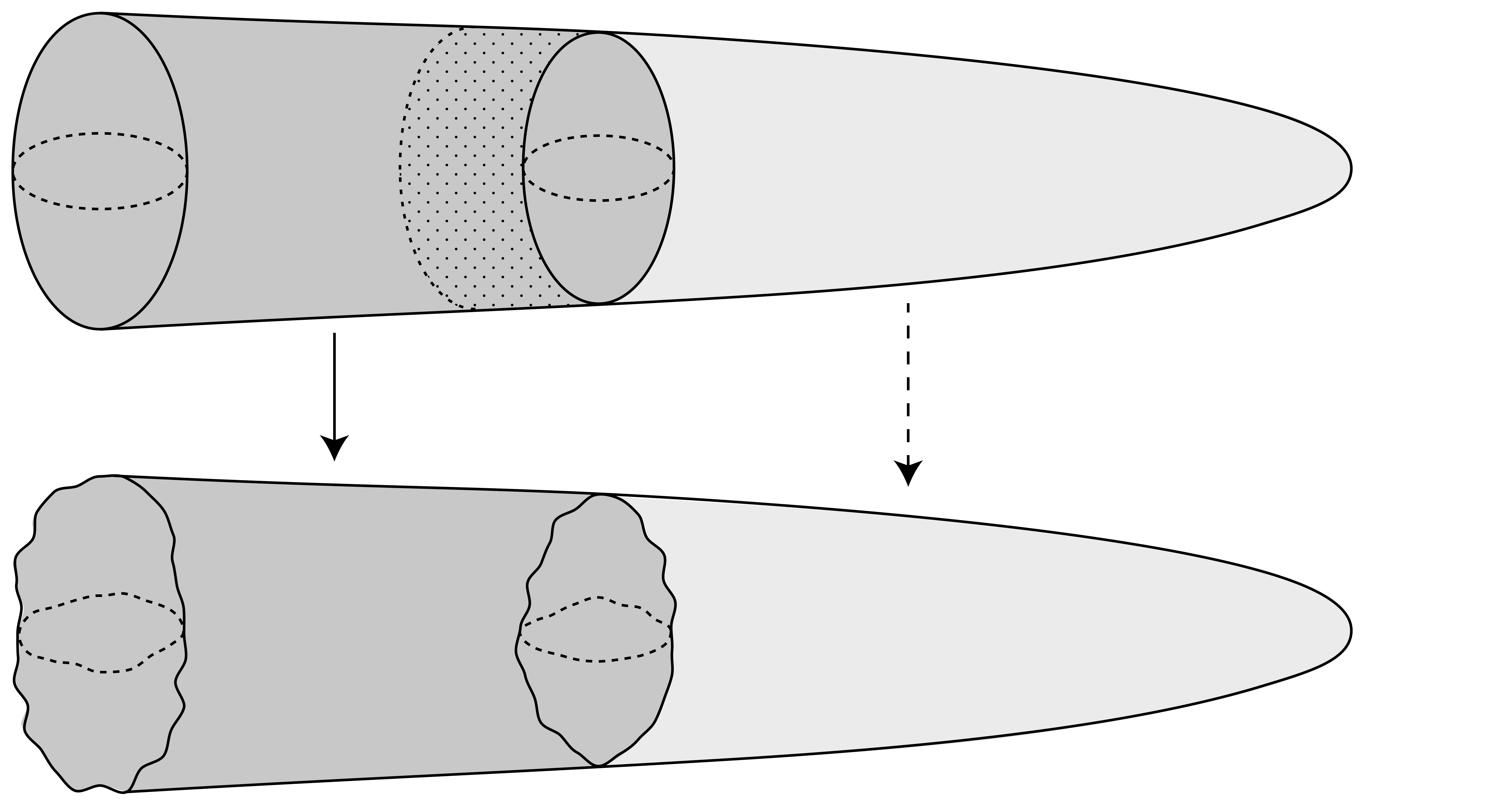}
\caption{Extending the map $\phi^J_{W \setminus \C}$ over the extension cap $\C$ to an almost isometry $\psi : W \to W'$.\label{fig_Bryant_Extension_Outline}}
\end{figure}

In the process of determining the initial data (at time $t_J$) of $\phi^{J+1}$ on or near the extension caps, as mentioned in the previous subsection, we are faced with the following task (see Figure~\ref{fig_Bryant_Extension_Outline} for an illustration).
We can find two regions $W \subset \M_{t_J}$ and $W' \subset \M'_{t_J}$ in the time-$t_J$-slices of $\M$ and $\M'$ that are each geometrically close to a Bryant soliton modulo rescaling by some constants $\lambda$ and $\lambda'$, respectively.
Moreover, the region $W$ contains an extension cap $\C \subset \M_{t_J}$.
The map $\phi^J$ restricted to $W \setminus \C$ is an almost isometric map $W \setminus \C \to W'$.
Our task is then to find another almost isometric map $\psi: W \to W'$, which is defined on the entire region $W$, and that coincides with $\phi^J$ away from some neighborhood of $\C$.  Although in this overview we have largely avoided any mention of quantitative features of the proof, we point out that this step hinges on  careful consideration of asymptotics, in order to make our construction independent of the diameters of $W$, $W'$, and $\C$.  In particular, it turns out to be of fundamental importance that we have the freedom to choose the exponent $E$ in the definition of $Q$ in (\ref{eqn_def_q_overview}) to be large.

We obtain $\psi$ as follows.   We use the fact that $W$ and $W'$ are approximate (rescaled) Bryant soliton regions to define an approximate homothety $\psi_0:W\ra W'$ that scales distances by the factor $\lambda'/\lambda$,  possibly after shrinking $W$, $W'$ somewhat.  
The map $\psi_0$ is unique up to pre/post-composition with almost isometries, i.e. approximate rotations around the respective tips.  
We then compare $\psi_0$ with $\phi^J$ on $W\setminus\C$, and argue that $\psi_0$ may be chosen so that it may be glued to $\phi^J$, to yield the desired map $\psi$. 
 To do this, we must show that: 
 \begin{itemize}
 \item $\psi_0$ is an approximate isometry not just an approximate homothety, i.e. the ratio of the scales $\lambda'/\lambda$  nearly  equals $1$.
 \item $\psi_0$ may be chosen to be sufficiently close to $\phi^J$ on a suitably chosen transition zone $V\subset W \setminus \C$.  
\end{itemize}
However, we are only given information on the map $\phi^J$ far away from tip of the extension cap $\C$, where the metric is close to a round cylinder. 
Using this information, we must determine to within small error the scale of the tips and the discrepancy between the two maps. 
This aspect makes our construction quite delicate, because the only means of detecting the scale of the tip is to measure the deviation from a cylindrical geometry near $V$, which is decaying polynomially in terms of the distance to the tip.  
The crucial point in our construction is that we can arrange things so that this deviation can be measured to within an error that decays at a faster polynomial rate. 

We now explain in some more detail the delicacy of the construction and our strategy for the case of showing that $\lambda' / \lambda$ is nearly equal to $1$.
The problem of matching $\phi^J$ and $\psi_0$ on $V$ will be handled similarly.
The only means to compare $\lambda$ and $\lambda'$ is the almost isometry $\phi^J : W \setminus \C \to W'$.
This almost isometry implies that the cross-sectional spheres of $W \setminus \C$ have approximately the same diameter as their images in $W'$.
Unfortunately, the closeness of these diameters does not imply a bound on $\lambda' / \lambda$, since these diameters vary --- and even diverge --- as we move away from the tips of $W$ and $W'$, and $\phi^J$ may map cross-sectional spheres of $W$ to other almost-cross-sectional spheres in $W'$ that are closer or farther from the tip.
This fact requires us to estimate the \emph{deviations} from the cylindrical geometry in $W$ and $W'$ by analyzing the precise asymptotics of the Bryant soliton.
If the precision of $\phi^J$ is smaller than these deviations, then $\phi^J$ can be used to compare further geometric quantities on $W$ and $W'$, not just the diameters of the cross-sectional spheres.
Combined with the almost preservation of the diameters of these spheres, this will imply that $\lambda' / \lambda \approx 1$.

The precision of the almost isometry $\phi^J$ is measured in terms of $|h|$.
Using the bound $Q \leq \ov{Q}$, as discussed in Subsection~\ref{subsec_controlling_the_perturbation}, we obtain a bound of the form
\begin{equation} 
\label{eq_h_RE_rhoE}
 |h| \lesssim R^{E/2}  \lesssim \rho^{-E}\,. 
\end{equation}
Since  $W$ is an approximate rescaled Bryant soliton region and  $\rho \to \infty$ as one goes to infinity on  the Bryant soliton, the bound (\ref{eq_h_RE_rhoE}) improves as we move further away from the tip of $W$.  
If the exponent $E$ is chosen large enough, then the precision of the almost isometry $\phi^J$ in the transition zone $V\subset W$ is good enough to compare the deviations from a cylinder in $V$ and its image, to very high accuracy.  
As mentioned before, this will imply that $\lambda' / \lambda \approx 1$.

For more details, we refer to Section~\ref{sec_extending_map_between_bryant_solitons}.

We mention that the mechanism that we are exploiting here can be illustrated  using a cantilever: the longer the cantilever, and the less rigid it is, the more its tip may wiggle.  However, the rigidity of a cantilever depends not only on its length, but also on the rigidity of the attachment at its base.  A longer cantilever may be more stable than a short one, as long as the attachment at its base is chosen rigid enough to compensate for the increase in length. (Here we are assuming the lever itself to be infinitely rigid.)

\subsection{Further discussion of the proof} \label{subsec_constr_process_IV_outline}
In this subsection we touch on a few additional features of the induction argument sketched in Subsection~\ref{subsec_constr_process_I_outline}.  Due to the complexity of the underlying issues, our explanations will be brief and relatively vague. For more details, we refer to Section~\ref{sec_apas}.

We recall the bound $Q \leq \ov{Q}$ from Subsection~\ref{subsec_controlling_the_perturbation}, which enabled us to guarantee a bound of the form $|h| \leq \eta_{\lin}$ in most parts of the comparison domain $\N$.
As discussed in that subsection, this bound is propagated forward in time using the interior decay estimate.
The bound $Q \leq \ov{Q}$, especially the factor $\rho^E$ in its definition, was also crucial in the Bryant Extension process.
In fact, it could be used to construct the initial time-slice of a new almost isometry $\phi^{J+1}$, defined on the extension caps, that is precise enough such that a bound of the form $|h| \leq \eta_{\lin}$ holds on or near each extension cap.

However, the bound $Q \leq \ov{Q}$, which is typically stronger than $|h| \leq \eta_{\lin}$, may not remain preserved during the Bryant Extension Process; it may deteriorate by a fixed factor.
In order to control the quality of the comparison map, measured by $|h|$, after the Bryant Extension process has been performed, we will  consider an additional bound of the form
\[ Q^* \approx e^{H (T-t)} \frac{|h|}{R^{3/2}} \approx e^{H (T-t)} \rho^3 |h| \leq \ov{Q}^*. \]
The constant $\ov{Q}^*$ will be chosen such that this bound implies the bound $|h| \leq \eta_{\lin}$ wherever it holds on $\N$.
Due to the small exponent $3 \ll E$, which makes the bound $Q^* \leq \ov{Q}^*$ weaker than $Q \leq \ov{Q}$ at large scales, this bound still holds on and near each extension cap after the Bryant Extension process has been carried out.

Both bounds, $Q \leq \ov{Q}$ and $Q^* \leq \ov{Q}^*$ will be propagated forward in time via the interior decay estimate from Subsection~\ref{subsec_controlling_the_perturbation}.
The bound $Q \leq \ov{Q}$ will hold at all points on the comparison domain $\N$ that are sufficiently far (in space and forward in time) from an extension cap, while the bound $Q^* \leq \ov{Q}^*$ will hold sufficiently far (in space) from the neck-like boundary of $\N$.
It will follow that, for a good choice of parameters, at least one of these bounds holds at each point of the comparison domain $\N$.
This fact will enable us to guarantee that $|h| \leq \eta_{\lin}$ everywhere on $\N$.

Even though the bound $Q \leq \ov{Q}$ may not hold in the near future of an extension cap, it may be important that it holds at \emph{some time} in the future,  thus allowing us to control the comparison map near a future neck-like boundary component, as described in Subsection~\ref{subsec_constr_process_II_outline}.
In order to guarantee this bound near such neck-like boundary components, in the future of extension caps, we first ensure that the neck-like boundary and the extension caps of the comparison domain are sufficiently separated (in space and time).
Then we use the strong form of the interior decay estimate to show that a weak bound of the form $Q \leq W \ov{Q}$, $W \gg 1$, which holds after a Bryant Extension process, improves as we move forward in time and eventually implies $Q \leq \ov{Q}$.
This interior decay estimate relies on the fact that $|h| \leq \eta_{\lin}$, which is guaranteed by the bound $Q^* \leq \ov{Q}^*$.

\section{Organization of the paper}  
The theorems stated in the introduction are proven in Section~\ref{sec_proofs_of_main_results}.  They are all consequences of a more technical stability theorem, Theorem~\ref{Thm_existence_comparison_domain_comparison}, which first appears in Section~\ref{sec_proofs_of_main_results}. This theorem asserts the existence of a comparison map between two Ricci flow spacetimes, satisfying a (large) number of geometric and analytic bounds.      As explained in the overview in the preceding section, Theorem~\ref{Thm_existence_comparison_domain_comparison} is proven using a simultaneous induction argument, in which the domain of this comparison map and the comparison map itself are constructed.  The induction step consists of two stages; the first one is concerned with the comparison domain, and the second with the comparison map.  These two stages are implemented in Sections \ref{sec_inductive_step_extension_comparison_domain} and \ref{sec_extend_comparison}, respectively, and the induction hypotheses are collected beforehand as a set of a priori assumptions, in Section~\ref{sec_apas}.  Both induction steps are formulated using objects and terminology that are introduced in preliminaries sections, Sections~\ref{sec_preliminaries_I} and \ref{sec_preliminaries_II}.  
The arguments in Section~\ref{sec_extend_comparison} rely on two main ingredients: the interior decay estimate, which is discussed in Section~\ref{sec_semi_local_max}, and the Bryant Extension Principle, which is presented in Section~\ref{sec_extending_map_between_bryant_solitons}. 
We will also make use of a number of technical tools, which  appear in Section~\ref{sec_preparatory_results}.

To facilitate readability and verifiability, we have made an effort to make the proof modular and hierarchical.  This eliminates unnecessary interdependencies, and minimizes the number of details the reader must bear in mind at any given stage of the proof.   For instance, the two stages of the induction argument are formulated so as to be completely logically independent of each other.  Also, within Section~\ref{sec_extend_comparison}, which constructs the inductive extension of the comparison map, the argument is split into several pieces, which have been made as independent of one another as possible.

\section{Conventions}\label{sec_notation_terminology}
\subsection{Orientability}
Throughout the paper we impose a blanket assumption that all $3$-manifolds are orientable.  The results remain true without this assumption --- for instance Theorem~\ref{thm_uniquness} can be deduced from the orientable case by passing to the orientation cover.  However, proving the main result without assuming orientability would complicate the exposition by increasing the number of special cases in many places.   It is fairly straightforward, albeit time consuming, to modify the argument to obtain this extra generality.

\subsection{Conventions regarding parameters}
The statements of the a priori assumptions in Section~\ref{sec_apas} involve a number of parameters, which will have to be chosen carefully.
We will not assume these parameters to be fixed throughout the paper; instead, in each theorem, lemma or proposition we will include a list of restrictions on these parameters that serve as conditions for the hypothesis to hold. These restrictions state that certain parameters must be bounded from below or above by  functions  depending on certain other parameters.  When we prove the main stability result, Theorem~\ref{Thm_existence_comparison_domain_comparison}, by combining our two main propositions, Propositions~\ref{prop_comp_domain_construction} and \ref{Prop_extend_comparison_by_one}, we will need to verify that these restrictions are compatible with one another.   This can be verified most easily via the parameter order, as introduced and discussed in Subsection~\ref{subsec_parameter_order}. 
This parameter order is chosen in such a way that the required bounds from below/above on each parameter, if any, are given by a function depending on parameters that precede it.
Hence, in order to verify the compatibility of all restrictions, it suffices to check that each parameter restriction is compatible with the parameter order in this way.

Throughout the entire paper, we will adhere to the convention that small (greek or arabic) letters stand for parameters that have to be chosen small enough and capital (greek or arabic) letters stand for parameters that have to be chosen sufficiently large.
When stating theorems, lemmas or propositions, we will often express restrictions on parameters in the form
\[ y \leq \ov{y} (x), \qquad Z \geq \underline{Z} ( x). \]
By this we mean that there are constants $\ov{y}$ and $\underline{Z}$, depending only on $x$ such that if $0 < y \leq \ov{y}$ and $Z \geq \underline{Z}$, then the subsequent statements hold.
Furthermore, in longer proofs, we will introduce a restriction on parameters in the same form as a displayed equation.
This makes it possible for the reader to check quickly that these restrictions are accurately reflected in the preamble of the theorem, lemma or proposition.
Therefore, she/he may direct their full attention to the remaining details of the proof during the first reading.

\section{Preliminaries I} \label{sec_preliminaries_I}
In the following we define most of the notions that are needed in the statement of the main results of this paper, as stated in Subsection~\ref{subsec_statement_main_results}.

\begin{definition}[Ricci flow spacetimes] \label{def_RF_spacetime}
A {\bf Ricci flow spacetime}  is a tuple $(\M, \lb \mathfrak{t}, \lb \partial_{\mathfrak{t}}, \lb g)$ with the following properties:
\begin{enumerate}[label=(\arabic*)]
\item $\M$ is a smooth $4$-manifold with (smooth) boundary $\partial \M$.
\item $\mathfrak{t} : \M \to [0, \infty)$ is a smooth function without critical points (called {\bf time function}).
For any $t \geq 0$ we denote by $\M_t := \mathfrak{t}^{-1} (t) \subset \M$ the {\bf time-$t$-slice} of $\M$.
\item $\M_0 = \mathfrak{t}^{-1} (0) = \partial \M$, i.e. the initial time-slice is equal to the boundary of $\M$.
\item $\partial_{\mathfrak{t}}$ is a smooth vector field (the {\bf time vector field}), which satisfies $\partial_{\mathfrak{t}} \mathfrak{t} \equiv 1$.
\item $g$ is a smooth inner product on the spatial subbundle $\ker (d \mathfrak{t} ) \subset T \M$.
For any $t \geq 0$ we denote by $g_t$ the restriction of $g$ to the time-$t$-slice $\M_t$ (note that $g_t$ is a Riemannian metric on $\M_t$).
\item $g$ satisfies the Ricci flow equation: $\mathcal{L}_{\partial_\mathfrak{t}} g = - 2 \Ric (g)$.
Here $\Ric (g)$ denotes the symmetric $(0,2)$-tensor on $\ker (d \mathfrak{t} )$ that restricts to the Ricci tensor of $(\M_t, g_t)$ for all $t \geq 0$.
\end{enumerate}
For any interval $I \subset [0, \infty)$ we also write $\M_{I} = \mathfrak{t}^{-1} (I)$ and call this subset the {\bf time-slab} of $\M$ over the time-interval $I$.  
Curvature quantities on $\M$, such as the Riemannian curvature tensor $\Rm$, the Ricci curvature $\Ric$, or the scalar curvature $R$ will refer to the corresponding quantities with respect to the metric $g_t$ on each time-slice.
Tensorial quantities will be imbedded using the splitting $T\M = \ker (d\mathfrak{t} ) \oplus \langle \partial_{\mathfrak{t}} \rangle$.

When there is no chance of confusion, we will sometimes abbreviate the tuple $(\M, \mathfrak{t}, \partial_{\mathfrak{t}}, g)$ by $\M$.
\end{definition}

Ricci flow spacetimes were introduced by Lott and the second author (see \cite{Kleiner:2008fh}).
The definition above is  almost verbatim that of \cite{Kleiner:2008fh} with the exception that we require Ricci flow spacetimes to have initial time-slice at time $0$ and no final time-slice.
This can always be achieved by applying a time-shift and removing the final time-slice from $\M$.
Ricci flows with surgery, as constructed by Perelman in \cite{Perelman:2003tka}, can be turned easily into Ricci flow spacetimes by removing a relatively small subset of surgery points.
See (\ref{eq_MM_construction_from_RFws}) in Subsection~\ref{subsec_status_quo} for further explanation.

We emphasize that, while a Ricci flow spacetime may have singularities --- in fact the sole purpose of our definition is to understand flows with singularities --- such singularities are not directly captured by a Ricci flow spacetime, as ``singular points'' are not contained in the spacetime manifold $\M$.
Instead, the idea behind the definition of a Ricci flow spacetime is to understand a possibly singular flow by analyzing its asymptotic behavior on its regular part. 

Any (classical) Ricci flow of the form $(g_t)_{t \in [0,T)}$, $0 < T \leq \infty$ on a $3$-manifold $M$ can be converted into a Ricci flow spacetime  by setting $\M = M \times [0,T)$, letting $\mathfrak{t}$ be the projection to the second factor and letting $\partial_{\mathfrak{t}}$ correspond to the unit vector field on $[0,T)$.
Vice versa, if $(\M, \mathfrak{t}, \partial_{\mathfrak{t}}, g)$ is a Ricci flow spacetime with $\mathfrak{t}(\M) = [0, T)$ for some $0 < T \leq \infty$ and the property that every trajectory of $\partial_{\mathfrak{t}}$ is defined on the entire time-interval $[0,T)$, then $\M$ comes from such a classical Ricci flow.

We now generalize some basic geometric notions to Ricci flow spacetimes.

\begin{definition}[Length, distance and metric balls in Ricci flow spacetimes]
Let $(\M, \mathfrak{t}, \partial_{\mathfrak{t}}, g)$ be a Ricci flow spacetime.
For any two points $x, y \in \M_t$ in the same time-slice of $\M$ we denote by $d(x,y)$ or $d_t (x,y)$ the {\bf distance} between $x, y$ within $(\M_t, g_t)$.
The distance between points in different time-slices is not defined.

Similarly, we define the {\bf length} $\length (\gamma)$ or $\length_t (\gamma)$ of a path $\gamma : [0,1] \to \M_t$ whose image lies in a single time-slice to be the length of this path when viewed as a path inside the Riemannian manifold $(\M_t, g_t)$.

For any $x \in \M_t$ and $r \geq 0$ we denote by $B(x,r) \subset \M_t$ the {\bf $r$-ball} around $x$ with respect to the Riemannian metric $g_t$.
\end{definition}

Our next goal is to characterize the (microscopic) geometry of a Ricci flow spacetime near a singularity or at an almost singular point.
For this purpose, we will introduce a \textbf{(curvature) scale function} $\rho : \M \to (0, \infty]$ with the property that
\begin{equation} \label{eq_rho_equivalent_Rm}
 C^{-1} \rho^{-2} \leq |{\Rm}| \leq C \rho^{-2} 
\end{equation}
for some universal constant $C < \infty$.
The quantity $\rho$ will be a (pointwise) function of the curvature tensor and therefore it can also be defined on (3-dimensional) Riemannian manifolds.
For the purpose of this section, it suffices to assume that $\rho = |{\Rm}|^{-1/2}$.
However, in order to simplify several proofs in subsequent sections, we will work with a slightly more complicated definition of $\rho$, which we will present in Subsection~\ref{subsec_curvature_scale} (see Definition~\ref{def_curvature_scale}).
Nonetheless, the discussion in the remainder of this subsection and the main results of the paper, as presented in Subsection~\ref{subsec_statement_main_results}, remain valid for any definition of $\rho$ that satisfies (\ref{eq_rho_equivalent_Rm}).

We now define what we mean by completeness for Ricci flow spacetimes.
Intuitively, a Ricci flow spacetime is called complete if its time-slices can be completed by adding countably many ``singular points'' and if no component appears or disappears suddenly without the formation of a singularity.

\begin{definition}[Completeness of Ricci flow spacetimes] \label{def_completeness}
We say that a Ricci flow spacetime $(\M,\mathfrak{t}, \partial_{\mathfrak{t}}, g)$ is {\bf $(r_0,t_0)$-complete}, for some $r_0, t_0 \geq 0$, if the following holds:
Consider a path $\gamma : [0, s_0) \to \M_{[0,t_0]}$ such that $\inf_{s \in [0,s_0)}  \rho (\gamma(s)) > r_0$ for all $s \in [0,s_0)$ and such that:
\begin{enumerate}
\item Its image $\gamma ([0,s_0))$ lies in a time-slice $\M_t$ and the time-$t$ length of $\gamma$ is finite or
\item $\gamma$ is a trajectory of $\partial_{\mathfrak{t}}$ or of $- \partial_{\mathfrak{t}}$.
\end{enumerate}
Then the limit $\lim_{s \nearrow s_0} \gamma (s)$ exists.

If $(\M, \mathfrak{t}, \partial_{\mathfrak{t}}, g)$ is $(r_0, t_0)$-complete for all $t_0 \geq 0$, then we also say that it is {\bf $r_0$-complete}.
Likewise, if $(\M, \mathfrak{t}, \partial_{\mathfrak{t}}, g)$ is {\bf $0$-complete,} then we say that it is {\bf complete}.
\end{definition}

Note that the Ricci flow spacetimes constructed \cite{Kleiner:2014wz} are $0$-complete, see \cite[Prop. 5.11(a), Def. 1.8]{Kleiner:2014wz}.
A Ricci flow with surgery and $\delta$-cutoff, as constructed by Perelman in \cite{Perelman:2003tka}, can be turned into a Ricci flow spacetime as in (\ref{eq_MM_construction_from_RFws}) that is $c \delta r$-complete for some universal constant $c > 0$, as long as the cutoff is performed in an appropriate way\footnote{As Perelman's objective was the characterization of the underlying topology, he allowed (but did not require) the removal of macroscopic spherical components during a surgery step. In contrast, Kleiner and Lott's version (cf \cite{Kleiner:2008fh}) of the cutoff process does not allow this. However, both cutoff approaches allow some flexibility on the choice of the cutoff spheres inside the $\eps$-horns. Some of these choices may result in the removal of points of scale larger than $c \delta r$; in such a case $c\delta r$-completeness cannot be guaranteed.
Nevertheless, in both approaches it is always possible to perform the cutoff in such a way that the resulting Ricci flow spacetime is $c \delta r$-complete.}, see \cite[Section 3]{Kleiner:2014wz}.

Lastly, we need to characterize the asymptotic geometry of a Ricci flow spacetime near its singularities.
This is done by the canonical neighborhood assumption, a notion which is inspired by Perelman's work (\cite{Perelman:2003tka}) and which appears naturally in the study of 3-dimensional Ricci flows.
The idea is to impose the same asymptotic behavior near singular points in Ricci flow spacetimes as is encountered in the singularity formation of a classical (smooth) 3-dimensional Ricci flow.
The same characterization also holds in high curvature regions of Perelman's Ricci flow with surgery that are far enough from ``man-made'' surgery points.
Furthermore, an even stronger asymptotic behavior was shown to hold on Ricci flow spacetimes as constructed by Lott and the second author in \cite{Kleiner:2014wz}.

The singularity formation in 3-dimensional Ricci flows is usually understood via singularity models called $\kappa$-solutions (see \cite[Sec. 11]{Perelman:2002um}).
The definition of a $\kappa$-solution consists of a list of properties that are known to be true for $3$-dimensional singularity models.
Interestingly, these properties are sufficient to allow a qualitative (and sometimes quantitative) analysis of $\kappa$-solutions.
We refer the reader to Appendix~\ref{app_kappa_solution_properties} and \cite{Perelman:2003tka,Kleiner:2008fh} for further details.

Let us recall the definition of a $\kappa$-solution.

\begin{definition}[$\kappa$-solution] \label{def_kappa_solution}
An ancient Ricci flow $(M, (g_t)_{t \in (-\infty, 0]} )$ on a $3$-dimensional manifold $M$ is called a \textbf{(3-dimensional) $\kappa$-solution}, for $\kappa > 0$, if the following holds:
\begin{enumerate}[label=(\arabic*)]
\item $(M, g_t)$ is complete for all $t \in (- \infty, 0]$,
\item $|{\Rm}|$ is bounded on $M \times I$ for all compact $I \subset ( - \infty, 0]$,
\item $\sec_{g_t} \geq 0$ on $M$ for all $t \in (- \infty, 0]$,
\item $R > 0$ on $M \times (- \infty, 0]$,
\item $(M, g_t)$ is $\kappa$-noncollapsed at all scales for all $t \in (- \infty, 0]$

(This means that for any $(x,t) \in M \times (- \infty, 0]$ and any $r > 0$ if $|{\Rm}| \leq r^{-2}$ on the time-$t$ ball $B(x,t,r)$, then we have $|B(x,t,r)| \geq \kappa r^n$ for its volume.)
\end{enumerate}
\end{definition}

We will compare the local geometry of a Ricci flow spacetime to the geometry of $\kappa$-solution using the following concept of pointed closeness.

\begin{definition}[Geometric closeness] \label{def_geometric_closeness_time_slice}
We say that a pointed Riemannian manifold $(M, g, x)$ is \textbf{$\eps$-close} to another pointed Riemannian manifold $(\ov{M}, \ov{g}, \ov{x})$ \textbf{at scale $\lambda > 0$} if there is a diffeomorphism onto its image
\[ \psi : B^{\ov{M}} (\ov{x}, \eps^{-1} ) \longrightarrow M \]
such that $\psi (\ov{x}) = x$ and
\[ \big\Vert \lambda^{-2} \psi^* g - \ov{g} \big\Vert_{C^{[\eps^{-1}]}(B^{\ov{M}} (\ov{x}, \eps^{-1} ))} < \delta. \]
Here the $C^{[\eps^{-1}]}$-norm  of a tensor $h$ is defined to be the sum of the $C^0$-norms of the tensors $h$, $\nabla^{\ov{g}} h$, $\nabla^{\ov{g},2} h$, \ldots, $\nabla^{\ov{g}, [\eps^{-1}]} h$ with respect to the metric $\ov{g}$.
\end{definition}

We can now define the canonical neighborhood assumption.
The main statement of this assumption is that regions of small scale (i.e. high curvature) are geometrically close to regions of $\kappa$-solutions.

\begin{definition}[Canonical neighborhood assumption] \label{def_canonical_nbhd_asspt}
Let $(M, g)$ be a (possibly incomplete) Riemannian manifold.
We say that $(M, g)$ satisfies the {\bf $\eps$-canonical neighborhood assumption} at some point $x$ if there is a $\kappa > 0$, a $\kappa$-solution $(\overline{M}, \linebreak[1] (\overline{g}_t)_{t \in (- \infty, 0]})$ and a point $\ov{x} \in \ov{M}$ such that $\rho (\overline{x}, 0) = 1$ and such that $(M, g, x)$ is $\eps$-close to $(\ov{M}, \ov{g}_0, \ov{x})$ at some (unspecified) scale $\lambda > 0$.

We say that $(M,g)$ {\bf satisfies the $\eps$-canonical neighborhood assumption at scales $(r_1, r_2)$,} for some $0 \leq  r_1 < r_2$, if every point $x \in M$ with $r_1 < \rho(x) < r_2$ satisfies the $\eps$-canonical neighborhood assumption.

We say that a Ricci flow spacetime $(\M, \mathfrak{t}, \partial_{\mathfrak{t}}, g)$ satisfies the \textbf{$\eps$-ca\-non\-i\-cal neighborhood assumption} at a point $x \in \M$ if the same is true at $x$ in the time-slice $(\M_{\mathfrak{t}(x)}, g_{\mathfrak{t}(x)})$.
Moreover, we say that $(\M, \mathfrak{t}, \partial_{\mathfrak{t}}, g)$ satisfies the {\bf $\eps$-canonical neighborhood assumption at scales $(r_1, r_2)$} if the same is true for all its time-slices.
Lastly, we say that a subset $X \subset \M$ satisfies the \textbf{$\eps$-canonical neighborhood assumption at scales $(r_1, r_2)$}, if the $\eps$-canonical neighborhood assumption holds at all $x \in X$ with $\rho(x) \in (r_1, r_2)$.
\end{definition}

Note that if $\M$ is a Ricci flow spacetime as constructed in \cite{Kleiner:2014wz}, then $\M_{[0,T]}$ satisfies the $\eps$-canonical neighborhood assumption at scales $(0,r)$, where $r=r(\eps,T) >0$ \cite[Thm. 1.3, Prop. 5.30]{Kleiner:2014wz}.
If $\M$ is the Ricci flow spacetime of a Ricci flow with surgery and $\delta$-cutoff, as constructed by Perelman in \cite{Perelman:2003tka}, then $\M$ satisfies the $\eps$-canonical neighborhood assumption at $x\in \M$, provided the scale of $x$ lies in the interval $(10h,r)$.  Here $h=h(\eps,t)$ and $r(\eps,t)$ are decreasing functions of time, which appear in Perelman's construction, $h\leq \delta^2 r$, and $\delta = \delta(\eps, t)$ may be chosen as small as desired.

Observe that we do not assume a global lower bound on $\kappa$ in Definition~\ref{def_canonical_nbhd_asspt}.
This slight generalization from other notions of the canonical neighborhood assumption does not create any serious issues, since by Perelman's work \cite{Perelman:2003tka}, every $3$-dimensional $\kappa$-solution is a $\kappa_0$-solution for some universal $\kappa_0 > 0$, unless it homothetic to a quotient of a round sphere (see assertion \ref{ass_C.1_a} of Lemma~\ref{lem_kappa_solution_properties_appendix} for further details).

We also remark that in Definition~\ref{def_geometric_closeness_time_slice} we have put extra care in describing how the $C^{[\eps^{-1}]}$-norm has to be understood.
The reason for this is that the model metric $\ov{g}$ in Definition~\ref{def_canonical_nbhd_asspt} is not fixed.
So it would be problematic, for example, to define the $C^{[\eps^{-1}]}$-norm using coordinate charts on $\ov{M}$, as the number and sizes of those coordinate charts may depend on the Riemannian manifold $(\ov{M}, \ov{g} )$.

It may seem more standard to require spacetime closeness to a $\kappa$-solution on a backwards parabolic neighborhood --- as opposed to closeness on a ball in a single time-slice --- in the definition of the canonical neighborhood assumption.
Such a condition would be stronger and, as our goal is to establish a uniqueness property, it would lead to a formally less general statement.
We point out that  spacetime closeness to a $\kappa$-solution is a rather straight forward consequence of time-slice closeness.
The main purpose of the use of time-slice closeness in our work is because our uniqueness property also applies to Ricci flow spacetime with singular initial data.
For this reason the canonical neighborhood assumption also has to be applicable to the initial time-slice $\M_0$ or to time-slices $\M_t$ for small $t$.

\section{Preliminaries II}\label{sec_preliminaries_II}
In this section we present basic definitions and concepts that will be important for the proofs of the main results of this paper.

\subsection{Curvature scale} \label{subsec_curvature_scale}
As mentioned in Section~\ref{sec_preliminaries_I}, we will now define a notion of a curvature scale $\rho$ that will be convenient for our proofs.
The main objective in our definition will be to ensure that $\rho = (\frac13 R)^{-1/2}$ wherever the sectional curvature is almost positive.
For this purpose, observe that there is a constant $c_0 > 0$ such that the following holds.
Whenever $\Rm$ is an algebraic curvature tensor with the property that its scalar curvature $R$ is positive and all its sectional curvatures are bounded from below by $-\frac1{10} R$, then $c_0 |{\Rm}| \leq \frac13 R$.
We will fix $c_0$ for the remainder of this paper. 

\begin{definition}[Curvature scale] \label{def_curvature_scale}
Let $(M, g)$ be a 3-dimensional Riemannian manifold and $x \in M$ a point.
We define the {\bf (curvature) scale} at $x$ to be 
\begin{equation} \label{eq_def_curvature_scale}
 \rho (x) = \min \big\{ \big( \tfrac13 R_+(x) \big)^{-1/2}, \big( c_0 |{\Rm}| (x) \big)^{-1/2}\big\}. 
\end{equation}
Here $R_+(x) := \max \{ R(x), 0 \}$ and we use the convention $0^{-1/2} = \infty$.

If $r_0 > 0$, then we set $\rho_{r_0} (x) := \min \{ \rho (x), r_0 \}$.
Lastly, if $(\M, \mathfrak{t}, \partial_{\mathfrak{t}}, g)$ is a Ricci flow spacetime, then we define $\rho, \rho_{r_0} : \M \to \R$ such that they restrict to the corresponding scale functions on the time-slices. 
\end{definition}

\begin{lemma} \label{lem_rho_Rm_R}
There is a universal constant $C < \infty$ such that
\begin{equation} \label{eq_equivalence_bound_rho_Rm}
 C^{-1} \rho^{-2} (x) \leq |{\Rm}|(x) \leq C \rho^{-2} (x). 
\end{equation}
Moreover, there is a universal constant $\eps_0 > 0$ such that if $x$ satisfies the $\eps_{\can}$-canonical neighborhood assumption for some $\eps_{\can} \leq \eps_0$, then $R(x) = 3\rho^{-2} (x)$.
\end{lemma}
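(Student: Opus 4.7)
The plan is to derive both claims directly from the definition
\[
\rho^{-2}(x) = \max\!\left\{\tfrac13 R_+(x),\; c_0 |\Rm|(x)\right\},
\]
using only (i) the standard dimensional comparison $|R|\leq C_n|\Rm|$ valid in dimension $3$, and (ii) the defining property of the constant $c_0$ fixed just before Definition~\ref{def_curvature_scale}.

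For the first assertion, the upper bound $|\Rm|(x)\leq c_0^{-1}\rho^{-2}(x)$ is immediate from the definition, since $\rho^{-2}(x)\geq c_0|\Rm|(x)$. For the reverse bound, I would observe that $\frac13 R_+(x)\leq \frac13|R(x)|\leq \frac{C_n}{3}|\Rm|(x)$ in dimension three, while $c_0|\Rm|(x)\leq c_0|\Rm|(x)$, so $\rho^{-2}(x)\leq \max\{C_n/3,\,c_0\}\,|\Rm|(x)$. Choosing $C$ to be the larger of these two constants gives \eqref{eq_equivalence_bound_rho_Rm}. This step is entirely routine.

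For the second assertion, I would argue as follows. The $\eps_{\can}$-canonical neighborhood assumption furnishes a $\kappa$-solution $(\overline M,(\overline g_t),\overline x)$ with $\rho(\overline x,0)=1$ and a scale $\lambda>0$ such that $(M,g,x)$ is $\eps_{\can}$-close to $(\overline M,\overline g_0,\overline x)$ at scale $\lambda$. On any $\kappa$-solution one has $\sec_{\overline g_0}\geq 0$ and $R_{\overline g_0}>0$; in particular at $\overline x$ all sectional curvatures lie above $-\tfrac1{10}R(\overline x)$, so by the very definition of $c_0$ we have $c_0|\Rm|(\overline x)\leq \tfrac13 R(\overline x)$, and combined with $\rho(\overline x,0)=1$ this forces $R(\overline x)=3$ and $\tfrac13 R(\overline x)=\rho^{-2}(\overline x)$. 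The plan is then to transport this strict inequality to $x$: the closeness at scale $\lambda$ ensures that $\lambda^{2}R(x)$ and the eigenvalues of $\lambda^{2}\Rm(x)$ approximate the corresponding quantities at $\overline x$ to within an error controlled by $\eps_{\can}$. Consequently, provided $\eps_{\can}$ is at most a universal threshold $\eps_0$, the strict inequality $\sec_g(x)>-\tfrac1{10}R(x)$ persists, and the defining property of $c_0$ again yields $c_0|\Rm|(x)\leq \tfrac13 R(x)$. Since $R(x)>0$, this means the minimum in \eqref{eq_def_curvature_scale} is realized by the first term, giving $\rho^{-2}(x)=\tfrac13 R(x)$, i.e., $R(x)=3\rho^{-2}(x)$.

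The only genuine issue is fixing $\eps_0$; this is a quantitative but elementary continuity argument, using that the condition $c_0|\Rm|\leq \tfrac13 R$ is open among algebraic curvature tensors with $R>0$, and that the model curvature $\Rm_{\overline g_0}(\overline x)$ satisfies this condition with a definite margin (coming from $\sec\geq 0$). Since the model $\kappa$-solutions are normalized by $\rho(\overline x,0)=1$, the required margin can be taken universal, so a single $\eps_0$ works for all models simultaneously.
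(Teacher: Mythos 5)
Your argument is correct and is essentially the paper's own proof, just written out in full: the paper likewise dismisses \eqref{eq_equivalence_bound_rho_Rm} as immediate from the definition, and proves the second claim by noting that for $\eps_{\can}$ small the canonical neighborhood assumption forces $R(x)>0$ and $\sec\geq -\tfrac1{10}R(x)$ at $x$, after which the defining property of $c_0$ gives $(\tfrac13 R_+(x))^{-1/2}\leq(c_0|\Rm|(x))^{-1/2}$. Your extra care about the universal margin coming from $\sec_{\overline g_0}\geq 0$ and $R(\overline x)=3$ is exactly the quantitative content that the paper leaves implicit.
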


\begin{proof}
The bound (\ref{eq_equivalence_bound_rho_Rm}) is obvious.
For the second part of the lemma observe that for sufficiently small $\eps_{\can}$ we have $R(x) > 0$ and $\sec \geq - \frac1{10} R(x)$ at $x$.
So $(\frac13 R_+ (x))^{-1/2} \leq (c_0 |{\Rm}| (x) )^{-1/2}$.
\end{proof}

The normalization constant $\frac13$ in front of the scalar curvature in (\ref{eq_def_curvature_scale}) is chosen purely for convenience.
More specifically, we will frequently consider the following round shrinking cylinder evolving by Ricci flow:
\[ \big( S^2 \times \R, (g_t = (\tfrac23 - 2t) g_{S^2} + g_{\R})_{t \in (- \infty, \frac13]} \big). \]
The scale of this cylinder and the normalization of the curvature scale have been chosen in such a way that  $\rho (\cdot, 0) \equiv 1$ and $\rho (\cdot, -1) \equiv 2$ hold, which can be remembered easily;
more generally, we have
\[ \rho(\cdot, t)  \equiv \sqrt{1-3t}. \]

\begin{definition}[(Weakly) thick and thin subsets]
Let $X$ be a subset of a Riemannian manifold $(M,g)$ or Ricci flow spacetime $(\M, \mathfrak{t}, \partial_{\mathfrak{t}}, g)$ and $r >0$ a number.
We say that $X$ is \textbf{$r$-thick} if $\rho(X) > r$ and \textbf{weakly $r$-thick} if $\rho (X) \geq r$.
Similarly, we say that $X$ is \textbf{$r$-thin} or \textbf{weakly $r$-thin} if $\rho(X) < r$ or $\rho (X) \leq r$, respectively.
\end{definition}

\subsection{Basic facts about the Bryant soliton} \label{subsec_basics_Bryant} 
In the following, we will denote by $(M_{\Bry}, (g_{\Bry, t})_{t \in \R})$ the Bryant soliton and with tip $x_{\Bry} \in M_{\Bry}$ normalized in such a way that $\rho (x_{\Bry}) = 1$.
The Bryant soliton was first constructed \cite{Bryant:gVzfj4nx}.
A more elementary construction can also be found in \cite{Appleton:2017vf}. 
Recall that $(M_{\Bry}, (g_{\Bry, t})_{t \in \R})$ is a steady gradient soliton all whose time-slices are rotationally symmetric with center $x_{\Bry}$.
More specifically, $(M_{\Bry}, g_{\Bry, t})$ can be expressed as a warped product of the form
\[ g_{\Bry, t} = d\sigma^2 + w_t^2 (\sigma) g_{S^2} \]
where $w_t (\sigma) \sim \sqrt{\sigma}$ for large $\sigma$.
We refer to Lemma~\ref{lem_bryant_geometry} for a more extensive list of properties of the Bryant soliton that are being used in this paper.
Note that, due to the normalization of $M_{\Bry}$, the definition of $\rho$ and (\ref{eqn_soliton_conserved}) of Lemma~\ref{lem_bryant_geometry}, we have $\rho \geq 1$ on $M_{\Bry}$.
This fact will be important in this paper.

We will set $g_{\Bry} := g_{\Bry, 0}$ for the time-$0$-slice of the Bryant soliton.
Furthermore, we will denote by $M_{\Bry} (r) := B(x_{\Bry},  r)$ the $r$-ball around the tip with respect to $g_{\Bry}$ and for $0 < r_1 < r_2$, we will denote by $M_{\Bry} (r_1, r_2)$ the open $(r_1, r_2)$-annulus around $x_{\Bry}$.

\subsection{Geometry of Ricci flow spacetimes}

The goal of this subsection is to introduce several notions that we will frequently use in order to describe points or subsets in Ricci flow spacetimes.

\begin{definition}[Points in Ricci flow spacetimes] \label{def_points_in_RF_spacetimes}
Let $(\M, \mathfrak{t}, \partial_{\mathfrak{t}}, g)$ be a Ricci flow spacetime and $x \in \M$ be a point.
Set $t := \mathfrak{t} (x)$.
Consider the maximal trajectory $\gamma_x : I \to \M$, $I \subset [0, \infty)$ of the time-vector field $\partial_{\mathfrak{t}}$ such that $\gamma_x (t) = x$.
Note that then $\mathfrak{t} (\gamma_x(t')) = t'$ for all $t' \in I$.
For any $t' \in I$ we say that $x$ \textbf{survives until time $t'$} and we write 
\[ x(t') := \gamma_x (t'). \]

Similarly, if $X \subset \M_t$ is a subset in the time-$t$ time-slice, then we say that $X$ \textbf{survives until time $t'$} if this is true for every $x \in X$ and we set $X(t') := \{ x(t') \;\; : \;\; x \in X \}$.
\end{definition}

We will also use the following two notions.

\begin{definition}[Time-slice of a subset]
Let $(\M, \mathfrak{t}, \partial_{\mathfrak{t}}, g)$ be a Ricci flow spacetime and let $X \subset \M$ be a subset.
For any time $t \in [0, \infty)$ we define the \textbf{time-$t$-slice} of $X$ to be $X_t := X \cap \M_t$ and for any interval $I \subset [0, \infty)$ we define the \textbf{$I$-time-slab} of $X$ to be $X_I := X \cap \M_I$.
\end{definition}

\begin{definition}[Product domain]
\label{def_product_domain}
Let $(\M, \mathfrak{t}, \partial_{\mathfrak{t}}, g)$ be a Ricci flow spacetime and let $X \subset \M$ be a subset.
We call $X$ a \emph{product domain} if there is an interval $I \subset [0, \infty)$ such that for any $t \in I$ any point $x \in X$ survives until time $t$ and $x(t) \in X$.
\end{definition}

Note that a product domain $X$ can be identified with the product $X_{t_0} \times I$ for an arbitrary $t_0 \in I$.
If $X_{t_0}$ is sufficiently regular (e.g. open or a domain with smooth boundary in $\M_{t_0}$), then the metric $g$ induces a classical Ricci flow $(g_t)_{t \in I}$ on $X_{t_0}$.
We will often use the metric $g$ and the Ricci flow $(g_t)_{t \in I}$ synonymously when our analysis is restricted to a product domain.

\begin{definition}[Parabolic neighborhood]
Let $(\M, \mathfrak{t}, \partial_{\mathfrak{t}}, g)$ be a Ricci flow spacetime.
For any $y \in \M$ let $I_y \subset [0, \infty)$ be the set of all times until which $y$ survives.
Now consider a point $x \in \M$ and two numbers $a \geq 0$, $b \in \R$.
Set $t := \mathfrak{t} (x)$.
Then we define the \textbf{parabolic neighborhood} $P(x, a, b) \subset \M$ as follows:
\[ P(x,a,b) := \bigcup_{y \in B(x,a)} \bigcup_{t' \in [t, t+b] \cap I_y} y(t'). \]
If $b < 0$, then we replace $[t,t+b]$ by $[t+b, t]$.
We call $P(x,a,b)$ \textbf{unscathed} if $B(x,a)$ is relatively compact in $\M_t$ and if $I_y \supset [t, t+b]$ or $I_y\supset [t +b, t] \cap [0, \infty)$ for all $y \in B(x,a)$.
Lastly, for any $r > 0$ we introduce the simplified notation
\[ P(x,r) := P(x,r,-r^2) \]
for the \textbf{(backward) parabolic ball} with center $x$ and radius $r$.
\end{definition}

Note that if $P(x,a,b)$ is unscathed, then it is a product domain of the form $B(x,a)  \times I_y$ for any $y \in B(x,a)$.
We emphasize that $P(x,a,b)$ can be unscathed even if $t+b < 0$, that is when it hits the initial time-slice earlier than expected.
So an unscathed parabolic neighborhood is not necessarily of the form $B(x,a) \times [t+b, t]$ if $b < 0$.

\subsection{Necks }
Borrowing from Definition~\ref{def_geometric_closeness_time_slice}, we will introduce the notion of a $\delta$-neck. 
\begin{definition}[$\delta$-neck]
Let $(M,g)$ be a Riemannian manifold and $U \subset M$ an open subset.
We say that $U$ is a {\bf $\delta$-neck at scale $\lambda > 0$} if there is a diffeomorphism
\[ \psi : S^2 \times \big( {- \delta^{-1}, \delta^{-1} }\big) \longrightarrow U \]
such that
\[ \big\Vert \lambda^{-2} \psi^* g - \big( \tfrac23 g_{S^2} + g_{\R} \big) \big\Vert_{C^{[\delta^{-1}]}(S^2 \times (- \delta^{-1}, \delta^{-1}))} < \delta. \]
We call the image $\psi ( S^2 \times \{ 0 \})$ a {\bf central 2-sphere of $U$} and every point on a central $2$-sphere a {\bf center of $U$}.
\end{definition}

Note that by our convention (see Definition~\ref{def_curvature_scale}) we have $\rho \equiv 1$ on $(S^2 \times \R,  \tfrac23 g_{S^2} + g_{\R})$.
So on a $\delta$-neck at scale $\lambda$ we have $\rho \approx \lambda$, where the accuracy depends on the smallness of $\delta$.
We also remark that a $\delta$-neck $U$ has infinitely many central $2$-spheres, as we may perturb $\psi$ slightly.
This is why we speak of \emph{a} central 2-sphere of $U$, as opposed to \emph{the} central 2-sphere.
Similarly, the centers of $U$ are not unique, but form an open subset of $U$.

\subsection{Ricci-DeTurck flow and harmonic map heat flow} \label{subsec_RdT_hmhf}
In this subsection we recall some of the basic facts about the harmonic map heat flow and the Ricci-DeTurck flow equation in the classical setting, which were first observed by DeTurck \cite{DeTurck:1983jp} and Hamilton \cite[Sec.6]{hamilton_formation}.
More details, including precise statements of short-time existence and regularity of these flows, can be found in Appendix~\ref{appx_Ricci_deT}.

Consider two $n$-dimensional manifolds $M, M'$, each equipped with a smooth family of Riemannian metrics $(g_t)_{t \in [0,T]}$, $(g'_t)_{t \in [0,T]}$.
Let moreover $(\chi_t)_{t \in [0,T]}$, $\chi_t : M' \to M$ be a smooth family of maps.

\begin{definition} \label{def_classical_hmh_flow}
We say that the family $(\chi_t)_{t \in [0,T]}$ moves by {\bf harmonic map heat flow between $(M', g'_t)$ and $(M,g_t)$} if it satisfies the following evolution equation:
\begin{equation} \label{eq_hmh_flow_def}
 \partial_t \chi_t = \triangle_{g'_t, g_t} \chi_t = \sum_{i=1}^n \big( \nabla^{g_t}_{d\chi_t (e_i)} d\chi_t (e_i) - d\chi_t ( \nabla^{g'_t}_{e_i} e_i ) \big),  
\end{equation}
where $\{ e_i \}_{i=1}^n$ is a local frame  on $M'$ that is orthonormal with respect to $g'_t$.
\end{definition}

Assume now for the remainder of this subsection that $(g_t)_{t \in [0,T]}$ and $(g'_t)_{t \in [0,T]}$ evolve by the Ricci flow equations
\[ \partial_t g_t = -2 \Ric_{g_t}, \qquad \partial_t g'_t = -2 \Ric_{g'_t}. \]
Furthermore, assume for the rest of this subsection that all the maps $\chi_t$ are diffeomorphisms and consider their inverses $\phi_t := \chi_t^{-1}$.
A basic calculation (see Appendix~\ref{appx_Ricci_deT} for more details) reveals that the pullback $g^*_t := \phi^*_t g'_t$ evolves by the {\bf Ricci-DeTurck flow} equation
\begin{equation} \label{eq_RdT_flow_def}
 \partial_t g^*_t = - 2 \Ric_{g^*_t} - \mathcal{L}_{X_{g_t} (g^*_t)} g^*_t, 
\end{equation}
where the vector field $X_{g_t} (g^*_t)$ is defined by
\begin{equation} \label{eq_def_X}
 X_{g_t} (g^*_t) :=  \triangle_{g^*_t, g_t} \id_M =  \sum_{i=1}^n \big( \nabla^{g_t}_{e_i} e_i - \nabla^{g^*_t}_{e_i} e_i \big), 
\end{equation}
for a local frame $\{ e_i \}_{i=1}^n$ that is orthonormal with respect to $g^*_t$.

The advantage of the Ricci-DeTurck flow equation over the Ricci flow equation is that it is a non-linear, strongly parabolic equation in the metric $g^*_t$.
More specifically, if we express $g^*_t$ in terms of the perturbation $h_t := g^*_t - g_t$, then (\ref{eq_RdT_flow_def}) becomes the {\bf Ricci-DeTurck flow equation for perturbations} 
\begin{equation} \label{eq_Ricci_de_Turck_introduction}
 \nabla_{\partial_t} h_t = \triangle_{g_t} h_t + 2\Rm_{g_t} (h_t) + \mathcal{Q}_{g_t} [ h_t]. 
\end{equation}
Here we view $g_t$ as a background metric.
All curvature quantities and covariant derivatives are taken with respect to $g_t$.
On the left-hand side of (\ref{eq_Ricci_de_Turck_introduction}), we moreover use Uhlenbeck's trick:
\[ (\nabla_{\partial_t} h_t)_{ij} = (\partial_t h_t)_{ij} + g^{pq}_t \big( {\Ric^{g_t}_{pj} ( h_t)_{iq} + \Ric^{g_t}_{ip} (h_t)_{qj} }\big) \]
The expressions on the right-hand side of (\ref{eq_Ricci_de_Turck_introduction}) are to be interpreted as follows:
\[ \big({ \Rm_{g_t} (h_t) }\big)_{ij} = g_t^{pq} R_{p ij}^{\quad u} (h_t)_{qu} \]
and $\mathcal{Q}_{g_t} [h_t]$ is an algebraic expression in $g_t$, $h_t$, $\nabla h_t$, $\nabla^2 h_t$ of the form
\begin{multline*}
 \mathcal{Q}_{g_t} [ h_t ] = (g_t + h_t)^{-1} * (g_t + h_t)^{-1} * \nabla h_t * \nabla h_t \\ 
 + (g_t+h_t)^{-1} * (g_t+h_t)^{-1} * \Rm_{g_t} *h_t * h_t +  ( g_t + h_t)^{-1} * ( g_t +h_t)^{-1} * h_t * \nabla^2 h_t. 
\end{multline*}
See (\ref{eq_QQ_formula}) in Appendix~\ref{appx_Ricci_deT} for an explicit formula for $\mathcal{Q}_{g_t}$.
The precise structure of the quantity $\mathcal{Q}_{g_t}$ will, however, not be of essence in this paper.
 
We remark that in the classical setting and in the compact case, the uniqueness of solutions to the Ricci flow equation follows from the existence of solutions to (\ref{eq_hmh_flow_def}) and the uniqueness of solutions to (\ref{eq_Ricci_de_Turck_introduction}).
More specifically, for any two Ricci flows $(g_t)_{t \in [0,T]}$ and $(g'_t)_{t \in [0, T]}$ on $M$ and $M'$ for which there is an isometry $\chi : M' \to M$ with $\chi^{*} g_0 = g'_0$ one first constructs a solution $(\chi_t)_{t \in [0,\tau)}$ of (\ref{eq_hmh_flow_def}), for some maximal $\tau < T$, with initial condition $\chi_0 = \chi$.
The resulting perturbation $h_t = \phi^*_t g'_t - g_t$, for $\phi_t = \chi_t^{-1}$, solves (\ref{eq_Ricci_de_Turck_introduction}), as long as it is well defined.
As $h_0 \equiv 0$, we obtain by uniqueness that $h_t \equiv 0$, as long as it is defined.
It then follows that $\chi_t$ is an isometry for all $t \in [0,T] = [0, \tau]$ and by (\ref{eq_hmh_flow_def}) that $\partial_t \chi_t \equiv 0$.

In this paper we will mostly analyze solutions $h_t$ to (\ref{eq_Ricci_de_Turck_introduction}) of small norm.
Via a limit argument, such solutions can be understood in terms the {\bf linearized Ricci-DeTurck} equation
\[ \nabla_{\partial_t} h'_t = \triangle_{g_t} h'_t + 2 \Rm_{g_t} (h'_t) . \]
For more details on this, see Section~\ref{sec_semi_local_max}.

\subsection{Maps between Ricci flow spacetimes}
In this subsection consider two Ricci flow spacetimes $(\M,\t,\D_\t,g)$ and $(\M',\t',\D_{\t'},g')$, which we will abbreviate in the following by $\M$ and $\M'$.
Our goal will be to characterize maps between subsets of these spacetimes.
Using the terminology introduced above, we will then generalize the notions introduced in the previous subsection to Ricci flow spacetimes.

\begin{definition}[Time-preserving and time-equivariant maps]
Let $X \subset \M$ be a subset and $\phi : X \to \M'$ be a map.
We say that $\phi$ is \textbf{time-preserving} if $\mathfrak{t}' (\phi (x)) = \mathfrak{t} (x)$ for all $x \in X$.
We say that $\phi$ is \textbf{$a$-time-equivariant}, for some $a \in \R$, if there is some $t_0 \in \R$ such that $\mathfrak{t}' (\phi (x)) = a \mathfrak{t} (x) + t_0$ for all $x \in X$.
\end{definition}

Observe that a time-preserving map is also $1$-time-equivariant.

\begin{definition}[Time-slices of a map]
If $\phi : X \subset \M \to \M'$ is time-equivariant and $t \in [0, \infty)$ such that $X_t = X \cap \M_t \neq \emptyset$, then we denote by
\[ \phi_t := \phi |_{X_t} : X_t \longrightarrow \M'_{t'} \subset \M' \]
the \textbf{time-$t$-slice} of $\phi$.
Here $t'$ is chosen such that $\phi (X_t) \subset \M'_{t'}$.
\end{definition}

\begin{definition}[$\partial_{\t}$-preserving maps]
\label{def_d_t_preserving}
Let $\phi : X \to \M'$ be a differentiable map defined on a sufficiently regular domain $X \subset \M$.
If $(d \phi)_* \partial_{\t} = \partial_{\t'}$, then we say that $\phi$ is \textbf{$\partial_{\t}$-preserving.}
\end{definition}

Note that the image of a product domain under a time-equivariant and $\partial_{\t}$-preserving map is again a product domain.

\begin{definition}[Harmonic map heat flow] \label{def_RF_spacetime_harm_map_hf}
Let $Y \subset \M'$ be a subset.
We say that a map $\chi : Y \to \M$ evolves by \textbf{harmonic map heat flow} if it is $1$-time-equi\-va\-ri\-ant and if at all times $t, t' \in [0, \infty)$ with $Y_t \neq \emptyset$ and $\chi (Y_{t'}) \subset \M'_{t}$ the identity
\begin{equation} \label{eq_hmhflow_RF_spacetime}
 d\chi (\partial_{\t'}) = \partial_{\t} + \triangle_{g'_t, g_{t}} \chi_t 
\end{equation}
holds on the interior of $Y$.
The last term in this equation denotes the Laplacian of the map $\psi_t : (\M'_{t'}, g'_{t'}) \to (\M_{t}, g_{t})$ (see (\ref{eq_hmh_flow_def}) for further details).
\end{definition}

It is not difficult to see that the notions of harmonic map heat flow in Definition~\ref{def_RF_spacetime_harm_map_hf} corresponds to Definition~\ref{def_classical_hmh_flow} in the case in which $\M$ and $\M'$ can be described in terms of classical Ricci flows $(M, (g_t)_{t \in I})$ and $(M', (g'_t)_{t \in I'})$, respectively.
The same is true in the case in which $\chi$ is the inverse of a diffeomorphism $\phi : X \to Y \subset \M'$, where $X$ is a product domain in $\M$ whose time-slices are domains with smooth boundary. 
In this case, which will be of main interest for us (see Definition~\ref{def_comparison}), the equation (\ref{eq_hmhflow_RF_spacetime}) makes sense and holds, by continuity, on all of $Y$.

Next, we generalize the concept of Ricci-DeTurck flow to the setting of Ricci flow spacetimes.

\begin{definition} \label{def_RdT_perturbation_on_MM}
Consider a smooth symmetric $(0,2)$-tensor field $h$ on the subbundle $\ker (d \t) \subset T\M$ over a sufficiently regular domain $N  \subset \M$ (in this paper we will only consider the case in which $N$ is a domain with smooth boundary or is a product domain whose time-slices are domains with smooth boundary).
We say that $h$ is a \textbf{Ricci-DeTurck perturbation (on $N$)} if
\begin{equation} \label{eq_RdT_on_spacetime_definition}
 \mathcal{L}_{\partial_{\t}} ( g + h) = - 2 \Ric ( g + h) - \mathcal{L}_{X_{g} (g+h)} ( g + h ), 
\end{equation}
where $X_{g} (g+h)$ is defined on each time-slice $X_t$ as in (\ref{eq_def_X}).
\end{definition}

If $X$ is a product domain of the form $X' \times I$, and if we identify $g$ and $h$ with smooth families of the form $(g_t)_{t \in I}$ and $(h_t)_{t \in I}$, then (\ref{eq_RdT_on_spacetime_definition}) is equivalent to the classical Ricci-DeTurck equation (\ref{eq_RdT_flow_def}).

The following lemma is an immediate consequence of our discussion from Subsection~\ref{subsec_RdT_hmhf}.

\begin{lemma} \label{lem_hmhf_RdTperturbation_spacetime}
Let $X \subset \M$ be open or a product domain whose time-slices are domains with smooth boundary and consider a diffeomorphism $\phi : X \to Y := \phi (X) \subset \M'$.
Assume that the inverse map $\phi^{-1} : Y \to X$ evolves by harmonic map heat flow.
Then the perturbation $h := \phi^* g' - g$ is a Ricci-DeTurck perturbation in the sense of Definition~\ref{def_RdT_perturbation_on_MM}. 
\end{lemma}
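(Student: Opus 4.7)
The plan is to reduce the assertion to the classical DeTurck computation recalled in Subsection~\ref{subsec_RdT_hmhf}. Equation~\eqref{eq_RdT_on_spacetime_definition} is pointwise on $X$, so it suffices to verify it in a small neighborhood of an arbitrary point $x \in X$.

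First I would use the time vector field $\partial_\t$ to introduce a local product structure. Fix $x \in X$ and set $t_0 := \t(x)$. Choose a small relatively compact neighborhood $U \subset X_{t_0}$ of $x$ and flow $U$ along $\pm\partial_\t$ for a short time $\delta > 0$ to obtain a trivialization of a neighborhood of $x$ as $U \times (t_0-\delta, t_0+\delta)$ in which $\partial_\t$ becomes the standard time vector field and $g$ becomes a classical Ricci flow $(g_s)_{s}$ on $U$. Since $\phi^{-1}$ is $1$-time-equivariant (by Definition~\ref{def_RF_spacetime_harm_map_hf}), so is $\phi$; let $c \in \R$ be the corresponding time offset. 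Trivializing a neighborhood of $\phi(x) \in \M'$ in the same way yields a classical Ricci flow $(g'_{s'})_{s'}$ on some $U' \subset \M'_{t_0+c}$, and in these trivializations $\phi$ takes the form $\phi(y, s) = (\phi_s(y), s+c)$ for a smooth family of diffeomorphisms $\phi_s : U \to U'$.

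Next I would unwind the spacetime harmonic map heat flow condition into the classical one. Writing $\chi := \phi^{-1}$ as $\chi(y', s') = (\chi_{s'}(y'), s'-c)$, a direct computation gives $d\chi(\partial_{\t'}) = \partial_\t + \partial_{s'}\chi_{s'}$, where the second summand is regarded as a vector tangent to the time-slice of $\M$. Comparing with Definition~\ref{def_RF_spacetime_harm_map_hf} then yields the classical harmonic map heat flow equation
\[ \partial_{s'} \chi_{s'} = \triangle_{g'_{s'}, g_{s'-c}} \chi_{s'}, \]
exactly in the sense of Definition~\ref{def_classical_hmh_flow}. At this point the classical DeTurck identity~\eqref{eq_RdT_flow_def} applies to the pulled-back family $g^*_s := \phi_s^* g'_{s+c}$ on $U$, giving
\[ \partial_s g^*_s = - 2\Ric(g^*_s) - \mathcal{L}_{X_{g_s}(g^*_s)} g^*_s. \]
Translating back through the trivialization, $g^*_s$ corresponds to $(g+h)$ restricted to $U \times \{s\}$, $\partial_s g^*_s$ corresponds to $\mathcal{L}_{\partial_\t}(g+h)$, and $X_{g_s}(g^*_s)$ corresponds to $X_g(g+h)$ time-slice by time-slice, since the expression~\eqref{eq_def_X} depends only on the time-slice metrics. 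Thus~\eqref{eq_RdT_on_spacetime_definition} holds on the trivialized neighborhood, hence at $x$, hence on all of $X$.

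I do not expect any serious obstacle: the argument is essentially a bookkeeping exercise, translating the spacetime formalism into coordinates adapted to $\partial_\t$ and invoking a result stated earlier in the paper. The only point deserving care is that the $1$-time-equivariance of $\phi$ forced by Definition~\ref{def_RF_spacetime_harm_map_hf} really does produce a smooth family of diffeomorphisms between time-slices (rather than a more exotic map); this is immediate from the identity $\t' \circ \phi = \t + c$ combined with $\phi$ being a diffeomorphism, because then each time-slice $\phi_s$ is a diffeomorphism onto $Y_{s+c}$ and smoothness in $s$ is inherited from smoothness of $\phi$.
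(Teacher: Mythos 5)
Your proof is correct and follows exactly the route the paper intends: the paper states this lemma is ``an immediate consequence of our discussion from Subsection~\ref{subsec_RdT_hmhf}'' and gives no further details, and your proposal is precisely the verification that the spacetime equation~\eqref{eq_RdT_on_spacetime_definition} reduces, under a local $\partial_\t$-trivialization, to the classical DeTurck identity~\eqref{eq_RdT_flow_def} already established there. One could make explicit that when $X$ is a product domain with boundary the HMHF equation of Definition~\ref{def_RF_spacetime_harm_map_hf} is only imposed on the interior of $Y$ and the identity on $\partial X$ is obtained by continuity (as the paper notes after Definition~\ref{def_RF_spacetime_harm_map_hf}), but your argument accommodates this since the trivialization works equally well on half-neighborhoods.
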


\section{A priori assumptions}
\label{sec_apas}

\begin{figure}
\labellist
\small\hair 2pt
\pinlabel $t_0$ at -10 95
\pinlabel $t_1$ at -10 425
\pinlabel $t_2$ at -10 735
\pinlabel $t_3$ at -10 1050
\pinlabel $t_4$ at -10 1355
\pinlabel $t_5$ at -10 1645
\pinlabel $\cdots$ at 900 1640
\pinlabel $\cdots$ at 1550 735
\pinlabel $\cdots$ at 2050 425
\pinlabel $\cdots$ at 2050 95
\pinlabel $\N^1$ at 350 270
\pinlabel $\N^2$ at 350 590
\pinlabel $\N^3$ at 350 900
\pinlabel $\N^4$ at 350 1220
\pinlabel $\N^5$ at 350 1520
\pinlabel $\mathcal{D}$ at 480 1050
\pinlabel {extension cap} at 1860 990
\endlabellist
\centering
\includegraphics[width=120mm]{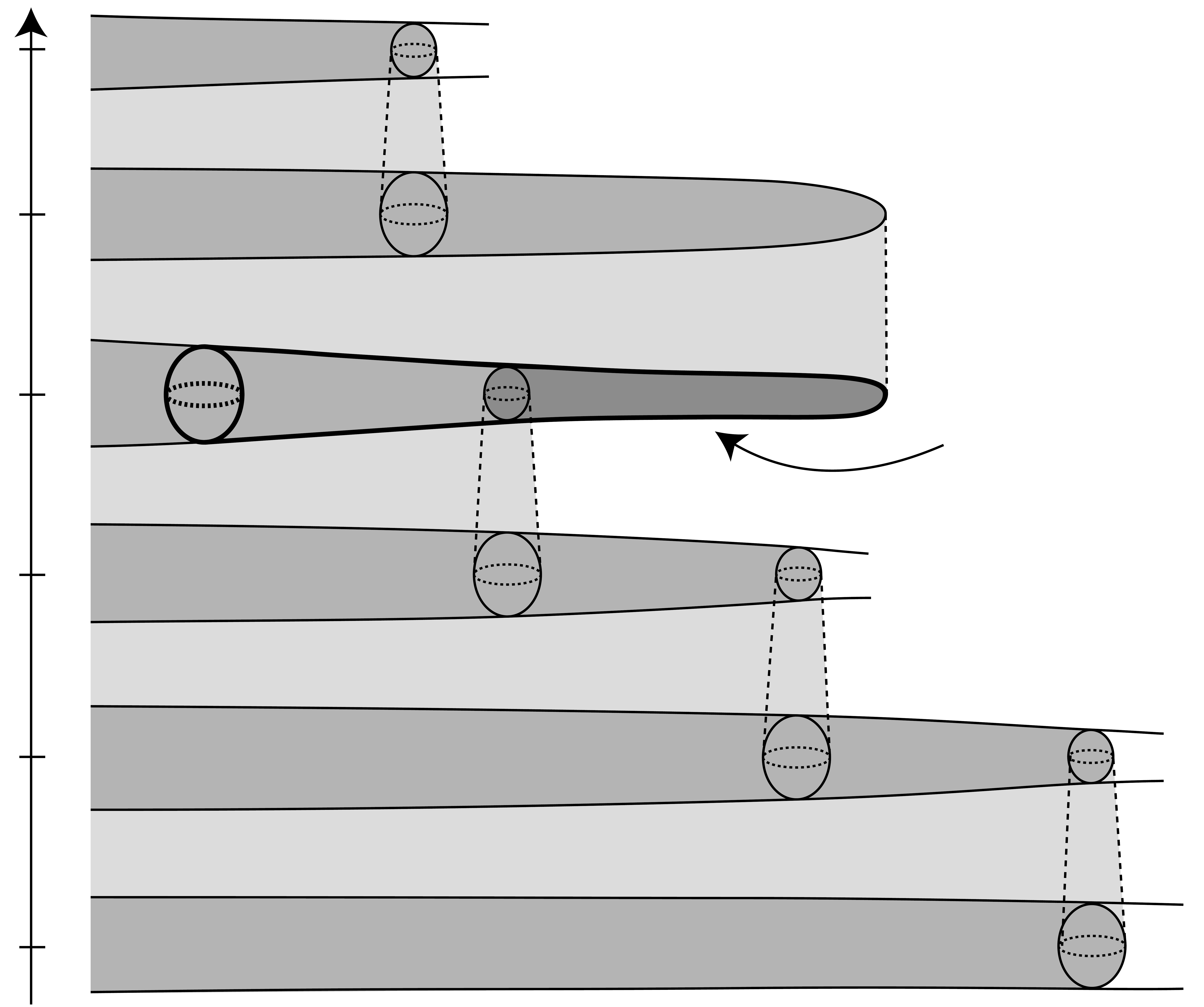}
\caption{Example of a comparison domain defined over the time-interval $[0,t_5]$ and a cut.
The dark shaded regions indicate the picture of the comparison domain at integral time-steps $t_0, \ldots, t_5$.
The extension cap at time $t_3$ is shaded very dark.
This extension cap is contained in a cut $\DD$, which is outlined in bold.
Note that cuts, such as $\DD$, occur in the definition of a comparison, not of a comparison domain.
\label{fig_comparison_domain}}
\end{figure}
In this section we introduce the objects and conditions that will be used to formulate and prove the main result, Theorem \ref{Thm_existence_comparison_domain_comparison}, which asserts the existence of a certain type of map between subsets of Ricci flow spacetimes.  The domain of the map will be called a {\it comparison domain} (Definition~\ref{def_comparison_domain}), and the map itself a {\it comparison} (Definition~\ref{def_comparison}).   The comparison and its domain will be subject to a number of {\it a priori assumptions} (Definitions~\ref{def_a_priori_assumptions_1_7} and \ref{def_a_priori_assumptions_7_13}).  These definitions have been tailored to facilitate an existence proof by induction over time steps.

We recommend reading the overview in Section~\ref{sec_overview} prior to reading this section, because it provides motivation for the structures defined here, and gives some indication of the role they play in the proof.  We refer the reader to Sections \ref{sec_preliminaries_I} and \ref{sec_preliminaries_II} for the definitions relevant to this section.

\subsection{Comparison domains} We begin with a definition that collects the qualitative features of the domain of our comparison map.  Additional assumptions of a quantitative nature are imposed later, in the a priori assumptions.  Loosely speaking, a comparison domain is a sequence of product domains  $\N^1,\ldots,\N^J$ defined on successive time-intervals, whose time-slices have spherical boundary (see Figure~\ref{fig_comparison_domain} for an illustration).  One observes two types of behavior near the boundary as one transitions from one product domain to the next:  boundary components can either ``recede'', or they can be filled in by $3$-balls.   In the main existence proof, the latter case corresponds to the situation when the comparison map is extended over a cap region lying in a subset that is approximated by a Bryant soliton;  for this reason, we call the closures of such $3$-balls extension caps.

\begin{definition}[Comparison domain]
\label{def_comparison_domain}
A {\bf comparison domain (defined over the time-interval $[0,t_J]$)} in a Ricci flow spacetime $\M$ is a triple $(\N, \lb \{\N^j\}_{1\leq j\leq J}, \lb \{ t_j \}_{j=0}^J)$, where:
\begin{enumerate}
\item \label{pr_7.1_1} The times $0=t_0<\ldots<t_J$ partition the time-interval $[0,t_J]$.
Each $\N^j$ (for $1 \leq j \leq J$) is a subset of  $\M_{[t_{j-1},t_j]}$, and $\N=\cup_{1\leq j\leq J}\N^j\subset \M_{[0,t_J]}$.
\item \label{pr_7.1_2} For all $1 \leq j \leq J$ the subset $\N^j$  is closed in $\M$, and is a product domain, in the sense of Definition~\ref{def_product_domain}. 
\item \label{pr_7.1_3} For all $1\leq j<J$, we have $\partial \N^{j+1}_{t_j} \subset \Int \N^{j}_{t_j}$.
Here $\Int \N^{j}_{t_j}$ denotes the interior of $\N^j_{t_j}$ inside $\M_{t_j}$.  Consequently, the difference $\N^{j+1}_{t_{j}} \setminus \Int \N^j_{t_{j}}$ is a closed subset of $\M_{t_j}$ that is a domain with smooth boundary, with boundary contained in $\N^j_{t_j}$.
\item \label{pr_7.1_4} For every $1\leq j<J$,
	\label{item_3_disk} the components of $\N^{j+1}_{t_{j}} \setminus \Int \N^j_{t_{j}}$ are $3$-disks, which are called {\bf  extension caps}. 
\end{enumerate}
For any $t<t_J$, we define the forward  time-$t$ slice $\N_{t+}$ of $\N$ to be the set  of accumulation points of $\N_{\bar t}$ as $\bar t\searrow t$, and  if $t=t_J$ we define $\N_{t+}=\N_t$.  
We define the backward time-slices $\N_{t-}$ similarly, but taking accumulation points as $\bar t\nearrow t$, and when $t=  0$, we put $\N_{0-}=\N_0$.   
Thus if $t\in (t_{j-1},t_j)$ then $\N_{t\pm}=\N^j_t$ and $\N_{t_j-}=\N^j_{t_j}$, $\N_{t_j+}=\N^{j+1}_{t_j}$ if $1\leq j<J$.  Observe that $\N_t=\N_{t-}\cup \N_{t+}$.

In the case $J = 0$ the comparison domain $(\N = \emptyset, \{ \}, \{ t_0 \} )$ is called the {\bf empty comparison domain}.

When there is no chance of confusion, we will sometimes abbreviate $(\N,\{\N^j\}_{j=1}^J, \lb \{ t_j \}_{j=0}^J)$ by $\N$.  
\end{definition}

\bigskip

\subsection{Comparisons}
Next, we collect the basic properties of our comparison maps between Ricci flow spacetimes.  Roughly speaking, a comparison is a map between Ricci flow spacetimes that is defined on a comparison domain.  
Away from the transition times, the inverse of this map  solves the harmonic map heat flow equation for the evolving metrics, or equivalently, the pullback metric satisfies the Ricci-DeTurck equation.  At a transition time, the comparison is extended over the extension caps.  In order to guarantee a good interpolation, it is necessary to adjust the comparison over a region that is much larger than the extension cap.  
As a consequence, the comparison, when viewed as a map between spacetimes, may have jump discontinuities near every extension cap.  The discontinuity locus is contained in a disjoint union of closed disks, which we  will call {\it cuts} (see  Figure~\ref{fig_comparison_domain} for an illustration).

In the following definition, we allow a comparison to be defined on a shorter time-interval than the comparison domain.   This is done for technical reasons having to do with a two part induction argument.  More specifically, in Section \ref{sec_inductive_step_extension_comparison_domain}, we will analyze a comparison that is defined on an entire comparison domain (over a time-interval $[0, t_J]$) and then extend the comparison domain by one time-step (to the time-interval $[0, t_{J+1}]$), without extending the comparison itself.  So we will end up with a comparison domain that is defined up to some time $t_{J+1}$, while the comparison itself still remains defined only up to time $t_J$.

\begin{definition}[Comparison] \label{def_comparison}
Let $\M$, $\M'$ be Ricci flow spacetimes and consider a comparison domain $(\N, \lb \{ \N^j \}_{j=1}^J, \lb \{ t_j \}_{j=0}^J )$ defined over the time-interval $[0,t_J]$ in $\M$.

A triple $(\Cut, \phi, \{ \phi^j \}_{j = 1}^{J^*} )$ is a {\bf comparison from $\M$ to $\M'$ defined on $(\N, \lb \{ \N^j \}_{j=1}^J, \lb \{ t_j \}_{j=0}^J)$ (over the time-interval $[0,t_{J^*}]$)} if:
\begin{enumerate}
\item \label{pr_7.2_1} $J^* \leq J$.
\item \label{pr_7.2_2} $\Cut = \Cut^1 \cup \ldots \cup \Cut^{J^*-1}$, where each $\Cut^j$ is a collection of pairwise disjoint $3$-disks inside $\Int \N_{t_j+}$.  
\item \label{pr_7.2_3} Each $\DD \in \Cut$ contains exactly one extension cap of the domain $(\N, \lb \{ \N^j \}_{j=1}^J, \lb \{ t_j \}_{j=0}^J)$ and every extension cap of $(\N, \lb \{ \N^j \}_{j=1}^J, \lb \{ t_j \}_{j=0}^J)$ that is contained in $\M_{[0,t_{J^*-1}]}$ is contained in one element of $\Cut$.
\item \label{pr_7.2_4} Each $\phi^j : \N^j  \to \M'$ is a  time-preserving diffeomorphism onto its image.  More precisely, $\phi^j$ may be extended to a diffeomorphism onto its image defined on an open neighborhood of $\N^j$ in the manifold with boundary $\M_{[t_{j-1},t_j]}$.
\item \label{pr_7.2_5} If $J^* \geq 1$, then $\phi : \cup_{j=1}^{J^*} \N^j \setminus \cup_{\DD \in \Cut}\DD \to \M'$ is a continuous map that is smooth on the interior of $ \cup_{j=1}^{J^*} \N^j  \setminus \cup_{\DD\in \Cut} \DD$. 
If $J^* = 0$, then we assume that $\phi : \emptyset \to \emptyset$ is the trivial map.
\item \label{pr_7.2_6} $\phi = \phi^j$ on the open time slab $\N^j_{(t_{j-1}, t_j)}$ for all $j = 1, \ldots, J^*$.
\item \label{pr_7.2_7} For all $j = 1, \ldots, J^*$, the inverse map $(\phi^j)^{-1} : \phi^j (\N^j) \to \N^j $ evolves by harmonic map heat flow (according to Definition~\ref{def_RF_spacetime_harm_map_hf}). 
\end{enumerate} 
We define $\phi_{t_j-}$ to be $\phi^j|_{\N^j_{t_j}}$ if $0< j\leq J^*$ and  $\phi^1_0$ if $j=0$.  Similarly, we define $\phi_{t_j+}$ to be $\phi^j|_{\N^{j+1}_{t_j}}$ if $0\leq j<J^*$ and  $\phi^{J^*}|_{\N^{J^*}_{t_{J^*}}}$ if $j=J^*$.
\end{definition}

We remark that Definition~\ref{def_comparison} implies that $\phi$ is injective, and that $\phi^{-1}$  satisfies the harmonic map heat flow equation everywhere it is defined.

Note that by Definition~\ref{def_comparison}, the only comparison in the case $J^* = 0$ is the trivial comparison $(\Cut = \emptyset, \phi : \emptyset \to \emptyset, \emptyset$).

As explained in Subsection~\ref{subsec_RdT_hmhf}, a map whose inverse is evolving by harmonic map heat flow induces a Ricci-DeTurck flow on its domain.
We will now use this fact to define the Ricci-DeTurck perturbation associated with a comparison.

\begin{definition}[Associated Ricci-DeTurck perturbation]
Consider a comparison domain $(\N, \lb \{ \N^j \}_{j=1}^J, \lb \{ t_j \}_{j=0}^J)$ in a Ricci flow spacetime $\M$ that is defined  over the time-interval $[0,t_J]$ and a comparison $(  \Cut, \linebreak[1] \phi, \linebreak[1] \{ \phi^j \}_{j=1}^{J^*})$ from $\M$ to $\M'$ defined on this domain  over the time-interval $[0, t_{J^*}]$ for some $J^* \leq J$.

Define $h := \phi^* g' - g$ on  $\N \setminus \cup_{\DD \in \Cut} \DD$ and $h^j := (\phi^j)^* g' - g$ on $\N^j$ for all $1 \leq j \leq J^*$.
Then we say that $(h, \{ h^j \}_{j=1}^{J^*})$ is the {\bf associated Ricci-DeTurck perturbation} for $(  \Cut, \linebreak[1] \phi, \linebreak[1] \{ \phi^j \}_{j=1}^{J^*})$.
Moreover, for  $1 \leq j \leq J^*$ we set $h_{t_j-} := h^j_{t_j}$, and define $h_{t_0-}:=h^1_0$.
Likewise, for $0 \leq j \leq J^* - 1$ we set $h_{t_j+} := h^{j+1}_{t_j}$ and $h_{t_{J^*}+}=h^{J*}_{t_{J^*}}$.
\end{definition}

Note that by Lemma~\ref{lem_hmhf_RdTperturbation_spacetime} the tensors $h$ and $h^j$ are Ricci-DeTurck perturbations in the sense of Definition~\ref{def_RdT_perturbation_on_MM}.

\bigskip

\subsection{A priori assumptions I: the geometry of the comparison domain}
Next, we introduce a priori assumptions for a comparison $(\Cut, \linebreak[1] \phi, \linebreak[1] \{ \phi^j \}_{j=1}^{J^*})$ defined on a comparison domain $(\N, \linebreak[1] \{ \N^j \}_{j=1}^J, \linebreak[1] \{ t_j \}_{j=0}^J)$.
We first state the first six a priori assumptions, \ref{item_time_step_r_comp_1}--\ref{item_eta_less_than_eta_lin_13}, which characterize the more geometric properties of the comparison domain and the comparison.  These are the only a priori assumptions needed to implement the first part of the main induction argument, in Section~\ref{sec_inductive_step_extension_comparison_domain}.

To make it easier to absorb the list of conditions, we make some informal preliminary remarks.
The construction of the comparison domain and comparison involves a comparison scale $r_{\comp}$.  Most of the a priori assumptions impose conditions at scales that are defined relative to $r_{\comp}$.  
For instance, the final time-slice of each product domain $\N^j$ of the comparison domain is assumed to have boundary components that are central 2-spheres of necks at scale $r_{\comp}$.
Moreover, we assume the comparison domain to be $\lambda r_{\comp}$-thick and to contain all $\Lambda r_{\comp}$-thick points at integral time-slices.   
These and similar characterizations will be made in a priori assumptions \ref{item_time_step_r_comp_1}--\ref{item_backward_time_slice_3}.

In addition,  we impose  two assumptions,  \ref{item_discards_not_too_thick_4} and \ref{item_geometry_cap_extension_5},   that restrict the situations when a component can be discarded or added,  respectively.  
To appreciate the role of these two conditions, the reader may wish to imagine a scenario when a Bryant-like cap region in $\M$  evolves through a range of scales, initially well below $\lambda r_{\comp}$, then well above $\Lambda r_{\comp}$,  possibly fluctuating between these over a time scale $\gg r_{\comp}^2$.  
Then initially the cap region will lie outside the comparison domain, because its scale is too small, and later it will necessarily lie in the comparison domain, because it has scale $>\Lambda r_{\comp}$.   
A priori assumptions   \ref{item_discards_not_too_thick_4} and \ref{item_geometry_cap_extension_5} ensure that these events occurs when the tip of the cap has scale in the range approximately $(\lambda r_{\comp},10\lambda r_{\comp})$, and that they do not occur unnecessarily too often.

Finally, a priori assumption \ref{item_eta_less_than_eta_lin_13} states that the comparison itself is an almost isometry of high enough precision.

We mention that a priori assumptions \ref{item_time_step_r_comp_1}--\ref{item_eta_less_than_eta_lin_13} depend on a number of parameters, which will be chosen in the course of this paper.
Also,  as with Definition~\ref{def_comparison}, in the following definition we do not require a comparison to be defined on the entire comparison domain (see the discussion before Definition~\ref{def_comparison}).

\begin{definition}[A priori assumptions \ref{item_time_step_r_comp_1}--\ref{item_eta_less_than_eta_lin_13}]
\label{def_a_priori_assumptions_1_7}
Let $(\N, \lb \{ \N^j \}_{j=1}^J, \lb \{ t_j \}_{j=0}^J)$ be a comparison domain in a Ricci flow spacetime $\M$ that is defined over  the time-interval $[0,t_J]$ and consider a comparison $(  \Cut, \linebreak[1] \phi, \linebreak[1] \{ \phi^j \}_{j=1}^{J^*})$ from $\M$ to $\M'$ on this domain to another Ricci flow spacetime $\M'$ that is defined over  the time-interval $[0, t_{J^*}]$ for some $J^* \leq J$.

We say that {\bf $(\N, \linebreak[1] \{ \N^j \}_{j=1}^J, \linebreak[1] \{ t_j \}_{j=0}^J)$  and $\linebreak[1] (\Cut, \linebreak[1] \phi, \linebreak[1] \{ \phi^j \}_{j=1}^{J^*})$  satisfy a priori assumptions \ref{item_time_step_r_comp_1}--\ref{item_eta_less_than_eta_lin_13}  with respect to the tuple of parameters $(\eta_{\lin}, \lb \delta_{\nn}, \lb \lambda, \lb D_{\CAP}, \lb \Lambda, \lb  \delta_{\bb}, \lb \eps_{\can}, \lb r_{\comp})$}  if the following holds:
\begin{enumerate}[label=(APA \arabic*), leftmargin=* ]
\item 
\label{item_time_step_r_comp_1}  We have $t_j = j \cdot  r_{\comp}^2$ for each $0 \leq j \leq J$.
\item 
\label{item_lambda_thick_2} All points in  $\N$ are $\lambda r_{\comp}$-thick.
\item 
\label{item_backward_time_slice_3} For every $1\leq j\leq J$, the backward time-slice $\N_{t_j-}=\N^j_{t_j}$ has the following properties: 
	\begin{enumerate}
	\item \label{item_central_2_spheres} The boundary components of $\N_{t_j-}$ are central $2$-spheres of $\delta_{\nn}$-necks at scale $r_{\comp}$.
	\item $\N_{t_j -}$ contains all $\Lambda r_{\comp}$-thick points of $\M_{t_j}$. 
	\item \label{item_contains_Lambda_thick_point} Each component of $\N_{t_j-}$ contains a $\Lambda r_{\comp}$-thick point. 
	\item \label{item_10_lambda_thick} Each component of $\M_{t_j} \setminus \Int \N_{t_j-}$ with non-empty boundary contains a $10\lambda r_{\comp}$-thin point.
	\item The points on each cut $\DD \in \Cut$ are $\Lambda r_{\comp}$-thin.
	\end{enumerate}

\item \label{item_discards_not_too_thick_4} \textit{(Discarded disks become thin)} \quad Suppose  $1 \leq j \leq J$,  and  $\mathcal{C}$ is a component of $\N_{t_{{j-1}} -} \setminus \Int \N_{t_{{j-1}}+}$ (if $j \geq 2$) or $\M_{0} \setminus \Int \N_{0+}$ (if $j=1$) such that:  	\ben
	\item  $\C$ is diffeomorphic to a $3$-disk.
	\item   $\D\C\subset \N_{t_{j-1}+}$.
	\een
Then  either $\C$ does not survive until time $t_j$ (as in Definition \ref{def_points_in_RF_spacetimes}) or for some time $t\in [t_{j-1},t_j]$ we can find a weakly $\lambda r_{\comp}$-thin point on $\mathcal{C}(t)$ (recall the notation $\C(t)$ from Definition~\ref{def_points_in_RF_spacetimes}.)

\item \label{item_geometry_cap_extension_5} \textit{(Geometry of extension caps)} \quad 
For each $1 \leq j \leq J^*$ and every component $\C$ of $\M_{t_j} \setminus \Int \N_{t_j-}$ the following holds.

$\C$ is an extension cap of $(\N, \{\N^j \}_{j=1}^J, \{ t_j \}_{j=0}^J)$ if and only if there is a component $\C'$ of $\M'_{t_j} \setminus \phi_{t_j-} ( \Int \N_{t_j -} )$ such that:
		\begin{enumerate}
		\item $\C$ and $\C'$ are $3$-disks.
		\item $\partial \C' = \phi_{t_j-} (\partial \C)$.
		\item   There is a point $x\in \C$ such that $(\M_{t_j},x)$ is $\de_{\bb}$-close to the pointed Bryant soliton $(M_{\Bry},g_{\Bry},x_{\Bry})$ at scale $10\lambda r_{\comp}$.
		\item There is a point $x'\in \M'_{t_j}$, at distance $\leq D_{\CAP} r_{\comp}$ from $\C'$, such that $(\M'_{t_j},x')$ is $\de_{\bb}$-close to the pointed Bryant soliton $(M_{\Bry},g_{\Bry},x_{\Bry})$  at some scale in the interval $[D_{\CAP}^{-1}r_{\comp},D_{\CAP}r_{\comp}]$.
		\item $\C$ and $\C'$ have diameter $\leq D_{\CAP}r_{\comp}$.
		\end{enumerate}

\item \label{item_eta_less_than_eta_lin_13} Consider the Ricci-DeTurck perturbation $(h, \{ h \}_{j=1}^{J^*})$ associated to the comparison $(  \Cut, \linebreak[1] \phi, \linebreak[1] \{ \phi^j \}_{j=1}^{J^*})$.
If $J^* \geq 1$, then $|h| \leq \eta_{\lin}$ on $ \cup_{j=1}^{J^*} \N^j  \setminus \cup_{\DD \in \Cut} \DD$.
Moreover, the $\eps_{\can}$-canonical neighborhood assumption holds at scales $(0,1)$ on $\cup_{j=1}^{J^*} \phi^j (\N^j)$. 
\end{enumerate}
\end{definition}

We point out that a priori assumptions \ref{item_time_step_r_comp_1}--\ref{item_discards_not_too_thick_4}  are conditions on the comparison domain only.  On the other hand, a priori assumption \ref{item_geometry_cap_extension_5} places restrictions on extension caps in terms of the comparison map and the local geometry of the image.  This is to ensure that extension caps arise only when the geometry of the domain and target are nice enough to allow an extension of a comparison on that is a precise enough almost isometry.

\begin{figure}
\labellist
\small\hair 2pt
\pinlabel {no $Q^*$-bound} at 1225 422
\pinlabel $\cdots$ at 1730 535
\pinlabel $\cdots$ at 2120 305
\pinlabel $\cdots$ at 2120 95
\pinlabel {no $Q$-bound} at 1150 870
\pinlabel {cut} at 1720 750
\endlabellist
\centering
\includegraphics[width=130mm]{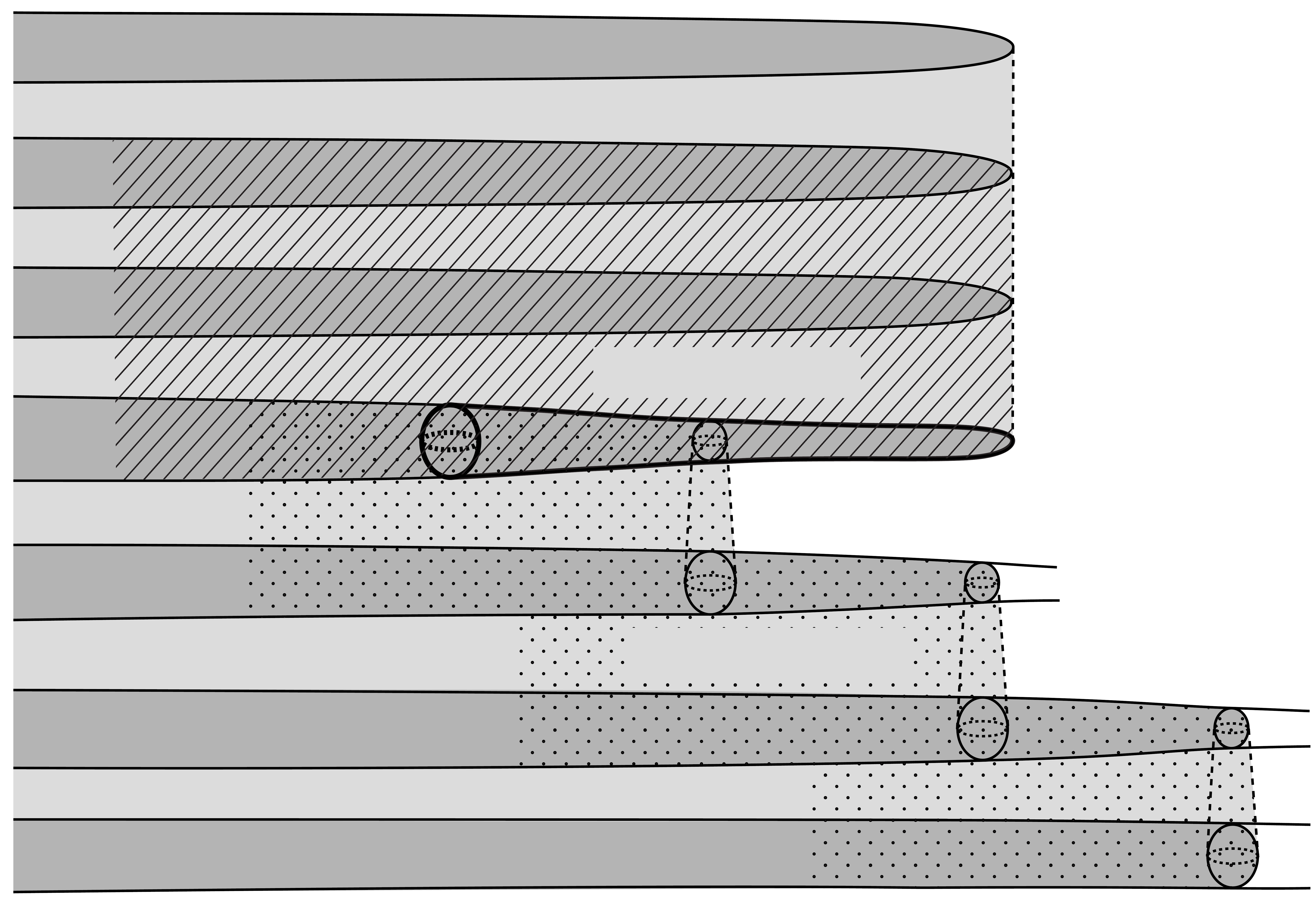}
\caption{The bound $Q \leq \ov{Q}$ in \ref{item_q_less_than_q_bar_6} holds on all of $\N$ except for the hatched region.
The bound $Q^* \leq \ov{Q}^*$ in \ref{item_q_star_less_than_q_star_bar} holds on all of $\N$ except for the dotted region.\label{fig_QQstar_regions}}
\end{figure}

\subsection{A priori assumptions II: analytic conditions on the comparison}
Lastly, we introduce a further set of a priori assumptions, a priori assumptions \ref{item_q_less_than_q_bar_6}--\ref{item_apa_13}, which characterize the behavior of the perturbation $h$ and the geometry of the cuts more precisely.
These assumptions will become important in Section~\ref{sec_extend_comparison}, where we will extend the comparison by one time-step onto a larger comparison domain.

We now give a brief overview of a priori assumptions \ref{item_q_less_than_q_bar_6}--\ref{item_apa_13}.
A priori assumptions \ref{item_q_less_than_q_bar_6}--\ref{item_h_derivative_bounds_9} impose global bounds on the Ricci-DeTurck perturbation $h$ via two quantities $Q$ and $Q^*$.
These bounds essentially introduce a pointwise weight, which depends on the curvature scale $\rho$ and time.  
A priori assumption \ref{item_q_less_than_q_bar_6} imposes a bound on $Q$ on the comparison domain,  on the complement of forward parabolic neighborhoods of cuts.
Similarly, a priori assumption \ref{item_q_star_less_than_q_star_bar} imposes a bound on $Q^*$ at points of the comparison domain that are far enough away from its neck-like boundary.
For an illustration of the domains on which these bounds do or do not hold, see Figure~\ref{fig_QQstar_regions}.
A priori assumption \ref{item_q_less_than_w_q_bar_7} introduces a weaker bound on $Q$, which holds essentially everywhere on the comparison domain.
Note that the constant $W$ in this bound will be chosen to be large.
Therefore, a priori assumption \ref{item_q_less_than_w_q_bar_7} will not directly imply a priori assumption \ref{item_q_less_than_q_bar_6}.

A priori assumption \ref{item_h_derivative_bounds_9} states that $Q^*$ is small on each cut and a priori assumption \ref{item_q_less_than_nu_q_bar_12} guarantees a good bound on $Q$ and $Q^*$ on the initial time-slice.
A priori assumption \ref{item_cut_diameter_less_than_d_r_comp_11} controls the geometry of the cuts. Lastly, a priori assumption \ref{item_apa_13} imposes a bound on $t_J$.

\begin{definition}[A priori assumptions \ref{item_q_less_than_q_bar_6}--\ref{item_apa_13}]
\label{def_a_priori_assumptions_7_13}
Let $(\N, \lb \{ \N^j \}_{j=1}^J, \lb \{ t_j \}_{j=0}^J)$ be a comparison domain in a Ricci flow spacetime $\M$ that is defined on the time-interval $[0,t_J]$ and consider a comparison $(  \Cut, \linebreak[1] \phi, \linebreak[1] \{ \phi^j \}_{j=1}^{J^*})$ on this domain to another Ricci flow spacetime $\M'$ that is defined on the same time-interval $[0, t_J]$.

We say that {\bf $( \N, \linebreak[1] \{ \N^j \}_{j=1}^J, \linebreak[1] \{ t_j \}_{j=0}^J)$ and $(\Cut, \linebreak[1] \phi, \linebreak[1] \{ \phi^j \}_{j=1}^J)$ satisfy  a priori assumptions \ref{item_q_less_than_q_bar_6}--\ref{item_apa_13}  with respect to the tuple of parameters $( T, \lb E, \lb H, \lb \eta_{\lin}, \lb \nu, \lb \lambda, \lb \eta_{\cut},  \lb  D_{\cut}, \lb W, \lb A, \lb r_{\comp})$  if the following holds.}
Define the functions
\[ Q := e^{H (T-\mathfrak{t})} \rho_1^E |h| , \qquad Q^* := e^{ H(T- \mathfrak{t})} \rho_1^3 |h|  \]
on $\N \setminus \cup_{\DD \in \Cut} \DD$, where $\mathfrak{t} : \M \to [0, \infty)$ is the time-function.
On $\N_{t_j \pm}$ we denote by $Q_{\pm}$ and $Q^*_{\pm}$ the corresponding values for $h_{t_j \pm }$.
We also set $Q_{\pm} := Q$ on $\N \setminus \cup_{\DD \in \Cut} \DD$.
Set 
\[ \ov{Q} := 10^{-E-1} \eta_{\lin} r_{\comp}^{E}, \qquad \ov{Q}^*:=10^{-1} \eta_{\lin} (\lambda r_{\comp})^{3} .\]
Then
\begin{enumerate}[start= 7, label=(APA \arabic*), leftmargin=* ]
\item \label{item_q_less_than_q_bar_6}\textit{($Q\leq \ov{Q}$ if no cuts in nearby past)} \quad For all $x \in \N \setminus \cup_{\DD \in \Cut} \DD$ for which $P(x, A \rho_1 (x)) \cap \DD = \emptyset$ for all $\DD \in \Cut$, we have
\[ Q(x) \leq \ov{Q}. \]
Note that the bound is also required to hold if $P(x, A \rho_1 (x))  \not\subset \N$.

\item \label{item_q_less_than_w_q_bar_7} We have
\[ Q \leq W \cdot \ov{Q} \qquad \text{on} \qquad  \N \setminus \cup_{\DD \in \Cut} \DD.\]

\item \label{item_q_star_less_than_q_star_bar} \textit{($Q^*\leq\ov{Q}^*$ away from time-slice boundary)} \quad For all $x \in \N \setminus \cup_{\DD \in \Cut} \DD$ for which $B(x, A \rho_1 (x)) \subset \N_{\t(x)-}$, we have
\[ Q^*(x) \leq \ov{Q}^*. \]

\item \label{item_h_derivative_bounds_9}  On every cut $\DD \in \Cut$, we have  
\[ Q^*_+ \leq \eta_{\cut} \ov{Q}^*. \]
\item \label{item_cut_diameter_less_than_d_r_comp_11} For every cut $\DD \in \Cut$, $\DD \subset \M_{t_j}$, the following holds: The diameter of $\DD$ is less than   $D_{\cut} r_{\comp}$ and $\DD$ contains a $\frac1{10} D_{\cut} r_{\comp}$-neighborhood of the extension cap  $\C = \DD \setminus \Int\N_{t_j -}$.

\item  \label{item_q_less_than_nu_q_bar_12} We have $Q \leq \nu \ov{Q}$ and $Q^* \leq \nu \ov{Q}^*$ on $\N_0$ (i.e. at time $0$).

\item \label{item_apa_13} We have $t_J \leq T$.
\end{enumerate}
\end{definition}

Note that a priori assumptions \ref{item_q_less_than_q_bar_6}--\ref{item_apa_13} are vacuous if $J = 0$.

We briefly comment on the purpose of a priori assumptions \ref{item_q_less_than_q_bar_6}--\ref{item_q_star_less_than_q_star_bar}.

As explained in Section~\ref{sec_overview}, a priori assumption \ref{item_q_less_than_q_bar_6}, the bound $Q \leq \ov{Q}$, serves as a main ingredient for the Bryant Extension Principle, as long as $E$ is chosen large enough.
It will also be used to ensure that $|h| \leq \eta_{\lin}$ at most points of the comparison domain.

Note however that $\ov{Q}$ is chosen such that the bound $Q \leq \ov{Q}$ only implies $|h| \leq \eta_{\lin}$ when $\rho_1 \gtrsim r_{\comp}$.
So it does not imply $|h| \leq \eta_{\lin}$ everywhere on the comparison domain.
Unfortunately, we will not be able to remedy this issue by replacing $\ov{Q}$ in a priori assumption \ref{item_q_less_than_q_bar_6} by a smaller constant, as our solution of the harmonic map heat flow will introduce an error of magnitude depending on $\delta_{\nn}$ near the neck-like boundary of $\N$.

More specifically, assuming that the bound $Q\leq \ov Q$ holds near the neck-line boundary, which has scale $\approx r_{\comp}$, then errors   would force $ \ov Q\gtrsim r_{\comp}^E\eta'$ where $\eta'=\eta'(\delta_{\nn})$.  On the other hand, since we would want the inequality $Q\leq \ov Q$  to enforce  the bound $|h|\leq \eta_{\lin}$  everywhere in $\N$, and since $\N$ may contain points of scale $\approx\lambda r_{\comp}$, we would need to have $ \ov Q\lesssim (\lambda r_{\comp})^E\eta_{\lin}$.  Combining the two inequalities, we get  $\eta'(\delta_{\nn})\lesssim \lambda^E\eta_{\lin}$, so we end up with a condition of the form $\delta_{\nn}\leq \delta_{\nn}(\eta_{\lin},\lambda)$. However, to construct the comparison domain so that its boundary consists of (roughly)  $\delta_{\nn}$-necks, we need a condition of the form $\lambda\leq \lambda(\delta_{\nn})$, which is incompatible.  In summary, the constant $\ov{Q}$ cannot be chosen such that a priori assumption \ref{item_q_less_than_q_bar_6} is both weak enough to hold near the boundary of $\N$ and strong enough to imply $|h| \leq \eta_{\lin}$ at all points of scale $\gtrsim \lambda r_{\comp}$.

The bound $Q^* \leq \ov{Q}^*$ in a priori assumption \ref{item_q_star_less_than_q_star_bar}, on the other hand, automatically implies $|h| \leq \eta_{\lin}$ everywhere on $\N$.
However, we are not imposing it near the neck-like boundary of $\N$.

Lastly, note that the bound $Q \leq \ov{Q}$ may be violated after a Bryant Extension construction.
Therefore, we have not imposed it in a priori assumption \ref{item_q_less_than_q_bar_6} at points that lie in the near future of cuts.
At these points, the bound $Q^* \leq \ov{Q}^*$ will be used to guarantee $|h| \leq \eta_{\lin}$.
Moreover, the bound $Q \leq W \ov{Q}$ from a priori assumption \ref{item_q_less_than_w_q_bar_7} will be used to partially retain a priori assumption \ref{item_q_less_than_q_bar_6} in the future of a cut.
Using the interior decay from Subsection~\ref{sec_semi_local_max}, this bound can in turn be improved to the bound $Q \leq \ov{Q}$ from a priori assumption \ref{item_q_less_than_q_bar_6} after a sufficient time.

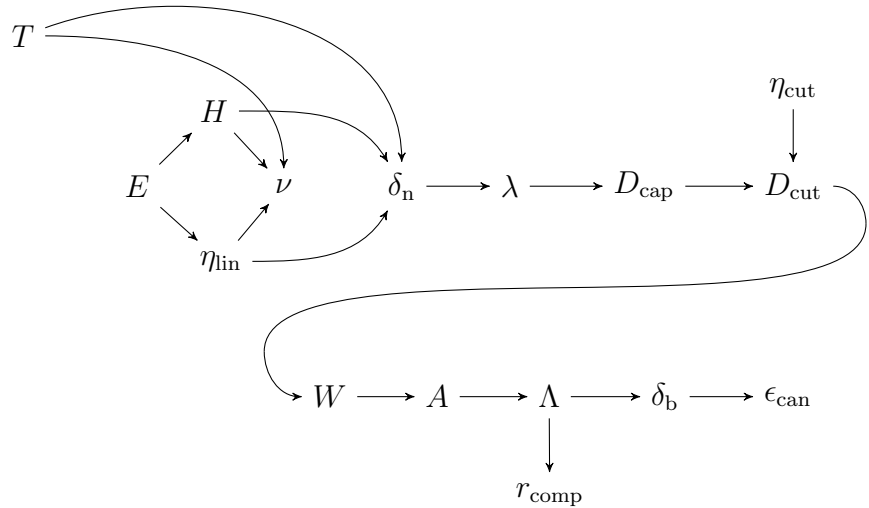
\begin{figure}
\begin{tikzpicture}[->,>=stealth',auto,node distance=3cm,main node/.style={circle,draw,font=\sffamily\Large\bfseries}]
  \node[anchor=west] at (0,2) (T) {$T$};
  \node[anchor=west] at (1.5,0) (E) {$E$};
  \node[anchor=west] at (2.5,1) (H) {$H$};
  \node[anchor=west] at (2.5,-1) (etalin) {$\eta_{\lin}$};
  \node[anchor=west] at (3.5,0) (nu) {$\nu$};
  \node[anchor=west] at (5,0) (deltan) {$\delta_{\nn}$};
  \node[anchor=west] at (6.5,0) (lambda) {$\lambda$};
  \node[anchor=west] at (8,0) (Dcap) {$D_{\CAP}$};
  \node[anchor=west] at (10,0) (Dcut) {$D_{\cut}$};
  \node[anchor=west] at (10.07,1.3) (etacut) {$\eta_{\cut}$};
  \node[anchor=west] at (4,-2.8) (W) {$W$};
  \node[anchor=west] at (5.5,-2.8) (A) {$A$};
  \node[anchor=west] at (7,-2.8) (Lambda) {$\Lambda$};
  \node[anchor=west] at (6.7,-4.1) (rcomp) {$r_{\comp}$};
   \node[anchor=west] at (8.5,-2.8) (deltab) {$\delta_{\bb}$};
   \node[anchor=west] at (10,-2.8) (epscan) {$\eps_{\can}$};
 \draw[->]  (E) edge (H)  (etalin) edge (nu) (E) edge (etalin) (H) edge (nu) (deltan) edge (lambda) (lambda) edge (Dcap) (Dcap) edge (Dcut) (etacut) edge (Dcut) (W) edge (A) (A) edge (Lambda) (Lambda) edge (deltab) (deltab) edge (epscan) (Lambda) edge (rcomp);
 \draw (T) edge[out=0,in=90,->] (nu);
 \draw (T) edge[out=20,in=90,->] (deltan);
 \draw (H) edge[out=0,in=120,->] (deltan);
 \draw (etalin) edge[out=0,in=240,->] (deltan);
 \draw[-] (Dcut) edge[out=0,in=90, looseness=1] (11.5,-0.5);
 \draw[-] (11.5,-0.5) edge[out=-90,in=90, looseness=0.5]  (3.5,-2.2) ;
 \draw[<-] (W) edge[in=-90,out=180, looseness=1] (3.5,-2.2);
\end{tikzpicture}
\caption{All restrictions on the parameters that will be imposed throughout this paper.
Each parameter in this graph can be chosen depending only on the parameters that can be reached by following the arrows backwards.
Note that the graph does not contain any oriented circles.\label{fig_parameter_dep}}
\end{figure}

\subsection{Parameter order}\label{subsec_parameter_order}
As mentioned earlier, the a priori assumptions, as introduced in the last two subsections, involve several parameters, which will need to be chosen carefully in the course of this paper.
Each step of our construction will require that certain parameters be chosen sufficiently small/large depending on certain other parameters.
In order to show that these parameters can eventually be chosen such that all restrictions are met, we need to ensure that these restrictions are not circular.
For this purpose, we introduce the following parameter order:
\[
 T, \hspace{1.2mm} E,  \hspace{1.2mm} H, \hspace{1.2mm} \eta_{\lin}, \hspace{1.2mm} \nu, \hspace{1.2mm} \delta_{\nn}, \hspace{1.2mm} \lambda, \hspace{1.2mm} D_{\CAP}, \hspace{1.2mm} \eta_{\cut}, \hspace{1.2mm} D_{\cut}, \hspace{1.2mm} W, \hspace{1.2mm} A, \hspace{1.2mm} \Lambda, \hspace{1.2mm} \delta_{\bb}, \hspace{1.2mm} \eps_{\can},  \hspace{1.2mm} r_{\comp} 
\]
In the entire paper, we will require each parameter to be chosen depending only on preceding parameters in this list.
So parameters can eventually be chosen successively in the order indicated by this list.

For a more detailed picture of all the parameter restrictions imposed in this paper see Figure~\ref{fig_parameter_dep}.
These restrictions also appear in the preamble of our main technical result, Theorem~\ref{Thm_existence_comparison_domain_comparison}.
Note that, as these restrictions are not completely linear, there are several admissible parameter orders.
We have chosen the above parameter order, because we found it to be most intuitive.

We advise the first-time reader that it is not necessary to follow all parameter restrictions in detail when going through the proofs of this paper.
Instead, it suffices to check that the above parameter order is obeyed in each step.

\section{Preparatory results}
\label{sec_preparatory_results}
In this section we collect a variety of technical results that will be needed in the proof of the main theorem.  
These are based on definitions from Sections~\ref{sec_preliminaries_I}--\ref{sec_apas}.  
The reader may wish to skim (or skip) this section on a first reading.   

\subsection{Consequences of the canonical neighborhood assumption}
The completeness and canonical neighborhood assumptions, as introduced in Definitions~\ref{def_completeness} and \ref{def_canonical_nbhd_asspt} lead in a straightforward way to local bounds on geometry, including local control on curvature and its derivatives, as well as control on neck and non-neck structure.  We begin this subsection with a few such results (Lemmas~\ref{lem_properties_kappa_solutions_cna}--\ref{lem_containment_parabolic_nbhd}), and then use them to deduce  control on scale distortion of bilipschitz maps (Lemma~\ref{lem_scale_distortion}), self-improvement of necks (Lemma~\ref{lem_C0_neck_smooth_neck}) and scale bounds near necks (Lemma~\ref{lem_thick_close_to_neck}).

Our first two results are direct consequences of the definition of the canonical neighborhood assumption, and properties of $\kappa$-solutions.

\begin{lemma}\label{lem_properties_kappa_solutions_cna}
The following hold:
\begin{enumerate}[label=(\alph*)]
\item 
\label{item_p_x_a_scale_curvature_bound} 
For every $A<\infty$ there is a constant $C=C(A)<\infty$ such that if
$$
\eps_{\can}\leq \ov\eps_{\can}(A)
$$
and a Riemannian manifold $M$ satisfies the $\eps_{\can}$-canonical neighborhood assumption at $x\in M$, then the following holds on the ball $B(x, A \rho(x))$ for all $0 \leq m \leq A$
\begin{gather*}
 \rho=\sqrt{3} R^{-\frac12}\,,\qquad C^{-1}\rho(x)\leq \rho\leq C\rho(x), \qquad
|\nabla^m \Rm|\leq C \rho^{-2-m}(x) .
\end{gather*} 

\item \label{item_dt_r_bound} There is a $C<\infty$ such that if
$$
\eps_{\can}\leq\ov\eps_{\can}
$$
and $\M$ is a Ricci flow spacetime that satisfies the $\eps_{\can}$-canonical neighborhood assumption at some point $x \in \M$, then 
$$
|\D_t\rho^2| (x) =3|\D_tR^{-1}| (x)\leq C\,.
$$
\item
\label{item_rho_squared_derivative_small} Given $\delta>0$,  if
$$
\eps_{\can}\leq\ov\eps_{\can}(\delta)
$$
and $\M$ is a Ricci flow spacetime that  satisfies the $\eps_{\can}$-canonical neighborhood assumption at some point $x \in \M$, then 
$$
\D_t\rho^2 (x) =3\D_tR^{-1} (x) \leq \delta\,.
$$

\end{enumerate}
\end{lemma}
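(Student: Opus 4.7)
The plan is to deduce all three assertions from the definition of the $\eps_{\can}$-canonical neighborhood assumption by transferring to $(M,g)$ (or to a time slice of $\M$) known properties of the $\kappa$-solution model at scale $\lambda$, where the relevant $\kappa$-solution properties are collected in Lemma~\ref{lem_kappa_solution_properties_appendix} of the appendix.

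For \ref{item_p_x_a_scale_curvature_bound}, the CNA supplies a pointed $\kappa$-solution $(\ov M,\ov g_0,\ov x)$ with $\rho(\ov x,0)=1$ and a diffeomorphism $\psi:B^{\ov M}(\ov x,\eps_{\can}^{-1})\to M$ such that $\lambda^{-2}\psi^*g$ is $\eps_{\can}$-close to $\ov g_0$ in $C^{[\eps_{\can}^{-1}]}$ for some $\lambda>0$. The compactness of pointed $3$-dimensional $\kappa$-solutions normalized by $\rho(\ov x,0)=1$ gives universal constants $C_m=C_m(A)$ with $C_0^{-1}\leq \ov\rho\leq C_0$ and $|\ov\nabla^m\ov\Rm|\leq C_m$ on $B^{\ov M}(\ov x,2A)$ for all $0\leq m\leq A$. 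Since any $\kappa$-solution has $R>0$ and $\sec\geq 0$, Lemma~\ref{lem_rho_Rm_R} yields $\ov\rho=\sqrt{3}\,\ov R^{-1/2}$ on the model. Choosing $\ov\eps_{\can}(A)$ small enough that $\eps_{\can}^{-1}$ exceeds $2A$ by a fixed margin and that the $C^{A}$-closeness allows the constants $C_m$ to be replaced by $2C_m$, all three properties pull back to $B(x,A\rho(x))$ after rescaling by $\lambda$; comparability $\lambda\asymp\rho(x)$ comes from the $C^0$-closeness and $\rho(\ov x,0)=1$.

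For \ref{item_dt_r_bound}, apply \ref{item_p_x_a_scale_curvature_bound} with a small fixed value of $A$ and $m=2$, and use the identity $\rho^2=3R^{-1}$, so that $\partial_{\t}\rho^2=-3R^{-2}\partial_{\t}R$. Under the Ricci flow equation $\partial_{\t}R=\Delta R+2|\Ric|^2$, and from \ref{item_p_x_a_scale_curvature_bound} we obtain $|\Delta R|(x)\leq C\rho^{-4}(x)$ and $|\Ric|^2(x)\leq C\rho^{-4}(x)$, hence
\[
|\partial_{\t}\rho^2|(x)=3R^{-2}|\partial_{\t}R|(x)\leq 3\bigl(\tfrac{\rho^2(x)}{3}\bigr)^2\cdot C\rho^{-4}(x)\leq C'.
\]

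For \ref{item_rho_squared_derivative_small}, the key extra input is the sign: on any $\kappa$-solution $\partial_t R\geq 0$ (Lemma~\ref{lem_kappa_solution_properties_appendix}), so $\partial_t\ov\rho^2\leq 0$ identically. I will argue by contradiction. Suppose there exist $\delta>0$, Ricci flow spacetimes $\M_i$, and points $x_i\in\M_i$ satisfying the $\eps_{\can,i}$-CNA with $\eps_{\can,i}\to 0$ and $\partial_{\t}\rho^2(x_i)>\delta$. Rescale so $\rho(x_i)=1$ and let $(\ov M_i,\ov g_{i,0},\ov x_i)$ be the corresponding model $\kappa$-solutions. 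By \ref{item_p_x_a_scale_curvature_bound}, the pullbacks $\psi_i^*g_i$ converge smoothly on compact subsets at time $0$ to the time-$0$ slice of some pointed $\kappa$-solution $(\ov M_\infty,\ov g_{\infty,0},\ov x_\infty)$ with $\ov\rho(\ov x_\infty,0)=1$. The quantity $\partial_{\t}\rho^2=-3R^{-2}(\Delta R+2|\Ric|^2)$ is determined by the $2$-jet of the time-slice metric, so it depends continuously on this smooth convergence, giving $\partial_{\t}\rho^2(x_i)\to \partial_t\ov\rho^2(\ov x_\infty,0)\leq 0$, contradicting $\partial_{\t}\rho^2(x_i)>\delta$. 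The only real obstacle is this transition from spatial $C^k$-closeness to control on a time derivative; it is resolved by noting that the Ricci flow equation expresses $\partial_{\t}R$ as a purely spatial operator applied to $g$, reducing the matter to the already established smooth compactness of $\kappa$-solutions.
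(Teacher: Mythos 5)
Your proposal reaches the same conclusions by essentially the same route as the paper — transferring $\kappa$-solution properties across the canonical neighborhood closeness, and using the fact that $\partial_t R$ is a universal function of the spatial jet $(\Rm,\nabla\Rm,\nabla^2\Rm)$ — but parts \ref{item_dt_r_bound} and \ref{item_rho_squared_derivative_small} are packaged differently. The paper proves \ref{item_dt_r_bound} and \ref{item_rho_squared_derivative_small} at once by directly transferring the universal $\kappa$-solution bound $-C\leq\partial_t R^{-1}\leq 0$ from Lemma~\ref{lem_kappa_solution_properties_appendix}\ref{ass_C.1_e}, using that $F$ is uniformly continuous on the compact set of normalized $\kappa$-solution jets (compactness supplied by Lemma~\ref{lem_kappa_solution_properties_appendix}\ref{ass_C.1_d}); you instead prove \ref{item_dt_r_bound} by the explicit computation $\partial_t\rho^2=-3R^{-2}(\Delta R+2|\Ric|^2)$ together with the bounds from \ref{item_p_x_a_scale_curvature_bound}, and you prove \ref{item_rho_squared_derivative_small} by a compactness contradiction. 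Both are valid; the paper's version has the small advantage of not requiring a separate limit argument.

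The one place where your write-up has a real (though easily repaired) imprecision is in the contradiction argument for \ref{item_rho_squared_derivative_small}. You assert that the models $(\ov M_i,\ov g_{i,0},\ov x_i)$ subsequentially converge to the time-$0$ slice of a $\kappa$-solution "by \ref{item_p_x_a_scale_curvature_bound}," but \ref{item_p_x_a_scale_curvature_bound} only supplies curvature bounds. The convergence to an honest limiting $\kappa$-solution uses Lemma~\ref{lem_kappa_solution_properties_appendix}\ref{ass_C.1_b}, which requires $\kappa\geq\underline\kappa$; by Lemma~\ref{lem_kappa_solution_properties_appendix}\ref{ass_C.1_a} this fails only when the $\ov M_i$ are shrinking round spherical space forms, a case in which the injectivity radius at $\ov x_i$ may degenerate and no subsequential Cheeger--Gromov limit to a $\kappa$-solution exists. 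You should either split off this case (on a shrinking round spherical space form the scale-invariant quantity $\partial_t\rho^2$ equals $-1$, so $\partial_t\rho^2(x_i)\to-1$, an immediate contradiction), or avoid the ambient limit altogether and argue at the level of $2$-jets: by \ref{item_p_x_a_scale_curvature_bound}/Lemma~\ref{lem_kappa_solution_properties_appendix}\ref{ass_C.1_d} the normalized $\kappa$-solution jets form a compact set, $F\leq 0$ on that set, and the jets at $x_i$ converge into it — which is precisely the paper's mechanism.
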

\begin{proof}
Assertion \ref{item_p_x_a_scale_curvature_bound} follows from the definition of the canonical neighborhood assumption, 
assertions \ref{ass_C.1_c} and \ref{ass_C.1_d} of Lemma~\ref{lem_kappa_solution_properties_appendix} and Lemma~\ref{lem_rho_Rm_R}.

For assertions \ref{item_dt_r_bound} and \ref{item_rho_squared_derivative_small} we recall that in  a Ricci flow the time derivative $\D_tR(x)$ may be expressed as a universal continuous function $F(\Rm(x),\nabla\Rm(x), \nabla^2 \Rm (x))$ of  spatial curvature derivatives.  
Now assertions \ref{item_dt_r_bound} and \ref{item_rho_squared_derivative_small} follow from the definition of the canonical neighborhood assumption, and
assertion \ref{ass_C.1_e} of Lemma~\ref{lem_kappa_solution_properties_appendix}.
\end{proof}

\begin{lemma} \label{lem_neck_or_cap}
For every $\delta > 0$ there is a constant $C_0=C_0(\delta) < \infty$ such that if
\[ \eps_{\can} \leq \ov\eps_{\can} (\delta), \]
then the following holds.

Assume that $(M,g)$ is a Riemannian manifold that satisfies the $\eps_{\can}$-canonical neighborhood assumption at some point $x \in M$.
Then  one of the following hold:
\begin{enumerate}[label=(\alph*)]
\item \label{ass_8.2_a} $x$ is the center of a $\delta$-neck at scale $\rho(x)$.
\item \label{ass_8.2_b} There is a compact, connected domain $V \subset \M_t$ with connected (possibly empty boundary) such that the following hold:
\begin{enumerate}[label=(\arabic*)]
\item  $B(x,\delta^{-1} \rho(x)) \subset V$.
\item $\rho (y_1)  < C_0 \rho(y_2)$ for all $y_1, y_2 \in V$.
\item $\diam V < C_0 \rho(x)$.
\item If $\D V \neq \emptyset$, then: 
	\ben
	\item $\partial V$ is a central $2$-sphere of a $\delta$-neck.
	\item Either $V$ is  a 3-disk or is diffeomorphic  to a twisted interval bundle over $\R P^2$.
	\item Any two points $z_1, z_2 \in \partial V$ can be connected by a continuous path inside $\partial V$ whose length is less than \[ \min \{ d (z_1, x),  d (x, z_2) \} - 100 \rho (x). \]
	\een
\item If $V$ is diffeomorphic to a twisted interval bundle over $\R P^2$, then $\rho (y_1) < 2 \rho(y_2)$ for all $y_1, y_2 \in V$.
\end{enumerate}
\end{enumerate}

\end{lemma}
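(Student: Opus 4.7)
The plan is to pull back the classical structure theory for $3$-dimensional $\kappa$-solutions through the approximating diffeomorphism provided by the canonical neighborhood assumption.

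First, unpack Definition~\ref{def_canonical_nbhd_asspt}: there exists a $\kappa>0$, a $3$-dimensional $\kappa$-solution $(\ov M,(\ov g_t)_{t\le 0})$, a point $\ov x\in\ov M$ with $\rho(\ov x,0)=1$, a scale $\lambda>0$, and a diffeomorphism $\psi:B^{\ov M}(\ov x,\eps_{\can}^{-1})\to M$ with $\psi(\ov x)=x$ satisfying
\[
\bigl\|\lambda^{-2}\psi^*g-\ov g_0\bigr\|_{C^{[\eps_{\can}^{-1}]}}<\eps_{\can}.
\]
From Lemma~\ref{lem_properties_kappa_solutions_cna}\ref{item_p_x_a_scale_curvature_bound} (or directly from $C^0$-closeness of the metrics) one gets $\lambda/\rho(x)\to 1$ as $\eps_{\can}\to 0$, together with uniform two-sided bounds on $\rho(\psi(\ov y))/\rho(\ov y,0)$ for $\ov y$ in any fixed ball around $\ov x$.

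Next, invoke the classification/structure statements for $3$-dimensional $\kappa$-solutions (recorded in Lemma~\ref{lem_kappa_solution_properties_appendix}): for some $\delta_1=\delta_1(\delta)\ll\delta$ and $C_0'=C_0'(\delta_1)$, every point $\ov y$ in the time-$0$ slice of a $\kappa$-solution either (i) is the center of a $\delta_1$-neck at scale $\rho(\ov y,0)$, or (ii) lies in a compact, connected domain $\ov V\subset\ov M$ of diameter $\le C_0'\rho(\ov y,0)$, with bounded variation of $\rho$, containing $B^{\ov M}(\ov y,2\delta^{-1}\rho(\ov y,0))$, and whose boundary (if nonempty) is a central $2$-sphere of a $\delta_1$-neck; moreover such a $\ov V$ is either a $3$-disk or a twisted interval bundle over $\R P^2$, and in the latter case $\ov M$ is a quotient of a shrinking cylinder so $\rho$ is nearly constant. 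The tip/boundary separation built into the classification also yields the desired path-length estimate inside $\partial\ov V$: boundary $2$-spheres of the bounding neck have small diameter compared with their distance to the ``tip region'' containing $\ov x$.

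Now split into the two cases for $\ov x$. In case (i), apply $\psi$: if $\eps_{\can}\le\ov\eps_{\can}(\delta)$, the composition of $\psi$ with the neck chart of the model produces a chart exhibiting $x$ as the center of a $\delta$-neck at scale $\rho(x)$, giving conclusion \ref{ass_8.2_a}. In case (ii), choose $\eps_{\can}$ small enough (depending on $\delta$, $C_0'$) so that $\ov V\subset B^{\ov M}(\ov x,\tfrac12\eps_{\can}^{-1})$; then $V:=\psi(\ov V)$ is well-defined, and the bounds on diameter, curvature scale ratio, and neck structure of $\partial\ov V$ transfer to $V$ with $\delta_1\to\delta$, up to enlarging constants by a fixed factor. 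The topology assertions, the containment $B(x,\delta^{-1}\rho(x))\subset V$, and the path-length inequality pull back directly through $\psi$. In the twisted $\R P^2$-bundle subcase, the final factor-$2$ bound on $\rho(y_1)/\rho(y_2)$ follows from the near-constancy of $\rho$ on the model.

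The main subtlety, and the step I would be most careful with, is the path-length condition (iii): one must verify that the ``cap'' component $\ov V$ produced by the $\kappa$-solution structure theory genuinely has its boundary neck far enough from $\ov x$ for the inequality $\length(\gamma)<\min\{d(z_1,\ov x),d(\ov x,z_2)\}-100\rho(\ov x,0)$ to hold. This forces one to choose the threshold $\delta_1$ and the domain $\ov V$ so that the boundary $2$-sphere has diameter $\ll \delta_1^{-1}\rho$ while the distance from $\partial\ov V$ to $\ov x$ is $\gtrsim\delta_1^{-1}\rho$; both follow from the neck structure of the boundary at scale $\rho$ on $\partial\ov V$ and the fact that $\ov V$ was chosen to contain $B^{\ov M}(\ov x,2\delta^{-1}\rho)$, but one must track the constants carefully through the passage from $\delta_1$ back to $\delta$ and through $\psi$.
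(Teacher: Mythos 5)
Your proof plan is correct and takes essentially the same route as the paper: the paper's entire proof is to cite Lemma~\ref{lem_neck_or_cap_kappa_solution} (the corresponding structure dichotomy for $\kappa$-solutions, with the same conclusions including the path-length bound and the $\R P^2$ twisted-bundle case) and then transfer it through the approximating diffeomorphism supplied by the canonical neighborhood assumption, exactly as you describe. One small correction: the $\kappa$-solution structure result you need is Lemma~\ref{lem_neck_or_cap_kappa_solution}, not Lemma~\ref{lem_kappa_solution_properties_appendix} as cited in your draft.
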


\begin{proof}
This follows immediately from Lemma \ref{lem_neck_or_cap_kappa_solution} using the definition of the canonical neighborhood assumption.
\end{proof}

\begin{lemma}\label{lem_unscathedness_criterion}
Suppose $\M$ is an $(r_0, t_0)$-complete Ricci flow spacetime.  If for some $r > r_0$ we have $\rho > r$ on  a parabolic  neighborhood $P(x,a,b) \subset \M_{[0,t_0]}$, then it is unscathed. 
\end{lemma}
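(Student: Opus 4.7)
The definition of ``unscathed'' splits into two requirements: (i) $B(x,a)$ is relatively compact in $\M_t$, and (ii) each $y\in B(x,a)$ survives throughout the ambient time-interval, namely $I_y\supset[t,t+b]$ if $b\geq 0$ and $I_y\supset[t+b,t]\cap[0,\infty)$ if $b<0$. I will verify the two in the order (ii), (i); in both, the key mechanism is the same --- the hypothesis $\rho>r>r_0$ throughout $P(x,a,b)\subset\M_{[0,t_0]}$ activates the relevant clause of the $(r_0,t_0)$-completeness hypothesis.

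For (ii), given $y\in B(x,a)$, run the $\D_\t$-trajectory $\gamma_y$ forward (when $b\geq 0$) or backward (when $b<0$) from $y$. As long as $\gamma_y(s)\in P(x,a,b)$, one has $\rho(\gamma_y(s))>r_0$ and $\gamma_y(s)\in\M_{[0,t_0]}$, so clause (2) of Definition~\ref{def_completeness} applies: if $I_y$ failed to cover the required interval, a limit point $y^\ast$ of $\gamma_y$ at the missing endpoint would exist in $\M$, and smoothness of $\D_\t$ would allow one to prolong $\gamma_y$ past $y^\ast$, contradicting maximality of $I_y$ (the only admissible termination is a backward trajectory already lying on $\D\M=\M_0$).

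For (i), the plan is a continuity-in-radius argument. Let $S:=\{a'\in[0,a]\mid\overline{B(x,a')}\text{ is compact in }\M_t\}$, which contains $[0,\delta]$ for some $\delta>0$ by local compactness of the smooth manifold $\M_t$, and is upward-open in $[0,a]$ via a standard finite-subcover argument (uniform local compactness on the compact set $\overline{B(x,a')}$ produces a single $\delta'>0$ for which $\overline{B(x,a'+\delta')}$ is compact). If $a\notin S$, set $a^\ast:=\sup S<a$; the key step is to show $a^\ast\in S$, since upward-openness at $a^\ast$ would then contradict $a^\ast=\sup S$. Given a sequence $(z_i)\subset B(x,a^\ast)$ with no convergent subsequence, compactness of $\overline{B(x,s)}$ for each $s<a^\ast$ forces the terms to cluster near the outer boundary; passing to a subsequence, assume $d_t(x,z_i)\to a^\ast$. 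Choose unit-speed minimizing geodesics $\gamma_i:[0,\ell_i]\to\M_t$ from $x$ to $z_i$ with $\ell_i=d_t(x,z_i)\to a^\ast$. Since $\gamma_i(s)\in\overline{B(x,s)}$ (compact for $s<a^\ast$) and the $\gamma_i$ are uniformly $1$-Lipschitz, Arzel\`a--Ascoli together with a diagonal argument extracts a $1$-Lipschitz limit path $\gamma_\infty:[0,a^\ast)\to\M_t$ with $\gamma_i(s)\to\gamma_\infty(s)$ pointwise on $[0,a^\ast)$.

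Since $\gamma_\infty([0,a^\ast))\subset B(x,a)$, we have $\rho\circ\gamma_\infty>r_0$; and its length is at most $a^\ast<\infty$. Clause (1) of Definition~\ref{def_completeness} therefore yields a limit $y_\infty:=\lim_{s\nearrow a^\ast}\gamma_\infty(s)\in\M_t$. A three-term triangle inequality, exploiting $d_t(z_i,\gamma_i(s))=\ell_i-s$, $d_t(\gamma_\infty(s),y_\infty)\leq a^\ast-s$, and $d_t(\gamma_i(s),\gamma_\infty(s))\to 0$, then forces $z_i\to y_\infty$, contradicting the choice of $(z_i)$. Hence $a^\ast\in S$, delivering the required contradiction and proving (i). The main obstacle is precisely the construction of $\gamma_\infty$: one must interlace the partial compactness of $\overline{B(x,s)}$ for $s<a^\ast$ with the Arzel\`a--Ascoli/diagonal extraction before the completeness hypothesis can finally be invoked. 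Once $\gamma_\infty$ is in hand, clause (1) closes matters quickly, exactly as clause (2) handled (ii).
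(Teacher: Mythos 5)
Your proof is correct, and it follows the same two-pronged structure as the paper's: clause (2) of Definition~\ref{def_completeness} handles survival of trajectories, and clause (1) handles the compactness of $\ol{B(x,a)}$. The difference is purely in how the compactness half is executed. The paper observes that any unit-speed geodesic issuing from $x$ stays inside $B(x,a)$ (where $\rho>r>r_0$) so long as its length is $<a$; clause (1) therefore lets it be extended to length $a$, so $\exp_x$ is defined on all of $\ol{B(0,a)}\subset T_x\M_t$, and then $\ol{B(x,a)}=\exp_x(\ol{B(0,a)})$ is compact as the continuous image of a compact set --- two sentences in total. You instead hand-roll a version of this Hopf--Rinow-type argument: define the set $S$ of good radii, argue $S$ is downward-closed and upward-open, and show $a^\ast=\sup S\in S$ by extracting an Arzel\`a--Ascoli limit of minimizing geodesics, feeding that limit path into clause (1), and concluding with a triangle inequality. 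All the individual steps work (including the existence of minimizing geodesics to $z_i$, which uses $\ol{B(x,s)}$ compact for $s\in(d(x,z_i),a^\ast)$ to run the direct method, and the reduction from $\ol{B(x,a^\ast)}$ to $B(x,a^\ast)$). What the paper's version buys is brevity: it invokes the standard fact that geodesic extendability to length $a$ makes $\exp_x$ a globally defined continuous map on the closed $a$-ball in the tangent space, which packages your entire compactness argument in one step.
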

\begin{proof}
Let $t=\t(x)$.  From the $(r_0,t_0)$-completeness of $\M$, any unit speed geodesic in $\M_t$ starting at $x$ can be extended up to a length of at least $a$.   
Therefore the exponential map $\exp_x:T_x\M_t\supset \ol{B(0,a)}\ra \M_t$ is well-defined, and has compact image $\exp_x(\ol{B(0,a)})=\ol{B(x,a)}$.  If $y\in B(x,a)$, then since $\rho> r$ on $P(x,a,b)$, it follows from $(r_0,t_0)$-completeness that $y(\bar t)$ is defined on $[t,t+b]$ if $b>0$ or $[t+b,t]\cap [0,\infty)$ if $b<0$. 
\end{proof}

Next, we  derive a few results based on the bounds in Lemma~\ref{lem_properties_kappa_solutions_cna}. 

\begin{lemma}[Scale nearly constant on small two-sided parabolic balls]
\label{lem_forward_backward_control}
If  $L>1$ and 
$$
\eta\leq\ov\eta(L)\,,\qquad \eps_{\can}\leq \ov\eps_{\can},
$$ 
then the following holds.

Suppose $0<r  \leq 1$ and $\M$ is an $(\eps_{\can}r, t_0)$-complete Ricci flow spacetime satisfying the $\eps_{\can}$-canonical neighborhood assumption at scales $(\eps_{\can}r,1)$.  If for some point $x\in \M_t$ with $t \in [0,t_0]$ we have $\rho_1(x)\geq r$, then the parabolic neighborhoods $P_{\pm} := P(x, \eta \rho_1 (x), \pm (\eta \rho_1 (x))^2) \cap \M_{[0,t_0]}$ are unscathed and
\begin{equation}
\label{eqn_rho_1_upper_lower_bound}
L^{-1}\rho_1(x)\leq \rho_1\leq L\rho_1(x)
\end{equation}
on $P_+ \cup P_-$.
\end{lemma}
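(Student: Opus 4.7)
The proof is a continuity argument in the spatial radius. Write $r_0 := \rho_1(x) \in [r,1]$ and set $\eta_0 := \eta r_0$; the argument for $P_+$ is given, the argument for $P_-$ being identical after reversing time. Consider the set
\[ S = \{ s \in [0,\eta_0] : P(x,s,s^2) \cap \M_{[0,t_0]} \text{ is unscathed, and } L^{-1/2} r_0 \leq \rho_1 \leq L^{1/2} r_0 \text{ holds there}\}. \]
Clearly $0 \in S$ and $S$ is relatively closed in $[0,\eta_0]$, so it suffices to show that $S$ is relatively open, after which $S = [0,\eta_0]$ and the lemma follows (the factor $L^{1/2}$ is absorbed into $L$).

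\textbf{Pointwise Lipschitz bounds.} The main input is Lemma~\ref{lem_properties_kappa_solutions_cna}. For $\eps_{\can}$ sufficiently small, at any point $y$ with $\rho(y) \in (\eps_{\can}r, 1)$, part~(a) with $A=1$ yields $\rho^{-2} = \tfrac13 R$ smooth with $|\nabla \rho^{-2}|(y) \leq C\rho^{-3}(y)$, while part~(c) gives $|\partial_\t \rho^2|(y) \leq \delta$ for any preselected $\delta > 0$ at the cost of further shrinking $\eps_{\can}$ depending on $\delta$. Thus $\rho^{-2}$ is Lipschitz in space with constant $\lesssim \rho^{-3}$ and Lipschitz in time with constant $\lesssim \delta\rho^{-4}$, wherever the canonical neighborhood assumption applies.

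\textbf{Openness step.} Fix $s \in S$ with $s < \eta_0$. The lower scale bound $\rho_1 \geq L^{-1/2} r_0 \geq L^{-1/2} r > \eps_{\can} r$ together with Lemma~\ref{lem_unscathedness_criterion} yields unscathedness of $P(x,s,s^2) \cap \M_{[0,t_0]}$, and by $(\eps_{\can}r,t_0)$-completeness this extends to an unscathed $P(x,s+\alpha,(s+\alpha)^2) \cap \M_{[0,t_0]}$ for sufficiently small $\alpha > 0$. Joining $x$ to any point $y$ in the enlarged neighborhood by a $\partial_\t$-trajectory of length $\leq \eta_0^2$ followed by a time-slice geodesic of length $\leq \eta_0$, and integrating the Lipschitz bounds along the path (using $\rho \geq L^{-1/2} r_0$ to bound the integrand), yields
\[ |\rho^{-2}(y) - r_0^{-2}| \leq C(L)(\eta + \delta)\,r_0^{-2}. \]
For $\eta \leq \ov\eta(L)$ and $\eps_{\can}$ (hence $\delta$) small enough this strictly improves the scale bound on the enlarged neighborhood, proving openness.

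\textbf{Main obstacle.} The principal subtlety is the interplay between the truncation $\rho_1 = \min\{\rho,1\}$ and the bounded window $(\eps_{\can}r,1)$ on which the canonical neighborhood assumption is available. This is resolved by splitting into cases: if $r_0 \geq L^{-1/2}$, the upper half $\rho_1 \leq L^{1/2} r_0$ is automatic since $\rho_1 \leq 1$, and only the lower half of the scale bound must be propagated, with the Lipschitz estimates keeping $\rho$ bounded away from $\eps_{\can} r$; if $r_0 < L^{-1/2}$, the target bound forces $\rho_1 \leq L^{1/2} r_0 < 1$ throughout, so the canonical neighborhood assumption is available everywhere the argument uses it. In either regime the continuity step closes and the lemma follows.
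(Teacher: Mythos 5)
Your proof follows the same underlying idea as the paper — invoke the derivative bounds on $\rho$ supplied by Lemma~\ref{lem_properties_kappa_solutions_cna} via the canonical neighborhood assumption, and integrate them along a short curve from $x$ to $y$, with a bootstrap to keep $\rho$ in the admissible range — but you package it as an explicit continuity argument over the spatial radius $s$, whereas the paper integrates directly along a single concatenated path (spatial geodesic at time $\t(x)$, then the $\partial_{\t}$-trajectory), building the bootstrap into the remark that the derivative bounds apply ``wherever $\rho_1\circ\gamma_i > \eps_{\can}r$.''  Two details in your write-up need repair.  First, Lemma~\ref{lem_properties_kappa_solutions_cna}(c) gives only the \emph{one-sided} bound $\partial_{\t}\rho^2 \leq \delta$, reflecting that the Harnack inequality on $\kappa$-solutions is one-sided; the two-sided bound is $|\partial_{\t}\rho^2| \leq C_0$ from part~(b), and $C_0$ is a fixed constant, not small.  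This does not sink the argument, because the time leg of your path has length $(\eta\rho_1(x))^2$, so even the fixed bound produces an $O(\eta^2)$ relative change in $\rho_1^2$ — which is all you need — but as written the $\delta$-term in the estimate $|\rho^{-2}(y)-r_0^{-2}| \lesssim (\eta+\delta)r_0^{-2}$ is not justified by what you cite.  Second, traversing the path in the order ``time first, then space'' requires a distance-distortion estimate to conclude that the spatial leg inside $\M_{\bar t}$ has length comparable to $\eta\rho_1(x)$; this is true (distortion factor $e^{O(\eta^2)}$ under the a priori curvature bound), but you simply assert ``a time-slice geodesic of length $\leq\eta_0$'' without saying why.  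The paper's ordering — spatial geodesic at time $\t(x)$ first, then flow forward or backward — avoids the issue entirely, since both legs are then bounded by the very definition of the parabolic neighborhood.  With these two patches your argument is correct, but it is not simpler than the paper's more direct integration.
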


\begin{proof}
If 
$$
\eps_{\can}\leq\ov\eps_{\can},
$$
then by Lemma \ref{lem_rho_Rm_R} and assertions \ref{item_p_x_a_scale_curvature_bound} and \ref{item_dt_r_bound} of Lemma \ref{lem_properties_kappa_solutions_cna} there is a constant $C_0<\infty$ such that near any point that satisfies the $\eps_{\can}$-canonical neighborhood assumption we have
\begin{equation}\label{eqn_rho_gradient_estimate}
 |\nabla \rho|,|\D_{\t} \rho^2|  \leq C_0\,.
\end{equation}
Now choose a point $y\in P_\pm$, and let $\ga : [0,a] \to \M$ be a curve from $x$ to $y$ that is a concatenation of curves $\ga_1, \ga_2$, where $\ga_1$ is a unit speed curve  from $x$ to $y(t)$ of length $<\eta\rho_1(x)$, and $\ga_2$ is the integral curve of $\pm\D_{\t}$ from $y(t)$ to $y$.   Then by (\ref{eqn_rho_gradient_estimate}), for $i=1,2$, we have 
\begin{equation}
\label{eqn_curve_derivative_estimate}
|(\rho_1\circ \ga_1)'(s)|\leq C_0\,,\qquad |(\rho_1^2\circ\ga_2)'(s)|\leq C_0
\end{equation}
wherever the derivatives are defined and $\rho_1\circ\ga_i(s)> \eps_{\can}r$.  Therefore if 
$$
\eta\leq\ov\eta(L),
$$
then (\ref{eqn_rho_1_upper_lower_bound}) follows by integrating the derivative  bound (\ref{eqn_curve_derivative_estimate}).
The fact that $P_{\pm}$ are unscathed follows from Lemma~\ref{lem_unscathedness_criterion}.
\end{proof}

\begin{lemma}[Backward survival control]
\label{lem_lambda_thick_survives_backward}
If $\delta>0$, $A<\infty$ and
\[\eps_{\can}\leq\ov\eps_{\can}(\delta,A)\,,\]
then the following holds.

Suppose $r>0$ and $\M$ is an $(\eps_{\can}r, t_0)$-complete Ricci flow spacetime satisfying the $\eps_{\can}$-canonical neighborhood assumption at scales $(\eps_{\can} r,r)$.  
Let $x \in \M_t$ with $t \in [0,t_0]$ and assume that $\rho(x) \geq r$.
Then $x(t')$ exists for all $\ov{t} \in [ t- Ar^2, t] \cap [0, \infty)$ and we have $\rho (x(\ov{t})) > (1-\delta) r$.
\end{lemma}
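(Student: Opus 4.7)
The plan is to use the sharp one-sided derivative estimate from Lemma~\ref{lem_properties_kappa_solutions_cna}\ref{item_rho_squared_derivative_small} combined with a continuity/open-closed argument along the backward integral curve of $\partial_\t$. First I would choose $\eps_{\can}\leq\ov\eps_{\can}(\delta,A)$ small enough so that Lemma~\ref{lem_properties_kappa_solutions_cna}\ref{item_rho_squared_derivative_small} yields $\D_\t\rho^2\leq \delta_0$ at every point satisfying the $\eps_{\can}$-canonical neighborhood assumption, where $\delta_0:=(2\delta-\delta^2)/(2A)$; I would also require $\eps_{\can}<1-\delta$ so that the threshold $(1-\delta)r$ stays strictly above the completeness scale $\eps_{\can}r$.

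Next I would define
\[
I:=\bigl\{\bar t\in[\max(t-Ar^2,0),\,t]\;:\;x(\bar t)\text{ is defined and }\rho(x(\bar t))>(1-\delta)r\bigr\}.
\]
Clearly $t\in I$, and $I$ is open in $[\max(t-Ar^2,0),t]$ by continuity of $\rho$ along the integral curve of $\partial_\t$. The main content is to show $I$ is also closed. Suppose for contradiction that $I\neq[\max(t-Ar^2,0),t]$, and let $\tau:=\sup\{s:s\notin I\}<t$. On $(\tau,t]$ we have $\rho(x(\cdot))>(1-\delta)r>\eps_{\can}r$, so $(\eps_{\can}r,t_0)$-completeness applied to the backward trajectory guarantees that $x(\tau):=\lim_{\bar t\searrow\tau}x(\bar t)$ exists. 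By continuity $\rho(x(\tau))\geq(1-\delta)r$, and since $\tau\notin I$ we must have equality $\rho(x(\tau))=(1-\delta)r$.

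To derive a contradiction I would now argue \emph{forward} from $\tau$. Let $\sigma:=\inf\{s\in[\tau,t]:\rho(x(s))\geq r\}$. Since $\rho(x(t))\geq r$ we have $\sigma\leq t$, and since $\rho(x(\tau))=(1-\delta)r<r$ we have $\sigma>\tau$. For every $s\in[\tau,\sigma]$ the scale satisfies $(1-\delta)r\leq\rho(x(s))\leq r$, so in particular $\rho(x(s))\in(\eps_{\can}r,r)$, and the $\eps_{\can}$-canonical neighborhood assumption applies. Integrating $\D_s\rho^2\leq\delta_0$ over $[\tau,\sigma]$ and using continuity at $\sigma$ gives
\[
r^2-(1-\delta)^2 r^2=\rho^2(x(\sigma))-\rho^2(x(\tau))\leq\delta_0(\sigma-\tau)\leq\delta_0\cdot Ar^2,
\]
i.e.\ $(2\delta-\delta^2)\leq\delta_0 A$, contradicting the choice $\delta_0=(2\delta-\delta^2)/(2A)$. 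Thus $I=[\max(t-Ar^2,0),t]$, which is the claim.

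The only delicate step is the second one, namely ensuring that the limit $x(\tau)$ exists and the value of $\rho$ at this limit is exactly $(1-\delta)r$; this is where the $(\eps_{\can}r,t_0)$-completeness hypothesis is essential, and it requires that the threshold $(1-\delta)r$ chosen for $I$ strictly exceeds the completeness scale $\eps_{\can}r$. Everything else is a routine application of the sharp derivative bound in Lemma~\ref{lem_properties_kappa_solutions_cna}\ref{item_rho_squared_derivative_small}, where the freedom to take $\eps_{\can}$ small (depending on $\delta$ and $A$) lets us absorb the time span $Ar^2$ into the loss $\delta$ in scale.
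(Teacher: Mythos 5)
Your proof is correct, and the overall mechanism — choose $\eps_{\can}$ small so that Lemma~\ref{lem_properties_kappa_solutions_cna}\ref{item_rho_squared_derivative_small} gives a small bound on $\D_\t\rho^2$, integrate backward over a duration $\leq Ar^2$, and invoke $(\eps_{\can}r,t_0)$-completeness to extend the trajectory — is the same as the paper's. The route you took to make the integration rigorous is slightly different from the paper's. You keep $\rho$ itself and run an open-closed argument, introducing the auxiliary time $\sigma$ to handle the fact that the canonical neighborhood assumption (and hence the derivative bound) is only available where $\rho<r$; going backward from $t$, the scale $\rho(x)$ itself may be large, so the derivative bound does not apply immediately. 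The paper avoids this by replacing $\rho$ with the truncated scale $\rho_r=\min\{\rho,r\}$, for which $\rho_r(x)=r$ and the derivative bound $\frac{d}{d\bar t}\rho_r^2\leq\delta_\#$ holds (wherever the derivative exists and $\rho_r>\eps_{\can}r$) regardless of whether $\rho<r$ or $\rho\geq r$, since on the latter set $\rho_r$ is locally constant. This lets the paper integrate backward in one step, without the $\tau/\sigma$ bookkeeping. Both arguments are correct; the $\rho_r$-truncation is just a more economical way of saying what your $\sigma$-detour says explicitly.
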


\begin{proof}
Set $t := \t (x)$ and let $\delta_\#>0$ be a constant whose value we will choose at the end of the proof.
Recall that $\rho_r = \min \{ \rho, r \}$.
By assertion \ref{item_rho_squared_derivative_small} of Lemma~\ref{lem_properties_kappa_solutions_cna}, and assuming
\[ \eps_{\can} \leq \ov\eps_{\can} (\delta_\#), \]
we have
\begin{equation} \label{eq_rho_r_dt_bound}
 \frac{d}{d\ov{t}} \big( \rho^2_r (x(\ov{t}) ) \big) \leq \delta_\#  
\end{equation}
for all $\ov{t} \leq t$ for which both $x (\ov{t})$ and the derivative exist and $\rho^2_r (x(\ov{t})) > (\eps_{\can} r)^2$.
Therefore, if 
\[ \delta_\# \leq \ov\delta (\delta, A), \]
then we may integrate (\ref{eq_rho_r_dt_bound}) to obtain that $\rho_r^2 (x ( \ov{t} )) > (1-\delta ) r$ for all $\ov{t} \leq t$ for which $t - \ov{t} \leq A r^2$ and $x (\ov{t})$ is defined.
Assuming
\[ \eps_{\can} < 1 - \delta, \]
we can use the $(\eps_{\can} r, t_0)$-completeness to show that $x (\ov{t})$ is defined for all $\ov{t} \in [ t- Ar^2, t] \cap [0, \infty)$.
\end{proof}

\begin{lemma}[Bounded curvature at bounded distance] \label{lem_bounded_curv_bounded_dist}
For every $A < \infty$ there is a constant $C = C (A) < \infty$ such that if
\[ \eps_{\can} \leq \ov\eps_{\can} (A), \]
then the following holds.

Let $0 < r \leq  1$ and consider an $(\eps_{\can} r, t_0)$-complete Ricci flow spacetime  $\M$ that satisfies the $\eps_{\can}$-canonical neighborhood assumption at scales $(\eps_{\can} r, 1)$.   If $x \in \M_{[0,t_0]}$ and  $\rho_1 (x) \geq    r$, then $P(x, \lb A \rho_1 (x))$ is unscathed and we have
\begin{equation}\label{eqn_bdded_curv_bdded_dist_lem} C^{-1} \rho_1(x) < \rho_1 < C \rho_1 (x) \qquad \text{on} \qquad P(x,  A \rho_1 (x)). 
\end{equation}
\end{lemma}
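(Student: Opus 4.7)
The plan is to first establish the spatial comparability of $\rho_1$ on the slice $B(x,A\rho_1(x)) \subset \M_{\t(x)}$, then extend this bound backward in time to the full parabolic neighborhood $P(x,A\rho_1(x))$, and finally deduce unscathedness via Lemma~\ref{lem_unscathedness_criterion}.

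For the spatial bound I distinguish two cases. If $\rho(x) < 1$, so $\rho_1(x)=\rho(x)$, then $\rho(x) \in (\eps_{\can}r,1)$, the canonical neighborhood assumption applies at $x$, and Lemma~\ref{lem_properties_kappa_solutions_cna}\ref{item_p_x_a_scale_curvature_bound} with parameter $A$ yields $C(A)^{-1}\rho(x) \leq \rho \leq C(A)\rho(x)$ on $B(x,A\rho(x))$, which gives the bound on $\rho_1$ since $\rho(x) \leq 1$. If instead $\rho(x) \geq 1$, so $\rho_1(x)=1$, then either $\rho \geq \tfrac{1}{2}$ on all of $B(x, A)$ (giving the lower bound trivially, while $\rho_1 \leq 1 = \rho_1(x)$ is automatic), or by the intermediate value theorem applied along a path from $x$ there exists $y \in B(x, A)$ with $\rho(y) = \tfrac{1}{2}$. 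In the latter subcase CNA applies at $y$, so Lemma~\ref{lem_properties_kappa_solutions_cna}\ref{item_p_x_a_scale_curvature_bound} at $y$ with parameter $4A$ yields comparability of $\rho$ on $B(y, 2A) \supset B(x,A)$. Combining produces $C_1 = C_1(A)$ with $C_1^{-1}\rho_1(x) \leq \rho_1 \leq C_1\rho_1(x)$ on the spatial ball $B(x, A\rho_1(x))$.

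For the backward temporal extension, fix $y \in B(x,A\rho_1(x))$. The lower bound on $\rho$ along the backward trajectory $y(\bar t)$ follows from Lemma~\ref{lem_lambda_thick_survives_backward} applied at $y$ with $\delta=\tfrac{1}{2}$, parameter $A_{8.5}$ chosen so that $A_{8.5}\bigl(C_1^{-1}\rho_1(x)\bigr)^2 \geq A^2\rho_1(x)^2$, and scale $r' := C_1^{-1}\rho_1(x) \leq 1$; the completeness and CNA hypotheses transfer because $\eps_{\can}$ can be taken small enough (depending on $A$) to guarantee $\ov\eps_{\can}(\tfrac12,A_{8.5})\cdot r' \geq \eps_{\can} r$. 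This yields $\rho(y(\bar t)) > \tfrac{1}{2}C_1^{-1}\rho_1(x)$ for all $\bar t \in [\t(x) - A^2\rho_1(x)^2, \t(x)] \cap [0,\infty)$. For the matching upper bound, the time-derivative estimate $|\partial_\t\rho^2| \leq C_0$ from Lemma~\ref{lem_properties_kappa_solutions_cna}\ref{item_dt_r_bound} integrates to $\rho^2(y(\bar t)) \leq \rho^2(y) + C_0 A^2\rho_1(x)^2 \leq (C_1^2 + C_0 A^2)\rho_1(x)^2$, where a short case analysis using continuity of $\rho$ confirms that CNA holds along the entire backward trajectory (either $\rho$ stays below $1$, or it crosses $1$ at some intermediate time, in which case the endpoint bound forces $\rho_1(y(\bar t))=1 \leq C'\rho_1(x)$ anyway).

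Combining the spatial and temporal bounds I obtain $C^{-1}\rho_1(x) \leq \rho_1 \leq C\rho_1(x)$ throughout $P(x,A\rho_1(x))$ for some $C = C(A)$. In particular $\rho > C^{-1}r \geq \eps_{\can} r$ on this parabolic neighborhood (for $\eps_{\can}$ small enough depending on $A$), so Lemma~\ref{lem_unscathedness_criterion} implies $P(x,A\rho_1(x))$ is unscathed, and \eqref{eqn_bdded_curv_bdded_dist_lem} follows after absorbing a factor to convert the non-strict inequalities to strict ones. The main technical obstacle is Case B of the spatial step, where CNA does not hold directly at $x$; the resolution exploits continuity of $\rho$ to locate a nearby point at a scale where CNA does apply, together with the flexibility to take the parameter in Lemma~\ref{lem_properties_kappa_solutions_cna}\ref{item_p_x_a_scale_curvature_bound} as large as needed to cover $B(x, A\rho_1(x))$.
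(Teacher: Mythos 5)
Your proof is correct and follows essentially the same route as the paper: establish comparability of $\rho_1$ on the spatial ball by invoking Lemma~\ref{lem_properties_kappa_solutions_cna}\ref{item_p_x_a_scale_curvature_bound} at a suitable point where CNA applies, get the backward lower bound (and survival) via Lemma~\ref{lem_lambda_thick_survives_backward}, deduce unscathedness from Lemma~\ref{lem_unscathedness_criterion}, and close with the integrated time-derivative estimate. Minor remarks: your two-case dichotomy on $\rho(x)\lessgtr 1$ is a slightly more verbose version of the paper's unified argument (which just distinguishes whether $\rho_1\equiv 1$ on the ball or not, and otherwise picks $y$ with $\rho_1(y)\in(\tfrac12\rho_1(x),1)$ by continuity); your citation of \ref{item_dt_r_bound} for the time-derivative bound is in fact the intended reference (the paper's citation of \ref{item_p_x_a_scale_curvature_bound} at that point appears to be a typographical slip); and the parenthetical justification in your final step that $\rho_1(y(\bar t))=1\leq C'\rho_1(x)$ when $\rho$ crosses $1$ is terse but correct, since the Lipschitz bound on $\rho_1^2$ holds a.e. along the backward trajectory and integrating from the endpoint forces $\rho_1(x)\geq c(A)$ in that case.
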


\begin{proof}
We claim that there is a constant $C_1=C_1(A) < \infty$ such that 
\begin{equation}\label{eqn_scale_control_p_x_a}
C_1^{-1}\rho_1(x)\leq \rho_1\leq C_1\rho_1(x) \qquad \text{on} \qquad B(x,A\rho_1(x))\,.
\end{equation} 
This is immediate if $\rho_1\equiv 1$ on $B(x,A\rho_1(x))$, so suppose $\rho_1(y)<1$ for some $y\in B(x,A\rho_1(x))$.  By the continuity of $\rho_1$, we may choose $y$ such that $\rho_1(y)  \in ( \frac12 \rho_1(x) , 1)$.  
Applying assertion \ref{item_p_x_a_scale_curvature_bound} of Lemma~\ref{lem_properties_kappa_solutions_cna} to the ball $B(y,4A\rho(y))\supset B(x,A\rho_1(x))$, we get (\ref{eqn_scale_control_p_x_a}).

If 
\[ \eps_{\can}\leq\ov\eps_{\can}(A) , \]
then using (\ref{eqn_scale_control_p_x_a}), we may apply Lemma~\ref{lem_lambda_thick_survives_backward} at any point $z\in B (x,A\rho_1(x))$ to conclude that $\ga(\bar t)$ is defined and 
\begin{equation*}\label{eqn_rho_1_lower_bound}
\rho_1(z(\bar t))\geq \tfrac12 C_1^{-1}\rho_1(x) > \eps_{\can} r
\end{equation*}
for all $\bar t\in [t-A\rho_1(x)^2,t]\cap [0,\infty)$. 
Thus $P(x, A \rho_1 (x))$ is unscathed by Lemma~\ref{lem_unscathedness_criterion}.

Next, by assertion \ref{item_p_x_a_scale_curvature_bound} of Lemma~\ref{lem_properties_kappa_solutions_cna} there is a universal constant $C_2< \infty$ such that if
\[\eps_{\can}\leq\ov\eps_{\can}(A), \]
then for all $\bar t\in [t-A\rho_1^2(x)]\cap [0,\infty)$ we have  $|\frac{d}{d\bar t} (\rho_1^2(z(\bar t))|<C_2$, provided the derivative is defined.  Integrating this bound yields $\rho^{2}_1(z(\bar t))  \leq C^{2} \rho_1^{2}(z)$ for $C=C(A) < \infty$.   Thus (\ref{eqn_bdded_curv_bdded_dist_lem}) holds.
\end{proof}

In the next result we combine the bounded curvature at bounded distance estimate (Lemma~\ref{lem_bounded_curv_bounded_dist}) with a distance distortion estimate to find a parabolic neighborhood centered at a point $x$ that contains all parabolic neighborhoods of the form $P(y,A_2\rho_1(y))$, where $y$ varies over some parabolic neighborhood $P(x,A_1\rho_1(x))$.

\begin{lemma}[Containment of parabolic neighborhoods] \label{lem_containment_parabolic_nbhd}
For any $A_1, \lb A_2 \lb < \lb \infty$ there is a constant $A' = A' (A_1, A_2) < \infty$ with $A' \geq A_1 + A_2$ such that if
\[ \eps_{\can} \leq \ov\eps_{\can} (A_1, A_2), \]
then the following holds.

Let $0 < r \leq  1$ and consider an $(\eps_{\can} r, t_0)$-complete Ricci flow spacetime $\M$  that satisfies the $\eps_{\can}$-canonical neighborhood assumption at scales $(\eps_{\can} r, 1)$.  
If $x \in \M_{[0,t_0]}$ and $\rho_1 (x) \geq   r$, then the parabolic neighborhood $P(x, A' \rho_1 (x))$ is unscathed and we have
\begin{equation}
\begin{aligned}
 \label{eq_P_containment_lemma}
 \qquad P(y, A_2 \rho_1(y)) \subset P(x, A' \rho_1 (x)) 
 \end{aligned}
\end{equation}
for all $y \in P(x, A_1 \rho_1 (x))$. 
\end{lemma}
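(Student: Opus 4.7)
The plan is to combine the scale and curvature control of Lemma~\ref{lem_bounded_curv_bounded_dist} with Hamilton's distance distortion estimate for Ricci flow, via a continuity argument. Applying Lemma~\ref{lem_bounded_curv_bounded_dist} with $A = A_1$ (for $\eps_{\can} \leq \ov\eps_{\can}(A_1)$ small enough) shows that $P(x, A_1 \rho_1(x))$ is unscathed and $C_1^{-1} \rho_1(x) \leq \rho_1 \leq C_1 \rho_1(x)$ there, for some $C_1 = C_1(A_1)$.  For any $y \in P(x, A_1 \rho_1(x))$ this gives $\rho_1(y) \leq C_1 \rho_1(x)$, so the time extent of $P(y, A_2 \rho_1(y))$ reaches back at most $A_2^2 C_1^2 \rho_1(x)^2$ from $\t(y) \in [\t(x) - A_1^2 \rho_1(x)^2, \t(x)]$.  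This handles the time direction of the inclusion (\ref{eq_P_containment_lemma}) as soon as $(A')^2 \geq A_1^2 + A_2^2 C_1^2$.

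For the spatial direction, let $w \in P(y, A_2 \rho_1(y))$, so $w = u(s')$ with $u \in B(y, A_2 \rho_1(y)) \subset \M_s$ for $s := \t(y)$.  Writing $y = z(s)$ with $z \in B(x, A_1 \rho_1(x))$ and applying Hamilton's distance distortion estimate on the unscathed, bounded-curvature neighborhood $P(x, A_1 \rho_1(x))$ (on which $|{\Rm}| \lesssim \rho_1(x)^{-2}$), I would obtain
\[ d_s(x(s), u) \;\leq\; d_s(x(s), z(s)) + d_s(y, u) \;\leq\; e^{C(A_1)A_1^2} A_1 \rho_1(x) + A_2 C_1 \rho_1(x) \;=:\; C_3(A_1, A_2)\, \rho_1(x). \]
Next, I would apply Lemma~\ref{lem_bounded_curv_bounded_dist} once more with a large constant $A^* \gg A_1 + A_2$ to get unscathedness and a bound $|{\Rm}| \leq C_*(A^*) \rho_1(x)^{-2}$ on $P(x, A^* \rho_1(x))$, and then run a standard continuity argument:  if $\bar\sigma \in [s, t]$ is maximal such that $u(\cdot)$ exists and $d_\sigma(x(\sigma), u(\sigma)) \leq \tfrac{1}{2} A^* \rho_1(x)$ on $[s, \bar\sigma]$, then the Ricci flow distance distortion estimate gives $d_{\bar\sigma}(x(\bar\sigma), u(\bar\sigma)) \leq e^{\frac12 C_*(A^*) A_1^2} C_3 \rho_1(x)$, and choosing $A^*$ large enough that $2 C_3 e^{\frac12 C_*(A^*) A_1^2} \leq A^*$ forces $\bar\sigma = t$.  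Setting $v := u(t) \in B(x, A^* \rho_1(x))$ then yields $v(s') = w$, so $w \in P(x, A^* \rho_1(x))$ and we may take $A' := A^*$.

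The main obstacle is closing this bootstrap inequality, since both sides involve $A^*$: the distortion factor depends on the curvature bound $C_*(A^*)$, which in turn depends on the size $A^*$ of the provisional neighborhood.  This is resolved by noting that the constant $C_*(A^*)$ in Lemma~\ref{lem_bounded_curv_bounded_dist} grows in a controlled way (polynomially in $A^*$), coming from integrating the bound $|\partial_\t \rho_1^2| \leq O(1)$ of assertion \ref{item_dt_r_bound} of Lemma~\ref{lem_properties_kappa_solutions_cna} together with the ball-wise scale control of assertion \ref{item_p_x_a_scale_curvature_bound} of the same lemma.  Hence the inequality can be solved by taking $A^* = A^*(A_1, A_2)$ sufficiently large, yielding the required constant $A' = A'(A_1, A_2) \geq A_1 + A_2$, and the unscathedness of $P(x, A' \rho_1(x))$ follows directly from Lemma~\ref{lem_bounded_curv_bounded_dist}.
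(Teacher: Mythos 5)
There is a genuine gap in the way you close the bootstrap. The inequality you need to satisfy, $2 C_3 e^{\frac12 C_*(A^*) A_1^2} \leq A^*$, cannot be solved by taking $A^*$ large if $C_*(A^*)$ grows polynomially in $A^*$: the left-hand side then grows like $e^{c (A^*)^p}$, which dominates $A^*$ for any $p>0$. To close this bootstrap you would need $C_*(A^*) = O(\log A^*)$, and the bound coming from Lemma~\ref{lem_bounded_curv_bounded_dist} (which integrates $|\partial_{\t} \rho_1^2| \leq O(1)$ over a time interval of size $(A^*)^2 \rho_1^2(x)$ and combines it with the spatial scale-comparison constant) is certainly not logarithmic. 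So ``polynomial growth'' does not resolve the obstacle you identify; the inequality simply fails for all large $A^*$.

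The way to break the circularity is not to use a two-sided curvature bound on the large parabolic ball at all, but only a lower Ricci bound whose constant can be made \emph{independent of the radius}. This is what the paper does. Via Lemma~\ref{lem_bounded_curv_bounded_dist} one gets a lower bound $\rho_1 > c(A')\rho_1(x)$ on $P(x, 2A'\rho_1(x))$. Since the canonical neighborhood assumption makes the geometry $\eps_{\can}$-close to a $\kappa$-solution at scale $\rho(z)$, and $\kappa$-solutions have $\Ric \geq 0$, one gets $\Ric(z) \geq -\eps'(\eps_{\can})\rho^{-2}(z) \geq -\eps' c^{-2}(A') \rho_1^{-2}(x)$. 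Now the $A'$-dependence sits \emph{inside} the small parameter, so choosing $\eps_{\can} \leq \ov\eps_{\can}(A')$ absorbs it, giving the uniform bound $\Ric \geq -C_2 \rho_1^{-2}(x)$ with $C_2$ universal. Flowing a length-minimizing curve from time $\t(y)$ to time $\t(x)$ (a time interval of size at most $A_1^2\rho_1^2(x)$, independent of $A'$) then produces a distortion factor $e^{C_2 A_1^2}$, and one reads off a closed-form choice $A' = A_1 + C_1(A_1) A_2 e^{C_2 A_1^2} + \sqrt{A_1^2 + A_2^2}$ by a direct contradiction argument rather than a continuity argument. Your continuity framework could likely be repaired by replacing the two-sided $|{\Rm}|$ bound with this Ricci lower bound, but as written the proof does not go through.
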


\begin{proof}
We first use Lemma~\ref{lem_bounded_curv_bounded_dist}, assuming
\[ \eps_{\can} \leq \ov\eps_{\can} (A_1), \]
to argue that $P(x, A_1 \rho_1 (x))$ is unscathed and 
\begin{equation} \label{eq_rho_C1A1_containment}
\rho_1 < C_1 (A_1) \rho_1 (x) \qquad \text{on} \qquad P(x, A_1 \rho_1 (x))
\end{equation}
for some $C_1 = C_1 (A_1) < \infty$.

The constant $A'$ will be determined in the course of the proof.
Again, by Lemma~\ref{lem_bounded_curv_bounded_dist}, assuming
\begin{equation} \label{eq_eps_A_1_A_2_bound}
 \eps_{\can} \leq \ov\eps_{\can} (A'), 
\end{equation}
we find that $P(x, 2A' \rho_1 (x))$ is unscathed and that $\rho_1 > c_2 (A') \rho_1 (x) \lb \geq c_2 r \lb > \eps_{\can} r$ on it.
At any point $z \in P(x, 2A' \rho_1 (x))$ with $\rho_1 (z) < 1$ the curvature operator is close to that of a $\kappa$-solution.
Since $\kappa$-solutions have non-negative Ricci curvature, we can argue that
\[ \Ric \geq - c_2^2 (A') \rho^{-2} (z) \geq - \rho_1^{-2} (x) \]
at $z$  if we assume a bound of the form (\ref{eq_eps_A_1_A_2_bound}).
On the other hand, at any $z \in P(x, 2A' \rho_1 (x))$ with $\rho_1 (z) = 1$ we have $\rho (z) \geq 1$ and therefore $\Ric \geq - C_2 $ at $z$ for some universal constant $C_2$.
So, in summary, we have
\begin{equation} \label{eq_lower_Ric_bound_P}
 \Ric \geq - C_2 \rho_1^{-2}(x)  \qquad \text{on} \qquad P(x, 2A' \rho_1 (x)). 
\end{equation}

Now consider a point $y \in P(x, A_1 \rho_1 (x))$.
Set $t_x  := \t (x)$ and $ t_y := \t (y)$.
We first claim that for
\[ A' \geq \underline{A}' (A_1, A_2) \]
we have
\begin{equation} \label{eq_B_yt_P_x_A_prime}
 B(y, A_2 \rho_1 (y)) \subset P(x,  A' \rho_1 (x)). 
\end{equation}
Assume not and choose a smooth curve $\gamma : [0,1] \to (P(x, 2A' \rho_1(x))_{t_y}$ between $y$ and a point in $z \in P(x, \lb 2A' \rho_1(x)) \lb \setminus \lb P(x, \lb A' \rho_1 (x))$ such that $\ell_{t_y} (\gamma) <  A_2 \rho_1 (y)$.
Note that for all $t' \in  [t_y, t_x]$ the curve $\gamma_{t'} : [0,1] \to \M_t$ with $\gamma_{t'} (s) := (\gamma(s))(t')$ is defined and its image is contained in $P(x, 2A' \rho_1 (x))$.
So by (\ref{eq_lower_Ric_bound_P}) and (\ref{eq_rho_C1A1_containment}) we have
\begin{multline*}
 d_{t_x } (y(t_x ), z(t_x )) \leq \ell_{t_x } (\gamma_{t_x } ) 
  <   \exp \big(  C_2 \rho_1^{-2} (x)  A_1^2 \rho_1^2(x ) \big) \cdot A_2 \rho_1 (y) \\
   \leq C_1 A_2 \exp ( C_2 A_1^2  ) \rho_1(x)
\end{multline*}
So
\begin{equation*}
 A' \rho_1(x) \leq d_{t_x} (x, y) + d_{t_x } (y, z(t_x )) 
 < A_1 \rho_1(x) + C_1 A_2 \exp ( C_2 A_1^2  ) \rho_1(x) . 
\end{equation*}
Now set
\[ A' (A_1, A_2) := A_1  + C_1 A_2 \exp ( C_2 A_1^2  ) + \sqrt{A_1^2 + A_2^2} . \]
Then we obtain a contradiction and thus (\ref{eq_B_yt_P_x_A_prime}) holds.
Since $A^{\prime 2} \geq A_1^2 + A_2^2$, we obtain (\ref{eq_P_containment_lemma}).
\end{proof}

The next two results concern the behavior of the curvature scale $\rho$ under nearly isometric mappings.  
We begin with a convergence lemma that shows that an immersion between Riemannian manifolds must nearly preserve the scale, provided it is nearly an isometry, and we have sufficient control on the curvature and possibly curvature derivatives on the domain and target. 
The main point is that the map is only assumed to be an almost isometry in the $C^0$-sense.

\begin{lemma}
\label{lem_convergence_of_scales}
Suppose $\{(Z^1_k,g^1_k,z^1_k\}_{k=1}^\infty$, $\{(Z^2_k,g^2_k,z^2_k)\}_{k=1}^\infty$ are sequences of pointed smooth Riemannian manifolds such that for some $r_0 > 0$ and for each $i=1,2$  the  ball $B(z^i_k,r_0)\subset Z^i_k$ is relatively compact for all $k$, and one of the following holds:
\begin{enumerate}[label=(\roman*)]
\item \label{con_8.20_i} $\sup_{B(z^i_k,r_0)}|{\Rm}|_{g^i_k}\ra 0$ as $k\ra\infty$.
\item \label{con_8.20_ii} $\limsup_{k\ra\infty}\sup_{B(z^i_k,r_0)}|\nabla^j{\Rm}|_{g^i_k}<\infty$ for $0\leq j\leq 5$.
\end{enumerate}
Let $\{\phi_k:Z^1_k\ra Z^2_k\}_{k=1}^{\infty}$ be a sequence of smooth maps  such that $\phi_k(z^1_k)=z^2_k$ and 
\begin{equation}
\label{eqn_almost_isometry}
\sup_{B(z^1_k,r_0)} \big| (\phi_k^*g^2_k-g^1_k) \big|_{g^1_k}\ra 0 \qquad  \text{as} \qquad k \ra \infty\,.
\end{equation}
Then, after passing to a subsequence, the scale functions converge to the same limit: 
$$
\lim_{k\ra\infty}\rho(z^1_k)=\lim_{k\ra\infty}\rho(z^2_k)\in [0,\infty)\cup\{\infty\}\,.
$$
\end{lemma}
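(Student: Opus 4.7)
The plan is to argue by a double compactness, extracting limits of both the metrics and of the map $\phi_k$. First I would pass to a subsequence so that $\rho(z^1_k)\to L_1$ and $\rho(z^2_k)\to L_2$ in $[0,\infty]$; the task is to show $L_1=L_2$. If condition \ref{con_8.20_i} holds on side $i$, then $|{\Rm}|_{g^i_k}(z^i_k)\leq\sup_{B(z^i_k,r_0)}|{\Rm}|_{g^i_k}\to 0$ forces $\rho(z^i_k)\to\infty$, i.e.\ $L_i=\infty$; so the substantive case is when \ref{con_8.20_ii} holds on both sides, and I will handle the mixed case at the end by replacing one side's limit with the flat metric.

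Assuming \ref{con_8.20_ii} on both sides, I fix orthonormal frames for $g^i_k$ at $z^i_k$ to identify $T_{z^i_k}Z^i_k\cong\R^n$. Rauch comparison gives that $\exp^i_k$ is a local diffeomorphism on a ball $B(0,r_1)\subset\R^n$ whose radius depends only on the uniform $|{\Rm}|$-bound, so the pulled-back metrics $\tilde g^i_k:=(\exp^i_k)^*g^i_k$ are smooth on $B(0,r_1)$ with $\tilde g^i_k(0)=\delta_{ab}$. The standard normal-coordinate expansion together with the bounds on $|\nabla^j{\Rm}|$ for $0\leq j\leq 5$ yields uniform $C^6$ bounds on the components of $\tilde g^i_k$, and Arzel\`a--Ascoli produces, after a further subsequence, a limit $\tilde g^i_k\to\tilde g^i_\infty$ in $C^{5,\alpha}$ on compact subsets, with $\tilde g^i_\infty$ a smooth Riemannian metric. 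Since $\exp^i_k$ is a local diffeomorphism at $0$, we have $|{\Rm}|_{\tilde g^i_k}(0)=|{\Rm}|_{g^i_k}(z^i_k)$ and similarly for $R$, so $\rho_{\tilde g^i_\infty}(0)=L_i$.

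Next, I would define $\Phi_k:=(\exp^2_k)^{-1}\circ\phi_k\circ\exp^1_k$, a smooth map well defined on some $B(0,r_2)\subset\R^n$ for all sufficiently large $k$ (using \eqref{eqn_almost_isometry} and the local diffeomorphism property of the exponential maps). Then $\Phi_k(0)=0$, and the identity $\Phi_k^*\tilde g^2_k=(\exp^1_k)^*(\phi_k^*g^2_k)$ combined with \eqref{eqn_almost_isometry} yields
\[ \big|\Phi_k^*\tilde g^2_k-\tilde g^1_k\big|_{\tilde g^1_k}\to 0 \quad \text{in}\ C^0(B(0,r_2)). \]
Because both $\tilde g^i_k$ are uniformly comparable to the Euclidean metric on $B(0,r_2)$, this forces $\Phi_k:(B(0,r_2),\tilde g^1_k)\to(\R^n,\tilde g^2_k)$ to be $(1+\eps_k)$-bilipschitz with $\eps_k\to 0$. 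Passing to yet a further subsequence, Arzel\`a--Ascoli gives $\Phi_k\to\Phi_\infty$ uniformly, with $\Phi_\infty$ bilipschitz. Using that $\Phi_k$ nearly preserves lengths and that $\tilde g^i_k\to\tilde g^i_\infty$ uniformly on compact sets, $\Phi_\infty$ is distance-preserving as a map between open subsets of the smooth Riemannian manifolds $(B(0,r_2),\tilde g^1_\infty)$ and its image in $\tilde g^2_\infty$. The Myers--Steenrod theorem then implies $\Phi_\infty$ is a smooth isometry onto its image, so $|{\Rm}|_{\tilde g^1_\infty}(0)=|{\Rm}|_{\tilde g^2_\infty}(0)$ and $R_{\tilde g^1_\infty}(0)=R_{\tilde g^2_\infty}(0)$, giving $L_1=L_2$.

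The hard part is this final step, upgrading $\Phi_\infty$ from bilipschitz to smooth isometry: $C^0$ almost-isometry of $\phi_k$ does not directly supply $C^1$ bounds on $\Phi_k$, and one cannot pass to limits naively in the quadratic expression $\Phi_k^*\tilde g^2_k$. The remedy is to observe that $\Phi_k$ nearly preserves lengths (hence distances), extract a genuinely distance-preserving $\Phi_\infty$, and then invoke the regularity theorem of Myers--Steenrod (or, equivalently, Calabi--Hartman). The mixed case where \ref{con_8.20_i} holds on one side and \ref{con_8.20_ii} on the other is handled by the same argument, using that the normal-coordinate expansion together with $|{\Rm}|_{g^i_k}\to 0$ gives $\tilde g^i_k\to g_{\mathrm{Euclidean}}$ in $C^0$; the distance-preserving limit argument then forces the other limit metric to be flat as well.
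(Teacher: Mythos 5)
Your proposal follows the same broad strategy as the paper: pull back via exponential maps to get uniformly smooth local models, extract a limiting distance-preserving map, and invoke Myers--Steenrod regularity to promote it to a Riemannian isometry preserving curvature. However, there is a genuine gap in the definition of $\Phi_k:=(\exp^2_k)^{-1}\circ\phi_k\circ\exp^1_k$. The map $\exp^2_k$ on a ball of radius determined by the curvature bound is only a \emph{local} diffeomorphism (Rauch rules out conjugate points, not cut points). Without any injectivity radius hypothesis on $Z^2_k$, $\exp^2_k$ need not be injective, so $(\exp^2_k)^{-1}$ is not a well-defined map, and $\phi_k\circ\exp^1_k$ need not lift through $\exp^2_k$ merely because the domain is simply connected --- a local diffeomorphism is not automatically a covering map, and there is no monodromy theorem to invoke. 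Your parenthetical remark that this is ``well defined \ldots using \eqref{eqn_almost_isometry} and the local diffeomorphism property'' sidesteps exactly the point that needs an argument. One could try to salvage it by a quantitative path-lifting argument (the bounded distortion of $\exp^2_k$ keeps lifted paths from escaping the domain, and the almost-isometry of $\phi_k$ controls their lengths), but this would need to be carried out carefully.

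The paper avoids this issue entirely with a cleaner trick: it never attempts to invert $\exp^2_k$. It first establishes the lemma under the extra hypotheses that the $\phi_k$ are diffeomorphisms and the injectivity radii at $z^1_k$ are uniformly bounded below --- in that setting Cheeger--Gromov compactness applies directly and one reaches the Myers--Steenrod step as you describe. For the general case, it shrinks $r_0$ so that the \emph{conjugate} radius of $Z^1_k$ at $z^1_k$ is $\geq 2r_0$, takes the \emph{single} domain $W_k:=B(0,2r_0)\subset T_{z^1_k}Z^1_k$, and sets $h^1_k:=\exp_{z^1_k}^*g^1_k$ and $h^2_k:=(\phi_k\circ\exp_{z^1_k})^*g^2_k$. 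Both pullbacks live on $W_k$, $h^2_k$ is well defined and positive definite because $\phi_k\circ\exp_{z^1_k}$ is an immersion (by \eqref{eqn_almost_isometry}), the injectivity radius of $(W_k,h^1_k)$ at $0$ is $\geq r_0$ since the exponential map of the pullback metric at $0$ is just the identity, and $\phi_k$ is replaced by the identity map $W_k\to W_k$, which is trivially a diffeomorphism. This is the move that makes the general case reduce cleanly to the special case; you should use it rather than trying to conjugate $\phi_k$ by two exponential maps.
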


\begin{proof}
We first prove the lemma under the additional conditions that the $\phi_k$s are diffeomorphisms and the injectivity radii  at $z^1_k$ satisfy 
\begin{equation}
\label{eqn_injrad_z_1}
\liminf_{k\ra\infty}\injrad(Z^1_k,g^1_k,z^1_k)>0\,.
\end{equation}

Using standard injectivity radius estimates, conditions  \ref{con_8.20_i}, \ref{con_8.20_ii},  (\ref{eqn_almost_isometry}), and (\ref{eqn_injrad_z_1}) imply that for every $r<r_0$, and sufficiently large $k$, the injectivity radius is bounded uniformly from below on $B(z^i_k,r)\subset Z^i_k$.  By standard compactness arguments, after passing to a subsequence, the sequence of pointed balls $\{(B(z^i_k,r_0),g^i_k,z^i_k)\}_{k=1}^\infty$ converges to a pointed  $C^4$-Riemannian manifold $(Z^i_\infty,g^i_\infty,z^i_\infty)$ that is a proper $r_0$-ball (i.e. balls of radius $<r_0$ are relatively compact), and there is a basepoint preserving  map $\phi_\infty:(Z^1_\infty,z^1_\infty)\ra (Z^2_\infty,z^2_\infty)$ that is an isometry of the Riemannian  distance functions, where for each $i=1,2$:
\begin{itemize}
\item  If $\{g^i_k\}$ satisfies \ref{con_8.20_i}, then the pointed convergence $(B(z^i_k, \lb r_0), \lb g^i_k, \lb z^i_k) \lb \ra \lb (Z^i_\infty, \lb z^i_\infty)$ is with respect to the Gromov-Hausdorff topology and $Z^i_\infty$ is flat.
\item If $\{g^i_k\}$ satisfies \ref{con_8.20_ii}, then the pointed convergence $(B(z^i_k, \lb r_0), \lb g^i_k, \lb z^i_k) \lb \ra \lb (Z^i_\infty, \lb g^i_\infty, \lb z^i_\infty)$ is with respect to the $C^4$-topology.
\end{itemize}
In view of the above we have $\rho(z^i_k)\ra \rho(z^i_\infty)\in [0,\infty)\cup\{\infty\}$ as $k\ra\infty$ for $i=1,2$.  Since $\phi_\infty$ is an isometry (of distance functions) between $C^4$ Riemannian manifolds, it is a $C^3$-isometry of Riemannian manifolds, and hence it preserves curvature tensors: $\phi_\infty^*(\Rm(z^2_\infty))=\Rm(z^1_\infty)$.  It follows that $\rho(z^1_\infty)=\rho(z^2_\infty)$.

We now return to the general case.  We may assume after shrinking $r_0$ that the conjugate radius of $Z^1_k$ at $z^1_k$ is $\geq 2r_0$.  For $i=1,2$ let $(W^i_k,w^i_k)$ be the ball $B(0,2r_0)\subset T_{z^1_k}Z^1_k$ with basepoint $w^i_k=0\in B(0,2r_0)$, and let $h^1_k:=\exp_{z^1_k}^*g^i_k$, $h^2_k:=(\phi_k\circ\exp_{z^1_k})^*g^2_k$.  Then the injectivity radius at $w^1_k$ satisfies $\injrad(W^1_k,h^1_k,w^1_k)\geq r_0$, and $B(w^i_k,r_0)\subset W^i_k $ is relatively compact.  
Therefore, applying the above argument to the identity maps $W^1_k\ra W^2_k$, we obtain the lemma.
\end{proof}

\begin{lemma}[Scale distortion of bilipschitz maps] \label{lem_scale_distortion}
There is a constant $10^3 < C_{\sd} < \infty$   such that the following holds if
\[
  \eta_{\lin} \leq \ov\eta_{\lin},  \qquad
  \delta_{\nn} \leq \ov{\delta}_{\nn}, \qquad 
  \eps_{\can} \leq \ov\eps_{\can}, \qquad 
  r_{\comp} \leq \ov{r}_{\comp}.
\]

Let $\M, \M'$ be $(\eps_{\can} r_{\comp}, t_0)$-complete Ricci flow spacetimes.
Consider a closed product domain $X \subset \M_{[0,t_0]}$ on a time-interval of the form $[t - r_{\comp}^2, t]$, $t \geq r_{\comp}^2$, such that the following holds:
\begin{enumerate}[label=(\roman*)]
\item \label{con_8.23_i} $\partial X_{t}$ consists of embedded $2$-spheres that are each centers of $\delta_{\nn}$-necks at scale $r_{\comp}$.
\item \label{con_8.23_ii} Each connected component of $X_{t}$ contains a $2 r_{\comp}$-thick point.
\end{enumerate}

Let $\bar t \in [t - r_{\comp}^2, t]$, $t' \geq 0$ and consider a diffeomorphism onto its image $\phi : X_{\bar t} \to \M'_{t'}$ such that $| \phi^* g'_{t'} - g_{\bar t} | \leq \eta_{\lin}$.  
We assume that $\M$ satisfies the $\eps_{\can}$-canonical neighborhood assumption  at scales $(0,1)$ on $X_{\bar t}$, and that $\M'$ satisfies the $\eps_{\can}$-canonical neighborhood assumption at scales $(0,1)$  on $\phi(X_{\bar t})$.

Then for any $x \in X_{\ov{t}}$ we have
\begin{equation} \label{eq_rho_phi_rho_bound}
 C_{\sd}^{-1} \rho_1 (x) < \rho_1 (\phi (x)) < C_{\sd} \rho_1 (x). 
\end{equation}
\end{lemma}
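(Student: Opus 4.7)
The approach is a contradiction and compactness argument, ultimately invoking Lemma~\ref{lem_convergence_of_scales}. Suppose no universal $C_{\sd}$ works; then there exist parameter sequences $\eta_{\lin,k},\delta_{\nn,k},\eps_{\can,k},r_{\comp,k}\searrow 0$ and corresponding data $(\M_k,\M'_k,X_k,\phi_k,x_k)$ satisfying all hypotheses, with $\log\bigl(\rho_1(\phi_k(x_k))/\rho_1(x_k)\bigr)\to\pm\infty$. Swapping the roles of $\M_k$ and $\M'_k$ and replacing $\phi_k$ by $\phi_k^{-1}$ (which is again an $O(\eta_{\lin,k})$-almost isometry satisfying analogous hypotheses --- the image $\phi_k(X_k)$ has boundary consisting of $\delta'_{\nn,k}$-neck centers with $\delta'_{\nn,k}\to 0$, and the image of a $2r_{\comp,k}$-thick point remains nearly as thick when $\eta_{\lin,k}$ is small), I may assume the ratio tends to $+\infty$. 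Since $\rho_1\leq 1$, this forces $\lambda_k:=\rho(x_k)=\rho_1(x_k)\to 0$ and $\rho(\phi_k(x_k))/\lambda_k\to\infty$.

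I then rescale the time-slice metrics by $\lambda_k^{-2}$ to obtain $\tilde g_k,\tilde g'_k$. Since the $C^0$-norm $|\phi^*g'-g|_g$ is scale-invariant on $(0,2)$-tensors, $\phi_k$ remains an $\eta_{\lin,k}$-almost isometry after rescaling, and $\rho_{\tilde g_k}(x_k)=1$ while $\rho_{\tilde g'_k}(\phi_k(x_k))\to\infty$. The plan is to extract Cheeger--Gromov-type limits of $(B_{\tilde g_k}(x_k,r_0),\tilde g_k,x_k)$ and $(B_{\tilde g'_k}(\phi_k(x_k),r_0),\tilde g'_k,\phi_k(x_k))$ for some fixed $r_0>0$, and then apply Lemma~\ref{lem_convergence_of_scales}. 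On the domain side, the canonical neighborhood assumption at $x_k$ (applicable since $\rho(x_k)=\lambda_k<1$ eventually) combined with Lemma~\ref{lem_properties_kappa_solutions_cna}\ref{item_p_x_a_scale_curvature_bound} gives uniform $C^m$-bounds on $\Rm$ on $B_{\tilde g_k}(x_k,r_0)$; the $\kappa$-noncollapsing of $\kappa$-solutions, together with the smooth closeness implied by the canonical neighborhood assumption, yields a positive injectivity radius bound; and the $(\eps_{\can,k}r_{\comp,k},t_0)$-completeness together with Lemma~\ref{lem_bounded_curv_bounded_dist} gives relative compactness, verifying hypothesis (ii) of Lemma~\ref{lem_convergence_of_scales}. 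On the target side, Lemma~\ref{lem_bounded_curv_bounded_dist} propagates the lower bound $\rho(\phi_k(x_k))\gg\lambda_k$ to all of $B_{g'_k}(\phi_k(x_k),r_0\lambda_k)$, so the rescaled curvature on $B_{\tilde g'_k}(\phi_k(x_k),r_0)$ tends uniformly to zero, verifying hypothesis (i).

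The main obstacle will be ensuring that the domain ball $B_{\tilde g_k}(x_k,r_0)$ actually lies inside $X_k$, so that $\phi_k$ is defined on it. By hypothesis (i), $\partial X_k$ consists of $\delta_{\nn,k}$-neck centers at scale $r_{\comp,k}$, so by Lemma~\ref{lem_forward_backward_control} every point within distance $O(r_{\comp,k})$ of $\partial X_k$ has scale $\asymp r_{\comp,k}$. If $\lambda_k/r_{\comp,k}\to 0$ along a subsequence, then $x_k$ is at distance $\gg \lambda_k$ from $\partial X_k$ and any fixed $r_0$ works; otherwise $\liminf\lambda_k/r_{\comp,k}>0$, and I combine the thick-point hypothesis (ii) with Lemma~\ref{lem_bounded_curv_bounded_dist} applied along a path in the component of $x_k$ connecting it to a $2r_{\comp,k}$-thick point, either to relocate $x_k$ deeper into $X_k$ or to choose $r_0$ small enough that the ball stays inside, reducing to the previous case. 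Once both limits are extracted, Lemma~\ref{lem_convergence_of_scales} forces $\lim_k \rho_{\tilde g_k}(x_k)=\lim_k \rho_{\tilde g'_k}(\phi_k(x_k))$ in $[0,\infty]$, which contradicts $1\neq +\infty$.
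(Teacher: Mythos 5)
Your overall strategy---contradiction, parabolic rescaling, and reduction to Lemma~\ref{lem_convergence_of_scales}---is the same one the paper uses. However, there are two problems, the second of which is a genuine gap.

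The first is the swap step. To reduce to the case where the ratio tends to $+\infty$, you assert that $\phi_k^{-1}$ satisfies ``analogous hypotheses,'' including that ``the image of a $2r_{\comp,k}$-thick point remains nearly as thick when $\eta_{\lin,k}$ is small.'' This is circular: that $\phi_k$ approximately preserves the scale of points with $\rho\asymp r_{\comp,k}$ is precisely (an instance of) the statement you are trying to prove. You also implicitly need a product domain in $\M'_k$ over a time interval with the right structure at its \emph{final} time, which $\phi_k(X_{k,\bar t_k})$, a subset of a single time-slice, does not obviously provide. The paper avoids the swap entirely: it rescales by $r_k:=\min\{\rho_1(x_k),\rho_1(\phi_k(x_k))\}$, so at least one of the two rescaled basepoints has $\rho_1=1$, and Lemma~\ref{lem_convergence_of_scales} (whose hypotheses (i) and (ii) may be assigned independently to the two sequences) then forces the common limit to equal $1$, giving the contradiction for either direction of divergence at once. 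You should adopt this.

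The second and more serious problem is the boundary case, which your proposal does not actually close. When $\liminf_k\lambda_k/r_{\comp,k}>0$, the point $x_k$ may lie arbitrarily close to---or even on---$\partial X_{k,\bar t_k}$, in which case \emph{no} fixed $r_0>0$ makes $B_{\tilde g_k}(x_k,r_0)\subset X_{k,\bar t_k}$, so ``choose $r_0$ small enough'' fails. The alternative you offer, ``relocate $x_k$ deeper into $X_k$,'' is not by itself a fix: if you move to a nearby interior point $y_k$, Lemma~\ref{lem_convergence_of_scales} gives you control of $\rho(\phi_k(y_k))$, not of $\rho(\phi_k(x_k))$, which is what the lemma's conclusion requires. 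What is missing is the mechanism to transfer the conclusion from $y_k$ back to $x_k$. The paper's Case~2 supplies exactly this: using assumption~\ref{con_8.23_i} and Lemma~\ref{lem_bounded_curv_bounded_dist}, the scale near $\partial X_k$ (at time $\bar t_k$, promoted from the neck structure at time $t_k$) is $\asymp r_{\comp,k}$, forcing $r_k\lesssim r_{\comp,k}$; using assumption~\ref{con_8.23_ii}, one selects $y_k$ at distance $r_{\comp,k}$ from the boundary component nearest $x_k$; applying the interior argument (your ``Case 1'') to $y_k(\bar t_k)$ shows $\rho_1(\phi_k(y_k(\bar t_k)))\asymp r_{\comp,k}$; and finally Lemma~\ref{lem_bounded_curv_bounded_dist} on the $\M'$ side, together with $d_{t'}(\phi_k(y_k(\bar t_k)),\phi_k(x_k))\lesssim r_{\comp,k}$, yields $\rho_1(\phi_k(x_k))\asymp r_{\comp,k}\asymp\rho_1(x_k)$, contradicting the divergence of the ratio. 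This final transfer via bounded curvature at bounded distance is the step your proposal omits, and without it the boundary case is unresolved.
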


This lemma will later be applied whenever a bound on the distortion of the scale function under a comparison (as defined in Definition~\ref{def_comparison}) is needed.
The product domain $X$ in this lemma will later be taken to be a time-slab $\N^j$ of a comparison domain (as defined in Definition~\ref{def_comparison_domain}) and $\phi$ will denote the time-slice of a comparison.
Assumptions \ref{con_8.23_i} and \ref{con_8.23_ii} correspond to a priori assumptions \ref{item_backward_time_slice_3}(a) and \ref{item_backward_time_slice_3}(d), respectively (see Definition~\ref{def_a_priori_assumptions_1_7}).

In order to avoid confusion, we point out that usually it is possible to derive stronger scale distortion bounds than (\ref{eq_rho_phi_rho_bound}), with $C_{\sd}$ replaced by a constant that can be chosen arbitrarily close to 1.
These stronger bounds follow simply via local gradient estimates, due to the parabolic nature of the comparison.
This approach, however, fails if the point $x$ lies close to the spatial or time-like boundary of $X$.
This is why we have to work with a larger constant $C_{\sd}$ in this paper.

\begin{proof}
Assume that the lemma was  false.  
Then there are sequences 
$\eta_{\lin,k}\ra 0$, $ \delta_{\nn,k}\ra 0$, $ \eps_{\can,k}\ra 0$,  $r_{\comp,k}\ra 0$, $\{\M^k\}$, $\{\M^{\prime k}\}$, $\{X^k\}$, $\{x_k\}$, $\{t_k\}$, $\{\bar t_k\}$, $\{t'_k\}$, $\phi_k:X^k_{\bar t_k}\ra\M^{\prime k}_{t'_k}$ satisfying the assumptions of the lemma, such that 
\begin{equation}
\label{eqn_ratio_blows_up}
\frac{\rho_1(x_k)}{\rho_1(\phi_k(x_k))}
\lra 0 \quad \text{or} \quad \infty \qquad \text{as} \qquad k\ra \infty\,.
\end{equation}

To simplify notation, we let $M_k:=\M^k_{\bar t_k}$ and $M_k':=\M^{'k}_{t'_k}$ denote the time-slices, with  metrics $g_k$ and $g_k'$, respectively, and let $Y_k:= X^k_{\bar t_k}\subset M_k$ be the relevant time-slice of the product domain $X^k$.

Let $r_k:=\min\{\rho_1(x_k),\rho_1(\phi_k(x_k))\}$.   In view of (\ref{eqn_ratio_blows_up}) we have $r_k\ra 0$.   
Note that by our assumptions, for each of $x_k$, $\phi_k(x_k)$, either the $\eps_{\can,k}$-canonical neighborhood assumption holds or we have $\rho_1 (x_k) = 1$ or $\rho_1 (\phi_k (x_k)) = 1$, respectively.
In the first case we may use the estimates on the derivatives of curvature in assertion \ref{item_p_x_a_scale_curvature_bound} of Lemma~\ref{lem_properties_kappa_solutions_cna}, and we have
\begin{equation} \label{eq_curv_deriv_bounded_in_proof}
 |\nabla^j {\Rm}| < C_1 r_k^{-2-j} \qquad \text{on} \qquad B(x_k, r_k) \quad \text{or} \quad B(\phi_k(x_k) , r_k), 
\end{equation}
respectively for some universal $C_1 < \infty$ and large $k$ and $0 \leq j \leq 5$, and in the second case we may apply Lemma~\ref{lem_bounded_curv_bounded_dist} to obtain
\begin{equation}
\label{eqn_curvature_bounded_by_c}
|{\Rm}|<C_2  \qquad \text{on} \qquad B(x_k,r_k) \quad \text{or} \quad B(\phi_k (x_k), r_k),
\end{equation}
respectively, for some universal $C_2 < \infty$ and large $k$.

\textit{Case 1: \quad $\liminf_{k\ra\infty}r_k^{-1} d(x_k,\D Y_k)>0$.}

If we let  $\wh g_k:=r_k^{-2}g_k$, $\wh g_k':=r_k^{-2}g_k'$, then the assumptions of Lemma \ref{lem_convergence_of_scales} hold for the sequence $\{\phi_k:(\Int Y_k,\wh{g}_k,x_k)\ra(M_k',\wh{g}_k',\phi_k(x_k))\}$ by (\ref{eqn_ratio_blows_up}), (\ref{eq_curv_deriv_bounded_in_proof}), (\ref{eqn_curvature_bounded_by_c}) and the fact that $r_k \to 0$.  
Hence, after passing to a subsequence,
$$
\lim_{k\ra\infty}\rho_{\wh{g}_k}(x_k)=\lim_{k\ra\infty}\rho_{\wh{g}_k'}(\phi_k(x_k))\,.
$$ 
Since for every $k$ the $\eps_{\can,k}$-canonical neighborhood assumption holds at  one of the points $x_k$, $\phi_k(x_k)$, the above limit must equal $1$.  This contradicts (\ref{eqn_ratio_blows_up}).

\textit{Case 2: \quad $\liminf_{k\ra\infty}r_k^{-1} d(x_k,\D Y_k)=0$.}

After passing to a subsequence, we may assume that 
\begin{equation}
\label{eqn_dist_x_k_boundary}
\lim_{k\ra\infty}r_k^{-1}  d(x_k, \lb \D Y_k)=0\,.
\end{equation} 
For each $k$ we may choose a boundary component $\Sigma_k\subset X^k_{t_k}$ such that $\lim_{k\ra\infty}r_k^{-1} d(x_k, \lb\Sigma_k(\bar t_k)) \lb=  \lb 0$.   
Let $U_k$ be the $10r_{\comp,k}$-neighborhood of $\Sigma_k$ in $X^k_{t_k}$.
If $k$ is large, then $\frac12 r_{\comp, k} < \rho < 2 r_{\comp,k}$ on $U_k$.
So by assumption \ref{con_8.23_ii} and the fact that $\delta_{\nn,k}\ra 0$, it follows that $U_k$ does not fully contain the component of $X^k_{t_k}$ in which it lies, and moreover it does not intersect any other boundary components of $X^k_{t_k}$.
Therefore, we can pick $ y_k \in X^k_{t_k}$ with $d(y_k ,\Sigma_k)=r_{\comp,k}$.
By Lemma~\ref{lem_bounded_curv_bounded_dist} there is a universal constant $C_3 < \infty$ such that for large $k$ 
 we have 
\begin{equation}
\label{eqn_rho_on_u_k_t_k_bar}
C_3^{-1}r_{\comp,k}\leq \rho\leq C_3 r_{\comp,k}
\end{equation}
on $U_k(\bar t_k)$, in particular on $\Sigma_k(\bar t_k)$.  By (\ref{eq_curv_deriv_bounded_in_proof}) or (\ref{eqn_curvature_bounded_by_c}) and the fact that $B(x_k,r_k)\cap \Sigma_k(\bar t_k)\neq\emptyset$ for large $k$, we get $r_k\leq C_4r_{\comp,k}$ for large $k$, where $C_4 < \infty$ is a universal constant.  By (\ref{eqn_rho_on_u_k_t_k_bar}), (\ref{eqn_dist_x_k_boundary}),  and a distance distortion estimate, we have $x_k\in U_k(\bar t_k)$, and therefore 
$r_k \leq \rho_1 (x_k) \leq C_3 r_{\comp,k}$ for large $k$.
Hence $\lim_{k \to \infty} r_{\comp, k}^{-1} d(x_k, \partial Y_k) = 0$.
By 
a distance distortion estimate, there is a universal constant $C_5 < \infty$ such that for large $k$
\begin{equation}\label{eqn_bar_x_k_bdy}
C_5^{-1}r_{\comp,k}\leq d( y_k (\bar t_k),x_k),d(  y_k (\bar t_k),\D Y_k)\leq C_5 r_{\comp,k}\,.
\end{equation}
So using (\ref{eqn_bar_x_k_bdy}) and Case 1, we can find a uniform $C_6 < \infty$ such that  
\[ C_3^{-1} C_6^{-1} r_{\comp, k} \leq C^{-1}_6 \rho_1(y_k  (\bar t_k))\leq\rho_1(\phi_k(y_k (\bar t_k)))\leq C_6 \rho_1(y_k  (\bar t_k)) \leq C_3 C_6 r_{\comp, k}. \]   
Since $d(\phi_k(y_k  (\bar t_k)),\phi_k(x_k))\leq 2C_3r_{\comp,k}$ for large $k$, Lemma~\ref{lem_bounded_curv_bounded_dist} gives $C_7^{-1}r_{\comp,k}\leq\rho_1(\phi_k(x_k))\leq C_7 r_{\comp,k}$ for some uniform $C_7 < \infty$ and large $k$.  
This contradicts (\ref{eqn_ratio_blows_up}).
\end{proof}

In the following lemma we show that a region that is bilipschitz close to a cylinder contains a smaller region on which we have closeness to a cylinder in the $C^m$-sense,  provided that the canonical neighborhood assumption holds.
So the smaller region is a neck of arbitrarily high accuracy, as long as the bilipschitz control on the  larger region is strong enough.

\begin{lemma}[Self-improvement of necks] \label{lem_C0_neck_smooth_neck}
If
\[ \delta_\# > 0, \qquad 
\delta \leq \ov\delta (\delta_\#), \qquad 
\eps_{\can} \leq \ov\eps_{\can} (\delta_\#), \]
then the following holds.

Let $(M, g)$ be a Riemannian manifold and $x \in M$ be a point that satisfies the $\eps_{\can}$-canonical neighborhood assumption.
Let $r> 0$ be a constant and $\psi : S^2 \times (-\delta^{-1}, \delta^{-1}) \to M$ be a diffeomorphism onto its image that satisfies $x \in \psi ( S^2 \times \{ 0 \} )$ and
\[ \big\Vert r^{-2} \psi^* g - g^{S^2 \times \R} \big\Vert_{C^0} < \delta, \]
where $g^{S^2 \times \R}$ denotes the round cylindrical metric with $\rho \equiv  1$ and the $C^0$-norm is taken over the domain of $\psi$.

Then $x$ is a center of a $\delta_\#$-neck in $M$ at scale $r$.
\end{lemma}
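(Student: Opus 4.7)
The plan is to argue by contradiction via a compactness argument, using the canonical neighborhood assumption to upgrade $C^0$ convergence to smooth convergence. Suppose the lemma fails. Then for some fixed $\delta_\# > 0$ there exist sequences $\delta_k \to 0$, $\eps_{\can,k} \to 0$, pointed Riemannian manifolds $(M_k, g_k, x_k)$, scales $r_k > 0$, and diffeomorphisms $\psi_k : S^2 \times (-\delta_k^{-1}, \delta_k^{-1}) \to M_k$ satisfying the hypotheses, but such that $x_k$ is \emph{not} the center of a $\delta_\#$-neck in $M_k$ at scale $r_k$.

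First I would pass to the rescaled metrics $\wh g_k := r_k^{-2} g_k$. The $C^0$-closeness hypothesis $\|\psi_k^* \wh g_k - g^{S^2 \times \R}\|_{C^0} < \delta_k$ on a ball of radius $\delta_k^{-1}$, together with the normalization $\rho \equiv 1$ on the round cylinder, forces $\rho_{\wh g_k}(x_k) \to 1$. On the other hand, by the $\eps_{\can,k}$-canonical neighborhood assumption there exists $\kappa_k > 0$, a $\kappa_k$-solution $(\ov{M}_k, (\ov g_k(t)))$ and a point $\ov{x}_k \in \ov M_k$ with $\rho(\ov x_k, 0) = 1$ such that $(M_k, g_k, x_k)$ is $\eps_{\can,k}$-close to $(\ov M_k, \ov g_k(0), \ov x_k)$ at some scale $\lambda_k > 0$. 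Since scales are preserved up to an error going to $0$ under this closeness, $\rho_{g_k}(x_k)/\lambda_k \to 1$, and combined with $\rho_{\wh g_k}(x_k) \to 1$ this gives $\lambda_k / r_k \to 1$.

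Next I would use the compactness theory for $\kappa$-solutions (assertion \ref{ass_C.1_a} of Lemma~\ref{lem_kappa_solution_properties_appendix}, together with the bounded curvature at bounded distance estimates in Lemma~\ref{lem_properties_kappa_solutions_cna}) to conclude that, after passing to a subsequence, the pointed $\kappa$-solution time-slices $(\ov M_k, \ov g_k(0), \ov x_k)$ converge in the pointed smooth Cheeger-Gromov sense to a smooth complete pointed Riemannian manifold $(\ov M_\infty, \ov g_\infty, \ov x_\infty)$. Composing the approximating diffeomorphisms given by the $\eps_{\can,k}$-closeness with those from Cheeger-Gromov convergence, and using $\lambda_k/r_k \to 1$, one obtains that $(M_k, \wh g_k, x_k)$ also converges smoothly on every compact set to $(\ov M_\infty, \ov g_\infty, \ov x_\infty)$.

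Now the $C^0$-hypothesis exhibits $(\ov M_\infty, \ov g_\infty, \ov x_\infty)$ as $C^0$-isometric to the round cylinder $(S^2 \times \R, g^{S^2 \times \R}, *)$ on arbitrarily large balls: indeed, the compositions $\wh\psi_k$ of $\psi_k$ with the Cheeger-Gromov approximation maps converge to a pointed isometry of distance functions, and since both source and target are smooth Riemannian manifolds this isometry is $C^\infty$ (via the Myers-Steenrod theorem, as used in the proof of Lemma~\ref{lem_convergence_of_scales}). Hence $(\ov M_\infty, \ov g_\infty) \cong (S^2 \times \R, g^{S^2 \times \R})$ isometrically. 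Smooth convergence therefore gives, for every $m$ and every $R < \infty$, diffeomorphisms $\Psi_k : S^2 \times (-R, R) \to M_k$ with $\Psi_k(*) = x_k$ and $\|\Psi_k^* \wh g_k - g^{S^2 \times \R}\|_{C^m} \to 0$. Choosing $R = \delta_\#^{-1}$ and $m = [\delta_\#^{-1}]$, this says that for all sufficiently large $k$, the point $x_k$ is a center of a $\delta_\#$-neck in $M_k$ at scale $r_k$, contradicting the choice of the sequence.

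The main obstacle is the passage from the $C^0$ assumption (which alone gives only a bilipschitz picture) to the smooth convergence needed to produce a $\delta_\#$-neck; the canonical neighborhood assumption, by modeling the geometry on a $\kappa$-solution with uniformly bounded derivatives of curvature, is precisely what lets one interpolate between a metric-space limit and a smooth one and then apply Myers-Steenrod to identify the limit with the round cylinder. A secondary technical point is matching the scales $\lambda_k$ and $r_k$ before taking the limit, which is handled by the observation that both are comparable to $\rho(x_k)$.
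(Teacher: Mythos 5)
Your proposal follows the same contradiction-by-compactness route as the paper: upgrade the $C^0$ bilipschitz hypothesis to smooth convergence using the canonical neighborhood assumption and compactness of $\kappa$-solutions, then identify the limit with the round cylinder via Myers--Steenrod. One small imprecision: the opening claim that $\rho_{\wh g_k}(x_k)\to 1$ is ``forced'' by the $C^0$ bound together with $\rho\equiv1$ on the cylinder is not automatic---curvature is not a $C^0$-quantity---and the paper justifies exactly this step by Lemma~\ref{lem_convergence_of_scales}, whose hypotheses hold because the canonical neighborhood assumption supplies uniform bounds on $\nabla^j\Rm$; you invoke that mechanism only later, but it is already needed for the initial scale comparison.
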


\begin{proof}
Without loss of generality we may assume that $r = 1$.  

Assume that the lemma was false for some $\delta_\# > 0$.  Then we can find sequences $\delta_k\ra 0$, $\eps_{\can,k}\ra 0$, as well as a sequence $\{(M_k,g_k, x_k)\}$ of pointed Riemannian manifolds and a sequence $\{\psi_k:S^2\times (-\delta^{-1}_k,\delta^{-1}_k)\ra M_k\}$ of diffeomorphisms onto their images such that for all $k$:
\begin{enumerate}[label=(\arabic*)]
\item  $(M_k,g_k)$ satisfies the $\eps_{\can,k}$ canonical neighborhood assumption at $x_k$.
\item $x_k\in\psi_k(S^2\times\{0\})$,
\item  $\|\psi_k^*g_{k}-g^{S^2\times\R}\|_{C^0}<\delta_k\ra 0$.
\item $x_k$ is not a  center of a $\delta_\#$-neck at scale $1$.
\end{enumerate}

Let $\wh{r}_k:=\rho_1(x_k)$.  Then letting 
\begin{align*}
(Z^1_k,g^1_k,z^1_k)&:=\big( S^2\times (-\delta_k^{-1},\delta_k^{-1}),\wh{r}_k^{-2}g^{S^2\times\R},\psi_k^{-1}(x_k) \big)\,,\\ 
(Z^2_k,g^2_k,z^2_k)&:=(M_k,\wh r_k^{-2}g_k,x_k)\,,
\end{align*}
and $\phi_k:=\psi_k$, the assumptions of Lemma~\ref{lem_convergence_of_scales} hold by (3) above and assertion \ref{item_p_x_a_scale_curvature_bound} of Lemma~\ref{lem_properties_kappa_solutions_cna} together with the choice of $\wh r_k$.   Therefore we have  $\rho(x_k)\ra \rho(\psi_k^{-1}(x_k))=1$ as $k\ra\infty$.   
It follows that $(M_k,g_k,x_k)$ is $\eps_{\can,k}$-close at scale tending to 1 to the final time-slice $(\wh M_k,\wh g_k,\wh x_k)$ of a $\kappa_k$-solution with $\rho (\wh x_k) = 1$, as $k\ra\infty$.
Hence  $\diam(\wh M_k,\wh g_k)\ra \infty$. 
Since $\rho(\wh x_k)=1$, it follows that $(\wh M_k,\wh g_k)$ cannot be a round metric for large $k$.  
Hence, by assertions \ref{ass_C.1_a} and \ref{ass_C.1_b} of Lemma~\ref{lem_kappa_solution_properties_appendix}, after passing to a subsequence, the sequence $\{(M_k,g_k,x_k)\}$ converges in the pointed smooth topology to the final time-slice $(M_\infty,g_\infty,x_\infty)$ of some $\kappa$-solution.  
However, by property  (3) above we conclude that  $(M_\infty,g_\infty)$ is isometric as a metric space to $(S^2\times \R,g^{S^2\times\R})$ equipped with the induced length metric.  
So $(M_\infty,g_\infty)$ is isometric as a Riemannian manifold to  $(S^2\times \R,g^{S^2\times\R})$.  
Thus $x_k$ is a  center of a $\delta_\#$-neck at scale $1$ for large $k$, contradicting (4).
\end{proof}

The next lemma gives control on the scale at bounded distance to a neck,  assuming the canonical neighborhood assumption.

\begin{lemma}[Scale bounds near necks] \label{lem_thick_close_to_neck} 
There is a constant $\delta_0 > 0$ such that for every $X < \infty$ there is a constant $Y = Y(X) < \infty$ such that if 
\[ \eps_{\can} \leq \ov{\eps}_{\can} (X), \]
then the following holds.

Let $(M,g)$ be a (possibly incomplete) Riemannian manifold and let $\Sigma \subset M$ be a central 2-sphere of a $\delta_0$-neck at scale $1$ in $M$.
Assume that $M$ satisfies the $\eps_{\can}$-canonical neighborhood assumption at some point in $\Sigma$.

Consider a point $x \in M \setminus \Sigma$  and let $\C$ be the component of $M\setminus\Sigma$ containing $x$.   
If $d (x, \Sigma) \leq X$ and $\diam\C\geq Y$, then $\rho_1 (x) > \frac1{10}$.  
Here the diameter is taken with respect to the distance function of $(M,g)$.
\end{lemma}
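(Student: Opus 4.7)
The plan is a contradiction argument via pointed smooth convergence. Fix $\delta_0 > 0$ small (to be chosen) and $X < \infty$, and suppose the conclusion fails. Then there exist sequences $Y_k \to \infty$, $\eps_{\can, k} \to 0$ and configurations $(M_k, g_k, \Sigma_k, x_k, \C_k)$ satisfying the hypotheses but with $\rho_1(x_k) \leq \frac{1}{10}$. Let $\sigma_k \in \Sigma_k$ be a point at which the $\eps_{\can, k}$-canonical neighborhood assumption holds.

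The first step is to extract a pointed $C^\infty_{\loc}$-limit $(M_\infty, g_\infty, \sigma_\infty)$. Since $\Sigma_k$ is a central $2$-sphere of a $\delta_0$-neck at scale $1$, we have $\rho(\sigma_k)$ close to $1$. By Lemma~\ref{lem_properties_kappa_solutions_cna}\ref{item_p_x_a_scale_curvature_bound}, the $\eps_{\can,k}$-CNA at $\sigma_k$ provides uniform bounds on $|\nabla^m \Rm|$ over each ball $B(\sigma_k, A)$, and the noncollapsing inherited from the approximating $\kappa$-solution yields a uniform injectivity radius bound, so a diagonal Cheeger-Gromov extraction produces a pointed smooth limit $(M_\infty, g_\infty, \sigma_\infty)$ into which $\ov{B(\sigma_\infty, X+1)}$ embeds smoothly and isometrically. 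Since $\eps_{\can,k} \to 0$, the $\eps_{\can,k}$-closeness of $(M_k, g_k, \sigma_k)$ to a $\kappa$-solution at scale $\rho(\sigma_k)$ becomes an exact isometric identification in the limit, so $B(\sigma_\infty, X+1)$ is isometric to a subset of the final time-slice of a genuine $3$-dimensional $\kappa$-solution $N_\infty$; by Lemma~\ref{lem_kappa_solution_properties_appendix}\ref{ass_C.1_a}, $N_\infty$ may be taken to be $\kappa_0$-noncollapsed for a universal $\kappa_0$ (it is not a round spherical quotient since it contains an approximate cylindrical piece of scale $1$).

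Passing $x_k$ and $\C_k$ to subsequential limits, we obtain $x_\infty \in M_\infty$ with $d(x_\infty, \sigma_\infty) \leq X$, $\rho(x_\infty) \leq \frac{1}{10}$, and a component $\C_\infty$ of $M_\infty \setminus \Sigma_\infty$ containing $x_\infty$ that has infinite diameter (by $\diam \C_k \geq Y_k \to \infty$). The desired contradiction comes from the classification of $3$-dimensional $\kappa$-solutions, combined with Lemma~\ref{lem_neck_or_cap} applied at $x_\infty$ in $N_\infty$: such a $\kappa$-solution is either compact (a bounded spherical quotient, incompatible with infinite diameter), a round cylinder or $\Z_2$-quotient thereof (on which $\rho \equiv 1$, incompatible with $\rho(x_\infty) \leq \frac{1}{10}$), or a non-compact positively curved solution with an asymptotically cylindrical end, whose scale function is essentially monotone along the axis and whose scale-$1$ level separates a bounded cap region (scale $\leq 1$) from an unbounded cylindrical end (scale $\geq 1$). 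In every case, the component of $N_\infty \setminus \Sigma_\infty$ containing a point of scale $\leq \frac{1}{10}$ must be the bounded cap side, contradicting $\diam \C_\infty = \infty$.

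The main obstacle I expect is the final classification step: justifying that on an unbounded component of $N_\infty \setminus \Sigma_\infty$ the scale stays $>\frac{1}{10}$. This ultimately rests on the structure theory of $3$-dimensional $\kappa$-solutions --- the fact that in any non-cylindrical $\kappa$-solution the scale function attains its minimum on a bounded "core" and grows toward infinity along the cylindrical end. The required ingredients are recorded in Lemma~\ref{lem_kappa_solution_properties_appendix} together with Perelman's qualitative description of $3$-dimensional ancient $\kappa$-solutions, while Lemma~\ref{lem_neck_or_cap} makes the neck-versus-cap dichotomy at $x_\infty$ precise enough to localize the argument.
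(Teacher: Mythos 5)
Your setup (contradiction, extraction of a pointed $C^\infty_{\loc}$ limit to a $\kappa$-solution, noting $\rho(x_\infty)\leq\frac1{10}$ and $\diam\C_\infty=\infty$) matches the paper up to the point where a contradiction has to be extracted, and the non-compactness and non-cylinder observations are the same. The divergence, and where the gap sits, is precisely the step you flagged as the main obstacle. You want to conclude that ``the component of $N_\infty\setminus\Sigma_\infty$ containing a point of scale $\leq\frac1{10}$ must be the bounded cap side,'' and you justify this by appealing to a structural claim that in a non-cylindrical $\kappa$-solution the scale attains its minimum on a bounded core and grows along the cylindrical end. That claim is not recorded in Lemma~\ref{lem_kappa_solution_properties_appendix}, and it is not delivered by Lemma~\ref{lem_neck_or_cap} applied at $x_\infty$: that lemma produces a \emph{local} region $V$ with $\diam V < C_0\rho(x_\infty)$ and $\rho < C_0\rho(x_\infty)$ on $V$, but since $C_0=C_0(\delta)$ can be large, this does not place $\Sigma_\infty$ (where $\rho\approx 1$) outside $V$, nor does it determine on which side of $\Sigma_\infty$ the unbounded end lies. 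In the absence of a uniqueness theorem for noncompact $\kappa$-solutions (the paper is deliberately written so as not to assume the Bryant soliton is the only one), the needed separation/monotonicity statement has to be \emph{proven}, and that is exactly what the last two-thirds of the paper's proof does.

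Concretely, the paper proceeds by a Toponogov comparison argument: it takes a geodesic ray $\gamma$ from $y_\infty\in\Sigma_\infty$, a far-away point $z$ on $\gamma$ in the unbounded side $Z$, and uses nonnegative sectional curvature to bound the comparison angle $\td\angle_{y_\infty}(x_\infty,z)\leq\frac{\pi}{4}$, hence $\td\angle_{x_\infty}(y_\infty,z)>\frac{\pi}{4}$ for $d(z,y_\infty)$ large. Together with Perelman's neck criterion (\cite[Cor.~49.1]{Kleiner:2008fh}), this shows $x_\infty$ is the center of a $\delta_1$-neck whose central $2$-sphere $\Sigma_{x_\infty}$ separates $y_\infty$ from $z$. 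A second Toponogov monotonicity argument, comparing $d(y_\infty,y''_\infty)$ to the diameter of $\Sigma_{x_\infty}$ rescaled by distance to $z$, then yields $.98\,c_0\leq\frac{2}{5}c_0$, a contradiction. So your route would be correct only if the ``scale monotone along the end'' structure theorem were available off the shelf, but it is not; the comparison-geometry argument is the substance of the proof, not an incidental alternative to an existing classification. To close the gap you would need to supply that comparison argument (or an equivalent) yourself.
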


The proof uses the geometry of non-negatively curved manifolds to bound neck scales from below.
The argument is a variation on  part of Perelman's proof of compactness of $\kappa$-solutions (see \cite{Perelman:2002um}).

\begin{proof}
Fix $X < \infty$ and  some small constant  $\delta_0 > 0$.
The precise conditions on the smallness of $\delta_0$ will become clear in the course of the proof.

Assume that the statement of the lemma was false (for fixed $X$) and choose sequences $Y_k \to \infty$ and $\eps_{\can,k} \to 0$.
Then we can find counterexamples $(M_k, g_k)$, $\Sigma_k$, $x_k$, $\C_k \subset M_k \setminus \Sigma_k$ such that $(M_k, g_k)$ satisfies the $\eps_{\can, k}$-canonical neighborhood assumption at some point $y_k \in \Sigma_k$, $d(x_k, \Sigma_k) \leq X$, $\diam \C_k \geq Y_k$, but $\rho (x_k) \leq  \frac1{10}$.  

If 
$$\delta_0\leq\ov\delta_0,$$ 
then the injectivity radius at $y_k$ is uniformly bounded from below by a positive constant.
So, after passing to a subsequence, we may assume that:
\begin{itemize}
\item The sequence of pointed Riemannian manifolds $(M_k, g_k, y_k)$ converges to the pointed final time-slice $(M_\infty, g_\infty, y_\infty)$ of some $\kappa$-solution.
\item The 2-spheres $\Sigma_k \subset M_k$ converge to a central 2-sphere $\Sigma_\infty$ of a $2\delta_0$-neck $U_\infty \subset M_\infty$ at scale 1.  
\item The points $x_k$ converge to a point $x_\infty \in M_\infty$ such that $\rho (x_\infty) \leq \frac1{10}$.
\item $d(x_\infty,y_\infty)\geq \frac14 \delta_0^{-1}$, since we may assume that $\rho>\frac12$ on the $2\de_0$-neck $U_\infty$.
\end{itemize}

As $\diam \C_k \geq Y_k \to \infty$,  the $\kappa$-solution $M_\infty$ must be non-compact.   If 
$$\delta_0\leq\ov\delta_0\,,$$ 
then $M_\infty$ cannot be isometric to a quotient of a round cylinder, because  $U_\infty$ is a $2\delta_0$-neck of scale $1$, while $\rho(x_\infty)\leq \frac{1}{10}$.  
Therefore $M_\infty$ is diffeomorphic to $\R^3$, and the $2$-sphere $\Sigma_\infty$ bounds a compact domain, and a non-compact domain $Z$.  
We cannot have $x_\infty\in M_\infty\setminus Z$, since this  would imply that $\diam \C_k \leq 2\diam(M_\infty\setminus Z)$ for large $k$, contradicting the fact that $\diam \C_k \ra\infty$.   So $x_\infty \in Z$.

Let $\gamma\subset M_\infty$ be a minimizing geodesic ray starting from $y_\infty$, and pick $z\in \gamma \cap Z$, to be determined later.   Let $\ol{y_\infty z}$, $\ol{zx_\infty}$, and $\ol{x_\infty y_\infty}$ be minimizing geodesic segments between the corresponding pairs of points.  Assuming 
$$\delta_0\leq\ov\delta_0,$$ 
the segments $\ol{y_\infty z}$, $\ol{y_\infty x_\infty}$ may intersect $\Sigma_\infty$ at most once and are nearly parallel to the $\R$-factor of the neck $U_\infty$.
Therefore both segments are contained in $Z$ apart from the endpoint $y_\infty$, and they form an angle of at most  $\frac{\pi}{4}$ at $y_\infty$.   By Toponogov's theorem, this implies that the comparison angle $\td{\angle}_{y_\infty}(x_\infty,z)$ is at most $\frac{\pi}{4}$.  Provided that $d(z,y_\infty)$ is sufficiently large, we therefore have $\td{\angle}_{x_\infty}(y_\infty,z)>\frac{\pi}{4}$. 

Fix some small $\delta_1 > 0$ whose value we will determine later. If 
\[ \delta_0\leq \ov\delta_0 \]
and $d(z,y_\infty)$ is sufficiently large, then $\rho^{-1}(x_\infty) \min \{d(x_\infty, \lb z), \lb d(x_\infty, \lb y_\infty)\}$ is  large enough that we may apply \cite[Corollary~49.1]{Kleiner:2008fh} to conclude that $x_\infty$ is a center of a $\delta_1$-neck, with central $2$-sphere $\Sigma_{x_\infty} \subset M_\infty$.  If $\delta_1\leq\ov\delta_1$, then the segments $\ol{x_\infty z}$, $\ol{x_\infty,y_\infty}$ intersect $\Sigma_{x_\infty}$ only at $x_\infty$ and are nearly parallel to the $\R$-factor the neck at $x_\infty$.  Since their angle at $x_\infty$ is $>\frac{\pi}{4}$, it follows that $y_\infty$ and $z$ lie in distinct connected components of $M_\infty\setminus \Sigma_{x_\infty}$. 

Let $c_0$ be the diameter of a central $2$-sphere of a round cylinder of scale $1$.  
If $\delta_0\leq\ov\delta_0$, we may choose a point $y_\infty'\in \Sigma_\infty$ such that $d(y_\infty',y_\infty)\geq .99 c_0$.   
Now consider geodesic segments $\ol{y_\infty z}$, $\ol{y_\infty' z}$.  
If $\delta_0\leq\ov\delta_0$, both segments are contained in $Z$, and since $\Sigma_{x_\infty}$ separates $y_\infty$ from $z$, both segments intersect $\Sigma_{x_\infty}$.  
If $\delta_0\leq\ov\delta_0$ then $|d(z,y_\infty)-d(z,y_\infty')|<.01c_0 $, as follows by applying the triangle inequality to points on $\ol{y_\infty z}$, $\ol{y_\infty' z}$ at distance $\frac12\delta_0^{-1}$.  
Therefore, after swapping the labels of $y_\infty$ and $y'_\infty$ if necessary, we may assume without loss of generality that there is a point $y''_\infty\in\ol{y'_\infty z}$ such that $d(z,y''_\infty)=d(z,y_\infty)$ and $d(y_\infty,y''_\infty)>.98c_0$.  
Similarly, if $\delta_1\leq\ov\delta_1$, there are  points $w_\infty\in\ol{y_\infty z}$, $w'_\infty\in \ol{y''_\infty z}$ such that 
\[ d(w_\infty,w'_\infty)<1.01c_0\rho(x_\infty)< \tfrac15 c_0, \] $d(w_\infty,z)=d(w'_\infty,z)$ and one of $w_\infty$, $w'_\infty$ lies on $\Sigma_{x_\infty}$.   
By Toponogov's theorem (monotonicity of comparison angles) we have
$$
\frac{d(y_\infty,y''_\infty)}{d(y_\infty,z)} \leq \frac{d(w_\infty,w'_\infty)}{d(w_\infty,z)}\,.
$$
So if $d(z,y_\infty)$ is sufficiently large, then 
\[  .98 c_0 \leq d(y_\infty, y''_\infty) \leq 2 d(w_\infty,w'_\infty) < \tfrac25 c_0, \]
which is a contradiction.
\end{proof}

\subsection{Promoting time-slice models to spacetime models}
Our next two results show that under appropriate completeness and canonical neighborhood assumptions, if a time-slice of a Ricci flow spacetime is close to a neck or a Bryant soliton, then a parabolic region is also close to a neck or Bryant soliton, respectively.  
The proofs are standard convergence arguments based on a rigidity property of necks and Bryant solitons among $\kappa$-solutions.

\begin{lemma}[Time-slice necks imply spacetime necks] \label{lem_time_slice_neck_implies_space_time_neck}
If
\[ \delta_\# > 0, \qquad 
0 < \delta \leq  \ov{\delta} (\delta_\#), \qquad 0 < \eps_{\can} \leq \ov\eps_{\can} (\delta_\#), \qquad 
0 < r \leq \ov{r},  \]
then the following holds.

Assume that $\M$ is an $(\eps_{\can} r, t_0)$-complete Ricci flow spacetime that satisfies the $\eps_{\can}$-canonical neighborhood assumption at scales $(\eps_{\can} r, 1)$.
Let $a \in [-1, \frac14]$ and consider a time $t \geq  0$ such that $t + ar^2 \in [0,t_0]$.

Assume that $U \subset \M_{t + a r^2}$ is a $\delta$-neck at scale $\sqrt{1-3a}\, r$.
So there is a diffeomorphism
\begin{equation*} 
 \psi_1 : S^2 \times (- \delta^{-1}, \delta^{-1} ) \longrightarrow U 
\end{equation*}
such that
\begin{equation} \label{eq_psi_1_lem_necks}
 \big\Vert  r^{-2} \psi_1^* g_{t + ar^2} - g^{S^2 \times \R}_a \big\Vert_{C^{[\delta^{-1}]}} < \delta. 
\end{equation}
Here $(g^{S^2 \times \R}_t)_{t \in (-\infty, \frac13)}$ denotes the shrinking round cylinder with $\rho (\cdot, 0) = 1$ at time $0$ and the $C^{[\delta^{-1}]}$-norm is taken over the domain of $\psi_1$.

Then there is a product domain $U^* \subset \M_{[t-r^2, t + \frac14 r^2] \cap [0,t_0]}$ and an $r^2$-time-equivariant and $\partial_{\mathfrak{t}}$-preserving diffeomorphism
\[ \psi_2 : S^2 \times \big({ - \delta_\#^{-1}, \delta_\#^{-1} }\big) \times [ t^{*}, t^{**} ] \longrightarrow U^*, \]
with $t + t^* r^2 = \max \{ t - r^2, 0 \}$ and $t + t^{**} r^2 = \min \{ t + \frac14 r^2, t_0 \}$, such that
\[ \psi_2 \big|_{S^2 \times ( - \delta_\#^{-1}, \delta_\#^{-1} ) \times \{ a \} } = \psi_1 \big|_{S^2 \times (- \delta_\#^{-1}, \delta_\#^{-1} )} \]
and
\[ \big\Vert {r^{-2}  \psi_2^* g - g^{S^2 \times \R} } \big\Vert_{C^{[\delta^{-1}_\#]}} < \delta_\#. \]
Here the $C^{[\delta^{-1}_\#]}$-norm is taken over the domain of $\psi_2$.
\end{lemma}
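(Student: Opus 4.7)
The plan is to argue by contradiction using a compactness-and-uniqueness argument: cylindrical initial data for Ricci flow, together with a bounded-curvature neighborhood, forces the flow to be close to a shrinking round cylinder.

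First, I would reduce to the case $r=1$ by parabolic rescaling, and normalize so that $t+a r^2 = 0$ (shifting the time function). Suppose the conclusion fails for some $\delta_\# > 0$; then there exist sequences $\delta_k \to 0$, $\eps_{\can,k}\to 0$, Ricci flow spacetimes $\M_k$, numbers $a_k \in [-1, \tfrac14]$, and diffeomorphisms $\psi_{1,k} : S^2\times(-\delta_k^{-1},\delta_k^{-1}) \to U_k \subset (\M_k)_{a_k}$ satisfying \eqref{eq_psi_1_lem_necks}, but for which no map $\psi_{2,k}$ with the stated $C^{[\delta_\#^{-1}]}$-closeness exists. Pass to a subsequence so that $a_k \to a_\infty \in [-1,\tfrac14]$.

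Second, I would establish uniform geometric control on a large neck region and in its parabolic past and future. Pick any $s \in (-\delta_\#^{-1}-1, \delta_\#^{-1}+1)$ and set $y_{k,s}:=\psi_{1,k}(\sigma,s)$ for a point $\sigma\in S^2$. From \eqref{eq_psi_1_lem_necks} we have $\rho(y_{k,s}) \to \sqrt{1-3a_\infty}\in[\tfrac12,2]$, so in particular $\rho(y_{k,s}) > 10\eps_{\can,k}$ for large $k$. Applying the canonical-neighborhood bounds of Lemma~\ref{lem_properties_kappa_solutions_cna}\ref{item_p_x_a_scale_curvature_bound} and the bounded-curvature-at-bounded-distance Lemma~\ref{lem_bounded_curv_bounded_dist} yields uniform $C^m$-bounds on the curvature tensor on backward parabolic neighborhoods $P(y_{k,s}, A)$ of fixed radius $A$. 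Combining with the backward survival Lemma~\ref{lem_lambda_thick_survives_backward} and the completeness hypothesis shows that for large $k$ the entire product domain $\psi_{1,k}(S^2\times(-\delta_\#^{-1}-1,\delta_\#^{-1}+1))$ is unscathed on the time-interval $[\max\{-1,-t_k\},\min\{\tfrac14,t_{0,k}-t_k\}]$, with uniform bounds on $\rho$ and all its derivatives. This produces the candidate maps $\psi_{2,k}$ by flowing along $\partial_\t$.

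Third, I would extract a smooth limit and identify it. Pull back the spacetime metric via $\psi_{2,k}$ to obtain a sequence of Ricci flows on $S^2 \times I \times [t^*_\infty, t^{**}_\infty]$ (with $I$ a large open interval) with uniformly bounded derivatives of curvature of all orders. By Arzel\`a--Ascoli a subsequence converges smoothly on compact subsets to a smooth Ricci flow $(\bar g_t)$. By the $C^0$ hypothesis \eqref{eq_psi_1_lem_necks}, the slice $\bar g_{a_\infty}$ agrees with the round cylindrical metric $g^{S^2\times\R}_{a_\infty}$; by uniqueness of smooth Ricci flow from bounded-curvature initial data, $\bar g_t$ \emph{is} the shrinking round cylinder $g^{S^2\times\R}_t$ on its entire domain. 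For large $k$ this smooth convergence contradicts the assumed $C^{[\delta_\#^{-1}]}$-failure, completing the proof.

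The main obstacle is guaranteeing that the product domain $U^*$ is genuinely unscathed all the way across the time-interval --- i.e.\ that no point of the neck region leaves the spacetime through a singularity before time $t^{**}$. This is handled by the interplay between the $(\eps_{\can}r,t_0)$-completeness hypothesis and Lemma~\ref{lem_lambda_thick_survives_backward}: since $\sqrt{1-3a}\in[\tfrac12,2]$ on $a\in[-1,\tfrac14]$, the scale on the neck region remains uniformly above $\eps_{\can,k}r_k$ for large $k$, so completeness prohibits any escape. Once this is secured, the compactness and uniqueness steps are standard.
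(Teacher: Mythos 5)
Your proposal follows the paper's compactness-and-contradiction strategy up to the point of identifying the limit flow, but the identification step you propose has a genuine gap.

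You want to conclude by ``uniqueness of smooth Ricci flow from bounded-curvature initial data'' that the limit $(\bar g_t)$ on $S^2\times I$ must equal the shrinking round cylinder once it agrees with the cylinder at the single time $t=a_\infty$. This does not work, for two independent reasons. First, your limit lives on the \emph{incomplete} manifold $S^2\times I$ with $I$ a bounded interval; the standard forward uniqueness theorems for Ricci flow (Chen--Zhu, Kotschwar) all require complete time-slices, and on an incomplete domain a solution is not determined by its initial data alone (boundary behavior can be prescribed, as for the linear heat equation on a bounded region). Second, you need to identify $\bar g_t$ both \emph{forward and backward} from $a_\infty$, since $a_\infty$ lies in the interior of $[t^*_\infty,t^{**}_\infty]$. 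Backward uniqueness for parabolic equations is not a consequence of well-posedness; the Ricci-flow backward uniqueness theorem (Kotschwar) again needs completeness and bounded curvature on the whole complete manifold, neither of which your limit has.

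The paper's route avoids PDE uniqueness altogether. It lets the spatial width $d_k\to\infty$ so that the limit is a Ricci flow on the entire $S^2\times\R$ (complete time-slices), and then observes that because $\eps_{\can,k}\to 0$, every time-slice of the limit is the final time-slice of a complete $\kappa$-solution. Since $S^2\times\R$ has two ends and the round shrinking cylinder is the only $\kappa$-solution with more than one end (Lemma~\ref{lem_kappa_solution_properties_appendix}\ref{ass_C.1_f}), each slice is \emph{a priori} a rescaled round cylinder; the Ricci flow equation then pins the scaling, and the normalization at $s=a_\infty$ fixes it exactly. Notice this is a rigidity/classification argument, not a PDE uniqueness argument, and it works equally well backward in time. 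A related point: your step~2 asserts that the product domain is unscathed with uniform $\rho$-control over the \emph{whole} time-interval before you have identified the limit; a priori the scale could decay to $\eps_{\can,k}r_k$ before reaching $t^{**}$. The paper handles this with a genuine bootstrap (the minimal/maximal choice of $a^*_\infty,a^{**}_\infty$): one first proves cylindricity on a possibly shorter interval, and only then uses the explicit cylinder geometry plus Lemma~\ref{lem_forward_backward_control} to extend the interval to the full $[t^*_\infty,t^{**}_\infty]$.
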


Note that the lemma can be generalized to larger time-intervals.
We have omitted this aspect, as it will not be important for us later.   We also remark that one may prove a more general result to the effect that any parabolic region is close to a parabolic region in a $\kappa$-solution.

\begin{proof}
For the following proof, we may assume that $\ov{r}$ and $\ov\eps_{\can}$ are chosen small enough such that any point $x \in \M$ with $\frac1{10} r \leq \rho(x) \leq 10r$ satisfies the $\eps_{\can}$-canonical neighborhood assumption.

Assuming 
\[\delta\leq \ov\delta\,,\] we have the following bound on the image of $\psi_1$:
\begin{equation} \label{eq_rho_14_4}
 \tfrac14 r \leq \tfrac12 \sqrt{1-3a} \, r < \rho  < 2 \sqrt{1-3a} \, r \leq 4 r . 
\end{equation}

Assume now that the statement of the lemma was false for some fixed $\delta_\# > 0$.
So there are sequences $\eps_{\can, k} \to 0$, $\delta_k \to 0$, $r_k \leq \ov{r}$, $t_k \geq r_k^2$, $a_k \in [-1,\frac14]$, $t_{0,k} \geq 0$, $t^*_k \in [-1,0]$, $t^{**}_k \in [0, \frac14]$ with $t_k + t^*_k r_k^2 = \max \{ t_k - r_k^2, 0 \}$ and $t_k + t^{**}_{k} r_k^2 = \min \{ t_k + \frac14 r_k^2, t_{0,k} \}$, as well as a sequence $\{\M^k\}$ of Ricci flow spacetimes that satisfy the $\eps_{\can, k}$-canonical neighborhood assumption at scales $(\eps_{\can, k} r_k, 1)$ and maps $\psi_{1,k}$ belonging to $\delta_k$-necks at time $t_k$ and scale $\sqrt{1-3a_k} \, r_k$, but for which the conclusion of the lemma fails.
After passing to a subsequence, we may assume that $t^*_\infty := \lim_{k \to \infty} t^*_k$, $t^{**}_\infty := \lim_{k \to \infty} t^{**}_k$ and $a_\infty := \lim_{k \to \infty} a_k$ exist.

Choose $a^*_\infty \in [t^*_\infty, a_\infty]$ and $a^{**}_\infty \in [a_\infty, t^{**}_\infty ]$ minimal and maximal, respectively, such that for any $d > 0$ and any compact interval $[s_1, s_2] \subset (a^*_\infty, a^{**}_\infty)$ the following holds for large $k$ (possibly depending on $d, s_1, s_2$):
For all $x \in \psi_{1,k} ( S^2 \times (-d,d ))$, $t' \in [t_k + s_1 r_k^2, t_k + s_2 r_k^2]$ the point $x(t')$ is defined and we have
\begin{equation} \label{eq_rho_x_t_prime_110_10}
 \tfrac1{10} r_k \leq \rho (x(t')) \leq 10 r_k . 
\end{equation}
Note that by the remark in the beginning of the proof, this implies that $x(t')$ satisfies the canonical $\eps_{\can, k}$-canonical neighborhood assumption.
By (\ref{eq_rho_14_4}) and Lemma~\ref{lem_forward_backward_control} we know that $a^*_\infty < a_\infty$ if $a_\infty >  t^*_\infty$ and $a^{**}_\infty > a_\infty$ if $a_\infty < t^{**}_\infty$.

By the choices of $a^*_{\infty}, a^{**}_\infty$ we can find sequences $d_k \to \infty$, $a^*_k \in [-1, a_k]$ and $a^{**}_k \in [a_k, \frac14]$ with $\lim_{k \to \infty} a^*_k = a^*_\infty$ and $\lim_{k \to \infty} a^{**}_k = a^{**}_\infty$ such that the set
\[ P_k := \big( \psi_{1,k} (S^2 \times ( -d_k, d_k )) \big) \big( [t_k + a^*_k r_k^2, t_k + a^{**}_k r_k^2] \big) \]
is well defined and such that $\frac1{10} r_k \leq \rho \leq 10 r_k$ on $P_k$.
For every $k$ consider the parabolically rescaled flow $(g'_{k, s})_{s \in (a^*_k, a^{**}_k)}$ on $S^2 \times (-d_k,d_k)$ with
\begin{equation} \label{eq_g_prime_k_s}
 g'_{k, s} :=  r_k^{-2} \wh{g}_{t_k  + s r_k^2}, 
\end{equation}
where $\wh{g}_{t_k+s r_k^2}$ denotes the pullback of $g_{t_k+s r_k^2}$ under the composition 
of $\psi_{1,k}$ with the map 
$$
\psi_{1,k} \big( S^2 \times ( -d_k, d_k ) \big)  \lra P_k
$$
that is given by the time $(s-a_k)r_k^2$-flow of $\D_{\t}$.  

By (\ref{eq_rho_x_t_prime_110_10}) and the $\eps_{\can, k}$-canonical neighborhood assumption (see assertion \ref{item_p_x_a_scale_curvature_bound} of Lemma~\ref{lem_properties_kappa_solutions_cna}), we obtain that the curvature of $(g'_{k, s})_{s \in (a^*_k, a^{**}_k)}$, along with its covariant derivatives, is uniformly bounded.
Together with (\ref{eq_psi_1_lem_necks}), these bounds imply uniform $C^m$-bounds on the tensor fields $(g'_{k, s})$ themselves.
So, by passing to a subsequence, we obtain that the $(g'_{k, s})$ converge to a Ricci flow $(g'_{\infty, s})_{s \in (a^*_\infty, a^{**}_\infty)}$ on $S^2 \times \R$, which extends smoothly to the time-interval $[a^*_\infty, a^{**}_\infty]$.

The $\eps_{\can,k}$-canonical neighborhood assumption implies that all time-slices of this limit are final time-slices of $\kappa$-solutions.
By (\ref{eq_psi_1_lem_necks}) we know that $g'_{\infty, a_\infty} = g^{S^2 \times \R}_{a_\infty}$. 
Since $S^2 \times \R$ has two ends, $g'_{\infty, s}$ splits off an $\R$ factor for all $s \in (a^*_\infty, s^*_\infty)$ and must therefore be homothetic to a round cylinder.
It follows that  $g'_{\infty, s} = g^{S^2 \times \R}_{s}$ for all $s \in [a^*_\infty, a^{**}_\infty ]$.
Since this limit is unique, we obtain that the $(g'_{k,s})$ converge to $(g'_{\infty, s})$ even without passing to a subsequence.

As $\frac12 \leq \rho \leq 2$ on $(S^2 \times \R) \times (a^*_\infty, a^{**}_\infty)$, we obtain that for any $d  > 0$ and $[s_1,s_2] \in (a^*_\infty, a^{**}_\infty)$ we have $\frac14 r_k \leq \rho  \leq 4 r_k$ on $(\psi_{1,k} (S^2 \times (- d, d)))([s_1,s_2])$ for large $k$.
So by Lemma~\ref{lem_forward_backward_control} and the minimal and maximal choices of $a^*_\infty, a^{**}_\infty$, we have $a^*_\infty = t^*_\infty$ and $a^{**}_\infty = t^{**}_\infty$.
Moreover, after adjusting the sequence $d_k \to \infty$, we may assume that $a^*_k = t^*_k$ and $a^{**}_k = t^{**}_k$ for large $k$.

For large $k$ we now define $\psi_{2,k}$ by extending $\psi_{1,k}$ restricted to $S^2 \times (-\delta^{-1}_\#, \delta^{-1}_\#)$ forward and backward using the flow of $r^2_k \partial_{\mathfrak{t}}$.
Then we have $r^{-2}_k \psi^*_{2,k} g_k = g'_{k, s}$ on $(S^2 \times (-\delta^{-1}_\#, \delta^{-1}_\#)) \times [ t_k^*,t^{**}_k]$.
So it suffices to show that $g'_{k,s}$ converges to $g^{S^2 \times \R}$ on $(S^2 \times (-\delta^{-1}_\#, \delta^{-1}_\#)) \times [t_k^*,t^{**}_k]$ uniformly in the $C^{[\delta_\#^{-1}]}$-sense.
To see this, note that $g'_{k,s}$ from (\ref{eq_g_prime_k_s}) is uniformly bounded on $(S^2 \times (-\delta^{-1}_\#, \delta^{-1}_\#)) \times [-1,\frac14]$ in every $C^m$-norm and that we have uniform convergence of $g'_{k,s}$ to $g^{S^2 \times \R}$ on every subset of the form $(S^2 \times (-\delta^{-1}_\#, \delta^{-1}_\#)) \times [s_1, s_2]$ for $[s_1, s_2 ] \subset (t^*_\infty, t^{**}_\infty )$, in every $C^m$-norm.
\end{proof}

\bigskip

For notation and facts about the Bryant soliton, see Subsection~\ref{subsec_basics_Bryant} and Appendix~\ref{appx_Bryant_properties}.  In the following result, it is important that $\rho\geq 1$ on the normalized Bryant soliton.

\begin{lemma}[Propagating  Bryant-like geometry] 
\label{lem_promoting_Bryant}
If
\begin{multline*}
 \delta_\# > 0, \qquad
T < \infty, \qquad
\delta \leq \ov\delta (\delta_\#, T), \qquad 
\eps_{\can} \leq \ov\eps_{\can} (\delta_\#, T), \qquad
r \leq \ov{r}, 
\end{multline*}
then the following holds.

Assume that $\M$ is an $(\eps_{\can} r, t_0)$-complete Ricci flow spacetime that satisfies the $\eps_{\can}$-canonical neighborhood assumption at scales $(\eps_{\can} r, 1)$.
Let  $t \in [0,t_0]$ and consider a diffeomorphism onto its image
\[ \psi_1 : M_{\Bry} (\delta^{-1}) \times \{ 0 \} \to \M_t \]
with the property that
\begin{equation} \label{eq_psi_Bry_assumption_propagate}
 \big\Vert r^{-2} \psi_1^* g_t - g_{\Bry} \big\Vert_{C^{[\delta^{-1}]} ( M_{\Bry} (\delta^{-1}) \times \{ 0 \})} < \delta. 
\end{equation}
Then  there is an $r^2$-time equivariant and $\partial_{\t}$-preserving diffeomorphism onto its image
\[ \psi_2 : M_{\Bry} (\delta_\#^{-1}) \times [  t^*, t^{**}   ]  \longrightarrow \M_{[t - T r^2, t + T r^2] \cap [0,t_0]}, \]
where $t^* \leq 0 \leq t^{**}$ are chosen such that $t + t^* r^2 = \max \{ t - T r^2, 0 \}$ and $t + t^{**} r^2 = \min \{ t + T r^2, t_0 \}$.
The map $\psi_2$ has the property that $\psi_2 = \psi_1$ on $M_{\Bry} (\delta_\#^{ - 1}) \times \{ 0 \}$ and 
\[ \big\Vert r^{-2} \psi_2^* g - g_{\Bry} \big\Vert_{C^{[\delta_\#^{-1}]}} < \delta_\#, \]
where the norm is taken over the domain of $\psi_2$.
\end{lemma}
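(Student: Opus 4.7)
The plan is to mimic the argument for Lemma~\ref{lem_time_slice_neck_implies_space_time_neck}, replacing the rigidity of the shrinking round cylinder among $\kappa$-solutions by the corresponding rigidity of the Bryant soliton. Assume toward contradiction that the conclusion fails for some $\delta_\# > 0$ and $T < \infty$, so that we have sequences $\eps_{\can,k} \to 0$, $\delta_k \to 0$, $r_k \leq \ov{r}$, a sequence of $(\eps_{\can,k} r_k, t_{0,k})$-complete Ricci flow spacetimes $\M^k$ satisfying the $\eps_{\can,k}$-canonical neighborhood assumption at scales $(\eps_{\can,k} r_k, 1)$, and maps $\psi_{1,k}$ satisfying (\ref{eq_psi_Bry_assumption_propagate}) but for which no $\psi_{2,k}$ with the desired properties exists. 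Pass to a subsequence so that $t_k^* \to t_\infty^* \in [-T,0]$ and $t_k^{**} \to t_\infty^{**} \in [0,T]$.

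Next, as in the proof of Lemma~\ref{lem_time_slice_neck_implies_space_time_neck}, choose $a_\infty^* \in [t_\infty^*,0]$ and $a_\infty^{**} \in [0,t_\infty^{**}]$ minimal and maximal, respectively, such that for every relatively compact $K \subset M_{\Bry}$ and every compact subinterval $[s_1,s_2] \subset (a_\infty^*, a_\infty^{**})$ the subsets
\[
 P_k(K,[s_1,s_2]) := (\psi_{1,k}(K))([t_k + s_1 r_k^2, t_k + s_2 r_k^2])
\]
are defined, unscathed, and satisfy uniform two-sided scale bounds $c\, r_k \leq \rho \leq C\, r_k$ (depending on $K$ and $[s_1,s_2]$) for all large $k$. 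The existence of a nontrivial such interval around $0$ follows from (\ref{eq_psi_Bry_assumption_propagate}), Lemma~\ref{lem_forward_backward_control}, and the fact that $\rho \geq 1$ on the normalized Bryant soliton. On $P_k(K,[s_1,s_2])$, the canonical neighborhood assumption together with assertion~\ref{item_p_x_a_scale_curvature_bound} of Lemma~\ref{lem_properties_kappa_solutions_cna} yields uniform $C^m$ bounds on curvature, and (\ref{eq_psi_Bry_assumption_propagate}) anchors the rescaled metrics at time $0$. By Arzel\`a--Ascoli the parabolically rescaled pullbacks $g'_{k,s} := r_k^{-2} \wh g_{t_k + s r_k^2}$ (defined exactly as in the proof of Lemma~\ref{lem_time_slice_neck_implies_space_time_neck}) converge, after passing to a subsequence, to a smooth Ricci flow $(g'_{\infty,s})_{s \in [a_\infty^*, a_\infty^{**}]}$ on $M_{\Bry}$ with $g'_{\infty,0} = g_{\Bry}$.

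Now comes the key step: identifying the limit as the Bryant soliton flow. The canonical neighborhood assumption passes to the limit, so each time-slice $(M_{\Bry}, g'_{\infty,s})$ is the final time-slice of a $\kappa$-solution (see Lemma~\ref{lem_kappa_solution_properties_appendix}). Since $g'_{\infty,0} = g_{\Bry}$ and the Bryant soliton is rigid among $\kappa$-solutions --- by the Hamilton--Brendle classification (\cite{Hamilton:1993em,Brendle:2013dd}) combined with the compactness of $\kappa$-solutions modulo rescaling (assertion~\ref{ass_C.1_b} of Lemma~\ref{lem_kappa_solution_properties_appendix}) --- every nearby $\kappa$-solution on $\R^3$ of matching scale is itself the Bryant soliton. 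Consequently $g'_{\infty,s}$ is isometric to $g_{\Bry}$ for every $s$, and by Chen--Zhu uniqueness for Ricci flows with bounded curvature it agrees with the Bryant soliton evolution (modulo pullback by the soliton diffeomorphisms generated by $\nabla f_{\Bry}$). In particular the limit metric stays uniformly $C^\infty$-close to $g_{\Bry}$ (possibly after composing $\psi_{1,k}$ with the soliton flow, which is an isometry of $g_{\Bry}$), and $\rho$ remains pinched in $[\tfrac12, 2]$ on any compact subset of $M_{\Bry}$. Lemma~\ref{lem_forward_backward_control} and the maximality/minimality of $a_\infty^{**}$, $a_\infty^*$ then force $a_\infty^* = t_\infty^*$ and $a_\infty^{**} = t_\infty^{**}$.

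Finally, for large $k$ we define $\psi_{2,k}$ by extending $\psi_{1,k}|_{M_{\Bry}(\delta_\#^{-1})}$ forward and backward using the flow of $r_k^2 \partial_{\mathfrak{t}}$ (composed with the Bryant soliton diffeomorphisms as above to absorb the evolution of $g_{\Bry,t}$). The $C^{[\delta_\#^{-1}]}$-closeness of $r_k^{-2} \psi_{2,k}^* g$ to $g_{\Bry}$ on $M_{\Bry}(\delta_\#^{-1}) \times [t_k^*, t_k^{**}]$ follows from the established uniform smooth convergence together with the uniform $C^m$ bounds, contradicting the failure of the conclusion. The main obstacle is the rigidity step identifying the limit flow with the Bryant soliton; everything else is a standard parabolic compactness argument parallel to the neck case.
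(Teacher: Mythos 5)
Your blow-up set-up mirrors the paper's almost exactly, but the key rigidity step --- identifying the limit flow with the Bryant soliton evolution for negative times --- has a genuine gap. You assert that ``the Bryant soliton is rigid among $\kappa$-solutions'' so that ``every nearby $\kappa$-solution on $\R^3$ of matching scale is itself the Bryant soliton.'' That claim is essentially (a local form of) Perelman's conjecture that the Bryant soliton is the only $\kappa$-solution on $\R^3$, and it was open at the time this paper was written; it is not a consequence of Hamilton's Harnack rigidity theorem nor of Brendle's classification of steady gradient solitons, and combining those with the compactness of the space of $\kappa$-solutions does not produce such an isolation statement. Indeed, the introduction of the paper explicitly remarks that the conjecture ``remains open'' and that they must sidestep it. For $s > 0$, forward uniqueness of Ricci flow with bounded curvature does the job, as you note; but for $s < 0$, nothing of the sort is available without a further idea.

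The missing idea is the content of Proposition~\ref{prop_dtR_0_Bryant}: a $\kappa$-solution satisfying $\partial_t R(p,0)=0$ at \emph{some} point $p$ must be the Bryant soliton. The paper exploits this as follows. One pastes the past of $g'_{\infty,s}$ on $[a^*_\infty,0]$ with a $\kappa$-solution extending $g'_{\infty,a^*_\infty}$ to get a full $\kappa$-solution $(g'''_s)_{s\in(-\infty,0]}$ on $M_{\Bry}$ whose time-$0$ slice is exactly $g_{\Bry,0}$. Because $\partial_t R=\Delta R+2|\Ric|^2$ is determined pointwise by the time-$0$ metric, and because on the Bryant soliton $\partial_t R = dR(\nabla f) = 2\Ric(\nabla f,\nabla f)$ vanishes at the tip $x_{\Bry}$ (where $\nabla f = 0$, by (\ref{eqn_soliton_conserved}) and (\ref{eqn_soliton_dr})), one deduces $\partial_t R_{g'''}(x_{\Bry},0)=0$. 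Only then does Hamilton's equality case of the Harnack inequality force $g'''$ to be a steady gradient soliton, at which point Brendle's theorem identifies it as the Bryant soliton. This pointwise observation at the tip is the hinge; without it one would indeed need the Perelman conjecture, which is precisely what the paper is structured to avoid.
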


\begin{proof}
The proof is similar to the proof of Lemma~\ref{lem_time_slice_neck_implies_space_time_neck}. 

In the following, we may assume that $\ov{r}$ and $\ov\eps_{\can}$ are chosen  small enough such that any point with $x \in \M$ with $\frac1{10} r \leq \rho(x) \leq 10 r$ satisfies the $\eps_{\can}$-canonical neighborhood assumption.

Assuming
\[\delta\leq\ov\delta\]
 we have
\[ \tfrac14 r \leq \rho \quad \text{on} \quad \Image \psi_1, \qquad \tfrac14 r \leq \rho (\psi_1 (x_{\Bry})) \leq 4 r. \]

Assume now that the statement of the lemma was false for some fixed $\delta_\# > 0$, $T<\infty$.  
So there are a sequences $\{\eps_{\can,k}\}$, $\{\delta_k\}$, $\{ t_{0,k} \}$, $\{r_k\}$, $\{ t_k \}$, $\{ t^*_k \}$, $\{ t^{**}_k \}$ such that $\eps_{\can, k} \to 0$, $\delta_k \to 0$,  as well as a sequence $\{\M^k\}$ of $(\eps_{\can,k} r_k, t_{0,k} )$-complete Ricci flow spacetimes that satisfy the $\eps_{\can}$-canonical neighborhood assumption at scales $(\eps_{\can, k} r_k, 1)$ and a sequence of maps $\{\psi_{1,k}\}$ satisfying the hypotheses of the lemma, but for which the conclusion of the lemma fails for all $k$.
By passing to a subsequence, we may assume that $t^*_\infty := \lim_{k \to \infty} t^*_k$ and $t^{**}_\infty := \lim_{k \to \infty} t^{**}_k$ exist.

Choose $a^*_\infty \in [t^*_\infty, 0]$, $a^{**}_\infty \in [0,t^{**}_\infty]$ minimal and maximal, respectively, such that for any $d > 0$ and any compact interval $[s_1,s_2] \subset (a^*_\infty, a^{**}_\infty)$ the following holds for large $k$:
For every $x \in \psi_{1,k} (M_{\Bry} (d))$ and $t' \in [t_k + s_1 r_k^2, t_k + s_2 r_k^2]$ the point $x(t')$ is well defined and we have $\rho (x(t')) \geq \frac1{10} r_k$ and $ \rho ( (\psi_{1,k} (x_{\Bry}))(t')) \leq 10 r_k$.
Note that $a^*_\infty < 0$ if $t^*_\infty < 0$ and $a^{**}_\infty > 0$ if $t^{**}_\infty > 0$ due to Lemma~\ref{lem_forward_backward_control}.

As in the proof of Lemma~\ref{lem_time_slice_neck_implies_space_time_neck}, we can now find sequences $d_k \to \infty$ and $a^*_k \in [ t^*_k, 0]$, $a^{**}_k \in [0, t^{**}_k]$ with $\lim_{k \to \infty} a^*_k = a^*_\infty$ and $\lim_{k \to \infty} a^{**}_k = a^{**}_\infty$, such that the product domains
\[ P_k := \big( \psi_{1,k} \big( M_{\Bry} (d_k) \big) \big)  \big( [t_k + a^*_k r_k^2, t_k + a^{**}_k r_k^2] \big) \]
are well defined and such that
\begin{multline*}
 \rho \geq \tfrac1{10} r_k \quad \text{on} \quad P_k, \qquad \\
  \rho \big( \big(\psi_{1,k} (x_{\Bry}) \big)(t') \big) \leq 10 r_k \quad \text{for all} \quad t' \in [t_k + a^*_k r_k^2, t_k + a^{**}_k r_k^2] .
\end{multline*}
So $(\psi_{1,k} (x_{\Bry}))(t')$ satisfies the $\eps_{\can, k}$-canonical neighborhood assumption for all $t' \in [t_k + a^*_k r_k^2, t_k + a^{**}_k r_k^2]$.

For every $k$ consider the parabolically rescaled flow $(g'_{k, s})_{s \in [a^*_k, a^{**}_k]}$ on $M_{\Bry}(d_k )$ with  $g'_{k, s} :=  r_k^{-2} \wh{g}_{t_k  +s  r_k^2 }$,
where $\wh{g}_{t_k+r_k^2s}$ denotes the pullback of $g_{t_k+s r_k^2}$ under the composition 
of $\psi_{1,k}$ with the map 
$$
\M^k_{t_k}\supset\psi_{1,k} \big( M_{\Bry}(d_k ) \times\{0\} \big)\lra P_k,
$$
given by the time $r_k^2s$-flow of $\D_{\t}$.  
By the  $\eps_{\can, k}$-canonical neighborhood assumption at $(\psi_{1,k} (x_{\Bry}))(t')$ (see assertion \ref{item_p_x_a_scale_curvature_bound} of Lemma~\ref{lem_properties_kappa_solutions_cna}) and a distance distortion estimate on $P_k$, we obtain that the curvature of this flow, along with its derivatives, is uniformly bounded by a constant that may only depend on the spatial direction.
Together with (\ref{eq_psi_Bry_assumption_propagate}), these bounds imply uniform local $C^m$-bounds on the tensor fields $(g'_{k, s})$ themselves.

So, by passing to a subsequence, we obtain that $(g'_{k, s})$ converges to a Ricci flow $(g'_{\infty, s})_{s \in (a^*_\infty, a^{**}_\infty)}$ on $M_{\Bry}$ with uniformly bounded curvature, which extends smoothly to the time-interval $[a^*_\infty, a^{**}_\infty]$.
By the $\eps_{\can,k}$-canonical neighborhood assumption at $(\psi_{1,k} (x_{\Bry}))(t')$ and the compactness of $\kappa$-solutions (see assertions \ref{ass_C.1_a} and \ref{ass_C.1_b} of Lemma~\ref{lem_kappa_solution_properties_appendix}), we find that all time-slices of this limit are final time-slices of $\kappa$-solutions.
By (\ref{eq_psi_Bry_assumption_propagate}), we furthermore know that $g'_{\infty, 0} = g_{\Bry, 0}$.

We now claim that $g'_{\infty, s} = g_{\Bry, s}$ for all $s \in [a^*_\infty, a^{**}_\infty]$.
For $s \geq 0$, this follows from the uniqueness of Ricci flows with uniformly bounded curvature.
To verify this in the case $s < 0$, recall that there is a $\kappa$-solution $(g''_{s})_{s \in (-\infty, 0]}$ on $M_{\Bry}$ such that $g''_{0} = g'_{a^*_\infty}$.
Set $g'''_s := g'_{\infty, s }$ if $a^*_\infty \leq s \leq 0$ and $g'''_s := g^{\prime \prime}_{s - a^*_\infty}$ if $s < a^*_\infty$.
Then $(g'''_s)_{s \in (-\infty, 0]}$ is a smooth $\kappa$-solution (possibly after adjusting $\kappa$).
Since $\D_tR_{g'''}(x_{\Bry}, 0)=0$, it follows from Proposition~\ref{prop_dtR_0_Bryant} that $(M_{\Bry}, (g'''_s)_{s \in (-\infty, 0]}, x_{\Bry})$ is isometric to $(M_{\Bry}, (g_{\Bry, t})_{t \in (-\infty, 0]}, x_{\Bry})$.
Thus $g'_{\infty, s} = g_{\Bry, s}$ for all $s \in [a^*_\infty, a^{**}_\infty]$.
As in the proof of Lemma~\ref{lem_time_slice_neck_implies_space_time_neck}, the uniqueness of the limit implies that the $(g'_{k,s})$ converge to $(g'_{\infty, s})$ even without passing to a subsequence.

So $(g'_{\infty, s})_{s \in [a^*_\infty, a^{**}_\infty]}$ satisfies $\rho \geq 1$ everywhere and $\rho (x_{\Bry},s) = 1$ for all $s \in [a^*_\infty, a^{**}_\infty]$.
Therefore, by the minimal and maximal choices of $a^*_\infty$ and $a^{**}_\infty$ and Lemma~\ref{lem_forward_backward_control}, we obtain that $a^*_\infty = t^*_\infty$ and $a^{**}_\infty = t^{**}_\infty$.
Moreover, after possibly adjusting the sequence $d_k \to \infty$, we may assume that $a^*_k = t^*_k$ and $a^{**}_k = t^{**}_k$ for large $k$.
The claim now follows as in the proof of Lemma~\ref{lem_time_slice_neck_implies_space_time_neck}.
\end{proof}

\subsection{Identifying approximate Bryant structure}
In the next result, we exploit the rigidity theorems of Hamilton and Brendle to show that a large region must be well approximated by a Bryant soliton if the scale is nearly increasing at a point. 

\begin{lemma}\label{lem_d_rho_squared_small_almost_bryant}
If 
$$
\delta_\#>0\,,\qquad \delta\leq\ov\delta(\delta_\#)\,,\qquad \eps_{\can}\leq\ov\eps_{\can}(\delta_\#)\,,
$$
then the following holds.

If $\M$ is a Ricci flow spacetime satisfying the $\eps_{\can}$-canonical neighborhood assumption at $x\in \M$, and $\D_{\t} \rho^2(x)\geq -\delta$, then $(\M_t,x)$ is $\delta_\#$-close to $(M_{\Bry},g_{\Bry},x_{\Bry})$ at any scale $a \in ((1-\delta) \rho(x), (1+\delta) \rho(x))$.
\end{lemma}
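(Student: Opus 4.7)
The plan is to argue by contradiction via a compactness/rigidity argument built on the canonical neighborhood assumption and Proposition~\ref{prop_dtR_0_Bryant}. Suppose the lemma fails for some $\delta_\# > 0$. Then I can extract sequences $\delta_k \to 0$, $\eps_{\can,k} \to 0$, Ricci flow spacetimes $\M^k$, points $x_k \in \M^k_{t_k}$ satisfying the $\eps_{\can,k}$-canonical neighborhood assumption with $\partial_\t \rho^2(x_k) \geq -\delta_k$, and scales $a_k \in ((1-\delta_k)\rho(x_k),(1+\delta_k)\rho(x_k))$ such that $(\M^k_{t_k}, x_k)$ is \emph{not} $\delta_\#$-close to $(M_{\Bry}, g_{\Bry}, x_{\Bry})$ at scale $a_k$. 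Parabolically rescaling each $\M^k$ so that $\rho(x_k) = 1$ preserves the canonical neighborhood assumption and the quantity $\partial_\t \rho^2(x_k)$ (both are scale invariant), so I may assume $\rho(x_k) = 1$ and $a_k \to 1$.

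By the $\eps_{\can,k}$-canonical neighborhood assumption I pick pointed $\kappa_k$-solutions $(\ov M_k, (\ov g_{k,t})_{t \in (-\infty,0]}, \ov x_k)$ with $\rho(\ov x_k, 0) = 1$ together with $C^{[\eps_{\can,k}^{-1}]}$-almost-isometries $\psi_k : B^{\ov M_k}(\ov x_k, \eps_{\can,k}^{-1}) \to \M^k_{t_k}$ at some scales $\lambda_k$; Lemma~\ref{lem_convergence_of_scales} forces $\lambda_k \to 1$. Two cases arise. If infinitely many $\ov M_k$ are homothetic to a round spherical space form, then on each such model $\partial_t \rho^2(\ov x_k,0)$ equals a universal negative constant (from $\partial_t R = 2|{\Ric}|^2 > 0$ on a round sphere); since $\partial_t \rho^2 = -3R^{-2}(\Delta R + 2|{\Ric}|^2)$ depends continuously on the $2$-jet of the metric and $[\eps_{\can,k}^{-1}] \to \infty$, the closeness forces $\partial_\t \rho^2(x_k)$ to converge to the same negative constant, contradicting $\partial_\t \rho^2(x_k) \geq -\delta_k \to 0$. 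Therefore for all but finitely many $k$ the solution $\ov M_k$ is non-spherical; assertions \ref{ass_C.1_a} and \ref{ass_C.1_b} of Lemma~\ref{lem_kappa_solution_properties_appendix} then furnish a universal $\kappa_0 > 0$ and smooth compactness, so a subsequence of $(\ov M_k, (\ov g_{k,t}), \ov x_k)$ converges in the pointed smooth topology on compact time-intervals to a pointed $\kappa_0$-solution $(\ov M_\infty, (\ov g_{\infty,t}), \ov x_\infty)$ with $\rho(\ov x_\infty, 0) = 1$.

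On the limit, the trace Harnack inequality for ancient flows of non-negative curvature operator gives $\partial_t R \geq 0$, while passing the assumption $\partial_t R(x_k) \leq 3\delta_k$ through the $C^m$-closeness (using again the spatial expression $\partial_t R = \Delta R + 2|{\Ric}|^2$ and $\lambda_k \to 1$) yields $\partial_t R(\ov x_\infty, 0) \leq 0$. Hence $\partial_t R(\ov x_\infty, 0) = 0$, and Proposition~\ref{prop_dtR_0_Bryant} identifies $(\ov M_\infty, \ov g_{\infty,0}, \ov x_\infty)$ isometrically with $(M_{\Bry}, g_{\Bry}, x_{\Bry})$. Combining the smooth convergence $(\ov M_k, \ov g_{k,0}, \ov x_k) \to (M_{\Bry}, g_{\Bry}, x_{\Bry})$ with the $\eps_{\can,k}$-closeness of $(\M^k_{t_k}, x_k)$ to $(\ov M_k, \ov g_{k,0}, \ov x_k)$ at scale $\lambda_k$ produces, for large $k$, $\tfrac12\delta_\#$-closeness of $(\M^k_{t_k}, x_k)$ to the pointed Bryant soliton at scale $\lambda_k$; since $a_k/\lambda_k \to 1$, rescaling the bound by the small factor $(\lambda_k/a_k)^2 - 1$ upgrades this to $\delta_\#$-closeness at scale $a_k$ for large $k$, contradicting the choice of the sequence.

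The main subtle point I expect to negotiate is the passage of the \emph{time} derivative $\partial_\t \rho^2$ between the spacetime $\M^k$ and its $\kappa$-solution model, since the canonical neighborhood assumption only controls the time-slice geometry; this is resolved by writing $\partial_t R$ as the purely spatial expression $\Delta R + 2|{\Ric}|^2$ and exploiting that $[\eps_{\can,k}^{-1}] \to \infty$ promotes $C^0$-closeness to $C^m$-closeness for every fixed~$m$. The rest of the argument is a routine compactness/rigidity combination, with the rigidity doing all the real work through Proposition~\ref{prop_dtR_0_Bryant}.
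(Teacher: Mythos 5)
Your proof is correct and follows essentially the same route as the paper's: a contradiction argument extracting $\kappa$-solution models via the canonical neighborhood assumption, using compactness of $\kappa$-solutions (Lemma~\ref{lem_kappa_solution_properties_appendix}) to pass to a limit with $\partial_t R(\ov{x}_\infty,0)=0$, and then applying Proposition~\ref{prop_dtR_0_Bryant} to identify the limit with the Bryant soliton. The only differences are cosmetic: you break out the round-spherical case explicitly (the paper folds it into the observation that $\liminf_k\partial_t\rho^2(\ov{x}_k)\geq 0$ excludes round space forms for large $k$), you cite Lemma~\ref{lem_convergence_of_scales} for $\lambda_k\to 1$ (the paper asserts this without citation), and you spell out the time-derivative transfer via $\partial_t R=\Delta R+2|\Ric|^2$ and the final scale bookkeeping, all of which the paper leaves implicit.
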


Note that $\D_{\t} \rho^2$ is scaling invariant.

\begin{proof}
Suppose the lemma was  false for some $\delta_\#>0$.  Then there a sequence $\{\M^k\}$ of Ricci flow spacetimes satisfying the $\frac{1}{k}$-canonical neighborhood assumption at $x_k\in \M^k_{t_k}$, such that $\D_{\t} \rho^2(x_k)\geq -\frac{1}{k}$, but $(\M^k_{t_k},x_k)$ is not $\delta_\#$-close to $(M_{\Bry},g_{\Bry},x_{\Bry})$ at some scale $a_k \in ( (1-\frac1k) \rho(x_k), (1+\frac1k) \rho(x_k))$.

By the definition of the canonical neighborhood assumption, for every $k$ there is a pointed $\kappa_k$-solution $(\ov{M}_k, (\ov{g}_{k,t})_{t \in (-\infty, 0]}, \ov{x}_k)$  with $\rho (\ov x_k)=1$ and a diffeomorphism onto its image  
$$
\psi_k: B(\ov{x}_k, 0, k)\lra \M^k_{t_k}
$$
with $\psi_k(\ov{x}_k)=x_k$ such that for some $\lambda_k > 0$ with $\lambda_k / \rho(x_k) \to 1$ we have
\[ \big\Vert \lambda_k^{-2} \psi_k^* g_k - \ov{g}_k \big\Vert_{C^{k}(B(\ov x_k,k))} < k^{-1}. \]
So we also have
\[ \big\Vert a_k^{-2} \psi_k^* g_k - \ov{g}_k \big\Vert_{C^{k}(B(\ov x_k,k))}  \to 0. \]
Hence $\liminf_{k \to \infty} \D_t\rho^2( \ov{x}_k ) \geq 0$.  
Therefore $(M_k, (\ov{g}_{k,t})_{t \in (-\infty, 0]})$  cannot be a shrinking round spherical space form for large $k$.
So by assertions \ref{ass_C.1_a}, \ref{ass_C.1_b} and \ref{ass_C.1_e} of Lemma~\ref{lem_kappa_solution_properties_appendix}, after passing to a subsequence, $(\ov{M}_k, (\ov{g}_{k,t})_{t \in (-\infty, 0]}, \ov{x}_k)$  converges in the pointed smooth topology to a $\kappa$-solution $(\ov{M}_\infty, (\ov{g}_{\infty,t})_{t \in (-\infty, 0]}, \ov{x}_\infty)$  with $\D_t\rho^2(\ov x_\infty)=0$.  
By Proposition~\ref{prop_dtR_0_Bryant} it follows that $(\ov{M}_\infty, \lb (\ov{g}_{\infty,t})_{t \in (-\infty, 0]}, \lb \ov{x}_\infty)$  is isometric to a Bryant soliton.   
This is a contradiction.
\end{proof}

By combining Lemma~\ref{lem_d_rho_squared_small_almost_bryant} with Lemma~\ref{lem_promoting_Bryant}, we can deduce closeness to a Bryant soliton on a \emph{parabolic} region.

\begin{lemma}[Nearly increasing scale  implies Bryant-like geometry]
\label{lem_bryant_propagate}
If
\begin{gather*}
 \alpha, \delta > 0, \qquad
1 \leq J < \infty, \qquad
\beta \leq \ov\beta (\alpha, \delta, J),\\ \qquad 
\eps_{\can} \leq \ov\eps_{\can} (\alpha, \delta, J), \qquad  r\leq\ov r(\alpha)
\end{gather*}
then the following holds.

Let $0<r\leq 1$.
Assume that $\M$ is an $(\eps_{\can} r, t_0)$-complete Ricci flow spacetime that satisfies the $\eps_{\can}$-canonical neighborhood assumption at scales $(\eps_{\can} r, 1)$.

Let $t \in [ J  r^2 , t_0]$ and $x \in \M_t$.
Assume that $x$ survives until time $t-r^2$ and that 
\begin{align*}
\alpha r\leq \rho (x) \leq \alpha^{-1} r    \\
 \rho (x(t-r^2)) \leq \rho(x) + \beta r. 
\end{align*}

Let $a \in [\rho (x(t-r^2)) , \rho(x) + \beta r]$.
Then $(\M_{t'}, x(t'))$ is $\delta$-close to $(M_{\Bry}, g_{\Bry}, x_{\Bry})$ at scale $a$ for all $t' \in [t-r^2, t]$. 
Furthermore, there is an $a^2$-time-equivariant and $\partial_{\mathfrak{t}}$-preserving diffeomorphism onto its image
\[ \psi : \ov{M_{\Bry} (\delta^{-1} )} \times \big[{- J \cdot (a r^{-1})^{-2}, 0} \big] \longrightarrow \M \]
such that $\psi (x_{\Bry}, 0) = x$ and
\[ \big\Vert a^{-2} \psi^* g - g_{\Bry} \big\Vert_{C^{[\delta^{-1}]}} < \delta, \]
where the norm is taken over the domain of $\psi$.
\end{lemma}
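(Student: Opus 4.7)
The plan is to locate an intermediate time $\tau \in [t-r^2, t]$ at which $\partial_{\t}\rho^2(x(\tau))$ is nearly non-negative, invoke Lemma~\ref{lem_d_rho_squared_small_almost_bryant} at $x(\tau)$ to deduce that $(\M_{\tau}, x(\tau))$ is close to the Bryant soliton, and then propagate that closeness forward and backward by Lemma~\ref{lem_promoting_Bryant}. Both conclusions of the lemma will drop out of the output of Lemma~\ref{lem_promoting_Bryant}.

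First I would locate $\tau$. By assertion \ref{item_rho_squared_derivative_small} of Lemma~\ref{lem_properties_kappa_solutions_cna}, taking $\eps_{\can}$ small depending on $\beta$ yields $\partial_{\t}\rho^2 \leq \beta$ at every point satisfying the canonical neighborhood assumption. Combined with backward survival (Lemma~\ref{lem_lambda_thick_survives_backward}), this forces $\rho(x(s))$ to stay comparable to $\rho(x)$, and hence to $r$, for all $s \in [t - r^2, t]$. The hypothesis $\rho(x(t-r^2)) \leq \rho(x) + \beta r$ together with $\rho(x) \leq \alpha^{-1}r$ then gives
\[
\int_{t-r^2}^t \partial_{\t}\rho^2(x(s))\,ds \;=\; \rho^2(x(t)) - \rho^2(x(t-r^2)) \;\geq\; - C_1(\alpha)\,\beta\, r^2.
\]
Averaging, some $\tau \in [t-r^2, t]$ satisfies $\partial_{\t}\rho^2(x(\tau)) \geq -C_1(\alpha)\beta$, and the matching upper bound $\partial_{\t}\rho^2 \leq \beta$ shows that $\rho(x(\cdot))$ is in fact nearly constant on this interval.

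Next, I apply Lemma~\ref{lem_d_rho_squared_small_almost_bryant} at $x(\tau)$ with a target precision $\delta_1 > 0$ to be chosen. If $\beta$ and $\eps_{\can}$ are small enough depending on $\delta_1$ and $\alpha$, we obtain that $(\M_{\tau}, x(\tau))$ is $\delta_1$-close to $(M_{\Bry}, g_{\Bry}, x_{\Bry})$ at every scale in a window of relative width $C_2\beta/\alpha$ around $\rho(x(\tau))$. Since $\rho(x(\tau))$ differs from $\rho(x)$ by at most $O(\beta)\,r$, shrinking $\beta$ further ensures that the entire interval $[\rho(x(t-r^2)), \rho(x) + \beta r]$ lies inside this window. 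I then apply Lemma~\ref{lem_promoting_Bryant} at $(x(\tau), \tau)$ with scale $a$ and propagation parameter $T_0 := 2J/\alpha^2$, taking the input radius slightly larger than $\delta^{-1}$ to accommodate the closed ball $\ov{M_{\Bry}(\delta^{-1})}$ in the conclusion. Since $a^2 \geq \tfrac12 \alpha^2 r^2$, the real-time width $T_0 a^2$ of the resulting parabolic region is at least $J r^2$, so the region covers $\M_{[t-Jr^2, t]}$. The output diffeomorphism provides the $\psi$ required by the second conclusion, and the closeness of $(\M_{t'}, x(t'))$ to $(M_{\Bry}, g_{\Bry}, x_{\Bry})$ at scale $a$ for each $t' \in [t-r^2, t]$ follows by restricting $\psi$ to the corresponding time-slice.

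The main obstacle is the consistent calibration of $\delta$, $\beta$, and $\eps_{\can}$ across the two lemmas. Working backwards from the target closeness $\delta$: Lemma~\ref{lem_promoting_Bryant} with parameters $(T_0(J,\alpha), \delta)$ demands a sufficiently precise input closeness $\delta_1 \leq \ov\delta_1(\delta, J, \alpha)$ and a tolerance $\eps_{\can} \leq \ov\eps_{\can}(\delta, J, \alpha)$; this $\delta_1$ in turn forces $C_1\beta \leq \ov\delta(\delta_1)$ and $\eps_{\can} \leq \ov\eps_{\can}(\delta_1)$ when applying Lemma~\ref{lem_d_rho_squared_small_almost_bryant}. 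Unwinding the chain yields the restrictions $\beta \leq \ov\beta(\alpha, \delta, J)$ and $\eps_{\can} \leq \ov\eps_{\can}(\alpha, \delta, J)$ asserted in the statement. The bound $r \leq \ov r(\alpha)$ is used only to ensure $\rho(x) \leq \alpha^{-1}r \leq 1$, placing every point involved in the argument within the scale range $(\eps_{\can} r, 1)$ where the canonical neighborhood assumption is in force.
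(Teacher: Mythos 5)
Your strategy is essentially identical to the paper's: use backward scale control (Lemma~\ref{lem_lambda_thick_survives_backward}) to keep $\rho$ comparable along $x(\cdot)$, locate a time $\tau \in [t-r^2,t]$ where $\partial_{\t}\rho^2(x(\tau))$ is nearly non-negative, feed this into Lemma~\ref{lem_d_rho_squared_small_almost_bryant} to get Bryant closeness of $(\M_\tau, x(\tau))$, and then propagate via Lemma~\ref{lem_promoting_Bryant}. Replacing the paper's Mean Value Theorem computation on $\rho$ with a direct averaging of $\partial_{\t}\rho^2$ is a harmless variation, and your treatment of the propagation parameter and the reparametrization of $\psi$ to anchor it at $(x,t)$ are fine.

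One intermediate step is miscalibrated and, read literally, would create a parameter-order conflict, though it is easily repaired. You invoke $\partial_{\t}\rho^2\leq\beta$ via Lemma~\ref{lem_properties_kappa_solutions_cna}\ref{item_rho_squared_derivative_small}, which forces $\eps_{\can}$ to be small depending on $\beta$, and you then claim the Bryant scale window from Lemma~\ref{lem_d_rho_squared_small_almost_bryant} has relative width $C_2\beta/\alpha$, so that the near-constancy of $\rho(x(\cdot))$ to within $O(\beta)r$ is needed to fit $a$ inside it. But the lemma's conclusion must hold for all $\beta\leq\ov\beta(\alpha,\delta,J)$ and all $\eps_{\can}\leq\ov\eps_{\can}(\alpha,\delta,J)$ chosen independently, so $\eps_{\can}$ may not shrink with $\beta$. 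The fix is that the width estimate on the window is too pessimistic. In Lemma~\ref{lem_d_rho_squared_small_almost_bryant} the tolerance $\delta$ on $\partial_{\t}\rho^2$ may be taken equal to its maximum allowable value $\ov\delta(\delta_1)$, and the scale window $((1-\delta)\rho,(1+\delta)\rho)$ then has width controlled solely by $\delta_1$, not by $\beta$. The confinement of $a$ and $\rho(x(\tau))$ to within a factor close to $1$ of $\rho(x)$ already follows from Lemma~\ref{lem_lambda_thick_survives_backward} (with $\eps_{\can}$ small depending only on $\delta_1,\alpha$) together with the hypothesis $\rho(x(t-r^2))\leq\rho(x)+\beta r$ (with $\beta/\alpha$ small depending on $\delta_1$); no appeal to $\partial_{\t}\rho^2\leq\beta$ is needed. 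This is precisely what the paper does via its auxiliary parameter $L$ close to $1$: both $\beta$ and $\eps_{\can}$ are bounded in terms of $L$, and $L$ is calibrated to $\delta'$ alone. With that repair, the parameter chain you state in your final paragraph is correct.
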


\begin{proof}
Let $2 > L > 1$, $\delta' > 0$ be constants whose values will be determined in the course of this proof.
By Lemma~\ref{lem_lambda_thick_survives_backward}, and assuming
\[ \eps_{\can} \leq \ov\eps_{\can} (\alpha, L), \qquad r \leq \ov{r} (\alpha), \]
we obtain that for all $t' \in [t-r^2, t]$ we have
\[ \tfrac12 \alpha r \leq L^{-1} \rho(x)  \leq \rho (x(t')) \leq L \rho (x(t-r^2)). \] 
If moreover
\[ \beta \leq \ov\beta (\alpha, L), \]
then
\[ \rho(x(t-r^2)) \leq \rho(x) + \beta r \leq \rho(x) + (L-1) \alpha r \leq L \rho(x). \]
So $L^{-1} \rho(x) \leq \rho (x(t')) \leq L^2 \rho(x)$ and $a \in [L^{-1} \rho(x(t')), L^2 \rho(x(t'))]$ for all $t' \in [t - r^2 , t]$.
We also obtain that $x(t')$ satisfies the $\eps_{\can}$-canonical neighborhood assumption for all $t' \in [t-r^2, t]$, assuming
\[ \eps_{\can} \leq \ov\eps_{\can}, \qquad r \leq \ov{r}. \]
By the Mean Value Theorem, we can find a $t' \in [t-r^2, t]$ at which
\[ \partial_{\t} \rho^2 (x(t')) 
\geq - 2 \rho(x(t')) \cdot \frac{(L^2 - L^{-1}) \rho(x)}{r^2}
\geq - 2 \alpha^{-2} (L^2 - L^{-1}) L. \]
Therefore, if
\[ L \leq 1 + \ov{L}(\delta'), \qquad \eps_{\can} \leq \ov\eps_{\can} (\delta'),  \]
then Lemma~\ref{lem_d_rho_squared_small_almost_bryant} implies that $(\M_{t'}, x)$ is $\delta'$-close to $(M_{\Bry}, g_{\Bry}, x_{\Bry})$ at scale $a$.
Assuming
\[ \delta' \leq \ov{\delta}' (\alpha, \delta, J), \qquad
\eps_{\can} \leq \ov\eps_{\can} (\alpha, \delta, J), \qquad
r \leq \ov{r} (\alpha), \]
the claim now follows from Lemma~\ref{lem_promoting_Bryant}.
\end{proof}

\subsection{The geometry of comparison domains}
The results in this subsection analyze the structure of comparison domains (and related subsets) of spacetimes that satisfy completeness and canonical  neighborhood conditions, as well as some of the a priori assumptions \ref{item_time_step_r_comp_1}--\ref{item_eta_less_than_eta_lin_13},  as introduced in Section~\ref{sec_apas}.  
  
The first two  results --- the Bryant Slice Lemma \ref{lem_Bryant_slice} and the Bryant Slab Lemma \ref{lem_bryant_slab} --- describe the structure of comparison domains in approximate Bryant regions.  
These results  are helpful  in showing that neck-like boundaries of comparison domains and cuts are far apart (Lemma \ref{lem_boundary_far_from_cut}), and in facilitating the construction of the comparison domain in Section~\ref{sec_inductive_step_extension_comparison_domain}.

The Bryant Slice Lemma characterizes how a domain $X$ in a time-slice $\M_t$ that is   bounded by a central 2-sphere of a sufficiently precise neck intersects a domain $W \subset \M_t$ that is geometrically close to a Bryant soliton.
The domain $X$ will later be equal to either backward time-slice $\N_{t_j-}$ of a comparison domain or the domain $\Omega$ from Section~\ref{sec_inductive_step_extension_comparison_domain}.

\begin{lemma}[Bryant Slice Lemma] \label{lem_Bryant_slice}
If
\[ 
\delta_{\nn} \leq \ov\delta_{\nn}, \qquad
0 < \lambda < 1, \qquad
\Lambda \geq \underline{\Lambda}, \qquad
\delta \leq \ov\delta (\lambda, \Lambda ), \]
then the following holds for some $D_0 = D_0 (\lambda) < \infty$.

Consider a Ricci flow spacetime $\M$ and let $r > 0$ and $t \geq 0$.
Consider a  subset $X \subset \M_t$ such that the following holds:
\begin{enumerate}[label=(\roman*)]
\item \label{con_8.42_i} $X$ is a closed subset and is a domain with smooth boundary.
\item \label{con_8.42_ii} The boundary components of $X$ are central 2-spheres of $\delta_{\nn}$-necks at scale $r$.
\item \label{con_8.42_iii} $X$ contains all $\Lambda r$-thick points of $\M_t$.
\item \label{con_8.42_iv} Every component of $X$ contains a $\Lambda r$-thick point.
\end{enumerate}

Consider the image $W$ of a diffeomorphism
\[ \psi : W^* := \ov{M_{\Bry} (d)} \longrightarrow W \subset  \M_t, \]
such that $d \geq \delta^{-1}$ and
\[ \big\Vert (10 \lambda r)^{-2} \psi^* g_t - g_{\Bry} \big\Vert_{C^{[\delta^{-1}]} (W^*)} < \delta. \]
Then $\psi (x_{\Bry})$ is $11\lambda r$-thin.
Moreover, if $\C := W \setminus \Int X \neq \emptyset$, then
\begin{enumerate}[label=(\alph*)]
\item \label{ass_8.42_a} $\C$ is a 3-disk containing $\psi(x_{\Bry})$.
\item \label{ass_8.42_b} $\C$ is a component of $\M_t \setminus \Int X$, and $\partial \C \subset \partial X$.
\item \label{ass_8.42_c} $\C$ is $9\lambda r$-thick and $1.1r$-thin.
\item \label{ass_8.42_d} $\C \subset \psi (M_{\Bry} (D_0 (\lambda) ))\subset\Int W$.
\end{enumerate}
\end{lemma}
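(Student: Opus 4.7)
The plan is to transport geometric information from the normalized Bryant soliton to $W$ via $\psi$, and identify $\C$ as the image under $\psi$ of a near-tip cap cut off by a cross-sectional 2-sphere whose radius is fixed by matching the scale $r$. Throughout the proof I would use that $\rho_{\Bry}\geq 1$ on $M_{\Bry}$ with equality only at $x_{\Bry}$, that $\rho_{\Bry}$ is strictly monotonically increasing in the radial coordinate $\sigma$, and that $\rho_{\Bry}(\sigma)\to\infty$ as $\sigma\to\infty$ (Subsection~\ref{subsec_basics_Bryant}). From the $C^{[\delta^{-1}]}$-closeness of $(10\lambda r)^{-2}\psi^*g_t$ to $g_{\Bry}$, the scale functions satisfy $\rho(\psi(y))=(1+O(\delta'))\cdot 10\lambda r\cdot\rho_{\Bry}(y)$ on $\ov{M_{\Bry}(d)}$, with $\delta'(\delta)\to 0$. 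In particular $\rho(\psi(x_{\Bry}))\in(9\lambda r,11\lambda r)$, proving that $\psi(x_{\Bry})$ is $11\lambda r$-thin.

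Assume now $\C\neq\emptyset$. Let $\sigma_*(\lambda)$ be the unique radius with $\rho_{\Bry}(\sigma_*(\lambda))=1/(10\lambda)$. Any boundary component $\Sigma$ of $X$ meeting $W$ must lie in $\Int W$: on $\Sigma$ the neck condition gives $\rho\equiv r(1+O(\delta_{\nn}))$, whereas $\rho>\Lambda r$ on $\partial W$ once $\delta\leq\ov\delta(\lambda,\Lambda)$ (so that $d=\delta^{-1}$ is large enough that $\rho_{\Bry}(d)>\Lambda/(10\lambda)$), hence $\Sigma\cap\partial W=\emptyset$. The scale formula and monotonicity of $\rho_{\Bry}$ force $\psi^{-1}(\Sigma)$ into the thin radial annulus $A(\eta):=\{y:|\sigma(y)-\sigma_*(\lambda)|<\eta\}$ with $\eta=\eta(\lambda,\delta_{\nn},\delta)\to 0$ as $\delta_{\nn},\delta\to 0$. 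By the Schoenflies theorem each such embedded 2-sphere bounds a 3-disk in $M_{\Bry}(d)$, and different boundary components, being embedded and confined to $A(\eta)$, are nested. List them $\Sigma_1,\ldots,\Sigma_k$ ($k\geq 0$) by increasing radius; they cut $W$ into nested regions $V_0\subset V_1\subset\cdots\subset V_k$, where $V_0$ is the inner 3-disk containing $\psi(x_{\Bry})$, $V_k$ contains $\partial W$, and each intermediate slab $V_i$ ($1\leq i\leq k-1$) lies in $\psi(A(\eta))$ and satisfies $\rho\leq 1.1r$. Adjacent slabs alternate between being in $X$ and in $\M_t\setminus X$; since $\partial W$ consists of $\Lambda r$-thick points, condition~\ref{con_8.42_iii} gives $V_k\subset X$. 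If $k\geq 2$, some intermediate $V_i$ must lie in $X$; since $\partial V_i=\Sigma_i\cup\Sigma_{i+1}\subset\partial X$, the connected region $V_i$ is a component of $X$ containing no $\Lambda r$-thick point (for $\Lambda\geq\underline\Lambda\geq 2$), contradicting condition~\ref{con_8.42_iv}. The case $k=0$ would force $W\subset\Int X$ by connectedness together with $\partial W\subset X$, giving $\C=\emptyset$ contrary to hypothesis. Hence $k=1$; set $\Sigma:=\Sigma_1$.

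It then follows that $\C=V_0\cup\Sigma$, a closed 3-disk containing $\psi(x_{\Bry})$, yielding~\ref{ass_8.42_a}. Any component of $\M_t\setminus\Int X$ containing $\C$ cannot cross $\partial W\subset\Int X$, so it equals $\C$, with $\partial\C=\Sigma\subset\partial X$, yielding~\ref{ass_8.42_b}. The scale bounds in~\ref{ass_8.42_c} follow from $\C\subset W$ (giving $\rho\geq 9\lambda r$) and from $\rho_{\Bry}(\sigma)\leq(1+O(\eta))/(10\lambda)$ on $V_0$ (giving $\rho\leq 1.1r$). Setting $D_0(\lambda):=\sigma_*(\lambda)+1$ establishes~\ref{ass_8.42_d}: $\C\subset\psi(\ov{M_{\Bry}(\sigma_*(\lambda)+\eta)})\subset\psi(M_{\Bry}(D_0(\lambda)))\subset\Int W$, where the last inclusion uses $D_0(\lambda)<d=\delta^{-1}$ for $\delta$ small. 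The main obstacle is the counting step that rules out nested sandwich boundary components of $X$; this step depends crucially on condition~\ref{con_8.42_iv} (reflecting a priori assumption~\ref{item_contains_Lambda_thick_point}), without which $X$ could contain thin annular slabs in the Bryant-like region of $W$.
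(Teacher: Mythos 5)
Your overall strategy matches the paper's --- transport the scale function along $\psi$, confine boundary spheres of $X$ meeting $W$ to a thin radial annulus, and use condition~\ref{con_8.42_iv} to eliminate extra boundary components --- but the counting step has a genuine parity gap. You assert that for $k\geq 2$ ``some intermediate $V_i$ must lie in $X$,'' but for $k=2$ the alternation gives: outer slab (touching $\partial W$) $\subset X$, the single middle slab $\not\subset X$, and the inner disk $V_0\subset X$. So no intermediate annular slab lies in $X$, and the contradiction you claim does not materialize. The contradiction in this case is instead that $V_0$ itself is a $1.1r$-thin component of $X$ with $\partial V_0=\Sigma_1\subset\partial X$, violating~\ref{con_8.42_iv}; you never apply~\ref{con_8.42_iv} to $V_0$, and the same issue recurs for all even $k$. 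The paper avoids the parity bookkeeping entirely: given two boundary components $\Sigma_1,\Sigma_2$ of $X$ meeting $W$ with $V_{\Sigma_1}\subset V_{\Sigma_2}$, the component $X_0$ of $X$ containing $\Sigma_1$ cannot escape $V_{\Sigma_2}$ (any escaping arc in $X_0$ would have to cross $\partial V_{\Sigma_2}=\Sigma_2\subset\partial X$), so $X_0\subset V_{\Sigma_2}$ is $1.1r$-thin and contradicts~\ref{con_8.42_iv}. That handles all $k\geq 2$ at once, without tracking which slab lands in $X$.

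A secondary point: you deduce nesting of the boundary spheres from the fact that they are embedded and confined to $A(\eta)$, but that alone is not sufficient --- disjoint embedded $2$-spheres in a region diffeomorphic to $S^2\times I$ can bound small $3$-disks not containing $x_{\Bry}$, and such spheres are not nested. What forces cross-sectionality is the surrounding $\delta_{\nn}$-neck $U_\Sigma$: after first showing $U_\Sigma\subset W$ (since its endpoints are $\Lambda r$-thin while $\partial W$ is $\Lambda r$-thick), the nearly constant scale on the neck places $\psi^{-1}(U_\Sigma)$ inside the same thin annulus, and the definite length of the neck forces $\psi^{-1}(\Sigma)$ to be isotopic within the annulus to a level set $\{\rho_{\Bry}=\mathrm{const}\}$. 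The paper records this isotopy explicitly; your write-up should too, since the nesting and the fact that each bounded $3$-disk contains the tip both hinge on it.
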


\begin{proof}
Assuming  
\[ \de \leq \ov\de(\lambda,\Lambda), \]
it follows from the definition of $W^*$ that $\D W$ is $\Lambda r$-thick, $W$ is $9\lambda r$-thick, and the image of the tip $\psi(x_{\Bry})$ is $11\lambda r$-thin.
The fact that $\partial W$ is $\Lambda r$-thick and assumption \ref{con_8.42_iii} imply that $\partial W \subset \Int X$.

Consider a boundary component $\Sigma \subset \partial X$ with $\Sigma\cap W \neq\emptyset$.
Let $U_\Sigma \subset \M_t$ be a $\delta_{\nn}$-neck at scale $r$ that has $\Sigma$ as a central 2-sphere.
If
\[ \de_{\nn} \leq \ov\de_{\nn}, \]
then we have $.99r < \rho < 1.01r$ on $U_\Sigma$.
Assuming 
\[ 
 \Lambda \geq 10, \]
we find that, then $U_{\Sigma}\cap \D W=\emptyset$ and hence $U_{\Sigma}\subset W$.

Next, if
\[ \delta \leq \ov\delta (\lambda), \]
then $ .98(10\lambda)^{-1} < \rho < 1.02(10\lambda)^{-1}$ on $\psi^{-1}(U_{\Sigma})$. 
Moreover, the 2-sphere $\Sigma^*:=\psi^{-1}(\Sigma)$ is isotopic within the set $\{.98(10\lambda)^{-1} \leq \rho \leq 1.02(10\lambda)^{-1}\} \subset W^*$ to the 2-sphere $\widehat{\Sigma}^*=\{ \rho=1.02(10\lambda)^{-1}\}$ in $W^*$.  

By Alexander's theorem, $\Sigma^*$ bounds a 3-disk $V_\Sigma^* \subset W^*$.
By the previous paragraph, we have $V_\Sigma^* \subset \wh{V}_\Sigma^* := \{ \rho \leq 1.02 (10\lambda)^{-1} \}$.
Thus, if
\[ \delta \leq \ov\delta (\lambda), \]
then $V_\Sigma := \psi (V^*_\Sigma) \subset \psi (\wh{V}^*_\Sigma)$ is $1.1 r$-thin and contains $\psi ( x_{\Bry})$.

Lastly, suppose that $\Sigma_1$, $\Sigma_2$ are distinct components of $\D X$ that intersect $W$.
Let $V_{\Sigma_1}$, $V_{\Sigma_2}$ be the corresponding 3-disk components, as defined in the discussion above.
Since $\psi (x_{\Bry}) \in V_{\Sigma_1}\cap V_{\Sigma_2}$, we may assume (after reindexing) that $V_{\Sigma_1}\subset V_{\Sigma_2}$.

If $X_0$ is the component of $X$ containing $\Sigma_1=\D V_{\Sigma_1}$, then it must be contained in $V_{\Sigma_2}$ since every arc leaving $V_{\Sigma_2}$ must intersect $\partial X \supset \partial V_{\Sigma_2}$.   
Thus $X_0$ is $1.1r$-thin, contradicting assumption \ref{con_8.42_iv} for
\[ \Lambda > 1.1. \]
Thus $W$ intersects at most one component of $\D X$.

Now suppose $\C:=W\setminus \Int X$ is nonempty.  Since $\D W \subset \Int X$, we have $\C\neq W$.  By the discussion above we see that $\D X\cap W$ consists of a single $2$-sphere component $\Sigma$, where $\Sigma$ bounds a $3$-disk $V_\Sigma$ which contains $\psi(x_{\Bry})$.  Thus $\C=V_\Sigma$, and assertions \ref{ass_8.42_a}--\ref{ass_8.42_c} now follow immediately.
For assertion \ref{ass_8.42_d} recall that $\C \subset \psi (\wh{V}^*_\Sigma) = \psi (\{ \rho \leq 1.02 (10\lambda)^{-1} \})$, which can easily be converted into the desired bound.
\end{proof}

\bigskip
Next we consider a parabolic region $W \subset \M_{[t_0,t_1]}$ inside a \emph{time-slab} of a Ricci flow spacetime that is geometrically close to an evolving Bryant soliton.
Moreover, we  consider two domains $X_0, X_1$ that are contained in the initial and final time-slices $\M_{t_0}, \M_{t_1}$ of this time-slab, respectively, and whose boundary components are central 2-spheres of sufficiently precise necks.
The Bryant Slab Lemma describes the complements of these domains in $W$ and characterizes their relative position.

\begin{lemma}[Bryant Slab Lemma]
\label{lem_bryant_slab}
If 
\[ 
\delta_{\nn} \leq \ov\delta_{\nn}, \qquad
0 < \lambda < 1, \qquad
\Lambda \geq \underline{\Lambda}, \qquad
\delta \leq \ov\delta (\lambda, \Lambda ), \]
then the following holds.

Consider a Ricci flow spacetime $\M$ and let $r > 0$ and $t_0 \geq 0$.
Set $t_1 := t_0 + r^2$.
For $i = 0,1$ let $X_i\subset \M_{t_i}$ be a closed subset that is a domain with boundary, satisfying conditions \ref{con_8.42_i}--\ref{con_8.42_iv} from Lemma \ref{lem_Bryant_slice}, and in addition:
\begin{enumerate}[label=(\roman*),start=5]
\item \label{con_8.42_v} $X_1 (t)$ is defined for all $t\in [t_{0},t_1]$, and $\partial X_1 (t_0)\subset\Int X_{0}$. \end{enumerate}
  
Consider a ``$\de$-good Bryant slab'' in $\M_{[t_{0},t_1]}$, i.e. the image $W$ of a map
\[ \psi : W^*=\ol{M_{\Bry}(d)}\times [-(10\la)^{-2},0] \longrightarrow \M_{[t_0,t_1]} \]
where $d \geq \de^{-1}$ and $\psi$  is a $(10 \lambda r)^2$-time equivariant and $\partial_{\mathfrak{t}}$-preserving diffeomorphism onto its image   and
\begin{equation*} \label{eq_Bryant_closeness_1}
 \big\Vert{ (10 \lambda r)^{-2} \psi^* g - g_{\Bry} }\big\Vert_{C^{[\delta^{-1}]} (W^*)} < \delta.
\end{equation*}

Set $\C_i:=W_{t_i}\setminus\Int X_i \subset \M_{t_i}$ for $i = 0,1$.
Then
\begin{enumerate}[itemsep=2pt,label=(\alph*), leftmargin=* ]
\item \label{ass_8.43_a} $\C_i(t)$ is well-defined and $9\lambda r$-thick.
\item \label{ass_8.43_b} If $\C_1\neq\emptyset$, then $\C_0 \subset \C_1 (t_0)$  and $\C_0 =\C_1 (t_0)\setminus\Int X_0$.
\end{enumerate}
\end{lemma}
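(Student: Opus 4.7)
The plan is to reduce both parts of the Bryant Slab Lemma to time-slice applications of the Bryant Slice Lemma (Lemma~\ref{lem_Bryant_slice}), combined with the fact that the Bryant slab $W$ is a product domain.

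First I would observe that the slab closeness hypothesis immediately implies time-slice closeness at both $t=t_0$ and $t=t_1$. Setting $s_0:=-(10\lambda)^{-2}$ and $s_1:=0$, the restrictions $\psi(\cdot,s_i):\ov{M_{\Bry}(d)}\to W_{t_i}$ satisfy the hypotheses of Lemma~\ref{lem_Bryant_slice} at scale $10\lambda r$. That lemma then gives, whenever $\C_i\neq\emptyset$, that $\C_i$ is a closed $3$-disk in $W_{t_i}$ containing $\psi(x_{\Bry},s_i)$, with $\partial\C_i\subset\partial X_i$, that is $9\lambda r$-thick, $1.1r$-thin, and contained in $\psi(M_{\Bry}(D_0(\lambda))\times\{s_i\})$.

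For part (a), I would exploit the product structure of $W$: since $\psi$ is $\partial_{\t}$-preserving and $W^*$ is a product, every point of $W$ survives for all $t\in[t_0,t_1]$, so $\C_i(t)$ is well-defined and its $\psi^{-1}$-preimage is the product of the $M_{\Bry}$-projection of $\psi^{-1}(\C_i)$ with $\{s(t)\}$. In particular $\C_i(t)\subset\psi(M_{\Bry}(D_0(\lambda))\times\{s(t)\})$. Since $\rho\geq 1$ on the normalized Bryant soliton (see Subsection~\ref{subsec_basics_Bryant}), and this is preserved in time since the soliton is steady, choosing $\delta\leq\ov\delta(\lambda,\Lambda)$ small enough guarantees $\rho\geq 9\lambda r$ throughout $\psi(M_{\Bry}(D_0(\lambda))\times[s_0,0])$, and hence on $\C_i(t)$.

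For part (b) I would assume $\C_1\neq\emptyset$. The product structure makes $\C_1(t_0)$ a well-defined $3$-disk in $W_{t_0}$ containing $\psi(x_{\Bry},s_0)$, with $\partial\C_1(t_0)\subset\partial X_1(t_0)\cap W_{t_0}$, and hypothesis (v) then forces $\partial\C_1(t_0)\subset\Int X_0$. If $\C_0=\emptyset$, then $W_{t_0}\subset\Int X_0$, so $\C_1(t_0)\setminus\Int X_0=\emptyset=\C_0$. Otherwise, Lemma~\ref{lem_Bryant_slice} applied at $t_0$ gives that $\C_0$ is also a $3$-disk in $W_{t_0}$ containing $\psi(x_{\Bry},s_0)$ with $\partial\C_0\subset\partial X_0$. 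Pulling back to $M_{\Bry}(D_0(\lambda))$ via $\psi^{-1}$, I obtain two $3$-disks, both containing $x_{\Bry}$, with disjoint boundary $2$-spheres (one in $\psi^{-1}(\partial X_0)$, the other in $\psi^{-1}(\Int X_0)$); standard $3$-manifold topology forces them to be nested. The nesting direction is then pinned down by noting that $\partial\C_1(t_0)\subset\Int X_0$ cannot lie in $\C_0=W_{t_0}\setminus\Int X_0$, ruling out $\C_1(t_0)\subsetneq\C_0$; hence $\C_0\subset\C_1(t_0)$. The identity $\C_0=\C_1(t_0)\setminus\Int X_0$ then follows from $\C_0\cap\Int X_0=\emptyset$ together with $\C_1(t_0)\subset W_{t_0}$.

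The main obstacle, which is essentially bookkeeping, lies in part (b): extracting the correct nesting direction for the two $3$-disks $\C_0$ and $\C_1(t_0)$ from the Bryant Slice Lemma together with hypothesis (v). Once the Slice Lemma is shown to apply at each time-slice and the product structure is used to transport conclusions across $[t_0,t_1]$, the remainder of the argument is straightforward.
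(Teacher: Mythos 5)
Your proof is correct and follows essentially the same route as the paper's: apply the Bryant Slice Lemma at the two time-slices $t_0$ and $t_1$, use the product structure of $W$ to transport conclusions in time, and for part (b) deduce $\C_0\subset\C_1(t_0)$ from the facts that both disks contain $\psi(x_{\Bry},-(10\lambda)^{-2})$ and that $\partial\C_1(t_0)\subset\Int X_0$ is disjoint from $\C_0$. The paper phrases the nesting step a bit more directly (boundary disjoint from $\C_0$ plus a shared point immediately gives $\C_0\subset\C_1(t_0)$, no need to first invoke general nestedness), and for part (a) it simply notes that the whole slab $W$ is $9\lambda r$-thick rather than localizing to $\psi(M_{\Bry}(D_0(\lambda)))$, but these are presentational differences only; your explicit handling of the $\C_0=\emptyset$ case is a small extra that the paper leaves implicit.
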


\begin{proof}
Assuming
\[ 0 < \lambda < 1, \qquad
\Lambda \geq \underline{\Lambda}, \qquad
\delta_{\nn} \leq \ov\delta_{\nn}, \qquad
\delta \leq \ov\delta (\lambda, \Lambda )\,, \]
Lemma~\ref{lem_Bryant_slice} may be applied in the $t_i$-time-slice for $i = 0,1$ and  $W$ is $9\lambda r$-thick.  Since by definition $\C_i\subset W_{t_i}$,  assertion \ref{ass_8.43_a}   now follows from the fact that $W$ is a product domain.

We now verify assertion \ref{ass_8.43_b}.  
If $\C_1 \neq\emptyset$, then $\psi(x_{\Bry},0) \in \C_1$, by Lemma~\ref{lem_Bryant_slice}. 
So since $W$ is a product domain, we get that $\psi(x_{\Bry}, \lb -(10\lambda)^{-2}) \in \C_1(t_0)$.
If $\C_0 \neq\emptyset$, then both $\C_1 (t_0)$ and $\C_0$ are $3$-disks in $W_{t_0}$ containing $\psi(x_{\Bry},-(10\lambda)^{-2})$.
By assumption \ref{con_8.42_v} we have $\partial \C_1 (t_0) \subset \D X_1 (t_0) \subset \Int X_0$ and hence $\D\C_1 (t_0)$ is disjoint from $\C_0 \subset\M_{t_0}\setminus \Int X_0$.
Therefore $\C_0 \subset \C_1 (t_0)$.  This gives $\C_0=(W_{t_0}\setminus\Int X_0)\cap \C_1(t_0)=\C_1(t_0)\setminus\Int X_0$.
\end{proof}

We now show that a parabolic region $P(x,a,-b)$ lies in a  comparison domain $(\N, \lb \{ \N^j \}_{j=1}^J, \lb \{ t_j \}_{j=0}^J)$, provided the ball $B(x,a,b)$ lies in $\N$ and $P(x,a,-b)$ ``avoids the cuts''; see below for further discussion.

\begin{lemma}[Parabolic neighborhoods inside the comparison domain] \label{lem_parabolic_domain_in_N}
Consider a Ricci flow spacetime $\M$, a comparison domain $(\N, \lb \{ \N^j \}_{j=1}^J, \lb \{ t_j \}_{j=0}^J)$ in $\M$ and a set $\Cut = \Cut^1 \cup \ldots \cup \Cut^{J-1}$, where $\Cut^j$ is a collection of pairwise disjoint $3$-disks inside $\N_{t_j+}$ in such a way that each extension cap of $\N$ is contained in some $\DD \in \Cut$.

Let $x \in \M$, $a,b > 0$ and assume that $B(x, a) \subset \N$ and that $P(x, a,-b) \cap \DD = \emptyset$ for all $\DD \in \Cut$.
Then
\begin{equation} \label{eq_P_x_a_b_in_N_D}
 P(x, a, -b) \subset \N \setminus \cup_{\DD \in \Cut} \DD. 
\end{equation}
\end{lemma}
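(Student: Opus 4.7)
The plan is to fix an arbitrary $y \in B(x,a) \subset \N$ and show, by a backward induction on the time-slabs $[t_{j-1},t_j]$ of the comparison domain, that the trajectory $t' \mapsto y(t')$ is defined on the whole interval $[\max\{0,\mathfrak t(x)-b\},\mathfrak t(x)]$ and remains in $\N \setminus \cup_{\DD\in\Cut}\DD$. Taking the union over all such $y$ then yields the containment (\ref{eq_P_x_a_b_in_N_D}).

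First I would set up the base step. Let $j_0$ be the index with $\mathfrak t(x) \in [t_{j_0-1},t_{j_0}]$. Since $y \in B(x,a) \subset \N$, we have $y \in \N^{j_0}$. Because $\N^{j_0}$ is a product domain in the sense of Definition~\ref{def_product_domain}, the trajectory $y(\cdot)$ is defined on all of $[t_{j_0-1},\mathfrak t(x)]$ and lies in $\N^{j_0} \subset \N$. Moreover, $P(x,a,-b)\cap \DD=\emptyset$ for all $\DD\in\Cut$ by hypothesis, so the portion of the trajectory on this slab already avoids the cuts.

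Next comes the inductive step, which handles the transition across a moment $t_j$ with $j < j_0$. Assume inductively that $y(\cdot)$ has been shown to be defined on $[t_j,\mathfrak t(x)]$ and to lie in $\N\setminus \cup_{\DD\in\Cut}\DD$. Since $y(\cdot)$ lies in $\N^{j+1}$ for $t'$ slightly above $t_j$ and $\N^{j+1}$ is closed, $y(t_j) \in \N^{j+1}_{t_j} = \N_{t_j+}$. Now, either $y(t_j) \in \N^j_{t_j}$, in which case the product domain property of $\N^j$ extends the trajectory over the entire slab $[t_{j-1},t_j]$ inside $\N^j$; or else $y(t_j)$ lies in a component of $\N^{j+1}_{t_j}\setminus \Int \N^j_{t_j}$, which by Definition~\ref{def_comparison_domain}(\ref{item_3_disk}) is an extension cap. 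By the hypothesis on $\Cut$, every extension cap is contained in some $\DD \in \Cut$, so the second alternative would force $y(t_j) \in P(x,a,-b)\cap \DD$, contradicting the assumption that $P(x,a,-b)$ is disjoint from every cut. Therefore the first alternative holds, and we may continue the induction down to $j=0$ or to the first $j$ such that $t_{j-1} < \max\{0,\mathfrak t(x)-b\} \leq t_j$.

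The argument is essentially combinatorial and presents no serious obstacle; the only delicate point is ensuring that the trajectory is actually well-defined at each transition time, but this is automatic since the $\N^j$ are closed and the product-domain property supplies trajectories on the closed slab $[t_{j-1},t_j]$. Taking the union over $y \in B(x,a)$ then gives $P(x,a,-b)\subset \N$, and disjointness from the cuts is part of the hypothesis, completing the proof.
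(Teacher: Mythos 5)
Your proof is correct and follows essentially the same line as the paper: push the trajectory $y(t')$ backward slab by slab using the product-domain property of each $\N^j$, observe that the only obstruction at a transition time $t_j$ is landing in $\N_{t_j+}\setminus\N_{t_j-}$, i.e.\ in an extension cap contained in a cut, which contradicts the disjointness hypothesis. The paper phrases this as a choice of minimal $j$ plus a contradiction rather than an explicit induction, but the content is identical.
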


As the notation suggests, the set $\Cut$ will later denote the set of cuts of a comparison, according to Definition~\ref{def_comparison}.
However, we will  use Lemma~\ref{lem_parabolic_domain_in_N} at a stage of the proof when this comparison will not have been fully constructed.
More specifically, we will later consider a comparison domain defined over the time-interval $[0,t_{J+1}]$ and have to take $\Cut$ to be the union $\Cut \cup \Cut^{J}$.
Here $\Cut$ is the set of cuts of a comparison  that is only defined on the time-interval $[0, t_J]$ and $\Cut^{J}$ is a set of freshly constructed cuts at time $t_J$, which will not be part of a comparison yet.
 For this reason we have phrased Lemma~\ref{lem_parabolic_domain_in_N} --- and similarly Lemma~\ref{lem_boundary_far_from_cut} below --- without using the terminology of a comparison and have instead only listed the essential properties of $\Cut$.

\begin{proof}
Set $t := \mathfrak{t}(x)$.
Consider a point $y \in B(x, a)$ and choose $j$ minimal with the property that $y (t)$ is defined and $y(t) \in \N$ for all $t \in [t_j, t]$.
Assume that $t_j > 0$ and $t_j > t - b$.
Then $y (t_j) \in \N_{t_j+} \setminus \N_{t_j-}$.
So $y (t_j)$ is contained in an extension cap and therefore $y (t_j) \in \DD$ for some $\DD \in \Cut$ in contradiction to our assumption.
So $t_j = 0$ or $t_j \leq t-b$.
It follows that $P(x,a,-b) \subset \N$.
Combining this with the assumption of the lemma yields (\ref{eq_P_x_a_b_in_N_D}).
\end{proof}

The following result shows that any point near the neck-like boundary of a comparison domain is far from cuts, in the sense that there is a large backward parabolic region that is disjoint from the cuts.  
This result  plays an important role in Section~\ref{sec_extend_comparison}, where it allows us to isolate behavior occurring at the cuts from behavior that occurs near the neck-like boundary.  

\begin{lemma}[Boundaries and cuts are far apart] \label{lem_boundary_far_from_cut}
If
\begin{gather*}
 \eta_{\lin} > 0, \qquad 
 \delta_{\nn} \leq \ov\delta_{\nn}, \qquad 
 \lambda \leq \ov\lambda, \qquad
   D_{\cut} > 0, \qquad A_0  > 0, \qquad \Lambda\geq\ul\Lambda\, , \\ 
 \delta_{\bb} \leq \ov\delta_{\bb} (\lambda,D_{\cut}, A_0, \Lambda ), 
 \qquad \eps_{\can} \leq \ov\eps_{\can}(\lambda,D_{\cut}, A_0, \Lambda ), \qquad
 r_{\comp} \leq \ov{r}_{\comp}, 
\end{gather*}
then the following holds.

Suppose $0< T<\infty$, and consider Ricci flow spacetimes $\M, \M'$ that are $(\eps_{\can} r_{\comp}, \lb T)$-complete and that satisfy the $\eps_{\can}$-canonical neighborhood assumption at scales $(\eps_{\can} r_{\comp}, 1)$.
Let $(\N, \lb \{ \N^j \}_{j=1}^{J+1}, \lb \{ t_j \}_{j=0}^{J+1})$ be a comparison domain on the time-interval $[0,t_{J+1}]$, and $(\Cut, \lb \phi, \lb \{ \phi^j \}_{j=1}^J)$ be a comparison from $\M$ to $\M'$ defined on  this comparison domain over the interval $[0,t_J]$.  
Assume that $t_{J+1} \leq T$ and that this comparison domain and comparison satisfy a priori assumptions \ref{item_time_step_r_comp_1}--\ref{item_eta_less_than_eta_lin_13} for the tuple of parameters $(\eta_{\lin}, \lb \delta_{\nn}, \lb \lambda, \lb D_{\CAP}, \lb \Lambda, \lb  \delta_{\bb}, \lb \eps_{\can}, \lb r_{\comp})$.

Let  $\Cut^J$ be a set of pairwise disjoint $3$-disks in $\Int \N_{t_J+}$ such that each $\DD \in \Cut^J$ contains an extension cap of the comparison domain.
Assume that the diameter of each $\DD \in \Cut \cup \Cut^J$ is less than $D_{\cut} r_{\comp}$.

Suppose $x\in \N_t=\N_{t-}\cup\N_{t+}$ and $P(x,A_0\rho_1(x))\cap\DD\neq\emptyset$ for some $\DD\in\Cut\cup\Cut^J$, where $\DD\subset\M_{t_k} $.  

Then $B(x,A_0\rho_1(x))\subset \N_{t+}\cap\N_{t-}$ if $t>t_{k}$, and $B(x,A_0\rho_1(x))\subset \N_{t+}$ if $t=t_{k}$.
\end{lemma}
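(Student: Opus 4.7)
The strategy is to build an approximate Bryant slab around the cut $\DD$ large enough to contain $B(x,A_0\rho_1(x))$ in its time-$t$ slice, and then use the Bryant Slice Lemma (Lemma~\ref{lem_Bryant_slice}) at each relevant time to conclude that the complement of $\N_{t'-}$ inside the slab is forced to be empty whenever $t'>t_k$, and similarly for $\N_{t_k+}$ when $t'=t_k$. The ``trick'' that makes things work is that the extension cap $\C\subset\DD$ places the tip of the Bryant region into $\N_{t_k+}$, so by the product structure of subsequent comparison time-slabs, the tip trajectory lies in $\N_{t'-}\cap\N_{t'+}$ for all $t'>t_k$.

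\emph{Step 1 (scale bounds on $x$).} By \ref{item_lambda_thick_2}, $\rho_1(x)\geq\lambda r_{\comp}$. Pick $y\in B(x,A_0\rho_1(x))$ with $y(t_k)\in\DD$; by \ref{item_backward_time_slice_3}(e), $\rho_1(y(t_k))\leq\Lambda r_{\comp}$. Applying Lemma~\ref{lem_bounded_curv_bounded_dist} to $P(x,A_0\rho_1(x))$ (with scale $r=\lambda r_{\comp}$, requiring $\eps_{\can}\leq \ov\eps_{\can}(\lambda,A_0)$), I get $\rho_1(x)\leq C_1 r_{\comp}$ with $C_1=C_1(A_0,\Lambda)$, and the entire parabolic neighborhood is unscathed with $\rho_1$ comparable to $\rho_1(x)$ throughout.

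\emph{Step 2 (Bryant slab).} By \ref{item_geometry_cap_extension_5}, there is $w\in\C\subset\DD$ such that $(\M_{t_k},w)$ is $\delta_{\bb}$-close to $(M_{\Bry},g_{\Bry},x_{\Bry})$ at scale $10\lambda r_{\comp}$. Choose $T_0=T_0(\lambda,A_0,\Lambda)$ so that $T_0(10\lambda)^2\geq (A_0 C_1)^2+1$, and choose $\delta_\#=\delta_\#(\lambda,D_{\cut},A_0,\Lambda)$ small enough that $\delta_\#^{-1}$ exceeds a large multiple of $(D_{\cut}+A_0 C_1)/\lambda$. Applying Lemma~\ref{lem_promoting_Bryant} with these choices (requiring $\delta_{\bb}\leq\ov\delta_{\bb}(\lambda,D_{\cut},A_0,\Lambda)$ and $\eps_{\can}\leq\ov\eps_{\can}(\lambda,D_{\cut},A_0,\Lambda)$), I obtain a $\partial_\t$-preserving Bryant slab $\psi:M_{\Bry}(\delta_\#^{-1})\times[t^*,t^{**}]\to W\subset\M$ covering a time interval that contains $[t_k,t]$, and with $W$ a subset of $\M_{[t_k,t]}$ such that $(10\lambda r_{\comp})^{-2}\psi^*g$ is $\delta_\#$-close to $g_{\Bry}$ in $C^{[\delta_\#^{-1}]}$.

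\emph{Step 3 (ball is in the slab).} Since $y(t_k)\in\DD$ and $\diam\DD\leq D_{\cut}r_{\comp}$ by \ref{item_cut_diameter_less_than_d_r_comp_11}, $y(t_k)$ lies within Bryant distance $O(D_{\cut}/\lambda)$ from $\psi(x_{\Bry},t_k)$. Since the Bryant soliton is a steady gradient soliton with fixed tip and $\psi$ is $\delta_\#$-close to it, the forward flow $y=y(t)$ stays within comparable Bryant distance from $\psi(x_{\Bry},t)$. Adding the spatial distance $A_0\rho_1(x)$ from $y$ to any point of $B(x,A_0\rho_1(x))$ (which is $O(A_0 C_1/\lambda)$ in Bryant units), the entire ball lies at Bryant distance $O((D_{\cut}+A_0 C_1)/\lambda)$ from the tip at time $t$. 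By the choice of $\delta_\#$, this is well below $\delta_\#^{-1}$, so $B(x,A_0\rho_1(x))\subset W_t$.

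\emph{Step 4 (the tip is in $\N_{t'-}$ for $t'>t_k$).} Since $w\in\C$ and $\C$ is an extension cap, $w\in\N_{t_k+}=\N^{k+1}_{t_k}$ by Definition~\ref{def_comparison_domain}. As $\N^{k+1}$ is a product domain (part \ref{pr_7.1_2} of the definition), the whole worldline $\psi(x_{\Bry},t')=w(t')$ lies in $\N_{t'-}=\N^{k+1}_{t'}$ for $t'\in(t_k,t_{k+1}]$. Iterating over successive time-slabs (using $\N_{t_j+}\supset\N_{t_j-}$ so that the tip at $t_j$, already in $\N_{t_j-}$, lies in $\N_{t_j+}$ and hence flows into $\N^{j+1}$), I get $\psi(x_{\Bry},t')\in\N_{t'-}\cap\N_{t'+}$ for every $t'\in(t_k,t]$.

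\emph{Step 5 (Bryant Slice Lemma).} At each time $t'\in(t_k,t]$ at which I need to check the structure, apply Lemma~\ref{lem_Bryant_slice} to $X:=\N_{t'-}$ and to $X:=\N_{t'+}$. Conditions (i)--(iv) of that lemma are a priori assumptions \ref{item_backward_time_slice_3}(a)--(c) (taking $\delta_{\nn}\leq\ov\delta_{\nn}$, $\Lambda\geq\ul\Lambda$, and $\delta_\#\leq\ov\delta(\lambda,\Lambda)$). For $\N_{t'+}$, one uses that $\partial\N_{t'+}$ is diffeomorphic via the product structure to a union of $\delta_{\nn}$-necks and that all $\Lambda r_{\comp}$-thick points at time $t'$ lie in $\N_{t'+}\supset\N_{t'-}$. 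The conclusion of Lemma~\ref{lem_Bryant_slice} says that $W_{t'}\setminus\Int\N_{t'\pm}$ is either empty or a $3$-disk \emph{containing} $\psi(x_{\Bry},t')$. By Step 4, $\psi(x_{\Bry},t')\in\N_{t'\pm}$, forcing the complement to be empty, i.e., $W_{t'}\subset\N_{t'-}\cap\N_{t'+}$.

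\emph{Step 6 (case $t=t_k$ and conclusion).} Combining Step 3 with Step 5 at time $t$ yields $B(x,A_0\rho_1(x))\subset W_t\subset\N_{t-}\cap\N_{t+}$ when $t>t_k$. For $t=t_k$, the same argument applied only to $\N_{t_k+}$ works, since $w\in\C\subset\N_{t_k+}$ and hence the Bryant Slice Lemma forces $W_{t_k}\subset\N_{t_k+}$; no claim can be made about $\N_{t_k-}$ because $w\notin\N_{t_k-}$ (the extension cap is added only at time $t_k+$).

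\emph{Main obstacle.} The delicate part is Step 2--Step 3: to cover the ball in Bryant units one needs $\delta_\#^{-1}$ large depending on $(D_{\cut}+A_0\Lambda)/\lambda$, and to cover the time interval one needs $T_0$ large depending on $(A_0\Lambda/\lambda)^2$, which together force $\delta_{\bb}$ to be small depending on all of $\lambda,D_{\cut},A_0,\Lambda$ (matching the parameter dependencies stated in the lemma). Carefully tracking these dependencies, together with the distance-distortion estimate that shows forward-flow of $y(t_k)$ stays close to the tip in the approximate Bryant slab, is the only technically subtle part; the conceptual punchline (Step 4) is that the mere presence of the extension cap at time $t_k$ removes the obstruction in the Bryant Slice Lemma at all later times.
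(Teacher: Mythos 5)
The main structural steps (scale bounds, building a Bryant slab around the cut via Lemma~\ref{lem_promoting_Bryant}, showing the ball lies in a time-slice of the slab, and invoking the Bryant Slice Lemma) follow the paper. However, Step~4 contains a serious gap that your argument does not close.

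You assert ``$\N_{t_j+}\supset\N_{t_j-}$'' and use this to conclude that the tip worldline $\psi(x_{\Bry},\cdot)$, once inside $\N_{t_k+}$, simply follows the product structure forward through each subsequent time-slab and hence stays in $\N_{t'\pm}$ for all $t'>t_k$. But the inclusion $\N_{t_j+}\supset\N_{t_j-}$ is false: from Definition~\ref{def_comparison_domain}(\ref{pr_7.1_3})--(\ref{pr_7.1_4}), the only a priori relation between the backward and forward time-slices at a transition time is $\partial\N_{t_j+}\subset\Int\N_{t_j-}$. Both ``added'' components (extension caps, components of $\N_{t_j+}\setminus\Int\N_{t_j-}$) and ``discarded'' components (components of $\N_{t_j-}\setminus\Int\N_{t_j+}$) are allowed, and the latter are precisely what a priori assumption \ref{item_discards_not_too_thick_4} is designed to constrain. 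So the tip trajectory could in principle exit $\N$ at a later transition time by being discarded, and your product-structure iteration provides no mechanism to rule this out. The paper's proof closes exactly this gap by a careful induction on time steps using the Bryant Slab Lemma together with \ref{item_discards_not_too_thick_4}: if the Bryant complement $\C_j=W_{t_j}\setminus\Int\N_{t_j-}$ were nonempty at a step where $\C_{j-1}=\emptyset$, then by the Slab Lemma $\C_j(t_{j-1})$ would be a $9\lambda r_{\comp}$-thick discarded $3$-disk over $[t_{j-1},t_j]$, contradicting \ref{item_discards_not_too_thick_4}. This thickness-versus-discard argument is the heart of the lemma and is entirely absent from your proposal.

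A secondary issue: in Step~5 you apply the Bryant Slice Lemma directly to $X=\N_{t'+}$, but the hypotheses of that lemma (boundary components being $\delta_{\nn}$-necks at scale $r_{\comp}$, etc.) are verified only for backward time-slices via \ref{item_backward_time_slice_3}; at an integral time $t_j$ with $j<l$, they are not given for $\N_{t_j+}=\N^{j+1}_{t_j}$. The paper avoids this by only invoking the Slice Lemma for $X=\N_{t_j-}$, and then deducing $W_t\subset\N_{t+}$ from the product structure of the single time-slab $\N^l$ containing $t$ combined with the inclusion $W_{t_l}\subset\N_{t_l-}$ already established at the last integral time.
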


As in Lemma~\ref{lem_parabolic_domain_in_N} we have introduced a set $\Cut^J$ of ``synthetic'' cuts at time $t_J$ in order to avoid complications due to the possible lack of a map $\phi^{J+1}$ that extends the comparison $(\Cut, \phi, \{ \phi^j \}_{j=1}^J)$ past time $t_J$.

The sketch of the proof is as follows.  The cut $\DD$ contains an extension cap, which by a priori assumption \ref{item_geometry_cap_extension_5} and Lemma~\ref{lem_promoting_Bryant} implies that a large future parabolic region is Bryant-like.  Then the Bryant Slice and Slab Lemmas, applied inductively on time steps, imply that the comparison domain contains this Bryant-like  region for many time steps, which excludes neck-like boundary in the vicinity.

\begin{proof}
Pick  $y\in P(x,A_0\rho_1(x))\cap \DD$.  By Lemma~\ref{lem_bounded_curv_bounded_dist} and a priori assumption \ref{item_lambda_thick_2}, if
\[  \eps_{\can}\leq\ov\eps_{\can}(\lambda, A_0)\,,\] 
then 
\begin{equation}
\label{eqn_y_scale_comparable_x_scale}
C_1^{-1}\rho_1(x)\leq \rho_1(y)\leq C_1\rho_1(x)\,,
\end{equation}
for some $C_1=C_1(A_0) < \infty$.

By Definition \ref{def_comparison}(\ref{pr_7.2_3})  and our assumptions regarding $\Cut^J$, we know that $\DD$ contains an extension cap $\C$.  A priori assumption \ref{item_geometry_cap_extension_5} implies that there is a point $z\in \C$ such that $(\M_{t_j},z)$ is $\de_{\bb}$-close to a Bryant soliton at scale $10\lambda r_{\comp}$. 

Let $\delta_\#>0$ be a constant that will be chosen at the end of the proof.

Choose $l \in \{ 2, \ldots, J+1 \}$ such that $t \in [t_{l-1}, t_l)$ or $[t_{l-1}, t_l]$ if $l = J+1$.
Since $\DD \subset \M_{t_k}$, we have $k \leq l-1$.
By a priori assumption \ref{item_backward_time_slice_3}(e) and (\ref{eqn_y_scale_comparable_x_scale}) we have
\begin{equation}\label{eqn_time_bound_t_ell_t_k}
t_l-t_k\leq (A_0\rho_1(x))^2+r_{\comp}^2\leq \big( (A_0C_1\Lambda)^2+1 \big) r_{\comp}^2\,.
\end{equation}
Assuming 
\[ \de_{\bb} \leq\ov\de_{\bb} (\lambda, A_0, \Lambda,\delta_\#), \qquad
\eps_{\can} \leq \ov\eps_{\can} (\lambda, A_0, \Lambda,\delta_\#), 
\qquad r_{\comp}\leq\ov r_{\comp}, \]
we can use (\ref{eqn_time_bound_t_ell_t_k}), a priori assumption \ref{item_lambda_thick_2} and Lemma~\ref{lem_promoting_Bryant} to find a $(10 \lambda r_{\comp})^2$-time equivariant and $\partial_{\mathfrak{t}}$-preserving diffeomorphism 
\[ \psi : W^*:=\ol{M_{\Bry} (\delta_\#^{-1} )} \times \big[ {0, (t_{l}-t_{k}) \cdot (10 \lambda r_{\comp})^{-2}} \big] \longrightarrow \M \]
onto its image,   such that $\psi (x_{\Bry},0) = z$ and
\begin{equation*} 
 \big\Vert{ (10 \lambda r_{\comp})^{-2} \psi^* g - g_{\Bry} }\big\Vert_{C^{[\delta_\#^{-1}]} (W^*)} < \delta_\#.
\end{equation*}
Let $W:=\psi(W^*)$.  

In the following we will apply the Bryant Slice Lemma~\ref{lem_Bryant_slice} at time $t_j$ for $X = \N_{t_j-}$, using the time-slice $W_{t_j}$, where $k \leq j \leq l$.
We will also apply the Bryant Slab Lemma~\ref{lem_bryant_slab} for  $X_0 = \N_{t_{j-1}-}$ and $X_1 = \N_{t_{j}-}$, using the time slab $W_{[t_{j-1},t_j]}$, where $k+1 \leq j \leq l$.
Note that assumptions \ref{con_8.42_i}--\ref{con_8.42_iv} of the Bryant Slice Lemma hold due to a priori assumptions \ref{item_backward_time_slice_3}(a)--(c) and assumption \ref{con_8.42_v} of the Bryant Slab Lemma holds due to Definition~\ref{def_comparison_domain}(\ref{pr_7.1_3}).
If 
\[ \delta_{\nn} \leq \ov\delta_{\nn}, \qquad
 0 < \lambda < 1, \qquad
  \Lambda \geq \underline{\Lambda}, \qquad
\de_\#\leq \ov\de_\#(\lambda, A_0, \Lambda), \]
then the remaining assumptions of both the Bryant Slice and the Bryant Slab Lemma are satisfied.
This means, in particular, that the time-slice $W_{t_j}$ and the slab $W_{[t_{j-1},t_j]}$ satisfy the assumptions of the Bryant Slice/Slab Lemma for all $k \leq j \leq l$ and all $k+1 \leq j \leq l$, respectively.

\begin{claim*} \quad
\begin{enumerate}[label=(\alph*)]
\item \label{ass_claim_8.47_a} $W_{t_j}\subset\N_{t_j- }$ for all $k+1\leq j \leq l$. 
\item \label{ass_claim_8.47_b} $W_t\subset \N_{t+}\cap\N_{t-}$ if $t>t_k$, and $W_t\subset \N_{t+}$ if $t=t_k$.
\end{enumerate}
\end{claim*}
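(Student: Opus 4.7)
I plan to prove both assertions by induction on $j$, $k \leq j \leq l$, establishing the twin statements
\[
(\alpha_j):\ W_{t_j} \subset \N_{t_j-}\ (\text{for } j\geq k+1),\qquad (\beta_j):\ W_{t_j}\subset\N_{t_j+}.
\]
Assertion (a) is exactly $(\alpha_j)$. For assertion (b) at $t\in(t_{j-1},t_j)$ with $j\geq k+1$: once $(\beta_{j-1})$ is known, i.e.\ $W_{t_{j-1}}\subset\N^j_{t_{j-1}}$, the product structure of both $W$ and $\N^j$ on $[t_{j-1},t_j]$ yields $W_{[t_{j-1},t_j]}\subset\N^j$, so $W_t\subset\N^j_t=\N_{t-}\cap\N_{t+}$; setting $t=t_j$ also gives $(\alpha_j)$. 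Thus it is enough to establish $(\beta_j)$ for every $k\leq j\leq l$ (with the convention $\N_{t_{J+1}+}:=\N_{t_{J+1}-}$ when $l=J+1$).

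For the base case $(\beta_k)$, I would apply Lemma~\ref{lem_Bryant_slice} to $W_{t_k}$ with $X=\N_{t_k+}=\N^{k+1}_{t_k}$. Hypotheses (iii) and (iv) of that lemma follow from a priori assumption~\ref{item_backward_time_slice_3}(b)--(c) applied at the forward time $t_{k+1}$, transported backward to $t_k$ via the product structure of $\N^{k+1}$ and Lemma~\ref{lem_forward_backward_control}. Hypothesis (ii) --- that $\partial\N^{k+1}_{t_k}$ consists of central $2$-spheres of $\delta_{\nn}$-necks at scale $r_{\comp}$ --- is obtained by starting from the $\delta_{\nn}$-necks around $\partial\N^{k+1}_{t_{k+1}}$ (a priori~\ref{item_backward_time_slice_3}(a) at $t_{k+1}$), propagating them backward over the unit-time interval via Lemma~\ref{lem_time_slice_neck_implies_space_time_neck}, and then self-improving via Lemma~\ref{lem_C0_neck_smooth_neck}, provided $\delta_{\nn}$ is taken sufficiently small. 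The Bryant Slice Lemma then asserts that $W_{t_k}\setminus\Int\N_{t_k+}$ is either empty or a $3$-disk containing $z=\psi(x_{\Bry},0)$; however, $z\in\C\subset\N_{t_k+}$, and since $z$ sits in a Bryant region at scale $\approx 10\lambda r_{\comp}$ while $\partial\N_{t_k+}$ consists of necks at the much larger scale $\approx r_{\comp}$, a scale comparison (combined with Lemma~\ref{lem_thick_close_to_neck}) places $z$ strictly in $\Int\N_{t_k+}$. This rules out the second alternative and establishes $(\beta_k)$.

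For the inductive step $(\beta_{j-1})\Rightarrow (\beta_j)$ with $k+1\leq j\leq l$: the implication $(\beta_{j-1})\Rightarrow (\alpha_j)$ is automatic by the product-structure argument above. To upgrade $(\alpha_j)$ to $(\beta_j)$, I would apply Lemma~\ref{lem_Bryant_slice} a second time, at time $t_j$ and with $X=\N_{t_j+}=\N^{j+1}_{t_j}$, verifying the hypotheses exactly as in the base case. The conclusion is that $W_{t_j}\setminus\Int\N_{t_j+}$ is either empty or a $3$-disk containing $z(t_j)$; an identical scale-comparison argument, applied to the Bryant tip $z(t_j)$ at scale $\approx 10\lambda r_{\comp}$ against boundary necks of scale $\approx r_{\comp}$, shows $z(t_j)\in\Int\N_{t_j+}$, hence $(\beta_j)$ holds.

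The main obstacle I anticipate is hypothesis (ii) of Lemma~\ref{lem_Bryant_slice}: a priori assumption \ref{item_backward_time_slice_3}(a) gives a neck structure only on the backward time-slice $\N^{j+1}_{t_{j+1}}$, and one must transfer this information across a full time step to $\N^{j+1}_{t_j}$. This is the reason the parameter $\delta_{\nn}$ must be taken small not only in terms of $\lambda$ and $\Lambda$ (as required to apply Lemma~\ref{lem_Bryant_slice} in its raw form), but also in terms of the smallness thresholds appearing in Lemmas~\ref{lem_time_slice_neck_implies_space_time_neck} and~\ref{lem_C0_neck_smooth_neck} --- which is consistent with the parameter order of Subsection~\ref{subsec_parameter_order}. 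Once this technical transfer is established in both the base case and the inductive step, the induction proceeds uniformly.
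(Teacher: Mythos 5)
Your decomposition into $(\alpha_j)$ and $(\beta_j)$, and the observation that $(\beta_{j-1})$ plus the product structure of $\N^j$ and $W$ gives $(\alpha_j)$ and all of assertion~(b) on the interior of the slab, is a clean bookkeeping scheme, and the base case $(\beta_k)$ is fine: $z\in\C\subset\N_{t_k+}$ and $\partial\N_{t_k+}\subset\Int\N_{t_k-}$ (Definition~\ref{def_comparison_domain}(\ref{pr_7.1_3})) already force $\C\subset\Int\N_{t_k+}$; no scale comparison is actually needed. The real work is the step $(\alpha_j)\Rightarrow(\beta_j)$, and there your argument has a gap.

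You propose to apply the Bryant Slice Lemma~\ref{lem_Bryant_slice} at time $t_j$ with $X=\N_{t_j+}$. But its hypothesis~\ref{con_8.42_iii} --- that $X$ contain all $\Lambda r$-thick points of $\M_{t_j}$ --- is supplied by \ref{item_backward_time_slice_3}(b) only for the \emph{backward} time-slices $\N_{t_j-}$. The forward slice $\N_{t_j+}=\N_{t_j-}$ minus the discarded disks plus the extension caps, and a discarded disk may well be $\Lambda r_{\comp}$-thick \emph{at} time $t_j$ (APA~\ref{item_discards_not_too_thick_4} only forces it to become thin, or stop surviving, at some \emph{later} time in $[t_j,t_{j+1}]$). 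So $\N_{t_j+}$ may omit $\Lambda r_{\comp}$-thick points, the lemma's hypothesis fails, and the proposed backward transport of \ref{item_backward_time_slice_3}(b)--(c) via Lemma~\ref{lem_forward_backward_control} does not close this gap: that lemma controls $\rho_1$ only on small two-sided parabolic balls, not across a full time step, and in any case the issue is not about $\rho$ at all but about which components got discarded. (Hypothesis~\ref{con_8.42_ii} has the scale drift you flag, which is annoying but manageable; \ref{con_8.42_iii} is the deeper problem.)

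More seriously, even granting the application of Lemma~\ref{lem_Bryant_slice}, its conclusion is the dichotomy ``$W_{t_j}\setminus\Int\N_{t_j+}$ is empty, or is a $3$-disk containing the tip $z(t_j)$.'' To rule out the second alternative you assert $z(t_j)\in\Int\N_{t_j+}$ by ``an identical scale-comparison argument.'' But there is no such argument: being far (in scale) from $\partial\N_{t_j+}$ does not place a point in $\Int\N_{t_j+}$, because $z(t_j)$ could sit squarely inside a discard (a $3$-disk component of $\N_{t_j-}\setminus\Int\N_{t_j+}$). In the base case you had the extra fact $z\in\C\subset\N_{t_k+}$ coming from $\C$ being an \emph{extension} cap; there is no analogue of this at times $t_j>t_k$, and the claim $z(t_j)\in\Int\N_{t_j+}$ is essentially equivalent to what you are trying to prove. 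The missing ingredient is a priori assumption~\ref{item_discards_not_too_thick_4}: the paper's proof sets $\C_j:=W_{t_j}\setminus\Int\N_{t_j-}$ and runs the Bryant Slice and Slab Lemmas only against the backward slices $\N_{t_j-}$ (where \ref{item_backward_time_slice_3} applies directly), then inducts: if $\C_{j-1}=\emptyset$ but $\C_j\neq\emptyset$, the Bryant Slab Lemma shows $\C_j(t_{j-1})$ is a $9\lambda r_{\comp}$-thick $3$-disk discard with boundary in $\N_{t_{j-1}+}$, contradicting~\ref{item_discards_not_too_thick_4}; the base case $j=k+1$ instead uses $\C\subset\N_{t_k+}$. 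Without invoking~\ref{item_discards_not_too_thick_4}, the induction does not close.
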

\begin{proof}
Let $\C_j := W_{t_j}\setminus\Int\N_{t_j-}$ for $k\leq j\leq l$.
By assertion \ref{ass_8.42_a} of the Bryant Slice Lemma we know that $\C_j$ is either empty or is a 3-disk in $\Int W_{t_j}$ for all $k \leq j \leq l$.
Furthermore, assertion \ref{ass_8.42_b} of the Bryant Slice Lemma implies that $\C_k = \C$.

We will now show by induction that $\C_j = \emptyset$ for all $k +1 \leq j \leq l$.
This will imply assertion \ref{ass_claim_8.47_a}.

To see this, observe first that if $\C_{k+1} \neq \emptyset$, then by the Bryant Slab Lemma we have $\C = \C_k \subset \C_{k+1} (t_k)$.
However, since $\C$ is an extension cap, we have $\C \subset \N_{t_k+}$, in contradiction to the fact that $\Int \C_{k+1} (t_k) \subset \M_{t_k} \setminus \N_{t_k+}$.

Next, assume that $k+2 \leq j \leq l$ and that $\C_{j-1} = \emptyset$, but $\C_j \neq \emptyset$.
Then, by the Bryant Slab Lemma, $\C_{j} (t)$ is defined and $9\lambda r_{\comp}$-thick for all $t \in [t_{j-1}, t_j]$.
Since $\C_{j-1} = \emptyset$, $W$ is a product domain and $\C_j\subset\Int W_{t_j}$, we have $\C_j (t_{j-1}) \subset \Int W_{t_{j-1}} \subset \N_{t_{j-1}-}$.
So $\C_j (t_{j-1})$ is a component of $\N_{t_{j-1}-} \setminus \Int \N_{t_{j-1}+}$ and $\partial \C_j (t_{j-1}) \subset \N_{t_{j-1}+}$.
This, however, contradicts a priori assumption \ref{item_discards_not_too_thick_4}, finishing the induction.

To see assertion  \ref{ass_claim_8.47_b}, observe that by assertion  \ref{ass_claim_8.47_a} for $j = l \geq k+1$ we have $W_t = W_{t_l} (t) \subset \N_{t_l-} (t)$.
As $t < t_l$ if $l \neq J+1$ and $\N_{t+}=\N_{t-}$ if $l=J+1$, this implies that $W_t \subset \N_{t+}$.
Assume now that $t > t_k$.
If $t > t_{l-1}$, then we trivially have $W_t \subset \N_{t+} = \N_{t-}$.
Lastly, if $t = t_{l-1} > t_k$, then $l-1 \geq k+1$ and therefore assertion  \ref{ass_claim_8.47_a} yields that $W_{t_{l-1}} \subset \N_{t_{l-1} -}$.
\end{proof}

We will now show that $B(x,A_0\rho_1(x))\subset W_t$.  
In combination with  assertion \ref{ass_claim_8.47_b} of the claim, this completes the proof of the lemma.

By the assumption of the lemma  we have $d_{t_k}(y,z)<  D_{\cut}r_{\comp}$.  
So if
\[ \delta_\#\leq\ov\delta_\#(\lambda,D_{\cut}), \]
then $y\in \DD\subset W_{t_k}$.  Recall that $\Ric>0$ on $(M_{\Bry},g_{\Bry})$.
So $g_{\Bry}$ is decreasing in time.  
Therefore, if
\[\delta_\#\leq\ov\delta_\# (\lambda,D_{\cut}), \]
then $d_t(y(t),z(t))\leq 2d_{t_k}(y,z)\leq 2D_{\cut}r_{\comp}$.  
Now by \ref{item_backward_time_slice_3}(e)
\begin{align*}
d_t(x,z(t))&\leq d_t(x,y(t))+d_t(y(t),z(t))\\
&< A_0\rho_1(x)+2D_{\cut}r_{\comp}\\
&\leq A_0C_1\rho_1(y)+2D_{\cut}r_{\comp}\\
&<(A_0C_1\Lambda+2D_{\cut})r_{\comp}\,.
\end{align*}
Therefore, assuming
\[\delta_\#\leq\ov\delta_\#(\lambda,D_{\cut},A_0,\Lambda ), \]
we have $B(x,A_0\rho_1(x)) \subset W_t$, as desired.
\end{proof}

The next lemma characterizes parabolic neighborhoods whose initial time-slices intersect a cut of a comparison.
It states that points that belong to such an initial time-slice, but not to the corresponding cut, must have large scale if certain parameters are chosen appropriately.
We also obtain that such an initial time-slice must be far from cuts that occur at earlier times.
The first assertion will follow from the fact that the geometry on and near a cut is geometrically sufficiently close to a Bryant soliton and the second assertion will be a consequence of Lemma~\ref{lem_boundary_far_from_cut}.

The results of the following lemma are specific for the proof in Subsection~\ref{subsec_verification_of_APAs}.
As in the previous lemmas, we will use a set $\Cut^J$ of ``synthetic'' cuts in time-$t_J$-slice.  Instead, we have listed the relevant properties of the cuts as assumptions of the lemma.

\begin{lemma} \label{lem_large_scale_near_neck}
For all $C_\#<\infty$, if 
\begin{gather*}
\delta_{\nn} \leq \ov\delta_{\nn}, \qquad 
\lambda\leq\ov\lambda,\qquad
D_{\cut} \geq \underline{D}_{\cut} (\lambda, C_\#), \qquad
\Lambda \geq \underline{\Lambda}, \\\qquad 
\delta_{\bb} \leq \ov\delta_{\bb} (\lambda, C_\#, D_{\cut}, A_0, \Lambda ), \qquad
\eps_{\can} \leq \ov\eps_{\can} (\lambda, D_{\cut}, A_0, \Lambda ), \\\qquad 
r_{\comp} \leq \ov{r}_{\comp} (C_\#) ,
\end{gather*}
then the following holds.

Suppose $0< T<\infty$, and consider Ricci flow spacetimes $\M, \M'$ that are $(\eps_{\can} r_{\comp}, \lb T)$-complete and that satisfy the $\eps_{\can}$-canonical neighborhood assumption at scales $(\eps_{\can} r_{\comp}, 1)$.
Let $(\N, \lb \{ \N^j \}_{j=1}^{J+1}, \lb \{ t_j \}_{j=0}^{J+1})$ be a comparison domain on the time-interval $[0,t_{J+1}]$, and $(\Cut, \lb \phi, \lb \{ \phi^j \}_{j=1}^J)$ be a comparison from $\M$ to $\M'$ defined on  this comparison domain over the time-interval $[0,t_J]$.  
Assume that $t_{J+1} \leq T$ and that this comparison domain and comparison satisfy a priori assumptions \ref{item_time_step_r_comp_1}--\ref{item_eta_less_than_eta_lin_13} for the tuple of parameters $(\eta_{\lin}, \lb \delta_{\nn}, \lb \lambda, \lb D_{\CAP}, \lb \Lambda, \lb  \delta_{\bb}, \lb \eps_{\can}, \lb r_{\comp})$.
Let $\Cut^J$ be a set of pairwise disjoint $3$-disks in $\N_{t_J+}$ such that each $\DD \in \Cut^J$ contains  exactly one extension cap of the comparison domain.

Assume that the diameter of every $\DD \in \Cut \cup \Cut^J$ is less than $D_{\cut} r_{\comp}$ and that the $\frac1{10} D_{\cut} r_{\comp}$-neighborhood of every extension cap is contained in some $\DD \in \Cut \cup \Cut^J$.

Let $x \in \N$ and $t := \mathfrak{t} (x)$.
Let $B_{t-T_0} := (B(x, A_0 \rho_1(x)))(t-T_0)$ be the initial time-slice of the parabolic neighborhood $P(x, \lb A_0 \rho_1 (x), \lb -T_0)$ for some $0 \leq T_0 \leq (A_0 \rho_1(x))^2$ and assume that $B_{t-T_0} \cap \DD_0 \neq \emptyset$ for some $\DD_0 \in \Cut \cup \Cut^J$.

Then
\[ \rho_1 \geq C_\# r_{\comp} \qquad \text{on} \qquad B_{t-T_0} \setminus \DD_0. \]
Moreover, for all $y \in B_{t-T_0}$ we have
\[ P (y, A_0 \rho_1(y)) \cap \DD = \emptyset \]
for all $\DD \in \Cut$ with $\DD \subset \M_{[0, \mathfrak{t}(y))}$.
\end{lemma}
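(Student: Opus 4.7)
The plan is to identify a large Bryant-like spacetime region surrounding $\DD_0$ and to deduce both assertions from its geometric structure, following the pattern of the proof of Lemma~\ref{lem_boundary_far_from_cut}. Since $\DD_0 \subset \M_{t_k}$ (with $t-T_0=t_k$ because $B_{t-T_0}\cap\DD_0\neq\emptyset$) contains an extension cap $\C$, a priori assumption \ref{item_geometry_cap_extension_5}(c) furnishes a point $z \in \C$ at which the time-$t_k$ geometry is $\delta_{\bb}$-close to the pointed Bryant soliton $(M_{\Bry}, g_{\Bry}, x_{\Bry})$ at scale $10 \lambda r_{\comp}$. Since $\diam \DD_0 < D_{\cut} r_{\comp}$, the Bryant geometry gives $\rho_1 \leq C_1(\lambda, D_{\cut}) r_{\comp}$ on $\DD_0$; combined with $B_{t-T_0} \cap \DD_0 \neq \emptyset$ and Lemma~\ref{lem_bounded_curv_bounded_dist}, this yields $\rho_1 \leq C_2(\lambda, D_{\cut}, A_0) r_{\comp}$ throughout $P(x, A_0 \rho_1(x))$. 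Invoking Lemma~\ref{lem_promoting_Bryant} with a large time parameter (depending on $\lambda, D_{\cut}, A_0$) produces a $\delta_\#$-good Bryant slab $W \subset \M$ around $z$, large enough spatially and in time to contain $\DD_0$ and all of $P(x, A_0 \rho_1(x), -T_0)$; in particular $B_{t-T_0}\subset W_{t_k}$.

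For part~1, I would exploit the asymptotics of the Bryant soliton: the curvature scale grows like $\rho(\sigma) \asymp \sqrt{10 \lambda r_{\comp} \cdot \sigma}$ at distance $\sigma$ from the tip (Subsection~\ref{subsec_basics_Bryant}). Since by hypothesis the $\tfrac{1}{10} D_{\cut} r_{\comp}$-neighborhood of $\C$ lies in $\DD_0$, any point of $B_{t-T_0} \setminus \DD_0$ has ambient distance $\geq \tfrac{1}{10} D_{\cut} r_{\comp}$ from $\C$, and $\delta_\#$-closeness of $W$ to the Bryant soliton then yields $\rho \geq c \sqrt{\lambda D_{\cut}}\, r_{\comp}$ there. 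Taking $D_{\cut}$ large (of order $C_\#^2/\lambda$), encoded in the hypothesis $D_{\cut} \geq \underline{D}_{\cut}(\lambda, C_\#)$, and $r_{\comp}$ small enough that $C_\# r_{\comp} \leq 1$ yields $\rho_1 \geq C_\# r_{\comp}$ on $B_{t-T_0} \setminus \DD_0$.

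For part~2, I would argue by contradiction: suppose some $y \in B_{t-T_0}$ has $P(y, A_0 \rho_1(y)) \cap \DD \neq \emptyset$ for some $\DD \in \Cut$ with $\DD \subset \M_{t_l}$, $t_l < t_k$. First I would check $y \in \N_{t_k}$: if $y \in B_{t-T_0} \setminus \DD_0$ then part~1 and $C_\# > \Lambda$ put $y \in \N_{t_k-}$ via a priori assumption \ref{item_backward_time_slice_3}(b), while if $y \in B_{t-T_0} \cap \DD_0$ then $y \in \DD_0 \subset \N_{t_k+}$. Lemma~\ref{lem_boundary_far_from_cut} then yields $B(y, A_0 \rho_1(y)) \subset \N_{t_k+} \cap \N_{t_k-}$. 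The contradiction would come from a backward analogue of the inductive Bryant-slab claim used in the proof of Lemma~\ref{lem_boundary_far_from_cut}: after enlarging the time parameter in Lemma~\ref{lem_promoting_Bryant} so that $W$ also covers the interval $[t_l, t_k]$, an induction on integer time steps from $t_l$ up to $t_k$ via the Bryant Slice and Slab Lemmas (Lemmas~\ref{lem_Bryant_slice}, \ref{lem_bryant_slab}) together with a priori assumption \ref{item_discards_not_too_thick_4} shows that no extension cap, and hence no element of $\Cut$, can occur inside $W$ at any intermediate time, contradicting the hypothesized intersection $\DD \cap P(y, A_0 \rho_1(y)) \neq \emptyset$.

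The main technical obstacle is this backward Bryant-slab induction in part~2. The forward inductive claim from the proof of Lemma~\ref{lem_boundary_far_from_cut} must be adapted to the time-reversed direction, requiring a careful reinterpretation of a priori assumption \ref{item_discards_not_too_thick_4} (originally framed for the forward direction of the construction) via the steady-soliton structure of the Bryant soliton, so that the spatial cross-sections of $W$ remain Bryant-like at each intermediate time and are compatible with the Bryant Slice and Slab Lemmas throughout.
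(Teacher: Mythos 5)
Your Part~1 is essentially the paper's argument: curvature-scale asymptotics on the Bryant soliton plus the containment $\psi(M_{\Bry}(D_\#))\subset\DD_0$ for $D_{\cut}$ chosen large depending on $\lambda$ and $C_\#$. One remark: you invoke Lemma~\ref{lem_promoting_Bryant} to upgrade the time-slice Bryant-closeness from \ref{item_geometry_cap_extension_5}(c) to a spacetime Bryant slab. That spacetime promotion is not needed for this assertion; the time-$t_j$-slice model alone, together with the bounded-curvature-at-bounded-distance estimate (Lemma~\ref{lem_bounded_curv_bounded_dist}) applied to $P(x,A_0\rho_1(x))$, gives the uniform scale bounds $C^{-1}r_{\comp}\leq\rho_1\leq C r_{\comp}$ on $B_{t-T_0}$ that you use.

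Your Part~2 contains a genuine gap, and it is the main issue with the proposal. The paper's argument for the second assertion is short and avoids the backward Bryant-slab induction entirely. After the scale bounds and the distance bound $d_{t_j}(y,z)\leq C_3 r_{\comp}$ (with $z\in\partial\C_0$), one observes that for a suitably large constant $A_0'=C_1^2 C_2 C_3$ the ball $B(y, A_0'\rho_1(y))$ contains an open neighborhood of $z\in\partial\C_0$ and hence points of $\C_0\subset\M_{t_j}\setminus\Int\N_{t_j-}$, so $B(y,A_0'\rho_1(y))\not\subset\N_{t_j-}$. Now suppose for contradiction $P(y,A_0\rho_1(y))\cap\DD\neq\emptyset$ for some $\DD\in\Cut$ at time $t_l<\t(y)=t_j$. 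Applying Lemma~\ref{lem_boundary_far_from_cut} with its radius parameter set to $\max(A_0,A_0')$ (and correspondingly tighter bounds on $\delta_{\bb},\eps_{\can}$) yields $B(y,\max(A_0,A_0')\rho_1(y))\subset\N_{t_j-}\cap\N_{t_j+}$, which contains $B(y,A_0'\rho_1(y))$ — an immediate contradiction. In other words, the same observation you already used for Part~1 (that $B_{t-T_0}$ pokes into the extension cap, hence outside $\N_{t_j-}$) directly contradicts the conclusion of Lemma~\ref{lem_boundary_far_from_cut}, so you never need to analyze what happens between $t_l$ and $t_j$.

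Your proposed backward-in-time Bryant-slab induction is not only unnecessary but also genuinely in doubt, for the reason you yourself flag: a priori assumption \ref{item_discards_not_too_thick_4} is an intrinsically forward-in-time constraint (it restricts when components may be discarded as time advances), and assumption~(v) of the Bryant Slab Lemma is formulated with $X_1(t)$ defined forward from $t_0$ and $\partial X_1(t_0)\subset\Int X_0$. Reversing both of these is not a routine adaptation and would require new structural statements about the comparison domain that the a priori assumptions do not obviously provide. The missing idea, concretely, is that $B_{t-T_0}\not\subset\N_{t_j-}$ already feeds directly into Lemma~\ref{lem_boundary_far_from_cut} to close the argument.
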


\begin{proof}
Let $t := \mathfrak{t} (x)$ and choose $j \in \{ 1, \ldots, J \}$ such that $t_j = t - T_0$, so $B_{t-T_0}\cup \DD_0\subset \M_{t_j}$.
Let $\C_0$ be the extension cap that is contained in $\DD_0$.

By Lemma~\ref{lem_bounded_curv_bounded_dist} and a priori assumption \ref{item_lambda_thick_2}, and assuming
\[ \eps_{\can} \leq \ov\eps_{\can} (\lambda, A_0), \]
we find that the parabolic neighborhood $P(x, A_0 \rho_1 (x))$ is unscathed and that
\begin{equation} \label{eq_C1_bound_rho_1}
 C_1^{-1} \rho_1 (x) \leq \rho_1 \leq C_1 \rho_1 (x) 
\end{equation}
on $P(x, A_0 \rho_1 (x))$, where $C_1 = C_1 (A_0) < \infty$.
By a distance distortion estimate this implies that $B_{t-T_0} \subset B( x(t_j), A_1 \rho_1(x))$ for some $A_1 = A_1 (A_0) < \infty$.

Choose a point $z \in \partial \C_0 \subset \N_{t_j -} \cap \DD_0$.
By a priori assumption \ref{item_backward_time_slice_3}(a) and assuming
\[ \delta_{\nn} \leq \ov\delta_{\nn}, \]
we have $\frac12 r_{\comp} \leq \rho_1 (z) \leq 2 r_{\comp}$.
So, again by Lemma~\ref{lem_bounded_curv_bounded_dist}, and assuming
\[ \eps_{\can} \leq \ov\eps_{\can} (D_{\cut}), \]
we obtain that
\[ C_2^{-1} r_{\comp} \leq \rho_1 \leq C_2 r_{\comp} \qquad \text{on} \qquad \DD_0 \]
for some $C_2 = C_2 (D_{\cut}) < \infty$.
Combining this bound with (\ref{eq_C1_bound_rho_1}) and the fact that $B_{t-T_0} \cap \DD_0 \neq \emptyset$, we obtain that
\begin{equation} \label{eq_C12C2_rho_1}
 C_1^{-2} C_2^{-1} r_{\comp} \leq \rho_1 \leq C_1^2 C_2 r_{\comp} \qquad \text{on} \qquad B_{t-T_0}. 
\end{equation}
Therefore for all $y \in B_{t-T_0}$
\begin{equation} \label{eq_dtj_y_z_C1C2A1}
 d_{t_j} (y, z) \leq (2C_1^2 C_2 A_1 + D_{\cut}) r_{\comp} \leq C_3 r_{\comp}, 
\end{equation}
for some $C_3 = C_3 ( D_{\cut}, A_0) < \infty$.

By a priori assumption \ref{item_geometry_cap_extension_5}(c) there is a diffeomorphism
\[ \psi : M_{\Bry} (\delta_{\bb}^{-1} ) \longrightarrow W \subset \M_{t_j} \]
such that $\psi (x_{Bry}) \subset \C_0$ and
\[ \Vert (10 \lambda r_{\comp})^{-2} \psi^* g_{t_j} - g_{\Bry} \Vert_{C^{[\delta_{\bb}^{-1}]} ( M_{\Bry} (\delta_{\bb}^{-1} ) )} < \delta_{\bb}. \]
So by (\ref{eq_dtj_y_z_C1C2A1}), and the fact that $z \in \partial \C_0$ and that the diameter of $\C_0 \subset \DD_0$ is bounded by $D_{\cut} r_{\comp}$ we have
\begin{equation} \label{eq_BT0_W}
 B_{t-T_0} \subset W, 
\end{equation}
assuming that
\[ \delta_{\bb} \leq \ov\delta_{\bb} (\lambda, D_{\cut}, A_0). \]

Choose $D_\# = D_\# (\lambda, C_\#) < \infty$ such that $\rho > 20 \lambda C_\#$ on $M_{\Bry} \setminus M_{\Bry} ( D_\#)$ (see Lemma~\ref{lem_properties_Bryant}).
So if
\[ \delta_{\bb}  \leq \ov\delta_{\bb} (\lambda, C_\#) , \qquad
r_{\comp} \leq \ov{r}_{\comp} (C_\#), \]
then
\begin{equation} \label{eq_rho1_C_sharp}
  \rho_1 \geq C_\# r_{\comp} \qquad \text{on} \qquad W \setminus \psi ( M_{\Bry} ( D_\#)  ) . 
\end{equation}
If
\[ D_{\cut} \geq \underline{D}_{\cut} ( \lambda, D_\# (\lambda, C_\#)), \qquad
\delta_{\bb} \leq \ov\delta_{\bb}, \]
then $M_{\Bry} ( D_\# ) \subset \DD_0$.
Together with (\ref{eq_BT0_W}) and (\ref{eq_rho1_C_sharp}) this implies the first assertion of this lemma.

For the second assertion note that by (\ref{eq_C12C2_rho_1}) and (\ref{eq_dtj_y_z_C1C2A1}) we have
\[ B(y, C_1^2 C_2 C_3 \rho_1 (y)) \not\subset \N_{t_j-} \]
for all $y \in B_{t-T_0}$. 
So the second assertion follows from Lemma~\ref{lem_boundary_far_from_cut}, assuming
\begin{gather*}
 \delta_{\nn} \leq \ov\delta_{\nn}, \qquad 
 \lambda\leq\ov\lambda,\qquad 
\Lambda \geq \underline{\Lambda}, \qquad
\delta_{\bb} \leq \ov\delta_{\bb} (\lambda, D_{\cut}, A_0, \Lambda ), \qquad \\
\eps_{\can} \leq \ov\eps_{\can} (\lambda, D_{\cut}, A_0, \Lambda ), \qquad 
r_{\comp} \leq \ov{r}_{\comp}. 
\end{gather*}
This finishes the proof.
\end{proof}

\section{Semilocal maximum principle} \label{sec_semi_local_max}
In this section we will show that small Ricci-DeTurck perturbations satisfy a uniform decay estimate when weighted by a suitable function of  time and scale.  More precisely, we show that quantities of the form
\[ Q := e^{H (T - \mathfrak{t})} \rho^E_1 |h|  \]
satisfy a semi-local maximum principle as long as the Ricci-DeTurck perturbation $h$ is small enough, and the Ricci flow background satisfies appropriate geometric assumptions.  The estimates of this section are  based on a vanishing theorem  for solutions $h$ of the linearized Ricci-DeTurck equation on a $\kappa$-solution background, for which $|h| R^{-1-\chi}$ is uniformly bounded, where $\chi > 0$ (see Theorem~\ref{Thm_vanishing_lemma}).  The most important ingredient for the proof of this vanishing theorem is a maximum principle due to Chow and Anderson (see \cite{Anderson:2005cf}).

We first present the two main results of this section, Proposition~\ref{Prop_semi_local_max} and Proposition~\ref{Prop_interior_decay}.
The first result states that a Ricci-DeTurck perturbation decays by a factor of at least $100$ in the interior of a large enough neighborhood, in a weighted sense, as long as the solution is small enough.
The factor $100$ is chosen arbitrarily here and can be replaced by any number $> 1$.

\begin{proposition}[Semi-local maximum principle] \label{Prop_semi_local_max}
If
\[ E > 2, \qquad 
H \geq \underline{H}(E), \qquad
\eta_{\lin} \leq \ov\eta_{\lin} (E), \qquad
\eps_{\can} \leq \ov\eps_{\can} (E), \]
then there are constants $L = L(E), C= C(E) < \infty$ such that the following holds.

Let $\M$ be a Ricci flow spacetime and pick $x\in \M_t$.
Assume that $\M$ is $(\eps_{\can} \rho_1 (x), t)$-complete and satisfies the $\eps_{\can}$-canonical neighborhood assumption at scales $(\eps_{\can} \rho_1 (x), \lb 1)$.

Then the parabolic neighborhood $P:= P(x, L \rho_1(x))$ is unscathed and the following is true. 
Let  $h$ be a Ricci-DeTurck perturbation on $P$.
Assume that $|h| \leq \eta_{\lin}$ everywhere on $P$ and define the scalar function
\begin{equation} \label{eq_def_Q_semi_loc_max}
 Q :=e^{H (T - \mathfrak{t})} \rho_1^E |h|  
\end{equation}
on $P$, where $T \geq t $ is some arbitrary number.  

Then in the case $t  > (L  \rho_1 (x))^2$ (i.e. if $P$ does not intersect the time-$0$ slice) we have
\[ Q (x) \leq \frac1{100} \sup_{ P} Q . \]
In the case $t  \leq (L \rho_1 (x) )^2$ (i.e. if $P$ intersects the time-$0$ slice) we have
\[ Q(x) \leq \frac1{100} \sup_{ P} Q + C  \sup_{ P \cap \M_0} Q. \]
\end{proposition}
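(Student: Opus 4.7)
The plan is to prove the proposition by a parabolic rescaling and Cheeger--Gromov compactness argument combined with the vanishing Theorem~\ref{Thm_vanishing_lemma} for the linearized Ricci--DeTurck equation on $\kappa$-solutions. The role of the hypothesis $E>2$ is that, after rescaling a hypothetical counterexample and passing to the limit, the pointwise bound on $|h|$ will be of the form $|h|\lesssim R^{1+\chi}$ with $\chi:=E/2-1>0$, which is precisely the input required by the vanishing theorem. The main subtlety is that the compactness argument must produce a limit on the \emph{entire} ancient $\kappa$-solution, not merely on a bounded parabolic region, and for this reason the contradiction hypothesis must include a sequence $L_k\to\infty$.

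First I would verify that $P=P(x,L\rho_1(x))$ is automatically unscathed and that $\rho_1\asymp\rho_1(x)$ on $P$, with constants depending only on $L$: this follows from Lemma~\ref{lem_bounded_curv_bounded_dist} together with the $(\eps_{\can}\rho_1(x),t)$-completeness and the canonical neighborhood assumption at scales $(\eps_{\can}\rho_1(x),1)$, while assertion \ref{item_p_x_a_scale_curvature_bound} of Lemma~\ref{lem_properties_kappa_solutions_cna} yields uniform bounds on the background curvature and its covariant derivatives on $P$. Focusing on the ``interior'' case $t>(L\rho_1(x))^2$ first (the boundary case proceeds analogously, with the term $C\sup_{P\cap\M_0}Q$ accounting for a Poisson-type boundary contribution), I would argue by contradiction: fix $E>2$ and take sequences $L_k\to\infty$, $\eta_{\lin,k}\to 0$, $\eps_{\can,k}\to 0$, as well as Ricci flow spacetimes $\M_k$, points $x_k$, Ricci-DeTurck perturbations $h_k$, and times $T_k$, with $|h_k|\leq \eta_{\lin,k}$ and
\[
Q_k(x_k)>\tfrac1{100}\sup_{P_k}Q_k=:\tfrac{M_k}{100}.
\]
Rescale each $\M_k$ parabolically so that $\rho_1(x_k)=1$, time-shift so that $t_k=0$ (so $T_k\geq L_k^2$), and set $\tilde h_k:=h_k/M_k$ and $\tilde Q_k:=e^{H(T_k-\t)}\rho_1^E|\tilde h_k|$; then $\sup_{P_k}\tilde Q_k=1$ and $\tilde Q_k(x_k)>\tfrac1{100}$.

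By the canonical neighborhood assumption and the compactness of $\kappa$-solutions (assertions \ref{ass_C.1_a} and \ref{ass_C.1_b} of Lemma~\ref{lem_kappa_solution_properties_appendix}), the pointed spacetimes $(\M_k,x_k)$ subconverge, on parabolic regions of size diverging with $L_k$, in the smooth pointed Cheeger--Gromov topology to the final time-slice of an ancient pointed $\kappa$-solution $(\M_\infty,x_\infty)$ with $\rho(x_\infty)=1$. The nonlinear terms in the Ricci--DeTurck perturbation equation for $\tilde h_k$ pick up a prefactor $M_k$ multiplied by $|h_k|$ times derivatives of $\tilde h_k$, and so are bounded by $\eta_{\lin,k}$ times locally uniform derivative norms of $\tilde h_k$; combined with interior parabolic regularity for the linear operator $\nabla_{\D_\t}-\Delta-2\Rm$ applied to $\tilde h_k$ (whose coefficients converge smoothly), this yields uniform $C^m_{\loc}$ bounds, so after passing to a subsequence $\tilde h_k\to\tilde h_\infty$ locally smoothly and $\tilde h_\infty$ satisfies $\nabla_{\D_\t}\tilde h_\infty=\Delta\tilde h_\infty+2\Rm(\tilde h_\infty)$ on all of $\M_\infty$. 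The bound $\tilde Q_\infty\leq 1$ together with $\rho\sim R^{-1/2}$ and $T_k-\t\geq 0$ yields $|\tilde h_\infty|\leq C R^{1+\chi}$, so Theorem~\ref{Thm_vanishing_lemma} forces $\tilde h_\infty\equiv 0$, contradicting $\tilde Q_\infty(x_\infty)\geq \tfrac1{100}$.

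The hardest part is ensuring that the limit $\tilde h_\infty$ lives on the \emph{entire} ancient $\kappa$-solution and not just on a bounded parabolic region; this is what dictates the contradiction hypothesis $L_k\to\infty$, and it requires Cheeger--Gromov convergence of parabolic regions of diverging size, together with passage of the Ricci--DeTurck perturbation equation to its linearization. The lower bound $H\geq\underline H(E)$ enters at the level of the a priori estimate on $Q$: one needs $H$ large enough to dominate the Ricci reaction term $2\Rm(\tilde h)$ in the evolution of $|\tilde h|^2$, so that $Q$ is a subsolution of a benign strictly parabolic operator and the local regularity estimates used in the compactness step hold uniformly. The degenerate case where $\M_\infty$ is a quotient of a round shrinking sphere must be handled separately, since then $\rho$ is spatially constant and the vanishing theorem is substituted by a direct unique continuation argument. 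Finally, the boundary case $t\leq (L\rho_1(x))^2$ is handled by the same contradiction but with a standard heat-kernel barrier that absorbs the $C\sup_{P\cap\M_0}Q$ term, where $C=C(E)$ is determined by parabolic Green's function estimates against the weight $\rho_1^E$.
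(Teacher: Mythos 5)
Your overall strategy (contradiction + parabolic rescaling + compactness + the Vanishing Theorem~\ref{Thm_vanishing_lemma}) matches the paper's, and the role you assign to $E>2$ is correct. However, there are two substantive gaps.

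First, your claim that the pointed rescaled limit is always the final time-slice of a $\kappa$-solution is false and is the crux of the matter. The canonical neighborhood assumption only characterizes the geometry at scales below $1$. If $\rho_1(x_k)$ stays bounded away from $0$ (equivalently, the curvature at $x_k$ stays bounded), the CNA tells you nothing about the geometry near $x_k$, and after rescaling the limit can be any complete Ricci flow --- not a $\kappa$-solution --- so the Vanishing Theorem is simply unavailable. Even when $\rho_1(x_k)\to 0$, the limit time-interval might be a bounded $(-T_\infty,0]$ rather than $(-\infty,0]$, again leaving the Vanishing Theorem out of reach. The paper handles these cases by taking a sequence $H_k\to\infty$ in the contradiction argument: the weight $e^{-H_k(t_k-\t)}$ with $H_k\rho_1^2(x_k)\to\infty$ forces the limit perturbation to vanish at all strictly earlier times, hence by continuity at $(x_\infty,0)$. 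This exponential-decay mechanism is entirely absent from your proposal.

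Second, and related, the role you assign to $H$ is wrong. You say $H$ is chosen large so that $Q$ becomes a subsolution of a benign parabolic operator; this is not how $H$ is used. In the paper $H_k\to\infty$ is part of the contradiction sequence (along with $L_k,C_k\to\infty$ and $\eta_{\lin,k},\eps_{\can,k}\to0$), and the threshold $\underline H(E)$ merely records that the argument fails for small $H$ since the exponential weight must dominate the polynomial weight $\rho_1^E$. Your proposed subsolution argument for $Q$ would need to be written out and would face the same difficulty of controlling the past. Finally, your one-sentence disposal of the boundary case $t\leq (L\rho_1(x))^2$ via a ``heat-kernel barrier'' is not a proof; the paper instead relies on Proposition~\ref{prop_promote_RdT_forward_locally}, a concrete local estimate propagating smallness of $h$ forward from time $0$, and distinguishes the case $\liminf_k t_k/\rho_1^2(x_k)=0$ where the parabolic neighborhood is eventually swallowed by the initial time-slice. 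You would need to treat these subcases (and the injectivity-radius degeneration, which requires passing to tangent spaces or universal covers) with comparable care.
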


Note that the parabolic neighborhood $P$ may be defined on a time-interval of size less than $(L\rho_1 (x))^2$ if $P$ intersects the initial time-slice $\M_0$.
By performing a time shift, Proposition~\ref{Prop_semi_local_max} can be generalized to the case in which $P$ is defined on a time-interval of size less than $(L\rho_1 (x))^2$ that does not necessarily intersect $\M_0$.
This fact will be used in Section~\ref{sec_extend_comparison} when $P$ intersects a cut, i.e. a discontinuity locus of $h$, at some positive time.

We also remark that the constant $T$ in Proposition~\ref{Prop_semi_local_max} does not have any mathematical significance and could be eliminated from the statement.  
It is present in Proposition \ref{Prop_semi_local_max} only to conform with the notation later in the paper where it is used.

In the next result, we improve the interior estimate and replace the factor $100$ by an arbitrary factor.
As a trade-off, we need to choose the parabolic neighborhood on which $h$ and $Q$ are defined large enough; note however that we don't need to change the bound on $|h|$ appearing in the assumptions.

\begin{proposition}[Interior decay] \label{Prop_interior_decay}
If
\begin{gather*}
 E > 2, \qquad
H \geq \underline{H}(E), \qquad
\eta_{\lin} \leq \ov\eta_{\lin} (E), \qquad
\alpha > 0,  \\
A \geq \underline{A}(E, \alpha), \qquad
\eps_{\can} \leq \ov\eps_{\can} (E, \alpha),
\end{gather*}
then there is a constant $C = C(E) < \infty$ such that the following holds.

Let $\M$ be a Ricci flow spacetime and $x\in\M_t$.
Assume that $\M$ is $(\eps_{\can} \rho_1(x), t)$-complete and satisfies the $\eps_{\can}$-canonical neighborhood assumption at scales $(\eps_{\can} \rho_1 (x), \lb 1)$.

Consider the parabolic neighborhood $P := P(x, A \rho_1 (x))$ and let $h$ be a Ricci-DeTurck perturbation on $P$ such that $|h|\leq \eta_{\lin}$ everywhere.  Define $Q$ as in (\ref{eq_def_Q_semi_loc_max}).

Then in the case $t  > (Ar)^2$ (i.e. if $P$ does not intersect the time-$0$ slice) we have
\[ Q (x) \leq \alpha \sup_{ P} Q . \]
In the case $t  \leq (Ar)^2$ (i.e. if $P$ intersects the time-$0$ slice) we have
\[ Q(x) \leq \alpha \sup_{ P} Q + C \sup_{ P \cap \M_0} Q. \]
\end{proposition}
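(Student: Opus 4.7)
The plan is to iterate Proposition~\ref{Prop_semi_local_max} finitely many times, choosing the number of iterations $k = k(\alpha)$ so that $100^{-k} \leq \alpha$, and then to set $A$ large enough (depending on $E$ and $\alpha$) so that all the intermediate parabolic neighborhoods fit inside $P = P(x, A\rho_1(x))$.

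More precisely, define $y_0 := x$ and inductively choose points $y_i$ as follows. Set $L = L(E)$ and $C = C(E)$ to be the constants from Proposition~\ref{Prop_semi_local_max}. Given $y_i \in P$ with $P(y_i, L\rho_1(y_i)) \subset P$, choose $y_{i+1} \in P(y_i, L\rho_1(y_i))$ to (almost) realize the supremum $\sup_{P(y_i, L\rho_1(y_i))} Q$. By Lemma~\ref{lem_bounded_curv_bounded_dist} (assuming $\eps_{\can} \leq \ov\eps_{\can}(A)$), we have $C_1^{-1}\rho_1(x) \leq \rho_1 \leq C_1 \rho_1(x)$ on $P$ for some $C_1 = C_1(A)$, and by Lemma~\ref{lem_containment_parabolic_nbhd} applied iteratively, there is $A' = A'(L, k) < \infty$ such that if $A \geq A'$ then $P(y_i, L\rho_1(y_i)) \subset P$ for all $i = 0, 1, \ldots, k-1$; this is where we use $A \geq \underline{A}(E, \alpha)$.

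Applying Proposition~\ref{Prop_semi_local_max} at each $y_i$ gives
\[
Q(y_i) \leq \frac{1}{100} \sup_{P(y_i, L\rho_1(y_i))} Q + C \sup_{P(y_i, L\rho_1(y_i)) \cap \M_0} Q,
\]
with the convention that the last term vanishes if $P(y_i, L\rho_1(y_i))$ does not meet $\M_0$. Iterating $k$ times and using that $\sup_{P(y_i, L\rho_1(y_i))} Q \leq Q(y_{i+1}) + \epsilon$ for arbitrarily small $\epsilon$, together with $P(y_i, L\rho_1(y_i)) \subset P$, we obtain
\[
Q(x) \leq 100^{-k} \sup_{P} Q \;+\; \Bigl(\sum_{i=0}^{k-1} 100^{-i}\Bigr) C \sup_{P \cap \M_0} Q \leq \alpha \sup_{P} Q + \tfrac{100C}{99} \sup_{P \cap \M_0} Q,
\]
provided $k$ is chosen with $100^{-k} \leq \alpha$. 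In the case $t > (A\rho_1(x))^2$ the boundary term vanishes throughout and we get the cleaner bound.

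The main (mild) obstacle is bookkeeping the nested containments: one must verify that each $P(y_i, L\rho_1(y_i))$ is both unscathed and contained in $P$. Unscathedness follows from Proposition~\ref{Prop_semi_local_max} itself (since each $y_i$ satisfies the completeness and canonical neighborhood hypotheses inherited from $P$), while containment follows by iterating Lemma~\ref{lem_containment_parabolic_nbhd}: a single application gives $P(y_1, L\rho_1(y_1)) \subset P(x, A_1\rho_1(x))$ for some $A_1 = A_1(L)$, and inductively $P(y_i, L\rho_1(y_i)) \subset P(x, A_i \rho_1(x))$ with $A_i = A_i(L, i)$. Taking $A \geq A_k$ and requiring $\eps_{\can} \leq \ov\eps_{\can}(A_k) = \ov\eps_{\can}(E, \alpha)$ closes the argument. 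The hypothesis $|h| \leq \eta_{\lin}$ persists at every stage since each $P(y_i, L\rho_1(y_i)) \subset P$.
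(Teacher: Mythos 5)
Your proposal is correct and follows essentially the same strategy as the paper: iterate Proposition~\ref{Prop_semi_local_max} roughly $k \approx -\log_{100}\alpha$ times, using Lemma~\ref{lem_containment_parabolic_nbhd} to control the nested parabolic neighborhoods and Lemma~\ref{lem_bounded_curv_bounded_dist} to keep $\rho_1$ comparable so that the hypotheses of Proposition~\ref{Prop_semi_local_max} remain valid at the intermediate points. The paper phrases the iteration as an induction on $\alpha$ (scaling $\alpha$ by $100$ at each step and taking $C = 2C(E)$ so that $\tfrac{1}{100}C + \tfrac{1}{2}C \leq C$ absorbs the boundary terms), whereas you run an explicit $k$-step chain with supremum-approximating points $y_i$ and a geometric series, but these are surface-level differences; the paper's formulation mainly saves you the $\epsilon$-bookkeeping and makes the $\eps_{\can}$-adjustment at the points $y_i$ (which you gesture at via the bounded-curvature bound $\rho_1 \geq C_1^{-1}\rho_1(x)$ but leave implicit) a bit more transparent.
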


We remark that it follows from the proof that the parabolic neighborhood $P(x, \lb\ul{A}\rho_1(x))$ is unscathed, although we cannot guarantee this for $P(x,A\rho_1(x))$.  Due to the way the proposition will be applied later, it is more convenient to state the conditions using the possibly larger scale $A$.

The proofs of Propositions~\ref{Prop_semi_local_max} and \ref{Prop_interior_decay} are based on the following strong maximum principle for solutions of the linearized Ricci-DeTurck flow.  
This maximum principle is  a special case of a result of Anderson and Chow (cf \cite{Anderson:2005cf}).   
The proof of Anderson and Chow's result simplifies in this special case, which is why we have decided to include it in this paper.

\begin{lemma}[Strong maximum principle of Anderson-Chow] \label{lem_strong_max_AC}
Let $(M, \linebreak[1] (g_t)_{t \in (-T, 0]})$, $T > 0$, be a Ricci flow on a connected $3$-manifold $M$ such that $(M, g_t)$ has non-negative sectional curvature for all $t \in (- T, 0]$.

Consider a solution $(h_t)_{t \in [-T, 0]}$ of the linearized Ricci-DeTurck equation on $M$, i.e.
\[ \partial_t h_t = \triangle_{L, g_t} h_t \qquad \iff \qquad \nabla_{\partial_t} h_t = \triangle_{g_t} h_t + 2 \Rm_{g_t} (h_t) . \]
Assume that
\[ |h| \leq CR \qquad \text{on} \qquad M \times (- T, 0] \]
for some $C > 0$ and that $|h|(x_0, 0) = C R(x_0, 0)$ for some $x_0 \in M$.
Then
\[ |h| = CR \qquad \text{on} \qquad M \times (- T, 0]. \]
\end{lemma}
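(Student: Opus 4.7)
The plan is to reduce the statement to the standard parabolic strong maximum principle, applied to the nonpositive function $u := |h|^2 - C^2 R^2$, which by hypothesis vanishes at the interior point $(x_0, 0)$. From $\nabla_{\partial_t} h = \Delta h + 2\,\Rm(h)$ and the standard evolution $\partial_t R = \Delta R + 2|\Ric|^2$, a direct calculation gives
\[ (\partial_t - \Delta)\,u \;=\; -2\bigl(|\nabla h|^2 - C^2|\nabla R|^2\bigr) \;+\; 4\bigl(\langle h, \Rm(h)\rangle - C^2R|\Ric|^2\bigr), \]
where $\langle h, \Rm(h)\rangle = h^{ij} R_{ikjl} h^{kl}$.

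The key algebraic ingredient, due to Anderson--Chow \cite{Anderson:2005cf}, is an inequality
\[ \langle h, \Rm(h)\rangle \;\leq\; \frac{|\Ric|^2}{R}\,|h|^2 \]
valid in dimension $3$ whenever $\sec \geq 0$ and $R > 0$; it is proved by diagonalizing $\Ric$ with eigenvalues $\mu_1, \mu_2, \mu_3 \geq 0$, substituting the three-dimensional formula expressing $\Rm$ in terms of $\Ric$, and analyzing the resulting quadratic form in the components of $h$. This bounds the zeroth-order part of $(\partial_t - \Delta)u$ above by $(4|\Ric|^2/R)\cdot u$.

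For the gradient terms, I would work on a small parabolic neighborhood of $(x_0, 0)$ where both $R$ and $|h|$ are bounded away from $0$ (the degenerate case $R(x_0, 0) = 0$ forces $h(x_0, 0) = 0$ and is handled separately by applying the usual strong maximum principle for $|h|^2$ together with Hamilton's strong maximum principle for the scalar curvature). There Kato's inequality $|\nabla h|^2 \geq |\nabla|h||^2$, combined with the identity $2|h|\nabla|h| = \nabla u + 2C^2 R\,\nabla R$ and a Cauchy--Schwarz step, yields a bound of the form $|\nabla h|^2 - C^2|\nabla R|^2 \geq -B_1|\nabla u| - B_2\,u$ with bounded coefficients $B_1, B_2$. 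Putting the two steps together, $u$ satisfies a linear parabolic differential inequality $(\partial_t - \Delta)u \leq b\cdot\nabla u + c\,u$ on that neighborhood with bounded $b$, $c$. Since $u \leq 0$ attains its maximum $0$ at the interior point $(x_0, 0)$, the standard strong maximum principle forces $u \equiv 0$ on the backward parabolic neighborhood, and a routine open/closed propagation argument in time, together with connectedness of $M$ to propagate spatially, extends $u \equiv 0$ to all of $M\times(-T, 0]$.

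The main obstacle is establishing the Anderson--Chow algebraic inequality; this is where both the dimension-$3$ hypothesis and the non-negative sectional curvature hypothesis are genuinely used, and the analogous pointwise inequality fails in higher dimensions, which is why this strong maximum principle is specific to $3$-dimensional nonnegatively curved Ricci flows.
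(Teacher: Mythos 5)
Your proposal is essentially correct and rests on exactly the same algebraic cornerstone as the paper's argument: the Anderson--Chow inequality $\langle h, \Rm(h)\rangle \leq (|\Ric|^2/R)\,|h|^2$, proved by diagonalizing $\Ric$ in dimension $3$ and analyzing the resulting quadratic form. The difference is in the choice of comparison quantity. The paper works with the \emph{linear} quantity $|h| - CR$ rather than the squared quantity $u = |h|^2 - C^2 R^2$. This makes the gradient terms vanish completely: Kato's inequality gives the one-sided evolution inequality
\[
\partial_t |h| \leq \triangle |h| + 2\,\frac{\Rm(h,h)}{|h|^2}\,|h| \qquad \text{wherever } |h| > 0,
\]
while $CR$ satisfies the \emph{exact} linear equation $\partial_t(CR) = \triangle(CR) + 2(|\Ric|^2/R)\,CR$. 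Subtracting and applying the AC bound yields directly $(\partial_t - \triangle)(|h|-CR) \leq 2(|\Ric|^2/R)(|h|-CR)$, a scalar parabolic inequality to which the strong maximum principle applies with no further manipulation. By contrast, your squared quantity $u$ picks up the extra term $-2\bigl(|\nabla h|^2 - C^2|\nabla R|^2\bigr)$, which does not have a sign and forces the Cauchy--Schwarz argument (from $2|h|\nabla|h| = \nabla u + 2C^2R\nabla R$) to recast it as a first-order-in-$\nabla u$ contribution plus a multiple of $u$ — an extra step whose coefficients degenerate where $|h| \to 0$ and therefore requires your localization near $(x_0,0)$ and a propagation argument. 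Both routes work (and the coefficients are indeed bounded near any point where $u = 0$ and $R > 0$, since there $|h| = CR > 0$), but the linear comparison is the cleaner and is what the paper uses. Your observation about the degenerate case $R(x_0,0) = 0$ — namely that it forces $R \equiv 0$ via Hamilton's strong maximum principle and then $h \equiv 0$ trivially — correctly fills in a boundary case that the paper glosses over.
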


\begin{proof}
Using Kato's inequality it is not hard to see that wherever $|h| \neq 0$ we have
\[ \partial_t |h| \leq \triangle_{g_t} |h| + 2\frac{\Rm (h, h)}{|h|^2} \cdot |h|. \]
On the other hand, whenever $R > 0$ we have\
\[ \partial_t (CR) = \triangle_{g_t} (CR) + 2 \frac{|{\Ric}|^2}{R} \cdot C R. \]
So the claim follows by the strong maximum principle applied to $|h|-CR$ if we can show that for any symmetric 2-tensor $h$
\begin{equation} \label{eq_Rm_Ric_inequality}
 \frac{\Rm (h, h)}{|h|^2} \leq \frac{|{\Ric}|^2}{R}. 
\end{equation}

To see (\ref{eq_Rm_Ric_inequality}) let $h_{ij}\neq 0$ be a non-zero $3$-dimensional symmetric $2$-tensor and $\Rm_{ijkl}$ a $3$-dimensional algebraic curvature tensor with non-negative sectional curvature.
We denote by $\Ric_{ij}$ and $R$ its Ricci and scalar curvatures.
Without loss of generality, we may assume that $|h|=1$ and that $\Ric_{ij}$ is diagonal.
Then $\Rm_{ijkl}$ is only non-zero if $\{ i, j, k, l \}$ has cardinality $2$.
Set $a_1 := \Rm_{2332}, a_2 := \Rm_{1331}, a_3 := \Rm_{1221}$ and $x_i := h_{ii}$.
Then
\begin{align*}
\Rm (h, h) &= \Rm_{ijkl} h_{il} h_{jk} \\
& = - 2 a_1 h_{23}^2 - 2 a_2 h_{13}^2 - 2 a_3 h_{12}^2 + 2 a_1 h_{22} h_{33} + 2 a_2 h_{11} h_{33} + 2 a_3 h_{11} h_{22}\\
&\leq 2 ( a_1  x_2 x_3 + a_2  x_1 x_3 + a_3  x_1 x_2 )\,.
\end{align*}
On the other hand
\[ |{\Ric}|^2 = (a_2 + a_3)^2 + (a_1 + a_3)^2 + (a_1 + a_2)^2 \]
and
\[ R = 2 (a_1 + a_2 + a_3). \]
Since 
$x_1^2 + x_2^2 + x_3^2\leq |h|^2 = 1$ the next lemma implies
 (\ref{eq_Rm_Ric_inequality}).
\end{proof}

\begin{lemma}
If $x_1^2 + x_2^2 + x_3^2 \leq 1$ and $a_1, a_2, a_3 \geq 0$ and $a_1 + a_2 + a_3 > 0$, then
\begin{equation} \label{eq_A_C_inequality}
 a_1  x_2 x_3 + a_2  x_1 x_3 + a_3  x_1 x_2  \leq \frac{(a_2 + a_3)^2 + (a_1 + a_3)^2 + (a_1 + a_2)^2}{4(a_1 + a_2 + a_3)}, 
\end{equation}
\end{lemma}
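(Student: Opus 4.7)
The plan is to reduce the inequality to a symmetric polynomial inequality in $a_1,a_2,a_3$ and then prove it using two classical symmetric function inequalities (Newton and Maclaurin). Denote the right-hand side of (\ref{eq_A_C_inequality}) by $R$, set $s := a_1 + a_2 + a_3 > 0$, $s_2 := a_1^2 + a_2^2 + a_3^2$, $e_2 := a_1 a_2 + a_2 a_3 + a_1 a_3$, $e_3 := a_1 a_2 a_3$, and note the identities $s^2 - s_2 = 2 e_2$ and $4 s R = s^2 + s_2$.

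First I would reduce to the case $x_1,x_2,x_3 \geq 0$: since $a_i \geq 0$, flipping the sign of any $x_i$ can only increase the left-hand side while preserving $x_1^2 + x_2^2 + x_3^2$. After this reduction, the left-hand side is a nonnegative homogeneous quadratic form in $x$, so its supremum on the unit ball is attained on the unit sphere. Writing the left-hand side as $\tfrac{1}{2} x^T M x$, where $M$ is the symmetric matrix with zeros on the diagonal and $M_{ij} = a_k$ for $\{i,j,k\} = \{1,2,3\}$, the supremum equals $\tfrac{1}{2} \lambda_{\max}(M)$. Thus it suffices to show $\lambda_{\max}(M) \leq 2R$.

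A direct computation yields the characteristic polynomial $p(\lambda) = \det(\lambda I - M) = \lambda^3 - s_2 \lambda - 2 e_3$. Since $p'(\lambda) = 3 \lambda^2 - s_2$, $p$ is strictly increasing on $[\sqrt{s_2/3}, \infty)$. The inequality $(s - \sqrt{s_2})^2 \geq 0$ gives $2R = \tfrac{s^2+s_2}{2s} \geq \sqrt{s_2} > \sqrt{s_2/3}$, so in order to conclude $2R \geq \lambda_{\max}(M)$ it suffices to prove $p(2R) \geq 0$. Using $s^2 - s_2 = 2 e_2$, one computes
\[
4 R^2 - s_2 \;=\; \frac{(s^2 + s_2)^2 - 4 s^2 s_2}{4 s^2} \;=\; \frac{(s^2 - s_2)^2}{4 s^2} \;=\; \frac{e_2^2}{s^2},
\]
so that
\[
p(2R) \;=\; 2R \bigl( 4R^2 - s_2 \bigr) - 2 e_3 \;=\; \frac{(s^2 + s_2)\, e_2^2}{2 s^3} - 2 e_3 \;=\; \frac{(s^2 + s_2)\, e_2^2 - 4 s^3 e_3}{2 s^3}.
\]
Using $s^2 + s_2 = 2 (s^2 - e_2)$, the desired inequality $p(2R) \geq 0$ reduces to $(s^2 - e_2)\, e_2^2 \geq 2 s^3 e_3$.

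Finally I would invoke two standard symmetric function inequalities. Maclaurin's inequality (equivalent to the identity $2(s^2 - 3 e_2) = (a_1 - a_2)^2 + (a_2 - a_3)^2 + (a_1 - a_3)^2 \geq 0$) gives $s^2 \geq 3 e_2$, and hence $s^2 - e_2 \geq \tfrac{2}{3} s^2$. Newton's inequality for the elementary symmetric polynomials in $3$ variables gives $e_2^2 \geq 3 s e_3$. Multiplying these two nonnegative inequalities,
\[
(s^2 - e_2)\, e_2^2 \;\geq\; \tfrac{2}{3} s^2 \cdot 3 s e_3 \;=\; 2 s^3 e_3,
\]
which is exactly what is required. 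The main (mild) obstacle is finding the reformulation of $p(2R) \geq 0$ as $(s^2 - e_2) e_2^2 \geq 2 s^3 e_3$; the crucial algebraic observation is that the particular weight $R = (s^2 + s_2)/(4s)$ makes $4R^2 - s_2$ a perfect square, after which Newton and Maclaurin finish the job with exactly the right constant.
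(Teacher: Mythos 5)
Your proof is correct, and it takes a genuinely different route from the paper's once both have passed through the common first step of reducing to a bound on the top eigenvalue of the matrix $M$ (which is twice the paper's matrix $A$). The paper observes that $\operatorname{tr}A=0$ and $\det A=\tfrac14 a_1a_2a_3\geq 0$ force $\lambda_1,\lambda_2\leq 0\leq\lambda_3$, expands the test vector $u=(1,1,1)$ in the eigenbasis, and uses the sign pattern to show the Rayleigh-type ratio $|Au|^2/(u^TAu)$---which equals the right-hand side exactly---dominates $\lambda_3$. You instead compute the characteristic polynomial $p(\lambda)=\lambda^3-s_2\lambda-2e_3$ explicitly, show $2R$ lies in the interval where $p$ is monotone, and verify $p(2R)\geq 0$ by reducing (via the pleasant identity $4R^2-s_2=e_2^2/s^2$) to the purely symmetric inequality $(s^2-e_2)\,e_2^2\geq 2s^3e_3$, which you dispatch with Maclaurin ($s^2\geq 3e_2$) and Newton ($e_2^2\geq 3se_3$). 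Both proofs are sound; the paper's is shorter and exploits the sign structure of the spectrum directly, whereas yours is more mechanical and exposes exactly which classical symmetric-function inequalities make the constant come out right.

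Two small remarks on your write-up. First, the sentence ``flipping the sign of any $x_i$ can only increase the left-hand side'' is not literally true: changing $x_1\mapsto -x_1$ flips the signs of two of the three monomials. What is true (and suffices) is that one can choose signs $\sigma_i\in\{\pm 1\}$ making all three products $\sigma_i\sigma_j x_ix_j$ nonnegative, since the product of the three products is $(x_1x_2x_3)^2\geq 0$. But in fact this whole reduction is unnecessary: the supremum of $\tfrac12 x^TMx$ over the closed unit ball is $\tfrac12\max\{\lambda_{\max}(M),0\}\leq\tfrac12\lambda_{\max}(M)$ with no sign constraints on $x$, and this alone bounds the left-hand side by $\tfrac12\lambda_{\max}(M)$. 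Second, when multiplying $s^2-e_2\geq\tfrac23 s^2$ with $e_2^2\geq 3se_3$, it is worth noting explicitly that both lower bounds are nonnegative, so the chain $(s^2-e_2)e_2^2\geq\tfrac23 s^2 e_2^2\geq\tfrac23 s^2\cdot 3se_3=2s^3e_3$ is valid; you state the pieces but should flag the nonnegativity to justify the multiplication.
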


\begin{proof}
Let $\lambda_1 \leq \lambda_2 \leq \lambda_3$ be the eigenvalues of the symmetric matrix
\[ A := \frac12 \left( \begin{matrix} 0 & a_3 & a_2 \\ a_3 & 0 & a_1 \\ a_2 & a_1 & 0 \end{matrix} \right) \]
and denote by $v_1, v_2, v_3 \in \R^3$ the corresponding orthonormal basis of eigenvectors.
The left-hand side of (\ref{eq_A_C_inequality}) is bounded from above by $\lambda_3$.

Since the trace of $A$ vanishes and its determinant equals $\frac14 a_1a_2a_3 \geq 0$, we must have $\lambda_1, \lambda_2 \leq 0$ and $\lambda_3 \geq 0$.
In the case $\lambda_3 = 0$ we are done.
So assume from now on that $\lambda_3 > 0$.
Consider the vector
\[ u := \left( \begin{matrix} 1 \\ 1 \\ 1 \end{matrix} \right) = c_1 v_1 + c_2 v_2 + c_3 v_3. \]
Since
\[ A u = \frac12 \left( \begin{matrix} a_2+a_3 \\  a_1+a_3 \\ a_1 + a_2 \end{matrix} \right) \qquad \text{and} \qquad u^T A u = a_1 + a_2 + a_3, \]
we obtain
\[ \frac{c_1^2 \lambda_1^2 + c_2^2 \lambda_2^2 + c_3^2 \lambda_3^2}{c_1^2 \lambda_1 + c_2^2 \lambda_2 + c_3^2 \lambda_3} = \frac{(a_2 + a_3)^2 + (a_1 + a_3)^2 + (a_1 + a_2)^2}{4(a_1 + a_2 + a_3)}. \]
Since $\lambda_1, \lambda_2 < 0$ and numerator and denominator of the first fraction are both positive, we obtain
\[ \lambda_3 = \frac{c_3^2 \lambda_3^2}{ c_3^2 \lambda_3} \leq \frac{(a_2 + a_3)^2 + (a_1 + a_3)^2 + (a_1 + a_2)^2}{4(a_1 + a_2 + a_3)}. \]
This is what we wanted to show.
\end{proof}

\begin{theorem}[Vanishing Theorem] \label{Thm_vanishing_lemma}
Consider a 3-dimensional $\kappa$-so\-lu\-tion $(M, \lb (g_t)_{t \in (- \infty, 0]})$ and a smooth, time-dependent tensor field $(h_t)_{t \in (-\infty, 0]}$ on $M$ that satisfies the linearized Ricci-DeTurck equation
\[ \partial_t h_t = \triangle_{L, g_t} h_t. \]
Assume that there are numbers $\chi > 0$ and $C < \infty$ such that
\begin{equation} \label{eq_h_R_chi}
 |h| \leq C R^{1+\chi} \qquad \text{on} \qquad M \times (- \infty, 0]. 
\end{equation}
Then $h \equiv 0$ everywhere.
\end{theorem}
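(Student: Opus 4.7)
The plan is to show that the dimensionless ratio $|h|/R$ vanishes identically, which forces $h \equiv 0$ since $R > 0$ on a $\kappa$-solution. Set $C_* := \sup_{M \times (-\infty,0]} |h|/R$. The hypothesis (\ref{eq_h_R_chi}) gives $|h|/R \leq CR^\chi$, and since $R$ is uniformly bounded on any 3-dimensional $\kappa$-solution (by the properties collected in Lemma \ref{lem_kappa_solution_properties_appendix}), we have $C_* < \infty$. I will argue by contradiction that $C_* = 0$.

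Suppose $C_* > 0$ and choose a sequence $(x_k, t_k) \in M \times (-\infty, 0]$ with $|h|(x_k, t_k)/R(x_k, t_k) \to C_*$. If $R(x_k, t_k) \to 0$ along a subsequence, then $C_* \leq \limsup CR(x_k, t_k)^\chi = 0$, contradicting $C_* > 0$; so $R(x_k, t_k) \in [r_0, R_{\max}]$ for some $r_0 > 0$. I then parabolically rescale by $\lambda_k := R(x_k, t_k)^{1/2}$, setting $\tilde g^{(k)}(\tilde t) := \lambda_k^2\, g(t_k + \lambda_k^{-2}\tilde t)$, so that $\tilde R^{(k)}(x_k, 0) = 1$. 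A direct computation shows that the pointwise ratio $|h|/R$ is parabolically scale-invariant, so for $\tilde h^{(k)}$ (which denotes $h$ in the rescaled picture) we still have $|\tilde h^{(k)}|/\tilde R^{(k)} \leq C_*$ with equality achieved in the limit at $(x_k, 0)$. By the Hamilton-type compactness of 3-dimensional $\kappa$-solutions, after passing to a subsequence the pointed flows $(M, \tilde g^{(k)}, x_k)$ converge smoothly to a pointed $\kappa$-solution $(M_\infty, g_\infty, x_\infty)$, and the uniform bound $|\tilde h^{(k)}| \leq C_* \tilde R^{(k)}$ together with interior parabolic Schauder estimates for the linear equation $\nabla_{\partial_t}\tilde h = \Delta \tilde h + 2\Rm(\tilde h)$ yield smooth subconvergence $\tilde h^{(k)} \to h_\infty$ to a solution of the linearized Ricci-DeTurck equation on $(M_\infty, g_\infty)$, satisfying $|h_\infty|/R_\infty \leq C_*$ globally with equality at $(x_\infty, 0)$.

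Applying the Anderson-Chow strong maximum principle (Lemma \ref{lem_strong_max_AC}) to $h_\infty$ on each slab $M_\infty \times (t_0, 0]$ and letting $t_0 \to -\infty$, I obtain the rigidity $|h_\infty| \equiv C_* R_\infty$ on all of $M_\infty \times (-\infty, 0]$. To contradict this, I pass the hypothesis (\ref{eq_h_R_chi}) to the limit: since $\lambda_k$ is bounded above by $R_{\max}^{1/2}$, the scaling identity $|\tilde h^{(k)}| = \lambda_k^{-2}|h|$ together with $\tilde R^{(k)} = \lambda_k^{-2} R$ yields $|\tilde h^{(k)}| \leq \lambda_k^{2\chi}\, C\, \tilde R^{(k),\, 1+\chi} \leq C R_{\max}^\chi\, \tilde R^{(k),\, 1+\chi}$, and so $|h_\infty| \leq C' R_\infty^{1+\chi}$ in the limit. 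Combining this with $|h_\infty| = C_* R_\infty$ gives $C_* \leq C' R_\infty(y,s)^\chi$ for every $(y, s) \in M_\infty \times (-\infty, 0]$. But on any 3-dimensional $\kappa$-solution $\inf R = 0$: for compact $\kappa$-solutions, $R \sim c/|t|$ as $t \to -\infty$ on a shrinking round spherical space form, while in the non-compact case the asymptotic soliton structure ensures that $R$ tends to zero either as $t \to -\infty$ or at spatial infinity. Consequently $C_* = 0$, contradicting the assumption $C_* > 0$.

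The main obstacle is the limit extraction: producing a smooth subsequential limit $h_\infty$ of the rescaled perturbations and verifying that the equality case of Anderson-Chow really is attained in the limit. The former rests on the fact that the linearized Ricci-DeTurck equation is strictly parabolic with smooth coefficients on the smoothly converging backgrounds, so a uniform $C^0$-bound on $\tilde h^{(k)}$ upgrades to uniform $C^m$-bounds on compact sets via standard interior estimates; the latter requires the basepoints $(x_k, 0)$ to survive to genuine basepoints in the limit, which is built into the compactness statement. Once these technical points are in place, the three-step scheme---rescale, pass to a limit where the sup is attained, apply Anderson-Chow plus the scale-decay of (\ref{eq_h_R_chi})---delivers the contradiction cleanly.
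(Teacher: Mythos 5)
Your proof follows essentially the same blow-up argument as the paper's: pick a sequence approaching $\sup |h|/R$, use the decay hypothesis to confine the base points to scalar curvature bounded away from $0$, extract a pointed limit and a limiting solution $h_\infty$ of the linearized equation, apply the Anderson--Chow strong maximum principle to force $|h_\infty| \equiv C_* R_\infty$, combine with the limiting decay bound to conclude $R_\infty$ is uniformly bounded below, and contradict that via $\inf R = 0$ on a $\kappa$-solution. The one small imprecision is your treatment of the compact endgame: you appear to invoke the classification of compact $3$-dimensional $\kappa$-solutions as round space forms (a result not available to, and not used by, the paper), whereas the paper instead applies the maximum principle to $\partial_t R = \triangle R + 2|\Ric|^2 \geq \triangle R + \tfrac23 R^2$ to conclude $\min_{M_\infty} R(\cdot, t) \to 0$ as $t \to -\infty$ --- a strictly more elementary step that you should substitute to avoid circularity.
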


\begin{proof}
Assume that $h_0 \not\equiv 0$.
Since $(M, (g_t)_{t \in (- \infty, 0]})$ has uniformly bounded curvature, we have
\[ |h| \leq C' R \]
for some $C' < \infty$.
Choose a sequence $(x_k, t_k) \in (- \infty, 0] \times M$ such that
\[ \lim_{k \to \infty} \frac{|h|(x_k, t_k)}{R(x_k, t_k)} = \sup_{M \times (- \infty, 0]} \frac{|h|}{R}. \]
It follows from (\ref{eq_h_R_chi}) that
\[ C R^{\chi} (x_k, t_k) \geq  \frac{|h|(x_k, t_k)}{R(x_k, t_k)} . \]
So there is a $c > 0$ such that $R(x_k, t_k) > c$ for all $k$.
Consider the sequence of pointed flows $(M, (g_{t + t_k})_{t \in (- \infty, 0]},  x_k)$.
After passing to a subsequence, this sequence converges to a pointed $\kappa$-solution $(M_\infty, \lb (g_{\infty,t})_{t \in (-\infty, 0]}, \lb x_\infty )$.
Similarly, consider the sequence of time-dependent tensor fields $h_k (\cdot,  t +t_k)$.
After passing to another subsequence, these tensor fields converge to a solution $(h_{\infty, t})_{t \in (- \infty, 0]}$ of the linearized Ricci-DeTurck flow on $M_\infty \times (- \infty, 0]$.
The bound (\ref{eq_h_R_chi}) carries over in the limit to
\begin{equation} \label{eq_h_infty_C_chi}
 |h_\infty| \leq C R^{1+\chi} 
\end{equation}
and by the choice of the points $(x_k, t_k)$ we obtain the extra property that
\[ \frac{|h_\infty|(x_\infty, 0)}{R(x_\infty, 0)} = \sup_{M_\infty  \times (-\infty, 0]} \frac{|h_\infty|}{R} = \sup_{M \times (- \infty, 0] } \frac{|h|}{R}=: C' > 0. \]

We can now apply the strong maximum principle, Lemma~\ref{lem_strong_max_AC}, and obtain that
\[ |h_\infty| \equiv C' R \qquad \text{on} \qquad  M_\infty \times (- \infty, 0]. \]
Combining this with (\ref{eq_h_infty_C_chi}) yields that on $M_\infty \times (-\infty, 0]$
\[ C' R\leq C R^{1+\chi}. \]
So $R$ is uniformly bounded from below on $M_\infty \times (- \infty, 0]$.
It follows that $(M, \lb (g_{\infty, t})_{t \in (- \infty, 0]})$ cannot be the round shrinking cylinder or a quotient thereof.
If $M_\infty$ was non-compact, then we can obtain the round shrinking cylinder as a pointed limit of $(M, (g_{\infty, t})_{t \in (- \infty, 0]})$, which contradicts the positive lower bound on $R$.
If, on the other hand, $M_\infty$ was compact, then the maximum principle applied to the evolution equation of $R$ would imply that $\min_{M_\infty} R(\cdot, t) \to 0$ as $t \to - \infty$, again contradicting the positive lower bound on $R$.
\end{proof}

\bigskip

\begin{proof}[Proof of Proposition~\ref{Prop_semi_local_max}]
Fix some $E > 2$ for the remainder of the proof.
By linearity of the desired bounds, we may assume for simplicity that $T = \mathfrak{t}(x)$.

Next, observe that, by bounded curvature at bounded distance, Lem\-ma~\ref{lem_bounded_curv_bounded_dist}, for any choice of $L < \infty$ we may choose $\eps_{\can} \leq \ov\eps_{\can} (L)$ small enough such the parabolic neighborhood $P(x, L \rho_1 (x))$ is unscathed and such that $\rho_1 > c_0 (L) \rho_1(x)$ on this parabolic neighborhood for some $c_0 = c_0 (L) > 0$.

Assume now that the statement was false (for fixed $E>2$).
Choose sequences $\eta_{\lin, k},\eps_{\can,k} \to 0$ and $H_{k} , L_k, C_k \to \infty$ such that $\eps_{\can,k}$ is small enough depending on $L_k$, as discussed in the preceding paragraph.   For each $k$ we can choose a Ricci flow spacetime $\M_k$, points $x_k\in \M_{k, t_k}$, an (unscathed) parabolic neighborhood $P_k := P(x_k, L_k \rho_1(x_k))$ and a Ricci-DeTurck perturbation $h_k$ on $P_k$ such that $|h_k| \leq \eta_{\lin, k}$ on $P_k$, which violate the conclusion of the proposition.  
Thus, setting
\[ Q_k(y)  := e^{H_k (t_k - \mathfrak{t}(y))} \rho_1^E(y) |h_k| (y)\qquad\text{for}\quad y\in P_k\,,\]
either $t_k := \mathfrak{t} (x_k) > (L_k\rho_1(x_k))^2$ and
\begin{equation} \label{eq_Qk_xk_large}
 Q_k (x_k) > \frac1{100} \sup_{P_k} Q_k 
\end{equation}
or $t_k = \mathfrak{t}(x_k) \leq (L_k\rho_1(x_k))^2$ and
\begin{equation} \label{eq_Qk_xk_large_initial}
 Q_k (x_k) > \frac1{100} \sup_{P_k} Q_k + C_{k} \sup_{P_k \cap \M_{k,0}} Q_k. 
\end{equation}

Let us rephrase the bounds (\ref{eq_Qk_xk_large}) and (\ref{eq_Qk_xk_large_initial}) in a more convenient form.
To do this, let $\alpha_k := |h_k (x_k)| \leq \eta_{\lin, k} \to 0$ and consider the tensor field $h'_k := \alpha_k^{-1} h_k$.
Then $h'_k$ is a solution to the rescaled Ricci-DeTurck equation (\ref{eq_rescaled_Ricci_de_Turck}) for $\alpha = \alpha_k$,
\begin{equation} \label{eq_h_prime_k_1_x_k}
 |h'_k|(x_k) = 1 
\end{equation}
and on $P_k$
\[ |h'_k| = \frac{|h_k|}{|h_k|(x_k)} = e^{- H_k (t_k - \mathfrak{t})} \bigg( \frac{ \rho_1}{\rho_1(x_k)} \bigg)^{-E} \cdot \frac{Q_k}{Q_k(x_k)}  .  \]
So by (\ref{eq_Qk_xk_large}) and (\ref{eq_Qk_xk_large_initial}) we have
\begin{alignat}{2}
  |h'_k| &\leq 100 e^{- H_k (t_k - \mathfrak{t})} \bigg( \frac{ \rho_1}{\rho_1(x_k)} \bigg)^{-E} \label{eq_sup_h_prime_k} \qquad &&\text{on} \qquad P_k \\
\intertext{and if $P_k\cap\M_{k,0}\neq\emptyset$, then}
  |h'_k| &\leq C_k^{-1} e^{- H_k (t_k - \mathfrak{t})} \bigg( \frac{ \rho_1}{\rho_1(x_k)} \bigg)^{-E} \qquad &&\text{on} \qquad P_k \cap \M_{k,0}. \label{eq_sup_h_prime_k_initial}
\end{alignat}

We now distinguish two cases.

\textit{Case 1: \quad $t_k  \geq c \rho_1^2(x_k)$ for all $k$ and some $c > 0$.}

  The metric $g_k$ restricted to $P_k$ can be expressed in terms of a classical Ricci flow $(g_{k, t})_{t \in [t_k - \Delta t_k, t_k]}$ on $B_k := B(x_k, L_k \rho_1(x_k))$, where 
  $$\Delta t_k := \min \{ t_k, (L_k \rho_1 (x_k))^2 \}\,.$$   
Let $r_k := \rho_1 (x_k)$ and $T_\infty := \limsup_{k \to \infty} r_k^{-2} \Delta t_k \geq c > 0$.  Consider the parabolically rescaled flows 
  $$(g'_{k ,t}:=r_k^{-2} g_{k, r_k^2 t + t_k})_{t \in [- r_k^{-2} \Delta t_k, 0]}\,.$$
 By bounded curvature at bounded distance, Lemma~\ref{lem_bounded_curv_bounded_dist}, and since $\eps_{\can, k} \to 0$, for any $s <\infty$, $T'<T_\infty$, for sufficiently large $k$ we find uniform bounds on the curvature on the curvature of $g'_{k, 0}$ on the $g'_{k , 0}$-ball  $B(x_k, 0, s)$ over the time-interval $[-T',0]$.

\textit{Case 1a: \quad We have $\liminf_{k \to \infty} \rho_1 (x_k) > 0$, and the injectivity radius satisfies $\liminf_{k\ra\infty}\injrad(g'_{k, 0},x_k)>0$.}

After passing to a subsequence, we may extract a smooth limiting  pointed flow $(M_\infty, (g_{\infty, t})_{t \in (-T_\infty, 0]}, x_\infty)$.
Due to (\ref{eq_sup_h_prime_k}) and the local gradient estimates from Lemma \ref{lem_loc_gradient_estimate},  the reparameterized tensor fields $(  r_k^{-2} h'_{k, r_k^2 t + t_k})_{t \in [ - r_k^{-2} \Delta t_k, 0]}$ converge, after passing to another subsequence, to a smooth solution $(h'_{\infty, t})_{t \in (-T_\infty, 0]}$ on $M_\infty$ of the linearized Ricci-DeTurck equation with background metric $(g_{\infty, t})_{t \in (-T_\infty, 0]}$ (see (\ref{eq_lin_RdT_Appendix}), such that
\begin{equation} \label{eq_h_prime_1_1}
|h'_{\infty}| (x_\infty, 0) = 1.
\end{equation}

Since $\lim_{k \to \infty} H_k \rho_1^2 (x_k) = \infty$, we can use the exponential factor in (\ref{eq_sup_h_prime_k}) to show that $h'_\infty \equiv 0$ on $M_\infty \times (- T_\infty, 0)$, which implies $h'_\infty(x_\infty,0)=0$.
This contradicts (\ref{eq_h_prime_1_1}).

\textit{Case 1a$\,^\prime$: \quad We have $\liminf_{k \to \infty} \rho_1 (x_k) > 0$, and the injectivity radius satisfies $\liminf_{k\ra\infty}\injrad(g'_k,x_k)=0$.}

For some $\wh{r} >0$  we may pull back $g'_k$ to the $\wh{r}$-ball in the tangent space at $x_k$ via the exponential map to reduce to Case~1a.
Note that in Case~1a it was not important that the time-slices of the limiting flow $(M_\infty, (g_{\infty, t})_{t \in (-T_\infty, 0]}, x_\infty)$ were complete.

\textit{Case 1b: \quad $\liminf_{k \to \infty} \rho_1 (x_k) = 0$, and the injectivity radius satisfies $\liminf_{k\ra\infty}\lb\injrad(g'_k(0),x_k)>0$.}

As explained in the beginning of Case~1a, by passing to a subsequence, we may assume that the pointed flows $(B_k, (g'_{k,t})_{k,t}, x_k)$ converge to a smooth pointed flow $(M_\infty, \lb (g_{\infty, t})_{t \in (-T_\infty, 0]}, \lb x_\infty)$ and, moreover, the tensor fields $(  r_k^{-2} h'_{k, r_k^2 t + t_k})_{t \in [ - r_k^{-2} \Delta t_k, 0]}$ converge to a smooth solution $(h'_{\infty, t})_{t \in (-T_\infty, 0]}$ on $M_\infty$ of the linearized Ricci-DeTurck equation with background metric $(g_{\infty, t})_{t \in (-T_\infty, 0]}$ (see (\ref{eq_lin_RdT_Appendix})), such that (\ref{eq_h_prime_1_1}) holds.

Using Lemma~\ref{lem_bounded_curv_bounded_dist} and the canonical neighborhood assumption, it follows that $R>0$ everywhere on $M_\infty \times (-T_\infty, 0]$.  
By assertion \ref{ass_C.1_a} of Lemma~\ref{lem_kappa_solution_properties_appendix} there is a $\kappa_0>0$ such that every $\kappa$-solution is either a shrinking round spherical space form or is a $\kappa_0$-solution.   
Therefore, in view of the injectivity radius bound, there is a $\kappa_1>0$ such that by the canonical neighborhood assumption  every time-slice $(M_\infty, g_{\infty, t})$, $t \in (-T_\infty, 0]$ is isometric to the final time-slice of a $\kappa_1$-solution.  
Since by assertion \ref{ass_C.1_e} of Lemma~\ref{lem_kappa_solution_properties_appendix} we have $\D_tR\geq 0$ on $\kappa$-solutions, we get that  $(M_\infty, (g_{\infty, t})_{t \in (-T_\infty, 0]}, x_\infty)$ has bounded curvature, so it is a $\kappa$-solution if $T_\infty = \infty$.

Passing (\ref{eq_sup_h_prime_k}) to the limit yields
\[ |h'_\infty| \leq 100\rho^{-E} \leq (C')^{E/2} R^{E/2} \qquad \text{on} \qquad M_\infty \times (- T_\infty, 0], \]
for some universal constant $C' < \infty$.

If $T_\infty = \infty$, then the Vanishing Theorem~\ref{Thm_vanishing_lemma} yields that $h'_\infty \equiv 0$, in contradiction to (\ref{eq_h_prime_1_1}).

Now suppose that $T_\infty < \infty$.  
We will show that for some constant $C''<\infty$ we have 
\begin{equation}
\label{eqn_linear_growth_near_t_infty}
|h'_{\infty}(x,t)| \leq C''(t+T_\infty)\,.
\end{equation} 
for all $x\in M_\infty$, $t\in(-T_\infty,0]$. 

As $(M_\infty, g_{\infty, 0})$ is isometric to the final time-slice of a $\kappa_1$-solution, and therefore has uniformly bounded curvature, we can find a constant $a_1 > 0$ such that for any $L'$ we have
\[ \rho > a_1 \rho_1 (x_k) \qquad \text{on} \qquad B(x_k, L' \rho_1 (x_k)), \]
as long as $k$ is chosen large enough.
So, by bounded curvature at bounded distance, Lemma~\ref{lem_bounded_curv_bounded_dist}, there is a constant $a_2 > 0$ such that for any $L' < \infty$ we have
\[  \rho > a_2 \rho_1 (x_k) \qquad \text{on} \qquad P(x_k, L' \rho_1 (x_k), - t_k ) \]
for large $k$.
By (\ref{eq_sup_h_prime_k}), (\ref{eq_sup_h_prime_k_initial}) and Proposition~\ref{prop_promote_RdT_forward_locally} we find a sequence $c_k \to 0$ and a constant  $C'' < \infty$ such that for any $L' < \infty$ we have
\begin{equation} \label{eq_h_k_close_to_initial}
 |h'_k|  < C'' \rho_1^{-2} (x_k) \cdot \mathfrak{t} + c_k  \qquad \text{on} \qquad P(x_k, L' \rho_1 (x_k), - t_k )  
\end{equation}
for large $k$.
Passing this bound to the limit implies (\ref{eqn_linear_growth_near_t_infty}).

Since $\sup|h'_\infty|<\infty$ this forces $h'_\infty\equiv 0$, again contradicting (\ref{eq_h_prime_1_1}).

\textit{Case 1b$\,^\prime$: \quad $\liminf_{k \to \infty} \rho_1 (x_k) = 0$, and the injectivity radius satisfies $\liminf_{k\ra\infty} \lb \injrad(g'_k(0),x_k)=0$.}

After passing to a subsequence, we may assume that  $\injrad(g'_k(0), \lb x_k)\ra 0$ as $k\ra \infty$.  
By Lemma~\ref{lem_kappa_solution_properties_appendix} the universal covers of the flows $(M_k,g'_{k,t})$ converge to shrinking round spheres on the time-interval $(-\infty,0]$.  We may now  pull back the tensor fields $h_k$ to the universal covers and reduce to Case~1b.

\textit{Case 2: \quad $\liminf_{k \to \infty} \rho_1^{-2} (x_k) t_k = 0$.}

 In this case, by combining the  curvature bounds from Lem\-ma~\ref{lem_bounded_curv_bounded_dist} with (\ref{eq_sup_h_prime_k}) and (\ref{eq_sup_h_prime_k_initial}), we can apply Proposition~\ref{prop_promote_RdT_forward_locally} to show that there is a sequence $c_k \to 0$ and constants  $C'', L' < \infty$ such that  (\ref{eq_h_k_close_to_initial}) holds for large $k$.
It follows that $\lim_{k \to \infty} |h'_k |(x_k) = 0$, in contradiction to (\ref{eq_h_prime_k_1_x_k}).
\end{proof}

\begin{proof}[Proof of Proposition~\ref{Prop_interior_decay}.]
The bound follows by iterating the bound from Proposition~\ref{Prop_semi_local_max}.

Assume that
\[ E > 2,  \qquad
H \geq \underline{H} (E), \qquad
\eta_{\lin} \leq \ov\eta_{\lin} (E), \qquad
\eps_{\can} \leq \ov\eps_{\can} (E), \]
and set $C = 2C(E)$ and $L = L(E)$ according to Proposition~\ref{Prop_semi_local_max}.
So Proposition~\ref{Prop_interior_decay} holds if $\alpha \geq \frac1{100}$.
Assume now by induction that $\alpha_0 < \frac1{100}$ and that Proposition~\ref{Prop_interior_decay} holds for $\alpha=100 \alpha_0$ under an assumption of the form
\[ A \geq A' := \underline{A} (E, 100 \alpha_0), \qquad \eps_{\can} \leq \ov\eps_{\can} (E, 100 \alpha_0). \]

Consider  the point $x \in \M$.
By Lemma~\ref{lem_containment_parabolic_nbhd} we can find a constant $A'' = A'' (L(E), A' (E, 100 \alpha_0)) < \infty$ such that if
\[ \eps_{\can} \leq \ov\eps_{\can} (L(E), A' (E, 100 \alpha_0)), \]
then the parabolic neighborhood $P (x, A' \rho_1 (x))$ is unscathed and we have
\[ P(y, A' \rho_1 (y)) \subset P(x, A'' \rho_1 (x)) \qquad \text{for all} \qquad y \in P(x, L\rho_1(x)). \]
Also, by bounded curvature at bounded distance, Lemma~\ref{lem_bounded_curv_bounded_dist}, assuming $\eps_{\can} \leq \ov\eps_{\can} (L(E))$, we know that $\rho_1 \geq c \rho_1 (x)$ on $P(x, L \rho_1 (x))$ for some $c = c(L(E)) > 0$.
 
Assume now that $A \geq A''$ and apply Proposition~\ref{Prop_interior_decay} at each $y \in P(x, L\rho_1(x))$ for $\alpha=100\alpha_0$.
Note that in order to do this, we need to ensure that $\M$ is $(\eps_{\can} \rho_1(y), \t (y))$-complete and satisfies the canonical neighborhood assumption at scales $(\eps_{\can} \rho_1 (y), 1)$.
This can always be guaranteed if we assume that $\eps_{\can} \leq c(L(E)) \eps_{\can} (E, 100 \alpha_0)$.
Proposition~\ref{Prop_interior_decay} for $\alpha=100\alpha_0$ gives us 
\[ \sup_{P(x, L \rho_1(x))} Q \leq 100 \alpha_0 \sup_{P(x, A'' \rho_1 (x))} Q + C \sup_{P(x, A'' \rho_1 (x)) \cap \M_0} Q. \]
Applying Proposition~\ref{Prop_semi_local_max} then implies (recall that we have replaced $C$ by $2C$)
\begin{multline*}
 Q(x) \leq \frac{100\alpha_0}{100} \sup_{P(x, L \rho_1 (x))} Q + \bigg( \frac{ C}{100} + \frac12 C \bigg) \sup_{P(x, A'' \rho_1 (x)) \cap \M_0} Q \\
 \leq \alpha_0 \sup_{P(x, A'' \rho_1 (x))} Q + C \sup_{P(x, A'' \rho_1 (x)) \cap \M_0} Q.
\end{multline*}
This finishes the induction.
\end{proof}

\section{Extending maps between Bryant solitons}
\label{sec_extending_map_between_bryant_solitons}
In this section we consider two regions that are close to Bryant solitons, at possibly different scales, and an almost isometry between annular subdomains inside these regions.
We will then prove that the scales of both Bryant soliton regions are almost equal and that the given almost isometry can be extended to an almost isometry, of possibly lesser accuracy, over the entire Bryant soliton regions.
An important aspect of the main result of this section is that the accuracy that is required from the given almost isometry depends only polynomially on the local scale --- or on the distance from the tip.  

Our main result, the Bryant Extension Proposition (Proposition \ref{prop_bryant_comparison_general}), will be needed in the proof of Proposition~\ref{Prop_performing_cap_extensions} in Section~\ref{sec_extend_comparison}.
In this proposition, we extend an almost isometry between two Ricci flow spacetime time-slices over an extension cap. 
By assumption, the accuracy of this  almost isometry improves at a large polynomial rate as we move away from the extension cap.
As long as this polynomial rate is sufficiently large, we can use Proposition~\ref{prop_bryant_comparison_general}  to construct  an extension of the almost isometry over the extension cap whose accuracy still improves at a large polynomial rate.  
This enables us to retain the fine geometric bounds needed to prolong our comparison. 

In this section we will use the notation $(M_{\Bry}, g_{\Bry}, x_{\Bry})$ for the pointed Bryant soliton with $\rho(x_{\Bry}) = 1$; for this and other notation related to the geometry of the Bryant soliton, we refer to  Subsection~\ref{subsec_basics_Bryant}.
We will also frequently use the curvature scale function $\rho : M_{\Bry} \to (0, \infty)$ as introduced in Definition~\ref{def_curvature_scale}.
Recall that $(M_{\Bry}, g_{\Bry})$ is an $O(3)$-invariant gradient steady soliton diffeomorphic to $\R^3$ and $\rho(x) \to \infty$ as $x \to \infty$.

We first present a version of the Bryant comparison result in a form that is most useful for its application in the proof of Proposition~\ref{Prop_performing_cap_extensions}.

\begin{proposition}[Bryant Extension] \label{prop_bryant_comparison_general}
If
\begin{gather*}
 E \geq \underline{E}, \qquad
C > 0, \qquad
\beta > 0, \qquad
D \geq \underline{D} (E, C, \beta), \qquad  \\
0 < b \leq C, \qquad 
0 < \delta \leq \ov\delta (E, C, \beta, D, b), 
\end{gather*}
then the following holds for any $D' > 0$.

Let $g$ and $g'$ be Riemannian metrics on $M_{\Bry} (D)$ and $M_{\Bry} (D')$, respectively, such that for some $\lambda \in [C^{-1}, C]$
\begin{equation} \label{eq_Bryant_closeness_Bryant_comparison}
 \big\Vert g - g_{\Bry} \Vert_{C^{[\delta^{-1}]} (M_{\Bry} (D))} , \quad \big\Vert \lambda^{ -2} g' - g_{\Bry} \Vert_{C^{[\delta^{-1}]} (M_{\Bry} (D'))} < \delta. 
\end{equation}
Consider a diffeomorphism onto its image $\phi : M_{\Bry} ( \frac12 D, D) \lb \to \lb M_{\Bry} (D')$ such that for $h := \phi^* g' - g$ we have for all $m = 0, \ldots, 4$
\[  \rho^E_g |\nabla^m_g h|_{g} \leq  b  \qquad \text{on} \qquad M_{\Bry} (\tfrac12 D, D)\,, \]
where $\rho_g$ denotes the scale function with respect to the metric $g$.
Then there is a diffeomorphism onto its image $\td\phi : M_{\Bry} (D) \to M_{\Bry} (D')$ such that the following holds:
\begin{enumerate}[label=(\alph*)]
\item \label{ass_10.1_a} $\td\phi = \phi$ on $M_{\Bry} (D-1, D)$.
\item \label{ass_10.1_b} For $\td{h} := \td\phi^* g' - g$ we have
\[ \rho^3_g \big| \td{h} \big|_{g} \leq  \beta b \qquad \text{on} \qquad M_{\Bry} ( D). \]
\end{enumerate}
\end{proposition}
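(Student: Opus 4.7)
The plan is to extend $\phi$ over $M_{\Bry}(D/2)$ by interpolating it with a model ``approximate homothety'' $\psi_0:M_{\Bry}(D)\to M_{\Bry}(D')$ constructed from the rotational symmetry of the Bryant soliton. First I would construct $\psi_0$ as follows. Since both $(M_{\Bry}(D), g)$ and $(M_{\Bry}(D'), \lambda^{-2} g')$ are $\delta$-close in $C^{[\delta^{-1}]}$ to the same pointed model $(M_{\Bry}, g_{\Bry}, x_{\Bry})$ by (\ref{eq_Bryant_closeness_Bryant_comparison}), they come with approximate ``radial coordinate'' structures. Composing the resulting near-identifications with the identity on the model yields a smooth map onto its image $\psi_0: M_{\Bry}(D) \to M_{\Bry}(D')$ which pulls back $g'$ to approximately $\lambda^2 g_{\Bry}$, hence to approximately $\lambda^2 g$. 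The map $\psi_0$ is unique up to post-composition with approximate rotations around the respective tips.

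The crux of the argument is to show that (i) $\lambda$ is extremely close to $1$, namely $|\lambda - 1| \leq \beta' b D^{-\alpha(E)}$ for some $\alpha(E)\to\infty$ as $E\to\infty$, and (ii) the rotation freedom in $\psi_0$ can be chosen so that $\psi_0$ approximates $\phi$ to comparable accuracy on the transition annulus $V := M_{\Bry}(D-2, D-1)$. Both (i) and (ii) would be extracted from the hypothesis $\rho_g^E|\nabla^m h|_g \leq b$ combined with the precise asymptotic geometry of the Bryant soliton (Subsection~\ref{subsec_basics_Bryant} and Appendix~\ref{appx_Bryant_properties}), in particular the fact that $w(r)\sim \sqrt{r}$ and that the subleading deviations from cylindrical geometry decay at a definite polynomial rate. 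Concretely, $\phi$ must send cross-sectional $2$-spheres of $(M_{\Bry}(D),g)$ to approximate cross-sectional $2$-spheres of $(M_{\Bry}(D'),g')$ with near-preservation of their diameters (which equal $\approx \pi w(r)$ and $\approx \pi\lambda w(r')$, respectively). The almost-preservation of diameters alone only yields a \emph{rough} comparison of $\lambda$; the improvement to an estimate that gains a polynomial factor in $D^{-1}$ comes from comparing deviation quantities built from $R$, $|\nabla R|$ or $w(r) - \sqrt{r}$ across the annulus, which are resolved by $\phi$ as soon as $E$ exceeds the polynomial rate of cylindrical decay.

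Once $\psi_0$ and the rotation have been fixed, I would define $\td\phi$ by classical cutoff interpolation between $\phi$ and $\psi_0$ in the transition annulus. Explicitly, fix a smooth cutoff $\eta:M_{\Bry}(D)\to [0,1]$ supported in $M_{\Bry}(D/2,D-1)$ and equal to $1$ near $M_{\Bry}(D-1,D)$, and interpolate via $\eta$ using the $g'$-exponential map based at $\psi_0(x)$. On $M_{\Bry}(D-1,D)$ this equals $\phi$, giving (a). For (b) I would split into three regions. On $M_{\Bry}(D-1,D)$ we have $\td h = h$, and since $\rho_g \gtrsim \sqrt{D}$ there, the hypothesis gives $\rho_g^3|\td h| \lesssim \rho_g^{-(E-3)} b \lesssim D^{-(E-3)/2} b \leq \beta b$ for $D$ large. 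On $M_{\Bry}(D/2)$, where $\td\phi = \psi_0$, the bound reduces to an estimate on $|\psi_0^* g' - g|$, which follows from (\ref{eq_Bryant_closeness_Bryant_comparison}) together with the sharp bound on $|\lambda-1|$ from (i). In the transition zone the cutoff introduces schematic errors of the form $|\td h| \lesssim |h| + |\psi_0^* g' - g| + \text{(cross terms)}$; since the cutoff derivative is bounded by an $E$-independent constant while both error tensors decay like $\rho_g^{-E}$, these are easily absorbed into $\beta b \rho_g^{-3}$.

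The main obstacle is step (i). This is precisely the delicate mechanism described in Subsection~\ref{subsec_overview_bryant_extension_principle}: $\phi$ is the \emph{only} available tool for comparing the scales of the two tips, but the quantities it compares (sphere diameters, say) vary slowly with $r$, so one gets no useful bound on $\lambda$ from the $C^0$-information alone. The improvement comes from exploiting the fine Bryant asymptotics of Lemma~\ref{lem_bryant_geometry} to extract geometric invariants whose deviation from the cylindrical limit decays as a definite negative power of $r$, and then using the fact that $\phi$ resolves these deviations provided the weight exponent $E$ is large enough. This is also why the output exponent in (b) can only be guaranteed to be $3$: after paying for the scale-matching one is left with slack $E-3$ in the weight, and $E$ must be chosen large enough that both the matching and the cutoff interpolation succeed simultaneously.
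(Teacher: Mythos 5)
Your overall architecture — reduce to a comparison with an approximate model homothety $\psi_0$, prove the sharp estimate $|\lambda - 1| \lesssim b D^{-\alpha(E)}$ via the Bryant asymptotics, fix the rotational freedom so $\psi_0$ tracks $\phi$ on a transition annulus, and then interpolate with a cutoff — is exactly the structure of the paper's proof. You also correctly identify that everything hinges on step (i), the scale detection; and your region-by-region verification of assertion (b) is essentially the paper's.

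However, there is a genuine gap in step (i), and the gap is precisely where you acknowledge it: you describe "comparing deviation quantities built from $R$, $|\nabla R|$ or $w(r) - \sqrt{r}$ across the annulus" but do not supply the mechanism that turns this into a pointwise scale estimate. The difficulty is that $\phi$ may shift which cross-sectional sphere maps to which (i.e.\ shift $\sigma$), so comparing any individual $\sigma$-dependent quantity at a point and its image leaves $\lambda$ undetermined up to this shift. The paper's essential move, which your sketch lacks, is to use the steady gradient soliton identities (Lemma~\ref{lem_bryant_geometry}): from $dR = 2\Ric(\nabla f,\cdot)$ and the fact that $\Ric$ is strictly positive with inverse bounded by $O(\sigma^2)$ on the annulus, one can solve for $\nabla f = \frac12\Ric^{-1}(dR)$ and then invoke the \emph{conserved} quantity $R + |\nabla f|^2 \equiv R(x_{\Bry}) = \lambda^{-2} R_{g_{\Bry}}(x_{\Bry})$. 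Because this combination is a $\sigma$-independent constant on each soliton, the $\sigma$-shift ambiguity disappears entirely, and $\phi$ approximately preserves it as soon as it approximately preserves $\Rm$, $\nabla\Rm$ (which it does at rate $\rho^{-E}$). This is what produces $|\lambda - 1| \lesssim b D^{-E+4}$ (Lemma~\ref{lem_scale_detection_Bry}). The rotation-matching step (ii) is similarly left unsupplied; the paper does it by first proving that $\phi$ nearly preserves the radial distance $\sigma$ (Lemma~\ref{lem_phi_nearly_preserves_si}, again via the soliton identities), then passing to a Euclidean annulus chart where $d^2\phi$ can be shown small, and freezing a nearby $O(3)$-map. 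Without these two concrete inputs, the remainder of your argument — which is otherwise sound — has nothing to glue to.

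A secondary structural remark: the paper first reduces the general statement to a ``model'' case (Proposition~\ref{prop_bryant_comparison}) in which $g = g_{\Bry}$ and $g' = \lambda^2 g_{\Bry}$ exactly, absorbing the $\delta$-errors from (\ref{eq_Bryant_closeness_Bryant_comparison}) in one preliminary step. Your sketch works directly with the almost-Bryant metrics. This is viable but forces you to carry the $\delta$-errors through every estimate, which is an avoidable complication.
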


We remark that there are several ways in which one could strengthen or sharpen this proposition.  
We chose the statement above, because it is adequate for our purposes and keeps the  complications in the proof to a minimum.
For example, the constant $\ul E$ in this proposition could be taken to be equal to $100$, or even smaller.
Also, the choice of the exponent $3$ in assertion \ref{ass_10.1_b} is arbitrary.  This exponent is needed in the proof of Proposition~\ref{Prop_performing_cap_extensions}, but it could be replaced by any other number, assuming that $E$ is chosen sufficiently large.

The Bryant Extension Proposition~\ref{prop_bryant_comparison_general} is a consequence of the following simpler result, on which we will focus for the larger part of this section.
A proof that Proposition~\ref{prop_bryant_comparison} implies Proposition~\ref{prop_bryant_comparison_general} is provided at the end of this section.

\begin{proposition}[Bryant Extension, simple form]
\label{prop_bryant_comparison}
There is a constant $C < \infty$ such that if
\[ 0 < \alpha < 1, \qquad E \geq \underline{E}, \qquad D \geq \underline{D} (\alpha), \]
then the following holds.
Assume that: 
\begin{enumerate}[label=(\roman*)]
\item $g_1 = g_{\Bry}$ and $g_2 = \lambda_2^2 g_{\Bry}$ is a rescaled Bryant soliton metric.
\item $\lambda_2 \in [\alpha, \alpha^{-1}]$.
\item $\phi : M_{\Bry} (\frac12 D , D) \to M_{\Bry}$ is a diffeomorphism onto its image.
\item For $h=\phi^*g_2-g_1$ and for some $b \leq \alpha^{-1}$ we have for all $m = 0, \ldots, 4$
\begin{equation*}
\label{eqn_nabla_h_bound}
|\nabla^m_{g_1} h|_{g_1} \leq  b D^{-E} \qquad \text{on} \qquad M_{\Bry} (\tfrac12 D, D).
\end{equation*}
\end{enumerate}
Then there a diffeomorphism onto its image $\td{\phi}: M_{\Bry} ( D) \ra M_{\Bry}$ such that: 
\ben[label=(\alph*)]
\item $\td{\phi}=\phi$ on $M_{\Bry} (D-1, D)$.
\item For $\td{h} := \td{\phi}^*g_2-g_1$ we have 
\begin{equation*}
\label{eqn_e'_bound}
 |h|_{g_1} \leq  b \cdot C \alpha^{-C} D^{-E+C} \qquad \text{on} \qquad M_{\Bry} (D).
\end{equation*}
\een
\end{proposition}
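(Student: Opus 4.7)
The plan is to prove Proposition~\ref{prop_bryant_comparison} in three phases: first force the rescaling factor $\lambda_2$ to be nearly $1$, then construct a candidate global extension $\psi_0$ using the symmetries of the Bryant soliton, and finally glue $\psi_0$ to $\phi$ over an intermediate annulus. The essential input is that nontrivial geometric quantities on the Bryant soliton --- such as $R(\sigma)$, $w(\sigma)$, and dimensionless invariants like $F(\sigma) := w^2(\sigma)R(\sigma)$ --- admit asymptotic expansions of the form $c_0 + c_1\sigma^{-1} + O(\sigma^{-2})$ as $\sigma \to \infty$, where the first correction $c_1 \neq 0$ encodes the precise departure from cylindrical geometry. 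The freedom to take $E$ large is what allows us to beat these fixed polynomial rates.

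In Phase~1, I would exploit two independent isometric invariants, the scalar curvature $R$ and a dimensionless ratio such as $F(\sigma) = w^2(\sigma) R(\sigma)$. Since $\phi$ is almost isometric between $g_1$ and $g_2 = \lambda_2^2 g_{\Bry}$ with defect $\lesssim bD^{-E}$, and $F$ is scale-invariant, we obtain $F(\sigma(\phi(x))) - F(\sigma(x)) = O(bD^{-E})$. Dividing by $F'(\sigma) \sim -c_1\sigma^{-2}$ on the annulus pins down $\sigma(\phi(x)) - \sigma(x) = O(bD^{-E+2})$. Substituting this into the curvature identity $\lambda_2^{-2}R(\sigma(\phi(x))) = R(\sigma(x)) + O(bD^{-E})$ and expanding $R(\sigma) = 3/\rho^2(\sigma)$ in its asymptotic series yields $|\lambda_2 - 1| \leq C\alpha^{-C}bD^{-E+C_0}$ for some universal $C_0$, provided $E \geq \underline{E}$ is large enough for these expansions to dominate the error.

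In Phase~2, I would take $\psi_0 : M_{\Bry} \to M_{\Bry}$ to be an isometry of $g_{\Bry}$, namely a suitable rotation $R \in O(3)$ fixing the tip. Because $R$ preserves $g_{\Bry}$, we have $\psi_0^*g_2 - g_1 = (\lambda_2^2 - 1)g_{\Bry}$, so the Phase~1 estimate gives $|\psi_0^*g_2 - g_1|_{g_1} \leq C\alpha^{-C}bD^{-E+C_0}$ on all of $M_{\Bry}(D)$. The rotation $R$ is selected by matching the induced angular self-map of $\phi$ on a representative cross-sectional sphere at radius $\approx D$; the same cantilever mechanism --- using the polynomial deviation from cylindrical geometry to detect angular discrepancies to within an error $\lesssim bD^{-E+C_0}$ --- forces $\psi_0$ to agree with $\phi$ on $M_{\Bry}(\tfrac12 D, D)$ in the $C^1$ sense with error of the same polynomial order, using also the higher derivative bounds $|\nabla^m h| \leq bD^{-E}$ for $m \leq 4$.

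In Phase~3, I would produce $\td\phi$ by interpolating $\psi_0$ and $\phi$ via a cutoff $\chi : M_{\Bry}(D) \to [0,1]$ with $\chi \equiv 0$ on $M_{\Bry}(\tfrac34 D)$ and $\chi \equiv 1$ on $M_{\Bry}(D-1, D)$, for instance via the exponential interpolation $\td\phi(x) = \exp_{\psi_0(x)}^{g_2}(\chi(x)\exp_{\psi_0(x)}^{g_2,-1}\phi(x))$. Since $\chi$ has $O(1)$ derivatives and $\phi$, $\psi_0$ are $C^1$-close on the overlap by Phase~2, the resulting pullback error $\td\phi^*g_2 - g_1$ inherits the bound $|\td h|_{g_1} \leq Cb\alpha^{-C}D^{-E+C_0}$ throughout $M_{\Bry}(D)$, and $\td\phi = \phi$ on $M_{\Bry}(D-1, D)$ by construction.

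The main obstacle is Phase~1, where we must determine $\lambda_2$ with precision $O(bD^{-E+C})$ rather than the naive $O(b)$ that a direct comparison of diameters would yield. This is the ``cantilever'' mechanism emphasized in Subsection~\ref{subsec_overview_bryant_extension_principle}: the only way to detect the scale of a Bryant tip from data supported on an annulus far from the tip is to resolve the polynomial deviation from cylindricality, and this is possible only because the defect of $\phi$ from being an isometry is made much smaller than this deviation by taking $E$ large. Careful bookkeeping of the leading cylindrical corrections, and a precise choice of two independent invariants whose system uniquely determines $\lambda_2$, will occupy the bulk of the proof; the subsequent gluing phase is comparatively routine.
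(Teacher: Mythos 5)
Your three-phase architecture matches the paper's proof at the level of structure: determine the scale factor $\lambda_2$ (Lemma~\ref{lem_scale_detection_Bry}), show $\phi$ nearly preserves the radial coordinate (Lemma~\ref{lem_phi_nearly_preserves_si}), build a Euclidean/rotational model map and glue (Lemmas~\ref{lem_g4_control}, \ref{lem_phi4}). Your Phases~2 and~3 are essentially the same idea as the paper's, which rewrites the annulus in a conformally-flattened warped-product metric $g_4$ so the map becomes approximately a Euclidean isometry, then glues with a partition of unity; your exponential-map interpolation is an acceptable variant, but you would need the same $C^1$-closeness estimate $|d\phi - d\psi_0| \lesssim b\alpha^{-C}D^{-E+C}$ on the overlap, which is precisely what the paper extracts by integrating the second-derivative bound $|d^2\phi| \lesssim b\alpha^{-C}D^{-E+C}$.

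However, your Phase~1 has a genuine gap. You propose to pin down $\sigma\circ\phi - \sigma$ first, using a dimensionless combination such as $F(\sigma) = w^2(\sigma)R(\sigma)$, and then extract $\lambda_2$ from $R$. This requires two things that you do not establish: (i) that your chosen $F$ is actually a pointwise curvature invariant (a function of $\Rm$ and finitely many covariant derivatives), so that almost-isometry implies $F(\sigma\circ\phi) - F(\sigma) = O(bD^{-E})$ --- note that $w$ itself is not directly a pointwise invariant, it is a global structure constant of the warped product, and expressing $w^2$ via $\Rm, \nabla\Rm, \ldots$ is not immediate from $K_R, K_O$ alone; and (ii) that the first asymptotic correction $c_1$ in $F(\sigma) = c_0 + c_1\sigma^{-1} + O(\sigma^{-2})$ is nonzero, with a quantitative lower bound on $|F'(\sigma)| \gtrsim \sigma^{-2}$. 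The facts collected in Lemma~\ref{lem_properties_Bryant} do not give you (ii) for $w^2R$; they control $R$, $-\partial_\sigma R$, $|\nabla f|$, and $w$ individually, but not the second-order cancellation structure in a product like $w^2 R$. The paper sidesteps both issues entirely by using the exact steady-soliton conserved quantity $R + |\nabla f|^2 \equiv \lambda^{-2}R_{g_{\Bry}}(x_{\Bry})$ together with $\nabla f = 2\Ric^{-1}(\nabla R)$ (so that the left-hand side is manifestly a pointwise curvature functional, with the only delicate point being $|\Ric^{-1}| \lesssim D^2$ from \eqref{eq_Ric_Bry_lower_bound}). This gives $\lambda_2$ \emph{before} any radial matching and with no appeal to second-order asymptotics. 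Your outline would have to either import additional nontrivial asymptotic expansions of the Bryant soliton, or adopt the soliton-identity route; as written it does not close.
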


The strategy of the proof is as follows.
We first show that $\phi$ almost preserves the curvature operator and its first covariant derivative, up to an error that decays polynomially in $D$.
As the scale of a Bryant soliton can be expressed in terms of the curvature and its derivative, this will imply that the scale $\lambda_2$ of $g_2$ is close to the scale $1$ of $g_1$, up to an error that decays polynomially in $D$.
Similarly, we can argue that $\phi$ preserves the distance function to the tip $x_{\Bry}$ up to a polynomially decaying error.
Using this extra information, we can in turn argue that $\phi$ is sufficiently close to an isometric rotation of $(M_{\Bry}, g_{\Bry})$ around the tip $x_{\Bry}$, again up to an error that decays polynomially in $D$.
By an interpolation argument, we eventually extend $\phi$ to a map on $M_{\Bry} (D)$ that is equal to this isometric rotation sufficiently far away from the boundary.

The proof will use some standard properties geometric properties of the Bryant soliton, which are reviewed in appendix~\ref{appx_Bryant_properties}.
Recall that $x_{\Bry} \in M_{\Bry}$ denotes the tip, i.e. the center of rotational symmetry, of $M_{\Bry}$.
In the following we furthermore denote by $\sigma := d_{g_{\Bry}} ( \cdot, x_{\Bry})$ the distance function from the tip.

The remainder of this section will be devoted to the proof of Proposition \ref{prop_bryant_comparison}.
Until the end of the section we will let $g_1 = g_{\Bry}$, $g_2$, $\la_2$, etc, be as in the statement of this  proposition.   
Let $g_3=\phi^*g_2$.  
We begin with some estimates on the difference between geometric quantities for $g_1$ and $g_3$.

We will use the convention that $1<C<\infty$ denotes a generic universal constant, which may change from line to line.

\begin{lemma} \label{lem_difference_estimate}
If
\[ E \geq \underline{E}, \qquad D \geq \underline{D} (\alpha), \]
then the following holds.

Let $\cald=\nabla_{g_3}-\nabla_{g_1}$ be the difference tensor for the Levi-Civita connections of $g_3$, $g_1$, respectively.  
Then we have
 $$|T|_{g_1}\leq b \cdot C D^{-E} \qquad \text{on} \qquad  M_{\Bry} (\tfrac12 D, D), $$
where $T$ is any tensor field from the following list:
\begin{align*}
&\{\nabla_{g_1}^{k}(g_3 - g_1)\}_{0\leq k\leq 4 },\;\; \{\nabla_{g_1}^{k}\cald\}_{0\leq k\leq 3 }\\&\{ \nabla_{g_1}^k(\Rm_{g_3}-\Rm_{g_1}), \;\;
 \nabla_{g_1}^k(\Ric_{g_3}-\Ric_{g_1}),\;\; \nabla_{g_1}^k(R_{g_3}-R_{g_1})\}_{0\leq k\leq 2}\,.
\end{align*}
The bound also holds if we view $\Ric_{g_i}$, $i = 1, 3$, as a $(1,1)$-tensor.

\end{lemma}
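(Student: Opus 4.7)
My plan is to bootstrap through three tiers of estimates: first $g_3-g_1$, then the difference tensor $\cald=\nabla_{g_3}-\nabla_{g_1}$, and finally the curvature differences. The first tier is immediate: since $g_3-g_1=\phi^*g_2-g_1=h$, the bounds on $\nabla_{g_1}^k(g_3-g_1)$ for $0\leq k\leq 4$ are precisely hypothesis (iv) of Proposition~\ref{prop_bryant_comparison}. Before starting the remaining tiers, I would record that for $D\geq\underline{D}(\alpha,E)$ we have $|h|_{g_1}\leq bD^{-E}\leq \alpha^{-1}D^{-E}\leq\tfrac12$, so $g_3$ and $g_1$ are uniformly equivalent on the annulus. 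Differentiating the identity $g_3\cdot g_3^{-1}=\id$ inductively then yields universal pointwise bounds on $|\nabla_{g_1}^k g_3^{-1}|_{g_1}$ for $k\leq 4$, via a polynomial expression in $g_3^{-1}$ and $\nabla^{\leq k}_{g_1}h$.

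For the second tier, I would use the standard identity $\cald^k_{ij}=\tfrac12 g_3^{kl}(\nabla_i h_{jl}+\nabla_j h_{il}-\nabla_l h_{ij})$, which expresses $\cald$ as a universal polynomial in $g_3^{-1}$ and $\nabla_{g_1}h$. Taking $\nabla_{g_1}^k$ for $k\leq 3$ yields a universal polynomial in $\nabla_{g_1}^j g_3^{-1}$ and $\nabla_{g_1}^{j+1}h$ with $j\leq k$. Combining the first-tier bounds with the bookkeeping on $g_3^{-1}$ gives $|\nabla_{g_1}^k\cald|_{g_1}\leq b\cdot CD^{-E}$ for $0\leq k\leq 3$.

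For the third tier, I would appeal to the classical formula
$$R_{g_3}(X,Y)Z-R_{g_1}(X,Y)Z=(\nabla_X\cald)(Y,Z)-(\nabla_Y\cald)(X,Z)+\cald(X,\cald(Y,Z))-\cald(Y,\cald(X,Z)),$$
i.e.\ schematically $\Rm_{g_3}-\Rm_{g_1}=\nabla\cald+\cald*\cald$. Differentiating $k$ times with $\nabla_{g_1}$ produces a sum of terms $\nabla^{k+1}\cald$ and $\nabla^i\cald*\nabla^j\cald$ with $i+j\leq k$, all controlled by the second-tier estimates provided $k\leq 2$ (so $k+1\leq 3$); this yields the bound on $\nabla_{g_1}^k(\Rm_{g_3}-\Rm_{g_1})$. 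For the Ricci and scalar curvature differences I would write schematically
$$\Ric_{g_3}-\Ric_{g_1}=g_3^{-1}*(\Rm_{g_3}-\Rm_{g_1})+(g_3^{-1}-g_1^{-1})*\Rm_{g_1},$$
together with its covariant derivatives, then use that $|\nabla_{g_1}^j\Rm_{g_1}|_{g_1}$ is uniformly bounded on $M_{\Bry}(\tfrac12 D,D)$ (in fact decaying in $D$, by the asymptotics of the Bryant soliton recalled in Subsection~\ref{subsec_basics_Bryant}) and $|\nabla_{g_1}^j(g_3^{-1}-g_1^{-1})|_{g_1}\leq C|\nabla^{\leq j}_{g_1}h|_{g_1}\leq b\cdot CD^{-E}$ for $j\leq 4$. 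Taking traces gives the bound on $\nabla^k(R_{g_3}-R_{g_1})$; raising an index via $g^{-1}$ gives the $(1,1)$-version of Ricci.

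The argument is pure tensor algebra, and the only subtle point is bookkeeping: the restriction $k\leq 2$ for curvature differences is forced exactly because $\nabla^{k+1}\cald$ consumes $\nabla^{k+2}h$, and the hypothesis supplies bounds only up to $\nabla^{4}h$. I anticipate no genuine obstacle; this lemma is the standard tensor-calculus input behind the Ricci-DeTurck formalism, and the role of the small factor $bD^{-E}$ is simply that every term in every identity is linear in some derivative of $h$, with bounded coefficients coming from $g_3^{-1}$ and the uniformly controlled Bryant background.
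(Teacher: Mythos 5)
Your proposal is correct and is essentially the explicit, tier-by-tier unpacking of the paper's argument: the paper compresses all the tensor identities you write out (difference tensor, curvature difference formula, traces) into the single observation that $T$ is a smooth pointwise function $F$ of $(h,\ldots,\nabla_{g_1}^4 h,\Rm_{g_1},\ldots,\nabla_{g_1}^2\Rm_{g_1})$ with $F=0$ when the $h$-arguments vanish, then invokes the uniform background curvature bounds of Lemma~\ref{lem_properties_Bryant} to conclude. Both proofs rest on exactly the same two inputs (hypothesis (iv) of Proposition~\ref{prop_bryant_comparison} and the bounds \eqref{eq_higher_der_bound_Bry}), and your derivative-count bookkeeping explaining why curvature differences stop at $k\leq 2$ is precisely the content the paper leaves implicit.
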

\begin{proof} 
Consider a point $x \in M_{\Bry} (\frac12 D, D)$ and identify $T_x M_{\Bry}$ with $\R^3$ such that $g_{1,x}$ corresponds to the Euclidean inner product.
The tensors $h_x, \nabla_{g_1} h_x, \ldots, \nabla^4_{g_1} h_x$ and $\Rm_{g_1, x}, \ldots, \nabla^2_{g_1} \Rm_{g_1,x}$  and $T_x$ can be viewed as tensors on $\R^3$.
As $T$ can be written in the form of an algebraic expression involving the tensors $g_1, \lb (g_1 + h)^{-1}, \lb h, \lb \ldots, \lb \nabla^4_{g_1} h, \lb \Rm_{g_1}, \lb \ldots, \lb \nabla^2_{g_1} \Rm_{g_1}$, there is a smooth tensor-valued function $F$ such that
\[ T_x = F(h_x, \ldots, \nabla^4_{g_1} h_x, \Rm_{g_1,x}, \ldots, \nabla^2_{g_1} \Rm_{g_1,x}). \]
Note that
\[ F(0, \ldots, 0, \Rm_{g_1, x}, \ldots, \nabla^2_{g_1} \Rm_{g_1,x}) = 0. \]
So by (\ref{eq_higher_der_bound_Bry}) we have
\begin{equation*}
 | T_x |_{g_1} \leq C \big| ( h_x, \ldots, \nabla^4_{g_1} h_x ) \big|_{g_1} 
\leq C \big( |h_x|_{g_1} + \ldots + |\nabla^4_{g_1} h_x |_{g_1} \big) \leq  CbD^{-E}, 
\end{equation*}
as long as $E \geq \underline{E}$ and $D \geq \underline{D}(\alpha)$.
\end{proof}

We now prove that the scales of $g_1$ and $g_2$ are close, up to an error that decays polynomially in $D$.

\begin{lemma}[Scale detection] \label{lem_scale_detection_Bry}
If
\[ E \geq \underline{E}, \qquad D \geq \underline{D} (\alpha), \]
then we have
\[ |\lambda_2 - 1| \leq b \cdot C\al^{-C} D^{-E+4}. \]
\end{lemma}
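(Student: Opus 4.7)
The plan is to detect $\lambda_2$ via two scale-dependent quantities evaluated at the paired points $x$ and $\phi(x)$: the dimensionful scale function $\rho$, and the dimensionless curvature invariant
\[ \Phi(g)(x) := \frac{|\nabla_g R_g|^2_g(x)}{R_g(x)^3}, \]
which is scale-invariant (i.e.\ $\Phi(\mu^2 g) = \Phi(g)$ for any constant $\mu>0$) and natural under pullback by diffeomorphisms. Setting $g_3 := \phi^* g_2 = \lambda_2^2\phi^*g_{\Bry}$, the defining identities give, at each $x \in M_{\Bry}(\tfrac12 D, D)$,
\[ \rho_{g_3}(x) = \lambda_2\rho_{g_{\Bry}}(\phi(x)), \qquad \Phi(g_3)(x) = \Phi(g_{\Bry})(\phi(x)). \]
The left-hand sides will be forced to be close to $\rho_{g_1}(x) = \rho_{g_{\Bry}}(x)$ and $\Phi(g_1)(x) = \Phi(g_{\Bry})(x)$, respectively, by the closeness of $g_3$ to $g_1$.

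Quantitatively, Lemma~\ref{lem_difference_estimate} applied to $g_3-g_1$, $R_{g_3}-R_{g_1}$, and $\nabla_{g_1}(R_{g_3}-R_{g_1})$ bounds their $g_1$-norms by $bCD^{-E}$. Using the asymptotic estimates $R_{g_1} \sim 3/\sigma$ and $|\nabla_{g_1} R_{g_1}|_{g_1} \sim 3/\sigma^2$ on $M_{\Bry}(\tfrac12 D, D)$ (of orders $D^{-1}$ and $D^{-2}$), a direct expansion of the ratio $|\nabla R|^2/R^3$, tracking separately the changes in numerator and denominator, will give
\[ |R_{g_3}(x) - R_{g_1}(x)| \leq bCD^{-E}, \qquad |\Phi(g_3)(x) - \Phi(g_1)(x)| \leq bCD^{-E+1}. \]
The first bound together with $\rho = \sqrt{3/R}$ translates into
\[ |\lambda_2\rho_{g_{\Bry}}(\phi(x)) - \rho_{g_{\Bry}}(x)| \leq bCD^{-E+3/2}, \]
and the second yields, with $\Psi(\sigma) := \Phi(g_{\Bry})$ restricted to the geodesic sphere of radius $\sigma$ around $x_{\Bry}$,
\[ |\Psi(\sigma(\phi(x))) - \Psi(\sigma(x))| \leq bCD^{-E+1}. \]

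Next I will invert this $\Psi$-comparison via quantitative asymptotic monotonicity of $\Psi$. The warped-product structure $g_{\Bry} = d\sigma^2 + w^2(\sigma)g_{S^2}$ with $w(\sigma)\sim\sqrt{\sigma}$ (Subsection~\ref{subsec_basics_Bryant}) gives $\Psi(\sigma) = \tfrac{1}{3\sigma}(1+O(1/\sigma))$ and $|\Psi'(\sigma)| \geq c/\sigma^2$ for $\sigma$ sufficiently large. The first $\rho$-estimate also implies $\rho_{g_{\Bry}}(\phi(x)) \in [c\alpha\sqrt D, C\alpha^{-1}\sqrt D]$, so for $D \geq \underline D(\alpha)$ both $\sigma(x) \in [\tfrac12 D, D]$ and $\sigma(\phi(x)) \in [c\alpha^2 D, C\alpha^{-2}D]$ lie in the asymptotic region where $|\Psi'| \geq c\alpha^C/D^2$. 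Inverting gives $|\sigma(\phi(x)) - \sigma(x)| \leq bC\alpha^{-C}D^{-E+3}$, and then $\rho_{g_{\Bry}}(\sigma) = \sqrt{\sigma}(1+O(1/\sigma))$ yields
\[ |\rho_{g_{\Bry}}(\phi(x)) - \rho_{g_{\Bry}}(x)| \leq bC\alpha^{-C}D^{-E+5/2}. \]
Combining this with the first $\rho$-estimate produces $|(\lambda_2-1)\rho_{g_{\Bry}}(\phi(x))| \leq bC\alpha^{-C}D^{-E+5/2}$, and dividing by $\rho_{g_{\Bry}}(\phi(x)) \geq c\alpha\sqrt D$ gives $|\lambda_2-1| \leq bC\alpha^{-C}D^{-E+2}$, which is stronger than the claimed $bC\alpha^{-C}D^{-E+4}$.

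The main technical point is extracting the quantitative monotonicity and asymptotic behavior of $\Psi$ near infinity from the Bryant soliton data collected in Subsection~\ref{subsec_basics_Bryant} and the appendix; this automatically ensures, via the first $\rho$-estimate, that $\phi(x)$ lies in the asymptotic regime for $D\geq\underline D(\alpha)$, so that $\Psi$ is strictly monotone there. Everything else is routine bookkeeping of exponents, with the polynomial losses $D^{3/2}$, $D^{5/2}$, $D^3$ arising because $R_{g_1}$, $|\nabla R_{g_1}|_{g_1}$, and $|\Psi'|$ are only of sizes $D^{-1}$, $D^{-2}$, $D^{-2}$ on $M_{\Bry}(\tfrac12 D, D)$.
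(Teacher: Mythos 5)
Your approach is a genuinely different route from the paper's, but it has a gap at the inversion step, which is exactly where you flag the ``main technical point.'' You propose to detect the radial coordinate $\sigma(\phi(x))$ through the scale-invariant ratio $\Psi(\sigma)=\Phi(g_{\Bry})=|\nabla R|^2/R^3$, and this requires a lower bound $|\Psi'(\sigma)|\geq c\,\sigma^{-2}$ on the relevant range. Writing $\Psi'=\bigl[\,2(\partial_\sigma R)(\partial_\sigma^2 R)-3(\partial_\sigma R)^3/R\,\bigr]/R^3$, the two terms in the bracket are both of size $\sim\sigma^{-5}$, so the claim $|\Psi'|\gtrsim\sigma^{-2}$ is a statement about the \emph{difference} of their leading coefficients, i.e.\ about the subleading asymptotics of $R$ and $\partial_\sigma^2 R$. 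The Bryant soliton data the paper actually collects (Lemma~\ref{lem_properties_Bryant}) gives only the leading order of $R$ and a \emph{lower} bound on $-\partial_\sigma R$ of order $\sigma^{-2}$, plus crude constant bounds $|\nabla R|,|\nabla^2\Rm|,|\nabla^3\Rm|<C_B$. Nothing there pins down $\partial_\sigma^2 R$ to $2c_1\sigma^{-3}(1+O(\sigma^{-1}))$, and without that you cannot rule out cancellation in $\Psi'$ — the quantitative monotonicity you need is simply not available from the stated inputs, and would have to be extracted afresh from the Bryant ODE.

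The paper sidesteps this entirely by using the soliton conservation law $R_{g_i}+|\nabla f|^2_{g_i}\equiv\lambda_i^{-2}R_{g_{\Bry}}(x_{\Bry})$, which is a genuine constant that directly encodes the scale, so no monotone profile needs to be inverted. The only ``inversion'' in the paper's argument is solving $dR=2\Ric(\nabla f,\cdot)$ for $\nabla f$ via $\Ric^{-1}$, and that only requires the Ricci lower bound $\Ric>C_B^{-1}\sigma^{-2}g_{\Bry}$, which \emph{is} supplied by Lemma~\ref{lem_properties_Bryant}; that $\Ric^{-1}$ is of size $D^2$ and appears twice is what produces the paper's loss of $D^4$ rather than your hoped-for $D^2$. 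If you wanted to salvage your approach, you would need to add a sharpened version of the asymptotics in Lemma~\ref{lem_properties_Bryant} (decaying bounds $|\partial_\sigma^2 R|\lesssim\sigma^{-3}$ together with the leading coefficient), or equivalently to deduce the radial profile of $\Psi$ from the soliton ODE; as written, the lemma's uniform $C_B$-bounds are not enough.
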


\begin{proof}
Set $\lambda_1 := 1$.
Then $g_i = \lambda_i^2 g_{\Bry}$ for $i = 1,2$ and by rescaling (\ref{eqn_soliton_conserved}), (\ref{eqn_soliton_dr}) by $\lambda_i$ we obtain that for $i = 1,2$
\begin{equation} \label{eq_rescaled_soliton_eqs}
R_{g_i}+|\nabla_{g_i} f|_{g_i}^2\equiv  R_{g_i} (x_{\Bry}) = \la_i^{-2} R_{g_{\Bry}}(x_{\Bry}),\quad dR_{g_i}=2\Ric_{g_i}(\nabla _{g_i}f,\cdot).
\end{equation}
In the following, we will express these equations in terms of the metrics $g_1$, $g_2$, by combining the difference estimates from the previous lemma with some estimates on the geometry of the normalized Bryant soliton from Lemma~\ref{lem_properties_Bryant}.
It will then follow that $\lambda_1$ and $\lambda_2$ are close.

In the following, we will work on the annulus $M_{\Bry} (\frac12 D, D)$ and assume that $D > 2C_B$, where $C_B$ is the constant from Lemma~\ref{lem_properties_Bryant}.
Therefore $\sigma > \frac12 D > C_B$ on $M_{\Bry} (\frac12 D, D)$ and thus the bounds of Lemma~\ref{lem_properties_Bryant} apply for $g_1$.
We may also assume that $E \geq \underline{E}$ and $D \geq \underline{D} (\alpha)$ have  been chosen large enough so that $g_1$ and $g_3$ are $2$-bilipschitz on $M_{\Bry} (\frac12 D, D)$.

From (\ref{eq_Ric_Bry_lower_bound}) in Lemma \ref{lem_bryant_geometry} we obtain the following bound for the Ricci tensor, viewed as a quadratic form on $T^*M$, 
\[ \Ric_{g_1}>C_B^{-1} D^{-2} g_1. \]
Therefore, assuming $D$ large enough, the inverse $\Ric_{g_1}^{-1}$, viewed as a map $T^* M\ra T^* M$, is well-defined and satisfies 
\begin{equation} \label{eq_Ric_g1_inverse}
 \big| {\Ric_{g_1}^{-1}} \big|_{g_1} <CC_B D^2. 
\end{equation}
Hence by Lemma~\ref{lem_difference_estimate}, if  $E \geq \underline{E}$ and $D \geq \underline{D} (\alpha)$, then 
$$
\big| {\Ric_{g_1}^{-1}(\Ric_{g_3}-\Ric_{g_1})} \big|_{g_1} \leq \big|{\Ric_{g_1}^{-1}} \big|_{g_1} \big|{\Ric_{g_3}-\Ric_{g_1}} \big|_{g_1} \leq  b \cdot C D^{-E+2}.
$$
So if $E \geq \underline{E}$ and $D \geq \underline{D} (\alpha)$, then the inverse of
\[ I + \Ric_{g_1}^{-1}(\Ric_{g_3}-\Ric_{g_1}) = \Ric_{g_1}^{-1} \Ric_{g_3} \]
exists and we have
\[ \big| {\Ric_{g_3}^{-1} \Ric_{g_1} - I  }\big|_{g_1} \leq  b \cdot C  D^{-E+2}. \]
Therefore again by (\ref{eq_Ric_g1_inverse}),
\begin{equation} \label{eq_Ric_g_inverse_diff_bound}
 \big|{ \Ric_{g_3}^{-1} - \Ric_{g_1}^{-1} }\big|_{g_1} \leq \big| { \Ric_{g_3}^{-1} \Ric_{g_1} - I }  \big|_{g_1}  \big|{\Ric_{g_1}^{-1}} \big|_{g_1} \leq  b \cdot C \alpha^{-C} D^{-E+4}. 
\end{equation}

Using the second relation in (\ref{eq_rescaled_soliton_eqs}) we find that
\begin{multline} \label{eq_dfphi_df}
 d (f\circ\phi)- d f = 2 \Ric_{g_3}^{-1}(d R_{g_3})- 2 \Ric_{g_1}^{-1}(d R_{g_1})\\
= 2 (\Ric_{g_3}^{-1}-\Ric_{g_1}^{-1})(d R_{g_3}) + 2 \Ric_{g_1}^{-1} (d R_{g_3} - d R_{g_1} )  . 
\end{multline}
So, as $|dR_{g_3}|_{g_1} \leq C |dR_{g_3}|_{g_3} \leq C \lambda_2^{-3} \leq C \alpha^{-3}$, we obtain by (\ref{eq_dfphi_df}), (\ref{eq_Ric_g1_inverse}), (\ref{eq_Ric_g_inverse_diff_bound}) and Lemma~\ref{lem_difference_estimate} that
\[ |d(f\circ\phi)-d f |_{g_1} \leq  b \cdot C\al^{-C} D^{-E+4}. \]
It follows using (\ref{eq_nab_f_growth}) that
\begin{align*}
\big| |d(f  \circ\phi) |_{g_3}^2- |d f |_{g_1}^2 \big| &\leq \big| |d (f \circ \phi ) |_{g_3}^2 - |df|_{g_3}^2 \big| + \big| |df |^2_{g_3} - |df|^2_{g_1} \big| \\
& \leq \big| d (f \circ \phi ) - df \big|_{g_3} \cdot \big| d (f \circ \phi ) + df \big|_{g_3} + C |h|_{g_1} |df|^2_{g_1} \\
&\leq b \cdot C \alpha^{-C} D^{-E+4} \big( |d (f \circ \phi )|_{g_3} + |df|_{g_3} \big) + b \cdot C \alpha^{-C} D^{-E} \\
&\leq b \cdot C \alpha^{-C} D^{-E+4} \big( |d (f \circ \phi ) - df |_{g_3} + 2 |df|_{g_1 } \big) + b \cdot C \alpha^{-C} D^{-E} \\
&\leq b \cdot C \alpha^{-C} D^{-E+4}. 
\end{align*}
Combining this with (\ref{eq_rescaled_soliton_eqs}) and Lemma~\ref{lem_difference_estimate} yields
\begin{multline*}
|\la_2^{-2}-\la_1^{-2}| \cdot R_{g_{\Bry}}(x_{\Bry}) \leq |R_{g_3}-R_{g_1}|+ \big| |d(f\circ\phi)|_{g_3}^2- |d f|_{g_1}^2 \big| \\
\leq  b \cdot C\al^{-C}D^{-E+4}.
\end{multline*}
So the bound on $|\lambda_2 - 1|$ follows for large enough $D$, as $\lambda_1 = 1$.
\end{proof}

Next, we prove that $\phi$ nearly preserves the radial distance function $\sigma$, up to an error that decays polynomially in $D$.

\begin{lemma}[$\phi$ nearly preserves $\si$]
\label{lem_phi_nearly_preserves_si}
If
\begin{equation} \label{eq_parameters_radial_lemma}
 E \geq \underline{E}, \qquad D \geq \underline{D} (\alpha), 
\end{equation}
then we have for $k = 0, 1, 2$
\begin{equation} \label{eq_sigma_difference_lemma}
\big|{\nabla_{g_1}^k(\si\circ\phi-\si)}\big|_{g_1}\leq b \cdot C\al^{-C}D^{-E+C}.
\end{equation}
\end{lemma}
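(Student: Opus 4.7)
The key idea is that on $(M_{\Bry}, g_{\Bry})$, the radial distance $\sigma$ can be reconstructed as a smooth function of the scalar curvature. Specifically, by the $O(3)$-symmetry of the Bryant soliton, the scalar curvature $R_{g_1}$ depends only on $\sigma$: there is a smooth strictly decreasing function $F:[0,\infty)\to (0,R_{g_1}(x_{\Bry})]$ with $R_{g_1}(x)=F(\sigma(x))$. From the warped product structure $g_{\Bry}=d\sigma^2+w^2(\sigma)g_{S^2}$ with $w(\sigma)\sim\sqrt\sigma$, together with the curvature asymptotics of Lemma~\ref{lem_bryant_geometry} / Lemma~\ref{lem_properties_Bryant}, one has $F(\sigma)\asymp \sigma^{-1}$ and $|F^{(k)}(\sigma)|\lesssim \sigma^{-1-k}$ with $|F'(\sigma)|\gtrsim \sigma^{-2}$ for large $\sigma$. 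Consequently the inverse $G:=F^{-1}$ satisfies $|G^{(k)}(r)|\leq C_k\, \sigma^{2k}$ when evaluated at a radius $\sigma\asymp D$, so $|G^{(k)}|\leq C\, D^{2k}$ on the relevant range of $R$-values.

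Next I use the scaling relation to transfer the problem to a comparison of scalar curvatures. Since $g_2=\lambda_2^2 g_{\Bry}$ shares the tip $x_{\Bry}$ of the target, $R_{g_2}=\lambda_2^{-2}R_{g_1}$ on the target, and therefore $R_{g_3}=\phi^*R_{g_2}=\lambda_2^{-2}(R_{g_1}\circ\phi)$, which rearranges to
\[ R_{g_1}\circ\phi \;=\; \lambda_2^2\, R_{g_3} \qquad\text{on}\qquad M_{\Bry}(\tfrac12 D, D). \]
Because $\sigma=G(R_{g_1})$ on the domain and $\sigma\circ\phi=G(R_{g_1}\circ\phi)=G(\lambda_2^2 R_{g_3})$, we obtain the pointwise identity
\[ \sigma\circ\phi-\sigma \;=\; G(\lambda_2^2 R_{g_3}) - G(R_{g_1}). \]
Combining Lemma~\ref{lem_difference_estimate} (giving $|R_{g_3}-R_{g_1}|_{g_1}\leq bCD^{-E}$, and analogous bounds on $|\nabla_{g_1}^m(R_{g_3}-R_{g_1})|_{g_1}$ for $m\leq 2$) with Lemma~\ref{lem_scale_detection_Bry} (giving $|\lambda_2^2-1|\leq bC\alpha^{-C}D^{-E+C}$) and the a priori bound $R_{g_1}\leq C/\sigma\leq C/D$ on the annulus, one gets
\[ \big|\lambda_2^2 R_{g_3}-R_{g_1}\big|_{g_1}\;\leq\; b\cdot C\alpha^{-C}D^{-E+C}, \]
and similarly for its first two covariant derivatives, with the same kind of polynomial loss. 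Multiplying by the $\|G'\|_\infty\leq CD^2$ bound on the appropriate range then yields the $k=0$ case of~(\ref{eq_sigma_difference_lemma}), with exponent $-E+C$ for a possibly larger universal $C$.

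For $k=1,2$ I differentiate the identity $\sigma\circ\phi-\sigma=G(\lambda_2^2 R_{g_3})-G(R_{g_1})$ using the Fa\`a di Bruno formula and estimate each term. For instance,
\[ d(\sigma\circ\phi-\sigma)=\big[G'(\lambda_2^2R_{g_3})-G'(R_{g_1})\big]\,d(R_{g_1}\circ\phi)+G'(R_{g_1})\,\big[d(R_{g_1}\circ\phi)-dR_{g_1}\big], \]
and the first bracket is controlled by $\|G''\|_\infty\cdot|\lambda_2^2R_{g_3}-R_{g_1}|$ while the second is controlled via Lemma~\ref{lem_difference_estimate}. Since each derivative of $G$ costs at most a factor of $D^2$ and each derivative of the curvature costs at most a factor of $D$ (via the asymptotics on $M_{\Bry}$), all resulting terms retain the shape $b\cdot C\alpha^{-C}D^{-E+C}$, where $C$ is permitted to grow. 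Throughout, $g_1$-covariant derivatives can be freely interchanged with the background Christoffel symbols of $g_{\Bry}$, and the difference tensor $\mathcal{D}$ of Lemma~\ref{lem_difference_estimate} contributes only lower-order errors of the same polynomial size.

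The main obstacle is the polynomial bookkeeping, i.e. keeping track of the powers of $D$ produced by inverting $F$ (which is very nearly degenerate to second order: $F'\asymp -\sigma^{-2}$) and by differentiating in the presence of the Bryant soliton's slow asymptotic decay $R\asymp 1/\sigma$. What makes the estimate work is precisely that all these losses are polynomial in $D$, so they can all be absorbed into a single ``loss exponent'' $C$ on the right-hand side of~(\ref{eq_sigma_difference_lemma}), provided only that $E\geq\underline E$ and $D\geq\underline D(\alpha)$ are chosen large enough to compensate.
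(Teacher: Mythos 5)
Your proof takes essentially the same approach as the paper: recover $\sigma$ from scalar curvature via the inverse of $F$ (your $G$ is the paper's $H$), use the identity $R_{g_1}\circ\phi=\lambda_2^2 R_{g_3}$ to reduce to comparing curvatures, and feed in Lemmas~\ref{lem_difference_estimate} and \ref{lem_scale_detection_Bry}. One small inaccuracy: you assert $|F^{(k)}(\sigma)|\lesssim\sigma^{-1-k}$ and hence $|G^{(k)}|\lesssim D^{2k}$, but Lemma~\ref{lem_properties_Bryant} only supplies constant bounds on $|F''|,|F'''|$, which yields the coarser estimates $|G''|\lesssim D^6$, $|G'''|\lesssim D^{10}$ (as the paper uses); this is harmless since any polynomial loss is absorbed into the exponent $C$, but the sharper decay you claim is not established by the cited lemma.
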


\begin{proof}
Let $F : (0, \infty) \to (0, \infty)$ be the function with the property that $R = F \circ \sigma$ on $(M_{\Bry}, g_{\Bry})$.
Consider the constant $C_B$ from Lemma~\ref{lem_properties_Bryant}.
By (\ref{eq_R_asymptotic_Bryant}), (\ref{eq_lower_partial_R_bound}) and (\ref{eq_higher_der_bound_Bry}) we have for $s > C_B$
\begin{multline} \label{eq_F_bounds}
 C_{B}^{-1} s^{-1} < F(s ) < C_{B} s^{-1}, \qquad C_{B}^{-1} s^{-2} < - F' (s) < C_{B} , \\ \qquad |F''(s)|, |F'''(s)| < C_{B}. 
\end{multline}
So there is a $c_0 > 0$ such that $F^{-1} ( (0, c_0) ) = (C_B, \infty)$ and such that there is an inverse $H : (0,c_0) \to (C_B, \infty)$ of $F |_{(C_B, \infty)}$.
A straight-forward application of the chain rule gives
\[ |H'(r )| < C_B r^{-2} , \qquad |H''(r )| < C r^{-6} , \qquad |H'''(r)| < C r^{- 10} . \]
(Note that these bounds are not optimal.)

Assume now that $E$ and $D$ have been chosen large enough, in the sense of (\ref{eq_parameters_radial_lemma}), that $\frac12 < \lambda_2 < 2$ by Lemma~\ref{lem_scale_detection_Bry} and that by (\ref{eq_F_bounds}) and Lemma~\ref{lem_difference_estimate} we have for $i = 1,3$
\[ (10 C_{B})^{-1} D^{-1} < R_{g_i} <  10 C_{B} D^{-1} < c_0 /10  \]
on $M_{\Bry} (\frac12 D, D)$.
Then on $M_{\Bry} (\frac12 D, D)$
\begin{equation} \label{eq_P_equation}
  \sigma \circ \phi  - \sigma  = P(R_{g_1}, R_{g_{3}} - R_{g_1}, \lambda_2),  
\end{equation}
where
\[ P (r_1, r_2, \lambda) :=  H ( \lambda^2 (r_1+r_2)) - H (r_1).  \]
Note that $P(r, 0 , 1) = 0$ for all $r \in (0, c_0)$ and that on $((10C_B)^{-1} D^{-1}, \lb 10C_B D^{-1})^2 \lb \times \lb (\frac12, 2)$  we have
\[ |\partial^k  P| \leq  C D^{10} \]
for $k = 1, 2, 3$.
So for $k = 0,1,2$ we have
\begin{equation*}
 |\partial_{1}^k P| (r_1, r_2, \lambda) \leq C D^{10} \big( |r_2| + |\lambda - 1| \big) \quad
\text{on}\quad ((10C_B)^{-1} D^{-1}, \lb 10C_B D^{-1})^2 \lb \times \lb (\tfrac12, 2)
\end{equation*}
So (\ref{eq_sigma_difference_lemma}) follows by differentiating (\ref{eq_P_equation}) and using Lemmas~\ref{lem_difference_estimate} and \ref{lem_scale_detection_Bry}.
\end{proof}

Recall that the Bryant soliton metric is a warped product $g_{\Bry }=d\si^2+w^2g_{S^2}$ on $M_{\Bry}\setminus\{x_{\Bry}\}$ and that $C_B^{-1} \sqrt{s} < w(s) < C_B \sqrt{s}$ for large $s$ (see Lemma~\ref{lem_properties_Bryant} for more details).
Fix some  $D$ that is sufficiently large such that $D > w(D)$.
We now let $g_4=d\si_4^2+w_4^2g_{S^2}$  be a warped product metric on $M_{\Bry} (D - \frac34 w(D) , D + \frac14 w(D))$ with
\[ \sigma_4 =\frac{\sigma - D}{w (D)} \]
and the warping function 
\[ w_4 = w_4 (\sigma)  = 1 + \sigma_4 = 1 + \frac{\sigma - D}{w (D) }. \]
Note that there is an isometry 
\[ \Phi : M_{\Bry} \big( D - \tfrac34 w(D) , D + \tfrac14 w(D) \big) \longrightarrow A_{1/4, 5/4} \subset \R^3 \]
to a  Euclidean annulus such that $1 + \sigma_4 (x) = |\Phi (x) |_{\R^3}$.
So
\[ \Phi \big( M_{\Bry} ( D - \tfrac12 w(D) , D  ) \big) = A_{1/2,1}. \]
Due to Lemma~\ref{lem_phi_nearly_preserves_si} we may assume in the following that $\phi ( M_{\Bry} (D  - \frac12 w(D), \lb D \lb)) \subset M_{\Bry} (D - \frac34 w(D), D + \frac14 w(D))$.
So $\phi$ induces a map
\[ \Phi \circ \phi \circ \Phi^{-1} : A_{1/2,1} \longrightarrow A_{1/4,5/4}. \]

We now show that $\phi$ restricted to $M_{\Bry} ( D - \frac12  w(D), D )$ almost preserves the metric $g_4$ and the  function $\sigma_4$.
This is equivalent to saying that $\Phi \circ \phi \circ \Phi^{-1}$ almost preserves the Euclidean metric and the radial distance function on $\R^3$.

\begin{lemma}
\label{lem_g4_control}
If  
\begin{equation} \label{eq_parameter_lemma_annulus}
 E \geq \underline{E}, \qquad D \geq \underline{D} (\alpha), 
\end{equation}
then for $k = 0, 1$
\begin{align}
\big|\nabla_{g_4}^k(\si_4\circ\phi-\si_4)\big|_{g_4}&\leq  b \cdot C\al^{-C}D^{-E+C}, \label{eq_phi_sigma4} \\
\big|\nabla_{g_4}^k(\phi^*g_4-g_4) \big|_{g_4}&\leq  b \cdot C\al^{-C}D^{-E+C}, \label{eq_phi_g4} 
\end{align}
on $M_{\Bry} ( D - \frac12 w(D), D )$.
\end{lemma}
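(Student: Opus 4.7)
The strategy is to reduce the lemma to the bounds already established in Lemmas~\ref{lem_scale_detection_Bry} and \ref{lem_phi_nearly_preserves_si}, together with the pointwise hypothesis on $h$, by expressing $g_4$ explicitly in terms of $g_1 = g_{\Bry}$ and $\sigma$ on the annulus and exploiting the fact that $\phi$ approximately preserves both of these objects. Throughout, I will use that on $M_{\Bry}(\tfrac12 D, D)$ we have $C_B^{-1}\sqrt{\sigma} < w(\sigma) < C_B\sqrt{\sigma}$ and $|w'|,|w''| \leq C/\sqrt{\sigma}$, so in particular $w(\sigma)/w(D)$ and $w_4 = 1 + \sigma_4$ both lie in a fixed compact interval of $(0,\infty)$, and $w(D) \leq C\sqrt{D}$.

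First I would record the auxiliary estimate
\[
\big|\nabla_{g_1}^k(\phi^* g_1 - g_1)\big|_{g_1} \leq b \cdot C\alpha^{-C} D^{-E+C}, \qquad k = 0,1,
\]
which follows from $\phi^* g_1 - g_1 = \lambda_2^{-2} h + (\lambda_2^{-2} - 1)g_1$ by combining the hypothesis $|\nabla_{g_1}^k h|_{g_1} \leq bD^{-E}$ with Lemma~\ref{lem_scale_detection_Bry}. Then, for the $\sigma_4$ estimate (\ref{eq_phi_sigma4}), observe that $\sigma_4\circ\phi - \sigma_4 = w(D)^{-1}(\sigma\circ\phi - \sigma)$, so the $C^0$ bound is immediate from Lemma~\ref{lem_phi_nearly_preserves_si} (with an extra factor $w(D)^{-1} \leq 1$). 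For $k=1$, a short computation in $(\sigma,\omega)$-coordinates shows that for any $1$-form $\eta$ on the annulus,
\[
|\eta|_{g_4}^2 = w(D)^2\,\eta(\partial_\sigma)^2 + w_4^{-2}\,|\eta_{S^2}|^2, \qquad |\eta|_{g_1}^2 = \eta(\partial_\sigma)^2 + w(\sigma)^{-2}\,|\eta_{S^2}|^2,
\]
so that $|\eta|_{g_4} \leq C w(D)\,|\eta|_{g_1}$. Applying this with $\eta = d(\sigma_4\circ\phi - \sigma_4) = w(D)^{-1}\,d(\sigma\circ\phi - \sigma)$ turns the $w(D)$ into a harmless factor that gets absorbed into $D^C$, and Lemma~\ref{lem_phi_nearly_preserves_si} finishes the $k=1$ bound.

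For the metric estimate (\ref{eq_phi_g4}), I would write both metrics in the warped form on $(\sigma,\omega)$-coordinates:
\[
g_1 = d\sigma\otimes d\sigma + w(\sigma)^2 g_{S^2}, \qquad g_4 = w(D)^{-2}\,d\sigma\otimes d\sigma + w_4(\sigma)^2\,g_{S^2},
\]
so $g_4 = A_r(\sigma)\,d\sigma\otimes d\sigma + A_s(\sigma)\,(g_1 - d\sigma\otimes d\sigma)$ with $A_r(\sigma) = w(D)^{-2}$ and $A_s(\sigma) = w_4(\sigma)^2/w(\sigma)^2$. Both $A_r, A_s$ are uniformly bounded above and below on the annulus, and their $\sigma$-derivatives satisfy $|A_r'|, |A_s'| \leq C/w(D)$. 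Pulling back,
\[
\phi^* g_4 = A_r(\sigma\!\circ\!\phi)\,d(\sigma\!\circ\!\phi)\otimes d(\sigma\!\circ\!\phi) + A_s(\sigma\!\circ\!\phi)\,\big(\phi^* g_1 - d(\sigma\!\circ\!\phi)\otimes d(\sigma\!\circ\!\phi)\big).
\]
Subtracting the analogous expression for $g_4$, each difference $A_i(\sigma\!\circ\!\phi) - A_i(\sigma)$, $d(\sigma\!\circ\!\phi) - d\sigma$, and $\phi^*g_1 - g_1$ is controlled by $b\cdot C\alpha^{-C}D^{-E+C}$ in $g_1$-norms by Lemmas~\ref{lem_scale_detection_Bry}, \ref{lem_phi_nearly_preserves_si}, and the preliminary bound above. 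Converting $g_1$-norms to $g_4$-norms costs at most powers of $w(D) \leq C\sqrt{D}$, which are absorbed into the $D^C$ factor, giving the $k=0$ bound of (\ref{eq_phi_g4}). For the $k=1$ bound one differentiates the displayed identity once, uses the $C^2$ control on $\sigma\circ\phi$ from Lemma~\ref{lem_phi_nearly_preserves_si} and the $C^1$ control on $h$ (so effectively $C^1$ control on $\phi^*g_1$), and bounds the difference $\nabla_{g_4} - \nabla_{g_1}$ of Christoffel tensors, which in these coordinates is an explicit expression in $A_r', A_s'$ and therefore bounded by $C/w(D)$ on the annulus.

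The main bookkeeping obstacle is tracking the factors of $w(D) \sim \sqrt{D}$ that appear when passing between $g_1$-norms and $g_4$-norms and between the radial coordinate $\sigma$ and its rescaled counterpart $\sigma_4$; however since the conclusion only demands an exponent of the form $-E+C$ for \emph{some} universal $C$, these factors pose no genuine difficulty, and the entire argument is effectively a tensor-calculus reshuffling of estimates that have already been proved.
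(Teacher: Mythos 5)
Your proposal is correct and follows essentially the same strategy as the paper's proof: it reduces everything to the hypothesis bound on $h$, the scale detection bound of Lemma~\ref{lem_scale_detection_Bry}, and the radial preservation bounds of Lemma~\ref{lem_phi_nearly_preserves_si}, then exploits the explicit warped-product structure of $g_4$ relative to $g_1$ and $d\sigma$. The one stylistic difference is that the paper introduces the intermediate rescaled metric $\ov{g}_1 := w^{-2}(D)\,g_1$, which is \emph{uniformly} bilipschitz to $g_4$ (independently of $D$), and works entirely in $\ov{g}_1$-norms, so that the $D$-power bookkeeping is automatic; you instead work with $g_1$-norms and convert to $g_4$-norms at the end, paying factors of $w(D)\sim\sqrt{D}$ which you correctly note are absorbed into the $D^C$ term. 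Your decomposition $g_4 = A_r\,d\sigma^2 + A_s(g_1 - d\sigma^2)$ is algebraically equivalent to the paper's use of the factor $\chi = w_4^2/\ov{w}^2$ (in fact $A_s = w(D)^{-2}\chi$), and your auxiliary bound on $\phi^*g_1 - g_1$ is the same identity the paper uses for $\phi^*\ov{g}_1 - \ov{g}_1$, up to the constant rescaling. One small imprecision: you state that $A_r, A_s$ are ``uniformly bounded above and below on the annulus,'' but both are $\sim D^{-1}$; what matters (and what your argument actually uses) is only that they are \emph{comparable to each other} and polynomially bounded in $D$, so this does not affect the argument.
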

\begin{proof}
Let us first consider the rescaled metric $\ov{g}_1 := w^{-2} (D) g_1$.
This metric is a warped product of the form
\[ \ov{g}_1 = d\sigma_4^2 + \ov{w}^2 g_{S^2}, \]
where
\[ \ov{w} = \frac{w}{w(D)}. \]
Note that for large $D$ the metric  $\ov g_1$ on $M_{\Bry} (D- \frac34 w(D), D + \frac14 w(D))$ is geometrically close to $S^2 \times (-\frac34,\frac14)$ equipped with the standard cylindrical metric.
More precisely, if we express $\ov{w} = \ov{w} (\sigma_4)$ as a function in $\sigma_4$, then by (\ref{eq_td_w_bounds}) in Lemma~\ref{lem_properties_Bryant} we have the following bounds when $\sigma_4 \in (-\frac34, \frac14)$
\begin{equation} \label{eq_w_5_bounds}
 | \ov{w} - 1| , \quad \bigg| \frac{d \ov{w}}{d\sigma_4} \bigg| , \quad \bigg| \frac{d^2 \ov{w}}{d\sigma^2_4} \bigg| \leq  C D^{-1/2}. 
\end{equation}

Let us now consider the map $\phi$.
We have
$$
\phi^*\ov{g}_1- \ov{g}_1= \lambda_2^{-2} w^{-2} (D) \big(\phi^*g_2-g_1+(1-\lambda_2^2)g_1 \big),
$$
Combining this with the scale detection Lemma~\ref{lem_scale_detection_Bry} gives us the following bound for $k = 0, 1$, assuming an estimate of the form (\ref{eq_parameter_lemma_annulus}):
\begin{equation}
\label{eqn_g5_g1}
\big| \nabla_{\ov{g}_1}^k \big( \phi^*\ov{g}_1- \ov{g}_1 \big) \big|_{\ov{g}_1}\leq  b \cdot C\al^{-C} D^{-E+C}.
\end{equation}
Note that here we have taken the covariant derivative with respect to $\ov{g}_1$, as opposed to $g_1$.
This change produces a factor of the order of $O(D^{k/2})$, which can be absorbed in the right-hand side.
Similarly, by rescaling (\ref{eq_sigma_difference_lemma}) in Lemma \ref{lem_phi_nearly_preserves_si} and assuming an estimate of the form (\ref{eq_parameter_lemma_annulus}), we obtain that for $k = 0,1,2$
\begin{equation}
\label{eqn_g5_si5}
\big| \nabla_{\ov{g}_1}^k(\si_4\circ\phi-\si_4) \big|_{\ov{g}_1}\leq b \cdot C\al^{-C} D^{-E+C} .
\end{equation}
This implies (\ref{eq_phi_sigma4}) for $k = 0$ immediately and for $k = 1$ after observing that $\ov{g}_1$ and $g_4$ are uniformly bilipschitz for large $D$.

So it remains to show (\ref{eq_phi_g4}).
The bound (\ref{eqn_g5_si5}) implies that for $k = 0, 1$
\begin{equation}
\label{eqn_phi_dsi5}
\big| \nabla_{\ov{g}_1}^k \big( \phi^*d\si_4^2-d\si_4^2 \big) \big|_{\ov{g}_1}\leq b \cdot C\al^{-C}D^{-E+C}.
\end{equation}
Combining (\ref{eqn_g5_g1})  and (\ref{eqn_phi_dsi5}), one gets
\begin{multline} \label{eq_nab_ovw_gS2}
 \big| \nabla_{\ov{g}_1}^k \big( \phi^*(\ov{w}^2g_{S^2})- \ov{w}^2g_{S^2} \big) \big|_{\ov{g}_1} = \big|\nabla_{\ov{g}_1}^k \big( (\phi^*\ov{g}_1-\ov{g}_1) - (\phi^*d\si_4^2-d\si_4^2) \big) \big|_{\ov{g}_1} \\
\leq b \cdot C\al^{-C}D^{-E+C}\,.
\end{multline}
Set now
\[ \chi := \frac{w_4^2}{\ov{w}^2} = \bigg( \frac{1  +  \sigma_4}{\ov{w}} \bigg)^2. 
\]
Let us first express $\chi (\sigma_4)$ as a function of $\sigma_4$.
Then by (\ref{eq_w_5_bounds}) we have for $k = 0,1,2$, as long as $- \frac34 < \sigma_4 < \frac14$,
\begin{equation} \label{eq_chi_bounds}
 | \chi  | , \quad \bigg| \frac{d \chi}{d\sigma_4} \bigg|,  \quad  \bigg| \frac{d^2 \chi}{d\sigma_4^2} \bigg|  \leq C. 
\end{equation}
It follows using (\ref{eqn_g5_si5}) that
\begin{align}
 \big| \chi \circ \sigma_4 \circ \phi  - \chi \circ \sigma_4 \big| &\leq  C D^{-1/2} | \sigma_4 \circ \phi - \sigma_4 | \leq  b \cdot C \alpha^{-C} D^{-E+C}, \label{eq_chi_phi_distortion} \\
 \big| \nabla_{\ov{g}_1} \big( \chi \circ \sigma_4 \circ \phi  - \chi \circ \sigma_4 \big) \big|_{\ov{g}_1} &\leq \big| (\chi' \circ \sigma_4 \circ \phi ) \, \phi^*d\sigma_4 -  ( \chi' \circ  \sigma_4 ) d\sigma_4  \big|_{\ov{g}_1} \notag \\
  &\leq \big| \chi' \circ \sigma_4 \circ \phi  \big| \cdot \big| \phi^* d\sigma_4 - d\sigma_4 \big|_{\ov{g}_1} \notag \\
&\qquad + \big| ( \chi' \circ \sigma_4 \circ \phi ) - \chi' (\sigma_4) \big|  \cdot |d\sigma_4|_{\ov{g}_1} \notag  \\
&\leq  b \cdot C \alpha^{-C} D^{-E+C}. \notag
\end{align}
So, assuming a bound of the form (\ref{eq_parameter_lemma_annulus}), we get using (\ref{eq_nab_ovw_gS2}) that for $k = 0,1$
\begin{align*}
\big| \nabla_{\ov{g}_1}^k &(\phi^*(w_4^2g_{S^2})-w_4^2g_{S^2}) |_{\ov{g}_1} \\
&= |\nabla_{\ov{g}_1 }^k(\phi^*((\chi \circ \sigma_4) \ov{w}^2g_{S^2})-(\chi \circ \sigma_4) \ov{w}^2g_{S^2}) |_{\ov{g}_1 } \\
&\leq \big| \nabla_{\ov{g}_1 }^k \big( ( \chi \circ \sigma_4 \circ \phi - \chi \circ \sigma_4 ) \phi^* (\ov{w}^2 g_{S^2}) \big) \big|_{\ov{g}_1 } \\
&\qquad + \big| \nabla_{\ov{g}_1 }^k \big( (\chi \circ \sigma_4) \big(  \phi^* (\ov{w}^2 g_{S^2}) - \ov{w}^2  g_{S^2} \big) \big) \big|_{\ov{g}_1 } 
\leq b \cdot C\al^{-C}D^{-E+C}.
\end{align*}
Combining this again with (\ref{eqn_phi_dsi5}) gives us that for $k = 0, 1$
\[ \big|\nabla_{\ov{g}_1}^k(\phi^*g_4-g_4) \big|_{\ov{g}_1} \leq b \cdot C\al^{-C}D^{-E+C} \]
This implies (\ref{eq_phi_g4}) for $k = 0$, as $g_4$ and $\ov{g}_1$ are uniformly bilipschitz for large $D$.
To see (\ref{eq_phi_g4}) for $k = 1$ note that due to (\ref{eq_chi_bounds}) we have $|\nabla_{g_4} - \nabla_{\ov{g}_1} |_{\ov{g}_1} \leq C$. 
\end{proof}

In the following lemma we extend the map $\Phi \circ \phi \circ \Phi^{-1} : A_{1/2, 1} \to A_{1/4, 5/4}$ to a map $\wh\phi$ on the unit ball $B_1 \subset \R^3$.

\begin{lemma}
\label{lem_phi4}
If  
\begin{equation} \label{eq_isometry_lemma_annulus}
 E \geq \underline{E}, \qquad D \geq \underline{D} (\alpha), 
\end{equation}
then there is a diffeomorphism onto its image $\wh\phi : B_1 \ra \R^3$ such that:
\begin{enumerate}[label=(\alph*)]
\item \label{ass_10.28_a} $|\wh\phi^*g_4-g_4 |_{g_4}\leq b \cdot C\al^{-C}D^{-E+C}$.
\item \label{ass_10.28_b} $|\sigma_4 \circ \wh\phi - \sigma_4|$, $|\wh\phi^*d\sigma_4 - d\sigma_4 |_{g_4} \leq  b \cdot C\al^{-C}D^{-E+C}$.
\item \label{ass_10.28_c} $\wh\phi \equiv \Phi \circ \phi \circ \Phi^{-1}$ on $A_{5/6, 1}$.
\item \label{ass_10.28_d} $\wh\phi \equiv \psi$ on $B_{4/6}$ for an orthogonal map $\psi \in O(3)$ of $\R^3$.
\end{enumerate}
\end{lemma}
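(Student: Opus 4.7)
The plan is to work entirely in the Euclidean annulus via the isometric identification $\Phi$ and to construct $\wh\phi$ by radially interpolating between $F := \Phi\circ\phi\circ\Phi^{-1}$ and a single orthogonal map $\psi \in O(3)$ that approximates $F$. Set $\eps := b\cdot C\al^{-C}D^{-E+C}$. Since $\Phi^*g_{\R^3}=g_4$ and $|\Phi(x)|_{\R^3}=1+\si_4(x)$, Lemma~\ref{lem_g4_control} yields, on the Euclidean annulus $A_{1/2,1}$ and with $r(x):=|x|_{\R^3}$, the bounds
\[
|F^*g_{\R^3}-g_{\R^3}|_{g_{\R^3}},\ |\nabla_{g_{\R^3}}(F^*g_{\R^3}-g_{\R^3})|_{g_{\R^3}},\ |r\circ F-r|,\ |F^*dr-dr|_{g_{\R^3}} \leq \eps.
\]

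First, I would extract an orthogonal map $\psi\in O(3)$ with $|F-\psi|_{C^1(A_{1/2,1})}\leq C\eps$. Because $F^*g_{\R^3}$ is $C^1$-close to $g_{\R^3}$, the Christoffel symbols of $F^*g_{\R^3}$ are $O(\eps)$, and hence $dF$, viewed in fixed Euclidean coordinates, has covariant derivative of size $O(\eps)$. Thus for any $x_0\in A_{1/2,1}$ (say $x_0=\frac34 e_1$) the linear map $dF(x_0)$ lies within $O(\eps)$ of a unique orthogonal $A\in O(3)$, and integrating along straight segments inside $A_{1/2,1}$ gives $|F(x)-F(x_0)-A(x-x_0)|\leq C\eps$. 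Using $|r\circ F-r|\leq\eps$ (in particular $|F(x_0)|=|x_0|+O(\eps)$) we may adjust $A$ by an $O(\eps)$ rotation to obtain $\psi\in O(3)$ with $F(x_0)=\psi(x_0)+O(\eps)$; combined with the derivative estimate this gives $|F-\psi|_{C^1(A_{1/2,1})}\leq C\eps$.

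Next, I would perform the interpolation. Choose a smooth cutoff $\chi:[0,1]\to[0,1]$ with $\chi\equiv 0$ on $[0,\tfrac{4}{6}]$, $\chi\equiv 1$ on $[\tfrac{5}{6},1]$, and $|\chi'|\leq C$, and define, for $x\in B_1$,
\[
\wh\phi(x) := \psi(x) + \chi(|x|)\bigl(F(x)-\psi(x)\bigr).
\]
The formula is well-posed since $F-\psi$ is only used where $\chi(|x|)>0$, i.e. on $A_{4/6,1}\subset A_{1/2,1}$. By construction $\wh\phi=\psi$ on $B_{4/6}$ and $\wh\phi=F$ on $A_{5/6,1}$, giving \ref{ass_10.28_c} and \ref{ass_10.28_d}. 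Because $|F-\psi|_{C^1}\leq C\eps$ on the transition annulus and $|\chi|,|\chi'|\leq C$, we get $|\wh\phi-\psi|_{C^1(B_1)}\leq C\eps$. Since $\psi\in O(3)$ satisfies $\psi^*g_{\R^3}=g_{\R^3}$ and $r\circ\psi=r$, expanding $\wh\phi^*g_{\R^3}=d\wh\phi^\top\cdot d\wh\phi$ and comparing with $d\psi^\top\cdot d\psi=\mathrm{Id}$ gives $|\wh\phi^*g_4-g_4|_{g_4}\leq C\eps$, which is \ref{ass_10.28_a}. Likewise $|r\circ\wh\phi-r|\leq|\wh\phi-\psi|\leq C\eps$ and $|\wh\phi^*dr-dr|_{g_4}\leq|d\wh\phi-d\psi|\leq C\eps$, giving \ref{ass_10.28_b}, possibly after absorbing constants by taking $E\geq\underline E$ and $D\geq\underline D(\al)$. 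Finally, $|\wh\phi-\psi|_{C^1(B_1)}\leq C\eps$ together with $\psi\in O(3)$ and a standard inverse-function argument shows that $\wh\phi$ is a local diffeomorphism; global injectivity follows by a continuity/degree argument using the straight-line homotopy $s\mapsto \psi+s(\wh\phi-\psi)$, which remains a local diffeomorphism throughout once $\eps$ is small enough.

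The main obstacle is the first step: extracting a \emph{linear} orthogonal map $\psi$ (as opposed to a merely affine near-isometry of $\R^3$) that approximates $F$ in $C^1$. The near-preservation of the radial function $r$ is crucial here, as it is precisely what forces the affine term to fix the origin modulo $O(\eps)$; without this input one could only produce an element of the Euclidean group rather than of $O(3)$. Everything else is routine cutoff interpolation in Euclidean space.
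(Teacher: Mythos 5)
Your overall strategy mirrors the paper's: derive $|d^2F|=O(\eps)$, find an affine Euclidean isometry near $F$, argue its translation part is $O(\eps)$, then interpolate via a cutoff (your $\psi+\chi(F-\psi)$ is the same as the paper's partition-of-unity combination $\zeta_1\phi+\zeta_2\psi$). The Christoffel-symbol derivation of the second-derivative bound is a clean alternative to the paper's index manipulation, and the gluing and injectivity parts are fine.

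However, there is a genuine gap in the pivotal step, the one you yourself flag as ``the main obstacle.'' You write that ``Using $|r\circ F-r|\leq\eps$ (in particular $|F(x_0)|=|x_0|+O(\eps)$) we may adjust $A$ by an $O(\eps)$ rotation'' to get $\psi\in O(3)$ with $\psi(x_0)\approx F(x_0)$. But the scalar estimate $|F(x_0)|=|x_0|+O(\eps)$ at the single point $x_0$ only places $F(x_0)$ on the sphere of radius $|x_0|$ up to $O(\eps)$; it says nothing about whether $F(x_0)$ points in a direction close to $Ax_0$. If $F(x_0)$ and $Ax_0$ are nearly antipodal, no $O(\eps)$ rotation of $A$ produces $\psi(x_0)\approx F(x_0)$; and if you instead allow a large rotation, the resulting $\psi$ is far from $F$ in $C^1$ and the cutoff interpolation breaks down. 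In other words, you need to \emph{first} show $|F(x_0)-Ax_0|=O(\eps)$, at which point no adjustment is needed at all — and this is exactly the assertion your argument fails to establish.

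The paper closes this gap by exploiting the covector estimate $|\phi^*d\sigma_4-d\sigma_4|_{g_4}=O(\eps)$ (which you list among the available inputs but never use): combined with $|d\psi'(x_0)-d\phi(x_0)|=O(\eps)$ it gives $(d\psi')_{x_0}\bigl((\nabla r)_{x_0}\bigr)\approx(\nabla r)_{\phi(x_0)}$, and since $\psi'(0)=\psi'(x_0)-|x_0|(d\psi')_{x_0}\bigl((\nabla r)_{x_0}\bigr)$ while $\phi(x_0)=|\phi(x_0)|(\nabla r)_{\phi(x_0)}$, one computes directly that $|\psi'(0)|=O(\eps)$. Alternatively, one may apply your scalar estimate at several well-spread points (e.g., along a frame of directions in the annulus) rather than at a single $x_0$: writing the affine map as $L(x)=b+Ax$ and expanding $|L(x)|^2-|x|^2=|b|^2+2\langle b,Ax\rangle=O(\eps)$ for all $x\in A_{1/2,1}$ forces $\langle b,y\rangle=O(\eps)$ for all $y$ in the annulus and hence $|b|=O(\eps)$. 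Either fix is a few lines, but one of them is needed; as written, your use of a single-point radial estimate does not determine the direction of $F(x_0)$ and the ``adjust by an $O(\eps)$ rotation'' step is circular.
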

\begin{proof}
By Lemma~\ref{lem_g4_control} and the fact that $g_4 \equiv \Phi^*g_{\R^3}$ we have for $k = 0,1$
\begin{align}
\big| \nabla_{\R^3}^k \big( (\Phi \circ \phi \circ \Phi^{-1})^*g_{\R^3}-g_{\R^3} \big) \big|_{\R^3}&\leq b \cdot C\al^{-C}D^{-E+C}, \label{eq_phi_R3_almost_isometry} \\
\big| \nabla_{\R^3}^k \big( r\circ(\Phi \circ \phi \circ \Phi^{-1})-r \big) \big|_{\R^3}&\leq b \cdot C\al^{-C}D^{-E+C}, \label{eq_phi_almost_rotation}
\end{align}
where $r (x) := | x|_{\R^3}$ denotes the radial distance function on $\R^3$.

From now on we will only work on $\R^{3}$.
To simplify notation, we will write $\phi$ instead of $\Phi \circ \phi \circ \Phi^{-1}$.
Expressing (\ref{eq_phi_R3_almost_isometry}) for $k = 1$ in Euclidean coordinates yields
\[ \bigg| \sum_{s=1}^3 \bigg( \frac{\partial^2 \phi^s}{\partial x^k \partial x^i} \, \frac{\partial \phi^s}{\partial x^j} + \frac{\partial^2 \phi^s}{\partial x^k \partial x^j} \,\frac{\partial \phi^s}{\partial x^i}  \bigg) \bigg|\leq b \cdot C\al^{-C}D^{-E+C}. \]
Permuting the indices $i, j, k$ cyclicly and using
\begin{multline*}
 2 \frac{\partial^2 \phi^s}{\partial x^i \partial x^j} \, \frac{\partial \phi^s}{\partial x^k} = \bigg( \frac{\partial^2 \phi^s}{\partial x^i \partial x^j} \, \frac{\partial \phi^s}{\partial x^k} + \frac{\partial^2 \phi^s}{\partial x^i \partial x^k} \,\frac{\partial \phi^s}{\partial x^j} \bigg) \\ 
 + 
\bigg( \frac{\partial^2 \phi^s}{\partial x^j \partial x^k} \, \frac{\partial \phi^s}{\partial x^i} + \frac{\partial^2 \phi^s}{\partial x^j \partial x^i} \,\frac{\partial \phi^s}{\partial x^k} \bigg) -
\bigg( \frac{\partial^2 \phi^s}{\partial x^k \partial x^i} \, \frac{\partial \phi^s}{\partial x^j} + \frac{\partial^2 \phi^s}{\partial x^k \partial x^j} \,\frac{\partial \phi^s}{\partial x^i} \bigg) 
\end{multline*}
gives us 
\[ \bigg| \sum_{s=1}^3  \frac{\partial^2 \phi^s}{\partial x^{i } \partial x^{j }} \, \frac{\partial \phi^s}{\partial x^{k }}  \bigg|\leq b \cdot C\al^{-C}D^{-E+C}. \]
Combining this with (\ref{eq_phi_R3_almost_isometry}) for $k = 0$ implies that under a condition of the form (\ref{eq_isometry_lemma_annulus})
\begin{equation} \label{eq_d2_bound_phi}
|d^2 \phi|_{\R^3}\leq b \cdot C\al^{-C}D^{-E+C}.
\end{equation}
Let now $x_0 \in A_{1/2, 1}$ be a point and consider the differential $(d \phi )_{x_0} : \R^3 \to \R^3$.
By (\ref{eq_phi_R3_almost_isometry}) there is a Euclidean isometry $\psi' : \R^3 \to \R^3$ with $\psi' (x_0) = \phi (x_0)$ and
\[ \big| (d\psi' )_{x_0} - (d \phi)_{x_0} \big|_{\R^3} \leq  b \cdot C \alpha^{-C} D^{-E+C}. \]
Combining this with (\ref{eq_phi_almost_rotation}) gives us
\[ \big| (d\psi')_{x_0} ((\nabla r)_{x_0}) - (\nabla r)_{\psi'(x_0)} \big|_{\R^3} \leq  b \cdot C \alpha^{-C} D^{-E+C}. \]
So again by (\ref{eq_phi_almost_rotation}) we have
\begin{align*}
 |\psi'(0) |_{\R^3} 
&= \big| \psi' (x_0) - |x_0|_{\R^3} (d\psi')_{x_0} \big( (\nabla r)_{x_0} \big) \big|_{\R^3}  \\
&\leq \big| \phi (x_0) - |x_0|_{\R^3} (\nabla r)_{\phi(x_0)} \big|_{\R^3} + b \cdot C \alpha^{-C} D^{-E+C} \\
&\leq \big| \phi (x_0) - |\phi(x_0)|_{\R^3} (\nabla r)_{\phi(x_0)} \big|_{\R^3} + b \cdot C \alpha^{-C} D^{-E+C} \\
&= b \cdot C \alpha^{-C} D^{-E+C} .
\end{align*}
Set now $\psi := \psi' - \psi' (0)$.
Then $\psi \in O(3)$ and for $k = 0,1$
\begin{equation*}
\big| d \psi (x_0)-d \phi (x_0)  \big|_{\R^3} \leq b \cdot C\al^{-C}D^{-E+C} 
\end{equation*}

Integrating (\ref{eq_d2_bound_phi}) along paths in $A_{1/2,1}$ starting from $x_0$ implies that under an assumption of the form (\ref{eq_isometry_lemma_annulus}) we have for all $x \in A_{1/2,1}$
\begin{multline}
\label{eqn_phi1_phi2_1}
\big| d \psi (x)-d \phi(x) \big|_{\R^3}
\leq \big| (d \psi )(x_0 )-d \phi)(x_0 ) \big|_{\R^3} \\ +10\sup_{A_{1/2,1}} \big|d^2 \psi -d^2\phi \big|_{\R^3}
\leq b \cdot C\al^{-C}D^{-E+C}.
\end{multline}
Integrating this bound once again along paths in $A_{1/2,1}$ yields that
\begin{multline}
\label{eqn_phi1_phi2_3}
|\psi (x)-\phi(x) |_{\R^3} 
\leq |\psi (x_0)-\phi(x_0)|_{\R^3} \\
+ 10 \sup_{A_{1/2,1}} \big| d \psi (x)-d \phi(x) \big|_{\R^3} 
 \leq b \cdot C\al^{-C}D^{-E+C}.
\end{multline}

We now let $\{\zeta_1,\zeta_2\}$ be a partition of unity on $A_{1/2,1}$ such that
$\zeta_1\equiv 1$ on  $A_{5/6, 1}$,  $\zeta_2\equiv 1$ in $B_{4/6}$, and $|\nabla_{\R^3} \zeta_i |_{\R^3}\leq C$.  
Let $\wh\phi := \zeta_1\phi+\zeta_2\psi$.  
Then assertions \ref{ass_10.28_c} and \ref{ass_10.28_d} hold immediately and assertion \ref{ass_10.28_a} follows from (\ref{eqn_phi1_phi2_1}) and (\ref{eqn_phi1_phi2_3}).
Assertion \ref{ass_10.28_b} follows from (\ref{eq_phi_almost_rotation}), the fact that $dr = d\sigma_4$ and that $r \circ \psi = r$.
\end{proof}

\begin{proof}[Proof of Proposition \ref{prop_bryant_comparison}]
We only need to translate the result of Lem\-ma~\ref{lem_phi4} back to $M_{\Bry}$.
By assertion \ref{ass_10.28_d} of Lemma~\ref{lem_phi4} and the fact that $M_{\Bry}$ is rotationally symmetric we can find an isometry $\td\psi : M_{\Bry} \to M_{\Bry}$ with $\td\psi (x_{\Bry}) = x_{\Bry}$ and $\td\psi = \Phi^{-1} \circ \psi \circ \Phi $.
Set $\td\phi := \Phi^{-1} \circ \wh\phi \circ \Phi$ on $M_{\Bry} (D - \frac12 w(D), D) $ and $\td\phi := \psi $ on the closure of $M_{\Bry} (D - \frac12 w(D))$.
By assertion \ref{ass_10.28_d} of Lemma~\ref{lem_phi4}, we know that $\td\phi$ is smooth.  
By assertion \ref{ass_10.28_b} the map $\td\phi$ is injective if $E \geq \underline{E}, D \geq \underline{D} (\alpha)$. 
So  it remains to bound $\td\phi^* g_2 - g_1$ on $M_{\Bry} (D - \frac12 w(D), D)$.
To do this, we first deal with the rescaling factor $\lambda_2$ using Lemma~\ref{lem_scale_detection_Bry}
\begin{multline*}
 \big| \td\phi^* g_2 - g_1 \big|_{g_1} 
 \leq  \big| \td\phi^* g_1 - g_1 \big|_{g_1} +  \big| (\lambda_2^2 - 1) \td\phi^* g_{1 } \big|_{g_1} \\
 \leq   \big| \td\phi^* g_1 - g_1 \big|_{g_1}  + b \cdot C \alpha^{-C} D^{-E+C}. 
\end{multline*}
So it remains to bound $\td\phi^* g_1 - g_1$.
For this purpose consider the rescaled metric $\ov{g}_1 = w^{-2} (D) g_1 = d\sigma_4^2 + \ov{w}^2 g_{S^2}$, as used in the proof of Lemma~\ref{lem_g4_control}, and observe that
\[ \ov{g}_1 = d \sigma_4^2   + \ov{w}^2 g_{S^2} = \frac{\ov{w}^2}{w^2_4} \big( d \sigma_4^2   + w_4^2 g_{S^2} \big) + \bigg( 1- \frac{\ov{w}^2}{w_4^2}  \bigg) d \sigma_4^2 . \]
Set $\chi \circ \sigma_4 := \frac{\ov{w}^2}{w_4^2}$ as in the proof of Lemma~\ref{lem_g4_control}.
As explained in this proof, we obtain using (\ref{eq_chi_phi_distortion}) and assertions \ref{ass_10.28_a} and \ref{ass_10.28_b} of Lemma~\ref{lem_phi4} that under an assumption of the form $E \geq \underline{E}$, $D \geq \underline{D} (\alpha)$
\begin{align*}
 \big| \td\phi^*  g_1 - g_1 \big|_{g_1} 
 &= \big| \td\phi^* \ov{g}_1 - \ov{g}_1 \big|_{\ov{g}_1}  \\
 &\leq (\chi \circ \sigma_4) \big| \td\phi^* g_4 - g_4 \big|_{\ov{g}_1} + \big| \chi \circ \sigma_4 \circ \td\phi - \chi \circ \sigma_4 \big| \cdot | \td\phi^* g_4|_{\ov{g}_1}  \\
 &\qquad +  \big| 1 - (\chi \circ \sigma_4 ) \big| \cdot \big| \td\phi^*  d \sigma_4^2 -  d \sigma_4^2 \big|_{\ov{g}_1} \\
 &\qquad + \big| \chi \circ \sigma_4 \circ \td\phi - \chi \circ \sigma_4 \big| \cdot | \td\phi^*  d \sigma_4^2 |_{\ov{g}_1}  \\
 & \leq  b \cdot C \alpha^{-C} D^{-E+C}. 
\end{align*}
This concludes the proof.
\end{proof}

\begin{proof}[Proof that Proposition~\ref{prop_bryant_comparison} implies Proposition~\ref{prop_bryant_comparison_general}]
Set $\lambda_2 :=  \lambda$, $g_1 \lb :=  \lb g_{\Bry}$ and $g_2 := \lambda_2^2 g_{\Bry}$.  
Assuming $\delta \leq \ov\delta (D)$, we have, using (\ref{eq_R_asymptotic_Bryant})
\begin{equation} \label{eq_rho_g_g_Bry}
 \rho_g \geq \tfrac12 \rho_{g_{\Bry}} \geq \tfrac1{4} C_B^{-1} D^{1/2} \qquad \text{on} \qquad M_{\Bry} (\tfrac12 D, D). 
\end{equation}
Now consider the map $\phi$ from Proposition~\ref{prop_bryant_comparison_general} and note that by the assumptions of this proposition and (\ref{eq_rho_g_g_Bry}) we have for $m = 0, \ldots, 4$
\begin{equation} \label{eq_phigp_g}
 \big| \nabla^m_g (\phi^* g' - g) \big|_g \leq   b \rho_g^{-E} \leq  4^{E} C_B^E \cdot  b D^{-E}. 
\end{equation}
We now claim that for $D \geq \underline{D} (E, C)$ and $\delta \leq \ov\delta (E, C, D, b)$ we have
\begin{equation} \label{eq_g2_g1_g1}
 \big| \nabla^m_{g_1} ( \phi^* g_2 - g_1 ) \big|_{g_1} \leq  C'_{1} (E) b D^{-E} 
\end{equation}
for all $m =0, \ldots, 4$ and some  constant $C'_1 = C'_1 (E) < \infty$.
To see this, assume first that $D \geq  \underline{D} (E, C)$ and $\delta \leq \ov\delta$ such that the pairs of metrics $\{g, \phi^* g'\}$, $\{g, g_1\}$, and $\{g', g_2\}$ are each 2-bilipschitz with respect to one another.
So $g_1, g, \phi^* g', \phi^* g_2$ are pairwise 8-bilipschitz.
As $\lambda \in (C^{-1}, C)$, we can find a constant $C'_2 = (C)  < \infty$ such that by (\ref{eq_Bryant_closeness_Bryant_comparison}) we have for all $m = 0, \ldots, 4$
\begin{align}
 \big| \nabla^m_{g_{1}} (g - g_1) \big|_{g_1} &\leq  C'_2 \delta,  \label{eq_g_g1_g1} \\
\big| \nabla^m_{\phi^* g_2} ( \phi^* g' - \phi^* g_2 ) \big|_{g_1} & \leq 8 \big| \nabla^m_{g_2} ( g' - g_2 ) \big|_{g_2} \circ \phi \leq  C'_2 \delta.  \label{eq_phigp_phig2}
\end{align}
We now argue similarly as in the proof of Lemma~\ref{lem_difference_estimate}.
The tensor $\nabla^m_{g_1} (\phi^* g_2 - g_1)$ can be written as an algebraic expression in terms of the tensors $g^{-1}, (\phi^* g_2)^{-1}, \nabla^{m'}_{g} ( \phi^* g' - g )$, $\nabla^{m'}_{g_1} (g - g_1)$ and $\nabla^{m'}_{\phi^* g_2} (\phi^* g' - \phi^* g_2)$, $m' \leq m$ (where we use $g_1$ as a background metric).
So, pointwise,
\begin{multline*}
 \nabla^m_{g_1} (\phi^* g_2 - g_1) = F \big(  \phi^* g' - g, \ldots, \nabla^{m}_{g} ( \phi^* g' - g ), \\
  g - g_1, \ldots, \nabla^{m}_{g_1} (g - g_1), 
\phi^* g' - \phi^* g_2, \ldots, \nabla^{m}_{\phi^* g_2} (\phi^* g' - \phi^* g_2) \big). 
\end{multline*}
for some smooth, tensor-valued function $F$.
By (\ref{eq_phigp_g}), (\ref{eq_g_g1_g1}) and (\ref{eq_phigp_phig2}), we therefore obtain (\ref{eq_g2_g1_g1}) as long as $2^E C_B^E b D^{-E}$ and $C'' \delta$ are sufficiently small.

So the conditions of Proposition~\ref{prop_bryant_comparison} are fulfilled for $\alpha  =\alpha (E, C):=  \min \{ C^{\prime -1}_1, \lb C^{-1} \} $.
Therefore, if 
$$
E \geq \underline{E}\,,\qquad D \geq \underline{D} (\alpha)\,,
$$
we obtain a diffeomorphism onto its image $\td\phi : M_{\Bry} (D) \to M_{\Bry}$ such that $\td\phi = \phi$ on $M_{\Bry} (D-1, D)$ and moreover there is a universal constant $C'_3 < \infty$ such that   
\[ \big| \td\phi^* g_2 - g_1 \big|_{g_1} \leq  b \cdot C'_3  \alpha^{-C'_{3}} D^{-E+C'_{3}}. \]
If $\delta \leq \ov{\delta} (E, b, \alpha, D)$, then we can assume that the metrics $g, g_1, \phi^* g', \phi^* g_2$ are pairwise sufficiently bilipschitz close to another such that we still have for some universal $C'_4 < \infty$
\[ \big| \td\phi^* g' - g \big|_{g } \leq b \cdot C'_4 \alpha^{-C'_4} D^{-E+C'_4}. \]
 By (\ref{eq_R_asymptotic_Bryant}) we have $C'_4 \alpha^{-C'_4} D^{-E+C'_4} \leq  \beta (10 C_{B} )^{-3/2} D^{-3/2} \leq \beta \rho_g^{-3}$ on $M_{\Bry} (\frac12 D, D)$, as long as 
$$
E \geq C'_4 + 4\,,\qquad D \geq \underline{D} (E, \alpha, \beta)\,,
\qquad
\delta \leq \ov\delta  (E, \alpha, \beta, D).
$$
This implies assumption \ref{ass_10.1_b} of Proposition~\ref{prop_bryant_comparison_general}.
Lastly, note that if 
$$
E \geq \underline{E}\,,\qquad D \geq \underline{D} (\alpha)\,,
$$ 
then $\td\phi$ is an immersion.
So since $$\td\phi ( M_{\Bry} (D-1, D) ) = \phi ( M_{\Bry} (D-1, D) ) \subset M_{\Bry} (D')$$ the image of $\td\phi$ must be contained in $M_{\Bry}(D')$ as well. 
\end{proof}

\section{Inductive step: extension of the comparison domain}
\label{sec_inductive_step_extension_comparison_domain}

\subsection{Statement of the main result} \label{subsec_Setup_comp_domain} 
Consider two Ricci flow spacetimes $\M$ and $\M'$.
The goal of this section is to extend a comparison domain $\N$ in $\M$ that is defined over a time-interval of the form $[0, t_J]$ by one time-step, to a comparison domain that is defined over the time-interval $[0, t_{J+1} = t_J + r_{\comp}^2]$.
In order to carry out this construction, we will assume the existence of a comparison from $\M$ to $\M'$ defined on $\N$ that together with $\N$ satisfies a priori assumptions \ref{item_time_step_r_comp_1}--\ref{item_eta_less_than_eta_lin_13} for some tuple of parameters. 
Assuming that these parameters are chosen appropriately, we will show that the extended comparison domain and the given comparison satisfy the same a priori assumptions for the same tuple of parameters.

The precise statement of the main result of this section is the following.
We remind the reader that we are using the notation for expressing parameter bounds explained in Section~\ref{sec_notation_terminology}.

\begin{proposition}[Extending the comparison domain] \label{prop_comp_domain_construction}
Suppose that
\begin{equation}
\begin{gathered}
\label{eqn_prop_comp_domain_construction_parameter_inequalities}
 \eta_{\lin} \leq \ov\eta_{\lin}, \qquad 
 \delta_{\nn} \leq \ov\delta_{\nn}, \qquad 
 \lambda \leq \ov\lambda (\delta_{\nn}), \qquad
 D_{\CAP} \geq \underline{D}_{\CAP} (\lambda),\\ 
\Lambda \geq \underline{\Lambda} (\delta_{\nn}, \lambda) , \qquad 
 \delta_{\bb} \leq \ov\delta_{\bb} ( \lambda,  \Lambda),\\ \qquad 
 \eps_{\can} \leq \ov\eps_{\can} (\delta_{\nn}, \lambda, \Lambda, \delta_{\bb}), \qquad 
 r_{\comp} \leq \ov{r}_{\comp} ( \lambda,  \Lambda)
\end{gathered}
\end{equation}
and assume that
\begin{enumerate}[label=(\roman*), leftmargin=* ]
\item \label{con_11.1_i} $\M, \M'$ are two $(\eps_{\can} r_{\comp}, T)$-complete Ricci flow spacetimes  that each satisfy the $\eps_{\can}$-canonical neighborhood assumption at scales \lb $(\eps_{\can} r_{\comp}, \lb 1)$.
\item \label{con_11.1_ii} $(\N, \{ \N^j  \}_{j = 1}^J, \{ t_j \}_{j=0}^J )$ is a comparison domain in $\M$ that is defined on the time-interval $[0, t_J]$.
We allow the case $J = 0$, in which this comparison domain is empty (see Definition~\ref{def_comparison_domain}).
\item \label{con_11.1_iii} $(\Cut, \phi, \{ \phi^j \}_{j = 1}^J )$ is a comparison from $\M$ to $\M'$ defined on  $(\N, \lb \{ \N^j  \}_{j = 1}^J, \lb \{ t_j \}_{j=0}^J )$ over the (same) time-interval $[0,t_J]$.  In the case $J=0$, this comparison is the trivial comparison (see the remark after Definition~\ref{def_comparison}).
\item \label{con_11.1_iv} $(\N, \{ \N^j  \}_{j = 1}^J, \{ t_j \}_{j=0}^J )$ and $(\Cut, \phi, \{ \phi^j \}_{j = 1}^J ))$ satisfy a priori assumptions \ref{item_time_step_r_comp_1}--\ref{item_eta_less_than_eta_lin_13} for the parameters $(\eta_{\lin}, \lb \delta_{\nn}, \lb \lambda, \lb D_{\CAP}, \lb \Lambda, \lb  \delta_{\bb}, \lb \eps_{\can}, \lb r_{\comp})$.
\item \label{con_11.1_v} $t_{J+1} := t_J + r_{\comp}^2 \leq T$.
\end{enumerate}

Then there is a subset $\N^{J+1} \subset \M_{[t_J, t_{J+1}]}$ such that $( \N \cup \N^{J+1},\lb \{ \N^j \}_{j = 1}^{J+1}, \lb \{ t_j \}_{j=0}^{J+1})$ is a comparison domain defined on the time-interval $[0, t_{J+1}]$ and such that $( \N \cup \N^{J+1}, \{ \N^j \}_{j = 1}^{J+1}, \{ t_j \}_{j=0}^{J+1})$ and $(\Cut, \lb \phi, \lb \{ \phi^j \}_{j = 1}^J )$ satisfy the a priori assumptions \ref{item_time_step_r_comp_1}--\ref{item_eta_less_than_eta_lin_13} for the same parameters $(\eta_{\lin}, \lb \delta_{\nn}, \lb \lambda, \lb D_{\CAP}, \lb \Lambda, \lb  \delta_{\bb}, \lb \eps_{\can}, \lb r_{\comp})$. 
\end{proposition}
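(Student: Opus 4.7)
The plan is to construct $\N^{J+1}$ by first building its final time-slice $\N^{J+1}_{t_{J+1}} \subset \M_{t_{J+1}}$ and then propagating it backward to time $t_J$ by the flow of $\partial_\t$. To build $\N^{J+1}_{t_{J+1}}$, I would start from the open set of $\Lambda r_{\comp}$-thick points in $\M_{t_{J+1}}$, which is forced into $\N^{J+1}_{t_{J+1}}$ by APA~\ref{item_backward_time_slice_3}(b). Using the $\eps_{\can}$-canonical neighborhood assumption on $\M_{t_{J+1}}$ and Lemma~\ref{lem_neck_or_cap}, every point with scale in a bounded range around $r_{\comp}$ is either a center of a $\delta_{\nn}$-neck (after self-improvement via Lemma~\ref{lem_C0_neck_smooth_neck}) or lies in a topologically controlled cap region. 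A standard selection argument then produces a finite collection of central $2$-spheres of $\delta_{\nn}$-necks at scale $r_{\comp}$ along which to cut $\M_{t_{J+1}}$; I would then define $\N^{J+1}_{t_{J+1}}$ as the union of the resulting components that contain at least one $\Lambda r_{\comp}$-thick point. This gives APA~\ref{item_backward_time_slice_3}(a)--(c) at time $t_{J+1}$ by construction, while APA~\ref{item_backward_time_slice_3}(d) follows from the neck/cap dichotomy applied to each discarded component. For the backward propagation, Lemma~\ref{lem_time_slice_neck_implies_space_time_neck} (combined with $(\eps_{\can}r_{\comp}, T)$-completeness) ensures that the $\partial_\t$-flow of the boundary necks of $\N^{J+1}_{t_{J+1}}$ remains unscathed over a time interval of length $r_{\comp}^2$ and produces central $2$-spheres of nearly-round necks at scale $\approx 2 r_{\comp}$; taking $\Lambda \geq \underline{\Lambda}(\delta_{\nn},\lambda)$, these boundary spheres are $\Lambda r_{\comp}$-thick and therefore lie in $\Int \N^J_{t_J}$ by APA~\ref{item_backward_time_slice_3}(b) for $\N^J$, giving the product-domain structure together with condition~\ref{pr_7.1_3} of Definition~\ref{def_comparison_domain}. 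APA~\ref{item_time_step_r_comp_1} is immediate and APA~\ref{item_lambda_thick_2} for $\N^{J+1}$ follows from Lemma~\ref{lem_forward_backward_control}.

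Next I would verify condition~\ref{pr_7.1_4} of Definition~\ref{def_comparison_domain} together with the $\M$-side of APA~\ref{item_geometry_cap_extension_5}. Each component $\C$ of $\N^{J+1}_{t_J} \setminus \Int \N^J_{t_J}$ must, by APA~\ref{item_backward_time_slice_3}(b) for $\N^J$, be $\Lambda r_{\comp}$-thin; on the other hand, its forward flow $\C(t_{J+1})$ lies in a component of $\N^{J+1}_{t_{J+1}}$ which contains a $\Lambda r_{\comp}$-thick point. Combined with APA~\ref{item_discards_not_too_thick_4} at step $j = J$ (pinning the minimal scale at nearly $\lambda r_{\comp}$) and APA~\ref{item_lambda_thick_2} for the just-constructed $\N^{J+1}$, this forces the existence of a point in (the backward evolution of) $\C$ whose scale increases by only a small amount over the time interval $[t_J, t_{J+1}]$, i.e.\ a point where the scale is nearly non-decreasing on the natural parabolic scale. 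Lemma~\ref{lem_bryant_propagate} then yields an $r_{\comp}^2$-time-equivariant, $\partial_\t$-preserving parameterization of a parabolic neighborhood of this point by the Bryant soliton at scale $10\lambda r_{\comp}$, with error $\delta_{\bb}$ (for $\delta_{\bb} \leq \ov{\delta}_{\bb}(\lambda,\Lambda)$, $\eps_{\can}\leq\ov\eps_{\can}(\lambda,\Lambda)$). Applying the Bryant Slice Lemma~\ref{lem_Bryant_slice} at time $t_J$ then forces $\C$ to be a 3-disk of diameter $\leq D_{\CAP}r_{\comp}$ containing an approximate Bryant tip, and likewise the Bryant Slab Lemma~\ref{lem_bryant_slab} gives the analogous statement across the time slab, supplying APA~\ref{item_discards_not_too_thick_4} for the new step $j = J+1$.

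The main obstacle is the $\M'$-side of APA~\ref{item_geometry_cap_extension_5}: producing, for each new extension cap $\C$, the corresponding $3$-disk $\C' \subset \M'_{t_J}$ with $\partial \C' = \phi_{t_J-}(\partial \C)$ and an approximate Bryant tip $x' \in \M'_{t_J}$ within $D_{\CAP}r_{\comp}$ of $\C'$ at some scale in $[D_{\CAP}^{-1}r_{\comp}, D_{\CAP}r_{\comp}]$. The strategy is to transport the approximate Bryant structure from $\M$ to $\M'$ via $\phi^J$, which by APA~\ref{item_eta_less_than_eta_lin_13} is an $\eta_{\lin}$-almost-isometry on $\N^J_{t_J}$ and whose image satisfies the $\eps_{\can}$-canonical neighborhood assumption at scales $(0,1)$. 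A collar of $\partial \C$ in $\N^J_{t_J}$ is, by the Bryant structure in $\M$, a nearly cylindrical tail of the approximate Bryant region; the scale distortion estimate of Lemma~\ref{lem_scale_distortion} together with $|\phi^{J*}g'-g|\leq\eta_{\lin}$ forces the image of this collar in $\M'_{t_J}$ to also be a nearly cylindrical region at a nearly equal scale. Since the component $\C'$ of $\M'_{t_J} \setminus \phi_{t_J-}(\Int\N_{t_J-})$ capped by this cylindrical end inherits, via the canonical neighborhood assumption, an approximate $\kappa$-solution model on a parabolic neighborhood, Lemma~\ref{lem_d_rho_squared_small_almost_bryant} together with the Bryant rigidity (Proposition~\ref{prop_dtR_0_Bryant}) applied to this model forces $\C'$ to contain a point that is $\delta_{\bb}$-close to $(M_{\Bry}, g_{\Bry}, x_{\Bry})$ at some scale in $[D_{\CAP}^{-1}r_{\comp}, D_{\CAP}r_{\comp}]$, provided $\eta_{\lin}\leq\ov\eta_{\lin}$, $\delta_{\bb}\leq\ov\delta_{\bb}(\lambda,\Lambda)$, $\eps_{\can}\leq\ov\eps_{\can}(\delta_{\nn},\lambda,\Lambda,\delta_{\bb})$, and $D_{\CAP}\geq\underline{D}_{\CAP}(\lambda)$. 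Combined with the fact that $\phi_{t_J-}$ is a diffeomorphism and the topology of $\C$ is that of a $3$-disk, this identifies $\C'$ as a $3$-disk with $\partial\C' = \phi_{t_J-}(\partial \C)$ and completes the verification of APA~\ref{item_geometry_cap_extension_5}.
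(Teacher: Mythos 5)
Your construction of $\N^{J+1}_{t_{J+1}}$ as the union of the components containing a $\Lambda r_{\comp}$-thick point reproduces only the paper's preliminary domain $\Omega$, and this is not yet a valid final time-slice: its complement in $\M_{t_{J+1}}$ can contain components that are weakly $10\lambda r_{\comp}$-thick everywhere (e.g.\ a cap whose scale has crossed $\Lambda r_{\comp}$ but which is cut off by a neck), and these violate a priori assumption~\ref{item_backward_time_slice_3}(d), which demands that every complementary component with nonempty boundary contain a $10\lambda r_{\comp}$-thin point. The paper therefore enlarges $\Omega$ to $\Omega^*$ by adjoining three carefully classified types of complementary component, and it is exactly this enlargement that makes the verification of a priori assumption~\ref{item_geometry_cap_extension_5} delicate.

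The more serious gap is in the $\M'$-side of~\ref{item_geometry_cap_extension_5}. You propose to transport the approximate Bryant structure from $\M$ to $\M'$ via $\phi^J$, using Lemma~\ref{lem_scale_distortion} and then Lemma~\ref{lem_d_rho_squared_small_almost_bryant} / Proposition~\ref{prop_dtR_0_Bryant}. This cannot work: Lemma~\ref{lem_d_rho_squared_small_almost_bryant} requires $\partial_\t \rho^2(x') \geq -\delta$ at a point of $\M'$, but the purely $C^0$ bound $|\phi^{J*}g' - g|\leq\eta_{\lin}$ at the single time $t_J$ gives no control whatsoever on time-derivatives of the scale in $\M'$ — the region around $\C'$ could be heading rapidly into a future singularity — and the point in $\M$ where the scale is nearly non-decreasing lies inside the extension cap $\C$, which is precisely outside the domain of $\phi^J$, so it cannot be pushed forward. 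Beyond this, the logical structure is inverted: in the paper, the $\M'$-side of~\ref{item_geometry_cap_extension_5} is a \emph{criterion} for which complementary 3-disks to adjoin (the ``type III'' components), and the substantial content (Lemma~\ref{lem_APA5}) is a proof by contradiction that no other cap-like component can appear. That argument is necessarily multi-step: one tracks the hypothetical cap backwards over many increments $t_{J},t_{J-1},\dots$ via APA~\ref{item_discards_not_too_thick_4} and the Bryant Slab Lemma, uses Lemma~\ref{lem_scale_distortion} and the Bryant Slice Lemma to build a companion ``barrier region'' $\wh{W}'\subset\M'$, and exploits the fact that the scale of a trajectory confined in $\wh{W}'$ cannot keep decreasing for $J_\#$ steps to locate a time $t_{j_0}$ at which the $\M'$-side of~\ref{item_geometry_cap_extension_5} \emph{was} satisfied — at which point that assumption forces an earlier extension cap, contradicting the cap hierarchy. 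None of this can be replaced by a single-time-slice transport argument.
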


We remind the reader that a priori assumptions \ref{item_time_step_r_comp_1}--\ref{item_eta_less_than_eta_lin_13} allow for the possibility that the comparison $(\Cut, \phi, \{ \phi^j \}_{j = 1}^J )$ is defined on a shorter time-interval than the underlying comparison domain (see Definition~\ref{def_a_priori_assumptions_1_7}).   
In particular,  \ref{item_geometry_cap_extension_5} and \ref{item_eta_less_than_eta_lin_13}  are only required to hold  over the time-interval on which the comparison is defined, which in the context of  Proposition~\ref{prop_comp_domain_construction} is $[0,t_J]$.  

We briefly explain the strategy of the proof of Proposition~\ref{prop_comp_domain_construction}, which will be carried out in the remainder of this section.
In Subsection~\ref{subsec_choosing_Omega}, we will first construct a domain $\Omega \subset \M_{t_{J+1}}$ such that the corresponding product domain $\Omega ([t_J, t_{J+1}]) \subset \M_{[t_J, t_{J+1}]}$ satisfies most of the a priori assumptions \ref{item_time_step_r_comp_1}--\ref{item_eta_less_than_eta_lin_13}.
The final time-slice $\N^{J+1}_{t_{J+1}}$ will later arise from $\Omega$ by adding certain components of its complement $\M_{t_{J+1}} \setminus \Omega$.
This is by far the most delicate part of the proof, because we need to accommodate both a priori assumption \ref{item_backward_time_slice_3}(d),  which forces certain components to be added to $\Omega$, and a priori assumption \ref{item_geometry_cap_extension_5}, which imposes strong restrictions whenever the addition of such components creates extension caps.
The precise criterion for which components of $\M_{t_{J+1}} \setminus \Omega$ will be added to $\Omega$, will be given in Subsection~\ref{subsec_def_of_N_Jp1} and some of the less problematic a priori assumptions will be verified in Subsection~\ref{subsec_verification_easy}.
The most important and complex step in our proof is Lemma~\ref{lem_APA5} in Subsection~\ref{subsec_construction_concluded}, which effectively states that cap extensions only arise when a priori assumption \ref{item_geometry_cap_extension_5} is satisfied.
For more details, we refer the reader to the explanations given before and after the statement of this lemma.

We make the standing assumption that hypotheses \ref{con_11.1_i}--\ref{con_11.1_v} of Proposition~\ref{prop_comp_domain_construction} hold for the remainder of this section. 
The construction of the domain $\N^{J+1}$ and the verification of its properties will proceed in several stages, with each stage requiring additional inequalities on the parameters.  The inequalities on the parameters imposed in the assumptions of lemmas or in discussions in between lemmas will be retained for the remainder of this section.
So the assertions of these lemmas or the conclusions of these discussions continue to hold until the end of this section.

We remind the reader that, while the dependence on the parameters may seem complex, it essentially suffices to observe that the parameter order, as discussed in Subsection~\ref{subsec_parameter_order}, is respected.
We will continue our practice of introducing parameter bounds in separate displayed equations,  to facilitate verification of the parameter dependences.

\subsection[Choosing an almost minimal domain containing all $\mathbf{\Lambda r_{\comp}}$-thick points]{Choosing an almost minimal domain containing all \texorpdfstring{$\Lambda r_{\comp}$}{{\textbackslash}Lambda r{\_}comp}-thick points} \label{subsec_choosing_Omega}
As a first step toward the construction of $\N^{J+1}$, we will  construct a precursor of its final time-slice $\N^{J+1}_{t_{J+1}}$ --- a subset $\Omega \subset \M_{t_{J+1}}$ bounded by  central $2$-spheres of $\delta_{\nn}$-necks at scale $r_{\comp}$ that contains all $\Lambda r_{\comp}$-thick points. 
The final time-slice $\N^{J+1}_{t_{J+1}}$ of $\N^{J+1}$ will later emerge from $\Omega$ by the addition of certain components of its complement inside $\M_{t_{J+1}}$.

Consider the collection $\mathcal{S}$ of all embedded $2$-spheres $\Sigma \subset \M_{t_{J+1}}$ that occur as central $2$-spheres of $\delta_{\nn}$-necks at scale $r_{\comp}$ in $\M_{t_{J+1}}$.

\begin{lemma}
\label{lem_choosing_s_prime}
We can find a subcollection $\mathcal{S}' \subset \mathcal{S}$ such that 
\begin{enumerate}[label=(\alph*)]
\item \label{ass_11.3_a} $d_{t_{J+1}} ( \Sigma_1, \Sigma_2 ) > 10 r_{\comp}$ for all distinct $\Sigma_1, \Sigma_2 \in \mathcal{S}'$.
\item \label{ass_11.3_b} For every $\Sigma \in \mathcal{S}$ there is an $\Sigma' \in \mathcal{S}'$ such that $d_{t_{J+1}} (\Sigma, \Sigma') < 100 r_{\comp}$.
\end{enumerate}
\end{lemma}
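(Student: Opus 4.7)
The plan is to obtain $\mathcal{S}'$ by a standard greedy/maximality argument: choose $\mathcal{S}'\subset \mathcal{S}$ maximal with respect to property \ref{ass_11.3_a}, and then deduce \ref{ass_11.3_b} automatically from maximality. The factor of $10$ in \ref{ass_11.3_a} versus $100$ in \ref{ass_11.3_b} leaves ample slack, so no fine geometric estimate is needed here; in fact the construction will give the stronger conclusion that every $\Sigma\in\mathcal{S}$ is within distance $\leq 10 r_{\comp}$ of some $\Sigma'\in\mathcal{S}'$.

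More precisely, I would consider the set $\mathcal{P}$ of all subcollections $\mathcal{S}''\subset \mathcal{S}$ satisfying
\[ d_{t_{J+1}}(\Sigma_1,\Sigma_2)>10 r_{\comp} \qquad \text{for all distinct } \Sigma_1,\Sigma_2\in\mathcal{S}'',\]
partially ordered by inclusion. The empty collection belongs to $\mathcal{P}$, and for any chain $\{\mathcal{S}''_\alpha\}\subset\mathcal{P}$ the union $\bigcup_\alpha \mathcal{S}''_\alpha$ again lies in $\mathcal{P}$, since the defining condition involves only pairs of elements and any two elements of the union already lie in some common $\mathcal{S}''_\alpha$. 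By Zorn's Lemma, $\mathcal{P}$ has a maximal element $\mathcal{S}'$, and \ref{ass_11.3_a} holds by definition.

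To verify \ref{ass_11.3_b}, let $\Sigma\in\mathcal{S}$. If $\Sigma\in\mathcal{S}'$ we may take $\Sigma'=\Sigma$, so that $d_{t_{J+1}}(\Sigma,\Sigma')=0<100 r_{\comp}$. Otherwise $\mathcal{S}'\cup\{\Sigma\}$ strictly contains $\mathcal{S}'$, so by maximality of $\mathcal{S}'$ it cannot lie in $\mathcal{P}$. Since $\mathcal{S}'$ itself satisfies the separation condition, the failure must come from some pair involving $\Sigma$, i.e.\ there exists $\Sigma'\in\mathcal{S}'$, distinct from $\Sigma$, with $d_{t_{J+1}}(\Sigma,\Sigma')\leq 10 r_{\comp}<100 r_{\comp}$, establishing \ref{ass_11.3_b}.

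There is essentially no obstacle in this argument: the only subtlety worth checking is that $\mathcal{S}$ is genuinely a set (it is a collection of embedded submanifolds of the smooth manifold $\M_{t_{J+1}}$, hence a subset of the power set of $\M_{t_{J+1}}$) so that Zorn's Lemma applies. Nothing in the proof uses the canonical neighborhood assumption, the geometry of necks, or the parameter inequalities \eqref{eqn_prop_comp_domain_construction_parameter_inequalities}; these will enter later when $\mathcal{S}'$ is used to define the domain $\Omega$.
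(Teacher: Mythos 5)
Your proof is correct, and it takes a genuinely different route from the paper. The paper constructs $\mathcal{S}'$ via an explicit countable algorithm: it fixes a countable dense subset $\{x_1,x_2,\ldots\}\subset\M_{t_{J+1}}$ and runs through the $x_i$ one at a time, adding to $\mathcal{S}'$ any $\Sigma\in\mathcal{S}$ that is within $r_{\comp}$ of $x_i$ and still $> 10 r_{\comp}$-separated from everything already chosen. This uses only the separability of the manifold and avoids Zorn's Lemma; it also produces the weaker estimate $< 100 r_{\comp}$ in \ref{ass_11.3_b} because the detection is indirect, via $r_{\comp}$-proximity to some $x_i$. Your Zorn's Lemma argument is shorter, does not rely on separability, and yields the sharper bound $\leq 10 r_{\comp}$; the closure-under-unions check for chains is trivially correct since the separation condition is pairwise. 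In this particular setting the two are essentially equivalent in strength (second-countability of $\M_{t_{J+1}}$ plus the $10 r_{\comp}$ separation already forces $\mathcal{S}'$ to be at most countable under either construction), so the choice between them is a matter of taste. One could even sidestep both by observing that the collection $\mathcal{P}$ has ``finite character'' in the relevant sense and invoking the Teichm\"uller--Tukey lemma, but that is the same content as your Zorn argument. Your closing remark that no geometric input is needed here and the parameter inequalities play no role is accurate and matches the paper.
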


\begin{proof}
Let $\{ x_1, x_2, \ldots \} \subset \M_{t_{J+1}}$ be a countable dense subset.
We can successively construct a sequence of collections $\emptyset = \mathcal{S}'_0 \subset \mathcal{S}'_1 \subset \ldots \subset \mathcal{S}$ by the following algorithm: If $x_i$ is in an $ r_{\comp}$-neighborhood of some $\Sigma \in \mathcal{S}$ with the property that $d_{t_{J+1}} (\Sigma, \Sigma') > 10 r_{\comp}$ for all $\Sigma' \in \mathcal{S}'_{i-1}$, then we set $\mathcal{S}'_i := \mathcal{S}'_{i-1} \cup \{ \Sigma \}$.
Otherwise, we set $\mathcal{S}'_i := \mathcal{S}'_{i-1}$.

Set $\mathcal{S}' := \cup_{i =1}^\infty \mathcal{S}'_{i}$.
Then assertion \ref{ass_11.3_a} holds trivially and for assertion \ref{ass_11.3_b} observe that every $\Sigma \in \mathcal{S}$ is $r_{\comp}$-close to some $x_i$.
If $\mathcal{S}'_i = \mathcal{S}'_{i-1}$, then $d_{t_{J+1}} (\Sigma, \Sigma') \leq 10 r_{\comp}$ for some $\Sigma' \in \mathcal{S}'_{i-1}$ and if $\mathcal{S}'_i = \mathcal{S}'_{i-1} \cup \{ \Sigma' \}$, then $x_i$ is contained in an $r_{\comp}$-neighborhood of $\Sigma'$.
In both cases,  $d_{t_{J+1}} (\Sigma, \Sigma') < 100 r_{\comp}$.
\end{proof}

We now fix the collection $\mathcal{S}'$ for the remainder of this section.

\begin{lemma}
\label{lem_s_separates_thick_thin}
If 
\begin{multline*}
  \delta_{\nn} \leq \ov\delta_{\nn}, \qquad 
  \lambda \leq \ov{\lambda} (\delta_{\nn}), \qquad 
  \Lambda \geq \underline{\Lambda} (\delta_{\nn} ),  \qquad 
 \eps_{\can} \leq \ov\eps_{\can} (\delta_{\nn}), \qquad 
  r_{\comp} < 1,
\end{multline*}
then the collection $\S'$ separates the $100\lambda r_{\comp}$-thin points of $\M_{t_{J+1}}$ from the  $\Lambda r_{\comp}$-thick points.
\end{lemma}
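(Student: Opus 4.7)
Plan: I would argue by contradiction. Suppose there is a continuous path $\gamma:[0,1]\to\M_{t_{J+1}}$ from a $\Lambda r_{\comp}$-thick point $x=\gamma(0)$ to a $100\lambda r_{\comp}$-thin point $y=\gamma(1)$ whose image is disjoint from every $\Sigma'\in\mathcal{S}'$. Requiring $\Lambda\geq 2$ and $\lambda\leq 1/200$ ensures $100\lambda r_{\comp}<r_{\comp}<\Lambda r_{\comp}$, so by continuity of $\rho$ and the intermediate value theorem there is some $s_{0}\in(0,1)$ with $\rho(\gamma(s_{0}))=r_{\comp}$; set $z_{0}:=\gamma(s_{0})$.

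Next, I would apply Lemma~\ref{lem_neck_or_cap} at $z_{0}$ with an auxiliary parameter $\delta_{0}$ chosen small enough (depending on $\delta_{\nn}$) that Lemma~\ref{lem_C0_neck_smooth_neck} upgrades any $\delta_{0}$-neck at the relevant scale to a $\delta_{\nn}$-neck, using $\eps_{\can}\leq\ov\eps_{\can}(\delta_{0})$. In case~\ref{ass_8.2_a}, $z_{0}$ is the center of a $\delta_{0}$-neck at scale $r_{\comp}$, and the upgrade places the corresponding central $2$-sphere $\Sigma_{0}$ in $\mathcal{S}$, with $\gamma$ crossing $\Sigma_{0}$ at $z_{0}$. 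In case~\ref{ass_8.2_b}, $z_{0}$ lies in a compact domain $V$ with $\rho$ pinched in $[C_{0}^{-1}r_{\comp},C_{0}r_{\comp}]$; taking $\Lambda\geq 2C_{0}$ and $100\lambda\leq(2C_{0})^{-1}$ rules out thick and thin points inside $V$. Since $y$ is thin, $V\neq\M_{t_{J+1}}$, so $\partial V\neq\emptyset$; the path must cross a component $\Sigma$ of $\partial V$, which is itself a central $2$-sphere of a $\delta_{0}$-neck at scale $r_{\comp}$, and applying Lemma~\ref{lem_C0_neck_smooth_neck} again reduces to the neck case.

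In either case I obtain a sphere $\Sigma_{0}\in\mathcal{S}$ that is crossed by $\gamma$. By Lemma~\ref{lem_choosing_s_prime}\ref{ass_11.3_b} there is some $\Sigma'\in\mathcal{S}'$ with $d_{t_{J+1}}(\Sigma_{0},\Sigma')<100\, r_{\comp}$. Because $\delta_{\nn}$ is tiny (in particular $\delta_{\nn}^{-1}\gg 100$), both spheres appear as central cross-sections of $\delta_{\nn}$-necks at scale $r_{\comp}$ that overlap substantially, so they extend to a common cylinder-like region $\widetilde U\subset\M_{t_{J+1}}$ diffeomorphic to $S^{2}\times(-a,a)$ at scale $r_{\comp}$ with $a\gg 100$, with both $\Sigma_{0}$ and $\Sigma'$ arising as parallel cross-sections cobounding a sub-region $W\cong S^{2}\times[0,1]$. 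Throughout $\widetilde U$ the scale $\rho$ is pinched near $r_{\comp}$, so by our choices of $\Lambda$ and $\lambda$ the region $\widetilde U$ contains no thick and no thin points.

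The main obstacle is the final topological step: one must show that $\gamma$ actually crosses $\Sigma'$. The idea is to exploit the well-defined ``neck coordinate'' on $\widetilde U$, inherited from the $C^{[\delta_{\nn}^{-1}]}$-closeness to the standard cylinder, which is essentially a $C^{1}$-small perturbation of the $\mathbb{R}$-coordinate of $S^{2}\times\mathbb{R}$. Since $\gamma$ enters $\widetilde U$ through $\Sigma_{0}$ (on the side of $x$) and must eventually exit $\widetilde U$ to reach the thin point $y$, and since the two exits of the tube sit on opposite sides of $\Sigma'$, an intermediate value argument applied to the neck coordinate of $\gamma\cap\widetilde U$ forces $\gamma$ to cross $\Sigma'$, contradicting our assumption. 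Carrying out this topological step cleanly — in particular verifying that the two $\delta_{\nn}$-necks genuinely glue into a common tube in which $\Sigma_{0}$ and $\Sigma'$ are isotopic cross-sections, and ruling out the possibility that $\gamma$ loops back out the same end it came in — is what requires $\delta_{\nn}$ to be small and is the heart of the argument; the parameter bound $\lambda\leq\ov\lambda(\delta_{\nn})$ enters so that the thick/thin exclusion on $\widetilde U$ is compatible with the cylindrical geometry given by $\delta_{\nn}$.
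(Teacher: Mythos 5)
Your proposal reaches the right intermediate objects --- an IVT point of scale $r_{\comp}$, a sphere $\Sigma_{0}\in\mathcal{S}$ through it via Lemma~\ref{lem_neck_or_cap}, and a nearby $\Sigma'\in\mathcal{S}'$ via Lemma~\ref{lem_choosing_s_prime}\ref{ass_11.3_b} --- but the topological step you flag as ``the main obstacle'' is a genuine gap, not a technicality to be cleaned up. Nothing forces $\gamma$ to cross $\Sigma'$. The path only needs to \emph{touch} $\Sigma_{0}$ at a single point; if $\Sigma_{0}$ and $\Sigma'$ are parallel cross-sections of a common tube $\widetilde U$, the path may enter $\widetilde U$ through whichever end lies on the same side of $\Sigma'$ as $\Sigma_{0}$, reach $\Sigma_{0}$, and leave through that same end, never meeting $\Sigma'$. (Note also that $\Sigma_0$ is an interior cross-section of $\widetilde U$, not part of $\partial\widetilde U$, so ``enters through $\Sigma_0$'' is not even the right picture.) Since $\mathcal{S}'$ is merely a metric net in $\mathcal{S}$ --- every $\Sigma\in\mathcal{S}$ lies within $100\,r_{\comp}$ of some $\Sigma'\in\mathcal{S}'$ --- the statement ``$\gamma$ meets some $\Sigma\in\mathcal{S}$'' does not imply ``$\gamma$ meets some $\Sigma'\in\mathcal{S}'$'', and your cylinder-coordinate argument cannot conclude because there is no reason the first entry and last exit of $\widetilde U$ lie on opposite sides of $\Sigma'$.

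The paper closes this by assuming, without loss of generality, that $\gamma$ is \emph{almost minimal} (any competitor path has length $\geq \length(\gamma)-r_{\comp}$), and then deploying two shortcut arguments. First, one reduces to the case that $\gamma$ stays at distance $>1000\,r_{\comp}$ from every $\Sigma'\in\mathcal{S}'$: if not, $\gamma(s')$ lies inside the $\delta_{\nn}$-neck around $\Sigma'$, both halves of $\gamma$ must cross a small cross-sectional sphere $\Sigma^*$ near that neck's boundary, and rerouting through $\Sigma^*$ shortens the path by more than $r_{\comp}$, a contradiction. With this separation, the IVT point cannot be a $\delta_{\nn}$-neck center at scale $r_{\comp}$ (otherwise its central sphere would be in $\mathcal{S}$ and Lemma~\ref{lem_choosing_s_prime}\ref{ass_11.3_b} would force some $\Sigma'$ within $100\,r_{\comp}$, contradicting the $1000\,r_{\comp}$ separation), so only alternative \ref{ass_8.2_b} of Lemma~\ref{lem_neck_or_cap} occurs. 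Second, in that alternative the path must enter and leave $V$ through $\partial V$ at two parameters $s_1<s<s_2$, and the length estimate built into Lemma~\ref{lem_neck_or_cap}\ref{ass_8.2_b} --- any two points of $\partial V$ are joined \emph{within} $\partial V$ by a path shorter than $\min\{d(z_1,x),d(x,z_2)\}-100\rho(x)$ --- lets one reroute $\gamma|_{[s_1,s_2]}$ through $\partial V$, saving at least $r_{\comp}$ and again contradicting minimality. Your proposal never invokes this length estimate, and without an extremality hypothesis there is nothing for any shortcut to contradict; the purely topological route does not close.
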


\begin{proof}
Suppose that the assertion of the lemma was false.  
Then there is a continuous path $\gamma:[0,1]\ra \M_{t_{J+1}} \setminus \cup_{\Sigma \in \mathcal{S}'} \Sigma$ such that $\gamma(0)$ is $\Lambda r_{\comp}$-thick and $\gamma(1)$ is $100 \lambda r_{\comp}$-thin.
Without loss of generality, we may assume that $\gamma$ has been chosen almost minimal in the sense that any other such path has length at least $\length_{t_{J+1}} (\gamma)-r_{\comp}$.

We first argue that we may assume in the following that
\begin{equation} \label{eq_gamma_far_from_Sigma}
 d_{t_{J+1}} \big( \gamma ([0,1]), \Sigma' \big) > 1000  r_{\comp} \qquad \text{for all} \qquad \Sigma' \in \mathcal{S}', 
\end{equation}
Assume that $d_{t_{J+1}} (\gamma(s'), \Sigma') \leq 1000  r_{\comp}$ for some $s' \in [0,1]$ and some $\Sigma' \in \mathcal{S}'$.
Let $U \subset \M_{t_{J+1}}$ be a $\delta_{\nn}$-neck at scale $r_{\comp}$ that has $\Sigma'$ as a cross-sectional $2$-sphere.
If
\[ \delta_{\nn} \leq \ov\delta_{\nn}, \]
then $\gamma(s') \in U$.
Moreover, if 
\[ \delta_{\nn} \leq \ov\delta_{\nn}, \qquad
\lambda \leq \ov\lambda, \qquad
\Lambda \geq \underline\Lambda, \]
then no point on $U$ is $\Lambda r_{\comp}$-thick or $100 \lambda r_{\comp}$-thin and therefore $\gamma(0), \gamma(1) \not\in U$.
Let $\Sigma^* \subset U$ be a cross-sectional $2$-sphere of $U$, close to its boundary such that $\Sigma^*, \Sigma'$ bound a domain diffeomorphic to $S^2 \times [0,1]$ inside $U$ that contains $\gamma(s')$.
It follows that $\gamma |_{[0,s']}, \gamma |_{[s',1]}$ intersect $\Sigma^*$.
If
\[ \delta_{\nn} \leq \ov\delta_{\nn}, \]
then the diameter of $\Sigma^*$ is less than $10 r_{\comp}$ and $\Sigma^*$ may be chosen such that the distance between $\Sigma^*$ and $\gamma(s')$ is larger than $10 r_{\comp}$.
This implies that we can replace $\gamma$ by a path whose length is shorter than $\length_{t_{J+1}} (\gamma)-r_{\comp}$, in contradiction to its almost minimality.
Therefore, we may assume in the following that (\ref{eq_gamma_far_from_Sigma}) holds.

By the intermediate value theorem, assuming
\[ \lambda < \tfrac1{10}, \qquad 
\Lambda > 1, \qquad
r_{\comp} < 1, \]
we may pick $s\in [0,1]$ such that $x :=\gamma(s)$ has scale $\rho(x) = r_{\comp}$.
By the construction of $\mathcal{S}$ and (\ref{eq_gamma_far_from_Sigma}), assuming
\[ \delta_{\nn} \leq \ov\delta_{\nn}, \]
the point $x$ cannot be the center of a $\delta_{\nn}$-neck at scale $r_{\comp}$.
So assuming
\[ \eps_{\can} \leq \ov\eps_{\can} (\delta_{\nn}), \qquad
r_{\comp} < 1, \]
we can use Lemma~\ref{lem_neck_or_cap} to find a compact subset $V \subset \M_{t_{J+1}}$ with $x \in V$ that has connected boundary and on which $C_0^{-1} r_{\comp} < \rho < C_0 r_{\comp}$ holds, where $C_0 = C_0 (\delta_{\nn}) < \infty$.
So, assuming
\[ \lambda \leq (100 C_0 (\delta_{\nn}))^{-1}, \qquad
\Lambda \geq C_0 (\delta_{\nn}), \]
we can conclude that $\gamma(0), \gamma(1) \not\in V$.
Therefore, $V$ must have exactly one boundary component and this component is a central 2-sphere of a $\delta_{\nn}$-neck.

We claim that $\partial V$ is disjoint from all elements of $\mathcal{S}'$.
Assume by contradiction that $\partial V$ intersects some $\Sigma' \in \mathcal{S}'$.
If
\begin{equation} \label{eq_delta_nn_bound_V_SS}
 \delta_{\nn} \leq \ov\delta_{\nn}, 
\end{equation}
then we have $\frac12 r_{\comp} < \rho < 2 r_{\comp}$ on $\Sigma' \cap \partial V$.
Again, assuming a bound of the form (\ref{eq_delta_nn_bound_V_SS}), we find that $\partial V$ is a central 2-sphere of a neck at some scale of the interval $(\frac14 r_{\comp}, 4 r_{\comp})$.
So the intersection of $\partial V$ with $\gamma ([0,1])$ is not further than $40 r_{\comp}$ from the intersection with $\Sigma'$, in contradiction to (\ref{eq_gamma_far_from_Sigma}).

Choose now $s_1 \in [0, s)$ and $s_2 \in (s, 1]$ such that $\gamma(s_i) \in \partial V$.
By Lemma~\ref{lem_neck_or_cap} the path $\gamma |_{[s_1,s_2]}$ can be replaced by a continuous path inside $\partial V$ of length less than $\length_{t_{J+1}} (\gamma|_{[s_1,s_2]}) - r_{\comp}$, contradicting the minimality assumption of $\gamma$.
\end{proof}

Now let $\Omega \subset \M_{t_{J+1}}$ be the union of the closures of all components of
\[ \M_{t_{J+1}} \setminus \cup_{\Sigma \in \mathcal{S}'} \Sigma \]
that contain $\Lambda r_{\comp}$-thick points.
Then by the previous lemma, $\Omega$ is weakly $100 \lambda r_{\comp}$-thick.

\begin{lemma} \label{lem_Omega_survives_until_t_J}
If
\[  \eps_{\can} \leq \ov\eps_{\can} (\lambda), \]
then all points in $\Omega$ survive until time $t_J$.
\end{lemma}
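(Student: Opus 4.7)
The plan is to apply the backward survival control lemma, Lemma~\ref{lem_lambda_thick_survives_backward}, to each $x \in \Omega$. By the construction of $\Omega$ carried out right after Lemma~\ref{lem_s_separates_thick_thin}, every $x \in \Omega$ is weakly $100\lambda r_{\comp}$-thick, i.e.\ $\rho(x) \geq 100\lambda r_{\comp}$. The goal is to prolong $x$ backward in time by $t_{J+1} - t_J = r_{\comp}^2$, which requires the scale to stay above the completeness threshold $\eps_{\can} r_{\comp}$ along the trajectory; the canonical neighborhood estimate on $\partial_{\t} \rho^2$ packaged inside Lemma~\ref{lem_lambda_thick_survives_backward} is exactly what controls this.

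The one technical point is a rescaling of parameters, since Lemma~\ref{lem_lambda_thick_survives_backward} ties the completeness scale and the CNA precision to a single symbol. To fit the hypotheses, I would take the parameter $r$ appearing there to be $r := 100\lambda r_{\comp}$, and choose $A := (100\lambda)^{-2}$ and $\delta := 1/2$, so that $Ar^2 = r_{\comp}^2$. The role of $\eps_{\can}$ in that lemma is then played by the auxiliary constant $\eps'_{\can} := \eps_{\can}/(100\lambda)$; this choice is forced because $\eps'_{\can} r$ must equal the completeness scale $\eps_{\can} r_{\comp}$ supplied by hypothesis~\ref{con_11.1_i} of Proposition~\ref{prop_comp_domain_construction}. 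The required $\eps'_{\can}$-canonical neighborhood assumption at scales $(\eps'_{\can} r, r) = (\eps_{\can} r_{\comp}, 100\lambda r_{\comp})$ then follows from the stronger $\eps_{\can}$-canonical neighborhood assumption at scales $(\eps_{\can} r_{\comp}, 1)$ provided by hypothesis~\ref{con_11.1_i}, since $\eps'_{\can} \geq \eps_{\can}$ makes the CNA weaker, and $100\lambda r_{\comp} \leq 1$ for $\lambda$ and $r_{\comp}$ small.

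The smallness hypothesis $\eps'_{\can} \leq \ov\eps_{\can}(\delta, A)$ of Lemma~\ref{lem_lambda_thick_survives_backward} now becomes $\eps_{\can} \leq 100\lambda \cdot \ov\eps_{\can}(\tfrac12, (100\lambda)^{-2})$, which is a restriction of the form $\eps_{\can} \leq \ov\eps_{\can}(\lambda)$, matching the statement of Lemma~\ref{lem_Omega_survives_until_t_J}. With this in place, Lemma~\ref{lem_lambda_thick_survives_backward} applied at $x \in \M_{t_{J+1}}$ yields that $x(\bar t)$ exists for $\bar t \in [t_{J+1} - r_{\comp}^2, t_{J+1}] \cap [0,\infty) = [t_J, t_{J+1}]$, using $t_J \geq 0$. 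In particular $x$ survives until time $t_J$, which is the conclusion.

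There is no real obstacle here; the entire content of the lemma is encoded in the scale-rescaling needed to apply Lemma~\ref{lem_lambda_thick_survives_backward} at the scale $100\lambda r_{\comp}$ of $\Omega$ rather than at the comparison scale $r_{\comp}$ itself, and this rescaling is precisely what produces the dependence $\eps_{\can} \leq \ov\eps_{\can}(\lambda)$ appearing in the hypothesis.
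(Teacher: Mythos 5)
Your proposal is correct and takes essentially the same approach as the paper: the paper's proof of this lemma is a one-liner citing Lemma~\ref{lem_lambda_thick_survives_backward} together with the fact that $\Omega$ is weakly $100\lambda r_{\comp}$-thick, and your argument is exactly that application, with the rescaling of $r$, $A$, and the effective canonical-neighborhood constant written out explicitly. The only small point left implicit in your write-up is that the comparison $\eps'_{\can} = \eps_{\can}/(100\lambda) \geq \eps_{\can}$ requires $\lambda \leq 1/100$, which is harmless since the standing parameter bound $\lambda \leq \ov\lambda(\delta_{\nn})$ from the proposition hypotheses can be taken to enforce this.
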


\begin{proof}
This follows immediately from Lemma~\ref{lem_lambda_thick_survives_backward} and the fact that $\Omega$ is weakly $100 \lambda r_{\comp}$-thick.
\end{proof}

\begin{lemma}
\label{lem_thick_in_n_tj_minus}
If $J \geq 1$ and
\[
  \delta_{\nn} \leq \ov{\delta}_{\nn}, \qquad  
\Lambda \geq \underline{\Lambda} ,  \qquad 
  \eps_{\can} \leq \ov\eps_{\can}, \qquad
  r_{\comp} \leq \ov{r}_{\comp} (\Lambda), 
\]
then for every $\Lambda r_{\comp}$-thick point $x \in \M_{t_{J+1}}$ we have $x(t_J) \in \Int \N_{t_J -}$.
\end{lemma}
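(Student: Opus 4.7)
The plan is to show in three steps that $x(t_J)$ exists, is not on $\partial \N_{t_J-}$, and belongs to $\N_{t_J-}$.

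First, to handle existence and a lower bound on the scale at $t_J$, I will invoke Lemma~\ref{lem_lambda_thick_survives_backward} with $r = \Lambda r_{\comp}$, $A = 1$ (so $Ar^2 = \Lambda^2 r_{\comp}^2 \geq r_{\comp}^2$ covers the time step), and a fixed small $\delta_0$, say $\delta_0 = \tfrac14$. The hypothesis $\rho(x) > \Lambda r_{\comp}$ provides the required scale bound, and the parameter restriction $r_{\comp} \leq \ov r_{\comp}(\Lambda)$ ensures $\Lambda r_{\comp} \leq 1$, so the canonical neighborhood assumption at scales $(\eps_{\can} r_{\comp}, 1)$ applies at $x$. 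Choosing $\eps_{\can} \leq \ov\eps_{\can}(\tfrac14, 1)$ (a universal bound absorbed by the hypothesis), the lemma yields both the existence of $x(t_J)$ and the estimate $\rho(x(t_J)) > \tfrac34 \Lambda r_{\comp}$.

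Second, I rule out $x(t_J) \in \partial \N_{t_J-}$. By APA~\ref{item_backward_time_slice_3}(a), every boundary point of $\N_{t_J-}$ is the center of a $\delta_{\nn}$-neck at scale $r_{\comp}$. For $\delta_{\nn}$ sufficiently small, the scale along such a neck lies in $(\tfrac12 r_{\comp}, 2 r_{\comp})$. Since $\tfrac34 \Lambda r_{\comp} > 2 r_{\comp}$ when $\Lambda \geq 3$, the bound from the first step forces $x(t_J) \notin \partial \N_{t_J-}$.

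Third, and most delicately, I show $x(t_J) \in \N_{t_J-}$. Suppose for contradiction $x(t_J)$ lies in the open complement $\M_{t_J} \setminus \N_{t_J-}$, so a neighborhood $U$ of $x(t_J)$ is contained in this complement. By APA~\ref{item_backward_time_slice_3}(b), $\rho \leq \Lambda r_{\comp}$ throughout $U$. Applying Lemma~\ref{lem_properties_kappa_solutions_cna}(c), which delivers $\partial_{\mathfrak t}\rho^2 \leq \delta_*$ for any prescribed $\delta_* > 0$ provided $\eps_{\can}$ is small depending on $\delta_*$, and integrating along the trajectory from $x(t_J)$ to $x$, one obtains $\rho^2(x) \leq \rho^2(x(t_J)) + \delta_* r_{\comp}^2 \leq (\Lambda^2 + \delta_*) r_{\comp}^2$. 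Since this non-strict bound is only barely compatible with $\rho(x) > \Lambda r_{\comp}$, I close the argument by exploiting the openness of the strictly $\Lambda r_{\comp}$-thick set: for points $y$ in a neighborhood of $x$ with $\rho(y) > \Lambda r_{\comp}(1+\eta_y)$, applying Lemma~\ref{lem_lambda_thick_survives_backward} with $r = \rho(y)$ and $\delta < \eta_y/(1+\eta_y)$ gives $\rho(y(t_J)) > \Lambda r_{\comp}$ strictly, whence $y(t_J) \in \N_{t_J-}$ by APA~\ref{item_backward_time_slice_3}(b). Passing to the limit $y \to x$ and using the closedness of $\N_{t_J-}$ together with continuity of the backward flow concludes $x(t_J) \in \N_{t_J-}$.

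The main obstacle will be the third step, where the gap between the strict inequality $\rho(x) > \Lambda r_{\comp}$ and the non-strict bound $\rho(x(t_J)) \leq \Lambda r_{\comp}$ coming from APA~\ref{item_backward_time_slice_3}(b) must be bridged. The resolution combines the arbitrary smallness of the Lemma~\ref{lem_properties_kappa_solutions_cna}(c) slack $\delta_*$ (under the hypothesis $\eps_{\can} \leq \ov\eps_{\can}(\delta_{\nn}, \lambda, \Lambda, \delta_{\bb})$) with a density/closedness argument using the openness of the strictly thick set.
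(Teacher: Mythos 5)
Your first two steps are sound. The gap is in the third, and it is genuine. Note that Lemma~\ref{lem_lambda_thick_survives_backward} with parameter $\delta$ requires $\eps_{\can} \leq \ov\eps_{\can}(\delta, A)$; since in the statement you are proving $\eps_{\can}$ must be a single universal constant, independent of the particular point $x$, you only have access to a fixed $\delta_0 > 0$. Backward survival then yields $\rho(x(t_J)) > (1-\delta_0)\rho(x)$, which exceeds $\Lambda r_{\comp}$ only when $\rho(x) > \Lambda r_{\comp}/(1-\delta_0)$. But the ``barely thick'' points with $\Lambda r_{\comp} < \rho(x) \leq \Lambda r_{\comp}/(1-\delta_0)$ are exactly the ones your step three must treat: your own derived bound $\rho^2(x) \leq (\Lambda^2 + \delta_*)r_{\comp}^2$ shows $x$ lies in this range under the contradiction hypothesis, and there the argument gives no information. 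Perturbing to nearby $y$ and passing to the limit does not help: continuity of $\rho$ gives $\rho(y) \approx \rho(x)$, so the excess thickness of $y$ is as small as that of $x$ and each $y_k$ faces the same obstruction; the closedness/density device never gets a sequence $y_k(t_J) \in \N_{t_J-}$ to start from. No topological manoeuvre converts the non-strict inclusion from APA~\ref{item_backward_time_slice_3}(b) into the strict one you need.

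The missing ingredient is a rigidity input. If $x(t_J) \notin \Int\N_{t_J-}$, then APA~\ref{item_backward_time_slice_3}(a),(b) give $\rho(x(t_J)) \leq \Lambda r_{\comp} < \rho(x)$, so the curvature scale strictly \emph{increases} along the trajectory over $[t_J, t_{J+1}]$; together with the companion bound $\rho(x) \leq 2\Lambda r_{\comp}$ (also from Lemma~\ref{lem_lambda_thick_survives_backward}), this is precisely the hypothesis of Lemma~\ref{lem_bryant_propagate}. By the Hamilton--Brendle rigidity, a large spatial neighborhood of $x(t_J)$ in $\M_{t_J}$ is then $\delta_\#$-close to $(M_{\Bry}, g_{\Bry}, x_{\Bry})$ at scale $\rho(x) > \Lambda r_{\comp}$. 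Since $\rho \geq 1$ on the normalized Bryant soliton and $\rho \to \infty$ at infinity, one finds a continuous path $\gamma$ from $x(t_J)$ inside this approximate Bryant region to a $\Lambda r_{\comp}$-thick point, with $\rho > c\Lambda r_{\comp}$ along it for a universal $c > 0$; the endpoint lies in $\N_{t_J-}$ by APA~\ref{item_backward_time_slice_3}(b), the path avoids $\partial\N_{t_J-}$ (where $\rho \approx r_{\comp}$) by APA~\ref{item_backward_time_slice_3}(a) once $\Lambda \geq \underline{\Lambda}$, and so $x(t_J) = \gamma(0) \in \Int\N_{t_J-}$, a contradiction. It is this Bryant-soliton model of the scale-increasing region that produces the contradiction; without such a structural input the scale estimates alone are simply not contradictory.
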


Recall that $x(t_J) \in \M_{t_J}$ denotes the image of $x$ under the time $- (t_{J+1}- t_J)$-flow of the time vector field $\partial_{\mathfrak{t}}$ (see Definition~\ref{def_points_in_RF_spacetimes}).

\begin{proof}
Assume that $x (t_J) \not\in \Int \N_{t_J-}$.
By a priori assumptions \ref{item_backward_time_slice_3}(a), (b), Lemma~\ref{lem_lambda_thick_survives_backward} and assuming that
\[ \delta_{\nn} \leq \ov\delta_{\nn}, \qquad
\Lambda \geq 2, \qquad
 \eps_{\can} \leq \ov\eps_{\can}, \qquad
r_{\comp} \leq \ov{r}_{\comp} (\Lambda), \]
we have $\rho(x) \leq 2 \Lambda r_{\comp}$.

Let $\delta_\# > 0$ be a constant whose value we will determine in the course of the proof.
Assuming
\[ \Lambda \geq 1, \qquad
\eps_{\can}  \leq \ov\eps_{\can} (\delta_\#), \qquad
 r_{\comp} \leq \ov{r}_{\comp} (\Lambda), \]
we can use Lemma~\ref{lem_bryant_propagate} (for $\alpha = 2\Lambda$) to argue that $(\M_{t_J}, x(t_J))$ is $\delta_\#$-close to $(M_{\Bry}, \lb g_{\Bry}, \lb x_{\Bry})$ at scale $\rho (x) > \Lambda r_{\comp}$.
Since $\rho$ is uniformly bounded from below on $(M_{\Bry}, g_{\Bry})$ and diverges at infinity, there is a universal constant $c > 0$ such that for
\[ \delta_\# \leq \ov\delta_\# \]
we can find a path $\gamma : [0,1] \to \M_{t_{J}}$ with $\gamma(0) = x(t_J)$, $\rho (\gamma(1)) > \Lambda r_{\comp}$ and $\rho (\gamma(s)) > c \Lambda r_{\comp}$ for all $s \in [0,1]$.
So by a priori assumption \ref{item_backward_time_slice_3}(b) we have $\gamma(1) \in \N_{t_J-}$.
If
\[ \delta_{\nn} \leq \ov\delta_{\nn}, \qquad
\Lambda \geq \underline{\Lambda}, \]
then by a priori assumption \ref{item_backward_time_slice_3}(a), the image $\gamma ([0,1])$ is disjoint from $\partial \N_{t_J-}$.
It follows that $x(t_J) = \gamma(0) \in \N_{t_J-}$.
\end{proof}

We remark that in the proof of Lemma~\ref{lem_thick_in_n_tj_minus}, the use of Lemma~\ref{lem_bryant_propagate}, which is based on the rigidity theorems of Hamilton and Brendle,  may be replaced by a longer but more elementary argument involving the maximum principle and the geometry of $\kappa$-solutions.

\begin{lemma}
\label{lem_omega_tj_in_n_tj_minus}
If $J \geq 1$ and
\begin{multline*}
 \delta_{\nn} \leq \ov\delta_{\nn},  \qquad 
 \lambda \leq \ov\lambda (\delta_{\nn}), \qquad  
\Lambda \geq \underline\Lambda (\delta_{\nn}),   \qquad 
 \eps_{\can} \leq \ov\eps_{\can} (\delta_{\nn}), \qquad 
 r_{\comp} \leq \ov{r}_{\comp} (\Lambda),
\end{multline*}
then $\Omega (t_J) \subset \Int \N_{t_J-}$.
\end{lemma}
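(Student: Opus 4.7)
\smallskip

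\textbf{Plan.} Take $y \in \Omega$. By the construction of $\Omega$, the point $y$ lies in the closure of some component $\C$ of $\M_{t_{J+1}} \setminus \bigcup_{\Sigma \in \mathcal{S}'} \Sigma$ which contains a $\Lambda r_{\comp}$-thick point $x_\C$. Pick a path $\gamma : [0,1] \to \ov{\C} \subset \Omega$ with $\gamma(0) = y$ and $\gamma(1) = x_\C$. By Lemma~\ref{lem_Omega_survives_until_t_J}, every point of $\Omega$ survives backward to time $t_J$, so $\gamma'(s) := \gamma(s)(t_J)$ defines a continuous path in $\M_{t_J}$, and Lemma~\ref{lem_thick_in_n_tj_minus} (whose hypotheses are guaranteed by the parameter inequalities in the statement) gives $\gamma'(1) = x_\C(t_J) \in \Int \N_{t_J-}$. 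The goal is to deduce $\gamma'(0) = y(t_J) \in \Int \N_{t_J-}$.

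Suppose for contradiction this fails. Since $\Int \N_{t_J-}$ is open in $\M_{t_J}$, the continuous path $\gamma'$ must meet $\partial \N_{t_J-}$ at some $s_0 \in [0,1)$; set $z := \gamma'(s_0)$. By a priori assumption~\ref{item_backward_time_slice_3}(a), $z$ is a center of a $\delta_{\nn}$-neck in $\M_{t_J}$ at scale $r_{\comp}$. Moreover $\gamma(s_0) \in \Omega$ is weakly $100\lambda r_{\comp}$-thick by Lemma~\ref{lem_s_separates_thick_thin}, so applying Lemma~\ref{lem_lambda_thick_survives_backward} backward from $\gamma(s_0)$ across the time-interval $[t_J, t_{J+1}]$ (with $A = (100\lambda)^{-2}$ and, say, $\delta = \tfrac12$) yields
\[
\rho(z(t)) \geq 50 \lambda r_{\comp} \qquad \text{for all } t \in [t_J, t_{J+1}].
\]

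The contradiction will come from forward-propagating the neck structure at $z$ by iterated application of Lemma~\ref{lem_time_slice_neck_implies_space_time_neck}. Choosing the parameter $a = -1$ at each step maximizes the forward extension: starting from a $\delta$-neck at time $t^{*}$ of scale $s^{*}$, a single application of the lemma produces a spacetime neck extending to time $t^{*} + 5(s^{*})^{2}/16$, where the time-$t^{*}+5(s^{*})^{2}/16$ slice is a (somewhat worse) neck of scale $s^{*}/4$. Iterating $n$ times from $(z,t_J)$ with initial scale $r_{\comp}$, we reach time $t_n = t_J + (r_{\comp}^{2}/3)(1 - 16^{-n}) < t_J + r_{\comp}^{2}/3 < t_{J+1}$ with scale $r_{\comp}/4^n$ at $z(t_n)$. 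For $n > \log_4\bigl(1/(50\lambda)\bigr)$ the predicted scale $r_{\comp}/4^n$ drops below $50\lambda r_{\comp}$, contradicting the backward survival bound above.

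The main obstacle is making the iteration rigorous. Each application of Lemma~\ref{lem_time_slice_neck_implies_space_time_neck} requires the input neck parameter to be smaller than a threshold depending on the desired output precision, so only finitely many (namely $\lceil \log_4(1/(50\lambda)) \rceil$) iterations can be chained together, and this is only possible if $\delta_{\nn}$ is chosen small enough to absorb the deterioration over that many steps. Similarly, $\eps_{\can}$ must be small enough that the canonical neighborhood assumption persists at all the scales traversed during the iteration, down to (slightly above) $50\lambda r_{\comp}$. The parameter constraints in the statement (in particular $\lambda \leq \ov\lambda(\delta_{\nn})$ and $\eps_{\can} \leq \ov\eps_{\can}(\delta_{\nn})$, together with $r_{\comp} \leq \ov r_{\comp}(\Lambda)$) are arranged precisely so that this iteration closes and yields the contradiction.
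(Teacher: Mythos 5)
The set-up in your first paragraph --- path from $y$ to a thick point $x_\C$, crossing point $z \in \partial\N_{t_J-}$, backward survival bound $\rho(z(t)) \geq 50\lambda r_{\comp}$ --- is sound. The gap is in the iteration, and you have in fact identified it yourself but then dismissed it incorrectly.

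The number of applications of Lemma~\ref{lem_time_slice_neck_implies_space_time_neck} you need is $n \sim \log_4\bigl(1/(50\lambda)\bigr)$, which grows without bound as $\lambda \to 0$. Each application degrades the neck quality, so sustaining a chain of length $n$ forces the initial parameter $\delta_{\nn}$ to be small depending on $n$, hence on $\lambda$. But $\delta_{\nn}$ precedes $\lambda$ in the parameter order of Subsection~\ref{subsec_parameter_order}, so $\delta_{\nn}$ may not depend on $\lambda$. The constraint $\lambda \leq \ov\lambda(\delta_{\nn})$ points the wrong way: it only says $\lambda$ is small once $\delta_{\nn}$ is fixed, whereas your chain needs $\lambda$ to be \emph{bounded below} in terms of $\delta_{\nn}$ (equivalently, $\delta_{\nn}$ small in terms of $\lambda$). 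With $\delta_{\nn}$ fixed and $\lambda$ sent to $0$ along $\lambda \leq \ov\lambda(\delta_{\nn})$, the required iteration count diverges and the chain cannot close. The appeal to ``the parameter constraints being arranged precisely so that this iteration closes'' therefore does not hold up.

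The paper avoids this entirely by never iterating. It chooses the path $\gamma_{J+1}$ from the thick point $x$ to $z$ to be \emph{almost minimizing} within the component $\Omega_0$; applies Lemma~\ref{lem_time_slice_neck_implies_space_time_neck} exactly once to deduce $\rho(z) < 0.7 r_{\comp}$; then, since $\rho(x) > \Lambda r_{\comp}$ at the other end, the intermediate value theorem produces a point $y$ on the path with $\rho(y) = r_{\comp}$ exactly. Almost-minimality forces $y$ to be far from all spheres in $\mathcal{S}'$, so $y$ cannot be a $\delta_{\nn}$-neck center, and Lemma~\ref{lem_neck_or_cap} furnishes a cap-type domain $V$ containing $y$ with $\rho$ pinched between $C_0^{-1}(\delta_{\nn}) r_{\comp}$ and $C_0(\delta_{\nn}) r_{\comp}$. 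Almost-minimality again shows $z \in V$ and $(\partial V)(t_J) \subset \Int\N_{t_J-}$, so the component $\C^*$ of $\M_{t_J}\setminus\Int\N_{t_J-}$ adjacent to $\Sigma_J$ sits inside $V(t_J)$. A priori assumption~\ref{item_backward_time_slice_3}(d) requires $\C^*$ to contain a $10\lambda r_{\comp}$-thin point; taking $\lambda < \tfrac{1}{20}C_0^{-1}(\delta_{\nn})$ contradicts the lower scale bound on $V(t_J)$. This is a single-step geometric argument whose only $\lambda$-dependence is the innocuous $\lambda \leq \ov\lambda(\delta_{\nn})$, consistent with the parameter order.
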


\begin{proof}
Let $\Omega_0$ be the closure of a component of $\M_{t_{J+1}} \setminus \cup_{\Sigma \in \mathcal{S}'} \Sigma$ that contains a $\Lambda r_{\comp}$-thick point $x$.
Note that by definition of $\Omega$ we have $\Omega_0 \subset \Omega$ and the lemma follows if we can show that $\Omega_0 (t_J) \subset \Int \N_{t_j-}$ for all such $\Omega_0$.

Fix $\Omega_0$ and a $\Lambda r_{\comp}$-thick point $x \in \Omega_0$ for the remainder of the proof and assume by contradiction that $\Omega_0 (t_J) \not\subset \Int \N_{t_J-}$.
Suppose by contradiction that there is a point Let $z \in \Omega_0$ with the property that $z (t_J) \not\in \Int \N_{t_J -}$.
Choose a path $\gamma_{J+1} : [0,1] \to \Omega_0$ within $\Omega_0$ such that $x = \gamma_{J+1} (0)$ and $z = \gamma_{J+1} (1)$.
Without loss of generality, we may assume that we have chosen $z$ and $\gamma_{J+1}$ almost minimal in the sense that for any other such choice of $z', \gamma'_{J+1}$ we have
\begin{equation} \label{eq_almost_minimal_property}
 \length_{t_{J+1}} (\gamma'_{J+1}) > \length_{t_{J+1}} (\gamma_{J+1} ) - r_{\comp}. 
\end{equation}

By Lemma~\ref{lem_thick_in_n_tj_minus}, assuming
\[
  \delta_{\nn} \leq \ov{\delta}_{\nn}, \qquad  
\Lambda \geq \underline{\Lambda}, \qquad 
 \eps_{\can} \leq \ov\eps_{\can}, \qquad  
 r_{\comp} \leq \ov{r}_{\comp} (\Lambda), 
\]
we have $x(t_J) \in \N_{t_J-}$.
Denote by $\gamma_{J} : [0,1] \to \M_{t_{J}}$ the curve at time $t_J$ corresponding to $\gamma_{J+1}$ under the $(- r^2_{\comp})$-flow of the time vector field $\partial_{\mathfrak{t}}$, i.e. $\gamma_J (s) = (\gamma_{J+1} (s))(t_J)$.
This path exists due to Lemma~\ref{lem_Omega_survives_until_t_J}.
Since $\gamma_J (1) = z(t_J) \not\in \Int \N_{t_J-}$, we can find a parameter $s_0 \in [0,1]$ such that $\gamma_J (s_0) \in \partial \N_{t_J -}$ and $\gamma_J ([0,s_0)) \subset \Int \N_{t_J-}$.
By truncating $\gamma_J$ and $\gamma_{J+1}$, we may  assume without loss of generality that $s_0 = 1$ and therefore $z (t_{J}) = \gamma_J (1) \in \partial \N_{t_J-}$ and $\gamma_J ( [0,1) ) \subset \Int \N_{t_J-}$.
The almost minimality property (\ref{eq_almost_minimal_property}) of $z$ and $\gamma_{J+1}$ remains preserved under this truncation process.

Let $\Sigma_{J} \subset \partial \N_{t_J-}$ the boundary component that contains $z(t_J)$.
By a priori assumption \ref{item_backward_time_slice_3}(a), $\Sigma_J$ is a central $2$-sphere of a $\delta_{\nn}$-neck at scale $r_{\comp}$ in $\M_{t_J}$.
Let $\delta_\# > 0$ be a constant whose value we will determine later.
By Lemma~\ref{lem_time_slice_neck_implies_space_time_neck}, assuming 
\[ \delta_{\nn} \leq \ov\delta_{\nn} (\delta_\#), \qquad 
\eps_{\can} \leq \ov\eps_{\can} (\delta_\#), \qquad
r_{\comp} \leq \ov{r}_{\comp}, \]
this implies that all points on $\Sigma_J$ survive until time $t_{J} + \frac14 r_{\comp}^2$ and  $\Sigma_J (t_{J} + \frac14 r_{\comp}^2)$ is a central $2$-sphere of a $\delta_{\#}$-neck at scale $\frac12 r_{\comp}$.
So $\rho ( z( t_J + \frac14 r_{\comp}^2) )< 0.6 r_{\comp}$, assuming
\[ \delta_\# \leq \ov\delta_\# . \]
By Lemma~\ref{lem_lambda_thick_survives_backward}, this implies that $\rho (\gamma_{J+1} (1)) = \rho (z) < 0.7 r_{\comp}$, assuming
\[ \eps_{\can} \leq \ov\eps_{\can}, \qquad r_{\comp} < 1. \]

Recall that at the other endpoint of $\gamma_{J+1}$ we have $\rho (\gamma_{J+1} (0)) = \rho (x) > \Lambda r_{\comp}$.
So by the intermediate value theorem, assuming
\[ \Lambda > 1, \]
we can find a parameter $s \in (0,1)$ such that $y := \gamma_{J+1}(s)$ has scale $\rho(y) = r_{\comp}$.

Assuming
\[ \delta_{\nn} \leq \ov\delta_{\nn}, \qquad
\Lambda \geq \underline{\Lambda}, \]
we can conclude that $x, z$ cannot lie in $\delta_{\nn}$-necks at scale $r_{\comp}$ and therefore $d_{t_{J+1}} ( \{ x, z \}, \lb \partial \Omega_0) > 2000 r_{\comp}$.
So by the almost minimal choice of $\gamma_{J+1}$ we find, using the same argument as the one leading to (\ref{eq_gamma_far_from_Sigma}) in the proof of Lemma~\ref{lem_s_separates_thick_thin}, that 
\begin{equation} \label{eq_gamma_far_inside_Omega_0}
 d_{t_{J+1}} \big( \gamma ([0,1]), \partial \Omega_0 \big) > 1000  r_{\comp},
\end{equation}
assuming that
\[ \delta_{\nn} \leq \ov\delta_{\nn}. \]

As the interior of $\Omega_0$ is disjoint from all elements of $\mathcal{S}'$, we can use assertion \ref{ass_11.3_b} of Lemma~\ref{lem_choosing_s_prime} and  (\ref{eq_gamma_far_inside_Omega_0}) to conclude that the point $y$ cannot be a center of a $\delta_{\nn}$-neck at scale $r_{\comp}$, assuming
\[ \delta_{\nn} \leq \ov\delta_{\nn}. \]
We can hence apply Lemma~\ref{lem_neck_or_cap} and find a smooth domain $V \subset \M_{t_{J+1}}$ with $y \in V$.
Moreover, we have $C_0^{-1} (\delta_{\nn}) r_{\comp} <  \rho <  C_0 (\delta_{\nn}) r_{\comp}$ on $V$.
So by Lemma~\ref{lem_lambda_thick_survives_backward} and assuming
\[ \eps_{\can} \leq \ov\eps_{\can} (\delta_{\nn}), \]
all points on $V$ survive until time $t_J$ and
\begin{equation} \label{eq_rho_on_Vt_J}
 \rho \geq \tfrac12 C_0^{-1} (\delta_{\nn} ) r_{\comp} \qquad \text{on} \qquad V(t_J). 
\end{equation}
Also, if
\[ \Lambda > C_0 (\delta_{\nn}), \]
then $x \not\in V$.
In particular, this implies that $\partial V \neq \emptyset$.
By Lemma~\ref{lem_neck_or_cap} the boundary $\partial V$ is a central $2$-sphere of a $\delta_{\nn}$-neck.
Choose $s_1 \in [0,s)$ such that $\gamma_{J+1} (s_1) \in \partial V$.

We claim that
\begin{equation} \label{eq_SigmaJ1_in_V}
  z \in V.
\end{equation}
If not, then we can choose $s_2 \in (s, 1]$ such that $\gamma_{J+1} (s_2) \in \partial V$.
By Lemma~\ref{lem_neck_or_cap} we can connect $\gamma_{J+1} (s_1), \gamma_{J+1} (s_2)$ by a path $\gamma' : [s_1, s_2] \to \partial V \subset \Int \Omega_0$ whose length is less than $\length_{t_{J+1}} (\gamma |_{[s_1,s]}) - 100 r_{\comp}$.
The concatenation of $\gamma_{J+1} |_{[0,s_1]}$, $\gamma'$ and $\gamma_{J+1} |_{[s_2,1]}$ would have length less than $\length_{t_{J+1}} (\gamma_{J+1}) - 100 r_{\comp}$, contradicting the almost minimal choice of $\gamma_{J+1}$ and confirming (\ref{eq_SigmaJ1_in_V}).

Next, we argue that
\begin{equation} \label{eq_partial_V_in_N}
 (\partial V)(t_J) \subset \Int \N_{t_J-}. 
\end{equation}
Note that by our choice of $\gamma_{J+1}$ we have $(\gamma_{J+1} (s_1))(t_J) \in \Int \N_{t_J-}$.
So if (\ref{eq_partial_V_in_N}) was false, then $(\partial V)(t_J) \cap \partial \N_{t_J-} \neq \emptyset$.
Therefore, by Lemma~\ref{lem_neck_or_cap}  we would find a continuous curve $\gamma'' : [s_1,1] \to \partial V$ between $\gamma_{J+1} (s_1)$ and a point $z' \in \partial V$ with $z' (t_J) \in  \partial \N_{t_J-}$ such that $\length_{t_{J+1}} (\gamma'') \leq d_{t_{J+1}} (\gamma_{J+1} (s_1), \gamma_{J+1} (s)) - 100 r_{\comp}$.
The concatenation of $\gamma |_{[0,s_1]}$ with $\gamma''$ would then have length of at most
\begin{multline*}
 \length_{t_{J+1}}(\gamma_{J+1}) - d_{t_{J+1}} (\gamma_{J+1} (s_1), \gamma_{J+1} (s)) + \length_{t_{J+1}} (\gamma'') \\
  \leq \length_{t_{J+1}}(\gamma_{J+1}) - 100 r_{\comp}.
\end{multline*}
This, however, contradicts again the almost minimal choice of $\gamma_{J+1}$, confirming (\ref{eq_partial_V_in_N}).

The inclusion (\ref{eq_SigmaJ1_in_V}) implies that $z (t_J) \in V (t_J)$.
Let $\C^*$ be the component of $\M_{t_J} \setminus \Int \N_{t_J-}$ that is adjacent to $\Sigma_J$.
As $\C^*$ is path-connected and $z(t_J) \in \C^*$, we can conclude, using (\ref{eq_partial_V_in_N}), that $\C^* \subset V$.
By a priori assumption \ref{item_backward_time_slice_3}(d) there must be a $10\lambda r_{\comp}$-thin point in $\C^*$.
So if we choose
\[ \lambda < \tfrac1{20} C_0^{-1} (\delta_{\nn}), \]
then we obtain a contradiction to (\ref{eq_rho_on_Vt_J}).
\end{proof}

\subsection{The definition of \texorpdfstring{$\mathbf{\N^{J+1}}$}{N{\^{}}{\{}J+1{\}}}} \label{subsec_def_of_N_Jp1}
We will now enlarge $\Omega$ to a subset $\Omega^* \subset \M_{t_{J+1}}$ that will become the final time-slice $\N^{J+1}_{t_{J+1}}$ of the product domain $\N^{J+1}$.
The components $Z$ of the difference $\M_{t_{J+1}} \setminus \Int \Omega$ fall into (at least) one of the following four types:

\begin{enumerate}[label=(\Roman*)]
\item  \label{type_11.3_I}
$Z$  has non-empty boundary and all points on $Z$ are weakly $10 \lambda r_{\comp}$-thick (in particular $Z$ is not a closed component of $\M_{t_{J+1}}$).
\item \label{type_11.3_II}
	\begin{enumerate}
	\item $Z$ is diffeomorphic to a $3$-disk.
	\item $Z(t)$ is well-defined and $\lambda r_{\comp}$-thick for all $t\in [t_J,t_{J+1}]$. 
	\item $Z (t_J) \subset \N_{t_J-}$ if $J \geq 1$. 
	\end{enumerate}

\item  \label{type_11.3_III}
	\begin{enumerate}
	\item $Z$ is diffeomorphic to a $3$-disk.
	\item $Z(t)$ is well-defined and $\lambda r_{\comp}$-thick for all $t\in [t_J,t_{J+1}]$. 
	\item  $\C:=Z (t_J)\setminus\Int\N_{t_J-}$ is a component of $\M_{t_J}\setminus\Int\N_{t_J-}$, and there is a component $\C' \subset \M'_{t_J} \setminus \phi ( \Int \N_{t_J-})$ such that a priori assumptions \ref{item_geometry_cap_extension_5}(a)--(e) hold, that is:
		\begin{itemize}
		\item $\C$ and $\C'$ are $3$-disks.
		\item $\partial \C' = \phi_{t_J-} (\partial \C)$.
		\item   There is a point $x\in \C$ such that $(\M_{t_J},x)$ is $\de_{\bb}$-close to the pointed Bryant soliton $(M_{\Bry},g_{\Bry},x_{\Bry})$ at scale $10\lambda r_{\comp}$.
		\item There is a point $x'\in \M'_{t_J}$, at distance $\leq D_{\CAP} r_{\comp}$ from $\C'$, such that $(\M'_{t_j},x')$ is $\de_{\bb}$-close to the pointed Bryant soliton $(M_{\Bry}, \lb g_{\Bry}, \lb x_{\Bry})$  at some scale in the interval $[D_{\CAP}^{-1}r_{\comp},D_{\CAP}r_{\comp}]$.
		\item $\C$ and $\C'$ have diameter $\leq D_{\CAP}r_{\comp}$.
		\end{itemize}

	\end{enumerate}
\item \label{type_11.3_IV} None of the above.
\end{enumerate}

Let $\Omega^*$  be the union of $\Omega$ with all components $Z \subset \M_{t_{J+1}} \setminus \Int{\Omega}$ that are of type \ref{type_11.3_I}, \ref{type_11.3_II} or \ref{type_11.3_III}.   
Assuming
\[ \eps_{\can}\leq \ov\eps_{\can}(\lambda), \]
each component of type \ref{type_11.3_I}--\ref{type_11.3_III} survives until time $t_J$, either by definition or by Lemma \ref{lem_lambda_thick_survives_backward}. 
The subset $\Omega$ survives until time $t_J$ by Lemma~\ref{lem_Omega_survives_until_t_J}. 
Thus we may define $\N^{J+1}$ to be the product domain with final time-slice $\Omega^*$:
\begin{equation}
\label{eqn_n_j_plus_1}
 \N^{J+1} := \cup_{t\in [t_J,t_{J+1}]}\Omega^* (t). 
\end{equation}

To provide some motivation for the choice of $\Omega^*$, we point out that if $\Omega^*\subset\M_{t_{J+1}}$ is a manifold with boundary obtained from  $\Omega$ by adding some components of its complement, and $\N^{J+1}$ is defined by (\ref{eqn_n_j_plus_1}), then one can check that $(\N, \linebreak[1] \{ \N^j \}_{j=1}^{J+1}, \linebreak[1] \{ t_j \}_{j=0}^{J+1})$ and $(\Cut, \linebreak[1] \phi, \linebreak[1] \{ \phi^j \}_{j=1}^J)$  will only satisfy a priori assumptions \ref{item_time_step_r_comp_1}--\ref{item_eta_less_than_eta_lin_13}   if $\Omega^*$ includes all components of type \ref{type_11.3_I}--\ref{type_11.3_III}.  In this sense $\Omega^*$ is the ``minimal'' candidate for an extension of $\Omega$  that yields a comparison (domain) satisfying the a priori assumptions.

In the remainder of this section we will complete the proof of Proposition~\ref{prop_comp_domain_construction} by verifying that $(\N, \linebreak[1] \{ \N^j \}_{j=1}^{J+1}, \linebreak[1] \{ t_j \}_{j=0}^{J+1})$ is a comparison domain, and that $(\N, \linebreak[1] \{ \N^j \}_{j=1}^{J+1}, \linebreak[1] \{ t_j \}_{j=0}^{J+1})$ and $(\Cut, \linebreak[1] \phi, \linebreak[1] \{ \phi^j \}_{j=1}^J)$  satisfy a priori assumptions \ref{item_time_step_r_comp_1}--\ref{item_eta_less_than_eta_lin_13}.   
Most of the verification is straightforward, using the results already established.  
The main difficulty will be establishing the properties of extension caps, especially \ref{item_geometry_cap_extension_5}.  
The crucial fact here, which we will prove in Lemma~\ref{lem_APA5}, is that components $Z$ of type \ref{type_11.3_I} and \ref{type_11.3_II} satisfy $Z(t_J) \subset \N_{t_J-}$.
In other words, extension caps are only caused by components of type \ref{type_11.3_III}, which satisfy a priori assumption \ref{item_geometry_cap_extension_5}.

The main idea of the proof of Lemma~\ref{lem_APA5} will be to show that if $Z(t_J)\not\subset \N_{t_J-}$ for some component $Z$ of type \ref{type_11.3_I}, then a priori assumption \ref{item_geometry_cap_extension_5} would have forced an extension cap to have occurred at some earlier time.
For more details we refer to the reader to the overview preceding the proof of Lemma~\ref{lem_APA5} in Subsection~\ref{subsec_construction_concluded}.

\subsection{Verification of Proposition \ref{prop_comp_domain_construction}, except for Definition \ref{def_comparison_domain}(\ref{item_3_disk}) and \ref{item_geometry_cap_extension_5}. } \label{subsec_verification_easy}
We will now verify that $(\N \cup \N^{J+1}, \lb \{ \N^j \}_{j=1}^{J+1}, \lb \{ t_j \}^{J+1})$ satisfies properties (\ref{pr_7.1_1})--(\ref{pr_7.1_3}) of the definition of a comparison domain (Definition~\ref{def_comparison_domain}) and that $(\N \cup \N^{J+1}, \lb \{ \N^j \}_{j=1}^{J+1}, \lb \{ t_j \}^{J+1})$ and $(\Cut, \lb \phi, \lb \{ \phi^j \}_{j=1}^J )$ satisfy a priori assumptions \ref{item_time_step_r_comp_1}--\ref{item_discards_not_too_thick_4} and \ref{item_eta_less_than_eta_lin_13} (see Definition~\ref{def_a_priori_assumptions_1_7}).
Most of these properties and assumptions will follow fairly easily, apart from some technical points.
The remaining verification of Definition~\ref{def_comparison_domain}(\ref{pr_7.1_4}) and a priori assumption \ref{item_geometry_cap_extension_5} requires some deeper discussion, which we postpone to the next subsection.

We remind the reader that we assume inequalities of the form (\ref{eqn_prop_comp_domain_construction_parameter_inequalities}), such that the conclusions of the lemmas from the preceding subsections are valid. 

Property (\ref{pr_7.1_1}) of Definition~\ref{def_comparison_domain} holds by construction.  

Next, let us verify property (\ref{pr_7.1_2}) of Definition~\ref{def_comparison_domain}.
Since it is a union of $\Omega$ with connected components of its complement, $\Omega^*$ is a closed subset of $\M_{t_{J+1}}$, and is a domain with smooth boundary, where the boundary components are connected components of $\D\Omega$.  Since $\N^{J+1}_t=\Omega^*(t)$ is the image of $\Omega^*$ under the $(t-t_{J+1})$-flow of $\D_{\t}$, which is defined on a neighborhood of $\Omega^*$, it follows that $\N^{J+1}_t$ is a domain with smooth boundary for all $t \in [t_J, t_{J+1}]$.
Next, recall that $\Omega$ is weakly  $100\lambda r_{\comp}$-thick by Lemma~\ref{lem_s_separates_thick_thin}. By the definition of components of types \ref{type_11.3_I}--\ref{type_11.3_III} and Lemmas~\ref{lem_lambda_thick_survives_backward} and \ref{lem_bounded_curv_bounded_dist}, assuming 
$$
\lambda \leq \ov\lambda, \qquad
\eps_{\can}\leq \ov\eps_{\can}(\lambda)\,, \qquad
r_{\comp} \leq \ov{r}_{\comp},
$$ 
we find that for all $t\in [t_J,t_{J+1}]$:
\begin{enumerate}[label=(\Alph*), leftmargin=* ]
\item \label{prop_11.4_A} The time-slice $\N^{J+1}_t=\Omega^*(t)$ is $\lambda r_{\comp}$-thick.
\item \label{prop_11.4_B} For every $x\in\Omega^*$ the parabolic neighborhood $P(x, r_{\comp})$ is unscathed and is $c r_{\comp}$-thick, where $c = c(\lambda) > 0$.
\end{enumerate}

Now suppose that $\{y_k\}\subset \N^{J+1}$ and $y_k\ra y_\infty\in \M_{t_\infty}$.  Then $y_k=x_k(t_k)$ for some $x_k\in\Omega^*$, $t_k\in [t_J,t_{J+1}]$, and $t_k\ra t_\infty$.  
Clearly, $x_k(t_\infty)\ra y_\infty$.
So $\{x_k(t_\infty)\}$ is a Cauchy sequence in $\M_{t_\infty}$.  Therefore $\{x_k\}$ is Cauchy in $\M_{t_{J+1}}$ by (B) above and a distance distortion argument.  
Since $\Omega^*$ is closed and $\lambda r_{\comp}$-thick, it is complete, assuming
\[ \eps_{\can} \leq \ov\eps_{\can} (\lambda). \]
It follows that $\{x_k\}$ converges to some $x_\infty\in \Omega^*$, and $x_\infty(t_\infty)=y_\infty$.   Hence $\N^{J+1}$ is closed, and we have verified property (\ref{pr_7.1_2}) of Definition~\ref{def_comparison_domain}.   

We have $\D\N^{J+1}_{t_J}=(\D\Omega^*)(t_J)\subset \Omega(t_J)$.  Since $\Omega(t_J)\subset \Int\N_{t_J-}$ by Lemma~\ref{lem_omega_tj_in_n_tj_minus}, part (\ref{pr_7.1_3}) of Definition~\ref{def_comparison_domain} holds.  

We now turn to the a priori assumptions.

A priori assumption \ref{item_time_step_r_comp_1} is obvious.  
By \ref{prop_11.4_A} above, $\N^{J+1}$ is $\lambda r_{\comp}$-thick; so a priori assumption \ref{item_lambda_thick_2}  holds. 

Note that we need only verify a priori assumption \ref{item_backward_time_slice_3} for $\N_{t_{J+1}-}=\Omega^*$.   
A priori assumptions \ref{item_backward_time_slice_3}(a)--(c) follow directly from the construction of $\Omega^*$.
To see a priori assumption \ref{item_backward_time_slice_3}(d), consider a component $Z \subset \M_{t_{J+1}} \setminus \Int \N_{t_{J+1}-}=\M_{t_{J+1}}\setminus\Int\Omega^*$ with non-empty boundary.
Then, by construction, $Z$ is a type-\ref{type_11.3_IV} component of $\M_{t_{J+1}} \setminus \Omega$.
As $Z$ is not of type \ref{type_11.3_I}, it must contain a $10 \lambda r_{\comp}$-thin point.
 A priori assumption \ref{item_backward_time_slice_3}(e) holds since in Proposition \ref{prop_comp_domain_construction} the comparison is defined over the time-interval $[0,t_J]$, and does not include any cuts in $\M_{t_J}$.

Next, we verify a priori assumption \ref{item_discards_not_too_thick_4}.  
Let $\mathcal{C}$ be a $3$-disk component of $\N^J_{t_J} \setminus \Int \N^{J+1}_{t_J}$ (if $J\geq1$) or $\M_0 \setminus \Int \N^1_{0}$ (if $J=0$), such that  $\D\C\subset \N_{t_J}^{J+1}$.  Assume by contradiction that all points on $\mathcal{C}$ survive until time $t_{J+1}$ and that $\C(t)$ is $\lambda r_{\comp}$-thick for all $t\in[t_J,t_{J+1}]$.   Then $\C(t_{J+1})$ is contained in $\M_{t_{J+1}}\setminus \Int\Omega^*$ by the definition of $\N^{J+1}$.  Moreover,  $\D(\C(t_{J+1}))=(\D\C)(t_{J+1})$ is a $2$-sphere contained in $\D\Omega^*$, and hence an entire boundary component of $\Omega^*$.  It follows that $\C(t_{J+1})$ is a component of $\M_{t_{J+1}}\setminus\Int \Omega^*$ that is also a component of $\M_{t_{J+1}}\setminus\Int \Omega$ of type \ref{type_11.3_IV}.  However, it is also of type \ref{type_11.3_II}, which is a contradiction.

Lastly, we point out that by the hypotheses of Proposition~\ref{prop_comp_domain_construction}, we know that a priori assumption \ref{item_eta_less_than_eta_lin_13} holds for $(\N, \linebreak[1] \{ \N^j \}_{j=1}^{J+1}, \linebreak[1] \{ t_j \}_{j=0}^{J+1})$ and $(\Cut, \linebreak[1] \phi, \linebreak[1] \{ \phi^j \}_{j=1}^J)$ (recall that Definition~\ref{def_a_priori_assumptions_1_7} only requires the bound in a priori assumption \ref{item_eta_less_than_eta_lin_13} to hold in the time-interval $[0,t_J]$).

\subsection{Proof of Proposition~\ref{prop_comp_domain_construction}, concluded} \label{subsec_construction_concluded}
It remains to verify Definition~\ref{def_comparison_domain}(\ref{item_3_disk}) and a priori assumption \ref{item_geometry_cap_extension_5}.  

We first verify the ``if'' direction of \ref{item_geometry_cap_extension_5}.  To that end, suppose that $J \geq 1$ and that $\C$ is a component of $\M_{t_J}\setminus\Int \N_{t_J-}$ such that there is a component $\C'$ of $\M'_{t_J}\setminus \phi_{t_J-}(\Int \N_{t_J-})$ satisfying a priori assumptions \ref{item_geometry_cap_extension_5}(a)--(e); in other words:
\begin{enumerate}[label=(\alph*), leftmargin=* ]
\item \label{prop_11.5_a} $\C$ and $\C'$ are $3$-disks.
\item \label{prop_11.5_b} $\partial \C' = \phi_{t_J-} (\partial \C)$.
\item \label{prop_11.5_c}  There is a point $x\in \C$ such that $(\M_{t_J},x)$ is $\de_{\bb}$-close to the pointed Bryant soliton $(M_{\Bry},g_{\Bry},x_{\Bry})$ at scale $10\lambda r_{\comp}$.
\item \label{prop_11.5_d} There is a point $x' \in \M'_{t_J}$, at distance $\leq D_{\CAP} r_{\comp}$ from $\C'$ such that $(\M'_{t_j},x')$ is $\de_{\bb}$-close to the pointed Bryant soliton $(M_{\Bry}, \lb g_{\Bry}, \lb x_{\Bry})$  at some scale in the interval $[D_{\CAP}^{-1}r_{\comp},D_{\CAP}r_{\comp}]$.
\item \label{prop_11.5_e} $\C$ and $\C'$ have diameter $\leq D_{\CAP}r_{\comp}$.
\end{enumerate}
We now claim that, under suitable assumptions on the parameters, $\C$ is a component of $\N^{J+1}_{t_J} \setminus \Int \N^{J }_{t_J}$.
Since $\C$ is a $3$-disk by assumption, this will imply that $\C$ is an extension cap.

To see this, we will apply the Bryant Slab Lemma~\ref{lem_bryant_slab} for $X_0 = \N^{J}_{t_J}$ and $X_1 = \Omega$.
Note that assumptions \ref{con_8.42_i}--\ref{con_8.42_iv} of the Bryant Slab Lemma hold due to Definition~\ref{def_comparison_domain}(\ref{pr_7.1_1}), a priori assumptions \ref{item_backward_time_slice_3}(a)--(c) and by the construction of $\Omega$.
Assumption \ref{con_8.42_v} of the Bryant Slab Lemma holds due to Lemma~\ref{lem_omega_tj_in_n_tj_minus}.
So the Bryant Slab Lemma can be applied on the time-interval $[t_J, t_{J+1}]$ if
\begin{equation} \label{eq_Bryant_slab_constants} 
\delta_{\nn} \leq \ov\delta_{\nn} , \qquad
 0 < \lambda < 1, \qquad
\Lambda \geq \underline{\Lambda}, \qquad
\delta' \leq \ov{\delta}' (\lambda, \Lambda) 
\end{equation}
and if there is a map $\psi$ with $\psi (x_{\Bry}, - (10 \lambda)^{-2}) =x$ and a $\delta'$-good Bryant slab $W \subset \M_{[t_J, t_{J+1}]}$, as required in the Bryant Slab Lemma.
The existence of the map $\psi$ and the $\delta'$-good Bryant slab $W$ follows from \ref{prop_11.5_c} above and Lemma~\ref{lem_promoting_Bryant}, and assuming
\[ \delta_{\bb} \leq \ov\delta_{\bb} (\lambda, \delta'), \qquad
\eps_{\can} \leq \ov\eps_{\can} (\lambda, \delta'), \qquad
r_{\comp} \leq \ov{r}_{\comp}. \]
Under assumptions of the same form as (\ref{eq_Bryant_slab_constants}) we can also apply the Bryant Slice Lemma~\ref{lem_Bryant_slice} at time $t_i$, $i = J, J+1$, for $\psi = \psi_{t_i}$, $W = W_{t_i}$ and $X = X_{i-J}$.

Let $\C_0 :=W_{t_J}\setminus\Int X_0$ and $\C_1 := W_{t_{J+1}} \setminus \Int X_1$ be as in the Bryant Slab Lemma.  
By the Bryant Slice Lemma applied at time $t_{J+1}$ we know that $x(t_{J+1}) = \psi ( x_{\Bry}, 0)$ is $11 \lambda r_{\comp}$-thin.
So, by construction of $\Omega$, we have $x(t_{J+1}) \in \C_1 \neq \emptyset$.
By assertions \ref{ass_8.42_a}, \ref{ass_8.42_b} of the Bryant Slice Lemma we find that $\C_0 = \C$ and that $\C_{1}$ is a 3-disk component of $\M_{t_{J+1}} \setminus \Int \Omega$.
Assertions \ref{ass_8.43_a}, \ref{ass_8.43_b} of the Bryant Slab Lemma imply that $\C_1 (t)$ is $9\lambda r_{\comp}$-thick for all $t \in [t_J, t_{J+1}]$ and $\C_1 (t_{J}) \supset \C_0$ and $\C = \C_0 = \C_1 (t_J) \setminus \Int \N_{t_J}^J$.
It follows that $Z := \C_{1}$ is a component of type \ref{type_11.3_III}, and so $Z(t_J)\subset \Omega^*(t_J)=\N_{t_J}^{J+1}$.  
Thus $\C\subset \N^{J+1}_{t_J}\setminus\Int \N^J_{t_J}$, and since $\C$ is a component of $\M_{t_J}\setminus \Int \N^J_{t_J}$, it is also a component of $\N_{t_J}^{J+1} \setminus\Int \N_{t_J}^J$.  
Hence the ``if'' direction of \ref{item_geometry_cap_extension_5} holds.

In order to verify Definition \ref{def_comparison_domain}(\ref{item_3_disk}) and the ``only if'' direction of a priori assumption \ref{item_geometry_cap_extension_5}, we need the following fundamental result.

\begin{lemma}[Structure of extension caps] \label{lem_APA5}
 If
\begin{equation*}
\begin{gathered}
\eta_{\lin}\leq\ov{\eta}_{\lin}\,,\qquad 
\de_{\nn}\leq\ov\de_{\nn}\,, \qquad
\lambda\leq\ov\lambda, \qquad 
D_{\CAP} \geq \underline{D}_{\CAP} (\lambda), 
\qquad \Lambda \geq \underline{\Lambda} (\lambda) \,,\\ \qquad
\eps_{\can} \leq \ov\eps_{\can}(\la,\Lambda,\de_{\bb})\,, \qquad
 r_{\comp} \leq \ov{r}_{\comp} (\lambda)\,,
 \qquad\qquad\qquad 
\end{gathered}
\end{equation*}
then the following holds.

If $Z \subset \M_{t_{J+1}} \setminus \Int \Omega$ is a component of type \ref{type_11.3_I}, then $Z(t_J) \subset \N_{t_J-}$.
\end{lemma}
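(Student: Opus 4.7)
The plan is to argue by contradiction, contradicting a priori assumption \ref{item_geometry_cap_extension_5} applied to $(\N, \{\N^j\}_{j=1}^J, \{t_j\}_{j=0}^J)$ at the top index $j=J$. Since extension caps are only defined for $1\le j<J$ by Definition \ref{def_comparison_domain}(\ref{item_3_disk}), no component of $\M_{t_J}\setminus\Int\N_{t_J-}$ is an extension cap of the inductive tuple, so the ``iff'' in \ref{item_geometry_cap_extension_5} at $j=J$ reduces to the assertion that no component $\C^*\subset\M_{t_J}\setminus\Int\N_{t_J-}$ admits a partner $\C'\subset\M'_{t_J}\setminus\phi_{t_J-}(\Int\N_{t_J-})$ for which all of conditions (a)--(e) hold. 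I will assume $Z(t_J)\not\subset\N_{t_J-}$ for a type-(I) component $Z$ and then exhibit such a pair to derive the contradiction.

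First I would isolate the ``bad'' component. Because $\partial Z(t_J)=(\partial Z)(t_J)\subset\Omega(t_J)\subset\Int\N_{t_J-}$ by Lemma \ref{lem_omega_tj_in_n_tj_minus}, the failure of containment forces a full component $\C^*$ of $\M_{t_J}\setminus\Int\N_{t_J-}$ to lie inside $Z(t_J)$, with $\partial\C^*\ne\emptyset$. A priori assumption \ref{item_backward_time_slice_3}(d) gives a $10\lambda r_{\comp}$-thin point $z\in\C^*$, and writing $z=w(t_J)$ with $w\in Z$, the weak $10\lambda r_{\comp}$-thickness of $Z$ yields $\rho(w)\ge 10\lambda r_{\comp}>\rho(z)$; integrating $|\D_\t\rho^2|\le C$ (Lemma \ref{lem_properties_kappa_solutions_cna}(b)) over the step $r_{\comp}^2$ and taking $\lambda$ universally small forces $\rho(w)\le(10\lambda)^{-1}r_{\comp}$. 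Then Lemma \ref{lem_bryant_propagate} applied at $x=w$ with $\alpha=10\lambda$, $r=r_{\comp}$, $a=10\lambda r_{\comp}$, and closeness parameter as small as we wish, produces a smooth Bryant chart $\psi:\ol{M_{\Bry}(\delta_\#^{-1})}\to W\subset\M_{t_J}$ witnessing $\delta_\#$-closeness of $(\M_{t_J},z)$ to $(M_{\Bry},g_{\Bry},x_{\Bry})$ at scale $10\lambda r_{\comp}$. The Bryant Slice Lemma \ref{lem_Bryant_slice} applied with $X=\N_{t_J-}$ then identifies $\C^*=W\setminus\Int\N_{t_J-}$, shows $\C^*$ is a $3$-disk contained in $\psi(M_{\Bry}(D_0(\lambda)))$, and bounds its diameter by $D_{\CAP}r_{\comp}$ provided $D_{\CAP}\ge\ul D_{\CAP}(\lambda)$, verifying conditions \ref{item_geometry_cap_extension_5}(a), (c), (e).

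The main technical obstacle is constructing the partner $\C'\subset\M'_{t_J}$ and verifying conditions (b), (d). The bound $|h|\le\eta_{\lin}$ from a priori assumption \ref{item_eta_less_than_eta_lin_13}, together with the fact that $W\setminus\C^*\subset\Int\N_{t_J-}$ (since $\partial W$ is $\Lambda r_{\comp}$-thick once $\delta_\#\le\ov\delta_\#(\lambda,\Lambda)$), implies that $\phi_{t_J-}\circ\psi$, restricted to an annulus $M_{\Bry}(D_1,D_2)\subset W\setminus\C^*$, is bilipschitz into $\M'_{t_J}$ with $C^0$-distortion $O(\eta_{\lin})$. Coupling this with the canonical neighborhood assumption on $\phi(\N_{t_J-})$ from \ref{item_eta_less_than_eta_lin_13} and applying Lemma \ref{lem_C0_neck_smooth_neck} on cross-sectional necks in the annulus gives $C^{[\delta^{-1}]}$-closeness to necks in $\M'_{t_J}$. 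A compactness/rigidity argument in the spirit of Lemma \ref{lem_convergence_of_scales}, Lemma \ref{lem_d_rho_squared_small_almost_bryant}, and Lemma \ref{lem_promoting_Bryant} then lets me promote this approximate structure to $\delta_{\bb}$-closeness of $\M'_{t_J}$ to a genuine Bryant soliton at some scale $\lambda'\in[D_{\CAP}^{-1}r_{\comp},D_{\CAP}r_{\comp}]$ on the outside of $\phi_{t_J-}(\partial\C^*)$. A second application of the Bryant Slice Lemma on the $\M'$ side then produces the desired $3$-disk $\C'$ with $\partial\C'=\phi_{t_J-}(\partial\C^*)$ and a tip point $x'$ at the required scale, giving (b) and (d). With (a)--(e) established for $(\C^*,\C')$, the inductive validity of \ref{item_geometry_cap_extension_5} at $j=J$ forces $\C^*$ to be an extension cap of $(\N,\{\N^j\}_{j=1}^J,\{t_j\}_{j=0}^J)$, which is impossible by Definition \ref{def_comparison_domain}(\ref{item_3_disk}). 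The hardest step throughout is precisely this transfer of Bryant geometry across the merely $C^0$-almost-isometric map $\phi_{t_J-}$: it hinges on the self-improvement of necks, the rigidity of Bryant solitons among $\kappa$-solutions (Proposition \ref{prop_dtR_0_Bryant}), and the canonical neighborhood assumption on $\M'$.
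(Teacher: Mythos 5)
Your high-level framing is sound: by Definition~\ref{def_comparison_domain}(\ref{item_3_disk}) the inductive comparison domain has no extension caps at time $t_J$, so the ``if'' direction of a priori assumption~\ref{item_geometry_cap_extension_5} at $j=J$ forbids any component of $\M_{t_J}\setminus\Int\N_{t_J-}$ from having a partner $\C'$ satisfying~(a)--(e), and the Bryant Slice Lemma applied to the chart produced by Lemma~\ref{lem_bryant_propagate} does show that $\C^*$ satisfies~(a),~(c),~(e). But there is a genuine gap at the construction of the partner $\C'$ and, above all, at the verification of condition~(d): that some point near $\C'$ in $\M'_{t_J}$ is $\delta_{\bb}$-close to a Bryant soliton.

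The $C^0$-almost-isometry $\phi_{t_J-}$ controls $\M'$ only on the image of the annulus $W\setminus\C^*$; the interior of $\C'$ is \emph{not} in the image of $\phi$, so its geometry and its time-evolution are not constrained. Lemma~\ref{lem_C0_neck_smooth_neck} can upgrade the almost-isometric annulus to a precise neck in $\M'$, but a precise neck bounding a $3$-disk does not force Bryant geometry inside: as the paper emphasizes, it is an open conjecture whether every $\kappa$-solution on $\R^3$ is the Bryant soliton, and the only available rigidity input (Proposition~\ref{prop_dtR_0_Bryant}, fed through Lemma~\ref{lem_d_rho_squared_small_almost_bryant}) requires knowing $\D_\t\rho^2\ge-\delta$ at the relevant point of $\M'_{t_J}$. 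You have no control on $\D_\t\rho^2$ in $\M'$ at time $t_J$ from the time-$t_J$ data alone; the cap in $\M'$ could be shrinking rapidly toward a neck pinch. (Invoking Lemma~\ref{lem_promoting_Bryant} is also circular here, since it presupposes the very Bryant-closeness you are trying to establish.) Nor does the precision of $\phi$ help: a priori assumption~\ref{item_q_less_than_q_bar_6} gives a bound that is \emph{weaker} than $|h|\le\eta_{\lin}$ at scales below $r_{\comp}$, so the error swamps the deviation-from-cylinder signal that would be needed to detect the tip. The paper's proof is built around this obstruction: it promotes the Bryant chart in $\M$ backward over $J_\#\gg1$ time steps, uses the Bryant Slice/Slab Lemmas and a priori assumption~\ref{item_discards_not_too_thick_4} to produce a hierarchy of disks $\C_{J-J_\#}\subset\cdots\subset\C_J$ (Claim~1), constructs a barrier region $\wh W'\subset\M'_{[t_{J-J_\#},t_J]}$ via $\phi$ and the scale-distortion Lemma~\ref{lem_scale_distortion} (Claim~3), tracks a point $z'$ backward inside $\wh W'$, and applies a pigeonhole argument over the $J_\#$ steps to locate a time $t_{j_0}<t_J$ where $\rho(z'(\cdot))$ barely decreases, so that Lemma~\ref{lem_bryant_propagate} can finally be applied on the $\M'$ side (Claim~4). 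The contradiction is then obtained at $j_0<J$, not at $j=J$: the pair $(\C_{j_0},\C'_{j_0})$ satisfies~(a)--(e), forcing $\C_{j_0}$ to be an extension cap and hence $\C_{j_0}\subset\N_{t_{j_0}+}$, contradicting Claim~1(b). The backward-in-time pigeonhole is the irreplaceable step that your single-time-slice argument is missing.
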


Before proceeding, we first explain how Lemma \ref{lem_APA5} completes the verification of Proposition \ref{prop_comp_domain_construction}.

For this purpose consider a  component $\C^* \subset \N^{J+1}_{t_J} \setminus \Int \N_{t_J-}$.
As $\Omega (t_J) \subset \Int \N_{t_J-}$, we have $\C^* \subset \N^{J+1}_{t_J} \setminus \Omega(t_J)$.
Thus $\C^* \subset \Int Z(t_J)$ for some component $Z \subset \M_{t_{J+1}} \setminus \Int \Omega$ of type \ref{type_11.3_I}, \ref{type_11.3_II} or \ref{type_11.3_III}.
By the above lemma and condition \ref{type_11.3_II}(c), $Z$ cannot be of type \ref{type_11.3_I} or \ref{type_11.3_II} and therefore must be of type \ref{type_11.3_III}.
Next, observe that $\C^* \subset Z(t_J) \setminus \Int \N_{t_J-} =: \C$ and $\C = Z(t_J) \setminus \Int \N_{t_J-} \subset \N^{J+1}_{t_J} \setminus \Int \N_{t_J-}$.
As $\C^*$ is a connected component of $\N^{J+1}_{t_J} \setminus \Int \N_{t_J-}$, it follows that $\C = \C^*$.

By \ref{type_11.3_III}(c) we know that $\C^* = \C$ is a 3-disk, which proves Definition~\ref{def_comparison_domain}(\ref{item_3_disk}).
The remaining statements of \ref{type_11.3_III}(c) imply that $\C^* = \C$ satisfies \ref{item_geometry_cap_extension_5}(a)--(e).

Next, we provide an outline of the proof of Lemma~\ref{lem_APA5}, neglecting several technicalities.

Assume by contradiction that $Z$ is a type \ref{type_11.3_I} component with $Z(t_J) \not\subset \N_{t_J-}$.
This means that $Z(t_J)$ contains a component $\C$ of the complement $\M_{t_J-} \setminus \Int \N_{t_J-}$.
As $Z$ consists of weakly $10 \lambda r_{\comp}$-thick points and $\C$ contains a $10 \lambda r_{\comp}$-thin point by a priori assumption \ref{item_backward_time_slice_3}(d), there must be a point in $\C$ whose scale increases over the time-interval $[t_J, t_{J+1}]$.
By Lemma~\ref{lem_bryant_propagate}, this is only possible if $Z$ and $\C$ lie in a large spacetime region $W \subset \M$ that is very close to a Bryant soliton.
More specifically, we may assume that this region is $9 \lambda r_{\comp}$-thick and defined over a long backward time-interval of the form $[t_{J - J_\#}, t_{J+1}]$, where $J_\# \gg 1$.

The existence of the component $\C$ and the Bryant like geometry on $W$ will then force the existence of a sequence of components $\C_j \subset \M_{t_j} \setminus \Int \N_{t_j -}$ for $j = J, J-1, \ldots, J - J_\#$, where $\C_J = \C$.
This will follow from a priori assumption \ref{item_discards_not_too_thick_4}, which forbids the discard of components that remain  $\lambda r_{\comp}$-thick during a time step.

Next, using the bilipschitz bound on the comparison map $\phi$ imposed by \ref{item_eta_less_than_eta_lin_13}, and the fact that $W$ is not too neck-like, we will find that for $t\in [t_{J - J_\#},t_J]$, the image $\phi_{t}(W\cap\N_{t})$ intersects a smoothly varying 3-disk region $\wh{W}'_t\subset\M_{t}'$ with scale and diameter comparable to $r_{\comp}$.

The union $\wh{W}' \subset \M'_{[t_{J-J_\#}, t_J]}$ of these regions forms a ``barrier region''  that will help us show the existence of a point $z' \in \wh{W}'_{t_{J-J_\#}}$ that survives until time $t_J$ and that has the property that $z'(t) \in \wh{W}'$ for all $t \in [t_{J-J_\#}, t_J]$.
The scale of $z'(t)$ will be controlled from above and below by a constant that is independent of $J_\#$.
Therefore, if we choose $J_\#$ large enough, then we can find a time-step $t_j\in [t_{J-J_\#+1}, t_{J-1}]$ such that the scale of $z(t)$ hardly decreases over the time-interval $[t_{j-1}, t_j]$.
Using again Lemma~\ref{lem_bryant_propagate} (this time in $\M'$), we will deduce that the geometry near $z'(t_j)$ is close to a Bryant soliton.
This means that  \ref{item_geometry_cap_extension_5} applies and would have forced $\C_j$ to be an extension cap, giving a contradiction.

\begin{proof}[Proof of Lemma \ref{lem_APA5}]
Fix a type \ref{type_11.3_I} component $Z \subset \M_{t_{J+1}} \setminus \Int \Omega$ for the remainder of the proof and assume that $Z(t_J) \not\subset \N_{t_J}$.
So $Z(t_J)$ intersects a component $\C$ of $\M_{t_J} \setminus \Int \N_{t_J-}$.
Because $Z(t_J)$ is a closed subset, its topological boundary in $\M_{t_J}$ is  $\D Z(t_J)$, and since $\partial Z(t_J) \subset \partial \Omega (t_J) \subset \Int \N_{t_J-}$, it is disjoint from $\M_{t_J} \setminus \Int \N_{t_J-}$.
The connectedness of $\C$ now implies that $\C \subset Z(t_J)$.

By a priori assumption \ref{item_backward_time_slice_3}(d) there is a $10 \lambda r_{\comp}$-thin point $x \in \C \subset Z(t_J)$.  
By the type \ref{type_11.3_I} property and the discussion in Subsection~\ref{subsec_def_of_N_Jp1}, we know that $x$ survives until time $t_{J+1}$ and that $x(t_{J+1})$ is weakly $10 \lambda r_{\comp}$-thick.
Moreover, by Lemma~\ref{lem_lambda_thick_survives_backward}, we find that $x(t_{J+1})$ is $11\lambda r_{\comp}$-thin, assuming
\[ \lambda \leq \tfrac1{10}, \qquad
\eps_{\can} \leq \ov\eps_{\can} (\lambda), \qquad
r_{\comp} \leq \tfrac1{10}. \]
We can therefore apply \ref{lem_bryant_propagate} to $x$ and obtain that a large spacetime neighborhood of $x(t_{J+1})$ is close to a Bryant soliton. 
More specifically, Lemma~\ref{lem_bryant_propagate} implies the following.
Let $\delta_\# > 0$ and $J_\# < \infty$ be constants whose values will be determined in the course of the proof.  
Then, under a condition of the form
\[ \lambda \leq \tfrac{1}{20}, \qquad
\eps_{\can} \leq \ov\eps_{\can} (\lambda, J_\#, \delta_\# ), \qquad
r_{\comp} \leq 1 ,
  \]
we can find a $(10 \lambda r_{\comp})^2$-time equivariant and $\partial_{\mathfrak{t}}$-preserving diffeomorphism 
\[ \psi : W^*:=\ol{M_{\Bry} (\delta_\#^{-1} )} \times \big[ {- \min \{ J ,J_\# +1 \} \cdot (10 \lambda)^{-2}, 0} \big] \longrightarrow \M \]
onto its image such that $\psi (x_{\Bry},0) = x (t_{J+1})$ and
\begin{equation} \label{eq_Bryant_closeness}
 \big\Vert{ (10 \lambda r_{\comp})^{-2} \psi^* g - g_{\Bry} }\big\Vert_{C^{[\delta_\#^{-1}]} (W^*)} < \delta_\#.
\end{equation}
Let $W=\psi(W^*)$.
Note that $ W^*$ has been chosen in such a way that its image $W$ has initial time-slice $t_{J- J_\#}$ if $J_\# \leq J -1$ and $t_1$ otherwise.

Next, we show that the existence of the component $\C$ of $\M_{t_J} \setminus \Int \N_{t_J-}$ forces the existence of components $\C_j \subset \M_{t_j} \setminus \Int\N_{t_j-}$ at a large number of earlier times $t_j \leq t_J$.
The existence of these components will be deduced using priori assumption \ref{item_discards_not_too_thick_4} and the Bryant-like geometry on $W$.

\begin{claim}[Cap hierarchy]
\label{cl_11_1}
If, in addition,
\begin{equation*}
\delta_{\nn} \leq \ov\delta_{\nn}  ,\qquad 
\lambda \leq \ov\lambda, \qquad
\Lambda \geq \underline{\Lambda}, \qquad \delta_\# \leq \ov{\delta}_\# ( \lambda, \Lambda, J_\#),
\end{equation*}
then  $J\geq J_\#+1$ and:
\begin{enumerate}[label=(\alph*)]
\item \label{ass_11.17_cl1_a} For all $J - J_\# \leq j \leq J$ the subset $\C_j:=W_{t_j}\setminus\Int \N_{t_j-}$ is a $3$-disk. 
\item \label{ass_11.17_cl1_b} For all $J - J_\# + 1 \leq j \leq J $ all points on $\C_j$ survive until time $t_{j-1}$ and $\C_{j-1} \subset \C_j (t_{j-1})$.
\item \label{ass_11.17_cl1_c} $\C = \C_J$.
\end{enumerate}
\end{claim}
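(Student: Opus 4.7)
The plan is to prove the claim by a backward induction on $j$ starting at $j=J$ and descending, using Lemma~\ref{lem_Bryant_slice} (the Bryant Slice Lemma) to establish the $3$-disk structure at each time, Lemma~\ref{lem_bryant_slab} (the Bryant Slab Lemma) to propagate information between consecutive slabs, and a priori assumption~\ref{item_discards_not_too_thick_4} to rule out the possibility that a cap disappears during a time step. The inequality $J\geq J_\#+1$ will fall out as a byproduct: if instead $J\leq J_\#$, the induction will reach $j=1$ and yield a contradiction via \ref{item_discards_not_too_thick_4} at $j=1$. Throughout the argument the parameter conditions of Lemmas~\ref{lem_Bryant_slice} and~\ref{lem_bryant_slab} hold uniformly thanks to the smallness/largeness assumptions on $\de_{\nn},\la,\Lambda$ and the choice $\de_\#\leq\ov\de_\#(\la,\Lambda,J_\#)$.

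For the base case $j=J$, note that the point $x\in\C$ from the enclosing proof equals $\psi(x_{\Bry},-(10\la)^{-2})$, so $x\in W_{t_J}$, which makes $\C_J=W_{t_J}\setminus\Int\N_{t_J-}$ nonempty. I apply Lemma~\ref{lem_Bryant_slice} to $W_{t_J}$ and $X=\N_{t_J-}$ --- its hypotheses (i)--(iv) being supplied by \ref{item_backward_time_slice_3}(a)--(c) at index $J$ --- to conclude that $\C_J$ is a $3$-disk and is a connected component of $\M_{t_J}\setminus\Int\N_{t_J-}$. Since $\C$ is the component of $\M_{t_J}\setminus\Int\N_{t_J-}$ containing $x$, this yields $\C=\C_J$, giving (c) and (a) at $j=J$.

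For the inductive step from $j$ down to $j-1$ (with $j\geq\max\{2,J-J_\#+1\}$), I assume $\C_j$ is a nonempty $3$-disk and apply Lemma~\ref{lem_bryant_slab} to the time-slab $W_{[t_{j-1},t_j]}$ with $X_0=\N_{t_{j-1}-}$ and $X_1=\N_{t_j-}$; hypothesis (v) is Definition~\ref{def_comparison_domain}(\ref{pr_7.1_3}), and the others follow from \ref{item_backward_time_slice_3} at indices $j-1$ and $j$. This yields both the $9\la r_{\comp}$-thickness of $\C_j(t)$ throughout $[t_{j-1},t_j]$ (so in particular all points of $\C_j$ survive to $t_{j-1}$) and the inclusion $\C_{j-1}\subset\C_j(t_{j-1})$ whenever $\C_{j-1}$ is nonempty, which will suffice for (b). The crux of the step, and what I expect to be the main obstacle, is showing $\C_{j-1}\neq\emptyset$. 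I would argue by contradiction: if $\C_{j-1}=\emptyset$, then $W_{t_{j-1}}\subset\Int\N_{t_{j-1}-}$, hence $\C_j(t_{j-1})\subset\Int\N_{t_{j-1}-}$; meanwhile, since $\N^j$ is a product domain on $[t_{j-1},t_j]$ and $\C_j$ is disjoint from $\Int\N^j_{t_j}$, we also have $\C_j(t_{j-1})\cap\Int\N_{t_{j-1}+}=\emptyset$, while $\D\C_j(t_{j-1})\subset\D\N^j_{t_{j-1}}=\D\N_{t_{j-1}+}\subset\Int\N_{t_{j-1}-}$ by Definition~\ref{def_comparison_domain}(\ref{pr_7.1_3}). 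Thus $\C_j(t_{j-1})$ is a connected clopen subset of $\N_{t_{j-1}-}\setminus\Int\N_{t_{j-1}+}$, i.e.\ a $3$-disk component of the discarded region whose boundary lies in $\N_{t_{j-1}+}$, meeting the hypotheses of \ref{item_discards_not_too_thick_4}. But by the slab estimate above this component survives to $t_j$ and is $\la r_{\comp}$-thick on $[t_{j-1},t_j]$, contradicting \ref{item_discards_not_too_thick_4}. Once $\C_{j-1}\neq\emptyset$ is established, Lemma~\ref{lem_Bryant_slice} at time $t_{j-1}$ shows $\C_{j-1}$ is a $3$-disk.

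Finally, to conclude $J\geq J_\#+1$ I would argue by contradiction once more. If $J\leq J_\#$, then $W$ has initial time $t_1$ and the induction above descends to $j=2$ to produce a nonempty $3$-disk $\C_1$. The same structural analysis then applies to $\C_1(t_0)$: by the product structure of $\N^1$ and $\C_1\cap\Int\N^1_{t_1}=\emptyset$, we have $\C_1(t_0)\subset\M_0\setminus\Int\N^1_0=\M_0\setminus\Int\N_{0+}$ with $\D\C_1(t_0)\subset\D\N_{0+}$, so $\C_1(t_0)$ is a $3$-disk component of $\M_0\setminus\Int\N_{0+}$ satisfying the hypotheses of \ref{item_discards_not_too_thick_4} in the $j=1$ case. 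The $\la r_{\comp}$-thickness of $\C_1(t)$ on $[t_0,t_1]$ now comes from Lemma~\ref{lem_lambda_thick_survives_backward} applied to the $9\la r_{\comp}$-thick set $\C_1$ (since $W$ no longer covers $[t_0,t_1]$), valid under $\eps_{\can}\leq\ov\eps_{\can}(\la)$. This contradicts \ref{item_discards_not_too_thick_4} at $j=1$, so $J\geq J_\#+1$.
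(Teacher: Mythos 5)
Your proof follows essentially the same route as the paper's: backward induction driven by the Bryant Slice and Slab Lemmas, with \ref{item_discards_not_too_thick_4} preventing the disappearance of a cap during a time step. The one genuine point of difference is in how $J\geq J_\#+1$ is obtained. The paper's inductive step invokes the Bryant Slab Lemma on $W_{[t_{j-1},t_j]}$ for every $j>J-J_\#$ and derives the contradiction at $j=1$; but when $J\leq J_\#$, the initial time-slice of $W$ is $t_1$ rather than $t_{J-J_\#}$, so $W_{[t_0,t_1]}$ is not a Bryant slab and the needed $\lambda r_{\comp}$-thickness of $\C_1(t)$ on $[t_0,t_1]$ is not directly supplied by the Slab Lemma. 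You correctly restrict the slab application to $j\geq\max\{2,J-J_\#+1\}$ and instead obtain the thickness from Lemma~\ref{lem_lambda_thick_survives_backward}; this introduces an extra condition $\eps_{\can}\leq\ov\eps_{\can}(\lambda)$ not in the claim's parameter list, but it is subsumed by the standing hypotheses of Lemma~\ref{lem_APA5}. Your handling of this edge case is the more careful one; apart from that the two arguments coincide.
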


\begin{proof}
In the following we will apply the Bryant Slice Lemma~\ref{lem_Bryant_slice} at time $t_j$ for $X = \N_{t_j-}$, where $J - J_\# \leq j \leq J$.
We will also apply the Bryant Slab Lemma~\ref{lem_bryant_slab} for  $X_0 = \N_{t_{j-1}-}$ and $X_1 = \N_{t_{j}-}$, where $J - J_\# + 1 \leq j \leq J$.
Note that assumptions \ref{con_8.42_i}--\ref{con_8.42_iv} of the Bryant Slice Lemma hold due to a priori assumptions \ref{item_backward_time_slice_3}(a)--(c) and assumption \ref{con_8.42_v} of the Bryant Slab Lemma holds due to Definition~\ref{def_comparison_domain}(\ref{pr_7.1_3}).
If 
\[ \delta_{\nn} \leq \ov\delta_{\nn}, \qquad
 0 < \lambda < 1, \qquad
  \Lambda \geq \underline{\Lambda}, \qquad
\de_\#\leq \ov\de_\#(J_\#,\lambda,\Lambda), \]
then the remaining assumptions of both the Bryant Slice and the Bryant Slab Lemma are satisfied.
This means, in particular, that the time-slice $W_{t_j}$ and the slab $W_{[t_{j-1},t_j]}$ satisfy the assumptions of the Bryant Slice Lemma and the Bryant Slab Lemma,  for all $J  - J_\# \leq j \leq J$  and $J  - J_\# +1\leq j \leq J$, respectively.

Since  $x\in \C\cap \left(W_{t_J}\setminus\Int\N_{t_J-}\right)$, we know by the Bryant Slice Lemma at time $t_J$ that $\C_J:=W_{t_J-}\setminus\Int\N_{t_J}$ is a $3$-disk and is a component of $\M_{t_J}\setminus\Int \N_{t_J-}$. 
Hence it coincides with $\C$, which proves assertion \ref{ass_11.17_cl1_c}.

Fix some $j$ with $J - J_\# \leq j \leq J$.
Assume inductively that $j \geq 1$ and that assertion \ref{ass_11.17_cl1_a} holds for all $j \leq j' \leq J$ and assertion \ref{ass_11.17_cl1_b} holds for all $j + 1 \leq j' \leq J$.
If $j = J - J_\#$, then $J \geq J_\# + 1$, as claimed, and assertions \ref{ass_11.17_cl1_a} and \ref{ass_11.17_cl1_b} hold. So assume in the following that $j > J - J_\#$.

By assertion \ref{ass_8.43_a} of the Bryant Slab Lemma, $\C_j (t)$ is defined and $9 \lambda r_{\comp}$-thick for all $t \in [t_{j-1}, t_j]$.
Moreover, the subset $\C_j(t_{j-1})$ is a $3$-disk component of $\M_{t_{j-1}}\setminus\Int \N_{t_{j-1}+}$ and $\D\C_j(t_{j-1})\subset \D\N_{t_{j-1}+}$.
It follows from a priori assumption \ref{item_discards_not_too_thick_4} that $j-1\geq 1$.  
Now suppose that $\C_{j-1}=W_{t_{j-1}}\setminus\Int \N_{t_{j-1}-}=\emptyset$.
It follows that $\C_j (t_{j-1}) \subset W_{t_{j-1}}\subset \Int \N_{t_{j-1}-}$.  
Therefore $\C_j(t_{j-1})\subset \N_{t_{j-1}-}\setminus\Int \N_{t_{j-1}+}$, and since it is a $3$-disk with boundary contained in $\D\N_{t_{j-1}+}$, it is a component of $\N_{t_{j-1}-}\setminus\Int \N_{t_{j-1}+}$.  This contradicts a priori assumption \ref{item_discards_not_too_thick_4}.  Thus $\C_{j-1}\neq\emptyset$ and by the Bryant Slice Lemma at time $t_{j-1}$ it must be a 3-disk.
So assertion \ref{ass_11.17_cl1_a} holds for $j-1$ and assertion \ref{ass_11.17_cl1_b} holds for $j$ by the Bryant Slab Lemma.

By induction we conclude that $J\geq J_\#+1$, and \ref{ass_11.17_cl1_a} and \ref{ass_11.17_cl1_b} hold.
\end{proof}

Next, we will construct the ``barrier'' region $\wh{W}'$ mentioned in the outline given above. We remark that in the following construction, we have to choose $\wh{W}$ larger than the reader may anticipate.
The reason is purely technical: Due to the fact that a priori assumption \ref{item_eta_less_than_eta_lin_13} only gives us $C^0$ bounds on the metric distortion of $\phi$, the weakness of the resulting scale distortion control (see Lemma~\ref{lem_scale_distortion}) forces us to work in a region whose boundary has scale a large multiple of $r_{\comp}$.

We will now construct the subset $\wh{W} \subset  W$.
For this purpose fix the (universal) constant $C_{\sd}$ from Lemma~\ref{lem_scale_distortion} and assume without loss of generality that $C_{\sd} > 100$.
Define
\[ \wh{W}^* \subset M_{\Bry} \times \big[ {- (J_\#+1) (10 \lambda)^{-2}, -(10 \lambda)^{-2}} \big] \]
to be the subset of points on which $\rho \leq 20 C_{\sd}^2 \cdot (10 \lambda)^{-1}$.
Then $\wh{W}^*$ is closed and connected, and its time-slices $\wh{W}^*_t$ are pairwise isometric $3$-disks for all $t\in [-(J_\#+1)(10\lambda)^{-2},-(10 \lambda)^{-2}]$. 
If
\[ \delta_\# \leq \ov\delta_\# (\lambda, J_\#), \]
then  
\[ \wh{W}^*  \subset W^*=\ol{M_{\Bry} (\delta_\#^{-1})} \times \big[{ - (J_\#+1) (10 \lambda)^{-2}, \lb -(10 \lambda)^{-2}} \big]\, . \]
So we may define
\[ \wh{W} := \psi (\wh{W}^*)\subset \M_{[t_{J-J_\#},t_{J}]}\,. \]
 Then, assuming 
$$
\delta_\# \leq \ov\delta_\# (\lambda, J_\#)\,, \qquad
r_{\comp} \leq \ov{r}_{\comp}\,,
$$
we obtain that  for all $t \in [t_{J-J_\#}, t_{J}]$, the time-slice $\wh{W}_t$ is a $3$-disk and 
\begin{equation}
\label{eqn_w_t_geometry}
\begin{gathered}
\text{$10 C_{\sd}^2 r_{\comp} < \rho = \rho_1  < 40 C_{\sd}^2 r_{\comp}$ on $\partial \wh{W}_t$}\,.\\
\text{$W_t\setminus\Int \wh{W}_t$ \quad is \quad $10 C_{\sd}^2 r_{\comp}$-thick}\\
\text{$\wh{W}_t$ \quad is \quad $40C_{\sd}^2 r_{\comp}$-thin}
\end{gathered}
\end{equation}

\begin{claim}
\label{cl_11_2}
If 
$$
\delta_{\nn} \leq \ov\delta_{\nn}, \qquad
\de_\#\leq \ov\de_\#(\lambda,\Lambda,J_\#), \qquad
\eps_{\can} \leq \ov\eps_{\can}, \qquad
r_{\comp} \leq \ov{r}_{\comp}, 
$$
then:
\begin{enumerate}[label=(\alph*)]
\item \label{ass_11.17_cl2_a} $W\subset \M\setminus\cup_{\DD\in\Cut}\DD$, and hence by Definition \ref{def_comparison}(\ref{pr_7.2_5}) the map $\phi$ is well-defined on $W_{[t_{J-J_\#},t_J]}\cap \N$. 
\item \label{ass_11.17_cl2_b} For all $J- J_\# < j \leq J$ and $t\in [t_{j-1},t_j]$, 
\begin{equation*} \label{eq_partial_W_NN_minus_cuts}
W_t\setminus \Int \wh{W}_t \subset \Int\N^j_t\,.
\end{equation*}
\item \label{ass_11.17_cl2_c} $\C_j \subset \Int \wh{W}_{t_j}$ for all $J-J_\# \leq j \leq J$.
\end{enumerate}
\end{claim}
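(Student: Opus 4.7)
My strategy is to prove (c), (b), (a) in succession, using the Bryant Slice and Slab Lemmas and the cap hierarchy from Claim~\ref{cl_11_1}. For (c), I would first apply assertion (c) of the Bryant Slice Lemma~\ref{lem_Bryant_slice} to $\C_j=W_{t_j}\setminus\Int\N_{t_j-}$ with $X=\N_{t_j-}$ (whose hypotheses hold by a priori assumptions \ref{item_backward_time_slice_3}(a)--(c)): it yields that $\C_j$ is $1.1 r_{\comp}$-thin. Combined with (\ref{eqn_w_t_geometry}), which gives $\rho>10C_{\sd}^2 r_{\comp}$ on $\partial\wh W_{t_j}$ with $C_{\sd}>10^3$, and the fact that $\C_j$ is a connected subset of $W_{t_j}$ containing the approximate tip $\psi(x_{\Bry})(t_j)$, these wildly disjoint scale regimes force $\C_j\subset\Int\wh W_{t_j}$. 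Assertion (b) then follows at time $t_j$ from the elementary set inclusion $W_{t_j}\setminus\Int\wh W_{t_j}\subset W_{t_j}\setminus\C_j=W_{t_j}\cap\Int\N_{t_j-}\subset\Int\N^j_{t_j}$, and extends to all $t\in[t_{j-1},t_j]$ by transport under the common product structure of $W$, $\wh W$, and $\N^j$.

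For (a), since cuts only occur at times $t_k$ with $k\in[1,J-1]$ and $W$ lies in $[t_{J-J_\#},t_{J+1}]$, I need only rule out $W_{t_k}\cap\DD\neq\emptyset$ for $\DD\in\Cut^k$ with $k\in[J-J_\#,J-1]$. Assuming for contradiction that $y\in W_{t_k}\cap\DD$, I would proceed in two steps. \emph{Step 1:} show that $\C_k$ cannot be an extension cap of the comparison domain. Since $\partial\N^{k+1}_{t_k}\subset\Int\N^k_{t_k}$ (Definition~\ref{def_comparison_domain}(\ref{pr_7.1_3})), the connected set $\C_k$ is either entirely in $\Int\N^{k+1}_{t_k}$ or entirely disjoint from $\N^{k+1}_{t_k}$. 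The former alternative would, by the product structure of $\N^{k+1}$, place $\C_k(t_{k+1})\subset\Int\N^{k+1}_{t_{k+1}}$; but the Bryant Slab Lemma~\ref{lem_bryant_slab} applied on $[t_k,t_{k+1}]$ (with $\C_{k+1}\neq\emptyset$ from Claim~\ref{cl_11_1}) gives $\C_k\subset\C_{k+1}(t_k)$, whence $\C_k(t_{k+1})\subset\C_{k+1}\subset\M_{t_{k+1}}\setminus\Int\N^{k+1}_{t_{k+1}}$, a contradiction. Hence $\C_k\cap\N^{k+1}_{t_k}=\emptyset$ and so $y\notin\C_k$.

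\emph{Step 2:} derive a contradiction from the coexistence of two approximate Bryant tips. The cut $\DD$ contains an extension cap $\C^*_k$ and, by a priori assumption~\ref{item_geometry_cap_extension_5}(c), a point $z\in\C^*_k$ around which $\M_{t_k}$ is $\delta_{\bb}$-close to the Bryant soliton at scale $10\lambda r_{\comp}$. Using $\diam\DD\leq D_{\cut}r_{\comp}$ (a priori assumption~\ref{item_cut_diameter_less_than_d_r_comp_11}), $\rho(y)\leq\Lambda r_{\comp}$ (a priori assumption~\ref{item_backward_time_slice_3}(e)), and the Bryant profile $\rho_{\Bry}\sim\sqrt{\sigma}$, I would show that $y$ lies sufficiently far inside $W_{t_k}$ in Bryant coordinates that, for $\delta_\#\leq\ov\delta_\#(\lambda,\Lambda,D_{\cut})$, the point $z$ also lies in $W_{t_k}$. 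Since $\rho(z)\approx 10\lambda r_{\comp}$, the strict interior-minimum property of $\rho_{\Bry}$ at $x_{\Bry}$ together with the accuracy of the $\delta_\#$- and $\delta_{\bb}$-approximations forces $d_{t_k}(z,x(t_k))\leq c\lambda r_{\comp}$ for a universal $c$. But $z\in\N^{k+1}_{t_k}$ and $x(t_k)\in\C_k$ lie on opposite sides of $\partial\N^{k+1}_{t_k}$, which consists of central 2-spheres of $\delta_{\nn}$-necks at scale $r_{\comp}$, so $d_{t_k}(z,x(t_k))\geq c'r_{\comp}$ for a universal $c'>0$; for $\lambda\leq\ov\lambda$ these two bounds are incompatible. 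The principal obstacle is Step 2: extracting a quantitative rigidity statement forcing two overlapping approximate Bryant tips at the same characteristic scale to coincide, despite a direct scale comparison being unavailable.
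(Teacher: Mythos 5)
Your proofs of (b) and (c) are valid but take the steps in the opposite order from the paper.  The paper establishes the inclusion $W_t\setminus\Int\wh W_t\subset\Int\N^j_t$ directly for every $t\in[t_{j-1},t_j]$: from the $\Lambda r_{\comp}$-thickness of $\partial W_{t_j}$ and \ref{item_backward_time_slice_3}(b) one gets $\partial W_t\subset\N^j_t$, and since $W_t\setminus\Int\wh W_t$ is connected and $10C_{\sd}^2 r_{\comp}$-thick while $\partial\N^j_t$ is $2.1r_{\comp}$-thin (by Lemma~\ref{lem_time_slice_neck_implies_space_time_neck}), the annulus cannot cross $\partial\N^j_t$.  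Assertion (c) then drops out as the $t=t_j$ special case.  You instead prove (c) first via Bryant Slice Lemma~(c) and the scale gap against $\partial\wh W_{t_j}$, derive (b) at $t=t_j$ by a set computation, and then transport to all $t$; this is also correct since $W$, $\wh W$ and $\N^j$ are all product domains over $[t_{j-1},t_j]$.

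For (a), your Step~1 is correct and is in substance the contradiction the paper extracts from Claim~1(b).  Step~2, however, is both unavailable and unnecessary.  It is unavailable because it invokes the cut-diameter bound of a priori assumption~\ref{item_cut_diameter_less_than_d_r_comp_11}, which belongs to \ref{item_q_less_than_q_bar_6}--\ref{item_apa_13}; Lemma~\ref{lem_APA5} sits inside the proof of Proposition~\ref{prop_comp_domain_construction}, whose hypothesis~(iv) supplies only a priori assumptions \ref{item_time_step_r_comp_1}--\ref{item_eta_less_than_eta_lin_13}, so the diameter bound on $\DD$ cannot be used here.  It is unnecessary because you never have to compare two distinct approximate tips: by \ref{item_backward_time_slice_3}(e) every point of $\DD$ is $\Lambda r_{\comp}$-thin, whereas for $\delta_\#\leq\ov\delta_\#(\lambda,\Lambda,J_\#)$ the sphere $\partial W_{t_k}$ is $\Lambda r_{\comp}$-thick; hence $\DD$ is disjoint from $\partial W_{t_k}$, and since $\DD$ is connected and meets $W_{t_k}$ it follows that $\DD\subset W_{t_k}$.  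Consequently the extension cap contained in $\DD$ is a component of $\M_{t_k}\setminus\Int\N_{t_k-}$ lying inside $W_{t_k}$, which by Bryant Slice Lemma~(b) must equal $\C_k$ --- and this is exactly what your Step~1 rules out.  The ``two overlapping tips'' you flag as the principal obstacle are therefore the same tip, and no quantitative Bryant rigidity is needed.
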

\begin{proof}
 If 
$$
\de_\#\leq \ov\de_\#(\lambda,\Lambda,J_\#)\,,
$$
then for every $J-J_\# \leq j\leq J+1$ we get that $\D W_{t_j}$ is $\Lambda r_{\comp}$-thick.

Suppose that  $\DD\cap W_{t_j} \neq\emptyset$ for some  $\DD\in\Cut$.  Note that this implies that $J-J_\# \leq j<J$.  Since $
\DD$ is $\Lambda r_{\comp}$-thin by \ref{item_backward_time_slice_3}(e), it is disjoint from $\D W_{t_j}$. So since $\DD$ is connected by Definition~\ref{def_comparison}(\ref{pr_7.2_2}), we have $\DD\subset W_{t_j}$.  By Definition~\ref{def_comparison}(\ref{pr_7.2_3}) the cut $\DD$ contains an extension cap. However, this contradicts assertion \ref{ass_11.17_cl1_b} of Claim~\ref{cl_11_1}.  
So we have shown assertion \ref{ass_11.17_cl2_a} of this claim.

Now suppose that $J-J_\#<j\leq J $ and $t \in [t_{j-1}, t_j]$ or $j = J - J_\#$ and $t = t_j$.
As $\D W_{t_j}$ is $\Lambda r_{\comp}$-thick, it is contained in $\N_{t_j-}$ by \ref{item_backward_time_slice_3}(b). 
Thus $\partial W_t \subset \N^j$.
Moreover, if
\[ \delta_{\nn} \leq \ov\delta_{\nn}, \qquad
\eps_{\can} \leq \ov\eps_{\can}, \qquad
r_{\comp} \leq \ov{r}_{\comp}, \]
then we obtain from Lemma~\ref{lem_time_slice_neck_implies_space_time_neck} that $\partial \N^j_t$ is $2.1 r_{\comp}$-thin.
In view of the fact that $W_{t}\setminus\Int  \wh{W}_{t}$ is connected and $10 C_{\sd}^2 r_{\comp}$-thick by (\ref{eqn_w_t_geometry}), it is disjoint from $\D\N^j_{t}$ and hence contained in $\N_{t}$.
This proves assertion \ref{ass_11.17_cl2_b} and assertion \ref{ass_11.17_cl2_c} follows in the case $t = t_j$.
\end{proof}

Next we consider the image of $W_t\setminus\Int \wh{W}_t $ under $\phi$, and show that the boundary component $\phi_t(\D \wh{W}_t)$ is adjacent to a region with  controlled geometry.  
\begin{claim}
\label{cl_11_3}
Assuming
\begin{gather*}
\eta_{\lin}\leq\ov\eta_{\lin}\,,\qquad 
\de_{\nn} \leq\ov\de_{\nn} \,,\qquad
\lambda \leq \ov\lambda \,, \qquad
\Lambda\geq \underline\Lambda(\la)\,, \qquad 
\delta_{\#} \leq \ov\delta_{\#} (\lambda, J_\#) \,,\\ \qquad
\eps_{\can}\leq\ov\eps_{\can}(\la)\,, \qquad
r_{\comp}\leq \ov r_{\comp}(\la)\,, 
\end{gather*}
there is a constant $C_1=C_1(\la) < \infty$ with the following property.

There is a  subset $\wh{W}' \subset \M'_{[t_{J-J_\#}, t_J]}$ such that for every $t\in [t_{J-J_\#},t_{J}]$
\begin{enumerate}[label=(\alph*)]
\item \label{ass_11.17_cl3_a} $\wh{W}'_t$ is a $3$-disk.
\item \label{ass_11.17_cl3_b} $\wh{W}'_t\cap\phi_t(W_t\setminus\Int \wh{W}_t)=\phi_t(\D \wh{W}_t)=\D \wh{W}'_t$.
\item \label{ass_11.17_cl3_c} $\wh{W}'$ is compact and its relative topological boundary inside the time-slab $\M'_{[t_{J-J_\#}, t_J]}$ is equal to $ \cup_{t \in [t_{J-J_\#}, t_J]} \partial  \wh{W}'_t$.
\item \label{ass_11.17_cl3_d} $\wh{W}'_t$ is $C_1 r_{\comp}$-thin and $C_1^{-1}r_{\comp}$-thick and $\diam_{t} \wh{W}'_t \leq C_1 r_{\comp}$.
\item \label{ass_11.17_cl3_e} $\partial\wh{W}'_t$ is $10 C_{\sd} r_{\comp}$-thick.
\item \label{ass_11.17_cl3_f} For any $J-J_\# \leq j\leq J$ the difference $\C'_j := \wh{W}'_{t_j}\setminus \phi_{t_j-}(\Int \N_{t_j-})$ is a $3$-disk component of $\M'_{t_j} \setminus \phi_{t_j-} ( \Int \N_{t_j-})$ and we have $\D\C'_j=\phi_{t_j-}(\D\C_j)$.
\end{enumerate}
\end{claim}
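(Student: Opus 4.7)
The plan is to construct $\wh{W}'_t$ time-slice by time-slice, as the 3-disk in $\M'_t$ bounded by the image $2$-sphere $\phi_t(\D\wh{W}_t)$ on the side opposite to $\phi_t(W_t\setminus\Int\wh{W}_t)$, and then package these into a spacetime region using the smoothness of $\phi$. Fix $t\in[t_{J-J_\#},t_J]$ and let $j$ be the index with $t\in[t_{j-1},t_j]$. By Claim~\ref{cl_11_2}\ref{ass_11.17_cl2_b}, we have $W_t\setminus\Int\wh{W}_t\subset\Int\N^j_t$, so $\phi_t$ is defined on a neighborhood of $\D\wh{W}_t$. The first step is to apply the scale distortion Lemma~\ref{lem_scale_distortion} to $\phi^j$ on the time-slab $\N^j$: assumption \ref{con_8.23_i} follows from a priori assumption \ref{item_backward_time_slice_3}(a), while \ref{con_8.23_ii} follows from \ref{item_backward_time_slice_3}(c) (assuming $\Lambda\geq 2$). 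The lemma then yields $C_{\sd}^{-1}\rho_1\leq \rho_1\circ\phi_t\leq C_{\sd}\rho_1$ wherever $\phi_t$ is defined. Applied to $\D\wh{W}_t$, where $\rho_1\in(10C_{\sd}^2 r_{\comp},40 C_{\sd}^2 r_{\comp})$ by \eqref{eqn_w_t_geometry}, this proves assertion \ref{ass_11.17_cl3_e} and gives a uniform scale range for $\phi_t(\D\wh{W}_t)$.

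Second, I would promote $\phi_t(\D\wh{W}_t)$ to a central $2$-sphere of a precise neck in $\M'_t$. The boundary $\D\wh{W}_t$ is, by construction, contained in the level set $\{\rho=40C_{\sd}^2 r_{\comp}\}$ of the Bryant-like region $W$; using \eqref{eq_Bryant_closeness} and the corresponding property of $g_{\Bry}$ (see Lemma~\ref{lem_properties_Bryant} and the asymptotic warped product form), $\D\wh{W}_t$ is a central $2$-sphere of a very precise $\delta'$-neck in $\M_t$ at scale $40C_{\sd}^2 r_{\comp}$, where $\delta'\to 0$ as $\delta_\#\to 0$. Since $|\phi_t^*g'-g|\leq\eta_{\lin}$ by \ref{item_eta_less_than_eta_lin_13}, the image $\phi_t(\D\wh{W}_t)$ is $C^0$-close to a cylindrical model at scale comparable to $40C_{\sd}^2 r_{\comp}$. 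Because \ref{item_eta_less_than_eta_lin_13} also gives the $\eps_{\can}$-canonical neighborhood assumption at scales $(0,1)$ on $\phi^j(\N^j)$, the self-improvement Lemma~\ref{lem_C0_neck_smooth_neck} upgrades this to the statement that $\phi_t(\D\wh{W}_t)$ is a central $2$-sphere of a $\delta_\bullet$-neck at a scale in $[C_{\sd}^{-1}\cdot 40C_{\sd}^2 r_{\comp},\,C_{\sd}\cdot 40C_{\sd}^2 r_{\comp}]$, with $\delta_\bullet$ as small as desired by taking $\delta_{\nn},\delta_\#,\eta_{\lin}$ small enough. This neck structure exhibits $\phi_t(\D\wh{W}_t)$ as a separating embedded $2$-sphere in $\M'_t$ with collars on both sides; I define $\wh{W}'_t$ to be the closure of the component of the complement of $\phi_t(\D\wh{W}_t)$ in a controlled neighborhood that does \emph{not} contain $\phi_t(W_t\setminus\Int\wh{W}_t)$, truncated by means of the neck/cap dichotomy of Lemma~\ref{lem_neck_or_cap}. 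The diameter and scale bounds of assertion \ref{ass_11.17_cl3_d}, together with the $3$-disk property \ref{ass_11.17_cl3_a}, will then follow by combining Lemma~\ref{lem_neck_or_cap} (which forces the ``cap side'' to be either a $3$-disk or a twisted $\R P^2$-bundle, and excludes the latter by an orientability/Bryant-comparison argument) with the bounded curvature at bounded distance estimate of Lemma~\ref{lem_bounded_curv_bounded_dist} applied to a base point near $\phi_t(\D\wh{W}_t)$. Assertions \ref{ass_11.17_cl3_b} and \ref{ass_11.17_cl3_c} are then immediate from the definition and the smooth $t$-dependence of $\phi_t$ (inherited from the smoothness of the harmonic map heat flow in Definition~\ref{def_comparison}(\ref{pr_7.2_7})); compactness follows because each $\wh{W}'_t$ is a closed $3$-disk bounded by the compact $2$-sphere $\phi_t(\D\wh{W}_t)$, varying smoothly in $t$.

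For the final assertion \ref{ass_11.17_cl3_f}, I would argue as follows. By Claim~\ref{cl_11_2}\ref{ass_11.17_cl2_c}, $\C_j\subset\Int\wh{W}_{t_j}$, and by \ref{item_backward_time_slice_3}(a), $\D\C_j\subset\D\N_{t_j-}$ is a central $2$-sphere of a $\delta_{\nn}$-neck at scale $r_{\comp}$. Applying Lemma~\ref{lem_C0_neck_smooth_neck} to $\phi_{t_j-}(\D\C_j)$ exactly as above shows that $\phi_{t_j-}(\D\C_j)$ is a separating central $2$-sphere of a precise neck in $\M'_{t_j}$ at scale within a factor $C_{\sd}$ of $r_{\comp}$, hence $\phi_{t_j-}(\D\C_j)\subset\Int\wh{W}'_{t_j}$ by the scale range of $\D\wh{W}'_{t_j}$. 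One then takes $\C'_j$ to be the component of $\M'_{t_j}\setminus\phi_{t_j-}(\Int\N_{t_j-})$ adjacent to $\phi_{t_j-}(\D\C_j)$ on the tip side (identified via the neck structure). Since $\phi_{t_j-}$ restricted to a neighborhood of $\D\C_j$ in $\N_{t_j-}$ is an almost isometry, the $\phi(\N)$-side of $\phi_{t_j-}(\D\C_j)$ is determined, and $\C'_j$ is the opposite side; it is a $3$-disk by the same neck/cap dichotomy argument used in the previous paragraph. The main obstacle I expect is cleanly showing that $\wh{W}'_t$ (and hence $\C'_j$) is globally a $3$-disk rather than just having the correct local neck structure near its boundary: a priori the ``cap side'' of $\phi_t(\D\wh{W}_t)$ could contain further unrelated topology of $\M'_t$. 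Ruling this out requires combining the canonical neighborhood structure with a Bryant-type rigidity argument, using that the scale distortion bound of Lemma~\ref{lem_scale_distortion} and the bounded curvature at bounded distance estimate of Lemma~\ref{lem_bounded_curv_bounded_dist} together confine $\wh{W}'_t$ to a ball of radius $O(r_{\comp})$ about $\phi_t(\D\wh{W}_t)$, on which the canonical neighborhood assumption forces a Bryant-cap or neck geometry matching that of $\wh{W}_t$.
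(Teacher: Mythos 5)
Your approach—promoting the image boundary $2$-sphere to a precise neck via Lemma~\ref{lem_C0_neck_smooth_neck} and then analyzing the ``cap side''—is genuinely different from the paper's, and the gap you flagged at the end is the real one. You want to define $\wh{W}'_t$ as the complementary component on the cap side of $\phi_t(\partial\wh{W}_t)$, but a neck collar around the image $2$-sphere says nothing about what lies beyond it: that region could have large diameter or nontrivial topology, and invoking Lemma~\ref{lem_bounded_curv_bounded_dist} from a base point on $\phi_t(\partial\wh{W}_t)$ only controls the scale within distance $O(r_{\comp})$ of that base point, which does not a priori cover the cap side. Also, Lemma~\ref{lem_scale_distortion} only constrains scales where $\phi$ is defined, and $\phi$ is not defined on the putative $\C'_j$. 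Finally, Lemma~\ref{lem_neck_or_cap} makes a statement about the local structure near a specified \emph{point}; it does not, as your phrasing suggests, ``force the cap side'' of a given $2$-sphere to be a disk, so you have not yet supplied an admissible point for the dichotomy.

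The paper closes this by applying Lemma~\ref{lem_neck_or_cap} at a point $x'=\phi_t(x)$ with $x\in\partial\wh{W}_t$, and arguing that the neck alternative~\ref{ass_8.2_a} cannot hold: the image $y'=\phi_t(y)$ of a nearby point $y\in W_t\setminus\Int\wh{W}_t$ with $\rho_1(y)=80C_{\sd}^4 r_{\comp}$ satisfies $\rho(y')>2\rho(x')$ (by scale distortion) yet $d_t(x',y')<2C'_2 r_{\comp}$, and a neck has nearly constant scale. Hence alternative~\ref{ass_8.2_b} applies, furnishing a compact domain $V'\supset\phi_t(\partial\wh{W}_t)$ with bounded diameter and controlled scale; the ratio $\rho(y')>2\rho(x')$ also excludes the twisted $\R P^2$-bundle case, so $V'$ is a $3$-disk, and $\wh{W}'_t$ is taken to be the $3$-disk bounded by $\phi_t(\partial\wh{W}_t)$ inside $V'$. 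The ingredient you are missing is the companion high-scale point $y'$, which converts local information at the boundary into global confinement of the cap side. You also leave a secondary gap in assertion~\ref{ass_11.17_cl3_f}: to identify $\C'_j$ with $\wh{W}'_{t_j}\setminus\phi_{t_j-}(\Int\N_{t_j-})$ you must rule out that some other component $N_1$ of $\N_{t_j-}$ maps into $\wh{W}'_{t_j}$; the paper does this via a priori assumption~\ref{item_backward_time_slice_3}(c), since $N_1$ contains a $\Lambda r_{\comp}$-thick point, whose image is $C_{\sd}^{-1}\Lambda r_{\comp}$-thick, contradicting the thinness of $\wh{W}'_{t_j}$ from assertion~\ref{ass_11.17_cl3_d} once $\Lambda$ is taken large.
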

\begin{proof}
Fix $t\in [t_{J-J_\#},t_{J}]$. 
By (\ref{eqn_w_t_geometry}), a priori assumptions \ref{item_lambda_thick_2}, \ref{item_backward_time_slice_3}(a), (c), \ref{item_eta_less_than_eta_lin_13} and Lemma~\ref{lem_scale_distortion}, assuming 
\begin{equation} \label{eq_scale_dist_application}
 \eta_{\lin} \leq \ov\eta_{\lin}, 
 \qquad \delta_{\nn} \leq \ov{\delta}_{\nn}, \qquad
  \Lambda \geq 2, \qquad 
  \eps_{\can} \leq \ov\eps_{\can} (\lambda) \,, \qquad
  r_{\comp}\leq\ov r_{\comp}\,, 
\end{equation}
 we have
\begin{equation} \label{eq_rho_bound_on_boundary_M_prime}
 10C_{\sd} r_{\comp} < \rho_1 = \rho < 40C_{\sd}^3 r_{\comp} \qquad \text{on} \qquad \phi_t ( \partial \wh{W}_t )\,, 
\end{equation}
and
\begin{equation*} \label{eq_diameter_bound}
 \diam \phi_t ( \partial \wh{W}_t ) < 10 \diam \partial \wh{W}_t < C'_1 r_{\comp},
\end{equation*}
where $C'_1 < \infty$ is a universal constant that can be determined in terms of $C_{\sd}$.

Choose $x \in \partial \wh{W}_t$.  Using (\ref{eqn_w_t_geometry}) and assuming 
\[  \delta_{\#} \leq \ov\delta_{\#} (\lambda, J_\#), \qquad
r_{\comp} \leq \ov{r}_{\comp}, 
\]
we can find a point $y \in W_t\setminus\Int \wh{W}_t$ with $\rho_1 (y) = \rho (y) = 80 C_{\sd}^4 r_{\comp}$ that can be connected to $x$ by a path of length at most $C'_2 r_{\comp}$ inside $W_t\setminus\Int \wh{W}_t$, for some $C'_2 = C'_2 (\lambda) < \infty$.
Let $x'=\phi_t(x)$, $y'=\phi_t (y)$.
Again by the scale distortion Lemma~\ref{lem_scale_distortion}, a priori assumptions \ref{item_lambda_thick_2}, \ref{item_backward_time_slice_3}(a), (c), \ref{item_eta_less_than_eta_lin_13}, and assuming a bound of the form (\ref{eq_scale_dist_application}), we conclude using (\ref{eq_rho_bound_on_boundary_M_prime}) that
\begin{equation} \label{eq_rho_yprime_xprime}
\rho(y') > 80 C_{\sd}^3 r_{\comp} > 2 \rho (x') 
\end{equation}
and $d_{\M'_t} (x', y') < 2 C'_2 r_{\comp}$. 
So there is a constant $\delta^\circ = \delta^\circ (\lambda ) > 0$ such that $x'$ cannot be a center of a $\delta^\circ$-neck in $\M'_{t}$.

Let us now apply Lemma~\ref{lem_neck_or_cap} to $x'$ for $\delta = \delta^\circ (\lambda)$.
In order to satisfy the assumptions of this lemma, we need to assume that
\[ \eps_{\can} \leq \ov\eps_{\can} (\lambda), \qquad
r_{\comp} \leq \ov{r}_{\comp}. \]
We obtain a constant $C_0 = C_0 (\delta^\circ (\lambda)) < \infty$ and a compact subset $V' \subset \M'_{t}$, containing $x'$ such that (compare with (\ref{eq_rho_bound_on_boundary_M_prime}))
\begin{equation} \label{eq_rho_bound_on_V_prime}
10 C_0^{-1} C_{\sd} r_{\comp} < \rho_1 = \rho < 40 C_0 C^3_{\sd} r_{\comp} \qquad \text{on} \qquad V' 
\end{equation}
and such that $\diam_t V' < 40 C_0 C_{\sd}^3 r_{\comp}$.
Moreover, we may assume that $\delta^\circ (\lambda)$ is chosen small enough such that $B(x', \max \{ C'_1, 2C'_2 \} r_{\comp} ) \subset V'$.
This implies that $y' \in \Int V'$ and 
\begin{equation} \label{eq_phi_wh_W_V_prime}
\phi_t (\partial \wh{W}_t) \subset \Int V'.
\end{equation}

We claim that $V'$ is a 3-disk.
To see this, we assume
\[ 
\delta_{\#} \leq \ov\delta_{\#} (\lambda, J_\#), \qquad
r_{\comp} \leq \ov{r}_{\comp} (\lambda),  
\]
such that $\partial W_t$ is $40 C_0 C_{\sd}^4 r_{\comp}$-thick and that $40 C_0 C_{\sd}^4 r_{\comp} \leq 1$.
So, again, by the scale distortion Lemma~\ref{lem_scale_distortion}, a priori assumptions \ref{item_lambda_thick_2}, \ref{item_backward_time_slice_3}(a), (c), \ref{item_eta_less_than_eta_lin_13}, and assuming (\ref{eq_scale_dist_application}), we obtain that $\phi_t (\partial W_t)$ is $40 C_0 C_{\sd}^3 r_{\comp}$-thick.
Thus by (\ref{eq_rho_bound_on_V_prime}) we have 
\begin{equation} \label{eq_ph_partial_W_V_prime}
 \phi_t (\partial W_t) \cap V' = \emptyset. 
\end{equation}
As $x'$ and $\phi_t (\partial W_t)$ lie in the same connected component of $\M'_t$, we must have $\partial V' \neq \emptyset$ and due to (\ref{eq_rho_yprime_xprime}), Lemma~\ref{lem_neck_or_cap} implies that $V'$ is a 3-disk.

By (\ref{eq_phi_wh_W_V_prime}) the 2-sphere $\phi_t (\partial \wh{W}_t)$ bounds a 3-disk $\wh{W}'_t \subset V'$.
We now repeat the construction above for all $t \in [t_{J-J_\#}, t_J]$ and set $\wh{W}' := \cup_{t \in [t_{J-J_\#}, t_J]} \wh{W}'_t$.
Then assertion \ref{ass_11.17_cl3_a} holds automatically.

Next, observe that $\wh{W}'_t$ and $\phi_t(W_t\setminus\Int(\wh{W}_t))$ are compact connected domains with smooth boundary that share a single boundary component $\phi(\D \wh{W}_t)$.  
Therefore assertion \ref{ass_11.17_cl3_b} of this claim can fail only if $\phi_t(W_t\setminus\Int \wh{W}_t) \subset \wh{W}'_t \subset V'$, which would contradict (\ref{eq_ph_partial_W_V_prime}).
Thus assertion \ref{ass_11.17_cl3_b} holds.

In order to show assertion \ref{ass_11.17_cl3_c}, it suffices to show that for all $t \in [t_{J-J_\#}, t_J]$, every point $q \in \Int \wh{W}'_t$ is not contained in the relative boundary of $\wh{W}'$ inside $\M'_{[t_{J-J_\#}, t_J]}$.
To see this, let $U \subset \M'_{[t_{J-J_\#}, t_J]}$ be a product domain containing $\wh{W}'_t$ that is open in $\M'_{[t_{J-J_\#}, t_J]}$.
By Claim~\ref{cl_11_2} and Definition \ref{def_comparison}, the map $\phi$ is well-defined and smooth on a neighborhood of $\cup_{\bar t\in {[t_{J-J_\#},t_J]}} W_{\ov{t}} \setminus\Int \wh{W}_{\ov{t}}$.
Therefore, after shrinking $U$ if necessary, we may assume that if $\bar t\in [t_{J-J_\#},t_J]$ is close to $t$, then $(\phi(W_{\bar t}\setminus\Int \wh{W}_{\bar t})\cap U_{\bar t} )(t)$ is defined and moves by smooth isotopy as $\ov{t}$ varies.
So by assertion \ref{ass_11.17_cl3_b} the 3-disk $(\wh{W}'_{\ov{t}} \cap U_{\ov{t}})(t)$ varies by smooth isotopy as well and therefore it contains a small neighborhood of $q$ inside $\M'_t$ for $\ov{t}$ close to $t$.
This implies that a small neighborhood of $q$ is contained in $\cup_{\ov{t} \in [t_{J-J_\#}, t_J]} \wh{W}'_{\ov{t}}$, which finishes the proof of assertion \ref{ass_11.17_cl3_c}.
The same argument implies that $\wh{W}'$ is a finite union of compact subsets and must therefore be compact.

Assertions \ref{ass_11.17_cl3_d} and \ref{ass_11.17_cl3_e} follow by construction of $\wh{W}'_t$ and (\ref{eq_rho_bound_on_V_prime}) and (\ref{eq_rho_bound_on_boundary_M_prime}), as long as $C_1 > 40 C_0 C_{\sd}^3$.

Lastly, consider assertion \ref{ass_11.17_cl3_f}.
Suppose that $J - J_\# \leq j \leq J$.

Recall that $\C_j= W_{t_j}\setminus\Int\N_{t_j-}$ is a $3$-disk by assertion \ref{ass_11.17_cl1_a} of Claim~\ref{cl_11_1}.  
By assertion \ref{ass_11.17_cl2_b} of Claim~\ref{cl_11_2} we have $W_{t_j}\setminus\Int \wh{W}_{t_j}\subset \Int\N_{t_j-}$.
So $\C_j\subset \Int \wh{W}_{t_j}$.  
Therefore, $\wh{W}_{t_j}\setminus\Int\C_j= \N_{t_j-} \cap \wh{W}_{t_j}$ is a compact connected manifold with boundary.

As $\phi_{t_j-} (\partial \wh{W}_{t_j} ) = \partial \wh{W}'_{t_j}$ and $\phi_{t_j-} :\N_{t_j-} \to \M'_{t_j}$ is injective, the image $\phi_{t_j-} ( \Int \wh{W}_{t_j} \cap \N_{t_j-})$ must either be contained in $\wh{W}'_{t_j}$ or in its complement.
Since $\phi_{t_j-}$ maps a neighborhood of $\D \wh{W}_{t_j}$ in $\wh{W}_{t_j}$ into $\wh{W}'_{t_j}$, we obtain by connectedness that $\phi_{t_j-} (\N_{t_j-} \cap \wh{W}_{t_j}) \subset \wh{W}'_{t_j}$. 
By the same argument, if $N_0$ is the component of $\N_{t_j-}$ that contains $\partial \wh{W}_{t_j}$, then $\phi_{t_j-} (N_0 \setminus \wh{W}_{t_j}  )$ is disjoint from $\wh{W}'_{t_j}$.

Assume now that there is a component $N_1 \neq N_0$ of $\N_{t_j-}$ with the property that $\phi_{t_j-} (N_1)$ intersects $\wh{W}'_{t_j}$.
Then again, since $\phi_{t_j-} (\partial \wh{W}_{t_j} ) = \partial \wh{W}'_{t_j}$ and $\phi_{t_j-}$ is injective, we must have $\phi_{t_j -} (N_1) \subset \wh{W}'_{t_j}$.
By a priori assumption \ref{item_backward_time_slice_3}(c), we know that $N_1$ must contain a $\Lambda r_{\comp}$-thick point.
So by Lemma~\ref{lem_scale_distortion}, a priori assumptions \ref{item_lambda_thick_2}, \ref{item_backward_time_slice_3}(a), (c), \ref{item_eta_less_than_eta_lin_13} and assuming (\ref{eq_scale_dist_application}),
these points must be mapped by $\phi_{t_j-}$ to $C_{\sd}^{-1}  \Lambda  r_{\comp}$-thick points in $\wh{W}'_{t_j}$.
Assuming
\[ \Lambda \geq \underline{\Lambda} (\lambda), \qquad
r_{\comp} \leq \ov{r}_{\comp} (\lambda), \]
this, however, contradicts assertion \ref{ass_11.17_cl3_d} of this claim.

Combining the conclusions of the last two paragraphs, we obtain that 
\[   \phi_{t_j-} (  \N_{t_j-} ) \cap  \wh{W}'_{t_j}
 =  \phi_{t_j-} (N_0) \cap \wh{W}'_{t_j} 
 = \phi_{t_j-} (N_0 \cap \wh{W}_{t_j})
 =  \phi_{t_j-} ( \N_{t_j-}  \cap   \wh{W}_{t_j} ) . \]
Since $\partial \wh{W}'_{t_j} = \phi_{t_j-} ( \partial \wh{W}_{t_j} )  \subset \phi_{t_j-} (\Int \N_{t_j-})$ we obtain from Alexander's theorem that $\C'_{t_j} = \wh{W}'_{t_j} \setminus \phi_{t_j-} ( \Int \N_{t_j-} ) = \wh{W}'_{t_j} \setminus \phi_{t_j-} (  \wh{W}_{t_j} \cap \Int \N_{t_j-} )$ is a 3-disk.
As $\C'_{t_j} \subset \Int \wh{W}'_{t_j}$, it is also a component of $\M'_{t_j} \setminus  \phi_{t_j-} (\Int \N_{t_j-})$.
This establishes assertion \ref{ass_11.17_cl3_f}.
\end{proof}

Choose $z' = \phi_{t_{J-J_\#}-} (z) \in \phi_{t_{J-J_\#}-}(\D\C_{J - J_\# })$.  We now show that $z'$ survives until time $t_J$ and at some time lies at the tip of an approximate Bryant soliton.

\begin{claim}
\label{cl_11_4}
If
\begin{gather*}
\eta_{\lin} \leq \ov\eta_{\lin} \,, \qquad
\delta_{\nn} \leq \ov\delta_{\nn} \,, \qquad
J_\#\geq \ul{J}_\#(\lambda, \de_{\bb})\,,\qquad 
\eps_{\can}\leq \ov\eps_{\can}( \lambda, \delta_{\bb}, J_\# )\,,\\
r_{\comp}\leq\ov r_{\comp} (\lambda) \,,\qquad
\end{gather*}
then: 
\begin{enumerate}[label=(\alph*)]
\item \label{ass_11.17_cl4_a} $z'(t)$ is defined and contained in $\wh{W}'_{t}$ for all $t\in[t_{J-J_\#},t_J]$. 
\item \label{ass_11.17_cl4_b} There is a $j_0\in \{J-J_\#,\ldots,J-1\}$ such that $(\M'_{t_{j_0}},z'(t_{j_0}))$ is $\de_{\bb}$-close to $(M_{\Bry},g_{\Bry},x_{\Bry})$ at scale $\rho(z'(t_{j_0})) < C_1(\lambda ) r_{\comp}$.
\end{enumerate}
\end{claim}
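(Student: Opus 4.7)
Part (b) follows from part (a) by a simple pigeonhole argument combined with Lemma~\ref{lem_bryant_propagate}, so the real work is in part (a).

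\emph{Part (a).} The initial point $z'(t_{J-J_\#}) \in \partial \C'_{J-J_\#}$ lies in $\Int \wh{W}'_{t_{J-J_\#}}$ by Claim~\ref{cl_11_3}\ref{ass_11.17_cl3_f}, since $\partial \C_{J-J_\#} \subset \partial \N_{t_{J-J_\#}-}$ lies in the interior of $\wh{W}_{t_{J-J_\#}}$. Suppose for contradiction that there is a first time $t^* \in (t_{J-J_\#}, t_J]$ at which $z'(t^*)$ exits $\wh{W}'$. Since $\wh{W}'$ is closed and by Claim~\ref{cl_11_3}\ref{ass_11.17_cl3_c} its relative topological boundary inside the time-slab $\M'_{[t_{J-J_\#}, t_J]}$ equals $\cup_t \partial \wh{W}'_t$, we must have $z'(t^*) \in \partial \wh{W}'_{t^*}$, and hence $\rho(z'(t^*)) \geq 10 C_{\sd} r_{\comp}$ by Claim~\ref{cl_11_3}\ref{ass_11.17_cl3_e}. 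I would derive a contradiction by transferring the Bryant-soliton trapping property from $\M$ to $\M'$ via the comparison $\phi$.

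Concretely, define $z(t) := \phi_t^{-1}(z'(t)) \in \M_t$ as long as $z'(t) \in \Int \wh{W}'_t$; this is well-defined by Claim~\ref{cl_11_2}\ref{ass_11.17_cl2_a} and Claim~\ref{cl_11_3}\ref{ass_11.17_cl3_b}, and lands in $\Int \wh{W}_t$. Since $\phi^{-1}$ solves the harmonic map heat flow equation (Definition~\ref{def_RF_spacetime_harm_map_hf}),
\[ \tfrac{d}{dt} z(t) \;=\; \bigl(\partial_{\t} + \triangle_{g'_t, g_t} \phi_t^{-1}\bigr)\bigl|_{z'(t)}, \]
so $z(t)$ moves in $\M$ along the time vector field $\partial_{\t}$ up to a ``harmonic-map-heat-flow drift'' whose size is $O(\eta_{\lin})$ in the natural rescaled sense, as follows from a priori assumption \ref{item_eta_less_than_eta_lin_13} and standard interior gradient estimates for $h$ on the approximate Bryant background. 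On the other hand, by (\ref{eq_Bryant_closeness}), on $W$ the metric $g$ is $\delta_\#$-close to an evolving rescaled Bryant soliton. In an exact Bryant soliton the sublevel set $\{\rho \leq c\}$ is strictly mapped into its interior by the forward flow of $\partial_{\t}$: indeed, since the soliton is steady, $\partial_{\t}$-trajectories project spatially along $-\nabla f$, and $\nabla f$ points toward the tip, so $\rho$ is strictly decreasing along $\partial_{\t}$-integral curves. Combining this strict trapping in the model with the smallness of both $\delta_\#$ (after choosing $\delta_\# \leq \bar{\delta}_\#(\lambda, J_\#, \eta_{\lin})$) and the drift term yields $z(t) \in \wh{W}_t$ for every $t \in [t_{J-J_\#}, t^*]$, and moreover $\rho(z(t)) \leq 2 C_{\sd}^2 r_{\comp}$. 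Applying the scale distortion Lemma~\ref{lem_scale_distortion} to $\phi_t$ then gives $\rho(z'(t)) \leq C_{\sd} \rho(z(t)) \leq 2 C_{\sd}^3 r_{\comp}$, which contradicts $\rho(z'(t^*)) \geq 10 C_{\sd} r_{\comp}$ after absorbing the universal constant into the definition of $\wh{W}^*$ (recall we assumed $C_{\sd} > 100$). This is the main obstacle, since care is required to make the drift estimate quantitative and uniform in $J_\#$.

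\emph{Part (b).} Granted (a), Claim~\ref{cl_11_3}\ref{ass_11.17_cl3_d} gives
\[ C_1^{-1} r_{\comp} \;\leq\; \rho(z'(t)) \;\leq\; C_1 r_{\comp} \qquad \text{for all } t \in [t_{J-J_\#}, t_J]. \]
Set $\Delta_j := \rho(z'(t_{j-1})) - \rho(z'(t_j))$ for $j = J-J_\#+1, \ldots, J$. Telescoping,
\[ \sum_{j=J-J_\#+1}^{J} \Delta_j \;=\; \rho(z'(t_{J-J_\#})) - \rho(z'(t_J)) \;\leq\; C_1 r_{\comp}. \]
Let $\beta = \bar{\beta}(C_1^{-1}(\lambda), \delta_{\bb}, 1)$ be the constant provided by Lemma~\ref{lem_bryant_propagate} with $\alpha = C_1^{-1}(\lambda)$, $\delta = \delta_{\bb}$ and one time step $J=1$. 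Taking $J_\# \geq C_1(\lambda)/\beta =: \underline{J}_\#(\lambda, \delta_{\bb})$, if every $\Delta_j$ exceeded $\beta r_{\comp}$ then the sum would exceed $J_\# \beta r_{\comp} \geq C_1 r_{\comp}$, a contradiction. Hence there exists $j_0 \in \{J-J_\#+1, \ldots, J\}$ with $\Delta_{j_0} \leq \beta r_{\comp}$, i.e.
\[ \rho(z'(t_{j_0 - 1})) \;\leq\; \rho(z'(t_{j_0})) + \beta r_{\comp}. \]
Since $\M'$ is $(\eps_{\can} r_{\comp}, T)$-complete and satisfies the $\eps_{\can}$-canonical neighborhood assumption at scales $(\eps_{\can} r_{\comp}, 1)$, and since by (a) $z'(t_{j_0})$ survives until $t_{j_0 - 1}$, Lemma~\ref{lem_bryant_propagate} applies in $\M'$ and yields that $(\M'_{t_{j_0}}, z'(t_{j_0}))$ is $\delta_{\bb}$-close to $(M_{\Bry}, g_{\Bry}, x_{\Bry})$ at scale $\rho(z'(t_{j_0})) < C_1(\lambda) r_{\comp}$, provided $r_{\comp} \leq \bar{r}(\lambda)$ and $\eps_{\can} \leq \bar{\eps}_{\can}(\lambda, \delta_{\bb})$. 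The parameter restrictions match the hypotheses of the claim, completing (b).
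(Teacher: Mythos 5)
Part (b) is essentially the paper's argument, but part (a) takes a substantially different route and has real problems, so let me concentrate there.

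The paper proves (a) by working entirely in $\M'$: using assertion \ref{ass_C.1_e} of Lemma~\ref{lem_kappa_solution_properties_appendix} (scalar curvature on a $\kappa$-solution is nondecreasing in time) together with the $\eps_{\can}$-canonical neighborhood assumption and Lemma~\ref{lem_rho_Rm_R}, one gets $\partial_{\t} \rho^2(y) < C_{\sd}^2 J_\#^{-1}$ at any $y\in\M'$ with $\eps_{\can} r_{\comp}<\rho(y)<4C_{\sd}r_{\comp}$, provided $\eps_{\can}\leq\ov\eps_{\can}(J_\#)$. Integrating over the time interval of length $\leq J_\# r_{\comp}^2$ starting from $\rho(z')<2C_{\sd}r_{\comp}$ gives $\rho(z'(t))<10 C_{\sd}r_{\comp}$, which, together with assertions \ref{ass_11.17_cl3_c} and \ref{ass_11.17_cl3_e} of Claim~\ref{cl_11_3}, traps $z'(t)$ inside $\wh{W}'$. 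No reference to $\phi$ or to the Bryant structure on $W$ is needed.

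Your route — pulling $z'(t)$ back to $\M$ via $\phi_t^{-1}$ and using Bryant-soliton trapping plus a drift bound — has several genuine gaps.
\begin{enumerate}[label=(\arabic*)]
\item The drift estimate is the crux and is left undone. The harmonic map heat flow drift changes $\rho^2$ by $O(\eta_{\lin})$ per (rescaled) unit time. Over $J_\#$ time steps the accumulated drift is $O(J_\#\eta_{\lin})$, and in the parameter order $\eta_{\lin}$ is fixed \emph{before} $J_\#$, so this does not go to zero. Your counter-mechanism (strict trapping of sublevel sets of $\rho$ in the Bryant model, which decreases $\rho^2$ at a rate that is bounded below by a universal positive constant away from the tip) is the right idea, but you never verify that the trapping rate dominates the drift rate uniformly in $J_\#$, nor that the relevant part of $W$ lies in the regime $\sigma>C_B$ where the trapping is strict; asserting "care is required" is not a proof.
\item The numerics quoted do not produce the contradiction. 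You conclude $\rho(z(t))\leq 2C_{\sd}^2 r_{\comp}$ and then $\rho(z'(t))\leq 2C_{\sd}^3 r_{\comp}$ via Lemma~\ref{lem_scale_distortion}. But $\partial\wh{W}'_t$ is only $10C_{\sd}r_{\comp}$-thick, and since $C_{\sd}>100$ we have $2C_{\sd}^3 r_{\comp}\gg 10C_{\sd}r_{\comp}$, so there is no contradiction. The (implicit) correct trapping bound would give $\rho(z(t))\lesssim r_{\comp}$ with a universal constant, not $2C_{\sd}^2 r_{\comp}$; your constants have drifted (so to speak) and the suggestion to "absorb the universal constant into the definition of $\wh{W}^*$" would force re-proving Claims~\ref{cl_11_2} and \ref{cl_11_3}, which you cannot do in place.
\item The pullback $z(t):=\phi_t^{-1}(z'(t))$ is not unambiguously defined at the integral times $t_j$ on all of $\wh{W}'_{t_j}$: by Claim~\ref{cl_11_3}\ref{ass_11.17_cl3_f}, $\C'_j\subset\wh{W}'_{t_j}$ lies outside $\phi_{t_j-}(\Int\N_{t_j-})$. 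The paper's argument sidesteps this entirely by never inverting $\phi$.
\end{enumerate}
For (b), your pigeonhole is the same as the paper's modulo a small index error: you allow $j_0=J$, which is outside the required range $\{J-J_\#,\ldots,J-1\}$; sum over $j\in\{J-J_\#+1,\ldots,J-1\}$ (as the paper does) to avoid this.
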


\begin{proof}
By the scale distortion Lemma~\ref{lem_scale_distortion} and a priori assumptions \ref{item_lambda_thick_2}, \ref{item_backward_time_slice_3}(a), (c), \ref{item_eta_less_than_eta_lin_13} we obtain, assuming
\[ \eta_{\lin} \leq \eta_{\lin}, \qquad
\delta_{\nn} \leq \ov\delta_{\nn}, \qquad
\Lambda \geq 2, \qquad
\eps_{\can} \leq \ov\eps_{\can} (\lambda) , \qquad
r_{\comp} \leq \ov{r}_{\comp}, \]
that $\rho(z') = \rho_1 (z' ) < C_{\sd} \rho_1 (z) < 2 C_{\sd} r_{\comp}$.

Next, recall that the scalar curvature on any $\kappa$-solution is pointwise non-decreasing in time (see assertion \ref{ass_C.1_e} of Lemma~\ref{lem_kappa_solution_properties_appendix}).
So assuming 
\[ \eps_{\can} \leq \ov\eps_{\can} (J_\#), \qquad
r_{\comp} \leq \ov{r}_{\comp}, \]
we obtain from Lemma~\ref{lem_rho_Rm_R} that at any $y \in \M'$ with $\eps_{\can} r_{\comp} < \rho (y) < 4 C_{\sd} r_{\comp}$ we have 
\begin{equation} \label{eq_dt_rho_square}
 \partial_{\mathfrak{t}} \rho^2 (y) = \partial_{\mathfrak{t}} \big( \tfrac13 R \big) < C^2_{\sd} J_\#^{-1}.
\end{equation}

Choose now $t^* \in [t_{J - J_\#}, t_J]$ maximal such that $z'(t)$ is well defined for all $t \in [t_{J - J_\#}, t^*)$.
By (\ref{eq_dt_rho_square}) we have $\rho^2 (z'(t)) < 5 C_{\sd}^2 r^2_{\comp}$ for all $t \in [t_{J-J_\# +1}, t^*)$ and therefore $\rho (z'(t)) < 10 C_{\sd} r_{\comp}$ for all such $t$.
Suppose that $t^* < t_J$.
As $\wh{W}'$ is compact, we must have $z'(t) \not\in \wh{W}'$ for $t$ close to $t^*$.
So there is a $t' \in [t_{J-J_\#}, t^*)$ such that $z(t')$ lies on the relative topological boundary of $\wh{W}'$ inside $\M'_{[t_{J-J_\#}, t_J]}$.
By assertions \ref{ass_11.17_cl3_c} and \ref{ass_11.17_cl3_e} of Claim~\ref{cl_11_3} this, however, implies that $\rho (z' (t')) > 10 C_{\sd} r_{\comp}$, contradicting our previous conclusion.
Therefore, $t^* = t_J$ and $z'(t) \in \wh{W}'$ for all $t \in [t_{J - J_\#}, t_J]$.

Since
\begin{equation*}
 \sum_{j=J-J_\#+1}^{J-1} \big( \rho (z'(t_j)) - \rho (z'(t_{j-1})) \big) = \rho (z' (t_{J-1})) - \rho (z' ) 
 > - \rho (z') > - 2C_{\sd} r_{\comp}, 
\end{equation*}
we can find a $j_0 \in \{ J - J_\# + 1, \ldots J - 1\}$ such that
\[  \rho (z'(t_{j_0})) - \rho (z'(t_{j_0-1})) > - \frac{2C_{\sd}}{J_\#-1} r_{\comp}. \]
Next, observe that $C_1^{-1}(\lambda) r_{\comp} < \rho (z'(t_{j_0})) < C_1 (\lambda) r_{\comp}$ by assertion \ref{ass_11.17_cl3_d} of Claim~\ref{cl_11_3}.
So by Lemma \ref{lem_bryant_propagate}, assuming 
$$
J_\#\geq \ul{J}_\#(\lambda, \de_{\bb}),  \qquad
\eps_{\can}\leq \ov\eps_{\can}(\lambda,\de_{\bb}), \qquad
r_{\comp} \leq \ov{r}_{\comp} (\lambda),
$$
we find that $(\M'_{t_{j_0}},z'(t_{j_0}))$ is $\de_{\bb}$-close to $(M_{\Bry},g_{\Bry},x_{\Bry})$ at scale $\rho(z'(t_{j_0}))$.
\end{proof}

Consider the component $\C'_{j_0}$ from assertion \ref{ass_11.17_cl3_f} of Claim~\ref{cl_11_3}.
We will now verify that \ref{item_geometry_cap_extension_5}(a)--(e) hold for $\C = \C_{j_0}$ and $\C' = \C'_{j_0}$, forcing the existence of an extension cap at time $t_{j_0}$.
A priori assumptions \ref{item_geometry_cap_extension_5}(a), (b) hold by Claim~\ref{cl_11_1}\ref{ass_11.17_cl1_a} and Claim~\ref{cl_11_3}\ref{ass_11.17_cl3_f}.
A priori assumption \ref{item_geometry_cap_extension_5}(c) is implied by (\ref{eq_Bryant_closeness}), assuming
\[ \delta_\# \leq \ov\delta_{\#} (\delta_{\bb}, J_\#). \]
A priori assumption \ref{item_geometry_cap_extension_5}(d) and the diameter bound on $\C'_{j_0}$ in a priori assumption \ref{item_geometry_cap_extension_5}(e) hold by Claim~\ref{cl_11_3}\ref{ass_11.17_cl3_d} and Claim~\ref{cl_11_4}\ref{ass_11.17_cl4_b}, as long as
\[ D_{\CAP} \geq C_1 (\lambda). \]
Lastly, the diameter bound on $\C_{j_0}$ in a priori assumption \ref{item_geometry_cap_extension_5}(e) follows from the fact that $\C_{j_0} \subset \wh{W}_{t_{j_0}}$ and by construction, assuming that
\[ D_{\CAP} \geq \underline{D}_{\CAP} (\lambda). \]

The conclusions of the previous paragraph combined with a priori assumption \ref{item_geometry_cap_extension_5} imply that $\C_{j_0}$ must be an extension cap, which implies that $\C_{j_0} \subset \N_{t_{j_0}+}$.
This, however, contradicts assertion \ref{ass_11.17_cl1_b} of Claim~\ref{cl_11_1} for $j = j_0 +1$, which finishes the proof.
\end{proof}

\section{Inductive step: extension of the comparison map} \label{sec_extend_comparison}
\subsection{Statement of the main result}
In this section we consider a comparison domain defined on the time-interval $[0, t_{J+1}]$, as constructed in Section~\ref{sec_inductive_step_extension_comparison_domain}, and a comparison defined on the time-interval $[0,t_J]$.
Our goal will be to extend the comparison to the time-interval $[0, t_{J+1}]$.
The following proposition will be the main result of this section.

\begin{proposition}[Extending  the comparison map by one step] \label{Prop_extend_comparison_by_one}
Suppose that
\begin{equation}
\begin{gathered} \label{eq_parameters_extend_comparison_by_one}
T > 0, \qquad
E \geq \underline{E}, \qquad
H \geq \underline{H} (E), \qquad
\eta_{\lin} \leq \ov\eta_{\lin} (E), \qquad \\
\nu \leq \ov\nu (T, E, H, \eta_{\lin}), \qquad
\delta_{\nn} \leq \ov\delta_{\nn} (T, E, H, \eta_{\lin} ), \qquad
\lambda \leq \ov\lambda, \qquad \\
D_{\CAP} > 0, \qquad
\eta_{\cut} \leq \ov\eta_{\cut} , \qquad
D_{\cut} \geq \underline{D}_{\cut} (T, E,H, \eta_{\lin}, \lambda, D_{\CAP}, \eta_{\cut} ), \qquad \\
W \geq \underline{W} (E, \lambda, D_{\cut}), \qquad 
A \geq \underline{A} (E, \lambda, W), \qquad
\Lambda \geq \underline{\Lambda} (\lambda, A), \qquad \\
\delta_{\bb} \leq \ov{\delta}_{\bb} (T, E, H, \eta_{\lin}, \lambda, D_{\CAP}, \eta_{\cut}, D_{\cut}, A, \Lambda), \qquad \\
\eps_{\can} \leq \ov\eps_{\can} (T, E, H, \eta_{\lin}, \lambda, D_{\CAP}, \eta_{\cut}, D_{\cut}, W, A, \Lambda), \qquad \\
r_{\comp} \leq \ov{r}_{\comp} (T, H, \lambda, D_{\cut}),
\end{gathered}
\end{equation}
and assume that:
\begin{enumerate}[label=(\roman*)]
\item \label{con_12.1_i} $\M, \M'$ are two $(\eps_{\can} r_{\comp}, T)$-complete Ricci flow spacetimes that each satisfy the $\eps_{\can}$-canonical neighborhood assumption at scales $(\eps_{\can} r_{\comp}, 1)$.  
\item \label{con_12.1_ii} $(\N, \{ \N^j \}_{j=1}^{J+1}, \{ t_j \}_{j=0}^{J+1})$ is a comparison domain in $\M$, which is defined over the time-interval $[0, t_{J+1}]$. 
We allow the case $J=0$.
\item \label{con_12.1_iii} $(\Cut, \phi, \{ \phi^j \}_{j=1}^J )$ is a comparison from $\M$ to $\M'$ defined on $(\N, \lb \{ \N^j \}_{j=1}^{J+1},  \lb \{ t_j \}_{j=0}^{J+1})$ over the time-interval $[0, t_J]$.
If $J = 0$, then this comparison is trivial, as explained in Definition~\ref{def_comparison}.
\item \label{con_12.1_iv} $(\N, \{ \N^j \}_{j=1}^{J+1}, \{ t_j \}_{j=0}^{J+1})$ and $(\Cut, \phi, \{ \phi^j \}_{j=1}^J )$ satisfy a priori assumptions \ref{item_time_step_r_comp_1}--\ref{item_eta_less_than_eta_lin_13} for the parameters $(\eta_{\lin}, \lb \delta_{\nn}, \lb \lambda, \lb D_{\CAP}, \lb \Lambda, \lb  \delta_{\bb}, \lb \eps_{\can}, \lb r_{\comp})$ and a priori assumptions \ref{item_q_less_than_q_bar_6}--\ref{item_apa_13} for the parameters $( T, \lb E, \lb H, \lb \eta_{\lin}, \lb \nu, \lb \lambda, \lb \eta_{\cut},  \lb  D_{\cut}, \lb W, \lb A, \lb r_{\comp})$.
\item \label{con_12.1_v} $t_{J+1} \leq T$.
\item \label{con_12.1_vi} If $J = 0$, then we assume in addition the existence of a map $\zeta : X \to \M'_0$ with the following properties.
First,  $X \subset \M_{0}$ is an open set that contains the $\delta_{\nn}^{-1}r_{\comp}$-tubular neighborhood around $\N_0$.
Second, $\zeta : X \to \M'_0$ is a diffeomorphism onto its image  that satisfies the following bounds on $X$:
\begin{align*}
 | \zeta^* g'_0 - g_0 | &\leq \eta_{\lin}, \\
 e^{HT} \rho_1^E | \zeta^* g'_0 - g_0 | &\leq \nu \ov{Q} = \nu \cdot 10^{-E-1} \eta_{\lin} r_{\comp}^E, \\
 e^{HT} \rho_1^3 | \zeta^* g'_0 - g_0 | &\leq \nu \ov{Q}^* = \nu \cdot 10^{-1} \eta_{\lin} (\lambda r_{\comp})^3. 
\end{align*}
Assume moreover that the $\eps_{\can}$-canonical neighborhood assumption holds at scales $(0,1)$ on the image $\zeta (X)$.
\end{enumerate}
Then, under the above assumptions, there are a set $\Cut^{J}$ of pairwise disjoint disks in $\M_{t_J}$, a time-preserving diffeomorphism onto its image $\phi^{J+1} : \N^{J+1} \to \M'$ and a continuous map 
\[ \ov\phi :  \N  \setminus \cup_{\DD \in \Cut \cup \Cut^{J}} \DD \to \M' \]
such that the following holds.

The tuple $(\Cut \cup \Cut^{J}, \lb \ov\phi, \lb \{ \phi^j \}_{j=1}^{J+1} )$ is a comparison from $\M$ to $\M'$ defined on $(\N , \{ \N^j \}_{j=1}^{J+1}, \{ t_j \}_{j=0}^{J+1})$ over the time-interval $[0, t_{J+1}]$.
This comparison and the corresponding domain still satisfy a priori assumptions \ref{item_time_step_r_comp_1}--\ref{item_eta_less_than_eta_lin_13} for the parameters $(\eta_{\lin}, \lb \delta_{\nn}, \lb \lambda, \lb D_{\CAP}, \lb \Lambda, \lb  \delta_{\bb}, \lb \eps_{\can}, \lb r_{\comp})$ and a priori assumptions \ref{item_q_less_than_q_bar_6}--\ref{item_apa_13} for the parameters $( T, \lb E, \lb H, \lb \eta_{\lin}, \lb \nu, \lb \lambda, \lb \eta_{\cut},  \lb  D_{\cut}, \lb W, \lb A, \lb r_{\comp})$.

Lastly, in the case $J=0$ we have $\phi^1_0 = \zeta |_{\N^1_0}$.
\end{proposition}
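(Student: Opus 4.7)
\medskip

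\noindent\textbf{Proof plan.} The induction step splits naturally into two largely independent subproblems: first, constructing initial data for $\phi^{J+1}$ on the time slice $\N^{J+1}_{t_J}$, which may differ from $\phi_{t_J-}$ on a collection of disks $\Cut^{J}$ each containing one extension cap; and second, extending this initial data forward over the product time-slab $[t_J, t_{J+1}]$ by evolving the inverse via harmonic map heat flow, so that all of \ref{item_time_step_r_comp_1}--\ref{item_apa_13} are preserved. To construct the initial data at an extension cap $\C\subset\N^{J+1}_{t_J}$, I would invoke a priori assumption \ref{item_geometry_cap_extension_5} to obtain Bryant soliton models $(M_{\Bry},g_{\Bry},x_{\Bry})$ for neighborhoods of $\C\subset\M_{t_J}$ and a matching $\C'\subset\M'_{t_J}$ at scales comparable to $r_{\comp}$. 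On the overlap with $\N_{t_J-}$, the pre-existing map $\phi_{t_J-}$ is already defined, and the bound $Q\leq\ov{Q}$ from \ref{item_q_less_than_q_bar_6} (valid there since the region surrounding the cap is far from earlier cuts by Lemma~\ref{lem_boundary_far_from_cut} and Lemma~\ref{lem_large_scale_near_neck}) translates, via the Bryant soliton asymptotics $\rho\sim\sqrt{\sigma}$, into a control of the form $|h|\lesssim\rho^{-E}$. Applying the Bryant Extension Proposition~\ref{prop_bryant_comparison_general} (with the chosen exponent $E$ large) produces an extension $\phi^{J+1}_{t_J}$ on $\N^{J+1}_{t_J}$ that agrees with $\phi_{t_J-}$ outside the disk $\DD\in\Cut^{J}$ containing $\C$ and satisfies $\rho^3_g|\td h|_g\leq \eta_{\cut}\ov{Q}^*$ on $\DD$, which is exactly a priori assumption \ref{item_h_derivative_bounds_9}. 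The diameter of $\DD$ is controlled by $D_{\cut}r_{\comp}$ from the Bryant soliton geometry, verifying \ref{item_cut_diameter_less_than_d_r_comp_11}. In the base case $J=0$, the hypothesized map $\zeta$ plays the role of this initial data on a $\delta_{\nn}^{-1}r_{\comp}$-neighborhood of $\N_0$.

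With the initial data fixed, I would then solve the harmonic map heat flow equation for $(\phi^{J+1})^{-1}$ on the time-slab corresponding to $\N^{J+1}$, using a standard short-time existence argument combined with a continuity method. The obstruction to extending the solution beyond some intermediate time $t\in[t_J,t_{J+1}]$ is the failure of the bound $|h|\leq \eta_{\lin}$, so the task becomes showing that on the open set where $\phi^{J+1}$ exists we actually have $|h|<\eta_{\lin}$ strictly, together with uniform estimates allowing one to pass continuously through any would-be boundary time. The geometry of the boundary of $\N^{J+1}$ -- consisting of central $2$-spheres of $\delta_{\nn}$-necks at scale $r_{\comp}$ -- forces these half-necks to shrink under the Ricci flow, so the ``staircase'' recedes and boundary influence is controlled: near $\partial\N^{J+1}$ the initial data inherits the closeness-to-cylinder from $\phi_{t_J-}$, and this closeness propagates by standard local parabolic estimates.

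The heart of the argument is verifying that the evolving $Q$ and $Q^*$ satisfy a priori assumptions \ref{item_q_less_than_q_bar_6}, \ref{item_q_less_than_w_q_bar_7} and \ref{item_q_star_less_than_q_star_bar} at the new time. On the image $\phi^{J+1}(\N^{J+1})$ the canonical neighborhood assumption at scales $(0,1)$ must be checked using Lemma~\ref{lem_scale_distortion} and the bounded curvature at bounded distance Lemma~\ref{lem_bounded_curv_bounded_dist}, once $|h|\leq\eta_{\lin}$ is in hand. For a point $x\in\N^{J+1}$ with $P(x,A\rho_1(x))\cap\DD=\emptyset$ for every cut, Lemma~\ref{lem_parabolic_domain_in_N} ensures the parabolic neighborhood lies inside $\N\setminus\cup\DD$; applying Proposition~\ref{Prop_interior_decay} with $\alpha=\frac{1}{2}$ (say), using the inductive bound $Q\leq W\ov{Q}$ from \ref{item_q_less_than_w_q_bar_7} on that neighborhood together with the initial-slice bound from \ref{item_q_less_than_nu_q_bar_12}, yields $Q(x)\leq \ov{Q}$ provided the constants are arranged in the order dictated by Figure~\ref{fig_parameter_dep}. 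For points $x$ whose backward parabolic neighborhood meets a cut, Lemma~\ref{lem_large_scale_near_neck} guarantees one is far from the neck-like boundary, so \ref{item_q_star_less_than_q_star_bar} is the relevant condition; the exponent $3$ in $Q^*$ is small enough that the Bryant Extension estimate survives, and Proposition~\ref{Prop_interior_decay} applied to $Q^*$ propagates the bound forward from time $t_J$. The bound \ref{item_q_less_than_w_q_bar_7} with the large constant $W$ is easier and requires only the semi-local maximum principle Proposition~\ref{Prop_semi_local_max} once the bilipschitz control is in place.

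The main obstacle is the delicate book-keeping required to separate the regions where each of the three $Q$-bounds \ref{item_q_less_than_q_bar_6}, \ref{item_q_less_than_w_q_bar_7}, \ref{item_q_star_less_than_q_star_bar} is to be invoked, and to ensure the parabolic neighborhoods used when applying Proposition~\ref{Prop_interior_decay} or Proposition~\ref{Prop_semi_local_max} actually fit inside the region where the appropriate a priori bound is already known from the inductive hypothesis. This forces $A$ to be large relative to $E$ and $W$, $D_{\cut}$ to be large relative to $A$, $\Lambda$ to be large relative to $A$, and $r_{\comp}$ to be small relative to all of these -- precisely the hierarchy appearing in \eqref{eq_parameters_extend_comparison_by_one}. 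The compatibility of these restrictions is what is encoded in the parameter order of Subsection~\ref{subsec_parameter_order}, and checking it at each application of Lemma~\ref{lem_boundary_far_from_cut}, Lemma~\ref{lem_large_scale_near_neck}, Proposition~\ref{prop_bryant_comparison_general}, and the two interior estimates is where essentially all the technical work will occur.
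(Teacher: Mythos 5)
Your plan correctly identifies the top-level architecture of the argument: (1) construct the time-$t_J$ initial data by applying the Bryant Extension Proposition~\ref{prop_bryant_comparison_general} near each extension cap, cutting on disks $\Cut^J$ and producing the $Q^*$-smallness needed for \ref{item_h_derivative_bounds_9}; (2) evolve forward by harmonic map heat flow; (3) verify the weighted bounds \ref{item_q_less_than_q_bar_6}--\ref{item_q_star_less_than_q_star_bar} via the semi-local maximum principle and interior decay estimate, respecting the parameter hierarchy. This matches the actual proof's division into Propositions~\ref{Prop_performing_cap_extensions}, \ref{Prop_extend_phi_to_t_star} and the verification subsection.

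There are, however, two genuine gaps. The first is the treatment of the boundary of $\N^{J+1}$ when solving the harmonic map heat flow: you write that ``the initial data inherits the closeness-to-cylinder from $\phi_{t_J-}$, and this closeness propagates by standard local parabolic estimates,'' but the standard local theory does not produce a solution of the initial-value problem on a product domain with noncompact boundary. The paper sidesteps the need to pose boundary conditions by a grafting construction: using Lemma~\ref{lem_time_slice_neck_implies_space_time_neck} and Lemma~\ref{lem_C0_neck_smooth_neck} to identify precise spacetime necks in both $\M$ and $\M'$ collaring the boundary, cutting inside those necks, gluing shrinking round half-cylinders, and then solving the flow on the resulting complete manifolds via Proposition~\ref{prop_hh_flow_existence}. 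Identifying the matching neck structure in $\M'$ is itself nontrivial and requires the improved $Q$-bound near the boundary from \ref{item_q_less_than_q_bar_6}, promoted through Proposition~\ref{Prop_interior_decay}; your plan does not explain how you would obtain the requisite $C^{[\de^{-1}]}$-neck structure on the $\M'$ side starting only from $C^0$-control of $h$, which is the content of Claims 2--4 in the proof of Proposition~\ref{Prop_extend_phi_to_t_star} and relies on the self-improvement of necks.

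The second gap concerns the verification of \ref{item_q_less_than_w_q_bar_7}, which you characterize as ``easier.'' In fact one must ensure that the constant $W$ is independent of $A$, while the naive argument -- converting the $Q^*$-bound near a cut into a $Q$-bound by estimating $\rho_1/r_{\comp}$ on a parabolic ball of radius $A\rho_1(x)$ -- would give a factor depending on $A$. The proof of Lemma~\ref{lem_verification_of_APA_8} avoids this by working at the smaller scale $L(E)$: for points whose $L(E)$-parabolic ball misses the cuts, the semi-local maximum principle with bound $10W\ov{Q}$ suffices; for points whose $L(E)$-parabolic ball meets a cut, one converts directly from $Q^*$, picking up a conversion factor that depends only on $E$, $\lambda$ and $D_{\cut}$ (through the diameter bound on cuts) rather than on $A$. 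Without this two-scale case split the parameter order would be circular. Relatedly, the proof machinery uses the device of relaxed bounds (\ref{eq_relaxed_APA_6})--(\ref{eq_relaxed_APA_8}) holding up to a time $t^{**}$, with Lemma~\ref{lem_setup_of_t_star_star} providing an openness step; your continuity argument is in the right spirit but does not specify why the relaxed bounds propagate a definite amount past $t^{**}$, which is what allows the induction on $t^{**}$ to close.
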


The proof of Proposition~\ref{Prop_extend_comparison_by_one} is divided into three steps, which are of rather different character. 
These are presented in Subsections~\ref{subsec_construction_cap_extensions}, \ref{subsec_extending_comparison} and \ref{subsec_verification_of_APAs}, respectively.

In the first step, we identify the set of disks $\Cut^J$, and construct the initial map $\phi^{J+1}_{t_J}$, at time $t_J$, so that it is defined on the union of $\N_{t_J-}$ with the extension caps, and agrees with $\phi_{t_J-}$ away from the cuts in $\Cut^J$.
Here we use the Bryant Extension Proposition, Proposition~\ref{prop_bryant_comparison_general}.

In the second step, we promote this extended map to a map $\phi^{J+1}$ that is defined on a time-interval of the form $[t_J, t^*]$, for some $t^* \in (t_J, t_{J+1}]$, by solving the harmonic map heat flow equation.
Unfortunately, at this point we cannot guarantee a priori that the harmonic map heat flow equation admits a solution on the \emph{entire} time-interval $[t_J, t_{J+1}]$, as it may develop a singularity at an earlier time.
However, we can rule out such a singularity as long as the solution satisfies certain uniform bounds.  In such a case we can indeed choose $t^* = t_{J+1}$.

In the third step, we verify that the map $\phi^{J+1}$, as constructed in the second step, satisfies a priori assumptions \ref{item_time_step_r_comp_1}--\ref{item_q_less_than_nu_q_bar_12}.
Our main focus will be on a priori assumptions \ref{item_eta_less_than_eta_lin_13}--\ref{item_q_star_less_than_q_star_bar}, as the remaining a priori assumptions follow relatively easily from our construction.
Once this is done, a priori assumption \ref{item_eta_less_than_eta_lin_13} provides sufficient control on the map $\phi^{J+1}$ to rule out the development of a singularity up to time $t^*$ and slightly after.
It thus follows a posteriori that $t^* = t_{J+1}$, which finishes the proof.

Readers interested in a more detailed description of the steps above will find further explanations embedded in Subsections~\ref{subsec_construction_cap_extensions}--\ref{subsec_verification_of_APAs}.

This section is organized as follows.
The intermediate results, Propositions~\ref{Prop_performing_cap_extensions} and \ref{Prop_extend_phi_to_t_star}, are presented in the next two subsections.
In order to reduce complexity, we have organized the discussion in each of these subsections to be independent from the remaining subsections; no assumptions are implicitly carried over to from one subsection to the next.
The last subsection (Subsection~\ref{subsec_verification_of_APAs}) contains the proof of the main proposition (Proposition~\ref{Prop_extend_comparison_by_one}).
This proof is linked to subsections~\ref{subsec_construction_cap_extensions} and \ref{subsec_extending_comparison} only via the intermediate results, Propositions~\ref{Prop_performing_cap_extensions} and \ref{Prop_extend_phi_to_t_star}, and does not depend on the details of their proofs.

As in Section \ref{sec_inductive_step_extension_comparison_domain}, we introduce parameter bounds in displayed equations.

\subsection{Extending the comparison over the extension caps} \label{subsec_construction_cap_extensions}
In this subsection, we consider a comparison domain $(\N, \{ \N^j \}_{j=1}^{J+1}, \{ t_j \}_{j=0}^{J+1})$, which is defined on the time-interval $[0,t_{J+1}]$, and a comparison $(\Cut, \lb \phi, \lb \{ \phi^j \}_{j=1}^J)$, which is defined on the time-interval $[0,t_J]$.
Based on this data, we will construct a collection of cuts $\Cut^{J}$ at time $t_J$ and a map $\wh\phi : \N_{t_J-} \cup \N_{t_J+} \to \M'_{t_J}$, which can be seen as an extension of $\phi_{t_J-}$ away from the cuts.
In Proposition~\ref{Prop_extend_phi_to_t_star}, which is the main result of the next subsection, the initial value $\phi_{t_J+}$ of the map $\phi^{J+1}$ will be taken to be the restriction of $\wh\phi$ to $\N_{t_J+}$.
In the proof of this proposition, it will turn out to be necessary that $\wh\phi$ is defined on a slightly larger domain than $\phi_{t_J+}$ due to technical reasons having to do with our process for
promoting $\phi_{t_J+}$ to later times $t > t_J$.

\begin{proposition}[Extending the comparison over the extension caps] \label{Prop_performing_cap_extensions}
Suppose that
\begin{equation} \label{eq_parameters_performing_cap_extensions}
\begin{gathered}
E \geq \underline{E} , \qquad 
 \eta_{\lin} \leq \ov\eta_{\lin}, \qquad 
 \delta_{\nn} \leq \ov\delta_{\nn}, \qquad 
\lambda \leq \ov\lambda, \qquad \\ 
D_{\cut} \geq \underline{D}_{\cut} (T, E, H, \eta_{\lin}, \lambda, D_{\CAP}, \eta_{\cut}) ,  \qquad
\Lambda \geq \underline\Lambda, \\ 
  \qquad  \delta_{\bb} \leq \ov\delta_{\bb} (T, E, H, \eta_{\lin}, \lambda, D_{\CAP}, \eta_{\cut}, D_{\cut}, A, \Lambda),   \\
   \qquad 
    \eps_{\can} \leq \ov\eps_{\can} (T, E, H, \eta_{\lin}, \lambda, D_{\CAP}, \eta_{\cut}, D_{\cut}, A, \Lambda),  \qquad  \\
    r_{\comp} \leq \ov{r}_{\comp} (T,H, \lambda, D_{\cut})
\end{gathered}
\end{equation}
and assume that assumptions \ref{con_12.1_i}--\ref{con_12.1_v} of Proposition~\ref{Prop_extend_comparison_by_one} hold and that $J \geq 1$.

Then there is a set of cuts $\Cut^{J}$ at time $t_J$, i.e. a family of pairwise disjoint $3$-disks in $\Int \N_{t_J+}$, and a diffeomorphism onto its image $\wh\phi : \N_{t_J-} \cup \N_{t_J+} \to \M'_{t_J}$ such that the following hold:
\begin{enumerate}[label=(\alph*)]
\item \label{ass_12.3_a} Each $\DD \in \Cut^{J}$ contains exactly one extension cap of the comparison domain $(\N, \lb \{ \N^j \}_{j=1}^{J+1}, \lb \{ t_j \}_{j=0}^{J+1})$ and each extension cap of this comparison domain that is in $\M_{t_J}$ is contained in one $\DD \in \Cut^{J}$.
\item \label{ass_12.3_b} $\wh\phi = \phi_{t_J-}$ on $\N_{t_J-} \setminus \cup_{\DD \in \Cut^{J}} \DD$.
\item \label{ass_12.3_c} Every cut $\DD \in \Cut^{J}$ has diameter $< D_{\cut} r_{\comp}$ and contains a $\frac1{10} D_{\cut} r_{\comp}$-neighborhood of the corresponding extension cap in $\DD$.
\item \label{ass_12.3_d} The associated perturbation $\wh{h} := \wh\phi^* g'_{t_J} - g_{t_J}$ satisfies $|\wh{h}| \leq \eta_{\lin}$ on $\N_{t_J-} \cup \N_{t_J+}$ and
\[ e^{H(T-t_J)} \rho_1^3 |\wh{h}| \leq  \eta_{\cut} \ov{Q}^* \]
on each $\DD \in \Cut^{J}$.
\item  \label{ass_12.3_e}
The $\eps_{\can}$-canonical neighborhood assumption holds at scales $(0,1)$ on the image $\wh\phi(\N_{t_J-} \cup \N_{t_J+})$.
\end{enumerate}
\end{proposition}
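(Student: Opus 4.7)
The plan is to construct $\wh\phi$ and $\Cut^J$ by treating each extension cap independently via the Bryant Extension Proposition~\ref{prop_bryant_comparison_general}. Enumerate the extension caps $\C_1,\ldots,\C_N \subset \N_{t_J+} \setminus \Int\N_{t_J-}$. For each $\C_i$, a priori assumption \ref{item_geometry_cap_extension_5} provides points $x_i \in \C_i$ and $x'_i \in \M'_{t_J}$ together with $\delta_{\bb}$-closeness to the Bryant soliton at scales $10\lambda r_{\comp}$ and $\lambda'_i \in [D_{\CAP}^{-1} r_{\comp}, D_{\CAP} r_{\comp}]$ respectively. First I would invoke Lemma~\ref{lem_promoting_Bryant} (applied only in the spatial direction at time $t_J$, which suffices here) to upgrade these to $\delta'$-close Bryant charts $\psi_i : M_{\Bry}(\delta^{\prime -1}) \to \M_{t_J}$ and $\psi'_i : M_{\Bry}(\delta^{\prime -1}) \to \M'_{t_J}$ for a small auxiliary $\delta' = \delta'(E,\lambda,D_{\CAP},\eta_{\cut})$; this is possible provided $\delta_{\bb} \leq \ov{\delta}_{\bb}(\delta',\lambda)$ and $\eps_{\can}$ is correspondingly small.

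Next, I would fix a large radius parameter $D = D(E, D_{\CAP}, \eta_{\cut}) \ll \delta^{\prime -1}$ and, for each $i$, consider the composition $\phi_i := (\psi'_i)^{-1} \circ \phi_{t_J-} \circ \psi_i$ restricted to the annulus $M_{\Bry}(\frac12 D, D)$. Here I need to verify the hypotheses of Proposition~\ref{prop_bryant_comparison_general}. The $C^0$-smallness of $\phi_i^* g'_i - g_i$, where $g_i := \psi_i^* g_{t_J}$ and $g'_i := (\psi'_i)^* g'_{t_J}$, follows from a priori assumption \ref{item_q_less_than_q_bar_6}: since no cut lies in the past of points in a neighborhood of $\C_i$ at time $t_J$ (by Lemma~\ref{lem_boundary_far_from_cut} applied with a suitable $A_0$, assuming $D_{\cut}$ is large enough), we have $|h_{t_J-}| \leq e^{HT} \rho_1^{-E} \ov{Q}$ on the relevant region, which translates under rescaling by $10\lambda r_{\comp}$ to a bound of the desired polynomial form $\rho_g^{-E}$ with constant $b$ controlled by $e^{HT} \eta_{\lin}$ (up to factors involving $\lambda$ and $D$). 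Higher covariant derivative bounds $|\nabla^m h|$ for $m\leq 4$ follow from local interior parabolic regularity for the Ricci--DeTurck equation, using the curvature bounds supplied by the canonical neighborhood assumption together with the smoothness of the Bryant background.

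Then Proposition~\ref{prop_bryant_comparison_general}, applied with its parameters $E, C, \beta$ chosen so that $\beta b \leq \frac12 \eta_{\cut} \cdot 10^{-1} \eta_{\lin} (\lambda r_{\comp})^3 \cdot (10\lambda r_{\comp})^3 \cdot e^{-HT}$ (translated back to the original scale), yields an extension $\td\phi_i : M_{\Bry}(D) \to M_{\Bry}(D')$ agreeing with $\phi_i$ on $M_{\Bry}(D-1, D)$, with $\rho_{g_i}^3 |\td\phi_i^* g'_i - g_i|_{g_i} \leq \beta b$ on $M_{\Bry}(D)$. Pushing back via $\psi_i, \psi'_i$, I define
\[
\wh\phi := \begin{cases} \psi'_i \circ \td\phi_i \circ \psi_i^{-1} & \text{on } \psi_i(M_{\Bry}(D)),\\ \phi_{t_J-} & \text{on } \N_{t_J-} \setminus \bigcup_i \psi_i(M_{\Bry}(D-1)),\end{cases}
\]
which is well-defined and smooth because the two definitions agree on the overlap. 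Take $\DD_i \subset \Int\N_{t_J+}$ to be the closure of $\psi_i(M_{\Bry}(D-1))$; assuming $\frac1{10} D_{\cut} \leq 10\lambda D \leq D_{\cut}$, this satisfies the diameter and neighborhood conditions of \ref{ass_12.3_c}, and we set $\Cut^J := \{\DD_i\}$.

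The main obstacle will be arranging the quantitative compatibility of the various scales in the hypotheses of Proposition~\ref{prop_bryant_comparison_general}: verifying $\lambda_2 = \lambda'_i/(10\lambda) \in [C^{-1}, C]$ (which forces $C \sim D_{\CAP}/\lambda$ and hence constrains $D$ and $\beta$), and ensuring that the constant $b$ arising from a priori assumption \ref{item_q_less_than_q_bar_6} is indeed uniformly bounded so that the proposition applies. The exponent bound $E \geq \ul E$ in the hypotheses is what converts the high polynomial decay of $|h|$ into the $\rho^3$-decay of $|\td h|$ on the cap, which in turn yields \ref{ass_12.3_d} through the choice of $\beta$ in terms of $\eta_{\cut}$ and $\lambda$. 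Assertion \ref{ass_12.3_e} follows from the canonical neighborhood assumption on $\M'$ combined with the fact that $\wh\phi(\N_{t_J-} \cup \N_{t_J+})$ differs from $\phi_{t_J-}(\N_{t_J-})$ only inside Bryant-like regions of $\M'$ where the canonical neighborhood assumption holds automatically at all scales below $1$.
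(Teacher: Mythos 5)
Your proposal takes essentially the same route as the paper's proof: use APA~\ref{item_geometry_cap_extension_5} to obtain Bryant charts $\psi_i,\psi'_i$ around each extension cap, use APA~\ref{item_q_less_than_q_bar_6} (via Lemma~\ref{lem_boundary_far_from_cut} to verify its cut-avoidance hypothesis) to get the weighted $C^0$ bound on $h_{t_J-}$, upgrade to $C^4$ bounds via local interior parabolic estimates, apply the Bryant Extension Proposition~\ref{prop_bryant_comparison_general}, glue, and take $\DD_i$ to be the region where $\wh\phi$ is redefined. The parameter dependencies you flag at the end are also the right ones.

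A few remarks on where your sketch is loose relative to the paper's argument, none of which is fatal. First, the invocation of Lemma~\ref{lem_promoting_Bryant} ``only in the spatial direction at time $t_J$'' is superfluous and somewhat confused: APA~\ref{item_geometry_cap_extension_5}(c)--(d) already supply the time-$t_J$ Bryant charts $\psi_i,\psi'_i$ directly, and one simply requires $\delta_{\bb}$ small enough depending on the auxiliary precision you want; Lemma~\ref{lem_promoting_Bryant}'s content is about propagating in time, and it yields nothing new if restricted to the single time-slice $t_J$. Second, you state the $C^0$ bound as $|h_{t_J-}|\leq e^{HT}\rho_1^{-E}\ov Q$, which should be $|h_{t_J-}|\leq e^{-H(T-t_J)}\rho_1^{-E}\ov Q\leq\rho_1^{-E}\ov Q$; your version is still an upper bound but carries a spurious large factor. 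Third, to apply parabolic regularity for the $C^4$ bounds one needs the $C^0$ bound not just at time $t_J$ but on a parabolic neighborhood reaching back to $t_{J-1}$, so the cut-avoidance argument via Lemma~\ref{lem_boundary_far_from_cut} has to be run at all points $y$ in that parabolic neighborhood, not only at the time-$t_J$ point. Fourth, and most substantively, you omit the geometric bookkeeping the paper handles in its first two claims: that the images $\psi_i(M_{\Bry}(D_\#))$ are pairwise disjoint (needed for the gluing to be well-defined), that $\C_i\subset\psi_i(M_{\Bry}(D_0(\lambda)))$ for a $\lambda$-controlled $D_0$ (via the Bryant Slice Lemma~\ref{lem_Bryant_slice}, and needed so that $\DD_i$ both contains a neighborhood of $\C_i$ and lies in $\Int\N_{t_J+}$), and that $\phi_{t_J-}$ maps the annular region into $W'_i$ so the composition $\phi_i=(\psi'_i)^{-1}\circ\phi_{t_J-}\circ\psi_i$ is actually defined. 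Finally, once $\wh\phi$ is defined you still need to check it is a diffeomorphism onto its image --- in particular injectivity, which follows from the identity $\td\phi_i(\DD_i)=\phi_{t_J-}(\DD_i\cap\N_{t_J-})\cup\Int\C'_i$ and pairwise disjointness of the $\C'_i$; this is a genuine step, not automatic from the gluing being well-defined.
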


The main idea of the proof of this proposition is to use the Bryant Extension Proposition~\ref{prop_bryant_comparison_general} in order to construct the cuts $\DD \in \Cut^{J}$ and the map $\wh\phi$ on each $\DD$.  
The assumptions  of that  proposition hold due to a priori assumptions \ref{item_geometry_cap_extension_5} and \ref{item_q_less_than_q_bar_6}: the former implies that regions in $\M$ that are close to extension caps, as well as  the corresponding regions in $\M'$, are geometrically close to Bryant solitons;  the latter gives the bound $Q \leq \ov{Q}$ near each extension cap.

While the strategy of proof can be summarized in a relatively straightforward way, there are several technical issues that we need to address.
First, we need to argue that extension caps at time $t_J$ are positioned close enough to a tip of an almost Bryant soliton region and that those regions are far enough away from one another to allow a separate construction of $\wh\phi$ in a large neighborhood of each extension cap.
Second, we need to verify the condition under which a priori assumption \ref{item_q_less_than_q_bar_6} guarantees the bound $Q\leq \ov{Q}$.
Lastly, once the cuts $\DD$ and the extensions have been constructed on each $\DD$, we need to verify that the resulting map $\wh\phi$ satisfies all the desired properties, for example that it is a diffeomorphism onto its image.

\begin{proof}
In the following proof we will always assume, without further mention, that
\begin{equation} \label{eq_lambda_eta_small}
 \eta_{\lin}, \lambda,r_{\comp} < 10^{-2}
\end{equation}
and that 
\[ \delta_{\nn} \leq \ov\delta_{\nn} \]
is chosen small enough such that by a priori assumption \ref{item_backward_time_slice_3}(a)  we have
\begin{equation} \label{eq_rho_on_partial_N}
0.9 r_{\comp} < \rho_1 = \rho < 1.1 r_{\comp} \qquad \text{on} \qquad \partial \N_{t_J-}.
\end{equation}

By definition of the comparison domain $(\N, \{ \N^j \}_{j=1}^{J+1}, \{ t_j \}_{j=0}^{J+1})$ we know that $\N_{t_J+} \setminus \Int \N_{t_J-}$ is a disjoint union of (possibly infinitely many) extension caps $\C_i$, $i \in I$, which are $3$-disks.
A priori assumption \ref{item_geometry_cap_extension_5} implies the existence of components $\C'_i$, $i \in I$, of $\M'_{t_J} \setminus \phi_{t_J -} ( \Int \N_{t_J -})$ such that the following holds for all $i \in I$:
\begin{enumerate}[label=(\arabic*)]
\item \label{li_12.3_1} $\C'_i$ is a $3$-disk.
\item \label{li_12.3_2} $\phi_{t_J-} (\partial \C_i) = \partial \C'_i$.
\item \label{item_psi_control} \label{li_12.3_3}  There is a diffeomorphism $\psi_i :  M_{\Bry} (\delta_{\bb}^{-1}) \to W_i \subset \M_{t_J}$ such that $\psi_i (x_{\Bry}) \in \C_i$ and
\[ \big\Vert (10 \lambda r_{\comp})^{-2} \psi_i^* g_{t_J} - g_{\Bry} \big\Vert_{C^{[\delta^{-1}_{\bb}]}( M_{\Bry} (\delta_{\bb}^{-1}))} < \delta_{\bb}. \]
\item \label{li_12.3_4} \label{item_psi_prime_control} There is a diffeomorphism $\psi'_i :  M_{\Bry}(\delta_{\bb}^{-1}) \to W'_i \subset \M'_{t_J}$ such that $d_{t_J} (\psi'_i (x_{\Bry}), \lb \C'_i) \lb \leq \lb D_{\CAP} r_{\comp}$ and
\[ \big\Vert a_i^{-2} (\psi'_i)^* g'_{t_J} - g_{\Bry} \big\Vert_{C^{[\delta^{-1}_{\bb}]}( M_{\Bry} (\delta_{\bb}^{-1}))} < \delta_{\bb}\]
for some scale $a_i \in [D_{\CAP}^{-1} r_{\comp}, D_{\CAP} r_{\comp}]$.
\item \label{li_12.3_5} $\C_i$ and $\C'_i$ have diameter $\leq D_{\CAP} r_{\comp}$.
\end{enumerate}
Since $\phi_{t_J-} : \N_{t_J-} \to \phi (\N_{t_J-}) \subset \M'_{t_J}$ is a diffeomorphism onto its image, we obtain from  items \ref{li_12.3_1} and \ref{li_12.3_2} that the components $\C'_i$, $i \in I$, are pairwise distinct.

We will assume in the following  that
\[ \delta_{\bb} \leq \ov\delta_{\bb} \]
is chosen sufficiently small such that for all $i \in I$
\begin{enumerate}[label=(\arabic*), start = 6]
\item \label{item_length_curve_distortion} lengths of curves in $M_{\Bry} (\delta_{\bb}^{-1})$ are distorted by $\psi_i$ by a factor of at least $9 \lambda r_{\comp}$ and at most $11 \lambda r_{\comp}$.
\end{enumerate} 

We now fix a constant $D_\# < \infty$ whose value we will determine in the course of the proof.
This constant controls the size of the neighborhood around each extension cap $\C_i$ in which we will  carry out our construction of $\wh\phi$.
More specifically, each such neighborhood will be of the form $\psi_i ( M_{\Bry} (D_\#)) \supset \C_i$; in particular, its diameter will be approximately $D_\# \cdot 10 \lambda r_{\comp}$.
Outside these neighborhoods, we will set $\wh\phi := \phi_{t_J-}$ and we will choose the cuts $\Cut^{J}$ to be disks that are contained in the corresponding $\psi_i ( M_{\Bry} (D_\#))$.

As we proceed with the proof of Proposition \ref{Prop_performing_cap_extensions}, we will establish several claims, which hold under certain bounds on the parameters.
At any point in the proof we will assume that the parameter bounds of the preceding claims hold, so that we can apply the assertions of these claims without restating the parameter bounds.

We first show that, under certain assumptions on our parameters, the neighborhoods $\psi_i (M_{\Bry}(D_\#))$ are pairwise disjoint and the extension caps $\C_i$ lie in bounded domains of the form $\psi_i (M_{\Bry} (D_0(\lambda)))$.

\begin{claim}
\label{cl_12.2_1}
There is a constant $D_0 = D_0 (\lambda) < \infty$ such that if
\begin{multline*}
 \delta_{\nn} \leq \ov\delta_{\nn}, \qquad \Lambda \geq \underline{\Lambda}, \qquad  D_\# \geq D_0(\lambda), \qquad\delta_{\bb} \leq \ov\delta_{\bb} ( \lambda, \Lambda, D_\#), \\\qquad r_{\comp} \leq \ov{r}_{\comp} (D_\#), 
\end{multline*}
then $D_\# \leq \delta_{\bb}^{-1}$ and the images $\psi_i ( M_{\Bry} (D_\#) )$, $i \in I$, are pairwise disjoint.
Moreover, for all $i \in I$ we have
\begin{equation} \label{eq_CiinD0}
 \C_i \subset \psi_i (M_{\Bry} (D_0)), \qquad \psi_i (M_{\Bry} (D_\#)) \subset W_i \subset \N_{t_J-}  \cup \C_i
\end{equation} 
and
\begin{multline} \label{eq_psi_i_scale_distortion}
 9 \lambda r_{\comp} \rho(x) < \rho (\psi_i (x)) = \rho_1 (\psi_i (x)) < 11 \lambda r_{\comp} \rho(x) \\ \quad \text{for all} \quad x \in M_{\Bry} (D_\#). 
\end{multline}
\end{claim}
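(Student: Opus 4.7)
The plan is to establish the five numbered assertions essentially independently, by applying the Bryant Slice Lemma (Lemma~\ref{lem_Bryant_slice}) separately to each pair $(\psi_i, W_i)$.

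First I would handle the routine pieces. The bound $D_\# \leq \delta_{\bb}^{-1}$ is immediate from the parameter hierarchy, by taking $\delta_{\bb} \leq \ov\delta_{\bb}(D_\#)$. For the bilipschitz scale distortion (\ref{eq_psi_i_scale_distortion}), I note that $\rho$ computed with $g_{\Bry}$ is uniformly bounded on the compact region $M_{\Bry}(D_\#)$ by a constant depending only on $D_\#$. The $\delta_{\bb}$-closeness from item~\ref{li_12.3_3} together with a standard convergence argument (as in Lemma~\ref{lem_convergence_of_scales}) then shows that $\rho(\psi_i(x))$ lies within a factor $(1\pm o_{\delta_{\bb}}(1))$ of $10\lambda r_{\comp}\rho(x)$, which for $\delta_{\bb}\leq \ov\delta_{\bb}$ falls in the interval prescribed in (\ref{eq_psi_i_scale_distortion}). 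The identity $\rho=\rho_1$ at $\psi_i(x)$ requires $\rho(\psi_i(x))<1$, which holds once $r_{\comp}\leq \ov r_{\comp}(D_\#)$.

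Next I apply the Bryant Slice Lemma at time $t_J$ with $X=\N_{t_J-}$, $\psi=\psi_i$, $W=W_i$. Its hypotheses \ref{con_8.42_i}--\ref{con_8.42_iv} hold by APA~\ref{item_backward_time_slice_3}(a)--(c), and the $\delta$-closeness condition is supplied by item~\ref{li_12.3_3}, provided $\delta_{\bb}\leq \ov\delta(\lambda,\Lambda)$. This produces the universal constant $D_0=D_0(\lambda)$ appearing in the claim. Since by (\ref{eq_psi_i_scale_distortion}) the tip $\psi_i(x_{\Bry})$ is $11\lambda r_{\comp}$-thin, for $\lambda\leq \ov\lambda$ it cannot lie in $\N_{t_J-}$ (by APA~\ref{item_backward_time_slice_3}(b), assuming $\Lambda\geq\underline\Lambda$), so the nonempty conclusion of the Bryant Slice Lemma applies: its unique $3$-disk component $\C:=W_i\setminus\Int\N_{t_J-}$ contains $\psi_i(x_{\Bry})$, equals $\C_i$ (as the latter is the component of $\M_{t_J}\setminus\Int\N_{t_J-}$ containing $\psi_i(x_{\Bry})$), and satisfies $\C_i\subset \psi_i(M_{\Bry}(D_0))$. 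Combined with $\psi_i(M_{\Bry}(D_\#))\subset W_i\subset \N_{t_J-}\cup \C_i$ (from assertion~\ref{ass_8.42_b} of the Bryant Slice Lemma and the inclusion $D_\#\leq \delta_{\bb}^{-1}$), this establishes (\ref{eq_CiinD0}).

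The main obstacle will be the pairwise disjointness of the $\psi_i(M_{\Bry}(D_\#))$. By the preceding step, any hypothetical overlap between $\psi_i(M_{\Bry}(D_\#))$ and $\psi_j(M_{\Bry}(D_\#))$ for $i\neq j$ must lie in $\N_{t_J-}$, since $\C_i\cap \C_j=\emptyset$. I would then require $D_\#\geq \underline{D}_\#(\lambda,\Lambda)$ large enough that the Bryant asymptotic $\rho\gtrsim \sqrt{\sigma}$ for $(M_{\Bry},g_{\Bry})$ (Lemma~\ref{lem_properties_Bryant}) combined with (\ref{eq_psi_i_scale_distortion}) forces $\psi_i(\partial M_{\Bry}(D_\#))$ to consist of points of scale $>2\Lambda r_{\comp}$; by APA~\ref{item_backward_time_slice_3}(b) this sphere lies in $\Int\N_{t_J-}$, and by Alexander's theorem it bounds a $3$-disk region $V_i\subset \M_{t_J}$ containing $\C_i$. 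If the overlap occurs, then by connectedness and Jordan-Brouwer separation, either $\psi_j(x_{\Bry})\in V_i$---in which case $\psi_j(x_{\Bry})\in V_i\setminus \N_{t_J-}\subset \C_i$, contradicting $\C_i\cap \C_j=\emptyset$---or $\psi_j(\partial M_{\Bry}(D_\#))$ crosses $\psi_i(\partial M_{\Bry}(D_\#))$. The second case will be ruled out by comparing the two local Bryant structures at a crossing point $y$ via item~\ref{li_12.3_3}: the rotational symmetry of the Bryant model and the $\delta_{\bb}$-closeness of both $\psi_i^{-1}, \psi_j^{-1}$ to the same rotationally symmetric structure at the same scale $10\lambda r_{\comp}$ force the tips $\psi_i(x_{\Bry})$ and $\psi_j(x_{\Bry})$ to agree within an $O(\delta_{\bb}\lambda r_{\comp})$-error, which for $\delta_{\bb}\leq \ov\delta_{\bb}(\lambda,\Lambda,D_\#)$ places $\psi_j(x_{\Bry})\in \C_i$, the final contradiction. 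Careful bookkeeping of the bilipschitz constants will be needed, but no tools beyond the Bryant Slice Lemma and the rotational rigidity of the normalized Bryant soliton.
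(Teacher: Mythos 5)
Your treatment of the scale distortion bound~(\ref{eq_psi_i_scale_distortion}), the $D_\#\leq\delta_{\bb}^{-1}$ constraint, and the containments~(\ref{eq_CiinD0}) via the Bryant Slice Lemma is essentially what the paper does, and you are right to apply the lemma in the slice $\M_{t_J}$ so that the exceptional disk it produces is precisely $\C_i$.

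The disjointness argument is where you depart from the paper, and your version has a genuine gap. First, you impose $D_\#\geq\underline D_\#(\lambda,\Lambda)$ so that $\psi_i(\partial M_{\Bry}(D_\#))$ is $\Lambda r_{\comp}$-thick and hence lies in $\N_{t_J-}$ by \ref{item_backward_time_slice_3}(b). But the statement of the claim allows only $D_\#\geq D_0(\lambda)$ --- a bound that must not depend on $\Lambda$. This is not a cosmetic issue: $\Lambda$ comes \emph{after} $D_{\CAP}$, $\eta_{\cut}$, $D_{\cut}$ in the parameter hierarchy, while $D_\#$ will be tied to $D_{\cut}$ (via (\ref{eq_D_cut_D_sharp})), so a $\Lambda$-dependence for $D_\#$ would break the hierarchy used to close the main induction. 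In fact the extra condition you impose is unnecessary: the inclusion $\psi_i(\partial M_{\Bry}(D_\#))\subset \Int\N_{t_J-}$ already follows from~(\ref{eq_CiinD0}), since for $D_\#>D_0$ that boundary sphere is disjoint from $\psi_i(M_{\Bry}(D_0))\supset\C_i$ while $W_i\setminus\C_i=W_i\cap\Int\N_{t_J-}$.

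Second, the case split at the end (tip in $V_i$ versus boundary spheres crossing) and the proposed rigidity argument comparing the two Bryant charts at a crossing point are both avoidable, and the rigidity step is delicate enough that it should not be waved through: the error in locating a tip from a $\delta_{\bb}$-approximate Bryant chart at a point of radial distance $\approx D_\#$ scales like $\delta_{\bb}D_\#\cdot 10\lambda r_{\comp}$, not $\delta_{\bb}\lambda r_{\comp}$, and one then has to argue that the resulting tip-proximity actually places $\psi_j(x_{\Bry})$ inside the compact set $\C_i$, which needs additional care. The observation your argument misses is that $W_{i}=\psi_{i}(M_{\Bry}(\delta_{\bb}^{-1}))$ is much larger than the $D_\#$-balls, since $\delta_{\bb}^{-1}\gg D_\#$. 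Hence if $\psi_{i_1}(M_{\Bry}(D_\#))$ and $\psi_{i_2}(M_{\Bry}(D_\#))$ overlap, the entire set $\psi_{i_2}(M_{\Bry}(D_\#))$ (of diameter $\lesssim 22 D_\#\lambda r_{\comp}$, lying within $\approx 33 D_\#\lambda r_{\comp}$ of $\psi_{i_1}(x_{\Bry})$) is absorbed into $W_{i_1}$ once $\delta_{\bb}\leq 10^{-2}D_\#^{-1}$. Then $\C_{i_2}\subset\psi_{i_2}(M_{\Bry}(D_\#))\subset W_{i_1}\subset \N_{t_J-}\cup\C_{i_1}$; but $\C_{i_2}$ meets neither $\Int\N_{t_J-}$ nor $\C_{i_1}$, so $\C_{i_2}\subset\partial\N_{t_J-}$, which is absurd for a $3$-disk. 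This is shorter, requires no rotational rigidity, and keeps $D_\#$'s lower bound depending only on $\lambda$.
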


\begin{proof}
Fix some index $i \in I$. 
The bound (\ref{eq_psi_i_scale_distortion}) follows immediately from \ref{li_12.3_3}, provided that
\[ \delta_{\bb} \leq \ov\delta_{\bb} ( D_\#), \qquad r_{\comp} \leq \ov{r}_{\comp} (D_\#). \]
Next, we invoke the Bryant Slice Lemma~\ref{lem_Bryant_slice}, for $X = \N_{t_J+}$, assuming
\[ 
\delta_{\nn} \leq \ov\delta_{\nn}, \qquad 
\Lambda \geq \underline\Lambda, \qquad
\delta_{\bb} \leq \ov\delta_{\bb} (\lambda, \Lambda). \]
Assumptions \ref{con_8.42_i}--\ref{con_8.42_iv} of this lemma hold due to Definition~\ref{def_comparison_domain} and a priori assumptions \ref{item_backward_time_slice_3}(a)--(c).
The first inclusion in (\ref{eq_CiinD0}) is a restatement of assertion \ref{ass_8.42_d} of the Bryant Slice Lemma and the second string of inclusions is a consequence of assertion \ref{ass_11.17_cl4_a}.

Finally, assume that $\psi_{i_1} ( M_{\Bry} (D_\#)) \cap \psi_{i_2} ( M_{\Bry} (D_\#)) \neq \emptyset$ for some $i_1 \neq i_2$.
Then, assuming
\[ \delta_{\bb} \leq 10^{-2} D_\#^{-1}, \]
we must have $\C_{i_2} \subset \psi_{i_2} ( M_{\Bry} (D_\#)) \subset W_{i_1}$, contradicting the second string of inclusions of (\ref{eq_CiinD0}).
This finishes the proof of the claim.
\end{proof}

In the second claim we show that the neighborhoods $\psi_i (M_{\Bry} (D_\#))$ around the extension caps $\C_i$ are mapped by $\phi_{t_J-}$ into the regions $W'_i$, which are geometrically close to Bryant solitons. \

\begin{claim}
\label{cl_12.2_3}
If
\[  \delta_{\bb} \leq \ov\delta_{\bb} (\lambda, D_{\CAP}, D_\#), \]
then $D_\#<\delta_{\bb}^{-1}$ and $\phi_{t_J-} ( \psi_i ( M_{\Bry} (D_\#)) \setminus \Int \C_i ) \subset W'_i$ for all $i \in I$.
\end{claim}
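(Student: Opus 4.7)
My plan is as follows. The first conclusion $D_\# < \delta_{\bb}^{-1}$ is immediate by taking $\ov\delta_{\bb} < D_\#^{-1}$. For the second conclusion, I would fix $i \in I$, set $A_i := \psi_i(M_{\Bry}(D_\#)) \setminus \Int \C_i$, and proceed by diameter/bilipschitz estimates. First, note that by Claim \ref{cl_12.2_1} the image $\psi_i(M_{\Bry}(D_\#)) \subset \N_{t_J-} \cup \C_i$, so $A_i \subset \N_{t_J-}$ and $\phi_{t_J-}$ is defined on $A_i$. Since $\C_i \subset \psi_i(M_{\Bry}(D_0(\lambda)))$ sits in the interior with $D_0(\lambda) < D_\#$, the set $A_i$ is a connected annular region, and by item (6) of the setup we have $\diam_{\M_{t_J}} \psi_i(M_{\Bry}(D_\#)) \leq 22\lambda r_{\comp} D_\#$, so $\diam A_i \leq 22\lambda r_{\comp} D_\#$ as well.

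Second, I would use a priori assumption \ref{item_eta_less_than_eta_lin_13}, which gives $|h_{t_J-}| \leq \eta_{\lin}$ on $\N_{t_J-}$, to conclude that $\phi_{t_J-}$ is $\sqrt{1+\eta_{\lin}}$-bilipschitz there; together with (\ref{eq_lambda_eta_small}) this yields $\diam_{\M'_{t_J}} \phi_{t_J-}(A_i) \leq 44\lambda r_{\comp} D_\#$. Since $\partial\C'_i = \phi_{t_J-}(\partial \C_i) \subset \phi_{t_J-}(A_i)$ by item \ref{li_12.3_2}, and by item \ref{li_12.3_4} the tip $\psi'_i(x_{\Bry})$ lies within distance $D_{\CAP} r_{\comp}$ of $\C'_i$ (which itself has diameter $\leq D_{\CAP}r_{\comp}$ by item \ref{li_12.3_5}), the triangle inequality gives
\[
\phi_{t_J-}(A_i) \subset B\big(\psi'_i(x_{\Bry}),\, R\big), \qquad R := 2D_{\CAP} r_{\comp} + 44\lambda r_{\comp} D_\#.
\]

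Third, I would show that a ball of the form $B(\psi'_i(x_{\Bry}), \tfrac12 a_i \delta_{\bb}^{-1})$ is contained in $W'_i$. From item \ref{li_12.3_4}, assuming $\delta_{\bb}\leq \ov\delta_{\bb}$, the map $\psi'_i$ is $(1+\tfrac{1}{10})a_i$-bilipschitz on $M_{\Bry}(\delta_{\bb}^{-1})$. Hence for any $y \in M_{\Bry}$ with $d_{g_{\Bry}}(y,x_{\Bry}) = \tfrac34 \delta_{\bb}^{-1}$ we have $d_{\M'_{t_J}}(\psi'_i(y), \psi'_i(x_{\Bry})) \geq \tfrac23 a_i\cdot\tfrac34\delta_{\bb}^{-1} > \tfrac12 a_i \delta_{\bb}^{-1}$. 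By a standard connectedness argument (the metric ball $B(\psi'_i(x_{\Bry}), \tfrac12 a_i\delta_{\bb}^{-1})$ is connected, contains $\psi'_i(x_{\Bry})$, and cannot cross $\psi'_i(\partial B_{g_{\Bry}}(x_{\Bry}, \tfrac34 \delta_{\bb}^{-1}))$), this ball is contained in $\psi'_i(B_{g_{\Bry}}(x_{\Bry}, \tfrac34\delta_{\bb}^{-1})) \subset W'_i$.

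Finally, to conclude I would choose $\ov\delta_{\bb}$ small enough (depending on $\lambda, D_{\CAP}, D_\#$) so that $R < \tfrac12 a_i \delta_{\bb}^{-1}$; using $a_i \geq D_{\CAP}^{-1} r_{\comp}$ from item \ref{li_12.3_4}, this amounts to requiring
\[
\delta_{\bb}^{-1} \geq 2 D_{\CAP}\big(2 D_{\CAP} + 44\lambda D_\#\big),
\]
which is a condition of the stated form $\delta_{\bb} \leq \ov\delta_{\bb}(\lambda, D_{\CAP}, D_\#)$. The main subtle point is Step 3, the connectedness argument identifying a metric ball in $\M'_{t_J}$ centered at the tip with a subset of the Bryant-like region $W'_i$ — but this is handled cleanly once the bilipschitz control on $\psi'_i$ is in hand, since the tip lies at the center of the ball and the image of the boundary sphere $\partial B_{g_{\Bry}}(x_{\Bry}, \tfrac34\delta_{\bb}^{-1})$ sits strictly outside the chosen radius.
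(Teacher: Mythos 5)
Your proposal is essentially the same argument as the paper's: both convert the length distortion of $\psi_i$ (Property (6)) and the bilipschitz control from a priori assumption \ref{item_eta_less_than_eta_lin_13} into a bound on $d_{t_J}(\phi_{t_J-}(y),\partial\C'_i)$, combine it with Properties (4), (5) to place $\phi_{t_J-}(y)$ in a ball of controlled radius around $\psi'_i(x_{\Bry})$, and take $\delta_{\bb}$ small; your Step~3 spells out the final containment of that ball in $W'_i$, which the paper leaves implicit. One step needs a small patch. The bound $|h_{t_J-}|\le\eta_{\lin}$ makes $\phi_{t_J-}$ a $\sqrt{1+\eta_{\lin}}$-almost-isometry on the metric tensor, so it controls lengths of curves lying \emph{inside} $\N_{t_J-}$ rather than ambient Riemannian distances in $\M_{t_J}$; hence to deduce $\diam_{\M'_{t_J}}\phi_{t_J-}(A_i)\le 44\lambda r_{\comp}D_\#$ you need curves between points of $A_i$ that avoid $\Int\C_i$, whereas the diameter bound you quote is realized by $M_{\Bry}$-geodesics that may pass through $\C_i$. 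The paper sidesteps this by connecting $y$ to $\partial\C_i$ by the radial $M_{\Bry}$-geodesic toward $x_{\Bry}$ stopped on $\psi_i^{-1}(\partial\C_i)$, which stays in $A_i$ by construction. The fix to your version is routine --- a detour around $\partial\psi_i^{-1}(\C_i)\subset M_{\Bry}(D_0(\lambda))$ adds length $O(\sqrt{D_0})\le O(D_\#)$ --- so the constant $44$ should become a universal multiple of $\lambda r_{\comp}D_\#$, which only shifts the threshold $\ov\delta_{\bb}(\lambda, D_{\CAP}, D_\#)$ by a constant and leaves the conclusion intact.
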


\begin{proof}
Fix an index $i \in I$ and a point $y \in \psi_i (M_{\Bry} (D_\#)) \setminus \Int \C_i$.
By Property \ref{item_length_curve_distortion} above, we can find a continuous path $\gamma : [0,1] \to \N_{t_J-}$ between $y$ and a point $z \in \partial \C_i$ whose length is at most $11\lambda r_{\comp} \cdot 2D_\#$.
Assuming (\ref{eq_lambda_eta_small}), and using a priori assumption \ref{item_eta_less_than_eta_lin_13}, we find that the length of its image $\phi_{t_J-} \circ \gamma$ is bounded by $100 D_\#  \lambda r_{\comp}$.
So since $\phi_{t_J-} (\partial \C_i) = \partial \C'_i$, we have
\[ d_{t_J} (\phi_{t_J-} (y), \partial \C'_i) \leq 100 D_\# \lambda r_{\comp}. \]
On the other hand, by Properties \ref{li_12.3_4}, \ref{li_12.3_5} above we have
\[ \partial \C'_i \subset  B(\psi'_i(x_{\Bry}),  2D_{\CAP} r_{\comp}). \]
So if
\[ \delta_{\bb} \leq \ov\delta_{\bb} (\lambda, D_{\CAP}, D_\#), \]
then we obtain that $y \in W'_i$, as desired.
\end{proof}

This concludes our discussion on the relative positions of the components $\C_i, \C'_i$ and the images of the maps $\psi_i$ and $\psi'_i$.
We will now focus on the associated perturbation $h_{t_J-} = \phi^*_{t_J-} g'_{t_J} - g_{t_J}$.
In the next claim, and its proof, we use the bound $Q \leq \ov{Q}$, as asserted by a priori assumption \ref{item_q_less_than_q_bar_6}, to deduce a bound on the weighted norm $\rho^E |h_{t_J-}|$ on $\psi_i (M_{\Bry} (D_\#)) \setminus \C_i$.
Using a standard local derivative estimate, we will also deduce similar weighted bounds on covariant derivatives of the form $\nabla^m h_{t_J-}$.

\begin{claim}
\label{cl_12.2_4}
There is a constant $C = C (E) < \infty$ such that if
\begin{gather*}
  \eta_{\lin} \leq \ov\eta_{\lin}, \qquad
\delta_{\nn} \leq \ov\delta_{\nn}, \qquad 
\lambda \leq \ov\lambda, \qquad
\Lambda\geq \ul\Lambda \qquad
 \delta_{\bb} \leq \ov\delta_{\bb} (\lambda, D_{\cut}, A, \Lambda, D_\#), \qquad \\
 \eps_{\can} \leq \ov\eps_{\can} (\lambda, D_{\cut},A,\Lambda, D_\#), \qquad 
 r_{\comp} \leq \ov{r}_{\comp} , 
\end{gather*}
then for the associated perturbation $h_{t_J-} = \phi_{t_J-}^* g'_{t_J} - g_{t_J}$ the following holds for all $i \in I$ and all $m = 0, 1, \ldots, 4$:
\begin{equation} \label{eq_nab_h_R_bound}
 e^{H (T - t_J)} \rho^E |\nabla^m h_{t_J-}| \leq  C \lambda^{-m} r_{\comp}^{-m+E }  \qquad  \text{on} \qquad \psi_i (M_{\Bry} (D_0+1, D_\#-1)).
\end{equation}
\end{claim}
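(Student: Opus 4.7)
\textbf{Proof plan for Claim~\ref{cl_12.2_4}.} The approach is to first obtain the $C^0$-bound on $h_{t_J-}$ from a priori assumption \ref{item_q_less_than_q_bar_6}, and then upgrade to higher-order bounds by a standard local parabolic regularity argument applied to the Ricci-DeTurck perturbation equation (\ref{eq_Ricci_de_Turck_introduction}). First I would note that by (\ref{eq_CiinD0}) and (\ref{eq_psi_i_scale_distortion}), the annulus $\psi_i(M_{\Bry}(D_0+1,D_\#-1))$ lies in $\N_{t_J-}$, and on this annulus
$$
9\lambda r_{\comp} \leq \rho \leq 11\lambda r_{\comp}\cdot C_B \sqrt{D_\#},
$$
where the upper bound uses the asymptotic $\rho_{\Bry}\lesssim\sqrt{\sigma}$ from Lemma~\ref{lem_properties_Bryant}. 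In particular, for $r_{\comp}\leq\ov r_{\comp}(\lambda,D_\#)$ (which may be absorbed into the stated hypothesis on $r_{\comp}$), one has $\rho\leq 1$, so $\rho = \rho_1$ on the annulus; no point of the annulus lies in a cut of $\Cut$, since all cuts in $\Cut$ lie in time-slices earlier than $t_J$.

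For the $m=0$ case, I would verify the hypothesis of a priori assumption~\ref{item_q_less_than_q_bar_6} by arguing by contradiction: suppose $P(x,A\rho_1(x))$ meets some $\DD_0\in\Cut$ in a time-slice $\M_{t_k}$ with $k\leq J-1$. Applying Lemma~\ref{lem_large_scale_near_neck} to the initial time-slice $B_{t_k}$ of this parabolic neighborhood, with an auxiliary constant $C_\#$ chosen large compared to $\lambda\sqrt{D_\#}\cdot C_B / c(A)$, forces $\rho_1\geq C_\# r_{\comp}$ on $B_{t_k}\setminus\DD_0$. Then Lemma~\ref{lem_bounded_curv_bounded_dist} propagates this estimate forward in time to give $\rho_1(x)\geq c(A)\cdot C_\#\, r_{\comp}$, contradicting the upper bound $\rho_1(x)\leq 11\lambda r_{\comp}\cdot C_B\sqrt{D_\#}$ whenever $D_{\cut}$ is chosen sufficiently large (so that the constant $C_\#$ supplied by Lemma~\ref{lem_large_scale_near_neck} is correspondingly large). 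Having verified the hypothesis, a priori assumption~\ref{item_q_less_than_q_bar_6} gives $Q(x)\leq\ov Q = 10^{-E-1}\eta_{\lin}r_{\comp}^E$; combined with $\rho=\rho_1\leq 1$ and $\eta_{\lin}\leq 1$, this yields the $m=0$ bound with $C = 10^{-E-1}$.

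For $1\leq m\leq 4$, I would apply a standard local $C^m$-parabolic estimate to the Ricci-DeTurck equation (\ref{eq_Ricci_de_Turck_introduction}) on a backward parabolic ball $P(y,c\rho(y))$ centered at $y\in\psi_i(M_{\Bry}(D_0+1,D_\#-1))$, with $c$ a small universal constant. By Lemma~\ref{lem_forward_backward_control}, this parabolic ball is unscathed and has $\rho$ varying by a bounded factor on it; by Claim~\ref{cl_12.2_1} (applied with a slightly larger annulus, e.g.\ $M_{\Bry}(D_0+\tfrac12,D_\#-\tfrac12)$) and Lemma~\ref{lem_containment_parabolic_nbhd}, it is contained in $\N_{t_J-}$ and remains disjoint from cuts. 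The canonical neighborhood assumption supplies bounds $|\nabla^j\Rm|\leq C_j\rho^{-2-j}$ on the background via assertion \ref{item_p_x_a_scale_curvature_bound} of Lemma~\ref{lem_properties_kappa_solutions_cna}. Feeding these, together with the $C^0$-bound from the previous step, into the local gradient estimate Lemma~\ref{lem_loc_gradient_estimate} gives
$$|\nabla^m h_{t_J-}|(y)\leq C_m\,\rho(y)^{-m}\sup_{P(y,c\rho(y))}|h_{t_J-}|,$$
and multiplying by $e^{H(T-t_J)}\rho(y)^E$, using $\rho\gtrsim\lambda r_{\comp}$ to convert $\rho(y)^{-m}$ into $(\lambda r_{\comp})^{-m}$, completes the induction and yields the claim with $C=C(E)$.

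\textbf{Main obstacle.} The delicate step is Step~1's verification of the hypothesis of a priori assumption~\ref{item_q_less_than_q_bar_6}: since $\rho_1(x)$ can be of order $\lambda r_{\comp}\sqrt{D_\#}$, the time-extent $(A\rho_1(x))^2$ of the parabolic neighborhood may exceed $r_{\comp}^2$ and reach earlier time-slabs, so the non-intersection with cuts cannot be argued by time-separation alone. The substitute is a scale-gap argument: Lemma~\ref{lem_large_scale_near_neck} forces a lower scale bound near earlier cuts, while (\ref{eq_psi_i_scale_distortion}) forces an upper scale bound on our annulus. Making the lower bound dominate the upper bound is precisely what dictates the dependence $D_{\cut}\geq\underline D_{\cut}(T,E,H,\eta_{\lin},\lambda,D_{\CAP},\eta_{\cut})$ in the hypotheses.
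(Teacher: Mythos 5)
Your overall strategy --- extract the $C^0$-bound from a priori assumption~\ref{item_q_less_than_q_bar_6} and upgrade via Lemma~\ref{lem_loc_gradient_estimate} --- is the same as the paper's, but both of your individual steps have gaps that the paper closes by a different route. In the $m=0$ step your contradiction argument needs to propagate the lower bound $\rho_1\geq C_\# r_{\comp}$ from $B_{t_k}$ forward to $x$, but Lemma~\ref{lem_large_scale_near_neck} only gives this bound on $B_{t_k}\setminus\DD_0$; if the flow line $x(t_k)$ itself lies inside $\DD_0$, nothing forces $\rho_1(x(t_k))$ to be large --- the available bounds there are $\lambda r_{\comp}<\rho_1<\Lambda r_{\comp}$ from \ref{item_lambda_thick_2} and \ref{item_backward_time_slice_3}(e), which are perfectly consistent with your upper bound $\rho_1(x)\lesssim\lambda\sqrt{D_\#}\,r_{\comp}$ and yield no contradiction. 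The paper instead applies Lemma~\ref{lem_boundary_far_from_cut}: for each $y\in P(x,\lambda r_{\comp})$ at time $t'\in(t_{J-1},t_J]$, the boundary point $z(t')\in\partial\N^J_{t'}$ lies within distance $D'(D_\#)\,\rho_1(y)$ of $y$, so $B(y,D'\rho_1(y))\not\subset\N_{t'-}$; the contrapositive of Lemma~\ref{lem_boundary_far_from_cut}, used with $A_0=\max\{A,D'\}$, then forbids $P(y,A\rho_1(y))$ from meeting any cut, regardless of where the backward flow line of $x$ goes.

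The $m\geq1$ step has a related gap: Lemma~\ref{lem_loc_gradient_estimate} needs a $C^0$-bound on a backward parabolic ball, and the $\rho^{-E}$-weighted bound you produced holds only at the time-$t_J$ slice. Feeding in the crude bound $|h|\leq\eta_{\lin}$ from \ref{item_eta_less_than_eta_lin_13} over the whole ball and then multiplying by $\rho^E(x)\leq(11\lambda C_B\sqrt{D_\#}\,r_{\comp})^E$ leaves a stray $D_\#^{E/2}$ factor, which cannot be absorbed into $C(E)$. There is also a scale mismatch: once $\rho(y)\sim\lambda\sqrt{D_\#}\,r_{\comp}\gg r_{\comp}$, a parabolic ball $P(y,c\rho(y))$ with universal $c$ does not stay inside the time-slab $\N^J$. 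The paper addresses both issues at once by first establishing $Q\leq\ov{Q}$ on all of $P(x,\lambda r_{\comp})$ (a ball of fixed radius that stays in $\N^J$), converting this via the bounded-curvature-at-bounded-distance estimate $\rho(x)\leq C'_3\rho_1(y)$ into a $C^0$-bound of the form $|h|\leq e^{-H(T-t_J)}(C'_3)^E\ov{Q}\,\rho^{-E}(x)$ on that parabolic ball, and only then invoking Lemma~\ref{lem_loc_gradient_estimate} at radius comparable to $\lambda r_{\comp}$, not $\rho(y)$.
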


As mentioned earlier, the main idea of the proof of this claim is to invoke the bound $Q \leq \ov{Q}$ from a priori assumption \ref{item_q_less_than_q_bar_6}.
However, this bound is predicated on the remoteness of cuts.
In order to verify this remoteness, we will invoke Lemma~\ref{lem_boundary_far_from_cut}.

\begin{proof}
Fix an index $i \in I$ and a point $x \in \psi_i (M_{\Bry} (D_0 + 1, D_\# -1))$ for the remainder of this proof. Then by Claim~\ref{cl_12.2_1}, (\ref{eq_lambda_eta_small}) and Property \ref{item_length_curve_distortion} above, we have
\begin{equation} \label{eq_BxlambdainNN}
 B(x, \lambda r_{\comp} ) \subset \psi_i (M_{\Bry} (D_0, D_\#)) \subset \N_{t_J-}. 
\end{equation}
So for the corresponding parabolic neighborhood we have
\[ P(x, \lambda r_{\comp}) \subset \N^J_{(t_{J-1}, t_J]}  \subset \N \setminus \cup_{\DD \in \Cut} \DD\,,\]
since $\DD \subset \M_{[0,t_{J-1}]}$ for all $\DD \in \Cut$.

Our goal will be to use a priori assumption \ref{item_q_less_than_q_bar_6} to deduce the bound $Q \leq \ov{Q}$ on $P(x, \lambda r_{\comp})$.
So consider a point $y \in P(x, \lambda r_{\comp})$ and set $t' := \mathfrak{t} (y)$.
We now claim that for an appropriate choice of constants we have
\begin{equation} \label{eq_P_large_no_cuts}
  P(y, A \rho_1 (y)) \cap \DD = \emptyset \qquad \text{for all} \qquad \DD \in \Cut. 
\end{equation}
To see this choose a  point $z \in \partial \C_i \subset \partial \N_{t_J-}$ nearest to  $y(t_J)$.
Then, by (\ref{eq_BxlambdainNN}) and Properties \ref{li_12.3_3}, \ref{item_length_curve_distortion} above, 
\begin{equation} \label{eq_dist_x_z}
 d_{t_J} (y(t_J), z) < 11 D_\# \lambda r_{\comp}.
\end{equation}
Let $z' := z(t')$.
Since $t' \in (t_{J-1}, t_J]$, we can use the curvature bound on the product domain $\N^J$ from a priori assumption \ref{item_lambda_thick_2} to derive a distortion estimate of the minimizing geodesic between $y(t_J)$ and $z$ over the time-interval $[t', t_J]$.
Since $t_J-t'\leq (\lambda r_{\comp})^2$, we obtain that for some universal constant $C'_1 < \infty$
\begin{equation} \label{eq_dist_x_z_prime}
 d_{t'} (y, z') < 11 e^{C'_1} D_\# \lambda r_{\comp}.
\end{equation}
Next, let us apply bounded curvature at bounded distance, Lemma~\ref{lem_bounded_curv_bounded_dist}, at $z$, along with  (\ref{eq_dist_x_z}), while assuming
\[ \eps_{\can} \leq \ov\eps_{\can} ( D_\#) . \]
We obtain a constant $C'_2 = C'_2 ( D_\#) < \infty$ such that by (\ref{eq_rho_on_partial_N})
\[ C'_2 \rho_1 (y) \geq   \rho_1 (z) \geq 0.9  r_{\comp}. \]
Combining this with (\ref{eq_dist_x_z_prime}), yields that
\[ d_{t'} (y, z') < D' \rho_1(y) \]
for some $D' = D' ( D_\#) < \infty$.
So if $t' < t_J$, then $B(y, D' \rho_1 (y)) \not\subset \N_{t'-}$.
We can now apply Lemma~\ref{lem_boundary_far_from_cut} (Boundaries and cuts are far apart) along with a priori assumption \ref{item_cut_diameter_less_than_d_r_comp_11}, assuming
\begin{gather*}
 \delta_{\nn} \leq \ov\delta_{\nn}, \qquad
 \lambda \leq \ov\lambda, \qquad
 \Lambda \geq \underline\Lambda, \qquad
\delta_{\bb} \leq \ov\delta_{\bb} ( \lambda, D_{\cut}, A, \Lambda, D'(\lambda, D_\#)), \qquad \\
\eps_{\can} \leq \ov\eps_{\can} ( \lambda, D_{\cut}, A, \Lambda, D'(\lambda, D_\#)), 
 \qquad r_{\comp} \leq \ov{r}_{\comp}, 
\end{gather*}
and obtain (\ref{eq_P_large_no_cuts}).
The case $t' = t_J$ follows from the case $t' < t_J$ by continuity.

Using (\ref{eq_P_large_no_cuts}) and a priori assumption \ref{item_q_less_than_q_bar_6}, we can now deduce that 
\begin{equation} \label{eq_APA7_repeat}
 e^{H (T-t_J)} \rho_1^E (y) |h_{t'-}(y)| \leq Q(y) \leq \ov{Q} =  10^{-E - 1} \eta_{\lin} r_{\comp}^{E}  . 
\end{equation}
Next, we apply bounded curvature at bounded distance, Lemma~\ref{lem_bounded_curv_bounded_dist}, at $x$, along with a priori assumption \ref{item_lambda_thick_2}, while assuming
\[ \eps_{\can} \leq \ov\eps_{\can} (\lambda). \]
We obtain that there is a universal constant $C'_3 < \infty$ such that
\begin{equation} \label{eq_R_osc_P}
  \rho (x) = \rho_1 (x) \leq  C'_3 \rho_1(y) .
\end{equation}
The equality statement follows from (\ref{eq_psi_i_scale_distortion}).
Combining (\ref{eq_APA7_repeat}) with (\ref{eq_R_osc_P}) yields
\begin{equation} \label{eq_m_0_bound}
e^{H (T-t_J)} \rho^E (x) |h_{t_J-} (y)| \leq  10^{-E-1} C_3^{\prime E}  \eta_{\lin} r_{\comp}^{E}. 
\end{equation}
If $y=x$, then this bound implies (\ref{eq_nab_h_R_bound}) for $m = 0$.
The bounds on the higher derivatives follow from (\ref{eq_m_0_bound}) using (\ref{eq_R_osc_P}), a priori assumption \ref{item_eta_less_than_eta_lin_13}, Shi's estimates and standard local gradient estimates for the Ricci-DeTurck flow (see also Lemma~\ref{lem_loc_gradient_estimate}), assuming
\[ \eta_{\lin} \leq \ov\eta_{\lin}. \]
This finishes the proof.
\end{proof}

We will now apply the Bryant Extension Proposition~\ref{prop_bryant_comparison_general},  to the restrictions of the map $\phi_{t_J-}$ to each $W_i$, for suitably chosen $D_\#$.
The resulting maps, which will be denoted by $\td\phi_{i}$, will be only defined on the domains $\psi_i (M_{\Bry} (D_\#))$, but will be equal to $\phi_{t_J-}$ near the boundaries of these domains.

\begin{claim}
\label{cl_apply_bryant_comparison}
If
\begin{equation*} \label{eq_parameters_for_Bryant_comparison}
\begin{gathered}
 E \geq \underline{E},    \qquad 
 \eta_{\cut} \leq \ov\eta_{\cut}, \qquad
 D_\#  \geq \underline{D}_\# (T, E, H, \eta_{\lin}, \lambda, D_{\CAP}, \eta_{\cut} ),  \\
  \delta_{\bb} \leq \ov\delta_{\bb} (T, E, H, \eta_{\lin}, \lambda, D_{\CAP}, \eta_{\cut} , D_\# ),
\end{gathered}
\end{equation*}
then for each $i \in I$ there is a diffeomorphism onto its image
\begin{equation*} \label{eq_wh_phi_def}
 \td{\phi}_{i} : \psi_i (M_{\Bry}(D_\# -1)) \longrightarrow W'_i 
\end{equation*}
and a $3$-disk
\[ \DD_i := \psi_i \big( \ov{M_{\Bry} (D_\# -  2)} \big) \subset \M_{t_J} \]
such that the following holds:
\begin{enumerate}[label=(\alph*)]
\item \label{ass_12.3_cl4_a} $\C_i \subset \Int \DD_i$.
\item \label{ass_12.3_cl4_b} $\td{\phi}_{i} = \phi_{t_J-}$ on $\psi_i (M_{\Bry} (D_\# -1 )) \setminus \Int \DD_i$. 
\item \label{ass_12.3_cl4_c} The perturbation $\td{h}_i := \td{\phi}^*_i g'_{t_J} - g_{t_J}$ satisfies the following bounds on $\DD_i$
\[
\qquad |\td{h}_i | \leq  \eta_{\lin}, \qquad
 e^{H (T- t_J)} \rho_1^3 | \td{h}_{i}| \leq \eta_{\cut} \ov{Q}^* = \eta_{\cut} \cdot 10^{-1} \eta_{\lin} (\lambda r_{\comp})^3 . 
\]
\item \label{ass_12.3_cl4_d} $\td{\phi}_i (\DD_i) = \phi_{t_J-} (\DD_i \cap \N_{t_J-}) \cup \Int \C'_i$.
\item \label{ass_12.3_cl4_e} $\DD_i$ contains the $8 \lambda D_\# r_{\comp}$-tubular neighborhood around $\C_i$.
\end{enumerate}
\end{claim}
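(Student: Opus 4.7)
My plan is to reduce the claim to a direct application of the Bryant Extension Proposition~\ref{prop_bryant_comparison_general}, by conjugating the map $\phi_{t_J-}$ with the Bryant charts $\psi_i$ and $\psi'_i$. Fix $i \in I$. I will consider the rescaled pullback metrics $g := (10\lambda r_{\comp})^{-2} \psi_i^* g_{t_J}$ on $M_{\Bry}(\delta_{\bb}^{-1})$ and $g' := (10\lambda r_{\comp})^{-2} (\psi'_i)^* g'_{t_J}$ on $M_{\Bry}(\delta_{\bb}^{-1})$, together with the map $\varphi := (\psi'_i)^{-1} \circ \phi_{t_J-} \circ \psi_i$, defined on the annulus $M_{\Bry}(\frac12 D, D)$ for a choice $D := D_\# - 1$; Claims \ref{cl_12.2_1} and \ref{cl_12.2_3} guarantee that $\varphi$ lands in $M_{\Bry}(\delta_{\bb}^{-1})$, that $\psi_i(M_{\Bry}(\frac12 D, D))$ avoids $\C_i$, and that the composition makes sense. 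By properties \ref{li_12.3_3}, \ref{li_12.3_4} above, the rescaled metrics $g$ and $\lambda^{-2} g'$ are $\delta_{\bb}$-close to $g_{\Bry}$, with scale ratio $\lambda := a_i / (10\lambda r_{\comp}) \in [(10 \lambda D_{\CAP})^{-1}, D_{\CAP}/(10\lambda)]$; this verifies hypothesis \eqref{eq_Bryant_closeness_Bryant_comparison} of Proposition~\ref{prop_bryant_comparison_general} for $C := D_{\CAP}/(10\lambda)$.

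Next I will convert the bounds from Claim~\ref{cl_12.2_4} into the polynomial decay $\rho_g^E |\nabla^m_g h|_g \leq b$ required by Proposition~\ref{prop_bryant_comparison_general}. Since $\psi_i$ is (after rescaling by $10\lambda r_{\comp}$) almost isometric onto its image and preserves $\rho_g$ up to a factor close to $1$ by \eqref{eq_psi_i_scale_distortion}, the bound \eqref{eq_nab_h_R_bound} translates to $\rho_g^E |\nabla^m_g h|_g \leq b := C'(E, H, T) \eta_{\lin}$ for all $m \leq 4$, where I absorb the $e^{-H(T-t_J)}$ factor and the powers of $\lambda r_{\comp}$ by the rescaling. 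Applying Proposition~\ref{prop_bryant_comparison_general} with parameters $E$, $C = D_{\CAP}/(10\lambda)$, and a target tolerance $\beta := \eta_{\cut}\cdot 10^{-2} \cdot (10\lambda)^3 / b$, which can be made as small as desired (and is allowed to depend on $\lambda, \eta_{\cut}, \eta_{\lin}$), yields $\td\varphi : M_{\Bry}(D) \to M_{\Bry}(\delta_{\bb}^{-1})$ extending $\varphi$ on $M_{\Bry}(D-1, D)$ with $\rho_g^3 |\td h|_g \leq \beta b$. Pulling back via $\psi_i, \psi'_i$ defines $\td\phi_i := \psi'_i \circ \td\varphi \circ \psi_i^{-1}$ on $\psi_i(M_{\Bry}(D_\# - 1))$; this together with \eqref{eq_psi_i_scale_distortion} and rescaling yields the two bounds in assertion \ref{ass_12.3_cl4_c}, provided the parameters are chosen so that the resulting constant is $\leq \eta_{\lin}$ (use $\eta_{\cut} \leq \ov\eta_{\cut}$) and $\leq \eta_{\cut} \ov Q^*$.

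With $\td\phi_i$ in hand I will set $\DD_i := \psi_i(\ov{M_{\Bry}(D_\#-2)})$ and verify assertions \ref{ass_12.3_cl4_a}, \ref{ass_12.3_cl4_b} and \ref{ass_12.3_cl4_e}: \ref{ass_12.3_cl4_a} follows from \eqref{eq_CiinD0} once $D_\# - 2 > D_0(\lambda)$; \ref{ass_12.3_cl4_b} is immediate from the fact that $\td\varphi = \varphi$ on $M_{\Bry}(D-1,D)$ and $\td\varphi$ is defined by the Bryant construction on $M_{\Bry}(D-1)$; and \ref{ass_12.3_cl4_e} follows from \eqref{eq_psi_i_scale_distortion} and Property \ref{item_length_curve_distortion}, after taking $D_\# \geq \underline D_\#(\lambda)$ so that $M_{\Bry}(D_\# - 2)$ contains the $D_\#$-tubular neighborhood of $M_{\Bry}(D_0)$ in $(M_{\Bry}, g_{\Bry})$.

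The main obstacle is assertion \ref{ass_12.3_cl4_d}, which is a topological statement about the image $\td\phi_i(\DD_i)$. The plan is to argue as follows. Since $\td\phi_i$ agrees with $\phi_{t_J-}$ on $\partial \DD_i \subset \N_{t_J-}$, the restriction $\td\phi_i|_{\partial \DD_i}$ maps into $\M'_{t_J}$ as an embedded $2$-sphere that bounds $\phi_{t_J-}(\DD_i \cap \N_{t_J-}) \cup \Int\C'_i$ on one side (using Claim~\ref{cl_12.2_3} plus the fact that $\partial \C'_i = \phi_{t_J-}(\partial\C_i)$). Because $\td\phi_i$ is a diffeomorphism onto its image and $\DD_i$ is a $3$-disk, its image is either $\phi_{t_J-}(\DD_i \cap \N_{t_J-}) \cup \Int\C'_i$ or its complementary region in $\psi'_i(M_{\Bry}(\delta_{\bb}^{-1}))$; a short argument using the bound from assertion \ref{ass_12.3_cl4_c} (which forces $\td\phi_i$ to be close in the $C^0$-sense to an almost-isometry that maps $\DD_i$ to the $\C'_i$-side) rules out the wrong orientation, provided $\eta_{\cut}$ and $\delta_{\bb}$ are sufficiently small. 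The technical care needed here is the main reason for the lower bound on $D_\#$ and the smallness of $\delta_{\bb}$ in the hypotheses.
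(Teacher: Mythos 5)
Your proposal follows essentially the same route as the paper: conjugate $\phi_{t_J-}$ by the Bryant charts $\psi_i, \psi'_i$, rescale, import the $\rho^E$-weighted derivative bounds from Claim~\ref{cl_12.2_4}, invoke the Bryant Extension Proposition~\ref{prop_bryant_comparison_general} on the annulus $M_{\Bry}(\tfrac12 D, D)$ with $D = D_\#-1$, pull back, and verify \ref{ass_12.3_cl4_a}--\ref{ass_12.3_cl4_e} from the agreement on the outer collar and the resulting $\rho^3$-bound. A few of your stated constants are off in detail but not in spirit: the $b$ in your application should carry a $(10\lambda)^{-E}$ factor (the $r_{\comp}$-powers cancel under rescaling, but $\lambda$ does not), the constant $C$ fed to Proposition~\ref{prop_bryant_comparison_general} must also dominate $b$ (not just the scale ratio), and the criterion you wrote for \ref{ass_12.3_cl4_e} ("$M_{\Bry}(D_\#-2)$ contains the $D_\#$-tubular neighborhood of $M_{\Bry}(D_0)$") is literally unattainable — the $\psi_i$-distortion scales lengths by roughly $10\lambda r_{\comp}$, so the Bryant-side radius needed is about $\tfrac45 D_\#$, which makes the correct condition $D_\# \gtrsim 9(D_0+2)$, i.e., a lower bound on $D_\#$ depending on $\lambda$ as in the hypotheses. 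These are bookkeeping slips, not gaps; the architecture of the argument is the paper's.
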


\begin{proof}
Fix some $i \in I$.
Set $b_i := a_i (10 \lambda r_{\comp})^{-1}$ and notice that Property \ref{item_psi_prime_control} from above gives
\[ b_i \in [ (10\lambda)^{-1} D_{\CAP}^{-1}, \lb (10\lambda)^{-1} D_{\CAP}]. \]
Assume that
\begin{equation} \label{eq_DsharpD0}
 D_\# \geq 2 (D_0 (\lambda) + 1) + 1 
\end{equation}
and consider the map
\[ \phi^{\circ}_i := \psi^{\prime -1}_i \circ \phi_{t_J-} \circ \psi_i : M_{\Bry} (\tfrac12 (D_\#-1), D_\# -1) \longrightarrow M_{\Bry} (\delta_{\bb}^{-1}), \]
which is well-defined by Claims~\ref{cl_12.2_1} and \ref{cl_12.2_3}.
Let $g_i^\circ := (10 \lambda r_{\comp})^{-2} \psi_i^* g_{t_J}$ and $g_i^{\prime \circ} := (10 \lambda r_{\comp})^{-2} \psi_i^{\prime *} g'_{t_J}$ be the pull-back metrics on $W_i$ and $W'_i$ to $M_{\Bry} (D_\#)$.
Notice that these pull-backs are close to $g_{\Bry}$ and $b_i^2 g_{\Bry}$, respectively, by Properties \ref{item_psi_control} and \ref{item_psi_prime_control} above.
Rescaling (\ref{eq_nab_h_R_bound}) from Claim~\ref{cl_12.2_4} by $(10\lambda r_{\comp})^{-1}$ yields for  $h^\circ_i := (\phi^\circ_i)^* g_i^{\prime\circ} -  g^{ \circ}_i$ 
\[ \rho^E \big|\nabla^m h^\circ_i \big|_{g^\circ_i} \leq  C (E) e^{-H (T-t_J)} (10\lambda)^{-E} \cdot 10^m \leq C (E) (10\lambda)^{-E} \cdot 10^4, \]
for all $m = 0,1, \ldots, 4$.
Here we have used $t_J \leq T$.
Note that $\rho$ is taken with respect to $g^\circ_i$.

We now apply the Bryant Extension Proposition~\ref{prop_bryant_comparison_general}  with $D=D_\#-1$,  $b = C(E) (10 \lambda)^{-E} \cdot 10^4$, $\beta = e^{-H (T-t_J)} \eta_{\cut} 10^{-4} \cdot \eta_{\lin}  \cdot b^{-1}$, $C = \max \{  (10\lambda)^{-1} D_{\CAP}, b \}$,  $\phi=\phi^{\circ}_i$, $g=g_i^\circ$, $g'=g_i^{\prime\circ}$.
We obtain that if
\begin{multline*}
 E \geq \underline{E}, \qquad
D_\# \geq \underline{D}_\# (T, E, H, \eta_{\lin}, \lambda, D_{\CAP}, \eta_{\cut} ), \qquad \\
\delta_{\bb} \leq \ov\delta_{\bb} (T, E, H, \eta_{\lin}, \lambda, D_{\CAP}, \eta_{\cut} , D_\#), 
\end{multline*}
then there is a smooth map $\td\phi^\circ_i : M_{\Bry} (D_\# - 1) \to M_{\Bry}(\delta_{\bb}^{-1})$ with
\begin{equation} \label{eq_tdphicircphicirc}
 \td\phi^\circ_i = \phi^\circ_i \qquad \text{on} \qquad M_{\Bry} (D_\# -2, D_\#-1),
 \end{equation}
such that for $\td{h}^\circ_i := (\td\phi^\circ_i)^* g^{\prime\circ}_i -  \psi^{\prime *} g^{ \circ}_i$ we have
\begin{equation} \label{eq_h_i_circ_bound}
 \rho^3 \big| \td{h}^\circ_i \big|_{g^\circ_i} \leq \eta_{\cut} \cdot e^{-H (T-t_J)} \cdot  10^{-4} \eta_{\lin} . 
\end{equation}
Now set $\td\phi_i := \psi'_i \circ \td\phi_i^\circ \circ \psi_i^{-1}$.
Then assertion \ref{ass_12.3_cl4_b} holds due to (\ref{eq_tdphicircphicirc}).
Rescaling (\ref{eq_h_i_circ_bound}) by $10 \lambda r_{\comp}$ implies the second bound in assertion \ref{ass_12.3_cl4_c}.
The first bound in assertion \ref{ass_12.3_cl4_c} follows from the second assuming
\[ \eta_{\cut} \leq \ov\eta_{\cut}, \qquad
\delta_{\bb} \leq \ov\delta_{\bb}. \]
Assertion \ref{ass_12.3_cl4_a} follows from Claim~\ref{cl_12.2_1} and (\ref{eq_DsharpD0}).

To see assertion \ref{ass_12.3_cl4_d} observe first that by assertion \ref{ass_12.3_cl4_b} and (\ref{eq_CiinD0}) from Claim~\ref{cl_12.2_1} we have
\[ \partial \big( \td{\phi}_i (\DD_i) \big) = \td{\phi}_i (\partial \DD_i) = \phi_{t_J-} (\partial \DD_i) = \partial \big( \phi_{t_J-} (\DD_i \cap \N_{t_J-}) \cup \Int \C'_i \big) . \]
So the smooth domains on both sides of the equation in assertion \ref{ass_12.3_cl4_d} share the same boundary and by assertion \ref{ass_12.3_cl4_b} these domains lie on the same side of this boundary.
So they have to agree.

Assertion \ref{ass_12.3_cl4_e} follows for
\[ D_\# \geq \underline{D}_\# (\lambda) , \qquad \delta_{\bb} \leq \ov\delta_{\bb} (\lambda) \]
from (\ref{eq_CiinD0}) in Claim~\ref{cl_12.2_1} and Property \ref{item_length_curve_distortion} from above.
\end{proof}

Next, we combine the maps $\td\phi_i$ and $\phi_{t_J-}$ to a map $\wh\phi : \N_{t_J-} \cup \N_{t_J+} \to \M'_{t_J}$. 
To do this, recall that by Claim~\ref{cl_12.2_1}, the subsets $\psi_i ( M_{\Bry} (D_\#))$, $i \in I$, are pairwise disjoint.
So the 3-disks $\DD_i$, $i \in I$, are pairwise disjoint as well.
Moreover, recall that by Claim~\ref{cl_12.2_1} and Claim~\ref{cl_apply_bryant_comparison}\ref{ass_12.3_cl4_a} we have
\[ \N_{t_J-} \cup \N_{t_J+} = \N_{t_J-} \cup_{i \in I} \Int \C_i = \N_{t_J-} \cup_{i \in I} \DD_i . \]
Therefore we can define $\wh\phi : \N_{t_J-} \cup \N_{t_J+} \to \M'_{t_J}$ as follows:
\[ \wh\phi := \begin{cases} \td{\phi}_{i}  \qquad & \text{on each} \qquad \DD_i , \qquad i \in I \\ \phi_{t_J-} \qquad & \text{on\phantom{ each}} \qquad \N_{t_J -} \setminus \cup_{i \in I} \DD_i \end{cases}  \]

\begin{claim}
$\wh\phi$ is a diffeomorphism onto its image.
\end{claim}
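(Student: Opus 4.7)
The plan is to verify three properties separately: that $\wh\phi$ is smooth, that it is an immersion (equivalently, a local diffeomorphism at each point), and that it is globally injective. Since $\N_{t_J-} \cup \N_{t_J+}$ and $\M'_{t_J}$ have the same dimension, the combination of these three facts gives the claim.

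Smoothness is the first step. On the open set $\Int \DD_i$ the map equals the smooth map $\td\phi_i$, and on the open set $\Int(\N_{t_J-} \setminus \cup_i \DD_i)$ it equals the smooth map $\phi_{t_J-}$. The only potential issue is at points of $\partial \DD_i$, but this is handled directly by Claim \ref{cl_apply_bryant_comparison}\ref{ass_12.3_cl4_b}, which asserts that $\td\phi_i = \phi_{t_J-}$ on the collar $\psi_i(M_{\Bry}(D_\#-1))\setminus \Int\DD_i$ of $\partial\DD_i$ inside $\N_{t_J-}$. Thus the two definitions agree on a neighborhood of each boundary point, so $\wh\phi$ is smooth. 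Being a local diffeomorphism then follows immediately from the fact that each $\td\phi_i$ is a diffeomorphism onto its image (Claim \ref{cl_apply_bryant_comparison}) and $\phi_{t_J-}$ is a diffeomorphism onto its image (Definition \ref{def_comparison}\ref{pr_7.2_4}).

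The main work is the injectivity argument, which proceeds by case analysis on where two points $x \neq y$ lie relative to the pairwise disjoint family $\{\DD_i\}_{i\in I}$ (disjointness by Claim \ref{cl_12.2_1}). If neither point is in any $\DD_i$, then $\wh\phi(x) = \phi_{t_J-}(x) \neq \phi_{t_J-}(y) = \wh\phi(y)$ by injectivity of $\phi_{t_J-}$. If both lie in the same $\DD_i$, the injectivity of $\td\phi_i$ settles it. If $x \in \DD_i$ and $y \in \DD_j$ with $i \neq j$, or $x \in \DD_i$ and $y \notin \cup_k\DD_k$, then one uses the image description $\td\phi_i(\DD_i) = \phi_{t_J-}(\DD_i \cap \N_{t_J-}) \cup \Int \C'_i$ from Claim \ref{cl_apply_bryant_comparison}\ref{ass_12.3_cl4_d}. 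The components $\Int\C'_i$ are pairwise disjoint (they are distinct components of $\M'_{t_J}\setminus\phi_{t_J-}(\Int\N_{t_J-})$, distinctness being inherited from the distinctness of the $\C_i$ via $\partial\C'_i = \phi_{t_J-}(\partial\C_i)$) and each $\Int\C'_i$ is disjoint from $\phi_{t_J-}(\N_{t_J-})$. Moreover $\phi_{t_J-}(\DD_i \cap \N_{t_J-})$ and $\phi_{t_J-}(\DD_j \cap \N_{t_J-})$ are disjoint for $i\neq j$ by disjointness of the $\DD_i$ and injectivity of $\phi_{t_J-}$. A short check in each case forces $\wh\phi(x) = \wh\phi(y)$ to imply a contradiction with one of these disjointness facts or with injectivity of $\phi_{t_J-}$.

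I do not anticipate a genuine obstacle in this claim: the construction of $\td\phi_i$ via the Bryant Extension Proposition was designed precisely to make the pieces fit together smoothly on the collars, and the component structure of the complement of $\phi_{t_J-}(\Int\N_{t_J-})$ enforces the required separation of images. The most careful part is simply bookkeeping the disjointness statements in the injectivity case analysis, in particular confirming that the $\C'_i$ are pairwise distinct components (which comes from item \ref{li_12.3_1} and the equality $\phi_{t_J-}(\partial\C_i)=\partial\C'_i$ from item \ref{li_12.3_2}, together with injectivity of $\phi_{t_J-}$ on $\N_{t_J-}$).
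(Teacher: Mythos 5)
Your arguments for smoothness, non-degeneracy of the differential, and injectivity track the paper's reasoning closely and are correct. However, the opening sentence — that smooth, immersive, and injective together imply ``diffeomorphism onto its image'' because the dimensions agree — is false when the domain is a manifold with boundary (and possibly non-compact), and this is exactly the situation here: $\N_{t_J-} \cup \N_{t_J+}$ has boundary consisting of central $2$-spheres of necks. A smooth injective immersion from a manifold with boundary need not be a homeomorphism onto its image with the subspace topology; the standard example is $[0,2\pi) \to S^1 \subset \R^2$, $t \mapsto e^{it}$, which is a smooth injective immersion but whose inverse is discontinuous at the image of the boundary point $t = 0$.

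What is missing, and what the paper devotes a separate paragraph to, is the verification that $\wh\phi^{-1}$ is continuous. This is not automatic: one must rule out a sequence $x_k$ with $\wh\phi(x_k) \to \wh\phi(x_\infty)$ for some boundary point $x_\infty \in \partial(\N_{t_J-}\cup\N_{t_J+}) = \partial\N_{t_J-} \setminus \cup_{i} W_i$ but with $x_k \not\to x_\infty$. The danger is that $x_k$ could lie inside some cut $\DD_{i_k}$ and $\wh\phi(x_k)$ could sneak up on $\wh\phi(x_\infty)$ from the wrong direction. The paper rules this out quantitatively: because each $\DD_i$ sits well inside the Bryant region $\psi_i(M_{\Bry}(D_\#))$, which in turn is contained in $W_i \subset \N_{t_J-} \cup \C_i$, and because $\phi_{t_J-}$ is an $\eta_{\lin}$-almost isometry, the image $\wh\phi(\DD_{i_k})$ stays a uniform distance from $\wh\phi(x_\infty)$. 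Hence for large $k$ the sequence must leave all the cuts, whereupon $\wh\phi(x_k) = \phi_{t_J-}(x_k)$ and continuity of $(\phi_{t_J-})^{-1}$ finishes the argument. Without this step, the claim is not proven.
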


\begin{proof}
By assertions \ref{ass_12.3_cl4_a} and \ref{ass_12.3_cl4_b} of Claim~\ref{cl_apply_bryant_comparison} we know that $\wh\phi$ is smooth and has non-degenerate differential.
Next we argue that $\wh\phi$ is injective.
To see this, observe that the maps $\td\phi_{i}$, $i \in I$, and $\phi_{t_J-}$ are each injective.
So it suffices to show that the images $\wh{\phi}_i (\DD_i)$, $i \in I$, and $\phi_{t_J-} ( \N_{t_J-} \setminus \cup_{i \in I} \DD_i)$ are pairwise disjoint.
Using Claim~\ref{cl_apply_bryant_comparison}\ref{ass_12.3_cl4_d} and the fact that the 3-disks $\DD_i$, as well as the 3-disks $\C'_i$, $i \in I$, are pairwise disjoint, it follows immediately that the images $\td{\phi}_i (\DD_i)$, $i \in I$, are pairwise disjoint.
Similarly, using Claim~\ref{cl_apply_bryant_comparison}\ref{ass_12.3_cl4_d}, we have for all $i \in I$
\begin{multline*}
\td{\phi}_i (\DD_i) \cap \phi_{t_J-} ( \N_{t_J-} \setminus \cup_{i \in I} \DD_i ) \\
= \big( \phi_{t_J-} (\DD_i \cap \N_{t_J-}) \cup \Int \C'_i \big) \cap \phi_{t_J-} ( \N_{t_j-} \setminus \cup_{i \in I} \DD_i ) = \emptyset,
\end{multline*}
as desired.

So $\wh\phi$ is an injective smooth map with non-degenerate differential.
In order to see that $\wh\phi$ is even a diffeomorphism onto its image, it suffices to show that $\wh\phi^{-1}:\im\wh\phi\ra \N_{t_J-}\cup\N_{t_J+}$ is continuous, i.e. for any sequence $x_k \in \N_{t_J-} \cup \N_{t_J+}$ and any point $x_\infty \in \N_{t_J-} \cup \N_{t_J+}$ if $\lim_{k \to \infty} \wh\phi (x_k) = \wh\phi(x_\infty)$, then $\lim_{k \to \infty} x_k = x_\infty$ itself.
This can be seen as follows:
If $x_\infty$ lies in the interior of $\N_{t_J-} \cup \N_{t_J+}$, then we are done by the  inverse function theorem and the fact that $\wh\phi$ is injective and has non-degenerate differential.  So assume that 
\begin{equation} \label{eq_x_infty_far_from_W}
 x_\infty \in \partial ( \N_{t_J-} \cup \N_{t_J+})   = \partial \N_{t_J-} \setminus \cup_{i \in I} \partial \C_i = \partial \N_{t_J-} \setminus \cup_{i \in I} W_i. 
\end{equation}
The first equality follows from Definition~\ref{def_comparison_domain}(\ref{pr_7.1_3}) and the last equality follows from (\ref{eq_CiinD0}) in Claim~\ref{cl_12.2_1}.
If for some $k$ we have $x_k \in \DD_{i_k}$ for some $i_k \in I$, then by Claim~\ref{cl_apply_bryant_comparison}\ref{ass_12.3_cl4_d}, by the construction of $\DD_{i_k}$ and by a priori assumption \ref{item_eta_less_than_eta_lin_13} and (\ref{eq_lambda_eta_small}) a ball of uniform radius around $\wh\phi(x_k)$ must still be contained in  $\wh\phi (\psi_{i_k} (M_{\Bry} (D_\#))) \subset \wh\phi ( W_{i_k})$.
Therefore, by (\ref{eq_x_infty_far_from_W}), the distance $d_{t_J} (\wh\phi (x_\infty), \wh\phi (x_k) )$ must be bounded from below by a uniform constant.
It follows that for large $k$ we have $x_k \in \N_{t_J-} \setminus \cup_{i \in I} \DD_i$, and thus $\wh\phi (x_k) = \phi_{t_J-} (x_k)$ by Claim~\ref{cl_apply_bryant_comparison}\ref{ass_12.3_cl4_b}.
Since $\phi_{t_J-}$ is a diffeomorphism onto its image, we must have $\lim_{k \to \infty} x_k = x_\infty$, which proves our claim.
\end{proof}

Now let
\[ \Cut^{J} := \{ \DD_i \;\; : \;\; i \in I \}. \]
Then assertion \ref{ass_12.3_a} of this proposition holds due to Claim~\ref{cl_apply_bryant_comparison}\ref{ass_12.3_cl4_a}.
Assertion \ref{ass_12.3_b} holds by Claim~\ref{cl_apply_bryant_comparison}\ref{ass_12.3_cl4_b} and by the construction of $\wh\phi$.
Assertion \ref{ass_12.3_d} of the proposition follows from Claim~\ref{cl_apply_bryant_comparison}\ref{ass_12.3_cl4_c} and  priori assumption \ref{item_eta_less_than_eta_lin_13}.
For assertion \ref{ass_12.3_e} recall that by Claim~\ref{cl_apply_bryant_comparison}\ref{ass_12.3_cl4_d} we have $\wh\phi ( \N_{t_J-} \cup \N_{t_J+} ) = \phi_{t_J-} (\N_{t_J-}) \cup_{i \in I} \C'_i$ and $\C'_i \subset W'_i$ for all $i \in I$.
By a priori assumption \ref{item_eta_less_than_eta_lin_13} we know that the $\eps_{\can}$-canonical neighborhood assumption holds at scales $(0,1)$ on $\phi_{t_J-} (\N_{t_J-})$ and by Property~\ref{li_12.3_4} above we have $\rho > \frac12 D_{\CAP}^{-1} r_{\comp} > \eps_{\can} r_{\comp}$ on $W'_i$ for all $i \in I$, assuming
\[ \delta_{\bb} \leq \ov\delta_{\bb}, \qquad
\eps_{\can} \leq \ov\eps_{\can} (D_{\CAP}). \]
Therefore, the $\eps_{\can}$-canonical neighborhood assumption holds at scales $(0,1)$ on $W'_i$ as well, which implies assertion \ref{ass_12.3_e}.

Lastly, we argue that assertion \ref{ass_12.3_c} holds if we choose
\begin{equation} \label{eq_D_cut_D_sharp}
 D_{\cut} = 22 \lambda D_\#. 
\end{equation}
Fix some $i \in I$.
By Property~\ref{item_length_curve_distortion} from the beginning of this proof, we have $\diam \DD_i < 2 \cdot 11 \lambda D_\# r_{\comp}= D_{\cut} r_{\comp}$.
On the other hand, Claim~\ref{cl_apply_bryant_comparison}\ref{ass_12.3_cl4_e} states that $\DD_i$ contains a $8 \lambda D_\# r_{\comp}$-tubular neighborhood around $\C_i$ and $8 \lambda D_\# \geq \frac1{10} 22 \lambda D_\# = \frac1{10} D_{\cut}$.

Lastly, let us review the choice of parameters.
In the course of the proof, we have introduced the auxiliary parameter $D_\#$, which is related to $\lambda$ and $D_{\cut}$ via (\ref{eq_D_cut_D_sharp}).
Once $\lambda$ has been fixed, any lower bound on $D_\#$ implies a lower bound on $D_{\cut}$, as indicated in (\ref{eq_parameters_performing_cap_extensions}).
After fixing $D_{\cut}$, the auxiliary parameter $D_\#$ can be viewed as a constant of the form $D_\# (\lambda, D_{\cut})$.
This constant influences the choices of $\delta_{\bb}, \eps_{\can}, r_{\comp}$.
So these parameters are bounded in terms of $\lambda, D_{\cut}$, as shown in (\ref{eq_parameters_performing_cap_extensions}).

This completes the proof of Proposition~\ref{Prop_performing_cap_extensions}.
\end{proof}

\subsection{Extending the comparison map past time \texorpdfstring{$t_J$}{t\_J}} \label{subsec_extending_comparison}
The goal of this subsection is to evolve the map $\wh\phi$, as constructed in Proposition~\ref{Prop_performing_cap_extensions}, forward in time by the harmonic map heat flow.
More specifically, we consider again a comparison domain $(\N, \lb \{ \N^j \}_{j=1}^{J+1}, \lb \{ t_j \}_{j=1}^{J+1})$, defined over the time-interval $[0, t_{J+1}]$, and a comparison $( \Cut, \lb \phi, \lb \{ \phi^j \}_{j=1}^J )$ from $\M$ to $\M'$ defined over the time-interval $[0,t_J]$.
We moreover consider the map $\wh\phi : \N_{t_J-} \cup \N_{t_J+} \to \M'$ from Proposition~\ref{Prop_performing_cap_extensions}.
We will then promote the map $\wh\phi |_{\N_{t_J+}}$ to a map $\phi^{J+1} : \N^{J+1}_{[t_J, t^*]} \to \M'$, which is defined on a time-interval of the form $[t_J, t^*]$, where $t^* \in (t_J, t_{J+1}]$.
In this subsection we will not be able to guarantee that $t^* = t_{J+1}$ --- in fact $t^*$ may be quite close to $t_J$ --- since we will only solve the harmonic map heat flow until $|h|$ reaches a certain threshold.
However, we will find that if $|h|$ does not reach this threshold on the time-interval $[t_J, t^*]$, then in fact $t^* = t_{J+1}$.
In the next subsection, we will then deduce various bounds on $|h|$, which will imply that $|h|$ stays below this threshold.
Hence, it will follow that $t^* = t_{J+1}$ and so $\phi^{J+1}$ can indeed be used to extend the comparison $( \Cut, \phi, \{ \phi^j \}_{j=1}^J )$ to the time-interval $[0,t_{J+1}]$.

In the course of our construction, we will also discuss the case $J = 0$, i.e. the case in which $\phi^{J+1}$ is the comparison map in the first time-step.
In this case, the comparison $( \Cut, \phi, \{ \phi^j \}_{j=1}^J )$ is empty to start with and Proposition~\ref{Prop_performing_cap_extensions} does not apply.
Instead, we will assume in this case that $\wh\phi$ is the initial map $\zeta$, as introduced in the assumptions of Proposition~\ref{Prop_extend_comparison_by_one}.

Let us now state our main result of this subsection.

\begin{proposition}[Extending the comparison map until we lose control] \label{Prop_extend_phi_to_t_star}
If 
\begin{equation} \label{eq_parameters_extend_phi_to_t_star}
\begin{gathered}
E > 2, \qquad 
 F > 0, \qquad
H \geq \underline{H} (E), \qquad 
 \eta_{\lin} \leq \ov\eta_{\lin}(E), \qquad 
 \\
 \nu \leq \ov\nu (T, E, F, H, \eta_{\lin}), \qquad  
 \delta_{\nn} \leq \ov\delta_{\nn} (T, E, F, H,  \eta_{\lin}), \qquad 
 \lambda \leq \ov\lambda,  \\
  \delta_{\bb} \leq \ov{\delta}_{\bb} (T, E, F, H,  \eta_{\lin},  \lambda, D_{\cut}, A, \Lambda ), \qquad \\
  \eps_{\can} \leq \ov\eps_{\can} ( T, E, F, H,  \eta_{\lin},  \lambda, D_{\cut}, A,\Lambda ), \qquad 
    r_{\comp} \leq \ov{r}_{\comp} ,
\end{gathered}
\end{equation}
then the following holds.

Assume that assumptions \ref{con_12.1_i}--\ref{con_12.1_vi} of Proposition~\ref{Prop_extend_comparison_by_one} hold.

Recall that in the case $J=0$, assumption \ref{con_12.1_vi} imposes the existence of a domain $X \subset \M_{t_{J}}$ and map $\zeta : X \to \M'_{t_J}$ with certain properties.
In this case we set $\wh\phi := \zeta$.

In the case $J \geq 1$, we set $X := \N_{t_J-} \cup \N_{t_J+}$ and consider the set $\Cut^{J}$ and the map $\wh\phi : X \to \M'_{t_J}$ satisfying all assertions of Proposition~\ref{Prop_performing_cap_extensions}. 

Then there is some time $t^* \in (t_J, t_{J+1}]$ and a smooth, time-preserving diffeomorphism onto its image $\phi^{J+1} : \N^{J+1}_{[t_J, t^*]} \to \M'$ with $\phi^{J+1}_{t_J} = \widehat\phi |_{\N_{t_J+}}$ whose inverse $(\phi^{J+1})^{-1} : \phi^{J+1} (\N^{J+1}_{[t_J, t^*]}) \to \N^{J+1}_{[t_J, t^*]}$ evolves by harmonic map heat flow (see Definition~\ref{def_RF_spacetime_harm_map_hf}) and such that the following holds for the associated perturbation $h^{J+1} := (\phi^{J+1})^* g' - g$ (which is a Ricci-DeTurck flow):
\begin{enumerate}[label = (\alph*)]
\item \label{ass_12.24_a} $|h^{J+1}| \leq 10 \eta_{\lin}$ on $\N^{J+1}_{[t_J, t^*]}$. 
\item \label{ass_12.24_b} For any $t \in [t_J, t^*]$ and $x \in \N^{J+1}_{t}$ whose time-$t$ distance to $\partial \N^{J+1}_t$ is smaller than $F r_{\comp}$ we have  
\[ Q_+(x) = e^{H(T-\t(x))} \rho_1^E (x) |h^{J+1} (x)| < \ov{Q} = 10^{-E-1} \eta_{\lin} r_{\comp}^E\,. \]
\item \label{ass_12.24_c} If even $|h^{J+1}| \leq  \eta_{\lin}$ on $\N^{J+1}_{t^*}$, then $t^* = t_{J+1}$.
\item \label{ass_12.24_d} $\phi^{J+1} (\N^{J+1}_{[t_J, t^*]})$ is $\eps_{\can} r_{\comp}$-thick.
\end{enumerate}
\end{proposition}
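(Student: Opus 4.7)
The plan is to construct $\phi^{J+1}$ on $[t_J,t^*]$ by solving the harmonic map heat flow for the inverse map $\chi:=(\phi^{J+1})^{-1}$, with initial condition $\chi_{t_J}=(\wh\phi|_{\N_{t_J+}})^{-1}$. Concretely, the region $\wh\phi(\N_{t_J+})\subset\M'_{t_J}$ is flowed forward by $\partial_{\t'}$ (it remains in the regular part for a short time thanks to assertion \ref{ass_12.3_e} of Proposition~\ref{Prop_performing_cap_extensions}, or assumption \ref{con_12.1_vi} if $J=0$), and on this product region the HMH flow admits a unique smooth short-time solution by the standard theory of Appendix~\ref{appx_Ricci_deT}; pulling back by $\chi^{-1}$ yields $\phi^{J+1}$.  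Let $t^*\in(t_J,t_{J+1}]$ be maximal such that $\phi^{J+1}$ exists as a time-preserving diffeomorphism onto its image on $\N^{J+1}_{[t_J,t^*]}$ and $|h^{J+1}|\leq 10\eta_{\lin}$ there.  Assertion (a) is then immediate.  For (c), if $|h^{J+1}|\leq\eta_{\lin}$ on the final slice, short-time existence applied at $t^*$ produces an extension on which $|h^{J+1}|$ stays below $10\eta_{\lin}$ for a short additional interval, so $t^*=t_{J+1}$.  Assertion (d) follows from the scale distortion Lemma~\ref{lem_scale_distortion}, combined with the $\lambda r_{\comp}$-thickness of $\N^{J+1}$ (APA \ref{item_lambda_thick_2}) and the choice $\eps_{\can}\leq C_{\sd}^{-1}\lambda$.

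The real work is assertion (b), the strict weighted bound $Q_+(x)<\ov Q$ for $x\in\N^{J+1}_t$ within $Fr_{\comp}$ of $\partial\N^{J+1}_t$.  The main tool is the interior decay estimate (Proposition~\ref{Prop_interior_decay}) applied to the backward parabolic neighborhood $P:=P(x,A\rho_1(x))$, with $A=A(E,\alpha)$ large for a smallness parameter $\alpha=\alpha(T,E,H,F)$ chosen below.  Near the boundary, APA \ref{item_backward_time_slice_3}(a), Lemma~\ref{lem_time_slice_neck_implies_space_time_neck} and Lemma~\ref{lem_bounded_curv_bounded_dist} force $\rho_1(x)\leq C_1(F) r_{\comp}$; since $t-t_J\leq r_{\comp}^2\ll(Ar_{\comp})^2$, the parabolic neighborhood $P$ reaches backward well past $t_J$ and its pre-$t_J$ portion lies in the previously constructed comparison domain $\N$.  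The main obstacle is to control the supremum of $Q$ over $P$: this requires that $P$ be contained in a region where $h$ is smooth, i.e.\ that $P$ avoid all the cuts $\Cut\cup\Cut^J$.  This is arranged via Lemma~\ref{lem_boundary_far_from_cut}, combined with Lemma~\ref{lem_containment_parabolic_nbhd} so that the same avoidance holds for the sub-neighborhoods $P(y,A\rho_1(y))$ at all $y\in P$; this is precisely where the hypothesis $\delta_{\bb},\eps_{\can},r_{\comp}$ small depending on $\lambda,D_{\cut},A,\Lambda$ is used.

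Once $P$ avoids cuts, $h$ is a Ricci-DeTurck perturbation on $P$ (viewing $\phi^{J+1}$ as an extension of the earlier comparison $\phi$, noting that $\wh\phi=\phi_{t_J-}$ away from $\Cut^J$ by Proposition~\ref{Prop_performing_cap_extensions}\ref{ass_12.3_b}), and $|h|\leq 10\eta_{\lin}$ holds throughout $P$ (combining APA \ref{item_eta_less_than_eta_lin_13} with the defining bound at times $>t_J$), which lies within the admissible range of Proposition~\ref{Prop_interior_decay} after adjusting $\ov\eta_{\lin}(E)$ by a factor of $10$.  We then split $\sup_P Q$ into two pieces: on $P\cap\N$, APA \ref{item_q_less_than_q_bar_6} applied pointwise via the sub-neighborhood estimate yields $Q\leq\ov Q$; on $P\cap\N^{J+1}_{(t_J,t^*]}$, the bound $|h|\leq 10\eta_{\lin}$ together with $\rho_1\leq C_2(F,A)r_{\comp}$ gives $Q\leq C_3(F,A,E,H,T)\eta_{\lin}r_{\comp}^E$.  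Proposition~\ref{Prop_interior_decay} then delivers
\[
Q(x)\leq\alpha\sup_P Q+C(E)\sup_{P\cap\M_0}Q\leq\alpha\cdot C_3\eta_{\lin}r_{\comp}^E+C(E)\,\nu\ov Q
\]
(the initial-slice term is bounded via assumption \ref{con_12.1_vi} when $J=0$, and via a time-shifted use of Proposition~\ref{Prop_semi_local_max} combined with APA \ref{item_q_less_than_q_bar_6} at $t_J$ when $J\geq 1$).  Choosing $\alpha$ small enough that $\alpha C_3\leq 10^{-E-2}$ and $\nu$ small enough that $C(E)\nu\leq 10^{-2}$ yields the strict inequality $Q(x)<\ov Q$, with all parameter dependencies aligning with the hypothesis \eqref{eq_parameters_extend_phi_to_t_star}.
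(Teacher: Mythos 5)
Your proposal misses two genuine difficulties, and as a result takes a route that does not close.

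\textbf{Boundary of the domain.} You assert that since $\wh\phi(\N_{t_J+})$ flows forward as a product region, ``the HMH flow admits a unique smooth short-time solution by the standard theory of Appendix~\ref{appx_Ricci_deT}.'' But $\N^{J+1}_{t_J}$ and its image are manifolds with boundary, and Proposition~\ref{prop_hh_flow_existence} in that appendix is proved only for complete manifolds. Solving the harmonic map heat flow on a manifold with boundary requires prescribing a boundary condition and estimating its influence; you do not address this at all. The paper avoids the boundary problem entirely by a grafting construction (Claim~\ref{cl_12.3_5}): it shows that the boundary collars of $\N^{J+1}$ and of $\wh\phi(\N_{t_J+})$ are very precise shrinking necks, truncates along them, glues on round shrinking half-cylinders to obtain complete $\delta_\#$-approximate Ricci flows $(N^+,g^+_t)$ and $(N^{\prime+},g^{\prime+}_t)$, and only then invokes Proposition~\ref{prop_hh_flow_existence}. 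Claims 1--4 and the whole neck-identification apparatus in the actual proof exist precisely to set up this grafting; omitting it leaves the short-time existence claim unsupported.

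\textbf{Assertion (b).} Your attempt to apply Proposition~\ref{Prop_interior_decay} directly at the terminal point $x\in\N^{J+1}_t$ runs into a circularity. To get $Q(x)\leq\alpha\sup_PQ$, you need $A\geq\underline{A}(E,\alpha)$, so $A\to\infty$ as $\alpha\to 0$. But your bound $\sup_{P\cap\N^{J+1}_{(t_J,t]}}Q\leq C_3(F,A,E,H,T)\,\eta_{\lin}r_{\comp}^E$ uses the scale estimate $\rho_1\leq C_2(A)\rho_1(x)$ on $P(x,A\rho_1(x))$ from Lemma~\ref{lem_bounded_curv_bounded_dist}, whose constant grows with $A$ without any tracked rate (it comes from a compactness argument in Lemma~\ref{lem_kappa_solution_properties_appendix}). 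So $C_3$ grows as $\alpha\to 0$, and nothing guarantees $\alpha\,C_3(A(\alpha))\to 0$; the final step ``choose $\alpha$ small enough that $\alpha C_3\leq 10^{-E-2}$'' is not available. Moreover, with $x$ within $Fr_{\comp}$ of $\partial\N^{J+1}_t$, the ball $B(x,A\rho_1(x))$ at time $t$ will in general exit $\N^{J+1}_t$, so the perturbation $h$ is not even defined on all of $P(x,A\rho_1(x))$ and Proposition~\ref{Prop_interior_decay} cannot be applied there. The paper's argument avoids both problems by applying the interior decay estimate one time-slice earlier, at $t_J$, centered at boundary points of $\N^{J+1}_{t_J}$: by the ``receding staircase'' structure (Definition~\ref{def_comparison_domain}(\ref{pr_7.1_3}) and a priori assumption \ref{item_backward_time_slice_3}) these lie well in the interior of $\N^J_{t_J}$, so a large backward parabolic neighborhood $P(x,A_\# r_{\comp})\subset\N$ exists and avoids the cuts (Claim~\ref{cl_12.3_1}). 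This yields a bound $Q<\delta_\#\ov Q$ at $t_J$ with $\delta_\#$ depending only on $(T,E,H,\eta_{\lin})$, independent of the size $A_\#$ needed, because $A_\#$ is chosen \emph{after} $\delta_\#$. That smallness is then propagated forward to $t\in(t_J,t^*]$ by the purely local short-time estimate Proposition~\ref{prop_distortion_stays_small} applied to the grafted data, which produces exactly the bound (\ref{eqn_h_bound_for_assertion_b}). The forward propagation step is where the grafted round-cylinder structure near the boundary is crucial, and this has no analogue in your sketch.

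Your treatment of assertion (c) is also too thin (short-time existence alone does not rule out that $|h^{J+1}|$ exceeds $10\eta_{\lin}$ immediately after $t^*$; the paper needs the quantitative Lemma~\ref{lem_hh_flow_C0_bound} and property~(\ref{li_12.24_4}) of the grafted flow), but the two points above are the substantive gaps.
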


We emphasize that we have introduced another auxiliary parameter, $F$, which we will choose in Subsection~\ref{subsec_verification_of_APAs}, depending only on $E$.
The bound in assertion \ref{ass_12.24_b}, which holds $F r_{\comp}$-close to the boundary of $\partial \N^{J+1}$, will be helpful later as we are not able to apply the semi-local maximum principle, Proposition~\ref{Prop_semi_local_max}, too close to the boundary.
For this purpose, we will later choose $F \geq L(E)$, where the latter is the constant from Proposition~\ref{Prop_semi_local_max}.

Let us now explain the main strategy of the proof of Proposition~\ref{Prop_extend_phi_to_t_star}.
Observe first that the parabolic domain $\N^{J+1} \subset \M$ is a product domain and the Ricci flow on it can be viewed as a conventional, non-singular Ricci flow.
A similar domain, which contains the image $\wh\phi (\N_{t_J+})$, can be found in $\M'$. 
So the proof of Proposition~\ref{Prop_extend_phi_to_t_star} can be reduced to a relatively standard short-time and long-time existence statement for the harmonic map heat flow between conventional Ricci flows on manifolds with boundary.   Rather than solving the harmonic map heat flow equation with a boundary condition, we found it technically simpler to use a ``grafting'' construction to eliminate the boundary.

 A large part of the following proof will be devoted to the characterization of the geometry near the boundary of $\N_{t_J+}$ and the boundary of its image $\wh\phi ( \N_{t_J+} )$, which will serve as a setup for the subsequent grafting construction.
More specifically, our goal will be to show that the boundary of $\N_{t_J  +}$ and its image $\wh\phi ( \N_{t_J+} )$ are contained in regions that look sufficiently neck-like on the time-interval $[t_J, t_{J+1}]$.
To achieve this, we will employ the following strategy.
A priori assumption \ref{item_backward_time_slice_3}(a)  provides neck structures near $\partial \N_{t_{J+1}-}$ at time $t_{J+1}$.
Using Lemma~\ref{lem_time_slice_neck_implies_space_time_neck}, these neck structures can be promoted backwards onto the time-interval $[t_J, t_{J+1}]$.
The newly constructed neck structure at time $t_J$, near $\partial \N_{t_J+}$, a priori assumption \ref{item_q_less_than_q_bar_6}  and the interior decay estimate, Proposition~\ref{Prop_interior_decay}, can then be used to identify $C^0$-neck structures near the boundary of $\wh\phi ( \N_{t_J+}) \subset \M'_{t_J}$. 
Using the canonical neighborhood assumption and the self-improving property of necks in $\kappa$-solutions, Lemma \ref{lem_C0_neck_smooth_neck}, these $C^0$-neck structures imply the existence of neck structures of higher regularity in $\M'_{t_J}$.
Lastly, we use Lemma~\ref{lem_time_slice_neck_implies_space_time_neck}, to promote these neck structures forward in $\M'$, onto the time-interval $[t_J, t_{J+1}]$. 

Based on this characterization of the boundary of $\N_{t_J+}$ and its image, we perform a grafting construction in the last phase of the proof.
This grafting construction involves cutting $\M_{[t_J,t_{J+1}]}$ and $\M'_{[t_J, t_{J+1}]}$ inside the previously identified neck regions, gluing on shrinking round half-cylinders, and passing to a map between the grafted spacetimes.  
We have thus reduced our discussion to standard existence results for the harmonic map heat flow between complete manifolds.  
We remark that our approach is facilitated by the fact that $\wh\phi$ is already defined on a larger neighborhood of $\N_{t_J+}$, therefore providing enough space for an interpolation between the metric on $\M_{[t_J, t_{J+1}]}$ and the cylindrical metric.

\begin{proof}[Proof of Proposition~\ref{Prop_extend_phi_to_t_star}]
Let $\delta_\# > 0$ be a constant whose value we will determine at the end of the proof.
It will only depend on $T, E, H$ and $\eta_{\lin}$ and influence only the parameters $\nu, \delta_{\nn}, \delta_{\bb}$ and $\eps_{\can}$.
So it lies between $\eta_{\lin}$ and $\nu$ in the parameter order introduced in Subsection~\ref{subsec_parameter_order}.
To avoid an accumulation of a large number of different constants, in what follows we will be using the standard practice of making a series of adjustments to the constant $\de_\#$.  This means, strictly speaking, that $\de_\#$ is not really a single constant, but takes on different values at different places in the proof, and the earlier values are adjusted as functions of the later values.

By a priori assumption \ref{item_backward_time_slice_3}(a), each boundary component $\Sigma \subset \partial \N^{J+1}_{t_{J+1}}$ is the central 2-sphere of a $\delta_{\nn}$-neck $U_\Sigma \subset \M_{t_{J+1}}$ at scale $r_{\comp}$.
Lemma~\ref{lem_time_slice_neck_implies_space_time_neck} implies that if
\[ \delta_{\nn} \leq \ov{\delta}_{\nn} (\delta_\#), \qquad \eps_{\can} \leq \ov{\eps}_{\can} (\delta_\#), \qquad r_{\comp} \leq \ov{r}_{\comp}, \]
then for each such $\Sigma$ there is a product domain $U^*_\Sigma \subset \M_{[t_J, t_{J+1}]}$  that contains $\Sigma$ and on which the flow is $\delta_\#$-close at scale $r_{\comp}$ to the round shrinking cylinder on the time-interval $[-1,0]$.
By this we mean the following: we can find an $r_{\comp}^2$-time-equivariant and $\partial_{\mathfrak{t}}$-preserving diffeomorphism
\[ \psi_\Sigma : S^2 \times \big( {- \delta_\#^{-1} , \delta_\#^{-1}  }\big) \times [ { - 1, 0} ] \longrightarrow U^*_\Sigma \]
such that $\Sigma = \psi_{\Sigma} ( S^2 \times \{ 0 \} \times \{ 0 \} )$ and
\begin{equation} \label{eq_delta_1_close_neck}
 \big\Vert r_{\comp}^{-2} \psi^*_\Sigma g - g^{S^2 \times \R} \big\Vert_{C^{[\delta_\#^{-1}]}} < \delta_\#. 
\end{equation}
Here $g^{S^2 \times \R}$ denotes the metric of the standard round shrinking cylinder spacetime and the norm is taken over the domain of $\psi_\Sigma$.

By (\ref{eq_delta_1_close_neck}) and assuming
\[ \delta_\# \leq \ov\delta_\#, \qquad
r_{\comp} \leq \ov{r}_{\comp}, \]
we have
\begin{equation} \label{eq_rho_bigger_1_on_U_t_J}
 1.9 r_{\comp}  < \rho_1 = \rho < 2.1  r_{\comp}   \qquad \text{on} \qquad U^*_{\Sigma, t_J}. 
\end{equation}
So, by a priori assumption \ref{item_backward_time_slice_3}(a), applied at time $t_J$, and assuming
\[ \delta_{\nn} \leq \ov\delta_{\nn}, \]
we find that $U^*_{\Sigma, t_J}$ is disjoint from $\partial \N^J_{t_J}$ if $J \geq 1$.
So, if $J\geq 1$, since $\Sigma (t_J) \subset \partial \N^{J+1}_{t_J} \subset \N_{t_J-}$, it follows that $U^*_{\Sigma, t_J}  \subset \N_{t_J-} \subset X$.

On the other hand, if $J=0$, and
\[ \delta_{\nn} \leq \ov\delta_{\nn}, \]
then $U^*_{\Sigma, t_J}$ has diameter $< 10 \delta_\#^{-1} r_{\comp}$.
So assuming
\[ \delta_{\nn} \leq \ov\delta_{\nn} (\delta_\#), \]
we have $U^*_{\Sigma, t_J} \subset X$.
So, in summary,
\begin{equation} \label{eq_U_star_in_NN_minus}
 U^*_{\Sigma, t_J}  \subset \N_{t_J-} \subset X \quad \text{if $J \geq 1$} \qquad \text{and} \qquad U^*_{\Sigma, t_J} \subset X \quad \text{if $J = 0$}.
 \end{equation}

Consider the Ricci-DeTurck perturbation $(h, \{ h^j \}_{j=1}^J)$ associated to the comparison $(\Cut, \phi,  \{ \phi^j \}_{j=1}^J )$ and let $Q$ be defined as in Definition~\ref{def_a_priori_assumptions_7_13} of the a priori assumptions \ref{item_q_less_than_q_bar_6}--\ref{item_apa_13}.
We will now use a priori assumption \ref{item_q_less_than_q_bar_6} to show that we have a bound on $Q$ in large parabolic neighborhoods near the boundary of $\N_{t_J+}$.
In Claim~\ref{cl_12.3_2}  this bound will later be used to obtain an improved bound on $Q$, and therefore on $h$, via the interior decay estimate, Proposition~\ref{Prop_interior_decay}.
For this purpose, let $A_\# < \infty$ be a constant whose value will be determined in the proof of Claim~\ref{cl_12.3_2} (depending only on $E$ and $\delta_\#$). 

\begin{claim}
\label{cl_12.3_1}
If $J \geq 1$ and
\begin{multline*} \label{eq_asspt_Q_bound_P_A_hash} 
 \delta_{\nn} \leq \ov\delta_{\nn} (A_\#), \qquad
 \lambda \leq \ov\lambda, \qquad
 \Lambda \geq \underline\Lambda, \qquad
 \delta_{\bb} \leq \ov{\delta}_{\bb} ( A_\#, \lambda, D_{\cut},  \Lambda, A ), \\ \qquad  
 \eps_{\can} \leq \ov{\eps}_{\can} (A_\#,  \lambda,  D_{\cut}, \Lambda, A), \quad 
 r_{\comp} \leq \ov{r}_{\comp}  ,
\end{multline*}
then for any $x \in \partial\N^{J+1}_{t_{J}}$ the parabolic neighborhood $P(x, A_\# r_{\comp})$ is unscathed,
\begin{equation} \label{eq_P_A_hash_in_N_minus_DD}
 P(x, A_\# r_{\comp}) \subset \N \setminus \cup_{\DD \in \Cut \cup \Cut^{J}} \DD 
\end{equation}
and we have the bound
\begin{equation} \label{eq_Q_bound_P_A_hash}
 Q \leq \ov{Q} \qquad \text{on} \qquad P(x, A_\# r_{\comp}). 
\end{equation}
\end{claim}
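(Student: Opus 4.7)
The plan is to prove the three conclusions sequentially: unscathedness of $P(x, A_\# r_{\comp})$, its containment in $\N \setminus \bigcup_{\DD \in \Cut \cup \Cut^{J}} \DD$, and the bound $Q \leq \ov Q$ on it.

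First I would exploit the near-cylindrical neck structure of $U^*_{\Sigma, t_J}$ established earlier in the proof via Lemma~\ref{lem_time_slice_neck_implies_space_time_neck}. Since $x \in \partial \N^{J+1}_{t_J}$ lies on a central $2$-sphere of the $\delta_\#$-close-to-cylinder region $U^*_{\Sigma, t_J}$ at scale $r_{\comp}$, we have $\rho_1(x) \in (1.9 r_{\comp}, 2.1 r_{\comp})$ by (\ref{eq_rho_bigger_1_on_U_t_J}); taking $\delta_\# \leq \ov\delta_\#(A_\#)$ small enough that the $\R$-direction extent of $U^*_{\Sigma, t_J}$ exceeds $10 A_\# r_{\comp}$ then forces $B(x, A_\# r_{\comp}) \subset U^*_{\Sigma, t_J} \subset \N_{t_J-}$ via (\ref{eq_U_star_in_NN_minus}). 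Bounded curvature at bounded distance (Lemma~\ref{lem_bounded_curv_bounded_dist}) gives uniform scale control on $P(x, A_\# r_{\comp})$, and together with $(\eps_{\can} r_{\comp}, T)$-completeness and Lemma~\ref{lem_unscathedness_criterion} this implies unscathedness.

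For the cut-avoidance statement, I would apply Lemma~\ref{lem_boundary_far_from_cut} at $x$ with $A_0 := A_\# r_{\comp}/\rho_1(x)$. If $P(x, A_\# r_{\comp}) \cap \DD \neq \emptyset$ for some $\DD \in \Cut \cup \Cut^{J}$ with $\DD \subset \M_{t_k}$, then regardless of whether $t_k < t_J$ or $t_k = t_J$, the lemma forces $B(x, A_\# r_{\comp}) \subset \N_{t_J+} = \N^{J+1}_{t_J}$, contradicting $x \in \partial \N^{J+1}_{t_J}$. Combined with $B(x, A_\# r_{\comp}) \subset \N_{t_J-} \subset \N_{t_J}$, Lemma~\ref{lem_parabolic_domain_in_N} then yields (\ref{eq_P_A_hash_in_N_minus_DD}). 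For the $Q$-bound, I would verify the hypothesis of a priori assumption~\ref{item_q_less_than_q_bar_6} at every $y \in P(x, A_\# r_{\comp})$: containment of parabolic neighborhoods (Lemma~\ref{lem_containment_parabolic_nbhd}) applied with $A_1 = A_\# r_{\comp}/\rho_1(x)$ and $A_2 = A$ produces a constant $\widetilde A = \widetilde A(A_\#, A)$ with $P(y, A \rho_1(y)) \subset P(x, \widetilde A \rho_1(x))$, and repeating the boundary-contradiction argument at $x$ with $A_0 = \widetilde A$ shows that $P(x, \widetilde A \rho_1(x))$, and hence $P(y, A \rho_1(y))$, avoids every cut in $\Cut$, so \ref{item_q_less_than_q_bar_6} gives $Q(y) \leq \ov Q$.

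The main obstacle is this last step: a naive application of \ref{item_q_less_than_q_bar_6} at a generic $y \in P(x, A_\# r_{\comp})$ is problematic because $y$ need not lie close to any boundary of the comparison domain, so Lemma~\ref{lem_boundary_far_from_cut} applied at $y$ itself yields no useful contradiction. The resolution is to transfer the argument back to the distinguished boundary point $x$ via parabolic containment; the enlarged radius $\widetilde A(A_\#, A)$ is then absorbed into the parameter bounds $\delta_{\bb} \leq \ov\delta_{\bb}(A_\#, \lambda, D_{\cut}, \Lambda, A)$ and $\eps_{\can} \leq \ov\eps_{\can}(A_\#, \lambda, D_{\cut}, \Lambda, A)$ required by Lemma~\ref{lem_boundary_far_from_cut}, which is precisely the dependence structure in the claim's hypotheses.
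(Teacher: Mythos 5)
Your overall scheme matches the paper's: use the neck structure near $\partial\N^{J+1}_{t_J}$ and bounded curvature at bounded distance to control $P(x, A_\# r_{\comp})$; rule out cut-intersection via Lemma~\ref{lem_boundary_far_from_cut}; and pass from the $A_\#$-scale neighborhood to the $A$-scale neighborhoods needed for \ref{item_q_less_than_q_bar_6} via Lemma~\ref{lem_containment_parabolic_nbhd}. Your cut-avoidance argument, applied at $x$ itself and contradicting $x\in\partial\N^{J+1}_{t_J}$, is correct and if anything slightly more direct than the paper's phrasing, and the observation that the enlarged radius $\widetilde A(A_\#,A)$ must be fed into Lemma~\ref{lem_boundary_far_from_cut} is exactly right.

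However, the first step has a genuine parameter-order problem. You establish $B(x, A_\# r_{\comp})\subset\N_{t_J-}$ by forcing $B(x, A_\# r_{\comp})\subset U^*_{\Sigma,t_J}$, which requires the $\mathbb R$-extent of the neck to exceed $\sim A_\# r_{\comp}$, i.e.\ $\delta_\# \leq \ov\delta_\#(A_\#)$. But in this proof $A_\#$ is \emph{defined} in terms of $\delta_\#$: it is chosen in Claim~\ref{cl_12.3_2} to satisfy $A_\# \geq \underline A_\#(E,\delta_\#)$, which comes from Proposition~\ref{Prop_interior_decay} and grows rapidly (via iterated applications of Lemma~\ref{lem_containment_parabolic_nbhd}) as $\delta_\#\to 0$. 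Requiring simultaneously $A_\#\lesssim\delta_\#^{-1}$ is therefore circular and potentially unsatisfiable. Notice also that your argument never actually uses the hypothesis $\delta_{\nn}\leq\ov\delta_{\nn}(A_\#)$ appearing in the claim — a red flag. The correct route, which the paper follows, does not try to keep the whole ball inside $U^*_{\Sigma,t_J}$. Instead one notes that on $U^*_{\Sigma,t_J}$ one has $\rho_1\approx 2r_{\comp}$ by (\ref{eq_rho_bigger_1_on_U_t_J}), while by a priori assumption \ref{item_backward_time_slice_3}(a) the boundary $\partial\N^J_{t_J}$ is surrounded by a $\delta_{\nn}$-neck at scale $\approx r_{\comp}$; so $x$ lies outside this $\delta_{\nn}$-neck, hence at distance $\gtrsim\delta_{\nn}^{-1}r_{\comp}$ from $\partial\N^J_{t_J}$. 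Taking $\delta_{\nn}\leq\ov\delta_{\nn}(A_\#)$ then ensures $B(x, A_\# r_{\comp})$ stays clear of $\partial\N^J_{t_J}$, whence $B(x, A_\# r_{\comp})\subset\N_{t_J-}$. This uses only parameters that come \emph{after} $A_\#$ in the order, eliminating the circularity.
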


\begin{proof}
Choose a boundary component $\Sigma \subset \partial \N_{t_{J+1}}^{J+1}$ such that $x \in \Sigma(t_J) \subset \partial \N_{t_J}^{J+1}$.
So $x \in U^*_{\Sigma, t_J}$.
Assuming
\[ \delta_{\nn} \leq \ov\delta_{\nn} (A_\#), \]
we obtain by similar arguments as those that led to  (\ref{eq_U_star_in_NN_minus}) that 
\begin{equation} \label{eq_ball_in_N}
 B(x, A_\# r_{\comp} ) \subset \N_{t_J-}. 
\end{equation}

Next, using Lemma~\ref{lem_containment_parabolic_nbhd} along with (\ref{eq_rho_bigger_1_on_U_t_J}), and assuming
\[ \eps_{\can} \leq \ov\eps_{\can} (A_\# , A), \]
we can find  a constant $A' = A' (A_\#, A) < \infty$ with $A' \geq A$ such that $P(x,A'\rho_1(x))$ is unscathed and
\begin{equation} \label{eq_P_y_A_P_x_A_prime}
 P(y, A \rho_1 (y) ) \subset P(x, A' \rho_1 (x)) \qquad \text{for all} \qquad y \in P(x, A_\# r_{\comp}). 
\end{equation}

We now show that $P(x, A' \rho_1 (x))$ is disjoint from the cuts.
To do this, observe that for any $t' \in (t_J, t_{J+1}]$ we have $B(x(t'), A' \rho_1 (x)) \not\subset \N$.
So by Lemma~\ref{lem_boundary_far_from_cut}, along with (\ref{eq_rho_bigger_1_on_U_t_J}) and a priori assumption \ref{item_cut_diameter_less_than_d_r_comp_11}, assuming
\begin{gather*}
 \delta_{\nn} \leq \ov\delta_{\nn}, \qquad 
 \lambda \leq \ov\lambda, \qquad
 \Lambda \geq \underline\Lambda, \qquad
\delta_{\bb} \leq \ov\delta_{\bb} (\lambda, D_{\cut},  A' (A_\#, A), \Lambda), \qquad \\
\eps_{\can} \leq \ov\eps_{\can} (\lambda, D_{\cut}, A' (A_\#, A), \Lambda), \qquad 
r_{\comp} \leq \ov{r}_{\comp}, 
\end{gather*}
we find that $P(x, A' \rho_1(x) ) \cap \DD = \emptyset$ for all $\DD \in \Cut \cup \Cut^{J}$.
Combining this with (\ref{eq_ball_in_N}) gives us (\ref{eq_P_A_hash_in_N_minus_DD}) via Lemma~\ref{lem_parabolic_domain_in_N}.
Combining it further with (\ref{eq_P_y_A_P_x_A_prime}) and a priori assumption \ref{item_q_less_than_q_bar_6} yields (\ref{eq_Q_bound_P_A_hash}).
\end{proof}

Next we improve the estimate from Claim~\ref{cl_12.3_1} and use it to identify more precise necks in $\M'$.   
\begin{claim}
\label{cl_12.3_2}
If
\begin{equation} \label{eq_claim2_assumptions_parameters}
\begin{gathered}
 E > 2, \qquad 
 H \geq \underline{H} (E). \qquad
 \eta_{\lin} \leq \ov{\eta}_{\lin} (E), \qquad 
 A_\# \geq \underline{A}_{\#} (E, \delta_\#),  \\ 
 \nu \leq \ov\nu (E, \delta_\#),  \qquad
  \eps_{\can} \leq \ov{\eps}_{\can} (E, \delta_\#) , \qquad 
  r_{\comp} \leq \ov{r}_{\comp} , 
\end{gathered}
\end{equation}
then for any component $\Sigma \subset \partial\N_{t_{J+1}}^{J+1}$ we have
\begin{equation} \label{eq_h_bound_delta_hash}
 \big| \wh\phi^* g'_{t_J} - g_{t_J} \big| < \delta_\# \qquad \text{on} \qquad U^*_{\Sigma, t_J}, 
\end{equation}
and
\begin{equation} \label{eq_M_prime_neck_bilipschitz}
 \big\Vert r^{-2}_{\comp} \big( \wh\phi \circ \psi_{\Sigma, t_J} \big)^* g'_{t_J} - g^{S^2 \times \R}_{-1} \big\Vert_{C^0} < \delta_\#.
\end{equation}
\end{claim}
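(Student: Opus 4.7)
The plan is to split on cases according to whether $J=0$ or $J\geq 1$. For $J=0$, where $\wh\phi=\zeta$, assumption~\ref{con_12.1_vi} of Proposition~\ref{Prop_extend_comparison_by_one} directly supplies $Q\leq\nu\ov{Q}$ on all of $X$. Once a bound of the form $\delta_{\nn}\leq\ov\delta_{\nn}(\delta_\#)$ forces $U^*_{\Sigma,0}\subset X$ (the diameter of $U^*_{\Sigma,0}$ being $O(\delta_\#^{-1}r_{\comp})$), a pointwise conversion from $Q$ to $|h|$ via $\rho_1\geq 1.9\,r_{\comp}$ on the neck --- see (\ref{eq_rho_bigger_1_on_U_t_J}) --- reduces (\ref{eq_h_bound_delta_hash}) to a hypothesis of the form $\nu\leq\ov\nu(E,\delta_\#)$, which is already among the assumptions of the claim.

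For $J\geq 1$ the key tool is Proposition~\ref{Prop_interior_decay}. For each $x\in U^*_{\Sigma,t_J}$, the plan is to choose a nearby boundary point $x_0\in\Sigma(t_J)\subset\partial\N^{J+1}_{t_J}$ and invoke Claim~\ref{cl_12.3_1} to obtain $Q\leq\ov{Q}$ on the large parabolic neighborhood $P(x_0,A_\# r_{\comp})$. Provided $A_\#\geq\underline{A}_\#(E,\delta_\#)$ is sufficiently large, this neighborhood will contain $P(x,A(E,\alpha)\rho_1(x))$, where $A(E,\alpha)$ is the radius demanded by Proposition~\ref{Prop_interior_decay} for a decay factor $\alpha=\alpha(E,\delta_\#)$ still to be determined. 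Applying Proposition~\ref{Prop_interior_decay} at $x$, with the initial-time-slice contribution controlled through a priori assumption~\ref{item_q_less_than_nu_q_bar_12}, yields
\[Q(x)\leq\alpha\,\ov{Q}+C(E)\,\nu\,\ov{Q}\,.\]
Choosing $\alpha$ and $\nu$ sufficiently small (as functions of $E$ and $\delta_\#$) drives $Q(x)$ well below $\ov{Q}$, and the usual conversion to $|h|$ using $\rho_1(x)\geq 1.9\,r_{\comp}$ on the neck then gives (\ref{eq_h_bound_delta_hash}).

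Finally, (\ref{eq_M_prime_neck_bilipschitz}) will follow from (\ref{eq_h_bound_delta_hash}) via the split
\[r_{\comp}^{-2}(\wh\phi\circ\psi_{\Sigma,t_J})^*g'_{t_J}-g^{S^2\times\R}_{-1}=r_{\comp}^{-2}\psi_{\Sigma,t_J}^*\bigl(\wh\phi^*g'_{t_J}-g_{t_J}\bigr)+\bigl(r_{\comp}^{-2}\psi_{\Sigma,t_J}^*g_{t_J}-g^{S^2\times\R}_{-1}\bigr)\,.\]
The second summand is controlled directly by (\ref{eq_delta_1_close_neck}), while (\ref{eq_delta_1_close_neck}) also implies that $\psi_{\Sigma,t_J}$ is, up to small error, an isometry from $(S^2\times(-\delta_\#^{-1},\delta_\#^{-1}),\,r_{\comp}^2 g^{S^2\times\R}_{-1})$ to $(U^*_{\Sigma,t_J},g_{t_J})$, so the first summand has $C^0$-norm essentially the same as that of $\wh\phi^*g'_{t_J}-g_{t_J}$ measured in $g_{t_J}$ (the factor $r_{\comp}^{-2}$ cancels the conformal scaling picked up by the pullback). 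Proving both (\ref{eq_h_bound_delta_hash}) and (\ref{eq_M_prime_neck_bilipschitz}) with a small fixed fraction of $\delta_\#$ in place of $\delta_\#$ throughout yields the stated conclusions.

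The main difficulty lies in matching the parameter scales: $A(E,\alpha)$ from Proposition~\ref{Prop_interior_decay} grows as the target accuracy $\alpha\sim\delta_\#$ shrinks, and $A_\#$ in Claim~\ref{cl_12.3_1} must comfortably exceed both $A\cdot(\rho_1/r_{\comp})$ and the diameter $O(\delta_\#^{-1})$ (in units of $r_{\comp}$) of the neck region. This forces the dependence $A_\#\geq\underline{A}_\#(E,\delta_\#)$ appearing in~(\ref{eq_claim2_assumptions_parameters}), and the remaining parameter constraints in that display propagate from the hypotheses of the two cited results; checking consistency with the global parameter order is straightforward but must be tracked carefully.
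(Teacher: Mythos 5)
Your proposal follows essentially the same route as the paper: in the case $J\geq 1$ you use Claim~\ref{cl_12.3_1} to guarantee $Q\leq\ov{Q}$ on a large parabolic neighborhood containing $U^*_{\Sigma,t_J}$ and avoiding all cuts, apply the interior decay estimate (Proposition~\ref{Prop_interior_decay}) together with a priori assumption~\ref{item_q_less_than_nu_q_bar_12}, convert via $\rho_1\gtrsim 2r_{\comp}$ on the neck, and combine with (\ref{eq_delta_1_close_neck}) for (\ref{eq_M_prime_neck_bilipschitz}); the case $J=0$ is handled identically to the paper via assumption~\ref{con_12.1_vi}. The only small omission is that you never explicitly note the identification $\wh\phi=\phi_{t_J-}$ (hence $\wh\phi^*g'_{t_J}-g_{t_J}=h_{t_J-}$) on $U^*_{\Sigma,t_J}$, which is needed to convert the $Q$-estimate into a bound on $\wh\phi^*g'_{t_J}-g_{t_J}$; however, this follows directly from the disjointness $P(x_0,A_\# r_{\comp})\cap\DD=\emptyset$ given by (\ref{eq_P_A_hash_in_N_minus_DD}) once $U^*_{\Sigma,t_J}\subset P(x_0,A_\# r_{\comp})$, so the gap is cosmetic.
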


\begin{proof}
Consider first the case $J = 0$.
In this case, by (\ref{eq_U_star_in_NN_minus}) and assumption \ref{con_12.1_vi} of this proposition we have on $U^*_{\Sigma, t_0}$
\[ e^{H T} \rho_1^E \big| \wh{h} \big| \leq \nu \ov{Q} = \nu \cdot 10^{-E-1} \eta_{\lin} r_{\comp}^E \,; \]
  recall that $\wh{h} = \wh{\phi}^* g'_{t_0} - g_{t_0}$.
So (\ref{eq_h_bound_delta_hash}) follows from (\ref{eq_rho_bigger_1_on_U_t_J}), assuming
\[ \eta_{\lin} \leq 1, \qquad \nu \leq \ov\nu ( \delta_\# ). \]

Second, consider the case $J \geq 1$.
By (\ref{eq_P_A_hash_in_N_minus_DD}) and assuming
\[ A_\# \geq \underline{A}_\# (\delta_\#), \]
we have $U^*_{\Sigma, t_J} \cap \DD = \emptyset$ for all $\DD \in \Cut^{J}$.
So therefore  on $U^*_{\Sigma, t_J}$ we have $\wh\phi = \phi_{t_J-}$ and hence $\wh\phi^* g'_{t_J} - g_{t_J} = h_{t_J-}$.
We will now apply Proposition~\ref{Prop_interior_decay} at every point of $U^*_{\Sigma, t_J}$.
To do this, note that by (\ref{eq_P_A_hash_in_N_minus_DD}) the perturbation $h$ is defined and smooth on all of $P(x, A_\# r_{\comp})$ and by a priori assumption \ref{item_eta_less_than_eta_lin_13} and  (\ref{eq_Q_bound_P_A_hash}) we have $|h| \leq \eta_{\lin}$ and $Q \leq \ov{Q}$ everywhere on this parabolic neighborhood.
Moreover, if $P(x, A_\# r_{\comp})$ intersects the initial time-slice $\M_0$, then by a priori assumption \ref{item_q_less_than_nu_q_bar_12} we have $Q \leq \nu \ov{Q}$ on the intersection.
Lastly, note that the diameter of $U^*_{\Sigma, t_J}$ is bounded by $10 \delta_\#^{-1} r_{\comp}$ for sufficiently small $\delta_\#$.
So  assuming
\begin{gather*}
E > 2, \qquad
H \geq \underline{H} (E), \qquad
 \eta_{\lin} \leq \ov\eta_{\lin} (E), \qquad 
A_\# \geq \underline{A}_\# (E, \delta_\#), \\ 
\nu \leq \ov\nu (E, \delta_\#),  \qquad
\eps_{\can} \leq \ov\eps_{\can} (E, \delta_\#), \qquad
r_{\comp} \leq \ov{r}_{\comp},
\end{gather*}
we conclude by Proposition~\ref{Prop_interior_decay} that $Q < \delta_\# \ov{Q}$ on $U^*_{\Sigma, t_J}$.
Note that here we have used (\ref{eq_rho_bigger_1_on_U_t_J}) and we applied Proposition~\ref{Prop_interior_decay} centered at all points in $U^*_{\Sigma,t_J}$, with an appropriate choice for the radius $A$.

So on $U^*_{\Sigma, t_J}$
\[ e^{H (T-t_J)} \rho_1^E |h_{t_J-}| = Q < \delta_\# \ov{Q} = \delta_\# \cdot 10^{-E-1} \eta_{\lin} r_{\comp}^E. \]
Using (\ref{eq_rho_bigger_1_on_U_t_J}) and the fact that $t_J \leq T$, due to assumption \ref{con_12.1_v} of this proposition, we obtain (\ref{eq_h_bound_delta_hash}) assuming
\[ \eta_{\lin} \leq \ov\eta_{\lin}. \]

Finally, the bound (\ref{eq_M_prime_neck_bilipschitz}) follows by combining (\ref{eq_h_bound_delta_hash}) with (\ref{eq_delta_1_close_neck}) and adjusting (the earlier instance of) $\delta_\#$.  
\end{proof}

Next, we use (\ref{eq_M_prime_neck_bilipschitz}) to establish the existence of a $\delta_\#$-neck in $\M'$.

\begin{claim}
\label{cl_12.3_3}
If
\begin{equation} \label{eq_delta_nn_claim3}
 \delta_{\nn} \leq \ov\delta_{\nn}, 
\end{equation}
then following holds.
For any component $\Sigma \subset \partial\N_{t_{J+1}-}$ there is a $\delta_\#$-neck $U'_\Sigma \subset \M'_{t_J}$ at scale $2r_{\comp}$ that has a central $2$-sphere which intersects $\wh\phi (\Sigma (t_J)) \subset \M'_{t_J}$.
\end{claim}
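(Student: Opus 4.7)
The plan is to apply the self-improvement of necks (Lemma~\ref{lem_C0_neck_smooth_neck}) to the $C^0$-close parametrization produced by Claim~\ref{cl_12.3_2}, after a cosmetic reparametrization to produce a neck at the correct scale. The main observation is that the time-$(-1)$ slice $g^{S^2\times\R}_{-1}=\tfrac{8}{3}g_{S^2}+g_{\R}$ of the normalized shrinking round cylinder is \emph{not} a uniform rescaling of the reference cylindrical metric $g^{S^2\times\R}=\tfrac{2}{3}g_{S^2}+g_{\R}$, because the $S^2$ factor shrinks while the $\R$-factor does not. However, it becomes one after stretching the $\R$-coordinate by a factor of $2$. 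I will therefore define
\[ \td\psi : S^2\times\big({-}\tfrac{1}{2}\delta_\#^{-1},\tfrac{1}{2}\delta_\#^{-1}\big) \longrightarrow \M'_{t_J}, \qquad \td\psi(y,s):=(\wh\phi\circ\psi_{\Sigma,t_J})(y,2s), \]
at which point a change of variables in (\ref{eq_M_prime_neck_bilipschitz}) yields
\[ \big\Vert (2r_{\comp})^{-2}\td\psi^* g'_{t_J}-g^{S^2\times\R}\big\Vert_{C^0}<4\delta_\#. \]
The map $\td\psi$ is a diffeomorphism onto its image since $\psi_{\Sigma,t_J}$ is by Lemma~\ref{lem_time_slice_neck_implies_space_time_neck} and $\wh\phi$ is by Proposition~\ref{Prop_performing_cap_extensions} (when $J\geq 1$) or by hypothesis~\ref{con_12.1_vi} (when $J=0$).

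Next I fix any $p\in S^2$ and consider the point $x':=\td\psi(p,0)=\wh\phi(\psi_{\Sigma,t_J}(p,0))\in\wh\phi(\Sigma(t_J))$. To apply Lemma~\ref{lem_C0_neck_smooth_neck} at $x'$ I need the $\eps_{\can}$-canonical neighborhood assumption to hold there. By assertion~\ref{ass_12.3_e} of Proposition~\ref{Prop_performing_cap_extensions} (when $J\geq 1$), or by hypothesis~\ref{con_12.1_vi} (when $J=0$, using the inclusion $U^*_{\Sigma,t_J}\subset X$ from (\ref{eq_U_star_in_NN_minus})), this assumption is valid at scales $(0,1)$ on a neighborhood of $x'$, so it suffices to verify $\rho(x')<1$. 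Combining the scale control $\rho\in(1.9r_{\comp},2.1r_{\comp})$ on $U^*_{\Sigma,t_J}$ from (\ref{eq_rho_bigger_1_on_U_t_J}) with the bilipschitz bound (\ref{eq_h_bound_delta_hash}) and a scale-distortion estimate in the spirit of Lemma~\ref{lem_scale_distortion} (which applies thanks to the canonical neighborhood assumption on both $\M$ and $\M'$), I will obtain $\rho(x')\in(\tfrac{1}{2}r_{\comp},4r_{\comp})$, hence $\rho(x')<1$ once $r_{\comp}\leq\ov r_{\comp}$.

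Finally, applying Lemma~\ref{lem_C0_neck_smooth_neck} to $\td\psi$ at $x'$ with scale $r=2r_{\comp}$ and target precision $\delta_\#$ concludes that $x'$ is a center of a $\delta_\#$-neck in $\M'_{t_J}$ at scale $2r_{\comp}$; its central $2$-sphere contains $x'\in\wh\phi(\Sigma(t_J))$, establishing the claim. The main (modest) obstacle is purely bookkeeping: Lemma~\ref{lem_C0_neck_smooth_neck} requires the input $C^0$-precision (here $4\delta_\#$) to be below a threshold $\ov\delta(\delta_\#)$ and $\eps_{\can}\leq\ov\eps_{\can}(\delta_\#)$, which is accommodated by the sequential-adjustment convention for $\delta_\#$ stated at the start of the proof of Proposition~\ref{Prop_extend_phi_to_t_star}. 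No deeper issue arises.
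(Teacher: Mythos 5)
Your argument is correct and follows the same route as the paper's: establish the $\eps_{\can}$-canonical neighborhood assumption on $\wh\phi(\Sigma(t_J))$ via Proposition~\ref{Prop_performing_cap_extensions}\ref{ass_12.3_e} (or hypothesis~\ref{con_12.1_vi} when $J=0$), and then feed the $C^0$-estimate (\ref{eq_M_prime_neck_bilipschitz}) from Claim~\ref{cl_12.3_2} into the self-improvement Lemma~\ref{lem_C0_neck_smooth_neck}. You are more explicit than the paper's one-paragraph proof on two points: (a) since $g^{S^2\times\R}_{-1}=\tfrac{8}{3}g_{S^2}+g_{\R}$ is genuinely \emph{not} a uniform rescaling of $g^{S^2\times\R}=\tfrac{2}{3}g_{S^2}+g_{\R}$, one must stretch the $\R$-coordinate by a factor of $2$ before Lemma~\ref{lem_C0_neck_smooth_neck} (whose hypothesis involves $g^{S^2\times\R}$, not a time-slice of the shrinking cylinder) can be invoked at scale $2r_{\comp}$; and (b) one must check $\rho(x')<1$ so that ``holds at scales $(0,1)$'' actually applies at $x'$. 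Both are correct and necessary observations; the paper leaves them implicit under its ``adjust $\delta_\#$'' convention. Your appeal to a scale-distortion estimate ``in the spirit of'' Lemma~\ref{lem_scale_distortion} for (b) is fine, though you could alternatively get $\rho(x')\approx 2r_{\comp}$ more directly from (\ref{eq_M_prime_neck_bilipschitz}) together with Lemma~\ref{lem_convergence_of_scales} and the canonical neighborhood assumption.
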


\begin{proof}
Note that $\wh\phi (\Sigma (t_J)) = \phi^J_{t_J} (\Sigma (t_J))$, as $\DD \subset \Int \N_{t_j+ }$ for all $\DD \in \Cut^J$ (see Definition~\ref{def_comparison}).
The $\eps_{\can}$-canonical neighborhood assumption holds on $\wh\phi (\Sigma (t_J))$ by assumption \ref{con_12.1_vi} of this proposition (if $J=0$) and by assertion \ref{ass_12.3_e} of Proposition~\ref{Prop_performing_cap_extensions} (if $J \geq 1$).
The statement now follows from Lemma~\ref{lem_C0_neck_smooth_neck}, assuming
\[ \eps_{\can} \leq \ov\eps_{\can} (\delta_\#), \]
after possibly adjusting $\delta_\#$.
\end{proof}

By Lemma~\ref{lem_time_slice_neck_implies_space_time_neck} and assuming
\[ \eps_{\can} \leq \ov{\eps}_{\can} (\delta_\#), \qquad r_{\comp} \leq \ov{r}_{\comp}, \]
we obtain furthermore after adjusting $\delta_\#$:

\begin{claim}
\label{cl_12.3_4}
Assuming parameter bounds of the same form as in (\ref{eq_claim2_assumptions_parameters}) and (\ref{eq_delta_nn_claim3}), the following holds.

For any component $\Sigma \subset \D\N_{t_{J+1}-}$ there is a product domain $U^{ \prime *}_\Sigma \subset \M'_{[t_J, t_{J+1}]}$, on the time-interval $[t_J, t_{J+1}]$, with $\wh\phi ( \Sigma (t_J)) \subset U^{\prime *}_{\Sigma, t_J}$ on which the metric is $\delta_\#$-close at scale $r_{\comp}$ to the standard round shrinking cylinder.
More specifically, there is an $r_{\comp}^2$-time-equivariant and $\partial_{\mathfrak{t}}$-preserving diffeomorphism
\[ \psi'_\Sigma : S^2 \times \big( {- \delta_\#^{-1}, \delta_\#^{-1} }\big) \times [ { - 1, 0} ] \longrightarrow U^{ \prime *}_\Sigma \]
such that
\begin{equation} \label{eq_delta_close_to_neck_M_prime}
 \big\Vert r_{\comp}^{-2} \psi^{\prime *}_\Sigma g' - g^{S^2 \times \R} \big\Vert_{C^{[\delta_\#^{-1}]}} < \delta_\#. 
\end{equation}
We furthermore have
\[ \psi'_{\Sigma} (S^2 \times \{ 0 \} \times \{ - 1 \} ) \cap \wh\phi ( \Sigma (t_J)) \neq \emptyset. \]
\end{claim}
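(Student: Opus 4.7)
The plan is to apply Lemma~\ref{lem_time_slice_neck_implies_space_time_neck} (time-slice necks imply spacetime necks) to the $\delta_\#$-neck $U'_\Sigma \subset \M'_{t_J}$ produced by Claim~\ref{cl_12.3_3}, thereby promoting it to a spacetime neck covering the time-interval $[t_J, t_{J+1}]$. Since Claim~\ref{cl_12.3_3} has already done the work of extracting a smooth neck structure from the $C^0$ closeness, what remains is essentially a bookkeeping exercise: identifying the parameters correctly, checking hypotheses, and restricting the output of the lemma to the time slab of interest.

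Concretely, I would instantiate Lemma~\ref{lem_time_slice_neck_implies_space_time_neck} with ambient Ricci flow spacetime $\M'$, scale parameter $r = r_{\comp}$, basepoint time $t = t_{J+1}$, and time offset $a = -1$, so that $t + a r^2 = t_J$ and $\sqrt{1-3a}\, r = 2 r_{\comp}$, matching the scale of $U'_\Sigma$ produced by Claim~\ref{cl_12.3_3}. The completeness hypothesis holds since $\M'$ is $(\eps_{\can} r_{\comp}, T)$-complete, the canonical neighborhood assumption at scales $(\eps_{\can} r_{\comp}, 1)$ holds by assumption~\ref{con_12.1_i} of Proposition~\ref{Prop_extend_comparison_by_one}, and $t_{J+1} \leq T$ by assumption~\ref{con_12.1_v}. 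The remaining parameter constraints $\eps_{\can} \leq \ov\eps_{\can}(\delta_\#)$ and $r_{\comp} \leq \ov{r}_{\comp}$ required by Lemma~\ref{lem_time_slice_neck_implies_space_time_neck} are of the form already imposed by the preceding claims.

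The lemma then produces an $r_{\comp}^2$-time-equivariant, $\partial_\t$-preserving diffeomorphism $\psi_2$ from $S^2 \times (-\delta_\#^{-1}, \delta_\#^{-1}) \times [t^*, t^{**}]$ onto its image in $\M'$, where $t_{J+1} + t^* r_{\comp}^2 = \max\{t_J, 0\} = t_J$ (so $t^* = -1$) and $t_{J+1} + t^{**} r_{\comp}^2 = \min\{t_{J+1} + \tfrac14 r_{\comp}^2,\, T\}$; since $t_{J+1} \leq T$, we have $t^{**} \geq 0$. Restricting $\psi_2$ to the subset $S^2 \times (-\delta_\#^{-1}, \delta_\#^{-1}) \times [-1, 0]$ defines $\psi'_\Sigma$, and setting $U^{\prime*}_\Sigma := \mathrm{Image}(\psi'_\Sigma)$ produces the desired product domain. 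The closeness bound~(\ref{eq_delta_close_to_neck_M_prime}) is inherited directly from the conclusion of Lemma~\ref{lem_time_slice_neck_implies_space_time_neck}. Moreover, the identity $\psi_2|_{S^2 \times (-\delta_\#^{-1}, \delta_\#^{-1}) \times \{-1\}} = \psi_1|_{S^2 \times (-\delta_\#^{-1}, \delta_\#^{-1})}$, where $\psi_1$ parameterizes the original neck $U'_\Sigma$, ensures that $\psi'_\Sigma(S^2 \times \{0\} \times \{-1\})$ coincides with the central $2$-sphere of $U'_\Sigma$ from Claim~\ref{cl_12.3_3}, which intersects $\wh\phi(\Sigma(t_J))$, yielding the final assertion.

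There is no real obstacle here; the only subtle point is that Lemma~\ref{lem_time_slice_neck_implies_space_time_neck} yields an output of quality $\delta_\#$ only when the input neck has quality at most $\ov\delta(\delta_\#)$, which is strictly smaller. Consequently, one must choose a correspondingly smaller target quality back at Claims~\ref{cl_12.3_2} and~\ref{cl_12.3_3}, propagating the adjustment through the earlier neck-identification step. This is precisely the type of bookkeeping absorbed by the phrase ``after adjusting $\delta_\#$'', and it introduces no dependencies on parameters outside those already controlling $\delta_{\nn}$, $\eps_{\can}$, $r_{\comp}$, and the other parameters appearing in the hypotheses of the previous claims.
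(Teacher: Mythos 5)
Your proposal is correct and takes the same approach as the paper, which likewise obtains Claim~\ref{cl_12.3_4} by promoting the time-slice neck $U'_\Sigma$ from Claim~\ref{cl_12.3_3} via Lemma~\ref{lem_time_slice_neck_implies_space_time_neck} (with $r = r_{\comp}$, $t = t_{J+1}$, $a = -1$) and adjusting $\delta_\#$. The only thing worth flagging is that neither you nor the paper spell out the containment $\wh\phi(\Sigma(t_J)) \subset U^{\prime*}_{\Sigma,t_J}$, but this follows routinely from the diameter of $\wh\phi(\Sigma(t_J))$ being $\lesssim r_{\comp}$ while the neck has length $\gtrsim \delta_\#^{-1} r_{\comp}$.
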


We now carry out the grafting construction.  We begin by  identifying product domains in the time slabs $\M_{[t_J,t_{J+1}]}$ and $\M'_{[t_J,t_{J+1}]}$ that will be used in the construction.

For $k=0,\ldots,5$ let $N_k$ be the (open) $100kr_{\comp}$-tubular neighborhood around $\N_{t_J+}$ in $\M_{t_J}$ and set  $N'_k := \wh\phi (N_k)$. Assuming
\[ \delta_\# \leq \ov\delta_\#, \qquad
\delta_{\nn} \leq \ov{\delta}_{\nn}, \]
we obtain from (\ref{eq_U_star_in_NN_minus}) and assumption \ref{con_12.1_vi} of this proposition that
\begin{equation} \label{eq_N_5_in_X}
\begin{aligned}
&N_0 \subset N_1 \subset \ldots \subset N_5 \subset X, \\ &N'_0 \subset N'_1 \subset \ldots \subset N'_5 \subset \wh\phi (X)\,. 
\end{aligned}
\end{equation}
Moreover, assuming
\[ \eta_{\lin} \leq 10^{-2}, \qquad \delta_\# \leq \ov\delta_\#, \]
Claim~\ref{cl_12.3_4}  and a priori assumption \ref{item_eta_less_than_eta_lin_13}  (if $J \geq 1$) or the assumptions from the proposition (if $J=0$) yield
\begin{equation} \label{eq_N_5_minus_N_0}
 N_5 \setminus N_0 \subset \bigcup_{\Sigma \subset \partial \N^{J+1}_{t_{J+1}}} U^*_{\Sigma, t_J}, \qquad N'_5 \setminus N'_0 \subset \bigcup_{\Sigma \subset \partial \N^{J+1}_{t_{J+1}}} U^{\prime *}_{\Sigma, t_J}. 
\end{equation}
By construction and by (\ref{eq_N_5_minus_N_0}), all points on $N_5 = \N_{t_J+} \cup (N_5 \setminus N_0)$ and $N'_5\setminus N'_0$ survive until time $t_{J+1}$.
A priori assumptions \ref{item_lambda_thick_2}, \ref{item_backward_time_slice_3}(a), (c), \ref{item_eta_less_than_eta_lin_13}, assertions \ref{ass_12.3_d} and \ref{ass_12.3_e} of Proposition~\ref{Prop_performing_cap_extensions}, (\ref{eq_N_5_in_X}), assumption \ref{con_12.1_vi} of this proposition and Lemma~\ref{lem_scale_distortion}, as well as (\ref{eq_delta_close_to_neck_M_prime}) and (\ref{eq_N_5_minus_N_0}), imply, assuming
\begin{multline*}
\delta_\# \leq \ov\delta_\#, \qquad
\eta_{\lin} \leq \ov\eta_{\lin}, \qquad 
\delta_{\nn} \leq \ov\delta_{\nn}, \qquad
\lambda \leq 1, \qquad
\Lambda \geq 2, \qquad
\eps_{\can} \leq \ov\eps_{\can} (\lambda) , \qquad \\
r_{\comp} \leq \ov{r}_{\comp}, 
\end{multline*}
that 
\[ \rho > C_{\sd}^{-1} \lambda r_{\comp} >  \eps_{\can} r_{\comp} \qquad \text{on} \qquad  N'_5 . \]

Let $t^*_1 \in [t_J, t_{J+1}]$ be maximal with the property that  $N'_5 (t)$ is defined and weakly $\frac12 C_{\sd}^{-1} \lambda r_{\comp}$-thick for all $t \in [t_J, t^*_1]$, where $C_{\sd}$ is the constant from Lemma \ref{lem_scale_distortion}.
Note here that $t^*_1$ is well-defined by the $(\eps_{\can}r_{\comp}, T)$-completeness of $\M'$ and Lemma~\ref{lem_forward_backward_control}, assuming
$$
\eps_{\can}\leq \ov\eps_{\can}(\lambda)\,.
$$  
We can now express the flows $g$ and $g'$ restricted to the product domains $N_5 ([t_J, t^*_1])$ and $N'_5 ([t_J, t^*_1])$ by conventional Ricci flows $ (N_5,(g_t)_{t\in [t_J,t^*_1]})$, $(N'_5,(g'_t)_{t\in[t_J,t^*_1]})$. 

\begin{claim}[Grafting on round half-cylinders]
\label{cl_12.3_5}
After adjusting $\de_\#$ there are   smoothly varying Riemannian metrics $(g^+_t)_{t \in [t_J, t^*_1]}$,  $(g^{\prime +}_t )_{t \in [t_J, t^*_1]}$ on smooth manifolds $N^+$ and $N^{\prime +}$, respectively, and a diffeomorphism $\phi^+ : N^+ \to N^{\prime +}$ such that:
\begin{enumerate}[label=(\alph*)]
\item \label{ass_12.24_cl5_a} $N_5$ and $N_5'$ can be viewed as open subsets of $N^+$ and $N^{'+}$, respectively.
\item \label{ass_12.24_cl5_b}  For all $t\in [t_J,t^*_1]$, we have $g^+_t=g_t$ on $N_1\subset N^+$ and $g^{\prime+}_t=g^\prime_t$ on $N'_1\subset N^{\prime+}$.
\item \label{ass_12.24_cl5_c} $g^+_t$, $g^{\prime+}_t$ are complete for all $t\in [t_J,t^*_1]$. 
\item \label{ass_12.24_cl5_d} For some constant $C=C(\la) < \infty$ we have
$$
|{\Rm_{g^+_t}}|\,,|{\Rm_{g^{\prime +}_t}}|\leq Cr_{\comp}^{-2}\,.
$$
\item \label{ass_12.24_cl5_e} $(g^+_t)_{t \in [t_J, t^*_1]}$,  $(g^{\prime +}_t )_{t \in [t_J, t^*_1]}$ are ``$\de_\#$-approximate Ricci flows'':
\begin{alignat*}{2}
- \delta_\# r_{\comp}^{-2} g^+_t &< \partial_{t} g^+_t + 2 \Ric_{g^+_t} &&< \delta_\# r_{\comp}^{-2} g^+_t\,, \\
 - \delta_\# r_{\comp}^{-2} g^{\prime+}_t &< \partial_{t} g^{\prime+}_t + 2 \Ric_{g^{\prime+}_t} &&< \delta_\# r_{\comp}^{-2} g^{\prime+}_t. 
\end{alignat*}
\item \label{ass_12.24_cl5_f} For some $C^* = C^*(\lambda) <\infty$,
\begin{align*}
|{\nabla^m_{g^+_t} \Rm (g^+_t)}|_{g^+_t}, |{\nabla^m_{g^{\prime+}_t} \Rm (g^{\prime+}_t)}|_{g^{\prime +}_t} &< C^*  r_{\comp}^{-2} (t-t_J)^{-m/2}  \\
|\nabla^{m_1}_{g^+_t} \partial_t^{m_2} g^+_t|_{g^+_t}, |\nabla^{m_1}_{g^{\prime+}_t} \partial_t^{m_2} g^{\prime+}_t|_{g^{\prime +}_t} &< C^*  r_{\comp}^{-2} (t-t_J)^{-(m_2-1 + m_1/2)} 
\end{align*} 
for all $t \in (t_J,t^*_1]$ and $m,m_1, m_2 = 0, \ldots, 100$.
\item \label{ass_12.24_cl5_g} There is a universal constant $C^{**} < \infty$ such that at every $x \in N^+$ with $d_{g^+_{t_J}}(x,N^+\setminus N_0)<\delta_\#^{-1} r_{\comp}$ we have
\begin{align*}
|{\nabla^m_{g^+_t} \Rm (g^+_t)}|_{g^+_t} (x) &< C^{**}  r_{\comp}^{-2 - m} \\
|\nabla^{m_1}_{g^+_t} \partial_t^{m_2} g^+_t|_{g^+_t} (x) &< C^{**}  r_{\comp}^{-m_1 - 2m_2}
\end{align*} 
for all $t \in (t_J,t^*_1]$ and $m,m_1, m_2 = 0, \ldots, 100$.
\item \label{ass_12.24_cl5_h} $\phi^+ = \wh\phi$ on $N_2$.
\item \label{ass_12.24_cl5_i} We have $|({\phi}^+)^*g^{\prime+}_{t_J}-g_{t_J}|_{g^+_t} <\delta_\#$ at every point $x\in N^+$ with $d_{g^+_{t_J}}(x,N^+\setminus N_0) <\delta_\#^{-1} r_{\comp}$.
\item \label{ass_12.24_cl5_j} $t^*_1 > t_J$ and if $t^*_1 < t_{J+1}$, then $\cup_{t \in [t_J, t^*_1]} N'_5 (t)$ must contain a $C_{\sd}^{-1} \lambda r_{\comp}$-thin point.
\end{enumerate}
\end{claim}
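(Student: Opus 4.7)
The plan is to construct $N^+$ and $N^{\prime+}$ by grafting standard shrinking round half-cylinders onto $N_5$ and $N'_5$ at each boundary component, exploiting the near-cylindrical neck structures furnished by the maps $\psi_\Sigma$ from (\ref{eq_delta_1_close_neck}) and $\psi'_\Sigma$ from (\ref{eq_delta_close_to_neck_M_prime}). First, I would orient each $\psi_\Sigma$ (respectively $\psi'_\Sigma$) so that its $s>0$ direction points into $N_0$ (resp.\ $\wh\phi(N_0)$) and the $s<0$ direction points outward. I would then fix constants $a_1<a_2$ with $-\delta_\#^{-1}/2<a_1<a_2<-500$ and a smooth cutoff $\chi:\R\to[0,1]$ equal to $1$ on $(-\infty,a_1]$ and $0$ on $[a_2,\infty)$. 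The manifold $N^+$ is obtained from $N_5$ by discarding the ``outer'' portion $\psi_{\Sigma,t_J}(S^2\times(-\delta_\#^{-1},a_1])$ of each neck collar and gluing in a half-cylinder $P_\Sigma\cong S^2\times(-\infty,a_2]$, identified over the overlap $S^2\times[a_1,a_2]$ via $\psi_\Sigma$ in a time-equivariant way. On the overlap set $g^+_t:=(1-\chi(s))g_t+\chi(s)(\psi_\Sigma)_*g^{S^2\times\R}_*$; on the rest of $N_5$ take $g^+_t=g_t$, and on $P_\Sigma$ take the shrinking round cylindrical metric. An entirely parallel construction, using $\psi'_\Sigma$, produces $N^{\prime+}$ and $(g^{\prime+}_t)$.

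Properties \ref{ass_12.24_cl5_a}, \ref{ass_12.24_cl5_c} are immediate. For \ref{ass_12.24_cl5_b}, note that, since $a_2<-500$, the interpolation region is disjoint from the $100 r_{\comp}$-tubular neighborhood $N_1$. For \ref{ass_12.24_cl5_d}, on $N_1$ the curvature bound follows from a priori assumption \ref{item_lambda_thick_2} and Lemma~\ref{lem_bounded_curv_bounded_dist} applied to the $\lambda r_{\comp}$-thick product domain $\N^{J+1}$; on the interpolation region and beyond, $g^+_t$ is $\delta_\#$-close to the cylinder metric (in $C^{[\delta_\#^{-1}]}$) by (\ref{eq_delta_1_close_neck}), so curvatures are comparable to $r_{\comp}^{-2}$. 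The $\delta_\#$-approximate Ricci flow condition \ref{ass_12.24_cl5_e} is trivial away from the interpolation region (both $g_t$ and the cylinder metric are exact Ricci flows) and on the interpolation region it follows by expanding $\partial_t g^+_t+2\Ric_{g^+_t}$ and using the $C^2$-closeness, after one adjustment of $\delta_\#$. For \ref{ass_12.24_cl5_f}, I would solve an auxiliary genuine Ricci flow with initial data $g^+_{t_J}$ on $N^+$ (short-time existence holds by completeness and bounded curvature) and compare it to $g^+_t$ using a Ricci--DeTurck-type stability estimate; Shi's derivative bounds for the auxiliary flow then give the claimed pointwise estimates, with the $(t-t_J)^{-m/2}$ factor. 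Near the boundary region the metric is simply $\delta_\#$-close to the cylinder metric in $C^{[\delta_\#^{-1}]}$, yielding the time-independent bounds \ref{ass_12.24_cl5_g}.

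The key point for constructing $\phi^+$ is that (\ref{eq_M_prime_neck_bilipschitz}) together with (\ref{eq_delta_close_to_neck_M_prime}) force the composition $\psi^{\prime-1}_{\Sigma,t_J}\circ\wh\phi\circ\psi_{\Sigma,t_J}$ to be a diffeomorphism between open subsets of $S^2\times\R$ that is $\delta_\#$-close to a Riemannian isometry of the cylindrical metric. By the rigidity of the round cylinder, such a map is $C^0$-close (on a slightly smaller domain, after one adjustment of $\delta_\#$) to a standard cylinder isometry, i.e.\ a rotation in $O(3)$ composed with a translation in the $\R$ factor. After precomposing $\psi'_\Sigma$ with this isometry -- a harmless replacement of the chart that does not affect Claim~\ref{cl_12.3_4} -- I may assume $\psi^{\prime-1}_{\Sigma,t_J}\circ\wh\phi\circ\psi_{\Sigma,t_J}$ is $\delta_\#$-close to the identity in $C^0$. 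Now define $\phi^+$ to equal $\wh\phi$ on $N_2$, to equal $\psi'_\Sigma\circ\psi_\Sigma^{-1}$ (extended as the identity on $P_\Sigma\to P'_\Sigma$ far out) on the outer region, and to interpolate between these two maps in a transition annulus inside $N_5\setminus N_2$ using a convex combination in local exponential coordinates around $\wh\phi$, which is well-defined precisely because the two candidate maps are $\delta_\#$-close. Properties \ref{ass_12.24_cl5_h} and \ref{ass_12.24_cl5_i} then follow from the construction after a final adjustment of $\delta_\#$.

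Property \ref{ass_12.24_cl5_j} follows from Lemma~\ref{lem_forward_backward_control} applied to points of $N'_5$, which are $C_{\sd}^{-1}\lambda r_{\comp}$-thick at $t=t_J$ and hence survive with controlled scale for a definite time; and if $t^*_1<t_{J+1}$ then the continuity of $\rho$, the $(\eps_{\can}r_{\comp},T)$-completeness of $\M'$, and the maximality of $t^*_1$ together produce a point in $\cup_{t\in[t_J,t^*_1]}N'_5(t)$ where $\rho$ attains the value $\tfrac12 C_{\sd}^{-1}\lambda r_{\comp}$, which is in particular a $C_{\sd}^{-1}\lambda r_{\comp}$-thin point. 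The main obstacle in the argument will be the construction and error-estimate for the interpolated $\phi^+$ on the transition annulus: the adjustment of $\psi'_\Sigma$ by a cylinder isometry must be carried out with care to match the two candidate maps to within $\delta_\#$ in $C^0$, using only the $C^0$-information available from (\ref{eq_M_prime_neck_bilipschitz}) together with the cylinder rigidity -- which is why the interpolation region is placed at cylindrical parameter $s\in(a_1,a_2)$ well away from $N_1$, where there is ample room for the isometry-matching to yield a map compatible with $\wh\phi$.
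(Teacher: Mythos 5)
Your plan follows the same general route as the paper: graft shrinking round half-cylinders onto $N_5$ and $N'_5$ along the neck collars $\psi_\Sigma$, $\psi'_\Sigma$, interpolate the metrics with a cutoff in the cylindrical coordinate to get the $\delta_\#$-approximate Ricci flows, read off the pointwise curvature and derivative bounds from the fact that the modified metric is the honest Ricci flow on $N_1$ and the exact cylinder flow outside the collar, and finally extend $\wh\phi$ to an almost-isometry $\phi^+$ by matching it to the standard isometry between the grafted cylinders. Property \ref{ass_12.24_cl5_j} via Lemma~\ref{lem_forward_backward_control} and $(\eps_{\can}r_{\comp},T)$-completeness is also essentially what the paper does.

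However, there is a genuine gap in the construction of $\phi^+$, and it is exactly at the point you flag as ``the main obstacle'' without resolving it. You establish only that $\psi^{\prime-1}_{\Sigma,t_J}\circ\wh\phi\circ\psi_{\Sigma,t_J}$ is $C^0$-close to a cylinder isometry (this is all that (\ref{eq_M_prime_neck_bilipschitz}) provides), and then propose to interpolate $\wh\phi$ with the isometry $\psi'_\Sigma\circ\psi_\Sigma^{-1}$ by a convex combination in exponential coordinates. But a cutoff combination of two $(1+\delta_\#)$-bilipschitz maps that are merely $C^0$-close is not automatically $(1+O(\delta_\#))$-bilipschitz: the derivative of $(1-\chi)\phi_1+\chi\phi_2$ contains the term $(1-\chi)D\phi_1+\chi D\phi_2$, and $D\phi_1$, $D\phi_2$ can each be close to \emph{different} orthogonal matrices at the same point (e.g.\ rotations by distinct angles), in which case their convex combination strictly contracts. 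The bound (\ref{eq_M_prime_neck_bilipschitz}) gives pointwise control on $D\wh\phi$ but no $C^1$-closeness of $\wh\phi$ to the isometry, since $D\wh\phi$ could oscillate rapidly. The paper handles this by first mollifying the bilipschitz map to upgrade the $C^0$-closeness to $C^1$-closeness before gluing; this is precisely the content of Lemma~\ref{lem_smoothing_bilipschitz}, whose proof is built around the mollification (\ref{eq_phi_prime_beta_def}) and the $C^1$-estimate in assertion~\ref{ass_D.1_cl_d} there. To repair your argument you would need to insert such a smoothing step before the interpolation, which effectively reproduces that lemma.

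A secondary remark: for property \ref{ass_12.24_cl5_f} you propose running an auxiliary genuine Ricci flow from $g^+_{t_J}$ and comparing via a Ricci--DeTurck stability estimate. This is more circuitous than needed, and it is not clear a priori that the comparison gives the sharp $(t-t_J)^{-m/2}$ decay rather than just a fixed $C^1$-bound growing with time. The paper's route is more direct: $g^+_t$ is an exact Ricci flow on $N_1$ and on $N^+\setminus N_2$ (where it is the round cylinder), so Shi's estimates give the interior decay there; on the transition annulus $N_2\setminus N_1$ one already has the time-independent bounds of assertion \ref{ass_12.24_cl5_g} (which hold at all points within $\delta_\#^{-1}r_{\comp}$ of $N^+\setminus N_0$), and these are stronger than what \ref{ass_12.24_cl5_f} requires.
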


\begin{proof}
Using Lemma~\ref{lem_forward_backward_control}, we find that $t^*_1 > t_J$ and that if $t^*_1 < t_{J+1}$, then $ N'_5 ([t_J, t^*_1])$ must contain a $C_{\sd}^{-1} \lambda r_{\comp}$-thin point.  
This proves assertion \ref{ass_12.24_cl5_j}.

For each component $\Sigma\subset \D\N^{J+1}_{t_{J+1}}$,   we may pushforward $r_{\comp}^{-2}g^{S^2\times \R}$ under $\psi_{\Sigma}$ and $\psi'_{\Sigma}$ to obtain spacetime metrics on  $U^*_{\Sigma}$ and $U^{'*}_{\Sigma}$.  Using the product structure on $U^*_{\Sigma}$ and $U^{'*}_{\Sigma}$, these yield evolving metrics $(g^{\Sigma}_t)_{t\in [t_J,t_{J+1}]}$, $(g^{\prime \Sigma}_t)_{t\in [t_J,t_{J+1}]}$ on the initial time-slices $U^*_{\Sigma,t_J}$, $U^{'*}_{\Sigma,t_J}$, and hence also on $N_5\setminus N_0$, $N'_5\setminus N'_0$ by (\ref{eq_N_5_minus_N_0}).

By a standard interpolation argument we can construct smooth families of metrics $(\td{g}_t)_{t\in [t_J,t^*_1]}$, $(\td{g}')_{t\in [t_J,t^*_1]}$ on $N_5$ and  $N'_5$ such that $\td{g}_t = g_t$ and $\td{g}'_t = g'_t$ on $N_1$ and $N'_1$, respectively, and such that for every  component $\Sigma \subset \D\N^{J+1}_{t_{J+1}}$ we have $\td{g}_t=g^\Sigma_t$, $\td{g}'=g^{\prime\Sigma}_t$ on $(N_5\setminus N_2)\cap U^*_{\Sigma,t_J}$ and $(N'_5\setminus N'_2)\cap U^{\prime*}_{\Sigma,t_J}$, respectively.  Moreover, using (\ref{eq_delta_1_close_neck}) and (\ref{eq_delta_close_to_neck_M_prime}), and after possibly adjusting $\delta_\#$, we may assume that for every component $\Sigma\subset \D\N^{J+1}_{t_{J+1}}$,  $m_1,m_2<\delta_\#^{-1}$,  and $t\in [t_J,t^*_1]$, we have
\begin{equation}
\label{eqn_tilde_g_tilde_g_prime_spacetime}
\begin{aligned}
|\nabla^{m_1}_{g^\Sigma_{t_J}}\D^{m_2}_t(\td{g}_t-g^\Sigma_t)|_{g^\Sigma_t}&<\delta_\# r_{\comp}^{-m_1 - 2m_2} \,,\\ 
|\nabla^{m_1}_{g^{'\Sigma}_{t_J}}\D^{m_2}_t(\td{g}^\prime_t-g^{\prime\Sigma}_t)|_{g^{\prime\Sigma}_t} &<\delta_\# r_{\comp}^{-m_1-2m_2}
\end{aligned}
\end{equation}
on $(N_5\setminus N_0)\cap U^*_{\Sigma,t_J}$ and $(N'_5\setminus N'_0)\cap U^{\prime *}_{\Sigma,t_J}$.
So, after possibly adjusting $\delta_\#$ once again, we may assume that on $N_5$ and $N'_5$ we have
\begin{alignat*}{2}
- \delta_\# r_{\comp}^{-2} \td{g}_t &< \partial_{t} \td{g}_t + 2 \Ric_{\td{g}_t} &&< \delta_\# r_{\comp}^{-2} \td{g}_t\,, \\
 - \delta_\# r_{\comp}^{-2} \td{g}'_t &< \partial_{t} \td{g}'_t + 2 \Ric_{\td{g}'_t} &&< \delta_\# r_{\comp}^{-2} \td{g}'_t. 
\end{alignat*}
Since these flows are isometric to round shrinking cylindrical flows near the ends of $N_5$ and $N'_5$, we can attach round shrinking half-cylinders to these flows at each end.
This produces flows $(g^+_t)_{t \in [t_J, t^*_1]}$ and $(g^{\prime +}_t )_{t \in [t_J, t^*_1]}$ on $N^+ \supset N_4$ and $N^{\prime +} \supset N'_4$ satisfying assertions \ref{ass_12.24_cl5_a}--\ref{ass_12.24_cl5_e} of this claim.  
Assertion \ref{ass_12.24_cl5_g} also follows from (\ref{eq_delta_1_close_neck}), (\ref{eq_delta_close_to_neck_M_prime}) and (\ref{eqn_tilde_g_tilde_g_prime_spacetime}), after adjusting $\delta_\#$.
Assertion \ref{ass_12.24_cl5_f} follows from Shi's estimates in $N_1$, $N'_1$, $N^+\setminus N_2$, $N^{\prime+}\setminus N'_2$ and assertion \ref{ass_12.24_cl5_g}.

Since $\wh\phi$ is a $(1+\delta_\#)$-bilipschitz map on $N_5 \setminus N_2 \subset \cup_{\Sigma} U^*_{\Sigma, t_J}$ (see (\ref{eq_h_bound_delta_hash}) and (\ref{eq_N_5_minus_N_0})) and $\td{g}_{t_J}$ and $\td{g}'_{t_J}$ are isometric to subsets of  round cylinders on the interior of $N_5 \setminus N_2$ and $N'_5 \setminus N'_2$, respectively, we can use a smoothing procedure (see also Lemma~\ref{lem_smoothing_bilipschitz}) to construct a diffeomorphism onto its image $\td{\phi} : N_4 \to \td\phi (N_4) \subset N'_5$ such that $\td\phi = \wh\phi$ on $N_2$ and $\td\phi^*g^{\prime +}_{t_J}=\td\phi^* \td{g}'_{t_J} =  \td{g}_{t_J}=g^+_{t_J}$ on $N_4 \setminus N_3$ and such that, after adjusting $\delta_\#$,
\begin{equation}
\label{eqn_g_plus_g_prime_plus}
 \big| \td{\phi}^* g^{\prime +}_{t_J} - g^+_{t_J} \big| < \delta_\# \qquad \text{on} \qquad U^*_{\Sigma, t_J} \cap N_4, 
\end{equation}
for every component $\Sigma \subset \D\N^{J+1}_{t_{J+1}}$.
We can now extend the diffeomorphism $\td\phi : N_4 \to \td\phi (N_4)$ to a diffeomorphism $\phi^+ : N^+ \to N^{\prime +}$ such that it remains an isometry on $N^+ \setminus N_4$.  Adjusting $\delta_\#$ again, the map $\phi^+$ will satisfy assertions \ref{ass_12.24_cl5_h} and \ref{ass_12.24_cl5_i} of this claim, by (\ref{eqn_g_plus_g_prime_plus}) and the fact that $(\phi^+)^*g^{\prime+}_{t_J}=g^+_{t_J}$ on $N^+\setminus N_3$.
\end{proof}

We now construct the map $\phi^{J+1}$ by  solving the harmonic map heat flow equation starting from $(\phi^+)^{-1}$, where $\phi^+ : N^+ \to N^{\prime +}$ is the map constructed in Claim~\ref{cl_12.3_5}.

Using Claim~\ref{cl_12.3_5} and Proposition~\ref{prop_hh_flow_existence} from the appendix, we obtain that if
\[ \eta_{\lin} \leq \ov\eta_{\lin}, \qquad \delta_\# \leq \ov\delta_\# (\eta_{\lin}),  \]
then we can find a time $t^* \in (t_J, t^*_1]$ and a solution $(\chi_t)_{t \in [t_J, t^*]}$, $\chi_t : N^{\prime +} \to N^+$ to the harmonic map heat flow equation with respect to $(g^{\prime +}_t)_{t \in [t_J, t^*_1]}$ and $(g^{ +}_t )_{t \in [t_J, t^*_1]}$ with the following properties: 
\begin{enumerate}
\item \label{li_12.24_1} $\chi_{t_J} = (\phi^+)^{-1}$.
\item \label{li_12.24_2} $\chi_t$ is a diffeomorphism for all $t \in [t_J, t^*]$.
\item \label{li_12.24_3}  $|(\chi_t^{-1})^*g^{\prime +}_t-g^{+}_t|_{g^{+}_t}<2\eta_{\lin} $   for all $t \in [t_J, t^*]$.
\item \label{li_12.24_4}  If 
\begin{equation*} \label{eq_2_eta_lin_bound_d}
|(\chi_{t^*}^{-1})^*g^{\prime +}_{t^*}-g^{+}_{t^*}|_{g^{+}_{t^*}}<1.9\eta_{\lin}  
\end{equation*}
holds on  $N^{\prime+}$, then $t^*=t^*_1$.
\end{enumerate}

\medskip

We first show that 
\begin{equation} \label{eq_N0_in_chi_N_prime_1}
\chi_t^{-1}( N_0 )\subset  N'_1  \qquad \text{for all} \qquad t \in [t_J, t^*].
\end{equation}
After rescaling by $r_{\comp}^{-2}$, assuming 
\[  \eta_{\lin}\leq\ov\eta_{\lin}\,,\qquad \delta_\# \leq \ov\delta_\#, \]
we may apply assertion \ref{ass_12.24_cl5_g} of Claim~\ref{cl_12.3_5} and Proposition~\ref{prop_drift_control}, taking the constants $\de$, $A$ in the hypotheses to be  $\delta = 1$, $A = C^{**}$, to conclude that $\chi_t^{-1} ( \partial N_0 )\subset N^\prime_1$ for all $t \in [t_J, t^*]$.  
Here we have used a continuity argument,  the fact that $\chi_{t_J}^{-1} (\partial N_0) = \phi^+ (\partial N_0) = \wh\phi (\partial N_0) = \partial N'_0$ by assertion \ref{ass_12.24_cl5_h} of Claim~\ref{cl_12.3_5}, and Property~(\ref{li_12.24_3}) above, to retain the hypotheses of Proposition~\ref{prop_drift_control}. 
Therefore, since $\chi_t^{-1}$ is a smoothly varying diffeomorphism, (\ref{eq_N0_in_chi_N_prime_1}) follows.

Now set  $\ov\phi_t := \chi^{-1}_t |_{N_0}$ for $t\in [t_J,t^*]$.  Since $g^+_t = g_t$ on $N_0$ and $g^{\prime+}_t=g^\prime_t$ on $N^\prime_1$ by assertion \ref{ass_12.24_cl5_b} of Claim~\ref{cl_12.3_5}, 
we can view $(\ov\phi_t)_{t \in [t_J, t^*]}$ as a smooth, time-preserving diffeomorphism onto its image of the form 
$$
\phi^{J+1} : \N^{J+1}_{[t_J, t^*]} \longrightarrow N'_1([t_J, t^*]) \subset \M'\, , 
$$
whose inverse evolves by harmonic map heat flow equation with respect to $g'$ and $g$.

Consider the perturbation $h^{J+1} = (\phi^{J+1})^* g' - g$.
Assertion~\ref{ass_12.24_a} of this proposition follows from Property~(\ref{li_12.24_3}) above.

If  
\[ \eta_{\lin} \leq \ov\eta_{\lin}, \qquad
\delta_\#\leq\ov\delta_\#(T, E,H,\eta_{\lin})\,,\qquad  \]
then for every $x\in N^+$ with $d_{g^+_{t_J}}(x,N^+\setminus N_0)<\delta_\#^{-1} r_{\comp}$,  after adjusting $\delta_\#$, and by assertions \ref{ass_12.24_cl5_e}, \ref{ass_12.24_cl5_g}, \ref{ass_12.24_cl5_i} of Claim~\ref{cl_12.3_5} and Property~(\ref{li_12.24_3}) above, we may apply Proposition~\ref{prop_distortion_stays_small} to conclude that 
\begin{equation}
\label{eqn_h_bound_for_assertion_b}
|(\chi_{t}^{-1})^*g^{\prime+}_{t}-g^+_{t}|_{g^+_{t}}(x)<e^{-HT}10^{-E -1}r_{\comp}^E\rho_1^{-E}(x)\eta_{\lin}<\eta_{\lin}
\end{equation}
for all $t\in [t_J,t^*]$.
If 
\[ \delta_\# \leq \ov\delta_\#(F)\,,\]
then (\ref{eq_delta_1_close_neck}) implies that for every $t\in [t_J,t^*]$ and every $x\in \N^{J+1}_t$ such that $d_t(x,\D\N^{J+1}_t)<Fr_{\comp}$ we have $d_{g^+_{t_J}}(x(t_J),N^+\setminus N_0)<\delta_\#^{-1} r_{\comp}$.  
Hence 
\[
|h^{J+1}(x)|=|(\chi_{t}^{-1})^*g^{\prime+}_{t}-g^+_{t}|_{g^+_{t}}(x (t_J)) 
<e^{-HT}10^{-E}r_{\comp}^E\rho_1^{-E-1}(x)\eta_{\lin}
\]
by (\ref{eqn_h_bound_for_assertion_b}).   This yields
assertion \ref{ass_12.24_b} of this proposition.

Finally, we verify assertions \ref{ass_12.24_c} and \ref{ass_12.24_d} of this proposition.  
We first apply Lemma~\ref{lem_scale_distortion} and a priori assumptions \ref{item_lambda_thick_2}, \ref{item_backward_time_slice_3}(a), (c), \ref{item_eta_less_than_eta_lin_13}, assuming
\[
 \eta_{\lin} \leq \ov\eta_{\lin}, \qquad
\delta_{\nn} \leq \ov\delta_{\nn}, \qquad
\lambda \leq \ov\lambda, \qquad
\Lambda \geq 2, \qquad
\eps_{\can} \leq \ov\eps_{\can}(\lambda) , \qquad 
r_{\comp} \leq \ov{r}_{\comp}, 
\]
to find that for all $t \in [t_J, t^*]$ the following holds:
If the $\eps_{\can}$-canonical neighborhood assumption holds at scales $(0,1)$ on $\phi^{J+1}_t (\N^{J+1}_t)$, then $\phi^{J+1} (\N^{J+1}_t)$ is $C_{\sd} \lambda r_{\comp}$-thick.
By assertion \ref{ass_12.3_e} of Proposition~\ref{Prop_performing_cap_extensions}, this condition holds for $t = t_J$.
Therefore, assuming 
\[ \eps_{\can} < C_{\sd}^{-1} \lambda, \]
it holds for all $t \in [t_J, t^*]$ by continuity.
This shows assertion \ref{ass_12.24_d} of this proposition and the fact that $\phi^{J+1}_{t^*} (\N^{J+1}_{t^*})$ is $C_{\sd} \lambda r_{\comp}$-thick.

To see assertion \ref{ass_12.24_c} of this proposition, assume that $|h^{J+1}| \leq \eta_{\lin}$ on $\N^{J+1}_{t^*}$.
Then by the definition of $\phi^{J+1}$ we have the bound $|(\chi_{t^*}^{-1})^*g^{\prime+}_{t^*}-g^+_{t^*}|_{g^+_{t^*}}\leq\eta_{\lin}$ on $N_0$.  By (\ref{eqn_h_bound_for_assertion_b}) it follows that $|(\chi_{t^*}^{-1})^*g^{\prime+}_{t^*}-g^+_{t^*}|_{g^+_{t^*}}\leq\eta_{\lin}$  holds everywhere on $N^+$.
So Property~(\ref{li_12.24_4}) above implies $t^*=t^*_1$.
Combining this with the conclusion from the previous paragraph and applying assertion \ref{ass_12.24_cl5_j} of Claim~\ref{cl_12.3_5}, yields $t^* = t^*_1 = t_{J+1}$, as desired.
\end{proof}

\bigskip

\subsection{Proof of Proposition~\ref{Prop_extend_comparison_by_one}, concluded} \label{subsec_verification_of_APAs}
In this subsection we use the results of the previous subsections to prove our main Proposition~\ref{Prop_extend_comparison_by_one}.
More specifically, we will analyze the map $\phi^{J+1} : \N^{J+1}_{[t_J, t^*]} \to \M'$ that was constructed in Proposition~\ref{Prop_extend_phi_to_t_star}.
We will verify that this map satisfies a priori assumptions \ref{item_time_step_r_comp_1}--\ref{item_q_less_than_nu_q_bar_12} and show that $t^* = t_{J+1}$.
Therefore $\phi^{J+1}$ can be used to extend the comparison $( \phi, \{ \phi^j \}_{j=1}^J, \Cut )$ to the time-interval $[0, t_{J+1}]$.
This will finish the proof of Proposition~\ref{Prop_extend_comparison_by_one}.

Our proof can be roughly summarized as follows:
By the assumptions of Proposition~\ref{Prop_extend_comparison_by_one}  we may assume that a priori assumptions \ref{item_time_step_r_comp_1}--\ref{item_geometry_cap_extension_5} already hold until time $t_{J+1}$ and a priori assumptions \ref{item_eta_less_than_eta_lin_13} and \ref{item_q_less_than_q_bar_6}--\ref{item_q_less_than_nu_q_bar_12} already hold until time $t_J$. 
We will refer to this assumption as the ``induction hypothesis'' henceforth.
Using the induction hypothesis and the conclusions of Propositions~\ref{Prop_performing_cap_extensions}  and \ref{Prop_extend_phi_to_t_star}, we will then establish that a priori assumptions \ref{item_time_step_r_comp_1}--\ref{item_q_less_than_nu_q_bar_12} hold up to time $t^*$.
The only non-trivial assumptions in this step will be a priori assumptions \ref{item_eta_less_than_eta_lin_13}--\ref{item_q_star_less_than_q_star_bar}.
We will prove these assumptions using another continuity argument:
We will assume that relaxed versions of a priori assumptions \ref{item_q_less_than_q_bar_6}--\ref{item_q_star_less_than_q_star_bar} hold up to some almost maximal time $t^{**} \leq t^*$ and, based on these extra assumptions, we prove a priori assumptions \ref{item_eta_less_than_eta_lin_13}--\ref{item_q_star_less_than_q_star_bar} up to time $t^{**}$.
By a straightforward openness argument it therefore follows that $t^{**} = t^*$ and therefore that a priori assumptions \ref{item_eta_less_than_eta_lin_13}--\ref{item_q_star_less_than_q_star_bar} hold up to time $t^*$.
Eventually, the fact that a priori assumption \ref{item_eta_less_than_eta_lin_13} holds up to time $t^*$ and assertion \ref{ass_12.24_c} of Proposition~\ref{Prop_extend_phi_to_t_star} imply that we indeed have $t^* = t_{J+1}$.
This will finish our proof.

We remark that throughout this entire subsection, we will introduce global terminology and assumptions on the parameters, which will be understood to remain valid for the remainder of the subsection.
In particular, conditions on the parameters that can be found in the following lemmas will be assumed to hold for the remainder of the subsection so that the conclusions of these lemmas can be applied immediately.

This subsection is structured as follows:
We first set up our argument by recalling the important assumptions from Proposition~\ref{Prop_extend_comparison_by_one}.
In Lemma~\ref{lem_summary_APA_10_12}, we will then summarize and put into context the results of the constructions from Propositions~\ref{Prop_performing_cap_extensions} and \ref{Prop_extend_phi_to_t_star}.
Next, we introduce the relaxed versions of a priori assumptions \ref{item_q_less_than_q_bar_6}--\ref{item_q_star_less_than_q_star_bar} in equations (\ref{eq_relaxed_APA_6})--(\ref{eq_relaxed_APA_8}), which hold up to some time $t^{**} \leq t^*$.
In Lemma~\ref{lem_setup_of_t_star_star}, we show that $t^{**} > t_J$ and that if the strong versions of a priori assumptions \ref{item_q_less_than_q_bar_6}--\ref{item_q_star_less_than_q_star_bar} hold up to time $t^{**}$, then we must in fact have $t^{**} = t^*$.   Based on these relaxed versions of a priori assumptions \ref{item_q_less_than_q_bar_6}--\ref{item_q_star_less_than_q_star_bar}, we will establish a priori assumptions \ref{item_eta_less_than_eta_lin_13}--\ref{item_q_star_less_than_q_star_bar} in Lemmas~\ref{lem_verification_of_APA_6}, \ref{lem_verification_APA_7}, \ref{lem_verification_of_APA_8} and \ref{lem_verification_APA_9} --- one lemma per a priori assumption and in this order.
Lastly, we wrap up our discussion, argue that $t^{**} = t^* = t_{J+1}$ and verify the assertions of Proposition~\ref{Prop_extend_comparison_by_one}.

Further explanations of the arguments may be found after the statements of the Lemmas below.

In what follows, we will be considering the setup as described in assumptions \ref{con_12.1_i}--\ref{con_12.1_vi} of Proposition~\ref{Prop_extend_comparison_by_one}.
So, among other things, we assume that $\M, \M'$ are $(\eps_{\can} r_{\comp}, T)$-complete and satisfy the $\eps_{\can}$-canonical neighborhood assumption at scales $(\eps_{\can} r_{\comp}, 1)$.
We consider a comparison domain $(\N, \{ \N^j \}_{j=1}^{J+1}, \{ t_j \}_{j=0}^{J+1})$ over the time-interval $[0, t_{J+1}]$, for $J \geq 0$, and a comparison $(\Cut, \phi, \{ \phi^j \}_{j=1}^J )$ from $\M$ to $\M'$ defined on this comparison domain over the time-interval $[0,t_J]$.
If $J = 0$, then this comparison is trivial, as explained after Definition~\ref{def_comparison}. 
We also assume $(\N, \{ \N^j \}_{j=1}^{J+1}, \{ t_j \}_{j=0}^{J+1})$ and $(\Cut, \phi, \{ \phi^j \}_{j=1}^J )$ satisfy a priori assumptions \ref{item_time_step_r_comp_1}--\ref{item_eta_less_than_eta_lin_13} for the parameters $(\eta_{\lin}, \delta_{\nn}, \lambda, D_{\CAP}, \Lambda, \delta_{\bb}, \eps_{\can}, r_{\comp})$  and a priori assumptions \ref{item_q_less_than_q_bar_6}--\ref{item_apa_13} for the parameters $(T, \lb E, \lb H, \lb \eta_{\lin}, \lb \nu, \lb \lambda, \lb \eta_{\cut}, \lb D_{\cut}, \lb W, \lb A, \lb r_{\comp})$.
Moreover, we assume in the following that
\begin{equation} \label{eq_t_J_1_less_T_verification}
t_{J+1} \leq T.
\end{equation}

If $J \geq 1$, then assumptions \ref{con_12.1_i}--\ref{con_12.1_v} of Proposition~\ref{Prop_extend_comparison_by_one} allow us to apply Proposition~\ref{Prop_performing_cap_extensions}.
Doing so yields the map $\wh\phi : \N_{t_J-} \cup \N_{t_J+} \to \M'_{t_J}$ and the set of cuts $\Cut^J$ with the properties as explained in assertions \ref{ass_12.3_a}--\ref{ass_12.3_e} of this proposition.
If $J = 0$, then we skip this step.

Next, we fix an auxiliary constant $F < \infty$, whose value will be determined in the course of this subsection depending only on $E$.
We can then apply Proposition~\ref{Prop_extend_phi_to_t_star} for $\wh\phi :  X := \N_{t_J-} \cup \N_{t_J+} \to \M'_{t_J}$  from Proposition~\ref{Prop_performing_cap_extensions} (if $J \geq 1$) or  $\wh\phi = \zeta : X \to \M'_0$ from assumption \ref{con_12.1_vi} of Proposition~\ref{Prop_extend_comparison_by_one} (if $J = 0$).
Then Proposition~\ref{Prop_extend_phi_to_t_star} yields a time $t^* \in (t_J, t_{J+1}]$ and a map $\phi^{J+1} : \N^{J+1}_{[t_J, t^*]} \to \M'$ satisfying assertions \ref{ass_12.24_a}--\ref{ass_12.24_d} of this proposition.
Note that Propositions~\ref{Prop_performing_cap_extensions} and \ref{Prop_extend_phi_to_t_star} are only applicable if our parameters satisfy the bounds (\ref{eq_parameters_performing_cap_extensions}) and (\ref{eq_parameters_extend_phi_to_t_star}).
These bounds are implied by bounds of the form (\ref{eq_parameters_extend_comparison_by_one})  and
\begin{gather*}
\nu \leq \ov\nu (T, E, F, H, \eta_{\lin}), 
 \qquad
 \delta_{\nn} \leq \ov\delta_{\nn} (T, E, F, H,  \eta_{\lin}),  
 \\
\delta_{\bb} \leq \ov\delta_{\bb} (T, E, F, H, \eta_{\lin},  \lambda, D_{\cut}, A , \Lambda), \qquad 
\eps_{\can} \leq \ov\eps_{\can} (T, E, F, H, \eta_{\lin},  \lambda ,D_{\cut}, A, \Lambda).
\end{gather*}
(Note that assuming $F = F(E)$, these bounds also follow from bounds of the form (\ref{eq_parameters_extend_comparison_by_one}).)

In the following lemma we summarize the important properties of $\Cut^J$ and $\phi^{J+1}$ and we show how these objects can be used to extend the comparison $(\Cut, \phi, \{ \phi^j \}_{j=1}^J)$ to a comparison that is defined over the time-interval $[0, t^*]$.

\begin{lemma} \label{lem_summary_APA_10_12}
There is a unique map $\ov\phi : \N_{[0, t^*]} \setminus \cup_{\DD \in \Cut \cup \Cut^J} \DD \to \M'$ such that $(\Cut \cup \Cut^{J}, \lb \ov\phi, \lb \{ \phi^j \}_{j=0}^{J+1})$ is a comparison defined on the comparison domain $(\N_{[0 ,t^*]}, \lb  \{ \N^j, \N^{J+1}_{[t_J, t^*]} \}_{j=1}^J, \lb \{ t_j, t^* \}_{j=0}^J )$.
This comparison is an extension of the comparison $(\Cut , \phi, \{ \phi^j \}_{j=1}^{J})$ in the sense that
\[ \ov\phi = \phi \qquad \text{on} \qquad \cup_{j=1}^J \N^j  \setminus \cup_{\DD \in \Cut  \cup \Cut^J} \DD. \]

Furthermore, this extended comparison and the comparison domain $(\N_{[0 ,t^*]}, \lb  \{ \N^j, \lb \N^{J+1}_{[t_J, t^*]} \}_{j=1}^J, \lb \{ t_j, t^* \}_{j=0}^J )$ satisfy the following properties: 
\begin{enumerate}[label=(\alph*)]
\item \label{ass_12.47_a} They satisfy  a priori assumptions  \ref{item_h_derivative_bounds_9}--\ref{item_apa_13} for the parameters $( T, \lb E, \lb H, \lb \eta_{\lin}, \lb \nu, \lb \lambda, \lb \eta_{\cut},  \lb  D_{\cut}, \lb W, \lb A, \lb r_{\comp})$.
\item \label{ass_12.47_b} Let $(h, \{ h^j \}_{j=1}^{J+1})$ be the associated Ricci-DeTurck perturbation (note that $h^{J+1}$ is only defined over the time-interval $[t_J, t^*]$).
Then $|h| \leq \eta_{\lin}$ on $\cup_{j=1}^J \N^j  \setminus \cup_{\DD \in \Cut \cup \Cut^J} \DD$ and $|h^{J+1}| \leq  10 \eta_{\lin}$ on $\N_{[t_J,t^*]}^{J+1}$.
Moreover, $\phi^{J+1} (\N^{J+1}_{[t_J, t^*]})$ is $\eps_{\can} r_{\comp}$-thick.
\item \label{ass_12.47_c} For any $t \in [t_J, t^*]$ and $x \in \N^{J+1}_{t}$ with $d_t (x, \partial \N^{J+1}_t) < F r_{\comp}$ we have
\[ Q_+(x) = e^{H(T-t)} \rho_1^E (x) |h^{J+1}(x)| \leq \ov{Q} = 10^{-E-1} \eta_{\lin} r_{\comp}^E. \]
\item \label{ass_12.47_d} If even $|h| \leq  \eta_{\lin}$ on $\N^{J+1}_{t^*}$, then $t^* = t_{J+1}$.
\end{enumerate}
\end{lemma}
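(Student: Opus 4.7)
The plan is to assemble $\ov\phi$ from the three maps already at hand, and then reduce every remaining claim to an assertion of Proposition~\ref{Prop_performing_cap_extensions}, Proposition~\ref{Prop_extend_phi_to_t_star}, or the inductive hypothesis. Concretely, I would set $\ov\phi$ equal to $\phi$ on $\cup_{j=1}^{J}\N^j$ minus the cuts, equal to $\wh\phi$ on $(\N_{t_J-}\cup\N_{t_J+})\setminus \cup_{\DD\in \Cut^J}\DD$, and equal to $\phi^{J+1}$ on $\N^{J+1}_{(t_J,t^*]}$. These three pieces agree on their overlaps: assertion~\ref{ass_12.3_b} of Proposition~\ref{Prop_performing_cap_extensions} gives $\wh\phi = \phi_{t_J-}$ on $\N_{t_J-}\setminus \cup_{\DD\in\Cut^J}\DD$, and the identity $\phi^{J+1}_{t_J}=\wh\phi|_{\N_{t_J+}}$ is built into Proposition~\ref{Prop_extend_phi_to_t_star}. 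Uniqueness is immediate because Definition~\ref{def_comparison}(\ref{pr_7.2_6}) forces $\ov\phi = \phi^j$ on each open time slab.

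Next I would run through properties~(\ref{pr_7.2_1})--(\ref{pr_7.2_7}) of Definition~\ref{def_comparison}. Most are inherited verbatim from the inductive hypothesis and Proposition~\ref{Prop_extend_phi_to_t_star}; the nontrivial observations are that the cuts $\Cut^J$ lie in $\Int\N_{t_J+}$ and each contain exactly one extension cap at time $t_J$ by assertion~\ref{ass_12.3_a} of Proposition~\ref{Prop_performing_cap_extensions}; that cuts in $\Cut$ lie in $\M_{[0,t_{J-1}]}$ and are therefore automatically disjoint from $\Cut^J\subset\M_{t_J}$; and that the inverse of $\phi^{J+1}$ evolves by harmonic map heat flow, which is part of the conclusion of Proposition~\ref{Prop_extend_phi_to_t_star}. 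The smooth extendability of $\phi^{J+1}$ to a neighborhood of $\N^{J+1}_{[t_J,t^*]}$ required by property~(\ref{pr_7.2_4}) is supplied by the fact that Proposition~\ref{Prop_extend_phi_to_t_star} actually produces the map on the ambient grafted domain.

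For assertion (a): a priori assumption~\ref{item_h_derivative_bounds_9} on cuts in $\Cut^J$ is the second estimate of assertion~\ref{ass_12.3_d} of Proposition~\ref{Prop_performing_cap_extensions}, and on older cuts it is the inductive hypothesis; a priori assumption~\ref{item_cut_diameter_less_than_d_r_comp_11} follows from assertion~\ref{ass_12.3_c} of Proposition~\ref{Prop_performing_cap_extensions} together with the inductive hypothesis; a priori assumption~\ref{item_q_less_than_nu_q_bar_12} is a statement only about the initial time-slice and is inherited from the inductive hypothesis (or taken from hypothesis~\ref{con_12.1_vi} of Proposition~\ref{Prop_extend_comparison_by_one} when $J=0$); and a priori assumption~\ref{item_apa_13} reduces to $t^*\leq t_{J+1}\leq T$ by~(\ref{eq_t_J_1_less_T_verification}). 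Assertion (b) combines a priori assumption~\ref{item_eta_less_than_eta_lin_13} from the inductive hypothesis on the older part with assertions~\ref{ass_12.24_a} and~\ref{ass_12.24_d} of Proposition~\ref{Prop_extend_phi_to_t_star}; assertions (c) and (d) are literal restatements of assertions~\ref{ass_12.24_b} and~\ref{ass_12.24_c} of that proposition. No step poses a substantive obstacle beyond bookkeeping; the only mildly delicate point is matching the three pieces of $\ov\phi$ at time $t_J$ outside the cuts, and this compatibility was precisely engineered by the constructions of $\wh\phi$ in Proposition~\ref{Prop_performing_cap_extensions} and of $\phi^{J+1}$ in Proposition~\ref{Prop_extend_phi_to_t_star}.
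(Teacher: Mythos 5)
Your proposal is correct and takes essentially the same route as the paper, which dispatches this lemma in a few lines: the construction and verification of the comparison axioms are declared ``straightforward,'' a priori assumptions~\ref{item_h_derivative_bounds_9}--\ref{item_apa_13} are attributed to the induction hypothesis together with assertions~\ref{ass_12.3_c} and~\ref{ass_12.3_d} of Proposition~\ref{Prop_performing_cap_extensions}, and assertions~\ref{ass_12.47_b}--\ref{ass_12.47_d} are identified as restatements of assertions~\ref{ass_12.24_a}--\ref{ass_12.24_d} of Proposition~\ref{Prop_extend_phi_to_t_star}. Your write-up simply spells out the bookkeeping (piecing together $\ov\phi$ from $\phi$, $\wh\phi$, $\phi^{J+1}$ and checking they agree at $t_J$, the disjointness of $\Cut$ and $\Cut^J$ because they live at different times, the $J=0$ case via hypothesis~\ref{con_12.1_vi}) that the paper leaves implicit.
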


\begin{proof}
The construction of the map $\ov\phi$ and the verification of the properties of Definitions~\ref{def_comparison_domain} and \ref{def_comparison} are straightforward.
A priori assumptions \ref{item_h_derivative_bounds_9} and \ref{item_cut_diameter_less_than_d_r_comp_11} follow directly from the corresponding a priori assumptions of the induction hypothesis and Proposition~\ref{Prop_performing_cap_extensions}\ref{ass_12.3_d} and \ref{ass_12.3_c}.
A priori assumption \ref{item_q_less_than_nu_q_bar_12} follows directly from a priori assumption \ref{item_q_less_than_nu_q_bar_12} of the induction hypothesis (if $J \geq 1$) or from assumption \ref{con_12.1_vi} in Proposition~\ref{Prop_extend_comparison_by_one} (if $J = 0$).
Assertions \ref{ass_12.47_b}--\ref{ass_12.47_d} are just restatements of assertions \ref{ass_12.24_a}--\ref{ass_12.24_d} of Proposition~\ref{Prop_extend_phi_to_t_star} combined with the induction hypthesis.
\end{proof}

Note that by the assumptions of Proposition~\ref{Prop_extend_comparison_by_one}, the comparison domain $(\N, \lb \{ \N^j \}_{j=1}^{J+1}, \lb \{ t_j \}_{j=0}^{J+1})$ satisfies a priori assumptions \ref{item_time_step_r_comp_1}--\ref{item_geometry_cap_extension_5} for the parameters $(\eta_{\lin}, \lb \delta_{\nn}, \lb \lambda, \lb D_{\CAP}, \lb \Lambda, \lb  \delta_{\bb}, \lb \eps_{\can}, \lb r_{\comp})$.  For the remainder of this section, references to \ref{item_time_step_r_comp_1}--\ref{item_geometry_cap_extension_5} will implicitly refer to this larger comparison domain, rather than the comparison domain defined on the shorter interval $[0,t^*]$.

It remains to verify a priori assumptions \ref{item_eta_less_than_eta_lin_13}--\ref{item_q_star_less_than_q_star_bar}.
Once this has been accomplished, assertion \ref{ass_12.47_d} of Lemma~\ref{lem_summary_APA_10_12} will immediately imply that $t^* = t_{J+1}$.
So we have reduced our discussion to an analysis of the associated Ricci-DeTurck perturbation and its derived quantities $Q$ and $Q^*$.

We will verify a priori assumptions \ref{item_eta_less_than_eta_lin_13}--\ref{item_q_star_less_than_q_star_bar} via another continuity argument, which we will set up now.
Consider the comparison $(\Cut \cup \Cut^J, \ov\phi, \{ \phi^j \}^{J+1}_{j=0})$ from Lemma~\ref{lem_summary_APA_10_12} and let $(h, \{ h^j \}_{j=1}^{J+1})$ be the associated Ricci-DeTurck perturbation, as mentioned in assertion \ref{ass_12.47_b} of this lemma.
As in Definition~\ref{def_a_priori_assumptions_7_13} we define the quantities
\[ Q = e^{H (T- \mathfrak{t})} \rho_1^E |h|, \qquad Q^* = e^{H(T-\mathfrak{t})} \rho_1^3 |h| \]
and the extensions $Q_{\pm}$ and $Q^*_{\pm}$ to $\N_{t_j \pm}$.
Moreover, again as in Definition~\ref{def_a_priori_assumptions_7_13}, we set 
\[ \ov{Q} := 10^{-E-1} \eta_{\lin} r_{\comp}^{E}, \qquad \ov{Q}^* := 10^{-1} \eta_{\lin} (\lambda r_{\comp})^3. \]
Choose a time $t^{**} \in [t_J, t^*]$ such that the following conditions hold for all $x \in \N^{J+1}_{[t_J, t^{**}]} \setminus \cup_{\DD\in \Cut^J}\DD$:
\begin{alignat}{2}
Q (x) &\leq 10 \overline{Q} &\qquad &\text{whenever $P(x, 10A \rho_1 (x)) \cap \DD = \emptyset$} \label{eq_relaxed_APA_6} \\ &&&\text{for all $\DD \in \Cut \cup \Cut^{J}$} \notag \\
Q(x) &\leq 10W \overline{Q} &\qquad & 
 \label{eq_relaxed_APA_7} \\
Q^* (x) &\leq 10 \overline{Q}^* &\qquad &\text{whenever $B(x, 10 A \rho_1 (x)) \subset \N_{\t(x)-}$} \label{eq_relaxed_APA_8} 
\end{alignat}

Note that these conditions are relaxed versions of a priori assumptions \ref{item_q_less_than_q_bar_6}--\ref{item_q_star_less_than_q_star_bar}.
The main objective of this subsection will be to show --- under certain bounds on our parameters --- that assumptions (\ref{eq_relaxed_APA_6})--(\ref{eq_relaxed_APA_8}) imply a priori assumptions \ref{item_eta_less_than_eta_lin_13} and \ref{item_q_less_than_q_bar_6}--\ref{item_q_star_less_than_q_star_bar} up to time $t^{**}$.
The following lemma will help us conclude that it is possible to choose $t^{**} = t^*$ if a priori assumptions \ref{item_q_less_than_q_bar_6}--\ref{item_q_star_less_than_q_star_bar} have been established.

\begin{lemma} \label{lem_setup_of_t_star_star}
If
\begin{gather*}
E \geq \underline{E}, \qquad
 F \geq \underline{F},  \qquad
 \eta_{\lin} \leq \ov\eta_{\lin}, \qquad
\delta_{\nn} \leq \ov\delta_{\nn}, \qquad
\lambda \leq \ov\lambda, \qquad
\eta_{\cut} \leq \ov\eta_{\cut},  \\
D_{\cut} \geq \underline{D}_{\cut} (\lambda), \qquad 
W \geq \underline{W}(E, \lambda, D_{\cut}), \qquad 
A \geq \underline{A}, \qquad 
\Lambda \geq \underline\Lambda,  \\
\delta_{\bb} \leq \ov\delta_{\bb} (\lambda, D_{\cut}, A, \Lambda ), \qquad 
\eps_{\can} \leq \ov\eps_{\can} (\lambda, D_{\cut}, A, \Lambda ), \qquad
r_{\comp} \leq \ov{r}_{\comp} (\lambda), \qquad
\end{gather*}
then we can choose $t^{**} > t_J$.

Furthermore, there is a constant $\tau = \tau (T, E, H, \eta_{\lin}, \lambda, A, r_{\comp}) > 0$ with the following property.
If a priori assumptions \ref{item_q_less_than_q_bar_6}--\ref{item_q_star_less_than_q_star_bar} hold up to time $t^{**}$, meaning that for all $x \in \N^{J+1}_{[t_J, t^{**}]}\setminus \cup_{\DD\in \Cut^J}\DD$
\begin{alignat}{2}
Q(x) &\leq \overline{Q} &\qquad &\text{whenever $P(x,A \rho_1 (x)) \cap \DD = \emptyset$} \label{eq_relaxed_APA_6_modified} \\ &&&\text{for all $\DD \in \Cut \cup \Cut^{J}$} \notag \\
Q(x) &\leq W \overline{Q} &\qquad & \label{eq_relaxed_APA_7_modified} \\
Q^*(x) &\leq  \overline{Q}^* &\qquad &\text{whenever $B(x,  A \rho_1 (x)) \subset \N_{\mathfrak{t}(x)-}$} \label{eq_relaxed_APA_8_modified} 
\end{alignat}
then (\ref{eq_relaxed_APA_6})--(\ref{eq_relaxed_APA_8}) even hold for all $x \in \N^{J+1}_{[t_J, \min \{ t^{**} + \tau, t^* \}]}\setminus \cup_{\DD\in \Cut^J}\DD$.
\end{lemma}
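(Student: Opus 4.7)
The lemma has two parts. For the first, I would verify that the relaxed conditions (\ref{eq_relaxed_APA_6})--(\ref{eq_relaxed_APA_8}) already hold at the instant $t = t_J$ as a consequence of the induction hypothesis, and then extend them slightly forward in time by continuity. For the second, I would combine (i) a Lipschitz estimate for $Q$ and $Q^*$ in time, coming from interior parabolic estimates on the product domain $\N^{J+1}$, with (ii) a distance-distortion estimate that allows the relaxed hypotheses at $x$ to be transferred into the strong hypotheses at $y := x(t^{**})$.

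\textbf{Part 1 ($t^{**} > t_J$).} At $t = t_J$, the set $\N^{J+1}_{t_J} \setminus \cup_{\DD \in \Cut^J}\DD$ coincides with $\N_{t_J-} \setminus \cup_{\DD \in \Cut^J}\DD$, since every extension cap lies inside some $\DD \in \Cut^J$ by Proposition~\ref{Prop_performing_cap_extensions}\ref{ass_12.3_a}. On this set, Proposition~\ref{Prop_performing_cap_extensions}\ref{ass_12.3_b} (or assumption \ref{con_12.1_vi} of Proposition~\ref{Prop_extend_comparison_by_one} when $J=0$) gives $\ov\phi = \phi_{t_J-}$, so the perturbation coincides with $h_{t_J-}$. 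The hypotheses of (\ref{eq_relaxed_APA_6}) and (\ref{eq_relaxed_APA_8}) involve parabolic neighborhoods / balls of the larger radius $10A\rho_1(x)$, so whenever they hold the hypotheses of the induction assumptions \ref{item_q_less_than_q_bar_6} and \ref{item_q_star_less_than_q_star_bar} (with radius $A\rho_1(x)$) hold a fortiori for the original comparison; this yields $Q(x) \leq \ov Q < 10\,\ov Q$ and $Q^*(x) \leq \ov Q^* < 10\,\ov Q^*$. The unconditional bound (\ref{eq_relaxed_APA_7}) follows from \ref{item_q_less_than_w_q_bar_7}. Since $h$, $\rho_1$ and $\ov\phi$ depend smoothly on time on the product domain $\N^{J+1}_{[t_J,t^*]}$ (on which the flow is classical), continuity propagates these bounds to a short interval $[t_J, t_J + \tau_0)$, establishing $t^{**} > t_J$.

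\textbf{Part 2 (propagation).} Assume the strong versions (\ref{eq_relaxed_APA_6_modified})--(\ref{eq_relaxed_APA_8_modified}) hold on $[t_J, t^{**}]$; WLOG $t^{**} > t_J$ (otherwise the argument reduces to Part 1). For $x$ with $\t(x) \leq t^{**}$ the relaxed bound is immediate, as the radius-$10A$ hypothesis implies the radius-$A$ hypothesis. For $x \in \N^{J+1}_t$ with $t \in (t^{**}, t^{**}+\tau]$, set $y := x(t^{**}) \in \N^{J+1}_{t^{**}} = \N_{t^{**}-}$, noting $y \not\in \cup_{\DD\in\Cut^J}\DD$ since these cuts lie at time $t_J < t^{**}$. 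By a priori assumption \ref{item_lambda_thick_2}, $\rho_1 \geq \lambda r_{\comp}$ on $\N^{J+1}$, so Lemma~\ref{lem_bounded_curv_bounded_dist} bounds the curvature and controls distance distortion. For $\tau$ small depending on $A, \lambda, r_{\comp}$, standard flow and distortion estimates then yield the inclusions
\begin{equation*}
P(y, A\rho_1(y)) \subset P(x, 10A\rho_1(x)), \qquad B(y, A\rho_1(y)) \subset \N_{t^{**}-},
\end{equation*}
the second using that $\N_{t^{**}-} = \N^{J+1}_t(t^{**})$ and that the backward $\D_\t$-flow of $B(x, 10A\rho_1(x)) \subset \N_{t-} = \N^{J+1}_t$ contains $B(y, A\rho_1(y))$. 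Hence the hypotheses of (\ref{eq_relaxed_APA_6}) and (\ref{eq_relaxed_APA_8}) at $x$ imply those of (\ref{eq_relaxed_APA_6_modified}) and (\ref{eq_relaxed_APA_8_modified}) at $y$, giving $Q(y) \leq \ov Q$ and $Q^*(y) \leq \ov Q^*$. The $C^0$-bound $|h| \leq 10\,\eta_{\lin}$ from Lemma~\ref{lem_summary_APA_10_12}\ref{ass_12.47_b}, combined with local parabolic gradient estimates (Lemma~\ref{lem_loc_gradient_estimate}) for the Ricci--DeTurck flow and with Lemma~\ref{lem_properties_kappa_solutions_cna}\ref{item_dt_r_bound} for $\rho_1$, furnishes a time-Lipschitz bound for $Q$ and $Q^*$ with constant $L$ depending only on $T, E, H, \eta_{\lin}, \lambda, A, r_{\comp}$. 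Choosing $\tau$ so that $L\tau \leq 9\,\ov Q$ (and similarly for $\ov Q^*$ and $W\ov Q$) yields $Q(x) \leq \ov Q + L\tau \leq 10\,\ov Q$, $Q^*(x) \leq 10\,\ov Q^*$, and $Q(x) \leq 10W\ov Q$, as required.

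\textbf{Main obstacle.} The delicate point is obtaining interior derivative bounds on $h$ that are uniform \emph{across} the lateral boundary of $\N^{J+1}$, so that the Lipschitz bound on $Q$ is valid up to that boundary rather than only in the interior. Here one invokes the grafting construction from the proof of Proposition~\ref{Prop_extend_phi_to_t_star}: the map $\phi^{J+1}$ is realized as the restriction of a harmonic map heat flow between the boundaryless spacetimes $N^+, N^{\prime+}$ obtained by attaching round shrinking half-cylinders, on which Shi-type and interior parabolic estimates apply without any boundary loss. This supplies the uniform derivative control on $h^{J+1}$ throughout $\N^{J+1}$ that makes the Lipschitz argument valid on all of $\N^{J+1} \setminus \cup_{\DD \in \Cut^J}\DD$.
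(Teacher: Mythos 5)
Your overall strategy — establish the relaxed bounds at $t_J$, then propagate them forward by a short time — is in the same spirit as the paper's, but it has two substantive gaps.

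\textbf{(1) The bounds on $Q_+$, $Q^*_+$ on the cuts at time $t_J$ are missing.} Your Part 1 reduces the relaxed bounds on $\N_{t_J+}\setminus\cup_{\DD\in\Cut^J}\DD$ to the induction hypothesis and then invokes "continuity." But the continuity step is not as soft as you suggest: to propagate the bounds into $(t_J,t_J+\tau_0)$ by any of the tools available (maximum principle or parabolic estimates), you need control of $Q_+$ and $Q^*_+$ on \emph{all} of $\N_{t_J+}$, including the interiors of the new cuts $\DD\in\Cut^J$, because points in $\N^{J+1}_{(t_J,t_J+\tau_0)}$ project backward onto those regions. This is exactly why the paper first establishes the forward estimates (\ref{eq_relaxed_APA_7_tJ_plus}) and (\ref{eq_relaxed_APA_8_tJ_plus}) on all of $\N_{t_J+}$: on $\DD\in\Cut^J$, the bound $Q^*_+\leq \eta_{\cut}\ov{Q}^*$ comes from a priori assumption \ref{item_h_derivative_bounds_9}, and it is then converted to a $Q_+$ bound using the scale control $\rho_1\leq C'(\lambda,D_{\cut})r_{\comp}$ on cuts, which is why $W\geq\underline{W}(E,\lambda,D_{\cut})$ appears; for $x\notin\DD_0$ but with $B(x,A\rho_1(x))$ meeting an extension cap, one needs Lemma~\ref{lem_large_scale_near_neck} to obtain $\rho_1(x)\geq C_\#r_{\comp}$ so that the a priori $Q$-bound yields the $Q^*$-bound. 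None of this appears in your proposal, and without it the constant $\tau$ cannot be made independent of the individual cuts. Note also that the paper obtains $t^{**}>t_J$ as a consequence of the propagation step applied at $t^{**}=t_J$, not by a separate continuity argument; your "WLOG $t^{**}>t_J$" in Part 2 hides exactly the case where these forward estimates are required.

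\textbf{(2) The boundary case should be handled differently, not by digging into the proof of Proposition~\ref{Prop_extend_phi_to_t_star}.} The subsection is deliberately modular — it is stated explicitly that the proof depends only on the \emph{statements} of Propositions~\ref{Prop_performing_cap_extensions} and \ref{Prop_extend_phi_to_t_star}. Your "main obstacle" paragraph resolves the near-boundary issue by reopening the grafting construction from the proof of Proposition~\ref{Prop_extend_phi_to_t_star} to obtain derivative bounds that the proposition's statement does not export, and this also forces you to defend a Lipschitz bound on $Q$ in $t$ whose constant is not controlled by the stated hypotheses. The paper sidesteps the issue with a clean case split: when $d_{t^{**}}(x,\partial\N^{J+1}_{t^{**}})\leq r_{\comp}$, a distance-distortion estimate shows $d_{t'}(x(t'),\partial\N^{J+1}_{t'})<10r_{\comp}<Fr_{\comp}$, so assertion \ref{ass_12.47_c} of Lemma~\ref{lem_summary_APA_10_12} (i.e.\ Proposition~\ref{Prop_extend_phi_to_t_star}\ref{ass_12.24_b}, an exported statement) directly gives $Q_+\leq\ov{Q}$, hence (\ref{eq_relaxed_APA_6}) and (\ref{eq_relaxed_APA_7}); and (\ref{eq_relaxed_APA_8}) is vacuous near the boundary because $\rho_1\geq\frac12 r_{\comp}$ there (Lemma~\ref{lem_time_slice_neck_implies_space_time_neck}) and $A>20$ forces $B(x,A\rho_1(x))\not\subset\N$. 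Away from the boundary, the paper uses the maximum-principle short-time bound of Proposition~\ref{prop_promote_RdT_forward_locally} together with the scale control (\ref{eq_091E})--(\ref{eq_0911Qh}), rather than a pointwise Lipschitz bound on $Q$ which would require initial derivative control of $\wh{h}$ that is not provided by Proposition~\ref{Prop_performing_cap_extensions}.
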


In other words, if \ref{item_q_less_than_q_bar_6}--\ref{item_q_star_less_than_q_star_bar} hold up to time $t^{**}$, then we may replace $t^{**}$ by $\min \{ t^{**} + \tau, t^* \}$.
The important point here is that $\tau$ can be chosen independently of $t^{**}$.
In Lemmas~\ref{lem_verification_of_APA_6}, \ref{lem_verification_APA_7}, \ref{lem_verification_of_APA_8} and \ref{lem_verification_APA_9}
 below, we will show that a priori assumptions \ref{item_q_less_than_q_bar_6}--\ref{item_q_star_less_than_q_star_bar} indeed hold up to time $t^{**}$, regardless of the choice of $t^{**}$.
It will then follow by iterating Lemma \ref{lem_setup_of_t_star_star}  that we can choose $t^{**} = t^*$ and that a priori assumptions \ref{item_q_less_than_q_bar_6}--\ref{item_q_star_less_than_q_star_bar} hold up to time $t^{*}$.

The main idea of the proof of Lemma~\ref{lem_setup_of_t_star_star} is that the relaxed conditions (\ref{eq_relaxed_APA_6})--(\ref{eq_relaxed_APA_8}) hold in the neighborhood of any point at which the stricter conditions (\ref{eq_relaxed_APA_6_modified})--(\ref{eq_relaxed_APA_8_modified}) are satisfied.
Using the canonical neighborhood assumption, we will find a uniform lower bound on the size of such a neighborhood.
Extra care has to be taken near the cuts at time $t_J$.
Here we will use the a priori assumptions from our induction hypothesis along with the geometry of the cuts to deduce that (\ref{eq_relaxed_APA_7_modified}) and (\ref{eq_relaxed_APA_8_modified}) even hold on and near the cuts.

\begin{proof} 
We first show that for all $x \in \N_{t_J+}$ (which may possibly lie on a cut)
\begin{alignat}{2}
Q_+ (x) &\leq W \overline{Q} &\qquad & 
 \label{eq_relaxed_APA_7_tJ_plus} \\
Q^*_+ (x) &\leq  \overline{Q}^* &\qquad &\text{whenever $B(x,  A \rho_1 (x)) \subset \N_{t_J+}$} \label{eq_relaxed_APA_8_tJ_plus} 
\end{alignat}
Note that the condition in (\ref{eq_relaxed_APA_8_tJ_plus}) refers to the \emph{forward} time-slice, in contrast to (\ref{eq_relaxed_APA_8_modified}).

Let us first prove (\ref{eq_relaxed_APA_8_tJ_plus}).
If $x \in \DD \in \Cut^J$, then (\ref{eq_relaxed_APA_8_tJ_plus}) follows from a priori assumption \ref{item_h_derivative_bounds_9} (see assertion \ref{ass_12.47_a} of Lemma~\ref{lem_summary_APA_10_12}), assuming
\[ \eta_{\cut} \leq 1. \]
So assume that $x \in \N_{t_J+} \setminus \cup_{\DD \in \Cut^J} \DD$.
If $B(x,  A \rho_1 (x)) \subset \N_{t_J-}$, then (\ref{eq_relaxed_APA_8_tJ_plus}) follows from a priori assumption \ref{item_q_star_less_than_q_star_bar} of the induction hypothesis.
So assume that $B(x,  A \rho_1 (x)) \not\subset \N_{t_J-}$, but $B(x,  A \rho_1 (x)) \subset \N_{t_J+}$.
In other words, $B(x, A \rho_1 (x))$ intersects an extension cap $\C_0 \subset \N_{t_J+} \setminus \Int \N_{t_J-}$.
Choose $\DD_0 \in \Cut^J$ with $\C_0 \subset \DD_0$ and let $C_\# < \infty$ be a constant whose value we will determine in the course of the proof.
We now apply Lemma~\ref{lem_large_scale_near_neck} for $A_0 = A$ and $T_0 = 0$, assuming
\begin{multline*}
\delta_{\nn} \leq \ov\delta_{\nn}, \qquad
\lambda \leq \ov\lambda, \qquad
D_{\cut} \geq \underline{D}_{\cut} (\lambda, C_\#), \qquad
\Lambda \geq \underline{\Lambda}, \\ 
\delta_{\bb} \leq \ov\delta_{\bb} (\lambda, C_\#, D_{\cut}, A, \Lambda ), \qquad
\eps_{\can} \leq \ov\eps_{\can} (\lambda, D_{\cut}, A, \Lambda ), \qquad 
r_{\comp} \leq \ov{r}_{\comp} (C_\#)
\end{multline*}
Note that the assumptions of this lemma on the set $\Cut \cup \Cut^J$ hold due to a priori assumption \ref{item_cut_diameter_less_than_d_r_comp_11}, which holds due to assertion \ref{ass_12.47_a} of Lemma~\ref{lem_summary_APA_10_12}.
We find that
\begin{equation} \label{eq_rho_C_sharp_r_comp}
\rho_1 (x) \geq C_\# r_{\comp}
\end{equation}
and that $P(x, A \rho_1 (x)) \cap \DD = \emptyset$ for all $\DD \in \Cut$.
So by a priori assumption \ref{item_q_less_than_q_bar_6} of the induction hypothesis we have 
\[ e^{H(T-t_J)} \rho_1^E(x) |h (x)| = Q(x) \leq \ov{Q} = 10^{-E-1} \eta_{\lin} r_{\comp}^E. \]
Combining this with (\ref{eq_rho_C_sharp_r_comp}) yields
\begin{equation*}
 Q^* (x) = e^{H(T-t_J)} \rho_1^3 (x) |h(x)| \leq \rho_1^{3-E}(x) \cdot 10^{-E-1} \eta_{\lin} r_{\comp}^E 
\leq C_\#^{3-E} \eta_{\lin} r_{\comp}^3. 
\end{equation*}
It follows that $Q^*(x) \leq \ov{Q}^*$ if $C_\#^{3-E} \leq 10^{-1} \lambda^3$, which holds assuming
\[ E \geq 4, \qquad
C_\# \geq \underline{C}_\# (\lambda). \]
This finishes the proof of (\ref{eq_relaxed_APA_8_tJ_plus}).

To see the bound (\ref{eq_relaxed_APA_7_tJ_plus}), we only need to consider the case $x \in \DD \in \Cut^J$, due to a priori assumption \ref{item_q_less_than_w_q_bar_7} of the induction hypothesis.
Then, again by a priori assumption \ref{item_h_derivative_bounds_9} (see assertion \ref{ass_12.47_a} of Lemma~\ref{lem_summary_APA_10_12}) we have $Q^*_+ (x) \leq \eta_{\cut} \ov{Q}^*$.
By a priori assumptions \ref{item_geometry_cap_extension_5}  and \ref{item_cut_diameter_less_than_d_r_comp_11}, and assuming
\[ \delta_{\bb} \leq \ov{\delta}_{\bb} (\lambda, D_{\cut}), \]
we conclude that there is a constant $C' = C' (\lambda, D_{\cut}) < \infty$ such that $\rho_1 < C' r_{\comp}$ on $\DD$.
So      
\begin{multline*}
 Q_+ (x) = e^{H(T-t_J)} \rho_1^E(x) |h_{t_J+} (x)| 
 = \rho_1^{E-3} (x) Q^*_+ (x) \\
 \leq (C')^{E-3} r_{\comp}^{E-3} \cdot \eta_{\cut} \ov{Q}^*
 = (C')^{E-3} r_{\comp}^{E-3} \cdot \eta_{\cut}  \cdot 10^{-1} \eta_{\lin} (\lambda r_{\comp})^3 \\
 = (C')^{E-3} \lambda^{3} 10^{E} \eta_{\cut} \cdot \ov{Q}. 
\end{multline*}
It follows that $Q_+ (x) \leq W \ov{Q}$, assuming
\[ \eta_{\cut} \leq 1, \qquad
W \geq \underline{W} (E, \lambda, D_{\cut}). \]
This finishes the proof of (\ref{eq_relaxed_APA_7_tJ_plus}).

We will show that (\ref{eq_relaxed_APA_6})--(\ref{eq_relaxed_APA_8}) hold slightly beyond time $t^{**}$ if $t^{**} < t^*$.
The fact that we can choose $t^{**} > t_J$ will follow along the lines of the proof.

Let $\tau>0$ be a constant to be determined in the course of the proof.  
It  suffices to argue that if $t^{**} < t^*$ and if (\ref{eq_relaxed_APA_6_modified})--(\ref{eq_relaxed_APA_8_modified}) hold on $\N^{J+1}_{[t_J, t^{**}]} \setminus \cup_{\DD \in \Cut \cup \Cut^J} \DD$ and (\ref{eq_relaxed_APA_7_tJ_plus}), (\ref{eq_relaxed_APA_8_tJ_plus}) hold on $\N_{t_J+}$, then (\ref{eq_relaxed_APA_6})--(\ref{eq_relaxed_APA_8}) hold in $\N^{J+1}_{t'}$ whenever $t' \in (t^{**}, t^*]$ and $t' - t^{**}<\tau$ where $\tau\leq\ov\tau(T, E, H,  \eta_{\lin}, \lambda, A, r_{\comp})$.  
To that end, choose $t'\in (t^{**},t^*]$ with $t'-t^{**}<\tau$, and a point $x' \in \N^{J+1}_{t^\prime}$.
Since $\N^{J+1}_{[t_J,t*]}$ is a product domain, we have $x'=x(t')$ for some $x\in \N^{J+1}_{t^{**}}$.  

First suppose that $d_{t^{**}} (x, \partial \N^{J+1}_{t^{**}}) \leq  r_{\comp}$.   Assuming
\[ \tau\leq\ov\tau(\lambda,r_{\comp})\,,\]
then by a distance distortion estimate based on a priori assumption \ref{item_lambda_thick_2} and the fact that $\N^{J+1}_{[t_J,t^*]}$ is a product domain, we obtain $d_{t' } (x(t'), \lb \partial \N^{J+1}_{t' }) \lb <  \lb 10 r_{\comp}$.
So assuming
\[ F \geq 10, \]
assertion \ref{ass_12.47_c} of Lemma~\ref{lem_summary_APA_10_12} implies that (\ref{eq_relaxed_APA_6}) holds for $x(t')$.  
Thus if
\[ W \geq 1, \]
then (\ref{eq_relaxed_APA_7}) holds as well.
Next, by a priori assumption \ref{item_backward_time_slice_3}(a), Lemma~\ref{lem_time_slice_neck_implies_space_time_neck}, and assuming
\[ \delta_{\nn} \leq \ov\delta_{\nn}, \qquad
\eps_{\can} \leq \ov\eps_{\can}, \qquad
r_{\comp} \leq \ov{r}_{\comp}, \]
we obtain that $\rho(x(t')) > \frac12 r_{\comp}$.
So $B(x(t'), A \rho_1 (x(t'))) \not\subset \N$, assuming
\[ A > 20\,, \]
and thus (\ref{eq_relaxed_APA_8}) holds.

Now suppose that $d_{t^{**}} (x, \partial \N^{J+1}_{t^{**}}) >  r_{\comp}$.
By a priori assumption \ref{item_lambda_thick_2}, Lemma~\ref{lem_forward_backward_control}, and assuming
\[ \tau\leq\ov\tau(E, \lambda, r_{\comp})\,,\qquad \eps_{\can} \leq \ov\eps_{\can} (\lambda)\,, \]
we obtain that
\begin{equation} \label{eq_091E}
 (0.9)^{1/E} \rho_1(x) \leq \rho_1 \leq (1.1)^{1/E} \rho_1 (x) \qquad \text{on} \qquad P(x, \tau, \tau). 
\end{equation}
Thus on $P(x, \tau, \tau)$
\begin{equation} \label{eq_0911Qh}
 0. 9 \cdot e^{ H (T-\mathfrak{t})} \rho_1^E (x) \cdot |h| \leq Q \leq 1.1 \cdot e^{ H (T-\mathfrak{t})} \rho_1^E (x) \cdot |h|. 
\end{equation}
Assume now that $P(x(t'), 10 A \rho_1 (x(t'))) \cap \DD = \emptyset$ for all $\DD \in \Cut \cup \Cut^J$.
 By a priori assumption \ref{item_lambda_thick_2} and bounded curvature at bounded distance, Lemma~\ref{lem_bounded_curv_bounded_dist}, and assuming
\[ \eps_{\can} \leq \ov\eps_{\can} (\lambda, A), \]
we conclude that $P(x(t'), 10 A \rho_1 (x(t')))$ is unscathed.
We also obtain a curvature bound on this parabolic neighborhood, which implies via a distance distortion estimate that
\[ P(x, 9 A \rho_1 (x(t'))) \subset P(x(t'), 10 A \rho_1 (x(t'))), \]
assuming
\[ \tau \leq \ov\tau (\lambda, A, r_{\comp}). \]
Combining this with (\ref{eq_091E}) and a priori assumption \ref{item_lambda_thick_2}, and assuming
\[ E \geq 1, \qquad
A \geq \underline{A}, \qquad
\tau \leq \ov\tau (\lambda, A , r_{\comp}),   \]
we obtain that $P(y, A \rho_1 (y)) \subset P(x(t'), 10 A \rho_1 (x(t')))$ for all $y \in B(x, \tau)$.
This implies that for all such $y$ we have $P(y, A \rho_1 (y)) \cap \DD = \emptyset$ for all $\DD \in \Cut \cup \Cut^J$.
Therefore, by (\ref{eq_relaxed_APA_6_modified}) we have $Q  \leq \ov{Q}$ on $B(x, \tau)$.
So if
$$
\eta_{\lin} \leq \ov\eta_{\lin}\,,
$$
then we can use Proposition~\ref{prop_promote_RdT_forward_locally} together with a priori assumption \ref{item_lambda_thick_2}, (\ref{eq_091E}),  (\ref{eq_0911Qh}) and assertion \ref{ass_12.47_b} of Lemma~\ref{lem_summary_APA_10_12}, and assuming
\begin{equation}
\label{eqn_last_tau_conditions}
\tau\leq\ov\tau(T, E, H, \eta_{\lin}, \lambda, r_{\comp})\,,
\end{equation}
to get that (\ref{eq_relaxed_APA_6}) holds.

 Using similar arguments, Properties (\ref{eq_relaxed_APA_7}) and (\ref{eq_relaxed_APA_8}) can be verified at $x(t')$ as well, assuming a bound of the form
(\ref{eqn_last_tau_conditions}).
Note that if $t^{**} = t_J$, then we need to use the bounds (\ref{eq_relaxed_APA_7_tJ_plus}) and (\ref{eq_relaxed_APA_8_tJ_plus}).
\end{proof}

\bigskip
Assume for the remainder of this subsection that the parameter bounds of Lemma \ref{lem_setup_of_t_star_star} hold and that $t^{**} > t_J$.

In the following we will verify a priori assumptions \ref{item_eta_less_than_eta_lin_13}--\ref{item_q_star_less_than_q_star_bar} up to time $t^{**}$.
Whenever we say that ``a priori assumption (APA $x$) holds'', then we mean that $(\N_{[0,t^{**}]}, \lb \{ \N^j, \lb \N^{J+1}_{[t_J, t^{**}]} \}_{j=1}^J, \lb \{ t_j, t^{**} \}_{j=1}^J)$ and $(\Cut \cup \Cut^{J}, \lb \ov\phi |_{\N_{[0,t^{**}]}}, \lb \{ \phi^j, \phi^{J+1} |_{\N^{J+1}_{[t_J,t^{**}]}} \}_{j=1}^{J})$ satisfy a priori assumption (APA $x$) for the set of parameters $(T, \lb E, \lb H, \lb \eta_{\lin}, \lb \nu, \lb \lambda, \lb \eta_{\cut}, \lb D_{\cut}, \lb W, \lb A, \lb r_{\comp})$.
Note that it follows from assertion \ref{ass_12.47_a} of Lemma~\ref{lem_summary_APA_10_12}, that a priori assumptions \ref{item_h_derivative_bounds_9}--\ref{item_apa_13} hold.

Let us first verify a priori assumption \ref{item_eta_less_than_eta_lin_13}.

\begin{lemma}[Verification of \ref{item_eta_less_than_eta_lin_13}] \label{lem_verification_of_APA_6}
If
\begin{gather*}
 \delta_{\nn} \leq \ov\delta_{\nn}, \qquad \lambda \leq \ov\lambda,  \qquad 
 \Lambda \geq \underline{\Lambda} ( \lambda, A ),  \qquad \delta_{\bb} \leq \ov\delta_{\bb} (\lambda, D_{\cut}, A, \Lambda ), \\
   \qquad \eps_{\can} \leq \ov\eps_{\can} (\lambda, D_{\cut}, A, \Lambda ), \qquad  r_{\comp} \leq \ov{r}_{\comp} ,
\end{gather*}
then a priori assumption \ref{item_eta_less_than_eta_lin_13} holds.
In other words, we have $|h| \leq \eta_{\lin}$ on $\N_{[0, t^{**}]} \setminus \cup_{\DD \in \Cut \cup \Cut^J} \DD$ and the $\eps_{\can}$-canonical neighborhood assumption holds at scales $(0,1)$ on $\cup_{j=1}^J \phi^j (\N^j) \cup \phi^{J+1} (\N^{J+1}_{[t_J, t^{**}]})$.
\end{lemma}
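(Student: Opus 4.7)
The proof splits into two essentially independent parts: the bound $|h|\leq\eta_{\lin}$ and the canonical neighborhood assumption on the image. The latter is immediate: on $\bigcup_{j\leq J}\phi^j(\N^j)$ it is part of \ref{item_eta_less_than_eta_lin_13} in the induction hypothesis, while on the newly constructed image $\phi^{J+1}(\N^{J+1}_{[t_J,t^{**}]})$, Lemma~\ref{lem_summary_APA_10_12}\ref{ass_12.47_b} supplies $\eps_{\can}r_{\comp}$-thickness, so every image point of scale $<1$ lies in the window $(\eps_{\can}r_{\comp},1)$ where the ambient $\eps_{\can}$-canonical neighborhood assumption on $\M'$ applies; this upgrades trivially to scales $(0,1)$ on this image.

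For $|h|\leq\eta_{\lin}$, the induction directly covers $\bigcup_{j=1}^J\N^j\setminus\bigcup_{\DD\in\Cut}\DD$. On $\N^{J+1}_{t_J}\setminus\bigcup_{\DD\in\Cut^J}\DD$, Proposition~\ref{Prop_performing_cap_extensions}\ref{ass_12.3_b} gives $\wh\phi=\phi_{t_J-}$, so $h_{t_J+}=h_{t_J-}$ and the bound descends. The substantive case is $x\in\N^{J+1}_{(t_J,t^{**}]}$; set $t:=\t(x)>t_J$, so $\N_{t-}=\N_{t+}=\N^{J+1}_t$. My plan is to dichotomize on whether $B(x,10A\rho_1(x))\subset\N^{J+1}_t$. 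In the interior subcase the relaxed bound \eqref{eq_relaxed_APA_8} yields $Q^*(x)\leq 10\ov Q^*$, and combined with $\rho_1(x)\geq\lambda r_{\comp}$ from \ref{item_lambda_thick_2} this gives $|h(x)|\leq e^{-H(T-t)}\rho_1^{-3}(x)\cdot 10\ov Q^*\leq\eta_{\lin}\lambda^3(r_{\comp}/\rho_1(x))^3\leq\eta_{\lin}$.

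In the boundary-adjacent subcase the ball meets $\partial\N^{J+1}_t$ at some $y$; since $t>t_J$, the contrapositive of Lemma~\ref{lem_boundary_far_from_cut} (applied with $A_0=10A$) gives that $P(x,10A\rho_1(x))$ avoids every cut in $\Cut\cup\Cut^J$, so the stronger relaxed bound \eqref{eq_relaxed_APA_6} applies, yielding $|h(x)|\leq 10^{-E}\eta_{\lin}(r_{\comp}/\rho_1(x))^E$. Combining \ref{item_backward_time_slice_3}(a) at time $t_{J+1}$ with Lemma~\ref{lem_time_slice_neck_implies_space_time_neck} to propagate the $\delta_{\nn}$-neck structure backwards through $[t_J,t_{J+1}]$ (which demands $\delta_{\nn}$, $\eps_{\can}$, and $r_{\comp}$ sufficiently small), the point $y$ sits in a $\delta_\#$-neck at scale close to $r_{\comp}$, so $\rho_1(y)\in[r_{\comp}/2,2r_{\comp}]$; bounded curvature at bounded distance (Lemma~\ref{lem_bounded_curv_bounded_dist}) at $y$ then transfers this to $\rho_1(x)\geq c(A)\,r_{\comp}$.

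The main obstacle is that the resulting estimate $|h(x)|\leq 10^{-E}c(A)^{-E}\eta_{\lin}$ degenerates as $A$ grows. My plan for closing this gap is to combine it with the sharper near-boundary bound $Q_+(x)\leq\ov Q$ supplied by Lemma~\ref{lem_summary_APA_10_12}\ref{ass_12.47_c}: on the $Fr_{\comp}$-tubular neighborhood of $\partial\N^{J+1}_t$ (with $F=F(E)$ fixed and $F\leq\delta_{\nn}^{-1}/10$) the $\delta_\#$-neck structure directly forces $\rho_1\geq 0.9\,r_{\comp}$, whence Lemma~\ref{lem_summary_APA_10_12}\ref{ass_12.47_c} yields $|h(x)|\leq 10^{-E-1}(10/9)^E\eta_{\lin}\leq\eta_{\lin}$. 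It then suffices to verify that every boundary-adjacent point lies either in this $Fr_{\comp}$-tube or satisfies the hypothesis of the interior subcase; the restrictions $\Lambda\geq\underline\Lambda(\lambda,A)$ together with the smallness of $\delta_{\bb}$ and $\eps_{\can}$ in terms of $A$ are what make this dichotomy exhaustive.
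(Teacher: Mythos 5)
Your proof is correct up through the point where you derive, in the boundary case, that $|h(x)|\leq 10^{-E}(r_{\comp}/\rho_1(x))^E\eta_{\lin}$ via the relaxed $Q$-bound (\ref{eq_relaxed_APA_6}) and Lemma~\ref{lem_boundary_far_from_cut}.  You also correctly diagnose the problem: a lower bound of the form $\rho_1(x)\geq c(A)\,r_{\comp}$ (which is what bounded curvature at bounded distance gives) yields $|h(x)|\leq 10^{-E}c(A)^{-E}\eta_{\lin}$, which blows up as $A\to\infty$.  However, the workaround you propose does not close the gap.  Your dichotomy rests on the claim that every $x$ in Case~2 either lies in the $Fr_{\comp}$-tube around $\partial\N^{J+1}_t$ or already satisfies $B(x,10A\rho_1(x))\subset\N^{J+1}_t$.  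This fails: since $F$ depends only on $E$ while $A$ is chosen much later in the parameter order (and is typically enormous), there can be points $x$ with $Fr_{\comp}\leq d_t(x,\partial\N^{J+1}_t)<10A\rho_1(x)$, for which neither subcase applies.  For such $x$, the only scale lower bound one can extract from $10A\rho_1(x)>Fr_{\comp}$ is $\rho_1(x)>\tfrac{F}{10A}r_{\comp}$, and the resulting $|h|$ bound $10^{-E}(10A/F)^E\eta_{\lin}$ again degenerates with $A$.  The parameter restrictions on $\Lambda$, $\delta_{\bb}$, $\eps_{\can}$ in terms of $A$ do not remove this intermediate regime.

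The missing ingredient in the paper's argument is Lemma~\ref{lem_thick_close_to_neck} (Scale bounds near necks), whose conclusion is that if $x$ lies at distance $\leq X$ from a central $2$-sphere $\Sigma$ of a sufficiently precise neck at unit scale, and if the component of $M\setminus\Sigma$ containing $x$ has diameter $\geq Y(X)$, then $\rho_1(x)>\tfrac1{10}$ --- crucially, a lower bound that is \emph{independent} of $X$.  Here one takes $X\approx 10A\rho_1(x)/(ar_{\comp})$, which is bounded in terms of $A$ once the upper bound $\rho_1(x)<C'(A)\rho_1(y)$ from bounded curvature at bounded distance (applied at $x$) is in hand.  The required diameter lower bound is supplied by the $\Lambda r_{\comp}$-thick point guaranteed by \ref{item_backward_time_slice_3}(c), which can be pushed far from the boundary by taking $\Lambda\geq\underline\Lambda(\lambda,A)$, and tracked back to time $t$ via a distance distortion estimate.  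This is precisely why the hypothesis $\Lambda\geq\underline\Lambda(\lambda,A)$ appears in the lemma: it is needed to feed the diameter condition of Lemma~\ref{lem_thick_close_to_neck}, not to make your tube dichotomy exhaustive.  With this lemma one obtains $\rho_1(x)\geq\tfrac1{10}r_{\comp}$ for \emph{all} $x$ in Case~2, whence $|h(x)|\leq 10^{-E}\cdot 10^E\eta_{\lin}=\eta_{\lin}$, with no degeneration in $A$.
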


We summarize the idea of the proof. 
It only remains to establish the bound $|h| \leq \eta_{\lin}$.
For points that are far enough away (compared to $A$) from the neck-like boundary of $ \N \cup \N^{J+1}$, we have $Q^* \leq 10 \ov{Q}^*$ from  (\ref{eq_relaxed_APA_8}), and  together with the lower bound $\rho_1 > \lambda r_{\comp}$ on $\N \cup \N^{J+1}$ from a priori assumption \ref{item_lambda_thick_2}, this implies $|h|\leq \eta_{\lin}$.  
On the other hand, points that are close to this boundary are far from the cuts, by Lemma~\ref{lem_boundary_far_from_cut}.
So at these points we may rely instead on the bound $Q \leq 10 \ov{Q}$ from (\ref{eq_relaxed_APA_6}).  This bound implies $|h| \leq \eta_{\lin}$ as long as $\rho_1 \geq  \frac1{10} r_{\comp}$, a fact which follows from the neck-like structure of the boundary of $\N^{J+1}$ and almost nonnegative curvature  (see Lemma~\ref{lem_thick_close_to_neck}).

\begin{proof}
The second part of a priori assumption \ref{item_eta_less_than_eta_lin_13} follows from assertion \ref{ass_12.47_b} of Lemma~\ref{lem_summary_APA_10_12} and the induction hypothesis.
So it remains to prove the bound $|h| \leq \eta_{\lin}$.
To this end consider a point  $x \in \N_{[0,t^{**}]} \setminus \cup_{\DD \in \Cut \cup \Cut^{J}} \DD$ and set $t := \mathfrak{t} (x)$.
Our goal will be to show $|h(x)| \leq \eta_{\lin}$.
In the case $t \in [0, t_J]$, we are done by a priori assumption \ref{item_eta_less_than_eta_lin_13} from our induction hypothesis, and the fact that $\N_{t_J+}\setminus \cup_{\DD \in \Cut \cup \Cut^{J}} \DD \subset \N_{t_J-}$.
So assume that $t \in (t_J, t^{**}]$ and therefore $x \in \N^{J+1}_{(t_J, t^{**}]}$.

We now distinguish the following two cases:

\textit{Case 1: \quad $B(x, 10 A \rho_1 (x) ) \subset \N^{J+1}_{t}=\N_{t-}$.}

In this case we can apply (\ref{eq_relaxed_APA_8}) and obtain that
\[ e^{H(T- t)} \rho_1^3 (x) |h(x)| = Q^*(x) \leq 10 \ov{Q}^* =  \eta_{\lin} (\lambda r_{\comp})^3. \]
Since by a priori assumption \ref{item_lambda_thick_2} and assumption (\ref{eq_t_J_1_less_T_verification}) we have $\rho_1 (x) > \lambda r_{\comp}$ and $t \leq t_{J+1} \leq T$, this implies $|h(x)| \leq  \eta_{\lin}$.

\textit{Case 2: \quad $B(x, 10 A \rho_1 (x) ) \not\subset \N^{J+1}_t$.}

Let us first apply Lemma~\ref{lem_boundary_far_from_cut} along with a priori assumption \ref{item_cut_diameter_less_than_d_r_comp_11}.
We obtain that if
\begin{gather*}
 \delta_{\nn} \leq \ov\delta_{\nn}, \qquad 
 \lambda\leq\ov\lambda, \qquad
 \Lambda \geq \underline\Lambda, \qquad
\delta_{\bb} \leq \ov\delta_{\bb} (\lambda, D_{\cut}, A, \Lambda), \qquad \\
\eps_{\can} \leq \ov\eps_{\can} (\lambda, D_{\cut}, A, \Lambda), \qquad 
r_{\comp} \leq \ov{r}_{\comp} , 
\end{gather*}
then $P(x, 10 A \rho_1 (x)) \cap \DD = \emptyset$ for all $\DD \in \Cut \cup \Cut^{J}$.
So by (\ref{eq_relaxed_APA_6}) we have  
\[ e^{H (T- t)} \rho_1^E (x) |h(x)| = Q(x) \leq 10 \ov{Q} = 10^{-E} \eta_{\lin}  r_{\comp}^E. \]
By assumption (\ref{eq_t_J_1_less_T_verification}) we have $t \leq t_{J+1} \leq T$.
So in order to show that $|h(x)| \leq \eta_{\lin}$, it suffices to verify the bound
\begin{equation} \label{eq_rho_less_10_r_comp}
 \rho_1 (x)  \geq \frac1{10} r_{\comp}. 
\end{equation}

To see that (\ref{eq_rho_less_10_r_comp}) holds, choose first some point $y \in \partial \N^{J+1}_t$ with $d_t (x,y) < 10 A \rho_1 (x)$.
Let $\Sigma \subset \partial \N^{J+1}_t$ be the (spherical) boundary component of $\N^{J+1}_t$ that contains $y$.
Consider the constant $\delta_0 > 0$ from Lemma~\ref{lem_thick_close_to_neck}.
If
\[ \delta_{\nn} \leq \ov\delta_{\nn}, \qquad \eps_{\can} \leq \ov\eps_{\can}, \qquad r_{\comp} \leq \ov{r}_{\comp}, \]
then by a priori assumption \ref{item_backward_time_slice_3}(a)  and Lemma~\ref{lem_time_slice_neck_implies_space_time_neck}, the component $\Sigma$ has to be a central $2$-sphere of a $\delta_0$-neck in $\M_t$ at scale $a r_{\comp}$ for some $a \in [1,2]$ and we must have
\[ 0.9 r_{\comp} < \rho_1 (y) < 2.1 r_{\comp}. \]
By bounded curvature at bounded distance, Lem\-ma~\ref{lem_bounded_curv_bounded_dist}, along with a priori assumption \ref{item_lambda_thick_2}, applied at $x$, and assuming
\[ \eps_{\can} \leq \ov\eps_{\can} (\lambda, A), \]
we find that  $\rho_1 (x) < C' \rho_1 (y) < 2.1 C' r_{\comp}$ for some $C' = C'(A) < \infty$.
So 
\[ d_t (x,y) < 10 A \rho_1 (x) < 21 C'  A r_{\comp}. \]

Let $Y_\# < \infty$  be a constant whose value we will fix at the end of the proof.
By a priori assumption \ref{item_backward_time_slice_3}(c) we can pick a $\Lambda r_{\comp}$-thick point $z \in \N_{t_{J+1}-}$  in the same component of $\N_{t_{J+1}-}$ as $\Sigma (t_{J+1})$.
By a priori assumption \ref{item_backward_time_slice_3}(a) and bounded curvature at bounded distance, Lemma~\ref{lem_bounded_curv_bounded_dist}, applied at all points on $\partial \N_{t_{J+1}-}$, and assuming
\[  \delta_{\nn} \leq \ov\delta_{\nn}, \qquad 
\Lambda \geq \underline\Lambda(Y_\#)\,,\qquad 
\eps_{\can} \leq \ov\eps_{\can} (Y_\#), \]
we obtain that  $d_{t_J} (z(t), \partial \N_{t_J+}) > Y_\# r_{\comp}$.

By a priori assumption \ref{item_lambda_thick_2} and a distance distortion estimate, it follows that then
\[ d_{t} (z(t), \partial \N_{t}^{J+1})  > e^{-C'' \lambda^{-2}} Y_\# r_{\comp} \]
for some universal constant $C'' < \infty$.
We can then apply Lemma~\ref{lem_thick_close_to_neck}, assuming that \[ \delta_{\nn} \leq \ov\delta_{\nn}, \qquad
 Y_\# \geq \underline{Y}_\# (\lambda, A), \qquad \eps_{\can} \leq \ov\eps_{\can} (A), \]
to show that $\rho_1 (x) \geq \frac1{10} a r_{\comp} \geq \frac1{10} r_{\comp}$.
So (\ref{eq_rho_less_10_r_comp}) holds.
\end{proof}

Next, we establish a priori assumption \ref{item_q_less_than_q_bar_6}.

\begin{lemma}[Verification of \ref{item_q_less_than_q_bar_6}] \label{lem_verification_APA_7}
If
\begin{gather*}
E \geq \underline{E}, \qquad
F \geq \underline{F} (E), \qquad
H \geq \underline{H} (E), \qquad
\eta_{\lin} \leq \ov\eta_{\lin} (E), \qquad \\
\nu \leq \ov\nu (E), \qquad
\delta_{\nn} \leq \ov\delta_{\nn} , \qquad 
\lambda \leq \ov\lambda, \qquad
A \geq \underline{A} (E, W), \qquad 
\Lambda \geq \underline\Lambda, \\
\delta_{\bb} \leq \ov\delta_{\bb} (E, \lambda, D_{\cut}, A, \Lambda), \qquad 
\eps_{\can} \leq \ov\eps_{\can} (E, \lambda, D_{\cut}, W, A, \Lambda), \qquad 
r_{\comp} \leq \ov{r}_{\comp},
\end{gather*}
then a priori assumption \ref{item_q_less_than_q_bar_6} holds.
In other words, for all $x \in \N_{[0,t^{**}]} \setminus \cup_{\DD \in \Cut \cup \Cut^J} \DD$ for which $P(x, A \rho_1 (x)) \cap \DD = \emptyset$ for all $\DD \in \Cut \cup \Cut^J$, we have
\begin{equation} \label{eq_Q_ov_Q_lemma_APA_7}
 Q (x) \leq \overline{Q} . 
\end{equation}
\end{lemma}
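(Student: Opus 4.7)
The proof proceeds by a case analysis on $x \in \N_{[0,t^{**}]}\setminus \bigcup_{\DD\in\Cut\cup\Cut^J}\DD$ satisfying $P(x,A\rho_1(x))\cap\DD = \emptyset$ for all $\DD\in\Cut\cup\Cut^J$. First, if $t:=\t(x) \leq t_J$, the bound $Q(x)\leq\ov{Q}$ follows directly from the induction hypothesis \ref{item_q_less_than_q_bar_6} at level $J$: since $\Cut^J\subset\M_{t_J}$, the extra cuts only enter the backward parabolic condition at $t=t_J$, and at that time $x\in\N_{t_J-}$ (because every extension cap is covered by some cut of $\Cut^J$, by Proposition~\ref{Prop_performing_cap_extensions}\ref{ass_12.3_a}) and $Q(x)=Q_-(x)$ via Proposition~\ref{Prop_performing_cap_extensions}\ref{ass_12.3_b}. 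Second, if $t\in(t_J,t^{**}]$ and $d_t(x,\partial\N^{J+1}_t)<Fr_{\comp}$, the bound is provided directly by Lemma~\ref{lem_summary_APA_10_12}\ref{ass_12.47_c}, which is available as soon as $F\geq\underline{F}(E)$ has been chosen appropriately from the harmonic map heat flow construction of Proposition~\ref{Prop_extend_phi_to_t_star}.

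The substantive case is $t\in(t_J,t^{**}]$ with $d_t(x,\partial\N^{J+1}_t)\geq Fr_{\comp}$. Here I would apply the interior decay estimate (Proposition~\ref{Prop_interior_decay}) with decay factor $\alpha:=1/(20W)$, producing an interior radius $A_1=\underline{A}(E,1/(20W))$, and require $A\geq A_1$ so that $P(x,A_1\rho_1(x))\subset P(x,A\rho_1(x))$ is disjoint from every $\DD\in\Cut\cup\Cut^J$. Once one verifies that $P(x,A_1\rho_1(x))\subset \N\setminus\bigcup_{\DD}\DD$, the relaxed bound (\ref{eq_relaxed_APA_7}) on the new time slab combined with the induction hypothesis \ref{item_q_less_than_w_q_bar_7} on the previous slabs gives $Q\leq 10W\ov{Q}$ on the entire parabolic neighborhood, while a priori assumption \ref{item_q_less_than_nu_q_bar_12} bounds $Q\leq\nu\ov{Q}$ on the intersection with $\M_0$ (if nonempty). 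The interior decay inequality then reads
\[
Q(x)\;\leq\;\alpha\cdot 10W\ov{Q}\;+\;C(E)\,\nu\,\ov{Q}\;=\;\tfrac{1}{2}\ov{Q}+C(E)\,\nu\,\ov{Q}\;\leq\;\ov{Q}
\]
provided $\nu\leq 1/(2C(E))$, i.e.\ under the stated condition $\nu\leq\ov\nu(E)$.

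The hard step is the containment $P(x,A_1\rho_1(x))\subset\N$ in the interior case. By Lemma~\ref{lem_parabolic_domain_in_N} it suffices to check $B(x,A_1\rho_1(x))\subset\N^{J+1}_t$. When $\rho_1(x)$ is comparable to or smaller than $r_{\comp}$, this is automatic from $d_t(x,\partial\N^{J+1}_t)\geq Fr_{\comp}$ once $F$ is taken sufficiently large. When $\rho_1(x)$ is much larger than $r_{\comp}$ the spatial extent $A_1\rho_1(x)$ can exceed $Fr_{\comp}$; in this regime one instead invokes bounded curvature at bounded distance (Lemma~\ref{lem_bounded_curv_bounded_dist}) to conclude that $\rho_1\gtrsim\rho_1(x)\gg r_{\comp}$ throughout the neighborhood, whereas a priori assumption \ref{item_backward_time_slice_3}(a) together with Lemma~\ref{lem_time_slice_neck_implies_space_time_neck} ensures $\rho_1\leq 1.1r_{\comp}$ on $\partial\N^{J+1}_{t'}$ for all $t'\in[t_J,t^{**}]$, so the neighborhood cannot meet the spatial boundary. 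The staircase structure of the comparison domain---specifically that backward evolution under Ricci flow enlarges the neck cross-sections only slightly over a single time step---together with distance distortion estimates from a priori assumption \ref{item_lambda_thick_2}, handles the portion of the parabolic neighborhood that crosses back through $t=t_J$ into $\N^J$, where the spatial boundary may reshape due to extension caps but those caps are contained in cuts that are already avoided.
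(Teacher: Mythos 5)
Your case analysis captures the broad architecture, and the first two cases ($t \leq t_J$, and $t > t_J$ with $d_t(x,\partial\N^{J+1}_t) < F r_{\comp}$) are handled correctly. But your treatment of the interior case ($t > t_J$, $d_t(x,\partial\N^{J+1}_t)\geq Fr_{\comp}$) has a genuine gap concerning parameter dependences.

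You always apply the interior decay estimate with decay factor $\alpha = 1/(20W)$, hence at the parabolic scale $A_1 = \underline{A}(E,1/(20W))$, which depends on $W$. To obtain $B(x,A_1\rho_1(x))\subset \N^{J+1}_t$ in the regime $\rho_1(x)\lesssim r_{\comp}$, you need $A_1\rho_1(x) \lesssim A_1 r_{\comp} < F r_{\comp}$, which forces $F \geq \underline{F}(E,W)$. But the lemma requires $F \geq \underline{F}(E)$ only, and this is not a cosmetic restriction: $F$ must be fixed before $\delta_{\nn}$ and $\nu$ (which already depend on $F$ through Proposition~\ref{Prop_extend_phi_to_t_star}), while $W$ comes strictly later in the parameter order. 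Letting $F$ depend on $W$ would therefore create a circular dependence.

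The paper's proof avoids this by splitting the interior case further according to whether $P(x,A'\rho_1(x))$ (a larger neighborhood built from $L(E)$ and $A$ via Lemma~\ref{lem_containment_parabolic_nbhd}) meets a cut. Away from cuts, the available bound is not $10W\ov{Q}$ but the far stronger $10\ov{Q}$, obtained from a priori assumption \ref{item_q_less_than_q_bar_6} together with (\ref{eq_relaxed_APA_6}), so the plain semi-local maximum principle (Proposition~\ref{Prop_semi_local_max}, factor $\tfrac1{100}$) at scale $L(E)$ suffices; this is why $F$ need only dominate quantities depending on $L(E)$, hence on $E$ alone. Near a cut, Lemma~\ref{lem_boundary_far_from_cut} supplies the spatial containment $B(x,A\rho_1(x))\subset\N$ for free, so there one can afford the $W$-dependent scale $A$ and the interior decay estimate with $\alpha$ depending on $W$. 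You should add this case split to your argument: your uniform use of the weak $10W\ov{Q}$ bound discards the stronger information available away from cuts, and that extra information is precisely what decouples $F$ from $W$.
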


The strategy of the proof is the following:
Near the neck-like boundary of $\N$ the bound (\ref{eq_Q_ov_Q_lemma_APA_7}) is a direct consequence of assertion \ref{ass_12.47_c} of Lemma~\ref{lem_summary_APA_10_12}.
So it remains to consider points that are far away from this neck-like boundary.
If a relaxed bound of the form $Q \leq 10 \ov{Q}$ holds on a parabolic neighborhood of size comparable to $L(E)$ around such a point, either via a priori assumption \ref{item_q_less_than_q_bar_6} or (\ref{eq_relaxed_APA_6}), then we can use the semi-local maximum principle, Proposition~\ref{Prop_semi_local_max}, and a priori assumption \ref{item_q_less_than_nu_q_bar_12} to improve this bound by a factor of $\frac1{10}$.
On the other hand, points for which such a relaxed bound is absent in such a parabolic neighborhood must be close enough to a cut, and thus even farther from the neck-like boundary.  
In this case we can guarantee a bound of the form $Q\leq 10W\ov{Q}$ by either \ref{item_q_less_than_w_q_bar_7} or (\ref{eq_relaxed_APA_7}) on an even larger parabolic neighborhood, of size comparable to $A$.  
The bound (\ref{eq_Q_ov_Q_lemma_APA_7}) then follows from the interior decay estimate, Proposition~\ref{Prop_interior_decay} and a priori assumption \ref{item_q_less_than_nu_q_bar_12}, for large enough $A$.

\begin{proof}
Let $x \in \N_{[0, t^{**}]} \setminus \cup_{\DD \in \Cut \cup \Cut^J}$ and assume that $P(x, \lb A \rho_1 (x)) \lb \cap \lb \DD \lb = \lb \emptyset$ for all $\DD \in \Cut \cup \Cut^J$.
Set $t := \mathfrak{t} (x)$.
Our goal will be to show that $Q (x) \leq \ov{Q}$.
By a priori assumption \ref{item_q_less_than_q_bar_6} from our induction hypothesis, we only need to consider the case $t > t_J$ and $x \in \N^{J+1}_{(t_J, t^{**}]}$.

Let $L = L(E) < \infty$ be the constant from Proposition~\ref{Prop_semi_local_max}.
By Lemma~\ref{lem_containment_parabolic_nbhd} and a priori assumption \ref{item_lambda_thick_2}, and assuming
\[ \eps_{\can} \leq \ov\eps_{\can} (L(E), \lambda, A), \]
we can find a constant $A' = A' (L(E), A) < \infty$ with $A' \geq \max \{ A, L \}$ such that $P(x, A' \rho_1 (x))$ is unscathed and
\begin{equation} \label{eq_P_10_A_P_A_prime}
 P(y, 10A \rho_1 (y)) \subset P(x, A' \rho_1 (x)) 
 \qquad \text{for all} \qquad y \in P(x, L \rho_1 (x)). 
\end{equation}

We now distinguish two cases:

\textit{Case 1: \quad $B(x, L \rho_1 (x)) \not\subset \N^{J+1}_{t}$.}

The goal in this case will be to apply assertion \ref{ass_12.47_c} of Lemma~\ref{lem_summary_APA_10_12}.
To do this, we first need to bound $\rho_1(x)$ from above.
For this purpose, choose $z \in \partial \N^{J+1}_t$ such that $d_t (x,z) < L \rho_1 (x)$.
By a priori assumption \ref{item_backward_time_slice_3}(a) and Lemma~\ref{lem_time_slice_neck_implies_space_time_neck}, and assuming
\[ \delta_{\nn} \leq \ov\delta_{\nn} , \qquad
\eps_{\can} \leq \ov\eps_{\can} , \qquad
r_{\comp} \leq \ov{r}_{\comp}, \]
we know that $z$ is a center of a sufficiently precise neck $U \subset \M_t$ at scale $a r_{\comp}$ for some $a \in [1,2]$  such that $\rho_1 (z) < 2.1 r_{\comp}$.
By bounded curvature at bounded distance, Lemma~\ref{lem_bounded_curv_bounded_dist}, and assuming
\[ \eps_{\can} \leq \ov\eps_{\can} (L(E)), \]
we therefore obtain $\rho_1(x) < C r_{\comp}$ for some $C = C(L(E)) < \infty$.
Thus $$d_t (x, \partial \N_t^{J+1}) \leq d_t (x,z) < CL r_{\comp}\,.$$
We can now apply assertion \ref{ass_12.47_c} of Lemma~\ref{lem_summary_APA_10_12}, assuming
\[ F > C(L(E))\cdot L(E), \]
and  obtain that $Q (x) \leq \ov{Q}$.

\textit{Case 2: \quad $B(x, L \rho_1 (x)) \subset \N^{J+1}_{t}$.}

We distinguish two subcases.

\textit{Case 2a: \quad $P(x, A' \rho_1 (x)) \cap \DD = \emptyset$ for all $\DD \in \Cut \cup \Cut^J$.}

Recall that $P(x, L \rho_1 (x)) \subset P(x, A' \rho_1 (x))$.
So by Lemma~\ref{lem_parabolic_domain_in_N} we have
\[ P(x, L \rho_1 (x)) \subset  \N_{[0, t^{**}]}  \setminus \cup_{\DD \in \Cut \cup \Cut^J} \DD. \]

Using the assumption of Case 2a, (\ref{eq_P_10_A_P_A_prime}), a priori \ref{item_q_less_than_q_bar_6} from the induction hypothesis and (\ref{eq_relaxed_APA_6}), we obtain that $Q \leq 10 \ov{Q}$ on $P(x, L \rho_1 (x))$.
By Lemma~\ref{lem_verification_of_APA_6} we have $|h| \leq \eta_{\lin}$ on $P(x, L \rho_1 (x))$.
If $P(x, L \rho_1 (x))$ intersects the initial time-slice $\M_0$, then a priori assumption \ref{item_q_less_than_nu_q_bar_12} also implies that $Q \leq \nu \ov{Q}$ on $P(x, L \rho_1 (x)) \cap \M_0$. 
So by Proposition~\ref{Prop_semi_local_max}, a priori assumption \ref{item_lambda_thick_2}, and assuming
\begin{equation*}
 E > 2, \qquad 
 H \geq \underline{H} (E), \qquad 
 \eta_{\lin} \leq \ov\eta_{\lin} (E), \qquad 
 \nu \leq \ov\nu (E), \qquad  
 \eps_{\can} \leq \ov\eps_{\can} (E, \lambda), 
\end{equation*}
we obtain the improved estimate $Q(x) \leq \ov{Q}$.

\textit{Case 2b: \quad $P(x, A' \rho_1 (x)) \cap \DD \neq \emptyset$ for some $\DD \in \Cut \cup \Cut^J$.}

Applying Lemma~\ref{lem_boundary_far_from_cut} with $A_0=A'$ and a priori assumption \ref{item_cut_diameter_less_than_d_r_comp_11}, and assuming
\begin{gather*}
 \delta_{\nn} \leq \ov\delta_{\nn}, \qquad 
 \lambda\leq \ov\lambda,\qquad
 \Lambda \geq \underline\Lambda, \qquad
 \delta_{\bb} \leq \ov\delta_{\bb} (\lambda, D_{\cut},  A'(E,A), \Lambda), \\ \qquad \eps_{\can} \leq \ov\eps_{\can}  (\lambda, D_{\cut},  A'(E,A), \Lambda), \qquad 
 r_{\comp} \leq \ov{r}_{\comp}, 
\end{gather*}
we find that   $B(x, A \rho_1 (x)) \subset \N $.
Recall moreover that by assumption of the lemma we have $P(x, A \rho_1 (x)) \cap \DD = \emptyset$ for all $\DD \in \Cut \cup \Cut^J$.
Therefore, again by Lemma~\ref{lem_parabolic_domain_in_N}, we obtain that
\[ P(x, A \rho_1 (x)) \subset  \N_{[0, t^{**}]} \setminus \cup_{\DD \in \Cut \cup \Cut^J} \DD. \]
By  a priori assumption \ref{item_q_less_than_w_q_bar_7} from the induction hypothesis and (\ref{eq_relaxed_APA_7}), we have $Q  \leq 10 W \ov{Q}$ on $P(x, A \rho_1 (x))$.
We will now apply Proposition~\ref{Prop_interior_decay} to $P(x, \lb A \rho_1 (x))$ in order to improve this estimate at $x$.
To do this, observe that, by Lemma \ref{lem_verification_of_APA_6} we have $|h| \leq \eta_{\lin}$ on $P(x, A \rho_1 (x))$ and if $P(x, A \rho_1 (x))$ intersects the initial time-slice $\M_0$, then a priori assumption \ref{item_q_less_than_nu_q_bar_12} implies that $Q \leq \nu \ov{Q}$ on $P(x, A \rho_1 (x)) \cap \M_0$. 
We can therefore apply Proposition~\ref{Prop_interior_decay} to $P(x, A \rho_1 (x))$, along with a priori assumption \ref{item_lambda_thick_2}, assuming that
\begin{gather*}
 E > 2, \qquad 
 H \geq \underline{H} (E), \qquad 
 \eta_{\lin} \leq \ov\eta_{\lin} (E),  \qquad 
 \nu \leq \ov{\nu} (E), \qquad \\
 A \geq \underline{A} (E, W), \qquad 
 \eps_{\can} \leq \ov\eps_{\can} (E, \lambda, W), 
\end{gather*}
and conclude that $Q(x) \leq \ov{Q}$.
This finishes the proof.
\end{proof}

Next, we verify a priori assumption \ref{item_q_less_than_w_q_bar_7}.

\begin{lemma}[Verification of \ref{item_q_less_than_w_q_bar_7}] \label{lem_verification_of_APA_8}
If
\begin{gather*}
E \geq \underline{E}, \qquad 
H \geq \underline{H} (E), \qquad 
\eta_{\lin} \leq \ov\eta_{\lin} (E), \qquad 
\nu \leq \ov\nu (E), \\
\delta_{\nn} \leq \ov\delta_{\nn}, 
 \qquad \lambda \leq \ov\lambda, \qquad 
 W \geq \underline{W} (E, \lambda, D_{\cut}) \qquad 
 A \geq \underline{A}(E),   \\ 
 \Lambda \geq \underline\Lambda, \qquad 
 \delta_{\bb} \leq \ov\delta_{\bb} (\lambda, D_{\cut}, A, \Lambda ), \qquad 
 \eps_{\can} \leq \ov\eps_{\can} ( E, \lambda, D_{\cut}, A, \Lambda  ), \\
   \qquad r_{\comp} \leq \ov{r}_{\comp},
\end{gather*}
then a priori assumption \ref{item_q_less_than_w_q_bar_7} holds.
In other words, we have
\begin{equation} \label{eq_Q_W_Q_lemma}
 Q \leq W \overline{Q} \qquad \text{on} \qquad  \N_{[0, t^{**}]}  \setminus \cup_{\DD \in \Cut \cup \Cut^J} \DD. 
\end{equation}
\end{lemma}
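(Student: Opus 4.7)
I would model the proof closely on that of Lemma~\ref{lem_verification_APA_7}, targeting the weaker bound $Q \leq W\ov{Q}$ but without the simplifying hypothesis that $P(x,A\rho_1(x))$ avoids all cuts. The base case $t \in [0,t_J]$ is immediate from a priori assumption~\ref{item_q_less_than_w_q_bar_7} of the induction hypothesis. For $x \in \N^{J+1}_{(t_J,t^{**}]}$ with $t := \mathfrak{t}(x)$, I would let $L = L(E)$ be the constant from Proposition~\ref{Prop_semi_local_max} and take $A' = A'(L,A)$ from Lemma~\ref{lem_containment_parabolic_nbhd}, then split into three sub-cases. First, if $B(x,L\rho_1(x)) \not\subset \N^{J+1}_t$, the neck structure of $\partial\N^{J+1}_t$ (via a priori assumption~\ref{item_backward_time_slice_3}(a), Lemma~\ref{lem_time_slice_neck_implies_space_time_neck}, and Lemma~\ref{lem_bounded_curv_bounded_dist}) produces $\rho_1(x) < Cr_{\comp}$ and $d_t(x,\partial\N^{J+1}_t) < CLr_{\comp}$; choosing $F \geq CL$, assertion~\ref{ass_12.47_c} of Lemma~\ref{lem_summary_APA_10_12} yields $Q(x) \leq \ov{Q} \leq W\ov{Q}$.

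When $B(x,L\rho_1(x)) \subset \N^{J+1}_t$, I would further distinguish Case~2a and Case~2b. In Case~2a, when $P(x,A'\rho_1(x))$ is disjoint from every $\DD \in \Cut \cup \Cut^J$, Lemma~\ref{lem_parabolic_domain_in_N} places $P(x,L\rho_1(x))$ inside $\N \setminus \cup_{\DD}\DD$, so (\ref{eq_relaxed_APA_6}) gives $Q \leq 10\ov{Q}$ there; the semi-local maximum principle (Proposition~\ref{Prop_semi_local_max}), combined with a priori assumption~\ref{item_q_less_than_nu_q_bar_12} on any intersection with $\M_0$, yields $Q(x) \leq \ov{Q}/10 + C(E)\nu\ov{Q} \leq \ov{Q}$ for $\nu$ small. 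In Case~2b, where $P(x,A'\rho_1(x))$ meets some cut, Lemma~\ref{lem_boundary_far_from_cut} guarantees $B(x,A\rho_1(x)) \subset \N_t$, so (\ref{eq_relaxed_APA_7}) gives $Q \leq 10W\ov{Q}$ on $P(x,A\rho_1(x))$; applying Proposition~\ref{Prop_interior_decay} with $\alpha \leq 1/20$ and $A \geq \underline{A}(E,\alpha)$ bounds the interior contribution by $W\ov{Q}/2$, while the contribution from $P \cap \M_0$ is controlled via a priori assumption~\ref{item_q_less_than_nu_q_bar_12}.

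The hard part will be controlling the initial-data contribution from $P \cap \N_{t_J+}$ in Case~2b, since naively using only the uniform bound $Q_+ \leq W\ov{Q}$ from (\ref{eq_relaxed_APA_7_tJ_plus}) would yield $(10\alpha + C(E))W\ov{Q} > W\ov{Q}$ after propagation through the $C(E)$ factor in Proposition~\ref{Prop_interior_decay}. The resolution is to use the refined bounds on the initial data: on each $\DD \in \Cut^J$, assertion~\ref{ass_12.3_d} of Proposition~\ref{Prop_performing_cap_extensions} gives $Q^*_+ \leq \eta_{\cut}\ov{Q}^*$, which together with the scale estimate $\rho_1 \leq C'(\lambda,D_{\cut})r_{\comp}$ on cuts (from a priori assumption~\ref{item_cut_diameter_less_than_d_r_comp_11} and Lemma~\ref{lem_bounded_curv_bounded_dist}) yields $Q_+ \leq C_0(E,\lambda,D_{\cut})\eta_{\cut}\ov{Q}$, while on $\N_{t_J+} \setminus \cup_\DD\DD$ the identity $Q_+ = Q_-$ transfers the induction hypothesis directly. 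The lower bound $W \geq \underline{W}(E,\lambda,D_{\cut})$ in the statement is then chosen sufficiently large, depending on $C(E)$ and $C_0(E,\lambda,D_{\cut})$, so that the sum of the interior and the refined initial-data contributions stays below $W\ov{Q}$, completing the induction step and closing the continuity argument of Lemma~\ref{lem_setup_of_t_star_star}.
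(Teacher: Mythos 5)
Your proposal diverges from the paper's argument at the crucial Case~2b, and the divergence introduces a genuine gap.

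The paper's case split is on whether $B(x,10A\rho_1(x))\subset\N^{J+1}_t$, and its Case~2b (where $P(x,L\rho_1(x))$ meets a cut) is resolved without any maximum-principle argument at all: one uses the scale bound $\rho_1(x)\leq C''(L(E))C'(\lambda,D_{\cut})r_{\comp}$ (from a priori assumptions~\ref{item_geometry_cap_extension_5},~\ref{item_cut_diameter_less_than_d_r_comp_11} and Lemma~\ref{lem_bounded_curv_bounded_dist}), together with the bound $Q^*(x)\leq 10\ov{Q}^*$ from (\ref{eq_relaxed_APA_8}) --- available precisely because Case~2 guarantees $B(x,10A\rho_1(x))\subset\N^{J+1}_t = \N_{t-}$ --- and converts via $Q(x)=\rho_1^{E-3}(x)Q^*(x)$. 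The entire $D_{\cut}$-dependence of $\underline{W}$ comes from this conversion factor, and no truncated parabolic region or initial-data estimate at $t_J$ is ever needed.

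Your Case~2b instead applies Proposition~\ref{Prop_interior_decay} to a parabolic region that crosses a cut, so you must truncate at time $t_J$ and control the initial data there (a time-shifted version of Propositions~\ref{Prop_semi_local_max}/\ref{Prop_interior_decay}, as noted in the remark after Proposition~\ref{Prop_semi_local_max}). You correctly identify this as the hard part, but your resolution does not close the gap. On the cut you get $Q_+\leq C_0(E,\lambda,D_{\cut})\eta_{\cut}\ov{Q}$ via assertion~\ref{ass_12.3_d} and the scale bound --- fine. But on $\N_{t_J+}\setminus\cup_\DD\DD$, saying ``$Q_+=Q_-$ transfers the induction hypothesis directly'' only gives the uniform bound $Q_-\leq W\ov{Q}$ from a priori assumption~\ref{item_q_less_than_w_q_bar_7}. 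The interior decay estimate then yields $Q(x)\leq 10\alpha\,W\ov{Q}+C(E)\,W\ov{Q}$, which exceeds $W\ov{Q}$ whenever $C(E)\geq 1$, and this cannot be repaired by making $W$ larger since both terms scale linearly in $W$. To salvage the argument one would need the sharper bound $Q_-\leq\ov{Q}$ (i.e.\ a priori assumption~\ref{item_q_less_than_q_bar_6} rather than~\ref{item_q_less_than_w_q_bar_7}) on the uncut part of the initial slice, and this in turn requires verifying that $P(y,A\rho_1(y))$ avoids all cuts in $\Cut$ for every $y$ in that slice --- exactly the content of the second assertion of Lemma~\ref{lem_large_scale_near_neck}, which your proposal does not invoke. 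Moreover, Lemma~\ref{lem_large_scale_near_neck} needs $D_{\cut}\geq\underline{D}_{\cut}(\lambda,C_\#)$, a parameter restriction your proof would have to route through. In short: the student's route is workable in principle, but requires an additional lemma, a time-shifted decay estimate, and more careful parameter bookkeeping, none of which the paper's direct $Q^*$-to-$Q$ conversion needs; and as written, the initial-data step does not produce a bound below $W\ov{Q}$.
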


Note that a main aspect of this lemma is that $W$ does not depend on $A$.
Otherwise the inequality (\ref{eq_Q_W_Q_lemma}) would follow easily from (\ref{eq_relaxed_APA_6}) and (\ref{eq_relaxed_APA_8}).
More specifically, at points whose distance to an extension cap is bounded in terms of $A$, we can only use (\ref{eq_relaxed_APA_8}) to obtain a bound on $Q$.
However, the ``conversion'' factor between $Q^*$ and $Q$ at such a point depends on $A$.
So the bound (\ref{eq_relaxed_APA_8}) cannot be used directly to verify (\ref{eq_Q_W_Q_lemma}).

The idea of the proof is the following.
We may focus on the time-slab $\N^{J+1}_{[t_J, t^{**}]}$ since the bound (\ref{eq_Q_W_Q_lemma}) follows from a priori assumption \ref{item_q_less_than_w_q_bar_7} of the induction hypothesis.
The bound (\ref{eq_Q_W_Q_lemma}) follows from (\ref{eq_relaxed_APA_6}) (the relaxed version of \ref{item_q_less_than_q_bar_6}) at points that are far away from the cuts, i.e. at distance comparable to $A$.
For points that are close to the cuts we distinguish two cases.
The strategy in the first case is to deduce (\ref{eq_Q_W_Q_lemma}) from  a priori assumption \ref{item_q_less_than_w_q_bar_7} and its relaxed version (\ref{eq_relaxed_APA_7}) via the semi-local maximum principle, Proposition~\ref{Prop_semi_local_max}.
This argument only works at points that are still sufficiently far away from the cuts, this time with separation comparable to $L(E)\ll A$.
In the second case we consider points that are close to cuts, comparable to $L(E)$.
At these points (\ref{eq_relaxed_APA_8}) (the relaxed version of \ref{item_q_star_less_than_q_star_bar}) guarantees a bound of the form $Q^* \leq 10 \ov{Q}^*$.
This bound translates into a bound on $Q$ and the conversion factor can be controlled in terms of $L(E), E, \lambda$ and $D_{\cut}$.
So (\ref{eq_Q_W_Q_lemma}) follows as long we choose $W$ larger than this conversion factor.

\begin{proof}
Consider a point $x \in \N^{J+1}_{[t_J, t^{**}]} \setminus \cup_{\DD \in \Cut^J} \DD$.
Note that the case when $t:=\t(x)=t_J$ follows from the induction hypothesis, so we assume in the following that $t>t_J$.

We distinguish the following cases.

\textit{Case 1: \quad $B(x, 10A \rho_1 (x)) \not\subset \N^{J+1}_{t}$.}

Then we can apply Lemma~\ref{lem_boundary_far_from_cut} along with a priori assumption \ref{item_cut_diameter_less_than_d_r_comp_11}, assuming that
\begin{gather*}
 \delta_{\nn} \leq \ov\delta_{\nn}, \qquad 
 \lambda \leq \ov\lambda, \qquad 
 \Lambda \geq \underline\Lambda , \qquad
\delta_{\bb} \leq \ov\delta_{\bb} (\lambda, D_{\cut}, A, \Lambda ), \qquad \\
\eps_{\can} \leq \ov\eps_{\can} (\lambda, D_{\cut}, A, \Lambda ), \qquad 
r_{\comp} \leq \ov{r}_{\comp}, 
\end{gather*}
and obtain that $P(x, 10 A \rho_1 (x)) \cap \DD = \emptyset$ for all $\DD \in \Cut \cup \Cut^J$.
So by (\ref{eq_relaxed_APA_6}) we have $Q (x) \leq 10 \ov{Q}$.
Therefore, $Q(x) \leq W \ov{Q}$, as long as
\[ W \geq 10. \]

\textit{Case 2: \quad $B(x, 10A \rho_1 (x)) \subset \N^{J+1}_{t}$.}

Choose $L = L(E)$ from Proposition~\ref{Prop_semi_local_max}.
We distinguish two subcases.

\textit{Case 2a: \quad $P(x, L \rho_1 (x)) \cap \DD = \emptyset$ for all $\DD \in \Cut \cup \Cut^J$.}

Assume that
\[ 10 A \geq L(E). \]
So $B(x, L \rho_1 (x)) \subset \N$ and thus by Lemma~\ref{lem_parabolic_domain_in_N}
\[ P(x, L \rho_1 (x)) \subset  \N_{[0, t^{**}]}^{J+1}  \setminus \cup_{\DD \in \Cut \cup \Cut^J} \DD. \]
Let us now apply Proposition~\ref{Prop_semi_local_max} to $P(x, L \rho_1 (x))$.
To do this, note that by Lemma \ref{lem_verification_of_APA_6}, a priori assumption \ref{item_q_less_than_w_q_bar_7} from the induction hypothesis and (\ref{eq_relaxed_APA_7}), we know that $|h| \leq \eta_{\lin}$ and $Q \leq 10 W \ov{Q}$ on $P(x, L \rho_1 (x))$.
If $P(x, L \rho_1 (x))$ intersects $\M_0$, then by a priori assumption \ref{item_q_less_than_nu_q_bar_12} we also have $Q \leq \nu \ov{Q}$ on the intersection.
Lastly, by a priori assumption \ref{item_lambda_thick_2} we have $\rho_1 (x) > \lambda r_{\comp}$.
So assuming
\begin{gather*}
 E > 2, \qquad 
 H \geq \underline{H} (E), \qquad 
 \nu \leq \ov\nu (E), \qquad 
 \eta_{\lin} \leq \ov\eta_{\lin} (E), \qquad 
 \eps_{\can} \leq \ov\eps_{\can} (E, \lambda), 
\end{gather*}
we obtain from Proposition~\ref{Prop_semi_local_max} that $Q(x) \leq W \ov{Q}$, as desired. 

\textit{Case 2b: \quad $P(x, L \rho_1 (x)) \cap \DD \neq \emptyset$ for some $\DD \in \Cut \cup \Cut^J$.}

By a priori assumptions \ref{item_geometry_cap_extension_5} and \ref{item_cut_diameter_less_than_d_r_comp_11}, and assuming
\[ \delta_{\bb} \leq \ov\delta_{\bb} ( \lambda,  D_{\cut}), \]
we can conclude that there is a constant $C' = C'(\lambda, D_{\cut}) < \infty$ such that $\rho_1 \leq C' r_{\comp}$ on $\DD$.
Next, by bounded curvature at bounded distance, Lemma~\ref{lem_bounded_curv_bounded_dist}, applied at $x$, a priori assumption \ref{item_lambda_thick_2}, and assuming that
\[ \eps_{\can} \leq \ov\eps_{\can} (L(E), \lambda), \]
we obtain a constant $C'' = C'' (L(E)) < \infty$ such that
\[ \rho_1 (x) \leq C'' C' r_{\comp}. \]
Since $B(x, 10A \rho_1 (x)) \subset \N^{J+1}_{t}$, we obtain from (\ref{eq_relaxed_APA_8}) that $Q^*(x) \leq 10 \ov{Q}^*$.
Assuming
\[ E \geq 3, \qquad \lambda \leq 1, \]
we can now convert this bound to a bound on $Q(x)$ as follows:
\begin{multline*}
 Q(x) = \rho_1^{E-3} (x) Q^*(x) 
 \leq \rho_1^{E-3} (x) 10 \ov{Q}^* \\
 \leq \big( C''(L(E))  C'(\lambda, D_{\cut})  r_{\comp} \big)^{E-3}  \eta_{\lin} (\lambda r_{\comp})^3 \\
 \leq 10^{E +1} \big( C'' (L(E)) C' (\lambda, D_{\cut}) \big)^{E-3} 10^{-E-1} \eta_{\lin} r_{\comp}^E  \\
 \leq 10^{E +1} \big( C'' (L(E)) C' (\lambda, D_{\cut}) \big)^{E-3} \ov{Q}. 
\end{multline*}
So $Q(x) \leq W \ov{Q}$, as long as
\[ W \geq \underline{W} (E, \lambda, D_{\cut}). \]
This finishes the proof.
\end{proof}

Lastly, we establish a priori assumption \ref{item_q_star_less_than_q_star_bar}.

\begin{lemma}[Verification of \ref{item_q_star_less_than_q_star_bar}] \label{lem_verification_APA_9}
If
\begin{gather*}
E \geq \underline{E}, \qquad
H \geq \underline{H}, \qquad
\eta_{\lin} \leq \ov\eta_{\lin}, \qquad
\nu \leq \ov\nu , \qquad
\delta_{\nn} \leq \ov\delta_{\nn}, \qquad 
\lambda \leq \ov\lambda, \qquad \\
\eta_{\cut} \leq \ov\eta_{\cut} , \qquad 
D_{\cut} \geq \underline{D}_{\cut} (\lambda), \qquad
A \geq \underline{A} (E, \lambda), \qquad 
\Lambda \geq \underline\Lambda, \\
\delta_{\bb} \leq \ov\delta_{\bb} (\lambda, D_{\cut}, A, \Lambda ), \qquad
\eps_{\can} \leq \ov\eps_{\can} (E,\lambda, D_{\cut}, A, \Lambda ), \qquad \\
r_{\comp} \leq \ov{r}_{\comp} (\lambda), \qquad
\end{gather*}
then a priori assumption \ref{item_q_star_less_than_q_star_bar} holds.
In other words, we have  
\begin{equation} \label{eq_Q_star_Q_lemma}
 Q^*(x) \leq \overline{Q}^* = 10^{-1} \eta_{\lin} (\lambda r_{\comp})^{3} 
\end{equation}
for all $x \in \N_{[0, t^{**}]} \setminus \cup_{\DD \in \Cut \cup \Cut^J} \DD$ for which $B(x, A \rho_1 (x)) \subset \N_{\mathfrak{t}(x)-}$.
\end{lemma}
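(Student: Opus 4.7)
The argument parallels that of Lemma~\ref{lem_verification_APA_7}, but applied to the weighted quantity $Q^*$ rather than $Q$, using the maximum principle estimates of Section~\ref{sec_semi_local_max} with exponent $E'=3$ in place of $E$. First I would reduce to the case $t:=\t(x)\in (t_J,t^{**}]$, since for $t\leq t_J$ the conclusion $Q^*(x)\leq \ov{Q}^*$ is part of a priori assumption \ref{item_q_star_less_than_q_star_bar} from the induction hypothesis. Note that the ``near neck-like boundary'' case that appears in Lemma~\ref{lem_verification_APA_7} does not occur here, because the hypothesis $B(x,A\rho_1(x))\subset\N_{t-}$ already separates $x$ from $\partial \N^{J+1}_t$ by a large multiple of $\rho_1(x)$; combined with the neck structure of $\partial\N^{J+1}_t$ (a priori assumption \ref{item_backward_time_slice_3}(a)) and Lemma~\ref{lem_bounded_curv_bounded_dist}, for $A\geq \underline{A}(E,\lambda)$ sufficiently large this prevents $\rho_1(x)$ from being forced down to the neck scale $r_{\comp}$ by boundary proximity.

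With $L:=L(3)$ from Proposition~\ref{Prop_semi_local_max} (applied with $E'=3$, which just meets the requirement $E'>2$) and $A'=A'(L,10A)$ from Lemma~\ref{lem_containment_parabolic_nbhd}, I get $P(x,A'\rho_1(x))$ unscathed and $P(y,10A\rho_1(y))\subset P(x,A'\rho_1(x))$ for every $y\in P(x,L\rho_1(x))$. The main sub-case (``far from cuts'') is when $P(x,A'\rho_1(x))\cap \DD=\emptyset$ for every $\DD\in\Cut\cup\Cut^J$ and $B(x,A'\rho_1(x))\subset \N_{t-}$. Then Lemma~\ref{lem_parabolic_domain_in_N} gives $P(x,L\rho_1(x))\subset \N_{[0,t^{**}]}\setminus\cup\DD$, the product structure of this sub-region propagates the inclusion to $B(y,10A\rho_1(y))\subset \N_{\t(y)-}$ for every $y\in P(x,L\rho_1(x))$, and the relaxed bound \eqref{eq_relaxed_APA_8} yields $Q^*\leq 10\,\ov{Q}^*$ on $P(x,L\rho_1(x))$. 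Combined with $|h|\leq\eta_{\lin}$ from Lemma~\ref{lem_verification_of_APA_6} and (when $P$ meets $\M_0$) the bound $Q^*\leq\nu\ov{Q}^*$ on $\M_0$ from a priori assumption \ref{item_q_less_than_nu_q_bar_12}, Proposition~\ref{Prop_semi_local_max} applied with exponent $E'=3$ gives
\[
Q^*(x)\leq \tfrac{1}{100}\cdot 10\,\ov{Q}^* + C\nu\,\ov{Q}^* \leq \ov{Q}^*
\]
provided $\nu\leq \ov\nu(E)$, which is the desired estimate.

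The remaining sub-case (``near cuts'') is $P(x,A'\rho_1(x))\cap \DD\neq\emptyset$ for some cut $\DD\subset\M_{t_k}$ with $t_k\leq t$. Here I would first apply Lemma~\ref{lem_boundary_far_from_cut} to obtain $B(x,A'\rho_1(x))\subset\N$, and use the diameter bound from a priori assumption \ref{item_cut_diameter_less_than_d_r_comp_11} together with the Bryant-soliton geometry of a priori assumption \ref{item_geometry_cap_extension_5} and Lemma~\ref{lem_bounded_curv_bounded_dist} to pin the scale of $x$ to a range $c(\lambda)r_{\comp}\leq \rho_1(x)\leq C(\lambda,D_{\cut},A')r_{\comp}$. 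At the cut time $t_k$, a priori assumption \ref{item_h_derivative_bounds_9} gives $Q^*_+\leq \eta_{\cut}\ov{Q}^*$ on $\DD$, while the argument proving \eqref{eq_relaxed_APA_8_tJ_plus} in Lemma~\ref{lem_setup_of_t_star_star} (together with the induction hypothesis for $t_k<t_J$) gives $Q^*_+\leq\ov{Q}^*$ on $\N_{t_k+}\setminus \DD$; thus $Q^*_+\leq\ov{Q}^*$ throughout $\N_{t_k+}$. I would then use a time-shifted version of Proposition~\ref{Prop_interior_decay} applied to $Q^*$ on a parabolic neighborhood of $x$ reaching back to time $t_k$, with decay factor $\alpha$ chosen small enough depending on $E$ and $\lambda$ (permissible since the hypothesis allows $A\geq\underline{A}(E,\lambda)$), using the strict bound $Q\leq\ov{Q}$ from Lemma~\ref{lem_verification_APA_7} away from cuts to control $Q^*$ on the rest of the parabolic neighborhood via the explicit scale relation $Q^*=\rho_1^{3-E}Q$ on that compact scale band. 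This upgrades the cut-time bound to $Q^*(x)\leq\ov{Q}^*$.

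The chief obstacle is the near-cut sub-case: unlike the verification of a priori assumption \ref{item_q_less_than_w_q_bar_7}, there is no global reservoir estimate ``$Q^*\leq W\ov{Q}^*$'' to feed into the interior decay estimate, so the argument must substitute the sharp cut data of a priori assumption \ref{item_h_derivative_bounds_9} together with the conversion between $Q$ and $Q^*$ in the compact scale band $\rho_1\asymp r_{\comp}$ that the Bryant-like geometry near a cut imposes. This interplay — which is exactly what forces the dependence $A\geq\underline{A}(E,\lambda)$ in the statement — is the delicate point; once it is arranged so that the conversion factor $\rho_1^{3-E}$ is absorbed by the decay factor $\alpha$ coming from Proposition~\ref{Prop_interior_decay}, the rest of the argument is routine.
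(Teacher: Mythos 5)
Your proposal correctly identifies the far-from-cuts case and correctly observes that Proposition~\ref{Prop_semi_local_max} with exponent $3$ (just above the threshold $E'>2$) does the work there. But there is a genuine gap in the case analysis, and a misdiagnosis of where the $A\geq\underline{A}(E,\lambda)$ dependence comes from.

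You split into two sub-cases: either $P(x,A'\rho_1(x))$ avoids all cuts and $B(x,A'\rho_1(x))\subset\N_{t-}$, or $P(x,A'\rho_1(x))$ meets a cut. This omits the situation where $P(x,A'\rho_1(x))$ avoids every cut but $B(x,A'\rho_1(x))\not\subset\N$. You assert this cannot happen because the hypothesis $B(x,A\rho_1(x))\subset\N_{t-}$ keeps $x$ far from the neck-like boundary, but this does not follow: $A'=A'(L(3),A)$ coming from Lemma~\ref{lem_containment_parabolic_nbhd} is much larger than $A$, so the larger ball can certainly reach $\partial\N_t^{J+1}$. In fact this is the case where the dependence $A\geq\underline{A}(E,\lambda)$ is actually forced, not the near-cut case as you claim. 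The correct handling is to invoke Lemma~\ref{lem_boundary_far_from_cut} to see that $P(x,A'\rho_1(x))$ avoids all cuts (so $P(x,A\rho_1(x))\subset\N\setminus\cup\DD$ by Lemma~\ref{lem_parabolic_domain_in_N}), obtain $Q\leq 10\ov{Q}$ on $P(x,A\rho_1(x))$ from a priori assumption~\ref{item_q_less_than_q_bar_6}/\eqref{eq_relaxed_APA_6}, convert via $Q^*=\rho_1^{3-E}Q\leq(\lambda r_{\comp})^{3-E}\cdot 10\ov{Q}=10\lambda^{-E}\ov{Q}^*$ using $\rho_1>\lambda r_{\comp}$ (a priori assumption \ref{item_lambda_thick_2}), and then let Proposition~\ref{Prop_interior_decay} with $A\geq\underline{A}(E,\lambda)$ absorb the $\lambda^{-E}$ factor. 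Without this case there is no place in the proof that needs $A$ large relative to $E$ and $\lambda$, which should have been a warning sign.

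In the near-cut case your outline is closer in spirit but still incomplete. The paper truncates the parabolic neighborhood at the last time $t_{j_0}$ where it leaves $\N\setminus\cup\DD$ (or at $0$), extends $h$ to the initial time-slice $B_{t_{j_0}}$ by $h_{t_{j_0}+}$, bounds $Q^*$ there in three sub-cases (initial time, on $\DD_0$, off $\DD_0$), and then feeds these into the initial-time-slice term of Proposition~\ref{Prop_semi_local_max} at exponent $3$. Your sketch instead appeals to \eqref{eq_relaxed_APA_8_tJ_plus} to claim $Q^*_+\leq\ov{Q}^*$ throughout $\N_{t_k+}$, but that estimate is proved only under the hypothesis $B(y,A\rho_1(y))\subset\N_{t_J+}$ and does not hold globally. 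The key missing ingredient in your off-cut sub-case is Lemma~\ref{lem_large_scale_near_neck}: it simultaneously gives the scale lower bound $\rho_1(y)\geq C_\# r_{\comp}$ needed to make the conversion $Q^*=\rho_1^{3-E}Q\leq C_\#^{3-E}\eta_{\lin}r_{\comp}^3$ small, and shows $P(y,A\rho_1(y))$ avoids all earlier cuts so that a priori assumption \ref{item_q_less_than_q_bar_6} actually applies. Replacing Proposition~\ref{Prop_semi_local_max} by an ad hoc ``time-shifted'' Proposition~\ref{Prop_interior_decay} is not necessary once the initial time-slice bound has been obtained correctly.
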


Let us first summarize the strategy of the proof.
As in the previous proofs, the semi-local maximum principle, Proposition~\ref{Prop_semi_local_max}, can be used to deduce (\ref{eq_Q_star_Q_lemma}) from   a priori assumption \ref{item_q_star_less_than_q_star_bar} or its relaxed version (\ref{eq_relaxed_APA_8})  at points that are sufficiently far away from the cuts and the neck-like boundary of $\N$.
Now, consider points that are close to the neck-like boundary, but far enough (comparably to $A$) from this boundary such that the assertion does not become vacuous.
At such points we use the bound $Q \leq 10 \ov{Q}$ from  a priori assumption \ref{item_q_less_than_q_bar_6} and its relaxed version (\ref{eq_relaxed_APA_6}) and the interior decay estimate, Proposition~\ref{Prop_interior_decay}, to overcome the conversion factor between $Q$ and $Q^*$ for sufficiently large $A$.
Lastly, consider points that are close to a cut.
At such points we invoke the semi-local maximum principle, Proposition~\ref{Prop_semi_local_max} with initial condition, on a truncated parabolic neighborhood whose initial time-slice intersects the cut.
We then use a priori assumption \ref{item_h_derivative_bounds_9} or \ref{item_q_less_than_q_bar_6} to deduce a very good bound for $Q^*$ on this initial time-slice.
Proposition~\ref{Prop_semi_local_max} then implies (\ref{eq_Q_star_Q_lemma}).

\begin{proof}
Consider a point $x \in \N_{[0, t^{**}]} \setminus \cup_{\DD \in \Cut \cup \Cut^J} \DD$ such that $B(x,  \lb A \rho_1 (x)) \lb \subset \lb \N_{\mathfrak{t}(x)-}$ and set $t := \mathfrak{t}(x)$.
The case $t \leq t_J$ follows from a priori assumption \ref{item_q_star_less_than_q_star_bar} of the induction hypothesis.
So in the following we assume that $t > t_J$ and therefore that $B(x, A \rho_1 (x)) \subset \N^{J+1}_t$.  

Let $L=L(3)$ be the constant from Proposition~\ref{Prop_semi_local_max} (for $E=3$).
Using Lemma \ref{lem_containment_parabolic_nbhd}, a priori assumption \ref{item_lambda_thick_2}, and assuming that
\[ \eps_{\can} \leq \ov\eps_{\can} (L, \lambda, A), \]
we can find a constant $A' = A' (A) < \infty$ with $A' \geq A$ such that the parabolic neighborhood $P(x, A' \rho_1 (x))$ is unscathed and such that 
\begin{equation} \label{eq_PA_in_PA_prime}
 P(y, 10A \rho_1 (y)) \subset P(x, A' \rho_1 (x)) 
 \qquad \text{for all} \qquad y \in P(x, A \rho_1 (x)).
\end{equation}
Let us now distinguish three cases.

\textit{Case 1: \quad We have}
\[ P(x, A' \rho_1 (x),- (L \rho_1 (x))^2 ) \subset  \N_{[0, t^{**}]}  \setminus \cup_{\DD \in \Cut \cup \Cut^{J}} \DD \]
\textit{and $P(x, A' \rho_1 (x),- (L \rho_1 (x))^2 )$ does not intersect the initial time-slice $\M_0$.}

So, assuming
\[ A \geq L, \]
we have $P(x, L \rho_1 (x)) \subset  \N_{[0, t^{**}]} \setminus \cup_{\DD \in \Cut \cup \Cut^{J}} \DD$.
Using (\ref{eq_PA_in_PA_prime}), a priori assumption \ref{item_q_star_less_than_q_star_bar}, (\ref{eq_relaxed_APA_8}) and Lemma~\ref{lem_verification_of_APA_6}, we find that $Q^* \leq 10 \ov{Q}^*$ and $|h| \leq \ov\eta_{\lin}$ on $P(x, L \rho_1 (x))$.
Since  the exponent in the definition of $Q^*$ is  $3 > 2$, if
\[  H \geq \underline{H}, \qquad \eta_{\lin} \leq \ov\eta_{\lin} , \qquad \eps_{\can} \leq \ov\eps_{\can} (\lambda) , \]
we may apply the semi-local maximum principle, Proposition~\ref{Prop_semi_local_max}, to deduce that $Q^* (x) \leq \ov{Q}^*$, which finishes the proof in this case.
Note that here we have used a priori assumption \ref{item_lambda_thick_2}.

\textit{Case 2: \quad  We have}
\[ B(x,  A' \rho_1 (x) ) \not\subset  \N. \]

By Lemma~\ref{lem_boundary_far_from_cut} and a priori assumption \ref{item_cut_diameter_less_than_d_r_comp_11}, and assuming
\begin{gather*}
 \delta_{\nn} \leq \ov\delta_{\nn}, \qquad 
 \lambda\leq \ov\lambda,\qquad
 \Lambda \geq \underline\Lambda, \qquad
 \delta_{\bb} \leq \delta_{\bb} (\lambda, D_{\cut}, A' (A), \Lambda ),  \\ \qquad \eps_{\can} \leq \ov\eps_{\can} (\lambda, D_{\cut}, A' (A), \Lambda) , \qquad r_{\comp} \leq \ov{r}_{\comp}, 
\end{gather*}
we find that
\begin{equation} \label{eq_P_x_A_prime_DD}
 P(x, A' \rho_1 (x)) \cap \DD = \emptyset, \qquad \text{for all} \qquad \DD \in \Cut \cup \Cut^J. 
\end{equation}
So by Lemma~\ref{lem_parabolic_domain_in_N}
\begin{equation} \label{eq_P_in_NN}
 P(x, A \rho_1 (x)) \subset  \N_{[0, t^{**}]}  \setminus \cup_{\DD \in \Cut \cup \Cut^J} \DD. 
\end{equation}

Combining (\ref{eq_P_x_A_prime_DD}) with (\ref{eq_PA_in_PA_prime}), we obtain that for all $y \in P(x, \lb A \rho_1 (x))$
\[ P(y, 10A \rho_1 (y)) \cap \DD = \emptyset \qquad \text{for all} \qquad \DD \in \Cut \cup \Cut^J. \]
Therefore, by a priori assumption \ref{item_q_less_than_q_bar_6} and (\ref{eq_relaxed_APA_6}), we obtain that $Q \leq 10 \ov{Q}$ on $P(x, A \rho_1 (x))$.

Let us now convert this bound into a bound on $Q^*$.
There are two ways of doing this.
One way would be to use Lemma~\ref{lem_thick_close_to_neck}, as in the proof of  Lemma \ref{lem_verification_of_APA_6} leading up to (\ref{eq_rho_less_10_r_comp}) to show that $\rho_1(x) \geq \frac1{10} r_{\comp}$.
In the following, however, we will use a different strategy, as it is technically easier.

Assuming
\[ E \geq 3 \]
and using a priori assumption \ref{item_lambda_thick_2} and (\ref{eq_P_in_NN}), we have on $P(x, \lb  A \rho_1 (x))$  
\begin{multline*}
 Q^* 
= \rho_1^{3-E} Q 
\leq (\lambda r_{\comp})^{3-E} \cdot 10 \ov{Q}
= (\lambda r_{\comp})^{3-E} \cdot  10^{-E} \eta_{\lin} r_{\comp}^E \\
\leq \lambda^{-E} \eta_{\lin} (\lambda r_{\comp})^3 
= 10 \lambda^{-E} \ov{Q}^*.
\end{multline*}
We will now apply Proposition~\ref{Prop_interior_decay} to $Q^*$ on $P(x, A \rho_1 (x))$.
To do this, observe that by Lemma~\ref{lem_verification_of_APA_6} we have $|h| \leq \eta_{\lin}$ on $P(x, A \rho_1 (x))$.
In addition, if $P(x, A \rho_1(x))$ intersects the initial time-slice $\M_0$, then by a priori assumption \ref{item_q_less_than_nu_q_bar_12} we have $Q^* \leq \nu \ov{Q}^*$ on the intersection.
We also have $\rho_1 (x) > \lambda r_{\comp}$ by a priori assumption \ref{item_lambda_thick_2}.
So if  
\begin{gather*}
 H \geq \underline{H}, \qquad
\eta_{\lin} \leq \ov\eta_{\lin}, \qquad 
\nu \leq \ov\nu, \qquad
A \geq \underline{A}(E, \lambda), \qquad
\eps_{\can} \leq \ov\eps_{\can} (E, \lambda), 
\end{gather*}
then we obtain that $Q^* (x) \leq \ov{Q}^*$, as desired.

\textit{Case 3: \quad  We have}
\begin{equation*}
\label{eqn_case_3_def}
B(x,  A' \rho_1 (x) ) \subset  \N, 
\end{equation*}
\textit{and either}
\[ P(x, A' \rho_1 (x),- (L \rho_1 (x))^2 ) \not\subset  \N_{[0, t^{**}]}  \setminus \cup_{\DD \in \Cut \cup \Cut^{J}} \DD, \]
\textit{or $P(x, A' \rho_1 (x),- (L \rho_1 (x))^2 )$ intersects the initial time-slice $\M_0$.}

In the following we will use the notation
\[ \Cut^j = \{ \DD \in \Cut \cup \Cut^J \;\; : \;\; \DD \subset \M_{t_j} \}. \]
Choose $j_0 \in \{ 1, \ldots, J \}$ maximal with the property that
\[ P(x, A' \rho_1 (x), - (t - t_{j_0}) ) \not\subset   \N_{[0, t^{**}]}  \setminus \cup_{\DD \in \Cut \cup \Cut^{J}} \DD. \]
If no such $j_0$ exists, then set $j_0 := 0$.
By Lemma~\ref{lem_parabolic_domain_in_N} we have $P(x, \lb A' \rho_1 (x), \lb - (t - t')) \lb \subset \lb \N_{[0, t^{**}]} \lb \setminus \lb \cup_{\DD \in \Cut \cup \Cut^J} \DD$ for all $t' \in (t_{j_0}, t]$.
Letting $t' \to t_{j_0}$ and using the fact that $\N_{[0, t^{**}]}$ is a closed subset of $\M$, we obtain
\begin{equation} \label{eq_P_A_prime_in_NN}
 P(x, A' \rho_1 (x), - (t - t_{j_0}) ) \subset   \N_{[0, t^{**}]}  \setminus \cup_{\DD \in \Cut^{j_0+1} \cup \ldots \cup \Cut^{J}} \DD 
\end{equation}
and either $j_0 = 0$ or there is a cut $\DD_0 \in \Cut^{j_0}$ such that
\begin{equation} \label{eq_P_in_NN_up_to_j0}
 P(x, A' \rho_1 (x), - (t - t_{j_0}) ) \cap \DD_0 \neq \emptyset.
\end{equation}

Let
\[ B_{t_{j_0}} := \big( B(x, L \rho_1 (x)) \big) (t_{j_0}). \]
In other words, $B_{t_{j_0}}$ is the initial time-slice of the parabolic neighborhood $P(x, \lb L \rho_1 (x), \lb -(t-t_{j_0}))$.
Note that by (\ref{eq_P_in_NN_up_to_j0}) the perturbation $h$ is defined everywhere on $P(x, \lb L \rho_1 (x),\lb -(t-t_{j_0})) \setminus B_{t_{j_0}}$ and it can be smoothly extended to the entire parabolic neighborhood by setting $h = h_{t_{j_0}+}$ on $B_{t_{j_0}}$.
Similarly, we can extend $Q^*$ to the entire parabolic neighborhood $P(x, L \rho_1 (x), -(t-t_{j_0}))$ by setting $Q^* = Q^*_+$ on $B_{t_{j_0}}$.

We will now bound $Q^* = Q^*_+$ on $B_{t_{j_0}}$.
Let $y \in B_{t_{j_0}}$.
Then the two cases indicated above lead to the following three cases:

\textit{Case 3a: \quad We have $j_0 = 0$ and therefore $y \in \M_0$.}

In this case, by a priori assumption \ref{item_q_less_than_nu_q_bar_12} we have
\[ Q^* (y) \leq \nu \ov{Q}^*. \]

\textit{Case 3b: \quad We have $j_0\geq 1$ and $y \in \DD_0$. }

In this case, a priori assumption \ref{item_h_derivative_bounds_9} yields
\[ Q^* (y) \leq \eta_{\cut} \ov{Q}^*. \]

\textit{Case 3c: \quad   We have $j_0\geq 1$ and $y \not\in \DD_0$.}

Our strategy in this case is to use the bound on $Q(y)$ from a priori assumption \ref{item_q_less_than_q_bar_6}   and translate it into a bound on $Q^*(y)$.
In order to do this, we need to ensure that a priori assumption \ref{item_q_less_than_q_bar_6}  apply at $y$ (or slightly earlier) and that $\rho_1 (y)$ is sufficiently large so that the $Q$-bound implies a good bound on $Q^*$.

Let $C_\# < \infty$ be a constant whose value we will determined at the end of the proof.
We can now apply Lemma~\ref{lem_large_scale_near_neck} and a priori assumption \ref{item_cut_diameter_less_than_d_r_comp_11}, assuming
\begin{gather*}
\delta_{\nn} \leq \ov\delta_{\nn}, \qquad
D_{\cut} \geq \underline{D}_{\cut} (\lambda, C_\#), \qquad
\Lambda \geq \underline{\Lambda}, \qquad 
\delta_{\bb} \leq \ov\delta_{\bb} (\lambda, C_\#, D_{\cut}, A'(A), \Lambda),\\ \qquad
\eps_{\can} \leq \ov\eps_{\can} (\lambda, D_{\cut}, A'(A), \Lambda), \qquad 
r_{\comp} \leq \ov{r}_{\comp} (C_\#), 
\end{gather*}
to find that
\[ \rho_1 (y) \geq C_\# r_{\comp} \]
and $P(y, 2  A  \rho_1 (y)) \cap \DD = \emptyset$ for all $\DD \in \Cut^1 \cup \ldots \cup \Cut^{j_0-1}$.
So for any $t' \in [t_{j_0-1}, t_{j_0})$, sufficiently close to $t_{j_0}$, we have $P(y(t'),  A  \rho_1 (y(t'))) \cap \DD = \emptyset$ for all $\DD \in \Cut \cup \Cut^J$.
So by a priori assumption \ref{item_q_less_than_q_bar_6}  we have $Q(y(t')) \leq  \ov{Q}$.
Letting $t' \to t_{j_0}$ yields $Q(y) \leq  \ov{Q}$.

Assuming
\[ E \geq 4,  \]
we obtain  
\begin{multline*}
 Q^* (y) 
 = \rho_1^{3-E} (y) Q(y) 
 \leq (C_\# r_{\comp})^{3-E} \cdot  \ov{Q} \\
 = (C_\# r_{\comp})^{3-E} 10^{-E -1} \eta_{\lin} r_{\comp}^E 
 \leq  C_\#^{-1} \lambda^{-3} 10^{-E -1} \eta_{\lin} (\lambda r_{\comp})^3 \\
 \leq   C_\#^{-1} \lambda^{-3}  \ov{Q}^*. 
\end{multline*}

Summarizing the results of subcases 3a--3c, we obtain that
\[ Q^* \leq \big( \nu + \eta_{\cut}  + C_\#^{-1} \lambda^{-3} \big) \ov{Q}^* \qquad \text{on} \qquad B_{t_{j_0}}. \]
Similarly as in case 1, we can use (\ref{eq_PA_in_PA_prime}) and (\ref{eq_P_A_prime_in_NN})  together with  a priori assumption \ref{item_q_star_less_than_q_star_bar} and (\ref{eq_relaxed_APA_8}) to show that $Q^* \leq 10 \ov{Q}^*$ on $P(x, L \rho_1 (x), - (t-t_{j_0}))$.
By Lemma~\ref{lem_verification_of_APA_6} we have $|h| \leq \eta_{\lin}$ on $P(x, L \rho_1 (x), - (t-t_{j_0}))$.
We can now apply Proposition~\ref{Prop_semi_local_max} along with a priori assumption \ref{item_lambda_thick_2}, assuming
\begin{gather*}
  H \geq \underline{H}, \qquad
 \eta_{\lin} \leq \ov\eta_{\lin}, 
  \qquad \nu \leq \ov\nu , 
 \qquad C_\# \geq \underline{C}_\# (\lambda), \\
 \qquad \eta_{\cut} \leq \ov\eta_{\cut} , 
 \qquad \eps_{\can} \leq \ov\eps_{\can} (\lambda),
\end{gather*}
to show that $Q^* (x) \leq \ov{Q}^*$, as desired.
\end{proof}

\bigskip
We can finally finish the proof of Proposition~\ref{Prop_extend_comparison_by_one}.
Lemmas~\ref{lem_verification_of_APA_6}, \ref{lem_verification_APA_7}, \ref{lem_verification_of_APA_8} and \ref{lem_verification_APA_9} imply that $(\N_{[0,t^{**}]}, \lb \{ \N^j, \lb \N^{J+1}_{[t_J, t^{**}]} \}_{j=1}^J, \lb \{ t_j, t^{**} \}_{j=1}^J)$ and $(\Cut \cup \Cut^{J}, \lb \ov\phi |_{\N_{[0,t^{**}]}}, \lb \{ \phi^j, \phi^{J+1} |_{\N^{J+1}_{[t_J,t^{**}]}} \}_{j=1}^{J})$ satisfy a priori assumptions \ref{item_eta_less_than_eta_lin_13}--\ref{item_q_star_less_than_q_star_bar} whenever (\ref{eq_relaxed_APA_6})--(\ref{eq_relaxed_APA_8}) hold up to time $t^{**}$.
So by iterating Lemma~\ref{lem_setup_of_t_star_star}, we may choose  $t^{**} = t^*$.
Since a priori assumption \ref{item_eta_less_than_eta_lin_13} holds for the aforementioned comparison domain and comparison, we have $|h| \leq \eta_{\lin}$ on $\N^{J+1}_{t^*}$.
So by assertion \ref{ass_12.47_d} of Lemma~\ref{lem_summary_APA_10_12}, we obtain that $t^{**} = t^* = t_{J+1}$.
So $(\N, \lb \{ \N^j \}_{j=1}^{J+1}, \lb \{ t_j \}_{j=1}^{J+1})$ and $(\Cut \lb \cup \lb \Cut^{J}, \lb \ov\phi, \lb \{ \phi^j \}_{j=1}^{J+1})$ satisfy a priori assumptions \ref{item_time_step_r_comp_1} and \ref{item_eta_less_than_eta_lin_13}--\ref{item_q_star_less_than_q_star_bar}.
A priori assumptions   \ref{item_h_derivative_bounds_9}--\ref{item_apa_13} follow from assertion \ref{ass_12.47_a} of Lemma~\ref{lem_summary_APA_10_12}.
Recall that \ref{item_lambda_thick_2}--\ref{item_geometry_cap_extension_5} hold by the assumptions of Proposition~\ref{Prop_extend_comparison_by_one}.

Lastly note that the auxiliary parameter $F$ was assumed to be large depending only on $E$.
So it is straight forward to check that the assumptions of the parameters imposed in the course of this proof all follow from (\ref{eq_parameters_extend_comparison_by_one}).
This finishes the proof of Proposition~\ref{Prop_extend_comparison_by_one}.

\section{Proofs of the main results}
\label{sec_proofs_of_main_results}
In this section we will combine the main results of Sections \ref{sec_inductive_step_extension_comparison_domain} and \ref{sec_extend_comparison} to prove the main result of the paper, Theorem~\ref{Thm_existence_comparison_domain_comparison}.  
We then prove some corollaries, including several stability results and a uniqueness theorem, as presented in Subsection~\ref{subsec_statement_main_results}.

\begin{theorem}[Existence of comparison domain and comparison] \label{Thm_existence_comparison_domain_comparison}
If
\begin{gather*}
T > 0, \qquad 
E\geq \underline{E},\qquad
H \geq \underline{H}(E) , \qquad
\eta_{\lin} \leq \ov{\eta}_{\lin}  (E), \qquad\\
\nu \leq \ov\nu (T, H, \eta_{\lin}), \qquad 
\delta_{\nn} \leq \ov\delta_{\nn} (T,H, \eta_{\lin}), \qquad
\lambda \leq \ov\lambda (\delta_{\nn}), \qquad\\
D_{\CAP} \geq \underline{D}_{\CAP} (\lambda), \qquad
\eta_{\cut} \leq \ov\eta_{\cut}, \qquad 
D_{\cut} \geq \underline{D}_{\cut} ( D_{\CAP}, \eta_{\cut} ), \\
W \geq \underline{W} (D_{\cut}), \qquad
A \geq \underline{A} (W), \qquad 
\Lambda \geq \underline{\Lambda}(A), \qquad 
\delta_{\bb} \leq \ov\delta_{\bb} (\Lambda), \\  
\eps_{\can} \leq \ov\eps_{\can} (\delta_{\bb}), \qquad
r_{\comp} \leq \ov{r}_{\comp} (\Lambda),
\end{gather*}
then the following holds.

Consider two $(\eps_{\can} r_{\comp}, T)$-complete Ricci flow spacetimes $\M, \M'$ that each satisfy the $\eps_{\can}$-canonical neighborhood assumption at scales $(\eps_{\can} r_{\comp}, \lb 1)$.

Let $\zeta : \{ x \in \M_0 \;\; : \;\: \rho(x) > \lambda r_{\comp} \} \to \M'_0$ be a diffeomorphism onto its image that satisfies the following bounds:
\begin{align*}
| \zeta^* g'_0 - g_0 | &\leq \eta_{\lin}, \\
 e^{HT} \rho_1^E | \zeta^* g'_0 - g_0 | &\leq \nu \ov{Q} = \nu \cdot 10^{-E-1} \eta_{\lin} r_{\comp}^E, \\
 e^{HT} \rho_1^3 | \zeta^* g'_0 - g_0 | &\leq \nu \ov{Q}^* = \nu \cdot 10^{-1}\eta_{\lin} (\lambda r_{\comp})^3
 \end{align*}
Assume moreover that the $\eps_{\can}$-canonical neighborhood assumption holds at scales $(0,1)$ on the image of $\zeta$.

Then for any $J \geq 1$ with $J r_{\comp}^2 \leq T$ there is a comparison domain $(\N, \lb \{ \N^j \}_{j=1}^J, \lb \{ t_j \}_{j=0}^J )$ and a comparison $(\Cut, \phi, \{ \phi^j \}_{j=1}^J )$ from $\M$ to $\M'$ defined on this domain such that a priori assumptions \ref{item_time_step_r_comp_1}--\ref{item_eta_less_than_eta_lin_13} hold for the tuple of parameters $(\eta_{\lin}, \lb \delta_{\nn}, \lb \lambda, \lb D_{\CAP}, \lb \Lambda, \lb  \delta_{\bb}, \lb \eps_{\can}, \lb r_{\comp})$ and a priori assumptions \ref{item_q_less_than_q_bar_6}--\ref{item_apa_13} hold for the tuple of parameters $( T, \lb E, \lb H, \lb \eta_{\lin}, \lb \nu, \lb \lambda, \lb \eta_{\cut},  \lb  D_{\cut}, \lb W, \lb A, \lb r_{\comp})$.
Moreover, $\phi_{0+} = \phi^1_0 = \zeta |_{\N_{0}}$.
\end{theorem}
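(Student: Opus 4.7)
The plan is to argue by induction on $J$, alternating between Proposition~\ref{prop_comp_domain_construction} (extension of the comparison domain by one time-step) and Proposition~\ref{Prop_extend_comparison_by_one} (extension of the comparison map by one time-step). Before starting the induction I would verify that the parameter inequalities stated in the theorem simultaneously imply both (\ref{eqn_prop_comp_domain_construction_parameter_inequalities}) and (\ref{eq_parameters_extend_comparison_by_one}). This is a tedious but routine check guided by the parameter order introduced in Subsection~\ref{subsec_parameter_order}: each restriction on a given parameter appearing in either proposition depends only on parameters preceding it in the global order, so the two lists are compatible and can be arranged to hold simultaneously with the choices made in the hypotheses.

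For the base case I would take $J=0$ with the empty comparison domain $(\emptyset, \{\,\}, \{0\})$ and the trivial comparison, which vacuously satisfies all a priori assumptions. Then I would apply Proposition~\ref{prop_comp_domain_construction} once to produce $\N^1$, yielding a comparison domain defined on $[0,t_1]$ satisfying \ref{item_time_step_r_comp_1}--\ref{item_eta_less_than_eta_lin_13}. Then I would invoke Proposition~\ref{Prop_extend_comparison_by_one} in the case $J=0$, feeding in the initial map $\zeta$; its hypothesis \ref{con_12.1_vi} is precisely matched by the three bounds on $\zeta^* g'_0 - g_0$ assumed in the theorem, once one notes that a priori assumption \ref{item_backward_time_slice_3}(a) guarantees that $\N^1_0$ lies inside the set where $\rho(x) > \lambda r_{\comp}$ so that $\zeta$ is defined on a sufficient tubular neighborhood. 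This produces a comparison $(\Cut^0, \ov\phi, \{\phi^1\})$ defined over $[0,t_1]$ satisfying all a priori assumptions \ref{item_time_step_r_comp_1}--\ref{item_apa_13}, with $\phi^1_0 = \zeta|_{\N^1_0}$.

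For the inductive step, assuming the existence of a comparison domain over $[0,t_J]$ and a comparison on this domain satisfying all of \ref{item_time_step_r_comp_1}--\ref{item_apa_13}, I would first apply Proposition~\ref{prop_comp_domain_construction} to extend the comparison domain to $[0,t_{J+1}]$ while preserving a priori assumptions \ref{item_time_step_r_comp_1}--\ref{item_eta_less_than_eta_lin_13}, and then apply Proposition~\ref{Prop_extend_comparison_by_one} (with hypothesis \ref{con_12.1_vi} vacuously satisfied since $J\geq 1$) to extend the comparison itself to $[0,t_{J+1}]$, recovering the full list of a priori assumptions. Iterating the two-step procedure until the desired value of $J$ is reached completes the construction.

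The conceptually hard part of this proof has already been absorbed into the two main propositions, which were designed so that their a priori assumptions match exactly at the interface of the inductive step; consequently the only real issue here is parameter book-keeping. The principal place where care is required is in the base case, where one must verify that the three quantitative bounds on $\zeta$ built into the hypotheses of the theorem translate cleanly into the bounds demanded by \ref{con_12.1_vi} of Proposition~\ref{Prop_extend_comparison_by_one}, and that the $\eps_{\can}$-canonical neighborhood assumption on $\zeta(X)$ passes through. Everything else is a mechanical application of the two propositions in alternation.
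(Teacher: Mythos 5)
Your proposal is correct and matches the paper's own proof: induction on $J$ alternating Proposition~\ref{prop_comp_domain_construction} and Proposition~\ref{Prop_extend_comparison_by_one}, with the only substantive verification being that $\zeta$ supplies assumption~\ref{con_12.1_vi} in the base case, plus the parameter-order bookkeeping. One small inaccuracy: the inclusion $\N^1_0 \subset \{\rho > \lambda r_{\comp}\}$ comes from a priori assumption~\ref{item_lambda_thick_2} (thickness of $\N$), while the needed containment of the full $\delta_{\nn}^{-1}r_{\comp}$-tubular neighborhood of $\N_0$ inside the domain of $\zeta$ is what requires~\ref{item_backward_time_slice_3} together with Lemma~\ref{lem_bounded_curv_bounded_dist}, not~\ref{item_backward_time_slice_3}(a) alone.
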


\begin{proof}[Proof of Theorem~\ref{Thm_existence_comparison_domain_comparison}.]
The theorem follows from Prop\-o\-si\-tions~\ref{prop_comp_domain_construction} and \ref{Prop_extend_comparison_by_one} by induction on $J$.
Both propositions can be applied under restrictions on the parameters that follow from the restrictions stated in the beginning of this theorem.  
Note that in the first step of the induction one applies Prop\-o\-si\-tion~\ref{prop_comp_domain_construction} to produce the first time slab $\N^1$ of the comparison domain.  By a priori assumption \ref{item_lambda_thick_2} we have $\N^1_0\subset X:=\{x\in \M_0\;\; : \;\;  \rho(x)>\lambda r_{\comp}\}$.  Assuming 
\[\lambda\leq\ov\lambda(\delta_{\nn})\,,\qquad \eps_{\can}\leq\ov\eps_{\can}(\delta_{\nn})\,,\]
by \ref{item_backward_time_slice_3} and Lemma~\ref{lem_bounded_curv_bounded_dist} it follows that the $\delta_{\nn}^{-1} r_{\comp}$-tubular neighborhood around $\N_0$ is contained in $X$.  
Hence the map $\zeta$ from the assumptions of Theorem~\ref{Thm_existence_comparison_domain_comparison} satisfies assumption \ref{con_12.1_vi} of Proposition~\ref{Prop_extend_comparison_by_one}.

Note that we have simplified the  restrictions on the parameters in the first part of this theorem by omitting arguments in parameter restrictions if they have already appeared in earlier restrictions.
This simplification does not change the nature of these restrictions.
For example, since we have imposed the restriction $H \geq \underline{H} (E)$, we can assume without loss of generality that $H \geq E$.
Therefore, it is not necessary to list $E$ in the restriction for $\nu \leq \ov\nu (H, \eta_{\lin}, T)$, as $\nu$ already depends on $H$.
\end{proof}

Next, we prove Theorem~\ref{Thm_comparing_RF_general_version}.
This theorem is similar to Theorem~\ref{Thm_existence_comparison_domain_comparison}; however the parameters associated with the a priori assumptions have been suppressed.
The proof of Theorem~\ref{Thm_comparing_RF_general_version} requires the following result.

\begin{lemma} \label{lem_MM_connected}
If
\[ \eps_{\can} \leq \ov\eps_{\can}, \qquad r \leq \ov{r}, \]
then the following holds.

Let $\M$ be an $(\eps_{\can} r, T)$-complete Ricci flow spacetime that satisfies the $\eps_{\can}$-canonical neighborhood assumption at scales $(\eps_{\can} r, \lb 1)$.
Let $x \in \M_{[0,T]}$ be a point with $\rho (x) > r$.
Then there is a continuous path $\gamma : [0,1] \to \M_{[0,T]}$ between $x$ and a point in $\M_0$ such that $\t \circ \gamma$ is non-increasing and such that $\rho (\gamma(s)) > .9 r$ for all $s \in [0,1]$.
\end{lemma}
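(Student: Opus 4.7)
The plan is to construct $\gamma$ as a reparametrization of the backward trajectory of $x$ under the time vector field $-\partial_{\t}$. The main ingredient is the backward survival control of Lemma~\ref{lem_lambda_thick_survives_backward}, which essentially asserts that a point of scale $\geq r$ survives backward in time for as long as we wish with scale staying close to $r$, provided $\eps_{\can}$ is small enough in terms of the time interval.

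Concretely, I would apply Lemma~\ref{lem_lambda_thick_survives_backward} at $x$ with $\delta := 0.1$ and with $A$ chosen so that $Ar^{2} \geq T$ (for instance $A := T/r^{2}$). The hypotheses of that lemma are inherited from the hypotheses of the present lemma ($(\eps_{\can} r, T)$-completeness and the $\eps_{\can}$-canonical neighborhood assumption on the relevant scales), and the required parameter bound $\eps_{\can} \leq \ov\eps_{\can}(\delta, A)$ is absorbed into the implicit $T$- and $r$-dependence of the constants $\ov\eps_{\can}$, $\ov r$ in the statement being proved. Writing $t_{0} := \t(x)$, the lemma then guarantees that $x(\bar t)$ is defined for all $\bar t \in [t_{0} - Ar^{2}, t_{0}] \cap [0,\infty) = [0, t_{0}]$ and that $\rho(x(\bar t)) > 0.9\, r$ throughout.

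I would then set $\gamma : [0,1] \to \M_{[0,T]}$ by $\gamma(s) := x\bigl(t_{0}(1-s)\bigr)$. This is a smooth (in particular continuous) curve with $\gamma(0) = x$ and $\gamma(1) = x(0) \in \M_{0}$, and $\t \circ \gamma(s) = (1-s) t_{0}$ is monotonically non-increasing; the scale estimate $\rho(\gamma(s)) > 0.9\, r$ is exactly the conclusion of Lemma~\ref{lem_lambda_thick_survives_backward}.

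There is no substantive obstacle: the lemma reduces cleanly to Lemma~\ref{lem_lambda_thick_survives_backward} once we choose $A$ large enough to exhaust the time interval $[0, t_0]$ in a single step. The only subtlety is bookkeeping of parameter dependence; if one wished to avoid letting $\ov\eps_{\can}$ depend on $A$, one could instead iterate the lemma over short subintervals of length $\sim r^{2}$, using the bound $\partial_{\t}\rho_{1}^{2} \leq \delta$ from Lemma~\ref{lem_properties_kappa_solutions_cna}\ref{item_rho_squared_derivative_small} to control the cumulative decay of $\rho$ across all the iterations.
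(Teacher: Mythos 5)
Your reduction to Lemma~\ref{lem_lambda_thick_survives_backward} has a gap that cannot be patched along the lines you sketch. The constants $\ov\eps_{\can}$ and $\ov{r}$ in the statement of Lemma~\ref{lem_MM_connected} carry no arguments, so by the conventions of Section~\ref{sec_notation_terminology} they are required to be \emph{universal}; there is no ``implicit $T$- and $r$-dependence'' available to absorb the restriction $\eps_{\can} \leq \ov\eps_{\can}(\delta, A)$ from Lemma~\ref{lem_lambda_thick_survives_backward}. With $A = T/r^2$, that restriction forces $\eps_{\can}$ to be small depending on $T$ and $r$, which is exactly what the statement being proved forbids. (This universality is essential downstream: the lemma is used in the proof of Theorem~\ref{thm_uniquness}, where $\eps_{\can}$ must not depend on $T$.)

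Your proposed fix --- iterating over steps of length $\sim r^2$ and invoking $\partial_{\t}\rho_1^2 \leq \delta$ --- does not repair this. The bound $\partial_{\t}\rho_1^2 \leq \delta$ controls the \emph{rate} of decay of $\rho_1^2$ along the backward trajectory, but over the full interval $[0,t_0]$ the cumulative decay is only bounded by $\delta t_0$, which can be as large as $\delta T$. To guarantee $\rho_1(x(\bar t)) > 0.9\,r$ for all $\bar t$, you would need $\delta T \lesssim r^2$, i.e.\ $\delta \lesssim r^2/T$, and hence $\eps_{\can} \leq \ov\eps_{\can}(r^2/T)$ --- the same non-universal dependence in disguise. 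Equivalently, iterating Lemma~\ref{lem_lambda_thick_survives_backward} yields $\rho(x_i) \gtrsim (1-\delta)^i r$, which collapses after $\sim T/r^2$ steps unless $\delta$ is tiny relative to $r^2/T$. The source of the difficulty is real: along a backward trajectory the scale may genuinely drift downward, and the backward trajectory of $x$ by itself is not the right path.

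The missing idea is to leave the backward trajectory when the scale starts dropping. The paper's argument flows $x$ back by exactly one curvature-scale time step $\rho_1^2(x)$ (this is Lemma~\ref{lem_lambda_thick_survives_backward} applied with a fixed universal $\delta$ and $A = 1$, so no bad parameter dependence), and then splits into cases. If the scale increased, or if it was already bounded below by a universal constant $\sim 1/2$, one simply iterates. If instead the scale was small and decreased, then $\partial_{\t}\rho^2$ is nearly nonnegative, and Lemma~\ref{lem_bryant_propagate} applies: the local geometry is close to a Bryant soliton, so at the \emph{same} earlier time slice there is a nearby point $y$ with $\rho(y) > \rho(x)$, reachable by a spatial path through $0.9\,r$-thick points. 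Replacing $x$ by $y$ and iterating, the scale $\rho(x_i)$ never drops below $\min\{\rho(x_0), 0.95\cdot \tfrac12\} > r$, while the time $\t(x_i)$ drops by at least $r^2$ per step, so the process terminates in $\M_0$ after finitely many steps. This use of the canonical neighborhood assumption (via Bryant rigidity) to sidestep cumulative scale loss is what your proposal lacks.
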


\begin{proof}
A slightly different version of this statement, which would also be adequate for our needs here, was proven in \cite[Prop. 3.5]{Kleiner:2014wz}. 
For completeness, we provide an alternative argument.

Set $t_0 := \mathfrak{t}(x)$ and $r_0 := \rho_1 (x) > r$.
By Lemma~\ref{lem_lambda_thick_survives_backward}, assuming
\[ \eps_{\can} \leq \ov\eps_{\can}, \qquad r \leq \ov{r}, \]
we know that $x$ survives until time $\max \{ t_0 -r_0^2, 0 \}$ and $\rho_1( x(t) ) \geq  .95 r_0 > .95 r$ for all $t \in [\max \{ t_0 - r_0^2, 0 \}, t_0]$.
So if $t_0 \leq r_0^2$, then we are done.
Consider now the case $t_0 > r_0^2$.
If $r_0 \leq \frac12$ and $\rho (x(t_0-r_0^2)) \leq \rho (x)$, then we can use Lemma~\ref{lem_bryant_propagate}, assuming
\[ \eps_{\can} \leq \ov\eps_{\can}, \]
to show that $(\M_{t_0}, x (t_0 - r_0^2))$ is close enough to $(M_{\Bry}, g_{\Bry}, x_{\Bry})$ such that there is a point $y \in \M_{t_0 - r_0^2}$ with $\rho (y) > \rho(x)$ and such that $x (t_0 - r_0^2)$ can be connected with $y$ by a continuous path inside $\M_{t_0 - r_0^2}$ whose image only consists of $.9 r_0$-thick points.

So, summarizing our conclusions, each  $x \in \M_{[0,T]}$ can be connected with a point $y \in \M_{[0,T]}$ by  a path $\gamma : [0,1] \to \M_{[0,T]}$ such that $\t \circ \gamma$ is non-increasing and $\rho (\gamma(s)) > .9 r_0$ for all $s \in [0,1]$, and one of the following holds:
\begin{enumerate}
\item $y \in \M_0$,
\item $\rho(y) \geq .95\rho (x) > .95 \cdot \frac12$ and $\mathfrak{t} (y) = \mathfrak{t}(x) - \rho^2_1(x)$,
\item $\rho (y) > \rho(x)$ and $\mathfrak{t} (y) = \mathfrak{t}(x) - \rho^2_1(x)$,
\end{enumerate}
Iterating this process yields a sequence of points $x_0 = x, x_1, x_2, \ldots \in \M_{[0,T]}$ such that $x_i$ and $x_{i+1}$ can be connected by a path with the desired properties.
It now remains to show that this sequence terminates at some index $i$ and that $x_i \in \M_0$.
To see this, note that by (1)--(3) the sequence of times $\mathfrak{t}(x_i)$ is non-increasing and $\rho (x_i) > r$, assuming
\[ r \leq .95 \cdot \tfrac14. \]
Since $\t (x_{i+1}) = \mathfrak{t} (x_i) - \rho_1^2 (x_i) \leq \mathfrak{t} (x_i) - r^2$ in cases (2) and (3), the sequence must terminate after a finite number of steps.
\end{proof}

\bigskip

\begin{proof}[Proof of Theorem~\ref{Thm_comparing_RF_general_version}.]
Since we will invoke Theorem~\ref{Thm_existence_comparison_domain_comparison} below, in order to make the estimates in Theorem~\ref{Thm_comparing_RF_general_version} conform more closely with those in Theorem~\ref{Thm_existence_comparison_domain_comparison}, it will be convenient to prove the theorem for $E$ replaced by $E/2$, $\phi$ replaced by $\zeta$ and $\wh\phi$ replaced by $\phi$.
So we assume that $\zeta : U \to U'$ satisfies
\[ | \zeta^* g'_0 - g_0 | \leq \eps \cdot  r^{E} (|{\Rm}| + 1)^{E/2} \]
and our goal is to construct $\phi : \wh{U} \to \wh{U}'$ such that
\[ | \phi^* g'_0 - g_0 | \leq \delta \cdot  r^{E} (|{\Rm}| + 1)^{E/2}. \]

We will first prove a slightly weaker version of the theorem in which we allow $\eps_{\can}$ to also depend on $T$.
We will mention how we can remove this dependence at the end of this proof.

Fix $T$ and $E \geq \underline{E}$, where $\underline{E}$ is the constant from Theorem~\ref{Thm_existence_comparison_domain_comparison} and assume that $\underline{E} \geq 3$.
Based on these choices, fix constants $H$, $\eta_{\lin}$, $\nu, \delta_{\nn}$, $\lambda$, $D_{\CAP}$, $\eta_{\cut}$, $D_{\cut}$, $W$, $A$, $\Lambda$, $\delta_{\bb}$, $\eps_{\can}$ and $\ov{r}_{\comp}$ that satisfy the restrictions stated in Theorem~\ref{Thm_existence_comparison_domain_comparison}.
Without loss of generality, we may assume that $\ov{r}_{\comp} \leq 1$.
Choose $r_{\comp} := \alpha r \cdot \ov{r}_{\comp}$, where $0 < \alpha = \alpha (\delta, T,E) \leq 1$ is a constant whose value will be determined in the course of this proof.

We now verify the assumptions  of Theorem~\ref{Thm_existence_comparison_domain_comparison}.
In what follows, we will be imposing several upper bounds on the parameters $\alpha$ and $\eps$.
The upper bounds on $\alpha$ will only depend on $\delta, T, E$ and the upper bounds on $\eps$ will only depend on $\delta, T, E$ and $\alpha$.
As $\alpha$ will not be chosen depending on $\eps$, there will be no circular dependence.
At a number of steps in the following proof, we will also assume that the constants $\eta_{\lin}, \delta_{\nn}$ and $\eps_{\can}$ have been chosen smaller than some universal constant.

Assuming $$\eps \leq \eps_{\can} \cdot \alpha \ov{r}_{\comp}\,,$$ we get that $\M, \M'$ are $(\eps_{\can} r_{\comp}, T)$-complete and satisfy the $\eps_{\can}$-canonical neighborhood assumption at scales $(\eps_{\can} r_{\comp},  \lb 1)$.
If $$\eps \leq c_1 \lambda \cdot \alpha \ov{r}_{\comp} \,,$$ for some universal constant $c_1 > 0$, then $U^* := \{ x \in \M_0 \;\; : \;\; \rho (x) > \lambda r_{\comp} \} \subset U$.
So, without loss of generality, we can replace $\zeta$ by $\zeta |_{U^*}$.

Let us now verify the bounds on $h_0 := \zeta^* g'_0 - g_0$ in the assumptions of Theorem~\ref{Thm_existence_comparison_domain_comparison}.
For this purpose note that there is a universal constant $C_1 < \infty$ such that $C^{-1}_1 \rho^{-2}_1 \leq |{\Rm}| + 1 \leq C_1 \rho^{-2}_1$.
Now by assumption of this theorem and the fact that
\[ |{\Rm}| +1 \leq C_1 \rho_1^{-2} < C_1 \lambda^{-2} r_{\comp}^{-2} \leq C_1 (\lambda \alpha \ov{r}_{\comp})^{-2} r^{-2} \,,\]
on $U^*$, we have
\[ |h_0| \leq \eps \cdot r^E (|{\Rm}| +1)^{E/2}  \leq \eps  \cdot C_1^{E/2} (\lambda \alpha \ov{r}_{\comp})^{-E} \leq \eta_{\lin}, \]
as long as $$\eps \leq C_1^{-E/2} (\lambda \alpha \ov{r}_{\comp})^{E} \eta_{\lin}\,.$$
Similarly, we obtain that
\begin{multline*}
 e^{HT} \rho_1^E | h_0 | \leq C_1^{E/2} e^{HT} (|{\Rm}| +1)^{-E/2} |h_0| 
 \leq C_1^{E/2} e^{HT} \cdot \eps r^E  \\
\leq \nu \cdot 10^{-E-1} \eta_{\lin} (\alpha r \cdot \ov{r}_{\comp})^E, 
\end{multline*}
as long as $$\eps \leq C_1^{-E/2} e^{-HT} \cdot \nu \cdot 10^{-E-1} \eta_{\lin} (\alpha \ov{r}_{\comp})^E$$ and
\begin{multline*}
 e^{HT} \rho_1^3 | h_0 | 
\leq C_1^{3/2} e^{HT} (|{\Rm}| +1)^{-3/2} |h_0|  \\
\leq C_1^{E/2}  (\lambda \alpha \ov{r}_{\comp})^{3-E} r^{3-E} \cdot  e^{HT} (|{\Rm}| +1)^{-E/2} |h_0| \\
\leq C_1^{E/2} (\lambda \alpha \ov{r}_{\comp})^{3-E}  e^{HT} \cdot \eps r^{3} 
\leq \nu \cdot 10^{-1} (\lambda \alpha r \cdot \ov{r}_{\comp})^3, 
\end{multline*}
as long as $$\eps \leq C_1^{-E/2} e^{-HT} \cdot \nu \cdot 10^{-1} (\lambda \alpha \ov{r}_{\comp})^E\,.$$
Note that the three bounds that we have imposed on $\eps$ in this  paragraph  depend only on $\delta, T, E$, assuming that $\alpha$ can be chosen depending on these three constants.

Lastly, note by assumption of this theorem the $\eps_{\can}$-canonical neighborhood assumption holds on the image of $\zeta$.

We can therefore apply Theorem~\ref{Thm_existence_comparison_domain_comparison} and obtain a comparison domain $(\N, \lb \{ \N^j \}_{j=1}^J, \lb \{ t_j \}_{j=0}^J)$ and a comparison $(\Cut, \phi, \{ \phi^j \}_{j=1}^J)$ that satisfy the a priori assumptions \ref{item_time_step_r_comp_1}--\ref{item_eta_less_than_eta_lin_13} for the parameters $(\eta_{\lin}, \delta_{\nn}, \lambda, D_{\CAP}, \Lambda, \delta_{\bb}, \eps_{\can}, r_{\comp} )$ and a priori assumptions \ref{item_q_less_than_q_bar_6}--\ref{item_apa_13} for the parameters $( T, \lb E, \lb H, \lb \eta_{\lin}, \lb \nu, \lb \lambda, \lb \eta_{\cut},  \lb  D_{\cut}, \lb W, \lb A, \lb r_{\comp})$.
Without loss of generality, we may assume that $T$ is an integral multiple of $r^2_{\comp}$, i.e. $t_J = T$; otherwise we may decrease $r_{\comp}$ or increase $T$ slightly.

Let now $\wh{U} \subset \M_{[0,T]}$ be the set of $C_2 r_{\comp}$-thick points, where $C_2 = C_2 (\Lambda) < \infty$ is a constant whose value will be determined at the end of this paragraph.
We claim that $\wh{U} \subset \N \setminus \cup_{\DD \in \Cut} \DD$.
To see this consider $x \in \wh{U}$ and choose $j \in \{ 1, \ldots, J \}$ such that $x \in \M_{[t_{j-1}, t_j]}$.
Then by Lemma~\ref{lem_forward_backward_control}, if 
$$
C_3\geq \ul C_3\,, \qquad
C_2 \geq  \Lambda C_3\,,\qquad
\eps_{\can}\leq\ov\eps_{\can}^{\text{Lemma~\ref{lem_forward_backward_control}}}  \,, \qquad
r_{\comp}\leq \ov{r}_{\comp}^{\text{Lemma~\ref{lem_forward_backward_control}}}
$$ 
then $x$ survives until time $t_j$ and $x(t_j)$ is $\Lambda r_{\comp}$-thick.
(Here we have used the notation $\ov\eps_{\can}^{\text{Lemma~\ref{lem_forward_backward_control}}}$ and $\ov{r}_{\comp}^{\text{Lemma~\ref{lem_forward_backward_control}}}$ to avoid confusion with the upper bounds $\ov\eps_{\can}$ and $\ov{r}_{\comp}$ from Theorem~\ref{Thm_existence_comparison_domain_comparison}.)
So by a priori assumption \ref{item_backward_time_slice_3}(b) we have $x(t_j) \in \N_{t_j-}$ and thus $x \in \N$.
Lastly, by a priori assumption \ref{item_backward_time_slice_3}(e) we have $x \not \in \DD$ for all $\DD \in \Cut$.

By the choice of $\wh{U}$ we have
\[ |{\Rm}| + 1\geq C_1^{-1} \rho^{-2}_1 \geq C_1^{-1} C_2^{-2} r_{\comp}^{-2} 
= C_1^{-1} C_2^{-1} \ov{r}_{\comp}^{-2} \alpha^{-2} r^{-2} \]
on $\M_{[0,T]} \setminus \wh{U}$. 
So if $$\alpha \leq \frac12 C_1^{-1/2} C_2^{-1/2} \ov{r}_{\comp}\, ,$$ then $|{\Rm}| + 1 \geq 4 r^{-2} \geq r^{-2} + 1$ and therefore $|{\Rm}| \geq r^{-2}$ on  $\M_{[0,T]} \setminus \wh{U}$.
In the notation of Theorem~\ref{Thm_comparing_RF_general_version}, we can now set $\wh\phi := \phi |_{\wh{U}}$ and $\wh{U}' := \phi (\wh{U})$.

We now need to verify the upper bound on $|\wh{\phi}^* g' - g|$ on $\wh{U}$, in the notation of Theorem~\ref{Thm_comparing_RF_general_version}.
To this end, note that by a priori assumption \ref{item_q_less_than_w_q_bar_7} we have
\begin{multline*}
 |h| \leq e^{-H (T - \mathfrak{t})} \rho_1^{-E} \cdot W \cdot 10^{-E-1} \eta_{\lin} ( \alpha r \ov{r}_{\comp})^E \\
\leq C_1^{E/2}  (|{\Rm}| + 1)^{E/2} W \eta_{\lin} \alpha^E r^E \ov{r}_{\comp}^E 
\leq \delta \cdot (|{\Rm}| + 1)^{E/2} r^E 
\end{multline*}
as long as $$\alpha \leq \delta^{1/E} C_1^{-1/2}   W^{-1/E}\eta_{\lin}^{-1/E} \ov{r}_{\comp}\,.$$

We now show that $|{\Rm}| \geq r^{-2}$ on $\M'_{[0,T]} \setminus \wh{U}'$ for sufficiently small $\alpha$ and $\eps$ if we additionally assume that $|{\Rm}| \geq (\eps r)^{-2}$ on $\M'_0 \setminus U'$.
To see this, assume that $|{\Rm}| (x') < r^{-2}$ for some $x' \in \M'_{[0,T]} \setminus \wh{U}'$.
So $\rho (x') \geq \frac12 C_1^{-1/2} r$.
We can now apply Lemma~\ref{lem_MM_connected}, assuming that $\eps_{\can}$ is smaller than some universal constant, to find a continuous path $\gamma : [0,1] \to \M'_{[0,T]}$ between $x'$ and a point $y' \in \M'_0$ such that $\rho (\gamma(s)) > C_3^{-1}  r$ for all $s \in [0,1]$, where $C_3$ is some universal constant, and $\t' \circ \gamma$ is non-increasing.
On the other hand, we have $\rho_{1} < C_1^{1/2} |{\Rm}|^{-1/2} \leq  C_1^{1/2} \eps r$ on $\M'_0 \setminus U'$.
So if $$\eps \leq C_1^{-1/2} C_3^{-1} \,,$$ then $y' \in U'$.
Set $y := \zeta^{-1}  (y')$.

Our next goal is to show that $y' \in \wh{U}'_0$.
To see this, we will argue that $\rho (y) > C_2 r_{\comp}$.
By Lemma~\ref{lem_forward_backward_control}, and assuming that $\eps_{\can}$ is smaller than some universal constant, we can find a universal constant $c_2 > 0$ such that $\rho > \frac12 C_3^{-1} r $ on $B(y', c_2 r)$.
So, as in the last paragraph, we obtain that $B(y', c_2 r) \subset U'$, assuming $$\eps \leq \frac12 C_1^{-1/2} C_3^{-1} \,.$$
If $\rho (y') < 1$, then we can use the $\eps_{\can}$-canonical neighborhood assumption at $y'$ to deduce bounds on higher curvature derivatives on $B(y', c_2 r)$ (as in Lemma~\ref{lem_properties_kappa_solutions_cna}), assuming that $\eps_{\can}$ is smaller than some universal constant.
On the other hand, if $\rho (y') \geq 1$, then we obtain an improved bound of the form $\rho > c_3$ on $B(y', c_2 r)$ for some universal constant $c_3 > 0$ (via Lemma~\ref{lem_bounded_curv_bounded_dist}).
So using Lemma~\ref{lem_convergence_of_scales}, applied similarly as in the proof of Lemma~\ref{lem_scale_distortion}, we obtain a universal constant $c_4 > 0$ such that $\rho (y) > c_4 r$, assuming that $\eta_{\lin}, \eps_{\can}$ and $\alpha$ are smaller than some universal constant.
So if $$\alpha \leq c_4 C_2^{-1}\,,$$ then $\rho (y) > C_2 r_{\comp}$ and therefore by construction of $\wh{U}$, we have $y \in \wh{U}_0$.
It follows that $y' = \phi  (y) \in \wh{U}'_0$.

Choose $s_0 \in [0,1]$ minimal with the property that $\gamma ((s_0,1]) \subset \wh{U}'$.
As $y \in \wh{U}'$, we know that $s_0 < 1$ and since $\wh{U}'$ is open and $x' \not\in \wh{U}'$, we obtain $\gamma(s_0) \not\in \wh{U}'$.
For any $s \in (s_0, 1]$ we have $\phi^{-1}  (\gamma(s)) \in \wh{U} \subset \N$.
So by Lemma~\ref{lem_scale_distortion}, and assuming that $\eta_{\lin}, \delta_{\nn}, \eps_{\can}$ and $\alpha$ were chosen smaller than some universal constant, we obtain
\[ \rho_1 \big( \phi^{-1}  (\gamma(s)) \big) \geq C^{-1}_{\sd} \rho_1 (\gamma(s)) > C^{-1}_{\sd} \cdot C_3^{-1}  r. \]
Therefore, if $$\alpha \leq \frac12 C_{\sd}^{-1} C_2^{-1} C_3^{-1} \,,$$ then
\[ \rho \big( \phi^{-1}  (\gamma(s)) \big) > 2 C_2 r_{\comp}. \]
Using Lemma~\ref{lem_forward_backward_control}, Proposition~\ref{prop_drift_control} and the uniform lower bounds on the scales of $\gamma(s)$ and $\phi^{-1}  (\gamma(s))$, we obtain that $z:= \lim_{s \nearrow s_0} \phi^{-1}  (\gamma(s))$ exists.
It follows that $\rho (z) \geq 2 C_2 r_{\comp}$.
So $z \in \wh{U}$ and thus $ \gamma(s_0) = \phi  (z) \in \wh{U}'$, contradicting the choice of $s_0$.

This finishes the proof of the theorem if we allow $\eps_{\can}$ to depend on $T$.
To see that $\eps_{\can}$ can even be chosen independently of $T$, we revert back to the notation used in the theorem and we construct $\wh\phi$ successively on time-intervals of the form $[0,1]$, $[1,2]$, \ldots.
More specifically, given $E \geq \underline{E}$ set $\eps_{\can} := \eps_{\can} (1, E)$, (i.e. the value in the weaker version of the theorem for $T = 1$).
Now assume that $\delta > 0$ and $T < \infty$ are given and assume without loss of generality that $T$ is an integer.
Set inductively $\eps_0 := \delta$ and $\eps_{i} :=  \min \{ \eps (\eps_{i-1}, 1, E), \eps_{i -1} , 1\}$, where $\eps(\cdot,1,E)$ is as in the statement of the weaker version of the theorem in the $T=1$ case, as well as $r_0 := r$ and $r_i := \eps_i r_{i-1}$, for $i = 1, \ldots, T$.
Assume now that $|{\Rm}| \geq (\eps_1 \cdots \eps_T r )^{-2} = (\eps_T r_{T-1})^{-2}$ on $\M_0 \setminus U$ (and possibly also on $\M'_0 \setminus U'$) and 
\[ | \phi^* g'_0 - g_0 | \leq \eps_T \cdot r_{T}^{2E} (|{\Rm}| + 1)^E. \]
We can then apply the weaker version of the theorem for $r = r_{T-i}$ and $\delta = \eps_{T-i}$ to find a sequence of subsets $\wh{U}_i \subset \M_{[i-1, i]}$, $\wh{U}'_i \subset \M'_{[i-1,i]}$ and diffeomorphisms $\wh\phi_i : \wh{U}_i \to \wh{U}'_i$ such that
\[ | \wh\phi^*_i g' - g | \leq \eps_{T-i} \cdot r_{T-i}^{2E} (|{\Rm}| + 1)^E \]
and $|{\Rm}| \geq r_{T-i}^{-2} $ on $\M_{[i-1, i]}$ (and possibly also on $\M_{[i-1,i]} \setminus \wh{U}'_i$) for $i = 1, \ldots, T$.
Moreover, $\wh\phi_{i-1} = \wh\phi_{i}$ on $\wh{U}_{i-1} \cap \wh{U}_i \subset \M_i$. 
Then $\wh\phi$ can be constructed by combining the diffeomorphisms $\wh\phi_1, \ldots, \wh\phi_T$ on the open subset
\[ \big( \wh{U}_1 \cap \M_{[0,1)} \big) \cup (\wh{U}_1 \cap \wh{U}_2) \cup \big( \wh{U}_2 \cap \M_{(1,2)} \big) \cup (\wh{U}_2 \cap \wh{U}_3) \cup \ldots \cup \big( \wh{U}_{T} \cap \M_{(T-1,T]} \big). \]
This finishes the proof.
\end{proof}

\bigskip

\begin{proof}[Proof of Theorem~\ref{Thm_comparing_RF_weak_form}.]
The theorem is a consequence of Theorem~\ref{Thm_comparing_RF_general_version}.
To see this, assume $\delta \leq 1$, choose $E := \underline{E}$ and consider the constants $\eps_{\can} = \eps_{\can} (E)$ and $\eps = \eps ( \frac{1}{3^E} \delta, \lb T, \lb E)$ from Theorem~\ref{Thm_comparing_RF_general_version}.
Set $\eps' := \min \{ \delta^{2E} \eps, \delta \eps, \eps_{\can} \}$ and $r := \delta$.

We claim that Theorem~\ref{Thm_comparing_RF_weak_form} holds for $\eps = \eps'$.
By the assumption of this theorem we have
\[ | \phi^* g'_0 - g_0 | \leq \eps'   \leq \eps \cdot \delta^{2E}
\leq \eps \cdot  r^{2E}   (|{\Rm}| + 1)^{E}. \]
We also have $|{\Rm}| \geq \eps^{\prime -2} \geq (\eps r)^{-2}$ on $\M_0 \setminus U$.
So Theorem~\ref{Thm_comparing_RF_general_version} can be applied and yields the existence of a time-preserving diffeomorphism $\wh\phi : \wh{U} \to \wh{U}'$ such that $\wh\phi = \phi$ on $U \cap \wh{U}$ and
\begin{equation} \label{eq_phi_hat_weakform}
 | \wh\phi^* g' - g | \leq  \frac1{3^E} \delta  \cdot \delta^{2E} (|{\Rm}| + 1)^{E} <  \delta,  
\end{equation}
on $\wh{U} \cap \{ |{\Rm}| < 2 \delta^{-2} \}$ and $|{\Rm}| \geq \delta^{-2}$ on $\M_{[0,T]} \setminus \wh{U}$.
We can now replace $\wh{U}$ by $\wh{U} \cap \{ |{\Rm}| < 2 \delta^{-2} \}$ and then $\wh{U}'$ by $\wh\phi ( \wh{U} )$.
Then (\ref{eq_phi_hat_weakform}) holds on all of $\wh{U}$ and we still have $|{\Rm}| \geq \delta^{-2}$ on $\M_{[0,T]} \setminus \wh{U}$.
\end{proof}

\begin{proof}[Proof of Addendum to Theorem~\ref{Thm_comparing_RF_weak_form}]
The bounds on the higher derivatives follow from Lemma~\ref{lem_loc_gradient_estimate}, combined with Lemma~\ref{lem_forward_backward_control} and Shi's estimates, since $\wh\phi^* g' - g$ satisfies that Ricci-DeTurck equation.
\end{proof}

\bigskip

We now apply the stability theorem, Theorem~\ref{Thm_comparing_RF_general_version}, to prove Theorem~\ref{thm_uniquness}, which asserts  the uniqueness of the Ricci flow spacetimes with a given initial condition, under completeness and canonical neighborhood assumptions.

The idea of the proof is as follows.  We first apply Theorem~\ref{Thm_comparing_RF_general_version} 
to produce a sequence of maps $\wh\phi_i:U_i\ra \M'_{[0,T]}$ 
such that $\cup_i U_i=\M_{[0,T]}$ and the $|\wh\phi_i^*g'-g|\leq\de_i\ra 0$.  
We then show that the $\wh\phi_i$s converge locally smoothly to the desired diffeomorphism $\wh\phi$.  To do this, we appeal to the drift bound in Proposition~\ref{prop_drift_control} to propagate the region of convergence over time, and we use uniqueness of isometries of Riemannian manifolds to propagate the convergence within time-slices.

\bigskip

\begin{proof}[Proof of Theorem~\ref{thm_uniquness}.]
We will prove the theorem in the case $T < \infty$.
The case $T = \infty$ follows by letting $T \to \infty$.
Choose $\underline{E}$ and $\eps_{\can} := \eps_{\can} (\underline{E})$ according to Theorem~\ref{Thm_comparing_RF_general_version}.
Also, by parabolic rescaling, we may assume without loss of generality that $r = 1$.

By Theorem~\ref{Thm_comparing_RF_general_version} we can find a sequence of open subsets $U_1 \subset U_2 \subset \ldots \subset \M_{[0,T]}$ such that $\cup_{i=1}^\infty U_i = \M_{[0,T]}$ and a sequence of time-preserving diffeomorphisms onto their images $\wh\phi_i : U_i \to \M'_{[0,T]}$ that satisfy the harmonic map heat flow equation, such that $\wh\phi_i |_{U_i \cap \M_0} = \phi |_{U_i \cap \M_0}$,  $\t' \circ \wh\phi_i = \t$ and
\begin{equation} \label{eq_bilipschitz_u_proof}
 | \wh\phi^*_i g' - g | \leq \delta_i \to 0. 
\end{equation}

Let $Y$ be the set of points $x\in \M_{[0,T]}$ such that the pointwise limit $\phi_\infty (x) := \lim_{i \to \infty} \wh\phi_i (x)$ exists.  Let 
$$
X=\{x\in \M_{[0,T]}\;\; : \;\; B(x,r)\subset Y\;\text{for some}\;r>0\}\,,
$$
so $X$ is the set of points $x\in \M_{[0,T]}$ that belong to relative interior of $Y\cap \M_{\t(x)}$ in $\M_{\t(x)}$.  
Recall that $X_t=X\cap \M_t$ for $t\geq 0$.  
Our main goal is to show that $X = \M_{[0,T]}$ and that the pointwise limit  $\wh\phi_\infty$ is smooth, preserves the metric, and time vector field.  Obviously, $X_0=Y_0=\M_0$, since $\wh\phi_i=\phi$ on $U_i\cap \M_0$.

\setcounter{claim}{0}

\begin{claim} \label{cl_1.3_1}
For every $t\in [0,T]$:
\begin{enumerate}[label=(\alph*)]
\item \label{ass_1.3_cl1_a} $\wh\phi_\infty\mid_{X_t}\ra \M_t$ is a smooth isometric immersion.
\item \label{ass_1.3_cl1_b} $X_t$ is a union of connected components of $\M_t$.
\item \label{ass_1.3_cl1_c} For all $x\in X_t$, $\rho(\wh\phi_\infty(x))=\rho(x)$.
\een
\end{claim}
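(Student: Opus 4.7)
The plan is to first establish assertion \ref{ass_1.3_cl1_a} (and simultaneously \ref{ass_1.3_cl1_c}) from the quantitative stability bound $|\wh\phi_i^*g'-g|\leq\delta_i\to 0$ together with higher-derivative control coming from the Addendum to Theorem~\ref{Thm_comparing_RF_weak_form}, and then deduce \ref{ass_1.3_cl1_b} by upgrading the pointwise convergence to a neighborhood using an equicontinuity argument and rigidity of isometric immersions. Throughout, the key structural fact is that on $X_t$ the pointwise limit already exists \emph{on a neighborhood} of each point in the time-slice, so one may work with genuine maps between open sets of fixed Riemannian $3$-manifolds.

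For \ref{ass_1.3_cl1_a} and \ref{ass_1.3_cl1_c}, fix $t\in[0,T]$ and a point $x\in X_t$ with $B(x,r)\subset Y_t$. On a slightly smaller ball, the $\wh\phi_i$ are defined for all large $i$ (since $\cup_i U_i=\M_{[0,T]}$ and $B(x,r)$ has compact closure in a chart, by the $0$-completeness of $\M_{[0,T]}$ applied to points of bounded scale), and the $C^0$-bound $|\wh\phi_i^*g'-g|\leq\delta_i$ makes them uniformly $(1+\delta_i)$-bilipschitz. Because $g$ and $g'$ are smooth background Ricci flows and $\wh\phi_i^{-1}$ solves the harmonic map heat flow, the Addendum to Theorem~\ref{Thm_comparing_RF_weak_form} (together with Shi-type estimates for $|\nabla^m\Rm|$ on compact subsets of $\M$ and $\M'$) yields uniform $C^m$-bounds on $\wh\phi_i^*g'-g$, hence uniform $C^m$-bounds on the $\wh\phi_i$ themselves. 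Combined with the pointwise convergence on $Y_t$, Arzel\`a--Ascoli gives smooth convergence $\wh\phi_i\to\wh\phi_\infty$ on a neighborhood of $x$ in $\M_t$, and passing $\delta_i\to 0$ in $|\wh\phi_i^*g'-g|\leq\delta_i$ yields $\wh\phi_\infty^*g'_t=g_t$ there. Thus $\wh\phi_\infty|_{X_t}$ is a smooth isometric immersion, which implies \ref{ass_1.3_cl1_a}; assertion \ref{ass_1.3_cl1_c} is then immediate, since isometric immersions preserve the full curvature tensor pointwise and $\rho$ is a pointwise function of $\Rm$.

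For \ref{ass_1.3_cl1_b}, since $X_t$ is open in $\M_t$ by definition, it suffices to show that $X_t$ is closed in $\M_t$. Consider $x_\infty\in\overline{X_t}\subset\M_t$ and a sequence $x_k\in X_t$ with $x_k\to x_\infty$. The $(1+\delta_i)$-bilipschitz property on a small ball $B(x_\infty,r)$ (combined with Lemma~\ref{lem_scale_distortion} or a direct distance-distortion estimate) gives that $\wh\phi_i(x_\infty)$ stays within bounded distance of $\wh\phi_i(x_k)\to\wh\phi_\infty(x_k)$, and by assertion \ref{ass_1.3_cl1_c} the points $\wh\phi_\infty(x_k)$ have scale bounded below by $\rho(x_k)\to\rho(x_\infty)$. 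Therefore, using $(r_0,t_0)$-completeness of $\M'$ at the scale determined by $\rho(x_\infty)$, the sequence $\wh\phi_i(x_\infty)$ stays in a compact region of $\M'_t$ and admits a subsequential limit $y_\infty$. Uniqueness of this limit, and hence convergence of the full sequence on a neighborhood of $x_\infty$, is forced by the rigidity of isometric immersions: any two subsequential limits of $\wh\phi_i$ on a neighborhood of $x_\infty$ would be smooth isometric immersions agreeing with $\wh\phi_\infty$ on the sequence $x_k\to x_\infty$, and an isometric immersion of a connected open Riemannian manifold is determined by its $1$-jet at a single point. This yields $B(x_\infty,r')\subset Y_t$ for some $r'>0$, so $x_\infty\in X_t$ and \ref{ass_1.3_cl1_b} follows.

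The main obstacle will be the argument in \ref{ass_1.3_cl1_b} that $\wh\phi_i(x_\infty)$ does not wander off to infinity in $\M'_t$: one needs to combine the scale control from \ref{ass_1.3_cl1_c} with the $(r_0,t_0)$-completeness and the canonical neighborhood assumption on $\M'$ in a way that yields compactness without assuming completeness of the time-slices in the usual Riemannian sense. Once this compactness is in hand, the bilipschitz bound and the rigidity of isometric immersions close the argument essentially automatically.
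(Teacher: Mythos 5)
Your proposal follows the same overall strategy as the paper---bilipschitz control from $|\wh\phi_i^*g'-g|\leq\delta_i\to 0$, Arzel\`a--Ascoli, compactness of images, and rigidity of isometries to pin down the limit---but differs in one notable technical choice and contains one genuine gap. For assertions \ref{ass_1.3_cl1_a} and \ref{ass_1.3_cl1_c} you invoke the Addendum to Theorem~\ref{Thm_comparing_RF_weak_form} plus Shi and parabolic interior estimates to upgrade $C^0$ convergence to smooth convergence of the maps. That route can be made to work (with some bookkeeping since the Addendum's $\eps$ depends on a local curvature bound $C$, forcing you to work on an exhaustion by compact subsets), but the paper's argument is more elementary and avoids any higher regularity theory for the $\wh\phi_i$: since the $\wh\phi_i$ are $(1+\delta_i)$-bilipschitz with $\delta_i\to 0$, a subsequential $C^0$-limit is distance-preserving between open subsets of smooth Riemannian manifolds, hence is automatically a smooth Riemannian isometry by a Myers--Steenrod type argument, and this already gives \ref{ass_1.3_cl1_a}, \ref{ass_1.3_cl1_c}, and the rigidity needed for \ref{ass_1.3_cl1_b} in one stroke. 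The compactness of the images is likewise obtained in the paper without appealing to completeness or scale control: one picks $r_0$ with $\overline{B(z,6r_0)}$ compact in $\M_t$ and observes via the bilipschitz bound that $B(\wh\phi_\infty(x),3r_0)\subset\wh\phi_i(\overline{B(x,5r_0)})$ for large $i$, so its closure is a closed subset of the continuous image of a compact set.

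The genuine gap is in your rigidity step for \ref{ass_1.3_cl1_b}. You argue that two subsequential limits of $\wh\phi_i$ on a neighborhood of $x_\infty$ agree with $\wh\phi_\infty$ on the sequence $x_k\to x_\infty$, and then conclude via the fact that an isometric immersion of a connected open manifold is determined by its $1$-jet at a single point. But agreement of two local isometries on a sequence converging to a point does not determine the $1$-jet there: a nontrivial rotation of $\R^3$ about an axis fixes a whole segment of that axis, so two local isometries can agree on a convergent sequence (and at the limit) and still be distinct. The ingredient that closes the gap---and which you already have in hand but did not use---is that each $x_k$ lies in $X_t$, so by definition a full ball $B(x_k,r_k)\subset Y_t$. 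Any subsequential limit therefore agrees with $\wh\phi_\infty$ on that entire open ball, not merely at the points $x_k$. Agreement on an open set does determine the $1$-jet (at any point of the ball), and then connectedness of the neighborhood of $x_\infty$ forces the two subsequential limits to coincide. This is precisely how the paper phrases the rigidity: the two limits ``agree on a neighborhood of $x$ in $X_t$, because $x\in X$''.
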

\begin{proof}
Suppose $t\in [0,T]$ and $z$ is in the closure of $X_t$.  Choose $r_0>0$ such that $\ol{B(z,6r_0)}$ is compact, and pick $x\in B(z,r_0) \cap X_t$. 
Hence $\ol{B(x,5r_0)}$ is compact and $z \in B(x, r_0)$.  There is a sequence $L_i\ra 1$ such that for large $i$ we have $\ol{B(x,5r_0)}\subset U_i$, and $L_i^{-1}g\leq \wh\phi_i^*g'\leq L_ig$ on $\ol{B(x,5r_0)}$.
An elementary Riemannian geometry argument gives, for large $i$, that
$\wh\phi_i(\ol{B(x,5r_0)})\supset B(\wh\phi_i(x),4r_0)$ and the restriction of $\wh\phi_i$ to $B(x,r_0)$ is $L_i$-bilipschitz with respect to the Riemannian distance functions on $\M_t$ and $\M'_t$.  Since $\wh\phi_i(x)\ra \wh\phi_\infty(x)$, for large $i$ we have
\begin{multline*}
\wh\phi_i(B(x,r_0))\subset B(\wh\phi_i(x),2r_0)\subset B(\wh\phi_\infty(x),3r_0)
\subset B(\wh\phi_i(x),4r_0)\subset \wh\phi_i(\ol{B(x,5r_0)})\,,
\end{multline*}
and therefore $\ol{B(\wh\phi_\infty(x),3r_0)}$ is compact.

Put $B:=B(x,r_0)$.  Suppose $\{\wh\phi_i |_B\}$ does not converge pointwise.  Then by the Arzela-Ascoli theorem, the sequence $\{\wh\phi_i |_B\}$ has two distinct subsequential limits $\psi,\psi':B\ra \M'_t$, and since $L_i\ra 1$, both maps  preserve the distance functions on $\M$ and $\M'$.  Hence $\psi$ and $\psi'$ are smooth Riemannian isometries.  
They agree on a neighborhood of $x$ in  $X_t$, because $x\in X$, and since $B$ is connected, they must coincide, contradicting $\psi\neq\psi'$.  Thus $B\subset X_t$, and the pointwise limit $\wh\phi_\infty$ is a smooth Riemannian isometry on $B$. 
This shows that the closure of $X_t$ is open, which implies assertion \ref{ass_1.3_cl1_b}.
Our proof also implies assertion \ref{ass_1.3_cl1_a}, which implies assertion \ref{ass_1.3_cl1_c}.
\end{proof}

\begin{claim} \label{cl_1.3_2}
There is a universal constant $c > 0$ such that for every $x \in Y_t = Y \cap \M_t$ and $\tau_x := c  \rho_1^2(x)$ the following holds for all $t' \in [ t - \tau_x, t + \tau_x] \cap [0,T]$: 
\begin{enumerate}[label=(\alph*)]
\item \label{ass_1.3_cl2_a} $x$ survives until $t'$, and $x(t') \in Y$.
\item \label{ass_1.3_cl2_b} $\wh\phi_\infty (x)$ survives until $t'$, and $(\wh\phi_\infty (x))(t') = \wh\phi_\infty (x(t'))$.
\end{enumerate}
\end{claim}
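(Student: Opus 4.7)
The plan is to propagate the pointwise convergence $\wh\phi_i \to \wh\phi_\infty$ from $x$ to a full spacetime parabolic neighborhood of $x$ by combining $(0,T)$-completeness with a drift estimate for the harmonic map heat flow. First I would use Claim~\ref{cl_1.3_1}\ref{ass_1.3_cl1_c} to note that $\rho_1(\wh\phi_\infty(x)) = \rho_1(x)$, and then invoke Lemma~\ref{lem_forward_backward_control} together with the $(0,T)$-completeness and the $\eps_{\can}$-canonical neighborhood assumption on $\M$ and $\M'$ to produce a universal $c > 0$ such that, with $\tau_x := c\rho_1^2(x)$, both parabolic neighborhoods $P(x, \frac12 \rho_1(x), \pm \tau_x)$ and $P(\wh\phi_\infty(x), \frac12 \rho_1(x), \pm \tau_x)$ (intersected with $\M_{[0,T]}$, $\M'_{[0,T]}$) are unscathed and carry uniform two-sided bounds on $\rho_1$. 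In particular, $x(t')$ and $\wh\phi_\infty(x)(t')$ are defined for all $t' \in [t-\tau_x, t+\tau_x] \cap [0,T]$.

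Next I would show that for each such $t'$, the sequence $\wh\phi_i(x(t'))$ converges to $\wh\phi_\infty(x)(t')$. For large $i$ we have $U_i \supset \ol{P(x, \frac12 \rho_1(x), \pm \tau_x)}$, by the nested exhaustion $U_i \nearrow \M_{[0,T]}$ and the compactness of the closed parabolic neighborhood. Since $|\wh\phi_i^* g' - g| \leq \delta_i \to 0$ on this neighborhood and the Ricci--DeTurck equation is strongly parabolic, Shi-type interior estimates (Lemma~\ref{lem_loc_gradient_estimate}) give $|\nabla^k (\wh\phi_i^* g' - g)| \to 0$ for all $k$ on the slightly smaller neighborhood, with bounds depending only on the uniform geometry of $\M$ there. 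Let $\xi_i(t') := (\wh\phi_i(x))(t')$ and $\eta_i(t') := \wh\phi_i(x(t'))$; both are time-$t'$ points of $\M'$, and $\xi_i(t') \to \wh\phi_\infty(x)(t')$ by assertion (b) of Claim~\ref{cl_1.3_1} applied at time $t'$ together with the definition of pointwise convergence.

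The core of the argument is then the comparison between $\xi_i$ and $\eta_i$, which measures the failure of $\wh\phi_i$ to be $\partial_\t$-preserving. Since $\wh\phi_i^{-1}$ evolves by harmonic map heat flow, the trajectory $t' \mapsto \eta_i(t')$ differs from the trajectory $t' \mapsto \xi_i(t')$ (the integral curve of $\partial_{\t'}$ through $\wh\phi_i(x)$) by the flow-line of the tension field $X_{g}(\wh\phi_i^*g')$, which is algebraically controlled by $\wh\phi_i^*g' - g$ and its first derivative (see (\ref{eq_def_X})). Applying Proposition~\ref{prop_drift_control} on $P(\wh\phi_i(x), \frac14\rho_1(x), \pm\tau_x)$ with $\delta = \delta_i \to 0$ and the uniform curvature bounds just established yields
\[
d_{t'}\bigl(\eta_i(t'),\, \xi_i(t')\bigr) \leq C \delta_i^{1/2} \rho_1(x) \longrightarrow 0
\]
uniformly in $t' \in [t-\tau_x, t+\tau_x] \cap [0,T]$.

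Combining the two facts, $\wh\phi_i(x(t')) = \eta_i(t') \to \wh\phi_\infty(x)(t')$, so the pointwise limit exists at $x(t')$, i.e. $x(t') \in Y$, and equals $\wh\phi_\infty(x)(t')$; this is exactly assertions \ref{ass_1.3_cl2_a} and \ref{ass_1.3_cl2_b}. The main obstacle is the drift estimate: we must ensure that the same $\tau_x$ works simultaneously for all large $i$, which requires the parabolic neighborhoods around $\wh\phi_i(x)$ in $\M'$ to be unscathed with uniform geometry; this is guaranteed for large $i$ because $\wh\phi_i(x) \to \wh\phi_\infty(x)$, $\rho_1$ is nearly preserved under the nearly-isometric $\wh\phi_i$ (Lemma~\ref{lem_scale_distortion}), and $\M'$ is $(0,T)$-complete, so Lemma~\ref{lem_forward_backward_control} applies uniformly in $i$.
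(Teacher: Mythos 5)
Your overall strategy matches the paper's: both use Lemma~\ref{lem_forward_backward_control} and the scale preservation of Claim~\ref{cl_1.3_1}\ref{ass_1.3_cl1_c} to produce unscathed parabolic neighborhoods of a universal size around $x$ and $\wh\phi_\infty(x)$, and both deduce $d_{t'}\big(\wh\phi_i(x(t')),(\wh\phi_i(x))(t')\big)\to 0$ from the drift estimate Proposition~\ref{prop_drift_control} with the $\eta$-parameter there controlled by $\delta_i\to 0$, after which $\wh\phi_i(x(t'))$ and $(\wh\phi_i(x))(t')$ both converge to $\wh\phi_\infty(x)(t')$.

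The step you gloss over is the one that makes the application of Proposition~\ref{prop_drift_control} legitimate. That proposition is a statement about a diffeomorphism between two local manifolds $M$ and $M'$, and its hypotheses presuppose that the image $\phi_t(M)$ stays in a ball $B(x',0,r)$ on which curvature is controlled. When you say ``apply Proposition~\ref{prop_drift_control} on $P(\wh\phi_i(x),\tfrac14\rho_1(x),\pm\tau_x)$'', you are implicitly assuming that $\wh\phi_i(x(\cdot))$ already stays near $(\wh\phi_i(x))(\cdot)$ --- but that is precisely the drift you are trying to bound, so the argument is circular as written. The paper closes this with an explicit open--closed argument in time: it introduces the extremal times $t^*_{\pm,i}$ on which the drift is $<r_0$, applies the drift estimate on $(t^*_{-,i},t^*_{+,i})$ to obtain the much stronger bound $\leq \eps_i r_0$ with $\eps_i\to 0$, and concludes that $t^*_{\pm,i}$ must already equal the endpoints of the allowed interval. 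Two smaller points: the fact that $\wh\phi_i(x)\to \wh\phi_\infty(x)$ is just the definition of $Y$, not assertion \ref{ass_1.3_cl1_b} of Claim~\ref{cl_1.3_1} (which concerns $X_t$ being a union of components); and Proposition~\ref{prop_drift_control} supplies only some $\eps_i\to 0$, not the explicit rate $\delta_i^{1/2}$. Also, the spacetime curvature-derivative bound $|\nabla^m\Rm|\leq A_x r_0^{-2-m}$ required by hypothesis~(ii) of Proposition~\ref{prop_drift_control} cannot be taken universal (the canonical neighborhood assumption only applies below scale $1$); the paper obtains it by compactness with a constant $A_x$ depending on $x$, which is harmless because $c$ stays universal and $\eps_i\to 0$ for each fixed $x$.
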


\begin{proof}
The claim follows from Proposition~\ref{prop_drift_control} via a continuity argument.
Let $x \in X_t$ and set $x' := \wh\phi_\infty (x)$.

Using assertion \ref{ass_1.3_cl1_c}  of Claim~\ref{cl_1.3_1} and Lemma~\ref{lem_forward_backward_control}, assuming that $\eps_{\can}$ is smaller than some universal constant, we can find a universal constant $c > 0$ such that for $r_0 := c^{1/2} \rho_1 (x)$ the following holds:
For all $t_0 \geq 0$ with $|t - t_0| \leq r_0^2$ the parabolic neighborhoods $P(x (t_0), r_0, 2r_0^2)$ and $P(x'(t_0), \lb 100 r_0, \lb 2r_0^2)$ are unscathed and $|{\Rm}| \leq r_0^{-2}$ on both.
By compactness, we moreover find a constant $1 \leq A_x < \infty$, which may depend on $x$, such that $|{\nabla^{m} \Rm}| \leq A_x r_0^{ -2-m}$ for $m = 0, 1, \ldots, 3$ on both parabolic neighborhoods.
Moreover, lengths of curves inside these parabolic neighborhoods are distorted by at most a factor of 2 under the Ricci flow.

For each $i$ choose $t^*_{-, i}$ minimal and $t^*_{+,i}$ maximal with $0 \leq t^*_{-,i} \leq t \leq t^*_{+,i} \leq T$ and $|t^*_{i, \pm} - t| \leq r_0^2$ such that we have $d_{t_0} (\wh\phi_i(x(t_0)) , (\wh\phi_i (x))(t_0)) < r_0$ for all $t_0 \in (t^*_{-,i}, t^*_{+,i})$.
Since $x \in Y_t$, we have $d_t (\wh\phi_i (x) , x') < r_0$ for large $i$.
So by the length distortion bound on $P(x'(t_0), \lb 100 r_0, \lb 2r_0^2)$ we have $d_{t_0} ((\wh\phi_i (x(t')))(t_0), x'(t_0)) < 4 r_0$ for all $t_0 \in (t^*_{-,i}, t]$ and $t' \in [t_0, t^*_{+,i})$ if $i$ is large (we use the convention $[t,t) = (t,t] = \{ t \}$ here).
By (\ref{eq_bilipschitz_u_proof}) and the distance distortion bounds on $P(x(t_0), r_0, 2r_0^2)$ and $P(x'(t_0), \lb 100 r_0, \lb 2r_0^2)$ we therefore obtain that for large $i$ and $t_0 \in (t^*_{-,i}, t]$
\[ \wh\phi_i \big( P(x(t_0), r_0, t^*_{+,i} - t_0) \big) \subset P( x'(t_0), \lb 100 r_0, \lb 2 r_0^2). \]

We can therefore apply Proposition~\ref{prop_drift_control} for $M = B(x(t_0),  r_0)$, $M' = B(x'(t_0), 100 r_0)$, $r = r_0 $ and $A = A_x $ along with (\ref{eq_bilipschitz_u_proof}) to find that there is a sequence $\eps_i \to 0$ such that for large $i$ we have
\begin{equation*} \label{eq_dt0_whphi}
 d_{t_0} \big( (\wh\phi_i (x(t')))(t_0), (\wh\phi_i (x))(t_0) \big) \leq \eps_i r_0 
\end{equation*}
for all $t_0 \in (t^*_{-,i}, t]$ and $t' \in [t, t^*_{+,i})$.
By the distance distortion bound on $P(x'(t_0), \lb 100 r_0, \lb 2r_0^2)$ this implies that
\begin{equation} \label{eq_dtprime_whphi}
 d_{t'} \big( \wh\phi_i (x(t')), (\wh\phi_i (x))(t') \big) \leq 2\eps_i r_0 
\end{equation}
for all such $t_0$ and $t'$.
The bound (\ref{eq_dtprime_whphi}) implies that for large $i$ we have $t^*_{-,i} = t - r_0^2$ or $t^*_{-,i} = 0$ and $t^*_{i,+} = t + r_0^2$ or $t^*_{i,+} = T$ due to their minimal and maximal choice.
So (\ref{eq_dtprime_whphi})  implies  assertions \ref{ass_1.3_cl2_a} and \ref{ass_1.3_cl2_b}.
\end{proof}

\begin{claim} \label{cl_1.3_3}
\begin{enumerate}[label=(\alph*)]
\item \label{ass_1.3_cl3_a} $X$ is (relatively) open and closed in $\M_{[0,T]}$.
\item \label{ass_1.3_cl3_b} $\wh\phi_\infty$ is smooth, and $(\wh\phi_\infty)_*(\D_{\t})=\D_{\t'}$.
\item \label{ass_1.3_cl3_c} $X=\M_{[0,T]}$.
\end{enumerate}
\end{claim}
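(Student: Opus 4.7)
The plan is to prove the three assertions in turn, using Claims 1 and 2, Lemma~\ref{lem_MM_connected}, and standard distance distortion estimates. For openness of $X$ in $\M_{[0,T]}$ in part (a), given $x \in X_t$ pick $r_0 > 0$ so that $\overline{B(x, 2r_0)}$ is compact and contained in $Y_t$; then $\rho_1$ is bounded below by some $r_1 > 0$ on this ball. Lemma~\ref{lem_forward_backward_control} and Claim 2(a) together yield a uniform $\tau > 0$ such that for all $|t' - t| \leq \tau$ and all $y \in B(x, r_0)$, the point $y(t')$ is defined and lies in $Y$. A distance distortion estimate on the resulting parabolic neighborhood (with curvature controlled via Lemma~\ref{lem_forward_backward_control}) then shows that $\{y(t') : y \in B(x, r_0)\}$ contains a fixed spatial ball around $x(t')$ in $\M_{t'}$, giving $x(t') \in X$. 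Hence $X$ contains a parabolic neighborhood of $x$.

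For closedness of $X$ in $\M_{[0,T]}$, suppose $x_k \in X$ converges to $x_\infty \in \M_{t_\infty}$. Since $\rho_1(x_k) \to \rho_1(x_\infty) > 0$, the openness argument applied at $x_k$ gives, for large $k$, that $x_k(t_\infty)$ is defined and lies in $X_{t_\infty}$. By continuity of the $\partial_\t$-flow, $x_k(t_\infty) \to x_\infty$ in $\M_{t_\infty}$. Claim 1(b) asserts that $X_{t_\infty}$ is a union of connected components of $\M_{t_\infty}$, hence closed, so $x_\infty \in X_{t_\infty} \subset X$.

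For (b), smoothness within each time-slice comes from Claim 1(a), and Claim 2(b) gives the intertwining identity $(\wh\phi_\infty(x))(t') = \wh\phi_\infty(x(t'))$. In any local trivialization of $\partial_\t$ as a time coordinate, $\wh\phi_\infty$ is thus the smooth spacetime propagation of its restriction to a single time-slice via the smooth $\partial_{\t'}$-flow of $\M'$, which yields joint smoothness and $(\wh\phi_\infty)_*(\partial_\t) = \partial_{\t'}$. For (c), part (a) makes $X$ a union of connected components of $\M_{[0,T]}$, and $X \supset X_0 = \M_0$ holds by construction of the $\wh\phi_i$. Every point $y \in \M_{[0,T]}$ has $\rho(y) > 0$, so Lemma~\ref{lem_MM_connected} provides a continuous path in $\M_{[0,T]}$ from $y$ to a point of $\M_0$. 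That path lies in a single connected component, which meets $X$ at its $\M_0$-endpoint and is therefore contained in $X$. Thus $y \in X$, and $X = \M_{[0,T]}$.

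The most delicate step is the propagation of the spatial-interior $X$-property along the $\partial_\t$-flow, used in both the openness and closedness arguments, since Claim 2 only gives pointwise propagation of the weaker $Y$-property. Upgrading this to propagation of open balls rests on the uniform scale and curvature control on a short parabolic neighborhood supplied by Lemma~\ref{lem_forward_backward_control}, which lets standard distance distortion convert pointwise forward/backward flow on a ball into a map whose image covers a ball in the target time-slice. Once these ingredients are combined, the remainder of the argument is a clean synthesis of Claims 1 and 2 with Lemma~\ref{lem_MM_connected}.
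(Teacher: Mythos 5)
Your proof is correct and uses the same key ingredients as the paper (Claims 1 and 2 together with Lemma~\ref{lem_MM_connected}); the only substantive difference is organizational, as the paper verifies part (a) in a single step by showing that the closure $\overline{X}$ is open (which yields both openness and closedness of $X$ at once), whereas you treat the two properties separately. As a minor remark, the distance-distortion step in your openness argument can be bypassed: the time-$(t'-t)$ flow of $\partial_\t$ restricted to the surviving points is already a diffeomorphism onto its image, so the flowed image of an open ball is automatically open in $\M_{t'}$.
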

\begin{proof}
Suppose $z\in \M_t$ and $z$ belongs to the closure of $X$.   For $r > 0$ sufficiently small, by assertion \ref{ass_1.3_cl1_b} of Claim~\ref{cl_1.3_1}, there exists $t'\in[t-r^2,t+r^2]\cap[0,T]$ such that $(B (z,r) )(t')$ is contained in $X$.  Shrinking $r$ if necessary, we may assume that for all $x\in ( B(z,r) )(t')$, we have $\tau_x\geq 2r^2$, where $\tau_x$ is as in Claim~\ref{cl_1.3_2}.  Thus by assertion \ref{ass_1.3_cl2_a} of Claim~\ref{cl_1.3_2} we conclude that $( B(x,r) ) (\ol{t})\subset X$ for all $\ol{t}\in [t-r^2,t+r^2]\cap[0,T]$.  
This implies the closure of $X$ in $\M_{[0,T]}$ is open, which implies assertion \ref{ass_1.3_cl3_a}.

By assertion \ref{ass_1.3_cl2_b} of Claim~\ref{cl_1.3_2}, it follows that $\wh\phi_\infty$ locally commutes with the flows of the time vector fields $\D_{\t}$ and $\D_{\t'}$ on $\M$ and $\M'$, respectively.  Combining this with assertion \ref{ass_1.3_cl1_a}  of Claim~\ref{cl_1.3_1}, we obtain assertion \ref{ass_1.3_cl3_b}.  By assertion \ref{ass_1.3_cl3_a}, it follows that $X$ is a union of connected components of $\M_{[0,T]}$.  Assertion \ref{ass_1.3_cl3_c} now follows from Lemma~\ref{lem_MM_connected}, assuming that $\eps_{\can}$ is smaller than some universal constant, and the fact that $\M_0 \subset X$.    
\end{proof}

By Claim~\ref{cl_1.3_3} we have constructed a smooth map $\wh\phi_\infty : \M_{[0,T]} \to \M'_{[0,T]}$ such that
\begin{equation} \label{eq_RF_spacetime_isometry}
\wh\phi^*_\infty g' = g, \qquad
\wh\phi_\infty |_{\M_0} = \phi, \qquad
(\wh\phi_\infty)_* \partial_{\t} = \partial_{\t'}, \qquad
\t' \circ \wh\phi_\infty = \t.
\end{equation}

We now claim that the map $\wh\phi_\infty$ is uniquely characterized by (\ref{eq_RF_spacetime_isometry}).
To see this, consider two such maps $\wh\phi_\infty, \wh\phi'_\infty$.
As both maps satisfy the harmonic map heat flow equation, we can apply the conclusions of our proof to up to this point to the sequences $U_i = \M_{[0,T]}$ and $\wh\phi_{2i-1} = \wh\phi_\infty$, $\wh\phi_{2i} = \wh\phi'_\infty$.
It follows that $\wh\phi_{i}$ converges pointwise, and therefore we must have $\wh\phi_\infty = \wh\phi'_\infty$ as asserted.

It remains to show that $\wh\phi_\infty$ is bijective.
To see this, we can interchange the roles of $\M$ and $\M'$ and apply our discussion to obtain a map $\wh\psi_\infty : \M'_{[0,T]} \to \M_{[0,T]}$ such that
\[
\wh\psi^*_\infty g = g', \qquad
\wh\psi_\infty |_{\M'_0} = \psi := \phi^{-1}, \qquad
(\wh\psi_\infty)_* \partial_{\t'} = \partial_{\t}, \qquad
\t \circ \wh\psi_\infty = \t'.
\]
Now consider the composition $\alpha := \wh\psi_\infty \circ \wh\phi_\infty$ such that
\[ \alpha^* g = g, \qquad
\alpha |_{\M_0} = \id_{\M_0}, \qquad
\alpha_* \partial_{\t} = \partial_{\t}, \qquad
\t \circ \alpha = \t, \]
By the uniqueness property, as discussed in the previous paragraph (for $\M = \M'$), we obtain that $\wh\psi_\infty \circ \wh\phi_\infty = \alpha = \id_{\M_{[0,T]}}$.
Similarly, we obtain that $\wh\phi_\infty \circ \wh\psi_\infty  = \id_{\M'_{[0,T]}}$.
This shows that $\wh\phi_\infty$ is bijective, finishing the proof.
\end{proof}

\appendix

\section{Ricci-DeTurck flow and harmonic map heat flow} \label{appx_Ricci_deT}

In this section we discuss the main estimates for harmonic map heat flow and  Ricci-DeTurck flow that will be needed in the paper.  While the  general methodology is fairly standard, we were unable to find  suitable references in the general PDE literature for these results.

\subsection{The main equations} \label{subsec_hmhf_main_equations}
In this subsection we derive the general equations for harmonic map heat flow with time dependent metrics on the source and target, and the associated Ricci-DeTurck flow.  Most of the ideas presented in this subsection go back to DeTurck \cite{DeTurck:1983jp}  and Hamilton \cite{hamilton_formation}.

Consider two $n$-dimensional manifolds $M, M'$, each equipped with a smooth family of Riemannian metrics $(g_t)_{t \in [0,T]}$, $(g'_t)_{t \in [0,T]}$.
Let moreover $(\chi_t)_{t \in [0,T]}$, $\chi_t : M' \to M$ be a smooth family of maps.

\begin{definition} \label{def_harmonic_map_heat_flow_appendix}
We say that the family $(\chi_t)_{t \in [0,T]}$ moves by {\bf harmonic map heat flow between $(M', g'_t)$ and $(M,g_t)$} if the family satisfies the following evolution equation:
\begin{equation} \label{eq_hmhf_def_appx}
 \partial_t \chi_t = \triangle_{g'_t, g_t} \chi_t = \sum_{i=1}^n \big( \nabla^{g_t}_{d\chi_t (e_i)} d\chi_t (e_i) - d\chi_t ( \nabla^{g'_t}_{e_i} e_i ) \big),  
\end{equation}
where $\{ e_i \}_{i=1}^n$ is a local frame field on $M'$ that is orthonormal with respect to $g'_t$.
\end{definition}

Assume now for the remainder of this subsection that all the maps $\chi_t$ are diffeomorphisms and consider their inverses $\chi_t^{-1}$.
Let
\begin{equation} \label{eq_associated_perturbation_appendix}
 h_t := \big( \chi^{-1}_t \big)^* g'_t - g_t 
\end{equation}
be the {\bf associated perturbation}.
The pullback $( \chi^{-1}_t )^* g'_t = g_t + h_t$ evolves by the following equation
\begin{align}
 \partial_t \big( \big( \chi^{-1}_t \big)^* g'_t \big) &= \big( \chi^{-1}_t \big)^* \partial_t g'_t - \mathcal{L}_{\partial_t \chi_t \circ \chi_t^{-1}} \big( \big( \chi^{-1}_t \big)^* g'_t \big) \label{eq_evolution_pullback} \\
 &= \big( \chi^{-1}_t \big)^* \big(  \partial_t g'_t + 2 \Ric(g'_t) \big) - 2 \big( \chi^{-1}_t \big)^* \Ric (g'_t) \notag \\
 &\qquad\qquad - \mathcal{L}_{\partial_t \chi_t \circ \chi_t^{-1}} \big( \big( \chi^{-1}_t \big)^* g'_t \big) \notag \\
 &= \big( \chi^{-1}_t \big)^* \big(  \partial_t g'_t  + 2 \Ric(g'_t) \big) \notag \\
 &\qquad\qquad - 2  \Ric (g_t + h_t) - \mathcal{L}_{\partial_t \chi_t \circ \chi_t^{-1}} ( g_t + h_t )  \notag \\
 &= \big( \chi^{-1}_t \big)^* \big( \partial_t g'_t + 2 \Ric(g'_t)  \big) - 2 \Ric (g_t) + \mathcal{X}_t , \notag
\end{align}
where $\mathcal{X}_t$ can be expressed as follows (in the following identity, covariant derivatives and curvature quantities are taken with respect to $g_t$ and the time-index $t$ is suppressed)
\begin{align*}
 \mathcal{X}_{ij} &=    (g+h)^{pq} \big( \nabla^2_{pq} h_{ij} + R_{pij}^{\quad u}  h_{uq} + R_{pji}^{\quad u} h_{uq} - R_{ip q}^{\quad u} h_{uj} - R_{j pq}^{\quad u} h_{iu } \big) \\
&\quad - \frac12 (g + h)^{pq} (g+h)^{uv} \big({ - \nabla_i h_{pu} \nabla_j h_{qv} - 2 \nabla_u h_{ip} \nabla_q h_{jv}} \\ 
&\qquad\qquad\qquad + 2 \nabla_u h_{ip} \nabla_v h_{jq}  + 2 \nabla_p h_{iv} \nabla_j h_{qu} 
+ 2 \nabla_i h_{pu} \nabla_q h_{jv} \big) .
\end{align*}
We will now use (\ref{eq_evolution_pullback}) to derive an evolution equation for $h_t$.
First observe that
\[ \partial_t h_t = \partial_t \big( \big( \chi^{-1}_t \big)^* g'_t \big) - \partial_t g_t. \]
Similarly as in Uhlenbeck's trick, we define (we will suppress the time-index again wherever it interferes with the index notation)
\[  (\nabla_{\partial_t} h_t)_{ij} = (\partial_t h_t)_{ij}  - \frac12 g^{pq} \big( h_{pj}  \partial_t g_{qi} + h_{ip}  \partial_t g_{qj} ). \]
Then
\begin{align} 
 (\nabla_{\partial_t} h_t)_{ij} 
 &= \big( \big( \chi^{-1}_t \big)^* \big( \partial_t g'_t + 2 \Ric(g'_t) \big) \big)_{ij} - \big(  \partial_t g_t + 2\Ric(g_t)  \big)_{ij}  \label{eq_evolution_h} \\
 &\qquad - \frac12 g^{pq} \big( h_{p j}  \partial_t g_{q i} + h_{ip }  \partial_t g_{qj} )  + \mathcal{X}_{ij}  \notag \\
 &= \big( \big( \chi^{-1}_t \big)^* \big( \partial_t g'_t  + 2 \Ric(g'_t) \big) \big)_{ij} - \big(  \partial_t g_t + 2 \Ric(g_t) \big)_{ij} \notag \\
&\qquad - \frac12 g^{pq} \big(  \partial_t g_t + 2 \Ric(g_t) \big)_{pi} h_{qj} \notag \\
&\qquad- \frac12 g^{pq} \big( \partial_t g_t + 2 \Ric(g_t) \big)_{pj} h_{iq} + \mathcal{Y}_{ij}, \notag 
\end{align}
where
\begin{align*}
 \mathcal{Y}_{ij} &= \mathcal{X}_{ij} + g^{pq} \Ric_{ip} h_{qj} + g^{pq} \Ric_{jp} h_{qi} \\
  &= (g+h)^{pq} \big( \nabla^2_{pq} h_{ij} + R_{pij}^{\quad u}  h_{uq} + R_{pji}^{\quad u} h_{uq} \big) \\
& \qquad\qquad\qquad + \big( g^{pq} - (g+h)^{pq} \big) \big( R_{ipq}^{\quad u} h_{uj} + R_{jpq}^{\quad u} h_{iu } \big)  \\
&\quad - \frac12 (g + h)^{pq} (g+h)^{uv} \big( - \nabla_i h_{pu} \nabla_j h_{qv} - 2 \nabla_u h_{ip} \nabla_q h_{jv} \\ 
&\qquad\qquad\qquad + 2 \nabla_u h_{ip} \nabla_v h_{jq}  + 2 \nabla_{p } h_{iv} \nabla_j h_{q u} 
+ 2 \nabla_i h_{pu} \nabla_q h_{jv} \big).
\end{align*}

In the following, we will focus on the case in which $h_t$ is small and in which the families of metrics $(g_t)_{t \in [0,T]}$ and $(g'_t)_{t \in [0,T]}$ almost satisfy the Ricci flow equation in the following sense.
For parameters $0 < \eta < 0.1$ and $\delta > 0$ we assume that for all $t \in [0,T]$
\[ - \eta g_t \leq h_t \leq \eta g_t \]
and
\begin{equation} \label{eq_eta_almost_RF}
 - \delta g'_t \leq \partial_t g'_t  + 2 \Ric(g'_t) \leq \delta g'_t, \qquad - \delta g_t \leq  \partial_t g_t + 2 \Ric(g_t) \leq \delta g_t. 
\end{equation}
If we now multiply (\ref{eq_evolution_h}) by $2g^{iu } g^{jv } h_{uv }$, then we obtain that for some dimensional constant $C_0 < \infty$:
\begin{multline*} \label{eq_evol_h2_with_gradient}
  \partial_t |h|^2 \leq (g+h)^{ij} \nabla_{ij}^2 |h|^2 - 2(g+h)^{ij} g^{pq} g^{uv} \nabla_i h_{pu} \nabla_j h_{qv}  \\
  +  C_0 \delta \cdot |h| + C_0 |{\Rm}_{g}| \cdot |h|^2 + C_0 |h| \cdot |\nabla h|^2.  
\end{multline*}
We will later consider the case  $\eta < \min \{ 0.1, C_0^{-1} \}$.
Note that then 
\begin{equation} \label{eq_RdT_norm_evolution}
  \partial_t |h|^2 \leq   (g+h)^{ij} \nabla_{ij}^2 |h|^2  + C_0 \delta \cdot |h|  + C_0 |{\Rm}_{g}|  \cdot |h|^2 .
\end{equation}

Next, let us consider the case in which $(g_t)_{t \in [0,T]}$ and $(g'_t)_{t \in [0,T]}$ both satisfy the (exact) Ricci flow equation.
Then (\ref{eq_evolution_pullback}) implies the {\bf Ricci-DeTurck equation} for the pullback  metric $g_t + h_t = (\chi^{-1}_t)^* g'_t$.
\begin{equation*}
 \partial_t ( g_t + h_t ) = - 2 \Ric  (  g_t + h_t )  - \mathcal{L}_{X_{g_t} ( g_t + h_t )}  ( g_t + h_t ), 
\end{equation*}
where the vector field $X_{g_t} (g_t + h_t)$ is defined by
\begin{equation} \label{eq_def_X_appendix}
 X_{g} (g^*) :=  \triangle_{g^*, g} \id_M =  \sum_{i=1}^n \big( \nabla^{g}_{e_i} e_i - \nabla^{g^*}_{e_i} e_i \big),
\end{equation}
for a local frame $\{ e_i \}_{i=1}^n$ that is orthonormal with respect to $g^*$.
Note that
\begin{equation} \label{eq_X_is_dt_chi}
 X_{g_t} (g_t + h_t) = \partial_t \chi_t \circ \chi^{-1}_t. 
\end{equation}

From an analytical point of view, (\ref{eq_evolution_h}) implies that the Ricci-DeTurck equation can be expressed as follows in terms of the perturbations $h_t$ (also referred to as the {\bf Ricci-DeTurck perturbation equation} here):
\begin{equation} \label{eq_Ricci_de_Turck}
 \nabla_{\partial_t} h_t = \triangle_{g_t} h_t + 2\Rm_{g_t} (h_t) + \mathcal{Q}_{g_t} [ h_t]. 
\end{equation}
Here the expression on the left-hand side now denotes the conventional Uhlenbeck trick:
\[ (\nabla_{\partial_t} h_t)_{ij} = (\partial_t h_t)_{ij} + g^{pq}_t \big(  ( h_t)_{pj} \Ric_{qi} + ( h_t)_{ip}  \Ric_{qj} \big) \]
Moreover,
\[ (\Rm_{g_t} (h_t))_{ij} = g^{pq} R_{p ij}^{\quad u} h_{qu} \]
and $\mathcal{Q}_{g_t} [h_t] = \mathcal{Q}^{(1)}_{g_t} [h_t]$ where
\begin{align} \label{eq_QQ_formula}
  \big( \mathcal{Q}^{(\alpha)}_{g_t} [h_t] \big)_{ij} &=    \big( (g+ \alpha h)^{pq} - g^{pq} \big) \big( \nabla^2_{pq} h_{ij} + R_{pij}^{\quad u} h_{uq} + R_{pji}^{\quad u} h_{uq} \big) \\
& \qquad\qquad + \big( g^{pq} - (g+\alpha h)^{pq} \big) \big( R_{ipq}^{\quad u} h_{uj} + R_{jpq}^{\quad u} h_{iu } \big) \notag \\
&\quad - \frac{\alpha}2 (g + \alpha h)^{pq} (g + \alpha h)^{uv} \big({ - \nabla_i h_{pu} \nabla_j h_{qv} - 2 \nabla_u h_{ip} \nabla_q h_{jv} } \notag \\ 
&\qquad\qquad + 2 \nabla_u h_{ip} \nabla_v h_{jq}  + 2 \nabla_{p } h_{iv} \nabla_j h_{q u} 
+ 2 \nabla_i h_{pu} \nabla_q h_{jv} \big). \notag
\end{align}
In this paper, we also consider the {\bf rescaled Ricci-DeTurck  equation} for perturbations of the form $\td{h}_t := \alpha^{-1} h_t$ (we will be interested in the case $\alpha \leq 1$ mostly):
\begin{equation} \label{eq_rescaled_Ricci_de_Turck}
  \nabla_{\partial_t} \td{h}_t = \triangle_{g_t} \td{h}_t + 2 \Rm_{g_t} (\td{h}_t) + \mathcal{Q}^{(\alpha)}_{g_t} [ \td{h}_t]. 
\end{equation}
Note that $Q^{(0)} [ h_t ] = 0$.
So for $\alpha \to 0$, the equation (\ref{eq_rescaled_Ricci_de_Turck}) converges to the {\bf linearized Ricci-DeTurck equation}
\begin{equation} \label{eq_lin_RdT_Appendix}
 \nabla_{\partial_t} \td{h}_t = \triangle_{g_t} \td{h}_t + 2 \Rm_{g_t} (\td{h}_t) . 
\end{equation}

\subsection{Local derivative estimates}
In the following, we will derive local bounds on derivatives of the Ricci-DeTurck equation and the harmonic map heat flow equation.
Let us first consider the Ricci-DeTurck perturbation equation.
We obtain the following local derivative bounds.

\begin{lemma}[Local derivative estimates for Ricci-DeTurck flow] \label{lem_loc_gradient_estimate}
For any $m, n \geq 1$ there are constants $\eta_{m} = \eta_{m} (n) > 0$ and $C_m = C_{m} (n) < \infty$ such that the following holds.

Consider a Ricci flow $(g_t)_{t \in [0, r^2]}$ on an $n$-dimensional manifold $M$.
Let $p \in M$ be a point, $r > 0$ and assume that the time-$0$ ball $B(p,0, r) \subset M$ is relatively compact and that $|{\nabla^{m'} {\Rm}}| \leq r^{-2-m'}$ on $B(p,0,r) \times [0, r^2]$ for all $m' = 0, \ldots, m +2$.

Consider a solution $(h_t)_{t \in [0,r^2]}$ on $(M, (g_t)_{t \in [0,r^2]})$ to the Ricci-De\-Turck perturbation equation (\ref{eq_Ricci_de_Turck}).
Then the following holds:

\begin{enumerate}[label=(\alph*)]
\item If
\[ H := \sup_{B(x,0,r) \times [0, r^2]} |h_t|_{g_t} \leq \eta_{m}, \]
then
\[ |\nabla^{m'} h_{t }|_{g_t} \leq C_{m} H t^{-m'/2} \]
on $B(p,0,r/2) \times (0, r^2]$ for all $m' = 1, \ldots, m$.
\item If
\[ H_0 := \sup_{B(x,0,r) \times [0, r^2]}  |h_t|_{g_t}  + \max_{0 \leq m' \leq m+1} \sup_{B(x,0,r)} r^{m'} |\nabla^{m'} h_{t}|_{g_t} \leq \eta_{m}, \]
then
\[ |\nabla^{m'} h_{t}|_{g_t}\leq C_{m} H_0 r^{-m'} \]
on $B(p,0,r/2) \times [ 0, r^2]$ for all $m' = 1, \ldots, m$.
\end{enumerate}
\end{lemma}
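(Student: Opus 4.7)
The plan is to prove both assertions by a standard Bernstein-type maximum principle argument applied to weighted combinations of $|\nabla^{m'}h|^2$, using the parabolic structure of the Ricci-DeTurck perturbation equation (\ref{eq_Ricci_de_Turck}) together with smallness of $h$ to retain uniform parabolicity.

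First, I would compute the evolution equation for $|\nabla^{m'}h|^2$ for each $0 \leq m' \leq m$. Differentiating (\ref{eq_Ricci_de_Turck}) $m'$ times and commuting $\nabla^{m'}$ with $\nabla_{\partial_t}$ produces curvature commutator terms that are controlled by the assumed bounds $|\nabla^{k}\Rm| \leq r^{-2-k}$. The key structural observation is that, when $H = \sup|h| \leq \eta_m$ is small, the highest-order piece $(g+h)^{pq}\nabla^2_{pq}(\nabla^{m'}h)$ coming from $\nabla^{m'}\mathcal{Q}_g[h]$ combines with $\triangle_{g_t}\nabla^{m'}h$ to give a uniformly parabolic operator $\widetilde{\triangle}_t := (g_t+h_t)^{pq}\nabla_p\nabla_q$; all remaining contributions to $\nabla^{m'}\mathcal{Q}_g[h]$ are schematically of the form
\[
\sum \nabla^{j_1}h * \cdots * \nabla^{j_\ell} h, \qquad j_1+\cdots+j_\ell = m'+2, \quad \ell \geq 2,
\]
with coefficients bounded in terms of $\Rm$ and $(g+h)^{-1}$. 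Together with the curvature terms this yields the master inequality
\[
\partial_t|\nabla^{m'}h|^2 \;\leq\; \widetilde{\triangle}_t|\nabla^{m'}h|^2 \;-\; 2|\nabla^{m'+1}h|^2 \;+\; \text{(lower-order error)},
\]
where the error is a polynomial in $|\nabla^j h|$ for $j \leq m'+1$, in which each monomial of total derivative order $m'+2$ has a factor that can be absorbed into $2|\nabla^{m'+1}h|^2$ via Kato's inequality and Young's inequality, provided $\eta_m$ is chosen small enough.

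Next I would prove assertion (a) by induction on $m'$. The base case $m'=0$ is the hypothesis $|h|\leq H$. Inductively assume $|\nabla^j h|_{g_t} \leq C_j H\, t^{-j/2}$ on $B(p,0,\tfrac{3r}{4})\times(0,r^2]$ for all $j<m'$. Following Shi, take a smooth spatial cutoff $\chi$ with $\chi \equiv 1$ on $B(p,0,\tfrac{r}{2})$, supported in $B(p,0,\tfrac{3r}{4})$, and satisfying $r^2|\nabla^2\chi| + r|\nabla\chi| \leq C$, and apply the parabolic maximum principle to
\[
F_{m'} \;=\; \chi^{2(m'+2)}\Big( t^{m'}|\nabla^{m'}h|^2 \;+\; K\sum_{j<m'} t^{j}|\nabla^{j}h|^2\Big)
\]
for a sufficiently large constant $K = K(n,m)$. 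The inductive hypothesis bounds the error terms by a multiple of $H^2$, while the $K$-coefficient is tuned so that the $-2|\nabla^{j+1}h|^2$ term coming from the $j$-th summand dominates the cross-terms produced by derivatives of $\chi$ and by the coupling between different orders. The maximum of $F_{m'}$ in $\ol{B(p,0,\tfrac{3r}{4})}\times[0,r^2]$ is thus bounded by a universal multiple of $H^2$, which gives (a). For assertion (b), I would run the same argument with the time weight $t^{m'}$ replaced by $1$; the hypothesis on $|\nabla^j h|$ at $t=0$ for $j \leq m+1$ provides the initial bound $F_{m'}|_{t=0} \leq C H_0^2$, and in particular controls the $-2|\nabla^{m+1}h|^2$ term when $m'=m$ (this is exactly why initial bounds up to order $m+2$ on $\Rm$ and order $m+1$ on $h$ are assumed, rather than $m$).

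The main obstacle is the quasilinear character of the equation: since $\mathcal{Q}_g[h]$ contains $h * \nabla^2 h$, applying $\nabla^{m'}$ produces a term of the form $h * \nabla^{m'+2}h$ that must be absorbed into the principal part $\widetilde{\triangle}_t$ rather than into $-2|\nabla^{m'+1}h|^2$, which dictates the smallness threshold $\eta_m$ and forces it to depend on $m$. The remaining top-order mixed terms $\nabla^a h * \nabla^{m'+2-a}h$ with $1 \leq a \leq m'+1$ are absorbed into $-2|\nabla^{m'+1}h|^2$ via Cauchy--Schwarz, using the inductive derivative bounds together with the smallness of $H$ to make each such factor small. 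Once the constants $\eta_m$, $K$, $C_m$ are chosen in that order, the maximum principle on $\chi^{2(m'+2)}F_{m'}$ closes the induction and yields both (a) and (b).
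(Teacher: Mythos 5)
The paper does not give a proof of this lemma at all: it simply cites \cite[Proposition~2.5]{Bamler:2014cf} and \cite[Lemma~4.4]{Appleton:2016ub} together with equation~(\ref{eq_Ricci_de_Turck}). Your proposal re-derives the estimate from scratch via a Bernstein/Shi-type maximum principle, which is in fact the method those references use, so the route is the same modulo the fact that the paper outsources the argument. On the substance, your outline has the right ingredients (uniform parabolicity of $\widetilde{\triangle}_t = (g_t+h_t)^{pq}\nabla_p\nabla_q$ when $|h|\leq \eta_m$, the schematic form of the errors, the spatial cutoff $\chi$, induction on the derivative order, and dropping the time weight to use the initial data in part (b)).

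There is, however, a genuine wrinkle in the choice of auxiliary function. As written, $F_{m'}=\chi^{2(m'+2)}\bigl(t^{m'}|\nabla^{m'}h|^2 + K\sum_{j<m'}t^j|\nabla^{j}h|^2\bigr)$ with a single constant $K$ does not close the maximum-principle computation. Differentiating $t^{j+1}|\nabla^{j+1}h|^2$ in time produces the uncompensated bad term $(j+1)t^{j}|\nabla^{j+1}h|^2$, and the only good term that could absorb it is the Bochner term $-2t^{j}|\nabla^{j+1}h|^2$ coming from the order-$j$ summand. If both summands carry the same coefficient $K$, the net contribution is $(j-1)Kt^{j}|\nabla^{j+1}h|^2\geq 0$ once $j\geq 2$, so the sign is wrong and no choice of a single $K$ helps. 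The standard fix — and what Shi actually does — is to grade the weights geometrically, e.g.\ take $F_{m'}=\chi^{2(m'+2)}\sum_{j\leq m'}K^{m'-j}t^{j}|\nabla^{j}h|^2$ with $K$ large compared to $m$, or use a product structure $t^{m'}|\nabla^{m'}h|^2\bigl(K + t^{m'-1}|\nabla^{m'-1}h|^2\bigr)$; then the good term at order $j$ (coefficient $K^{m'-j}$) dominates the bad term at order $j+1$ (coefficient $(j+1)K^{m'-j-1}$) as soon as $2K>m+1$. This is an easy repair, but as stated the argument does not go through. Separately, your parenthetical explanation of why curvature derivatives up to order $m+2$ are assumed is off the mark: $-2|\nabla^{m+1}h|^2$ is a \emph{favourable} term and needs no control; the extra curvature derivatives come from iterating the commutators $[\nabla_{\partial_t},\nabla^{m'}]$ and $[\triangle,\nabla^{m'}]$, together with the second spatial derivatives of the coefficients of $\widetilde{\triangle}_t$ that appear when computing $\widetilde{\triangle}_tF_{m'}$.
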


\begin{proof}
This follows directly from \cite[Proposition~2.5]{Bamler:2014cf}, \cite[Lemma~4.4]{Appleton:2016ub} and (\ref{eq_Ricci_de_Turck}).
\end{proof}

Next, we discuss similar local derivative bounds for the harmonic map heat flow.
To this end, consider families of metrics $(g_t)_{[0,T]}$ on $M$ and $(g'_t)_{[0,T]}$ on $M'$ and a solution $(\chi_t :M'\ra M)_{t \in [0,T]}$ to the harmonic map heat flow equation (\ref{eq_hmhf_def_appx}) between $M'$ and $M$.
Choose local coordinates $(x^1, \ldots, x^n)$ on $U \subset M$ and $(y^1, \ldots, y^n)$ on $V \subset M'$ such that $\chi_t (V) \subset U$ for all $t \in [0,T]$.
Express the families of metric $(g_t)_{t \in [0,T]}$ on $U$ and $(g'_t)_{[0,T]}$ on $V$ as
\[ g_t = g_{t, ij} \, dx^i dx^j, \qquad g'_t = g'_{t, ij} \, dy^i dy^j. \]
The maps $\chi_t$ can be expressed on $V$ as an $n$-tuple of functions $(\chi^1_t (y^1, \lb \ldots,\lb y^n), \lb \ldots, \lb \chi^n_t (y^1,\lb \ldots,\lb y^n))$ and the harmonic map heat flow equation takes the form (we again suppress the $t$-index)
\begin{multline*}
 \partial_t \chi^k = g^{\prime ij} \frac{\partial^2 \chi^k}{\partial y^i \partial y^j}  
 - g^{\prime ij} g^{\prime uv} \bigg( 2 \frac{\partial g'_{iu}}{\partial y^j} - \frac{\partial g'_{ij}}{\partial y^u} \bigg) \frac{\partial \chi^k}{\partial y^v}  \\
 + g^{\prime ij} g^{k l} (\chi^1, \ldots, \chi^n) \frac{\partial \chi^u}{\partial y^i} \frac{\partial \chi^v}{\partial y^j} \bigg( 2  \frac{\partial g_{ul}}{\partial x^v} - \frac{\partial g_{uv}}{\partial x^l} \bigg)
\end{multline*}
Using this notation, we can now state the following local regularity result.

\begin{lemma}[Local gradient bounds for harmonic map heat flow] \label{lem_hmhf_loc_gradient}
For any $m, n \geq 1$ and $A < \infty$ there are constants $\alpha_m = \alpha_m (A, n)$ and $C_m = C_m (A, n) < \infty$ such that the following holds.

Choose $r > 0$ such that $r^2 \leq T$ and $p \in V$ and assume that the Euclidean ball $B(q,  r) \subset V$ is relatively compact. 
Assume that on $U \times [0, r^2]$ and $V \times [0,r^2]$ we have
\begin{equation*} \label{eq_gij_bound_2}
 |\partial^{m_1} \partial_t^{m_2} ( g_{ij} - \delta_{ij} )|  \leq \alpha_m r^{-m_1 - 2m_2}, \quad 
|\partial^{m_1} \partial_t^{m_2} (g'_{ij} - \delta_{ij} )| \leq \alpha_m r^{-m_1 - 2m_2}
\end{equation*}
for all $0 \leq m_1 + 2 m_2 \leq m+1$ (here ``$\partial^{m_1}$'' denotes spatial derivatives).
Assume moreover that there is a $p \in U$ such that $\chi_t (B(q,r)) \subset B(p,Ar)$ for all $t \in [0,r^2]$.

Then the following holds:
\begin{enumerate}[label=(\alph*)]
\item \label{ass_A.16_a} We have
\begin{equation} \label{eq_gradient_chi_bound}
 |\partial^{m_1} \partial_t^{m_2} \chi^k |  \leq C_m t^{-(m_1+2m_2-1)/2} . 
\end{equation}
on $B(q,r / 2) \times (0, r^2]$  for all $0 < m_1 + 2m_2 \leq m$.
\item \label{ass_A.16_b} If moreover
for all $0 <  m_1 \leq m + 1 $ we have
\[ |\partial^{m_1} \chi^k | \leq A r^{1-m_1}  \qquad \text{on} \qquad B(p,r) \times \{ 0 \}  \]
(for $t = 0$), then we even have
\[  |\partial^{m_1} \partial_t^{m_2} \chi^k |  \leq C_m r^{-(m_1+2m_2-1)}  \]
 on $B(q,r / 2) \times  [0, r^2]$ for all $0 <  m_1 + 2m_2 \leq m$.
\end{enumerate}
\end{lemma}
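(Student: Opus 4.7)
The plan is to reduce to the case $r=1$ by parabolic rescaling and then derive the derivative bounds by a standard bootstrap based on interior parabolic Schauder (or equivalently $L^p$) estimates, using the fact that the equation is a small, quasilinear perturbation of the vector-valued heat equation when $\alpha_m$ is taken small enough.

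First I would apply the scaling $y \mapsto y/r$, $t \mapsto t/r^2$, $\chi^k \mapsto (\chi^k - p^k)/r$. The hypotheses become: $|\partial^{m_1}\partial_t^{m_2}(g_{ij}-\delta_{ij})|,\, |\partial^{m_1}\partial_t^{m_2}(g'_{ij}-\delta_{ij})| \leq \alpha_m$ for $0\leq m_1+2m_2\leq m+1$, the Euclidean ball $B(q,1)$ is relatively compact in (rescaled) $V$, and $|\chi^k|\leq A$ on $B(q,1)\times[0,1]$. Rewriting the PDE in the form
\[
\partial_t \chi^k \;=\; a^{ij}(y,t)\,\partial_i\partial_j \chi^k \;+\; b^v_k(y,t)\,\partial_v\chi^k \;+\; c^{ij}_{k,uv}(\chi,y,t)\,\partial_i\chi^u\,\partial_j\chi^v,
\]
we see that $a^{ij}=g'^{ij}$ is uniformly elliptic with ellipticity constant arbitrarily close to $1$ once $\alpha_m\leq\bar\alpha_m$, and that the coefficients $a^{ij}$, $b^v_k$, $c^{ij}_{k,uv}$ together with their derivatives up to the relevant order are bounded by a constant $C=C(A,n)$.

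Next I would carry out the interior bootstrap for assertion (a). The $C^0$ bound $|\chi^k|\leq A$ is given. To pass from $C^0$ to a $C^1$ bound with the $t^{-1/2}$ weight I would use the standard Bernstein-type computation for the quantity $\eta(y,t)\, |\partial\chi|^2 + \mu |\chi|^2$, with $\eta$ an appropriate space-time cutoff supported in $B(q,r_1)\times[0,r_1^2]$ ($r_1$ slightly larger than $1/2$), together with the evolution equation for $|\partial\chi|^2$. Choosing $\mu$ large to absorb the cross terms coming from $c^{ij}_{k,uv}\,\partial\chi\,\partial\chi$, and using that $\alpha_m$ is small so that the principal part is nearly $\partial_t-\Delta$, the maximum principle gives $|\partial\chi|^2\leq C/t$ on the smaller ball. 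Equivalently one can view this as a parabolic Schauder estimate applied to $\chi^k$ regarded as a weak solution of a linear equation with bounded coefficients, together with the $C^0$ bound. Once $C^1$ control is obtained in the interior, the coefficients $c^{ij}_{k,uv}(\chi,y,t)\partial_i\chi^u\,\partial_j\chi^v$ in $(*)$ become $C^0$-bounded known functions, so the equation for $\chi^k$ is a linear parabolic equation with small $C^0$-controlled coefficients; standard parabolic Schauder (or $L^p$) theory applied to successive spatial and time derivatives then yields the weighted bound $|\partial^{m_1}\partial_t^{m_2}\chi^k|\leq C_m\, t^{-(m_1+2m_2-1)/2}$ on successively shrinking parabolic balls, for $0<m_1+2m_2\leq m$. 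Undoing the scaling gives (\ref{eq_gradient_chi_bound}).

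For assertion (b), the additional assumption provides the initial $C^{m+1}$ bound $|\partial^{m_1}\chi^k|(\cdot,0)\leq A$ for $0<m_1\leq m+1$. Now I would apply the same bootstrap but using parabolic Schauder estimates up to the initial time slice: since both the equation and the initial data are controlled in $C^{m+1}$, and the coefficients are $C^{m+1}$-small, the weight $t^{-(m_1+2m_2-1)/2}$ can be replaced by the fixed bound $1$. Thus all spatial and time derivatives of $\chi^k$ up to order with $m_1+2m_2\leq m$ are uniformly bounded on $B(q,r/2)\times[0,r^2]$ after rescaling back.

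The main obstacle is implementing the bootstrap cleanly in the presence of the quadratic gradient nonlinearity $c^{ij}_{k,uv}(\chi)\partial_i\chi^u\partial_j\chi^v$: the coefficients depend on $\chi$ itself, so one must first obtain a $C^0$, then $C^1$ interior estimate before higher derivatives can be controlled. The threshold $\alpha_m$ has to be chosen small enough, depending on $A$ and $n$, so that the principal symbol stays uniformly close to the Laplacian throughout the iteration and the Bernstein/Schauder constants remain controlled; the fact that $A$ enters only through the range of $\chi$ (and thus the $C^{m+1}$-norms of $g_{ij}\circ\chi$) makes all dependences the claimed ones $\alpha_m(A,n)$, $C_m(A,n)$.
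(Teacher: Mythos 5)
The paper's proof takes a genuinely different, and cleaner, route than yours. After the parabolic rescaling to $r=1$ the paper rescales \emph{the map itself}, setting $\td\chi^k := \beta\chi^k$ with $\beta$ small so that $|\td\chi|\leq\beta A$. The equation for $\td\chi$ has the same principal part, a bounded drift, and a quadratic gradient term whose coefficient becomes $\alpha_m\beta^{-1}f_2(\cdot,\beta^{-1}\td\chi,\cdot)$; the constraint $\alpha_m\leq\beta^{m+1}$ is exactly what is needed so that this rescaled coefficient, together with its $z$-derivatives up to order $m$, stays bounded. Once the solution is small and the coefficients controlled, the paper simply quotes two black-box local derivative estimates for Ricci--DeTurck-type flows (for part (a), Proposition~2.5 of Bamler's earlier work; for part (b), a lemma of Appleton). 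This is the device that handles the quadratic nonlinearity: reduce to the regime of \emph{small} solutions, where the estimates are already established.

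Your proposal skips this step, rescaling only $(y,t)$ and not $\chi$, and then attempts a direct Bernstein/Schauder bootstrap. That is where the gap is. The evolution of $|\partial\chi|^2$ acquires a term of size $\alpha_m|\partial\chi|^4$ coming from differentiating $c^{ij}_{k,uv}(\chi)\,\partial_i\chi^u\partial_j\chi^v$. Your ansatz $\eta|\partial\chi|^2 + \mu|\chi|^2$ only generates a good term proportional to $-\mu|\partial\chi|^2$ (quadratic), which cannot absorb a quartic term whose coefficient is small but whose size is controlled by the (a priori unknown) quantity you are trying to estimate. ``Choosing $\mu$ large'' does not help, and ``$\alpha_m$ small depending on $A$'' is not sufficient either, because the smallness needed would depend on $\sup|\partial\chi|^2$, which is exactly what is not yet known — the estimate does not close, and indeed harmonic map heat flow does blow up in general, so some smallness mechanism is indispensable. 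The standard repairs are either Schoen's weight $|\partial\chi|^2/(\Lambda-|\chi|^2)$ (not the additive $\mu|\chi|^2$ you wrote), or, as the paper does, rescaling $\chi$ so the solution is small. You correctly identify the quartic term as the obstacle, but the argument you sketch does not actually resolve it.
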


\begin{proof}
Without loss of generality, we may assume via parabolic rescaling and translating that $r = 1$ and $p=q=0$.
The constant $\alpha_m$ will be chosen in the course of this proof.
We will always assume that $\alpha_m < 0.1$.

Let $\beta > 0$ be a constant whose value we determine in the course of this proof.
Set $\td\chi^k_t := \beta \cdot \chi^k_t$.
Then $\td\chi$ satisfies an equation of the form
\begin{multline*}
 \partial_t \td\chi = g^{\prime ij} \partial^2_{ij} \td\chi + f_1 (x^1, \ldots, x^n, t) * \partial \td\chi \\
 + \alpha_m \beta^{-1} \cdot f_2 ( x^1, \ldots, x^n, \beta^{-1} \td\chi^1, \ldots, \beta^{-1} \td\chi^n, t) * \partial \td\chi * \partial \td\chi,
\end{multline*}
where $f_1, f_2$ are functions with
\begin{equation} \label{eq_fi_bound}
|\partial^{m_1} \partial^{m_2}_t f_i | \leq C(m,n)
\end{equation}
on $B(p, 0, 1) \times [0, 1]$ for all $0 \leq m_1 + 2m_2 \leq m$ (note that $f_2$ has $2n$ spatial components).
Assume for the remainder of the proof that $\alpha_m \leq \beta^{m+1}$.
Then
\[ f_3 ( x^1, \ldots, x^n,  z^1, \ldots,  z^n, t) := \alpha_m \beta^{-1}  f_2 ( x^1, \ldots, x^n, \beta^{-1} z^1, \ldots, \beta^{-1} z^n, t) \]
also satisfies a bound of the form (\ref{eq_fi_bound}).

Next, note that
\[ |\td\chi | \leq \beta A \qquad \text{on} \qquad B(p, 0, 1) \times [0, t_0]. \]
So if $\beta$ is chosen small, depending on $A$, $m$ and $n$, then we can again use \cite[Proposition~2.5]{Bamler:2014cf} in assertion \ref{ass_A.16_a} to derive bounds for $ |\partial^{m_1} \partial_t^{m_2} \td\chi^k_{ij} |$ on $B(p,0,1/2) \times (0, 1]$ that depend only on $A, m, n$. 
These bounds imply (\ref{eq_gradient_chi_bound}).
For assertion \ref{ass_A.16_b} we can use \cite[Lemma~4.4]{Appleton:2016ub}.
\end{proof}

Using this local gradient estimate, we can now prove the following drift bound.

\begin{lemma}[Drift bound in local coordinates] \label{lem_drift_local_coo}
For every $n \geq 1$ and $A < \infty$ there are constants $\tau = \tau (A, n), \alpha = \alpha (A, n) > 0$ and $C = C(A, n) < \infty$ such that the following holds.

Let $r > 0$ and let $(g_t)_{t \in [0,T]}, (g'_t)_{t \in [0,r^2]}$ be smooth families of Riemannian metrics on $n$-dimensional manifolds $M, M'$.
Assume that $(\chi_t)_{t \in [0,r^2]}$ is a solution to the harmonic map heat flow equation (\ref{eq_hmhf_def_appx}) with the property that $\chi_t$ is $A$-Lipschitz for all $t \in [0, r^2]$.

Let $q \in M'$ and $p := \chi_0 (q) \in M$.

Assume that we have the bounds $|{\nabla^m \Rm (g_t)}|, |{\nabla^m \Rm (g'_t)}| \leq \alpha r^{-2-m}$ for $m = 0, \ldots, 3$ and $|{\nabla^m \partial_t g_t}|, \lb |{\nabla^m \partial_t g'_t}| \leq \alpha r^{-2-m}$ for $m = 0,1$ on $B(p,0,r) \times [0,r^2]$ and $B(q,0,r) \times [0,r^2]$.
Assume moreover that $B(p,0,r)$ and $B(q,0,r)$ are relatively compact in $M$ and $M'$, respectively.

Then $d_0 ( \chi_t (q), p ) \leq C t^{1/2}$ for all $t \in [0, \tau r^2]$.
\end{lemma}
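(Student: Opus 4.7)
The plan is to reduce the statement to the standard local derivative estimate for the harmonic map heat flow (Lemma~\ref{lem_hmhf_loc_gradient}) by working in suitable local coordinates, and then to integrate the resulting bound on $\partial_t \chi$ in time. First I would rescale parabolically by $r^{-1}$ to reduce to the case $r = 1$; the hypotheses on curvature and time derivatives, the $A$-Lipschitz property of $\chi_t$, and the conclusion we wish to prove are all scaling invariant in the appropriate sense. Then I would introduce geodesic normal coordinates at $p$ for $g_0$ on some definite ball $B(p,0,\frac{1}{2})$ and at $q$ for $g_0'$ on $B(q,0,\frac{1}{2})$. Since curvature and its derivatives up to order $3$ and the spatial derivatives of $\partial_t g_t$, $\partial_t g_t'$ up to order $1$ are controlled by $\alpha$, a standard computation (writing the metric coefficients as integrals of the Jacobi equation, and of $\partial_t g$ via the coefficients) gives
\[
|\partial^{m_1}\partial_t^{m_2}(g_{ij} - \delta_{ij})| \leq C(n)\alpha, \qquad |\partial^{m_1}\partial_t^{m_2}(g'_{ij} - \delta_{ij})| \leq C(n)\alpha
\]
on these charts for all $m_1 + 2m_2 \leq 3$, so that $\alpha \leq \ov\alpha_2(n)$ ensures the hypothesis of Lemma~\ref{lem_hmhf_loc_gradient} (with $m=2$).

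Next I would set up a short continuity argument. Let $\tau^* \in (0,1]$ be the supremum of times $t_0 \leq 1$ such that $d_0(\chi_t(q),p) < \tfrac{1}{4}$ for all $t \in [0,t_0]$. For $t \in [0,\tau^*]$, the $A$-Lipschitz property of $\chi_t$ (combined with the distance distortion estimate between $g_0$ and $g_t$, which is uniformly close to $1$ thanks to the $\alpha$-bound on $\partial_t g_t, \partial_t g'_t$) gives, after shrinking the domain to a sub-ball $B(q,0,\rho)$ of radius $\rho = \rho(A) > 0$, the containment $\chi_t(B(q,0,\rho)) \subset B(p,0,\tfrac{1}{2})$. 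In coordinates this means $\chi_t(B(q,\rho)) \subset B(p, A')$ for some $A' = A'(A)$, which is exactly the hypothesis of Lemma~\ref{lem_hmhf_loc_gradient}\ref{ass_A.16_a}, applied on the coordinate ball of a definite radius $\rho' = \rho'(A)$.

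Applying Lemma~\ref{lem_hmhf_loc_gradient}\ref{ass_A.16_a} with $m = 2$, $m_1 = 0$, $m_2 = 1$, I obtain coordinate bounds
\[
|\partial_t \chi^k(q,t)| \leq C_2(A,n)\, t^{-1/2}, \qquad t \in (0,\tau^*].
\]
Since the metric $g_t$ in the normal coordinate chart around $p$ is $2$-bilipschitz to the Euclidean metric (for $\alpha$ sufficiently small), this translates into
\[
|\partial_t \chi_t(q)|_{g_t} \leq 2 C_2\, t^{-1/2}.
\]
Integrating in time and using the bilipschitz equivalence of $d_t$ and $d_0$ gives
\[
d_0(\chi_t(q), p) \leq C \int_0^t s^{-1/2}\, ds = 2C\, t^{1/2}, \qquad t \in [0, \tau^*],
\]
for some $C = C(A,n)$. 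Finally, choosing $\tau = \tau(A,n) > 0$ small enough that $2C\tau^{1/2} < \tfrac{1}{8}$ ensures that the continuity condition $d_0(\chi_t(q),p) < \tfrac{1}{4}$ cannot fail before time $\tau$, so $\tau^* \geq \tau$ and the drift bound holds on $[0,\tau]$.

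The main technical obstacle is the passage from the Riemannian hypotheses of the lemma (curvature derivatives and covariant derivatives of $\partial_t g$) to the coordinate hypotheses of Lemma~\ref{lem_hmhf_loc_gradient} (ordinary spatial and time derivatives of $g_{ij} - \delta_{ij}$), which requires a careful but standard use of normal (or alternatively harmonic) coordinates together with the Jacobi equation to convert curvature control into metric-coefficient control up to the required order. The continuity/bootstrap step itself is short once the coordinate bounds are in place, because the integrated drift $\sim t^{1/2}$ beats the threshold for any sufficiently small time window.
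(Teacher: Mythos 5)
Your proposal follows essentially the same route as the paper's own proof: rescale to $r=1$, pass to coordinates via the exponential map (equivalently, geodesic normal coordinates), convert the curvature and $\nabla^m \partial_t g$ hypotheses into coordinate bounds on $\partial^{m_1}\partial_t^{m_2}(g_{ij}-\delta_{ij})$, invoke Lemma~\ref{lem_hmhf_loc_gradient} to obtain $|\partial_t\chi|\lesssim t^{-1/2}$, integrate, and close with a continuity argument on the drift threshold. The argument is correct.
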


\begin{proof}
Without loss of generality, we may assume that $r = 1$.
Choose $t^* \in [0, 1]$ maximal with the property that $d_0 (\chi_t (q), p) \leq \frac1{10}$ for all $t \in [0, t^*]$.
Obviously, $t^* > 0$.
In the following we will find a lower bound on $t^*$ in terms of $A, n$.

Assuming $\alpha$ to be sufficiently small, we can use the $A$-Lipschitz bound on $\chi_t$ to conclude that we have for all $t \in [0, t^*]$
\[ \chi_t ( B(q, 0, (2A)^{-1}) ) \subset B(p, 0, 1) \subset U. \]

We will now apply Lemma~\ref{lem_hmhf_loc_gradient} for $r = r^* := \frac12 (2A)^{-1}$.
To do this, consider the exponential map $\exp_{q, g'_0} : T_q M' \supset B(0,r^*) \to B(q,0,r^*) \subset M'$ based at $q$ with respect to the metric $g'_0$.
Then the family of pullback metrics $(\exp_{q, g'_0})^* g'_t$ on $B(0, r^*)$ satisfies a bound of the form (\ref{eq_gradient_chi_bound}) for $r = r^*$ and $\alpha$ replaced by $C(A,n) \alpha$.
A similar bound holds for the family of pullback $(\exp_{p, g_0})^* g_t$ on $B(0,1)$.
The family of maps $\chi_t \circ \exp_{q, g'_0} : B(0,r^*) \to B(p,0,1)$ can be lifted to a family of maps $\td\chi_t : B(0, r^*) \to B(0,1)$ with $\chi_t \circ \exp_{q, g'_0} = \exp_{p,g_0} \td\chi_t$ and $\td\chi_0 (0) = 0$.

We can now apply Lemma~\ref{lem_hmhf_loc_gradient} for $\td\chi_t$ and assuming that $\alpha$ is sufficiently small, and obtain that
\[  | \partial_t \td\chi_t | \leq C' t^{-1/2} \]
for some $C' = C' (A,n) < \infty$.
Integrating this bound yields $$d_0 (\chi_t (q), p)) \leq d_0 (\td\chi_t(0),0) \leq C t^{1/2}$$ for all $t \in [0, t^*]$, where $C = C(A,n) < \infty$.

Set $\tau := \min \{ (100C)^{-2}, 1 \}$.
If $t^* < \tau$, then $d_0 (\chi_t (q), p)) < \frac1{10}$ for all $t \in [0, t^*]$, in contradiction to the maximal choice of $t^*$.
So $t^* \geq \tau$, which finishes the proof.
\end{proof}

\subsection{Short-time existence}
In this subsection, we prove our main short-time existence result, Proposition~\ref{prop_hh_flow_existence}, for the harmonic map heat flow.
The main technical challenges come from the fact that we will work in the non-compact setting and that the background metrics on domain and target are time-dependent and may not strictly satisfy the Ricci flow equation.

We first derive the following bound for solutions of the harmonic map heat flow, which is a consequence of (\ref{eq_RdT_norm_evolution}).

\begin{lemma} \label{lem_hh_flow_C0_bound}
For every $n \geq 1$ there is a constant $\ov\eta_n > 0$ such that  for any $0 < \eta_0 < \eta_1 < \ov\eta_n $ and $0 < \delta, C < \infty$ there is a constant $\tau = \tau (\eta_0, \eta_1, \delta, C, n) > 0$ such that the following holds.

Consider smooth families of metrics $(g_t)_{t \in [0, T]}$ and $(g'_t)_{t \in [0,T]}$ on $n$-dimensional manifolds $M$ and $M'$ such that (\ref{eq_eta_almost_RF}) holds.
Assume moreover that $(M, g_t)$ and $(M', g'_t)$ are complete and $|{\Rm (g_t)}|, \linebreak[1] |{\Rm (g'_t)}| \leq C$ for all $t \in [0,T]$ and that $|\nabla^{g_t} \partial_t g_t|$ is uniformly bounded on $M \times [0,T]$ (by some constant that may be independent of $C$).

Let $(\chi_t)_{t \in [0,T]}$ be a smooth family of diffeomorphisms between $M'$ and $M$ moving by harmonic map heat flow (\ref{eq_hmhf_def_appx}) and set $h_t := ( \chi^{-1}_t )^* g'_t - g_t$.
Assume that $|h_0| \leq \eta_0$ and that  $|\partial_t h_t | < C' t^{-1/2}$ on $M \times (0,T]$ for some finite constant $C'$.

Then for all $t \in [0, \min \{\tau, T \}]$ we have $|h_t| \leq \eta_1$.
\end{lemma}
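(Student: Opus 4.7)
The strategy is a continuity argument driven by the scalar differential inequality (\ref{eq_RdT_norm_evolution}) combined with a weak maximum principle on the non-compact manifold $M$. Choose $\ov\eta_n := \min\{0.1, C_0^{-1}\}$, where $C_0 = C_0(n)$ is the dimensional constant from (\ref{eq_RdT_norm_evolution}). Let $t^* \in [0,T]$ be the supremum of times $t'$ such that $|h_s|_{g_s} \leq \eta_1$ on $M \times [0,t']$; the hypothesis $|\partial_t h_t| \leq C' t^{-1/2}$ and $|h_0| \leq \eta_0 < \eta_1$ guarantee that $t^* > 0$ and that $t \mapsto \sup_M |h_t|$ is continuous on $[0,t^*]$. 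The aim is to prove $t^* \geq \min\{\tau, T\}$ for some $\tau = \tau(\eta_0,\eta_1,\delta,C,n) > 0$, which gives the lemma.

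On $[0,t^*]$ the a priori assumption on $\eta_1$ means (\ref{eq_RdT_norm_evolution}) applies, so
\[
\partial_t |h|^2 \leq (g+h)^{ij}\nabla^2_{ij}|h|^2 + C_0 \delta \cdot |h| + C_0 C \cdot |h|^2
\]
on $M \times [0,t^*]$. The plan is to promote this pointwise inequality to the ODE comparison $f(t) := \sup_M |h_t|^2$ satisfies
\[
f(t) \leq \eta_0^2 + \int_0^t \big( C_0 \delta \sqrt{f(s)} + C_0 C f(s) \big)\, ds,
\]
and then apply Gr\"onwall/standard comparison to obtain $f(t) \leq \eta_1^2$ for $t$ smaller than an explicit $\tau = \tau(\eta_0,\eta_1,\delta,C,n)$.

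To justify this ODE comparison on a non-compact $M$, I will run the Karp--Li / Ecker--Huisken style weak maximum principle. Since $|{\Rm}(g_t)| \leq C$ and $g_t$ is complete with $|\nabla^{g_t} \partial_t g_t|$ bounded, a fixed basepoint $x_0 \in M$ admits a smooth spatial exhaustion function $\phi \geq 0$ with $\phi(x) \to \infty$ as $x \to \infty$ and $|\nabla \phi|_{g_t}, |\nabla^2 \phi|_{g_t} \leq C_1(C,n)$ uniformly in $t \in [0,T]$ (obtained by smoothing the $g_0$-distance to $x_0$ and using bounded distortion between $g_t$ and $g_0$ on short time-intervals). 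For each $\varepsilon > 0$, apply the classical maximum principle to
\[
u_\varepsilon := |h|^2 - \varepsilon e^{Kt}(\phi + 1),
\]
with $K = K(C,n)$ chosen so that the operator $\partial_t - (g+h)^{ij}\nabla^2_{ij}$ applied to $\varepsilon e^{Kt}(\phi+1)$ dominates $C_0 C \cdot \varepsilon e^{Kt}(\phi+1)$. Because $|h|^2$ is bounded and $e^{Kt}(\phi+1) \to \infty$, the supremum of $u_\varepsilon(\cdot,t)$ is attained on a compact subset, so the ordinary maximum principle yields $\sup_M u_\varepsilon(\cdot,t) \leq \eta_0^2 + \int_0^t(C_0\delta\sqrt{f} + C_0 C f)\,ds$; sending $\varepsilon \to 0$ gives the integral inequality for $f$.

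The main obstacle is the non-compactness of $M$: one cannot simply evaluate at a maximizer of $|h|^2$. The hypotheses $|h| \leq \eta_1$ and $|\partial_t h| \leq C' t^{-1/2}$ are what make the exhaustion argument work, since they guarantee the relevant time-continuity and boundedness of $|h|^2$. Once the integral inequality for $f$ is in hand, a direct ODE comparison picks out $\tau$ so small that $f(\tau) < \eta_1^2$, contradicting maximality of $t^*$ unless $t^* \geq \tau$; this completes the argument.
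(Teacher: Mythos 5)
Your proof is correct and follows the same approach as the paper: you invoke the evolution inequality (\ref{eq_RdT_norm_evolution}), apply a weak maximum principle on the non-compact $M$ using the bounded-curvature hypotheses, and close the loop with a continuity argument based on the $|\partial_t h_t| < C' t^{-1/2}$ bound. The only cosmetic difference is that the paper first applies Young's inequality to the $C_0\delta|h|$ term to obtain a linear ODE and an explicit exponential bound, while you keep the $\sqrt{f}$ term and handle it directly via Gr\"onwall; both yield the same qualitative conclusion. Your explicit construction of the exhaustion function $\phi$ and the auxiliary function $u_\varepsilon$ spells out what the paper condenses into the phrase ``weak maximum principle,'' correctly using the hypotheses on $|\nabla^{g_t}\partial_t g_t|$ and $|{\Rm}(g_t)|$ precisely as the paper's remark indicates they are needed.
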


Note that in this lemma the constants $\eta_0, \eta_1, \delta$ can be chosen independently of $C$.

\begin{proof}
By (\ref{eq_RdT_norm_evolution}) we have
\[   \partial_t |h|^2 \leq   (g+h)^{ij} \nabla_{ij}^2 |h|^2  + C_0^2 \delta^2 + |h|^2  + C_0 C  \cdot |h|^2,  \]
as long as $|h| \leq \ov\eta_n$ for some universal $\ov\eta_n > 0$.
So by the weak maximum principle applied to (\ref{eq_RdT_norm_evolution}) we obtain
\[ |h_t|^2 \leq \eta_0^2 e^{(C_0C +1) t} + \frac{C_0^2 \delta^2}{C_0 C +1} ( e^{(C_0C +1) t} - 1 ). \]
Note that for the application of the weak maximum principle we need to use the fact that $\partial_t g_t$ and $|\nabla^{g_t} \partial_t g_t|$ are  uniformly bounded on $M \times [0,T]$.
The bound on the first quantity follows from (\ref{eq_eta_almost_RF}) and the curvature bound and the second quantity is bounded by assumption.

The lemma now follows immediately by a continuity argument. 
Observe here that the condition $|h_t| \leq \ov\eta_n$ always holds on a slightly larger time-interval than the condition $|h_t| \leq \eta_1$, due to the bound on $|\partial_t h_t |$ and the fact that $C' t^{-1/2}$ is integrable.
\end{proof}

\bigskip
We first discuss the existence theory of the harmonic map heat flow in the case in which the domain $M'$ is compact and we will derive a lower bound on the time of existence.

\begin{lemma}[Short-time existence of harmonic map heat flow, compact case] \label{lem_existenc_hmhf_compact}
For every $n \geq 1$ and $C < \infty$ there are constants $\tau = \tau (C, n) > 0$ and $C^* = C^* (C, n) < \infty$ such that the following holds.

Let $(g_t)_{t \in [0,T]}, (g'_t)_{t \in [0,T]}$ be smooth families of Riemannian metrics on $n$-dimensional manifolds $M, M'$ and $\ov\chi : M' \to M$ a smooth map such that:
\begin{enumerate}[label = (\roman*)]
\item \label{con_A.22_i} $(M, g_0)$ is complete and $M'$ is compact.
\item \label{con_A.22_ii} 
$|{\nabla^m_{g_t} \Rm (g_t)}|, |{\nabla^m_{g'_t} \Rm (g'_t)}|, |\nabla^{m}_{g_t} \partial_t g_t|, |\nabla^{m}_{g'_t} \partial_t g'_t| \leq C $ on $M$ and $M'$ for all $t \in [0,T]$ and $m = 0, \ldots, 3$.
\item \label{con_A.22_iii} $\ov{\chi}$ is $C$-Lipschitz.
\end{enumerate}
Then the harmonic map heat flow equation
\begin{equation} \label{eq_hmhf_equation_compact}
 \partial_t \chi_t = \triangle_{g'_t, g_t} \chi_t, \qquad \chi_0 = \ov\chi 
\end{equation}
has a smooth solution on the time-interval $[0, \min \{ \tau, T \})$ and $\chi_t$ is $C^*$-Lipschitz for all $t \in [0, \min \{ \tau, T \}]$.
\end{lemma}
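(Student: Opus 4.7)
\medskip

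\noindent\textbf{Proof proposal.} The plan is to combine a standard local-in-time existence result for harmonic map heat flow with a Bochner-type a priori estimate on the energy density to obtain a uniform lower bound on the existence time.

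First, I would invoke classical short-time existence. Because $M'$ is compact and $\ov\chi$ is smooth, standard parabolic theory (either linearization in local coordinates with Schauder estimates, or the Eells--Sampson contraction argument adapted to time-dependent metrics) produces a unique smooth solution $(\chi_t)$ on some maximal interval $[0,T_{\max})$ with $T_{\max}>0$. Assumptions \ref{con_A.22_i}--\ref{con_A.22_iii} guarantee that the coefficients of the quasilinear parabolic system $\partial_t\chi_t=\triangle_{g'_t,g_t}\chi_t$ are uniformly controlled for a short time, so the only way $T_{\max}<\tau$ can occur is if $\sup_{M'}|d\chi_t|_{g'_t,g_t}$ blows up at $T_{\max}$. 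Thus it suffices to bound the Lipschitz constant of $\chi_t$ uniformly on a fixed time interval depending only on $C$ and $n$.

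Next, I would derive a Bochner-type inequality for the energy density $e(\chi_t):=\tfrac12|d\chi_t|_{g'_t,g_t}^2$. The classical Eells--Sampson computation gives
\[
(\partial_t-\triangle_{g'_t})e(\chi_t)=-|\nabla d\chi_t|^2+\langle d\chi_t(\Ric_{g'_t}(e_i)),d\chi_t(e_i)\rangle-\Rm_{g_t}(d\chi_t(e_i),d\chi_t(e_j),d\chi_t(e_j),d\chi_t(e_i)),
\]
with additional lower-order terms of the schematic form $(\partial_t g_t)*d\chi*d\chi$ and $(\partial_t g'_t)*d\chi*d\chi$ arising from the time-dependence of the metrics. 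Using the bounds in \ref{con_A.22_ii} on curvature and its first derivatives, together with the bounds on $\partial_t g_t$, $\partial_t g'_t$ and their spatial derivatives, all extra terms can be absorbed into an inequality of the form
\[
(\partial_t-\triangle_{g'_t})e(\chi_t)\le A\,e(\chi_t)+B\,e(\chi_t)^2,
\]
where $A=A(C,n)$ and $B=B(C,n)$ are finite constants. Here the quadratic term reflects the boundedness of $|\Rm_{g_t}|$ on $M$.

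Since $M'$ is compact, I would then apply the scalar maximum principle to $F(t):=\sup_{M'}e(\chi_t)$, which satisfies the ODI $\dot F\le AF+BF^2$. Starting from $F(0)\le\tfrac12 C^2$ by assumption \ref{con_A.22_iii}, this yields an explicit time $\tau=\tau(C,n)>0$ up to which $F(t)\le (C^*)^2/2$ for some $C^*=C^*(C,n)<\infty$. Hence $\chi_t$ remains $C^*$-Lipschitz, and the continuation criterion from the first step forces $T_{\max}\ge\min\{\tau,T\}$.

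The main obstacle I expect is the careful bookkeeping of the time-dependence terms in the Bochner formula: one must verify that when $(g_t)$ and $(g'_t)$ are not Ricci flows the extra contributions from $\mathcal L_{\partial_t g_t}$-type terms, together with the error produced by differentiating the moving inner product on $T^*M'\otimes\chi_t^*TM$, can all be controlled by the same $A\,e+B\,e^2$ structure with constants depending only on $C,n$. Once this is in place the maximum principle step is routine and the uniform Lipschitz bound follows.
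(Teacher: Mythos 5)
Your proposal is correct and matches the paper's argument: both derive a parabolic inequality for the energy density of the form $\partial_t|d\chi_t|^2\le\triangle_{g'_t}|d\chi_t|^2+C'|d\chi_t|^2+C'|d\chi_t|^4$, apply the maximum principle on the compact domain $M'$ to obtain a Lipschitz bound $C^*=C^*(C,n)$ up to a time $\tau=\tau(C,n)$, and then invoke a continuation criterion to conclude $T_{\max}\ge\min\{\tau,T\}$. The paper makes the continuation step explicit by citing its local drift and gradient estimates (Lemmas~\ref{lem_drift_local_coo} and~\ref{lem_hmhf_loc_gradient}) to bootstrap higher-order bounds from the Lipschitz bound, but this is the same mechanism you describe informally.
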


\begin{proof}
By standard parabolic theory, we find that (\ref{eq_hmhf_equation_compact}) has a solution $(\chi_t)_{t \in [0,T^*)}$ for some maximal $0 < T^* \leq T$.
If $T^* < T$, then this solution does not extend smoothly until time $T$.
It remains to deduce a lower bound on $T^*$ and a Lipschitz bound that only depend on $C, n$.

As explained in \cite{Bamler:2015hd}, the norm of the differential $d\chi_t \lb  \in \lb  C^\infty (M' ; \lb  T^*M' \lb \otimes \lb \chi^*_t TM)$ satisfies an evolution inequality of the form
\begin{multline*}
 \partial_t |d\chi_t|^2 \leq \triangle_{g'_t} |d\chi_t|^2 + \partial_t g'_t * d\chi_t * d\chi_t + \Ric (g'_t) * d\chi_t * d\chi_t \\
+ \partial_t g_t * d\chi_t * d\chi_t + \Rm(g_t) * d\chi_t * d\chi_t * d\chi_t * d\chi_t  . 
\end{multline*}
So for some $C' = C'(C, n) < \infty$ we have
\[  \partial_t |d\chi_t|^2 \leq \triangle_{g'_t} |d\chi_t|^2 + C' |d\chi_t|^2 + C' |d\chi_t|^4. \]
So, using assumption \ref{con_A.22_iii} and the weak maximum principle, we can find constants $\tau = \tau (C, n) > 0$ and $C'' = C'' (C, n) < \infty$ such that $| d\chi_t |^2 \leq C''$ for all $t \in [0, \min \{ \tau, T^* \})$.
So $\chi_t$ remains $C^*$-Lipschitz for all $t \in [0, \min \{ \tau, T^* \})$ for some $C^* = C^* (C, n) < \infty$.

We can now use Lemma~\ref{lem_drift_local_coo} followed by \ref{lem_hmhf_loc_gradient} to derive bounds on higher derivative of $\chi_t$ that are independent of $t$.
This shows that $\chi_t$ extends smoothly to time $\min \{ \tau, T^* \}$.
We therefore obtain a contradiction to the maximality  of $T^*$ in the case in which $T^* < \min \{ \tau, T \}$.
\end{proof}

Next, we remove the compactness assumption on $M'$, but assume that the injectivity radius of $M'$ is positive.

\begin{lemma}[Short-time existence of harmonic map heat flow, non-compact case, positive injectivity radius] \label{lem_existence_hmhf_noncompact_inj_positive}
Lemma~\ref{lem_existenc_hmhf_compact} continues to hold if we modify the assumptions by replacing \ref{con_A.22_i} and \ref{con_A.22_ii} by
\begin{enumerate}[label = (\roman*$\,^\prime$)]
\item \label{con_A.24_ip} $(M, g_0)$ and $(M', g'_0)$ are complete.
\item \label{con_A.24_iip} $|{\nabla^m_{g_t} \Rm (g_t)}|, |{\nabla^m_{g'_t} \Rm (g'_t)}|, |\nabla^{m}_{g_t} \partial_t g_t|, |\nabla^{m}_{g'_t} \partial_t g'_t| \leq C $ on $M$ and $M'$ for all $t \in [0,T]$ and $m = 0, \ldots, 7 $.
\end{enumerate}
and if we assume in addition that
\begin{enumerate}[label = (\roman*$\,^\prime$), start=4]
\item \label{con_A.24_ivp} The injectivity radius of $(M, g_0)$ and $(M', g'_0)$ is uniformly bounded from below by a positive constant.
\end{enumerate}
\end{lemma}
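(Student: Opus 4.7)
\smallskip

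My plan is to reduce the non-compact case to the compact case of Lemma~\ref{lem_existenc_hmhf_compact} by a compact exhaustion and grafting argument, and then extract a subsequential limit using the interior estimates from Lemmas~\ref{lem_hmhf_loc_gradient}, \ref{lem_drift_local_coo}, and \ref{lem_hh_flow_C0_bound}. The positive injectivity radius assumption, together with the uniform curvature and time-derivative bounds, will play the role of providing ``uniform local geometry'' so that all of these interior estimates apply with constants independent of the exhaustion parameter.

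More concretely, I would proceed as follows. First I would fix an exhaustion $K_1\subset K_2\subset\ldots$ of $M'$ by compact sets, and for each $i$ construct a closed compact manifold $\widetilde M'_i$ together with a smooth family of metrics $(\widetilde g'_{i,t})_{t\in[0,T]}$ and smooth data agreeing with $(M',g'_t)$ on a neighborhood of $K_i$; this can be done by a standard grafting/interpolation procedure in which, outside $K_{i+1}$, we smoothly transition to a fixed model end (for example, a round shrinking sphere or flat torus) using the derivative bounds in hypothesis \ref{con_A.24_iip}, which is why we need the bounds for $m$ up to $7$ rather than $3$ (so that the Ricci flow stays close enough to the original equation under grafting and so that Shi-type estimates can recover the lost derivatives at the seam). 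I would similarly graft $(M,g_t)$ to produce $(\widetilde M_i,\widetilde g_{i,t})$, and produce a $C$-Lipschitz extension $\widetilde{\ov\chi}_i:\widetilde M'_i\to\widetilde M_i$ of $\ov\chi|_{K_i}$ using the injectivity radius bound. Applying Lemma~\ref{lem_existenc_hmhf_compact} to this compactified problem yields harmonic map heat flows $(\chi^{(i)}_t)$ existing on a common time-interval $[0,\min\{\tau,T\})$, each $C^*$-Lipschitz, with $\tau,C^*$ depending only on $C,n$.

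Second, I would use the injectivity radius lower bound to work in geodesic normal coordinate charts of a uniform radius on $(M',g'_0)$ and $(M,g_0)$. In such charts the metrics $g_t$ and $g'_t$ satisfy the coordinate bounds required by Lemma~\ref{lem_hmhf_loc_gradient} (after a fixed uniform rescaling), and the $C^*$-Lipschitz bound on $\chi^{(i)}_t$ together with the drift bound from Lemma~\ref{lem_drift_local_coo} guarantees that a ball around any point $q\in M'$ is mapped into a ball of uniformly bounded radius around $\chi^{(i)}_0(q)$, on a (possibly smaller) uniform time-interval. Lemma~\ref{lem_hmhf_loc_gradient} then furnishes $C^m$ bounds on $\chi^{(i)}_t$ on every relatively compact open set, uniformly in $i$. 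Standard Arzel\`a-Ascoli and diagonal extraction produce a subsequential limit $\chi_t:M'\to M$ on $[0,\min\{\tau',T\})$ for some $\tau'>0$ depending only on $C,n$; the limit is smooth, solves \eqref{eq_hmhf_equation_compact}, agrees with $\ov\chi$ at $t=0$, and is $C^*$-Lipschitz.

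The main obstacle will be carrying out the grafting in a way that preserves enough of the hypotheses to invoke Lemma~\ref{lem_existenc_hmhf_compact} with constants depending only on $C,n$, and that simultaneously allows uniform interior control in Step~2. In particular one must interpolate between $(g_t)$ and a model Ricci flow without generating spikes in the curvature derivatives $|\nabla^m\Rm|$ or in $|\nabla^m\partial_t g_t|$ up to order $3$ (the order appearing in the compact case), starting from bounds only up to order $7$ in the original data; this is where the extra derivatives in hypothesis \ref{con_A.24_iip} are consumed, essentially because each grafting step loses a couple of derivatives through cut-off and smoothing. Everything else —- the uniqueness, the Lipschitz propagation, and the regularity of the limit —- is a routine consequence of the local estimates already established.
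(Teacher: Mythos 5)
Your overall strategy --- compactify, solve on the compactification via Lemma~\ref{lem_existenc_hmhf_compact}, then extract a subsequential limit using the interior bounds from Lemmas~\ref{lem_drift_local_coo} and \ref{lem_hmhf_loc_gradient} --- is the same skeleton the paper uses, and the limit-extraction step as you describe it is fine. The compactification step, however, has a genuine gap, and it is precisely the step where all the work is hiding.

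You propose to exhaust $M'$ by compact sets $K_i$ and then ``smoothly transition to a fixed model end (e.g.\ a round shrinking sphere or flat torus)'' outside $K_{i+1}$. This cannot work for a general $M'$: the boundary $\partial K_{i+1}$ is an arbitrary closed $(n-1)$-manifold (possibly with many components, each of arbitrary topology), and there is no reason it bounds a ``model cap'' at all, let alone one whose geometry you can control uniformly in $i$. The topology is precisely what a fixed model end cannot absorb. The paper sidesteps this entirely by \emph{doubling}: it takes an exhaustion by compact smooth domains $N^{(i)}$ with \emph{uniformly bounded second fundamental form} (this is itself nontrivial and is the point of the reference to Cheeger--Gromov; a generic exhaustion does not have controlled boundary geometry), glues two copies of $N^{(i)}$ along $\partial N^{(i)}$ to obtain a closed manifold $M'^{(i)}$ with no topological constraint, and then smooths the resulting metric in a collar of the seam using the injectivity radius lower bound. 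The uniform second-fundamental-form bound is what makes the post-smoothing curvature and $\partial_t$-derivative bounds hold with constants depending only on $C,n$. You should also drop the appeal to ``Shi-type estimates'' for recovering derivatives at the seam: $(g_t)$ and $(g'_t)$ here are not assumed to be Ricci flows, so Shi's estimates are not available, and the derivative loss is a straightforward cut-off/mollification loss, which is exactly why the hypothesis demands bounds up to order $7$ rather than the order $3$ needed in the compact lemma.

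One more point on the limit: after you have obtained uniformly Lipschitz solutions $\chi^{(i)}_t$ on a common short time interval, the lower bound $\tau$ for the existence time that you get directly from Lemma~\ref{lem_existenc_hmhf_compact} applied to the doubled manifolds will in general depend on the doubling, but once you have the limit $\chi_t$ on $M'$ you should rerun the maximum-principle Lipschitz argument (as in the proof of Lemma~\ref{lem_existenc_hmhf_compact}) directly on $M'$ to see that the existence time and the Lipschitz constant of $\chi_t$ depend only on $C,n$ and not on the injectivity radius bound; this is what the statement asserts, and the paper makes this point explicitly.
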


\begin{proof}
We will reduce the non-compact case to the compact case via a standard doubling construction.
By \cite{Cheeger:1991tx} we can find a sequence $N^{(1)} \Subset  N^{(2)} \Subset  \ldots \Subset  M'$ of domains with smooth boundary such that $\cup_{i=1}^{\infty} \Int N^{(i)} = M'$ and such that the second fundamental form of $\partial N^{(i)}$ is bounded by some constant $C' = C'(C, n) < \infty$.
Let $M^{\prime (i)}$ be the manifold that arises by identifying two copies of $N^{(i)}$ along their boundary and define $\ov\chi^{(i)} : M^{\prime (i)} \to M$ to be equal to $\ov\chi |_{N^{(i)}}$ on each copy of $N^{(i)}$.
By a smoothing construction, and using assumption (iv$\,^\prime$), we can find families of metrics $(g^{\prime (i)}_t)_{t \in [0,T]}$ on $M^{\prime (i)}$ that agree with $g^{(i)}$ away from a $1$-tubular neighborhood of $\partial N^{(i)}$ and such that the bounds in assumption (ii$\,^\prime$) continue to hold for all $m = 0, \ldots, 3$.
Moreover, by modifying $\ov\chi^{(i)}$ in a $1$-tubular neighborhood of $\partial N^{(i)}$, we can construct maps $\ov\chi^{\prime (i)} : M^{\prime (i)} \to M$ that are $C''$-Lipschitz, for some $C'' < \infty$ that is independent of $i$.
Note that $C''$ may, however, depend on the injectivity radius bound in assumption \ref{con_A.24_ivp}.

Using Lemma~\ref{lem_existenc_hmhf_compact}, we can evolve $\ov\chi^{\prime (i)}$ by the harmonic map heat flow to some family $(\chi^{\prime (i)}_t)_{t \in [0,T^*]}$ for some $T^* > 0$ that is independent of $i$, but may depend on the injectivity radius bound in assumption \ref{con_A.24_ivp}.
Moreover, the maps $\chi^{\prime (i)}_t$ are $C^{\prime *}$-Lipschitz for some uniform $C^{\prime *} < \infty$.

Using Lemmas~\ref{lem_drift_local_coo} and \ref{lem_hmhf_loc_gradient}, we obtain uniform local derivative bounds on the families $(\chi^{\prime (i)}_t)_{t \in [0,T^*]}$.
So after passing to a subsequence, these families converge to a solution $\chi_t : M' \to M$ of the harmonic map heat flow on the time-interval $[0, T^*]$.

By the same maximum principle argument as used in the proof of Lemma~\ref{lem_existenc_hmhf_compact}, we obtain a Lipschitz bound on $\chi_t$ of the form $C^* (C, n)$ that holds up to  time $\min \{ \tau, T \}$, where $\tau = \tau (C, n) > 0$ is a constant that does not depend on the injectivity radius bound in \ref{con_A.24_ivp}.
Assume now that $T^* < T$ is chosen maximal with the property that the harmonic map heat flow exists on $[0,T^*)$.
If $T^* < \min \{ \tau, T \}$, then we can argue as in the proof of Lemma~\ref{lem_existenc_hmhf_compact} that the flow extends smoothly to time $T^*$ and then restart the flow at time $T^*$.
This would contradict the maximal choice of $T^*$.
Therefore, $T^* \geq \min \{ \tau, T \}$.
\end{proof}

Using a similar construction, we can remove the assumption on the positivity of the injectivity radius.

\begin{lemma}\label{ex_hh_flow_version_1}
Lemma~\ref{lem_existence_hmhf_noncompact_inj_positive} continues to hold if we remove assumption (iv$\,^\prime$) and replace assumption (ii$\,^\prime$) by
\begin{enumerate}[label = (\roman*$\,^{\prime\prime}$), start=2]
\item $|{\nabla^m_{g_t} \Rm (g_t)}|, |{\nabla^m_{g'_t} \Rm (g'_t)}|, |\nabla^{m}_{g_t} \partial_t g_t|, |\nabla^{m}_{g'_t} \partial_t g'_t| \leq C $ on $M$ and $M'$ for all $t \in [0,T]$ and $m = 0, \ldots, 10$.
\end{enumerate}
\end{lemma}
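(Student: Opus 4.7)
\textbf{Proof proposal for Lemma \ref{ex_hh_flow_version_1}.}

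The strategy is to reduce to Lemma \ref{lem_existence_hmhf_noncompact_inj_positive} by working on local exponential-map covers, where the conjugate radius bound supplied by the curvature hypothesis automatically yields a uniform positive injectivity radius, and then to patch local solutions via a uniqueness argument. The three extra orders of regularity ($m = 0, \ldots, 10$ versus $0, \ldots, 7$) are needed precisely so that the standard geometry constants stay controlled under the approximation scheme described below, since pulling back, extending, and mollifying will each consume a few derivatives.

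First I would, for each base point $p \in M'$, lift the initial data to a ball in $T_p M' \cong \R^n$. The curvature bound gives $r_0 = r_0(C,n) > 0$ with the property that $\exp_{p}^{g'_0} : B(0, 2r_0) \to M'$ is a local diffeomorphism whose image lies inside the conjugate locus, so the pullback metrics $\td g'^{(p)}_{t,0} := (\exp_p^{g'_0})^* g'_t$ are well defined on $B(0, 2r_0)$ and, by Toponogov-type comparison, have injectivity radius at $0$ bounded below by a positive constant $r_1 = r_1(C,n)$. The bounds in hypothesis (ii$''$) pass to analogous bounds on $\td g'^{(p)}_t$, with slightly worse constants (this is where the higher regularity is spent). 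Next I would extend $\td g'^{(p)}_t$ smoothly outside $B(0, 3r_0/2)$ so as to agree with the Euclidean metric near infinity, using a cutoff in Fermi-type coordinates plus the higher-derivative bounds to interpolate, and call the resulting complete metric $(g'^{(p)}_t)_{t \in [0,T]}$ on $\R^n$. By construction it has uniformly bounded geometry and a uniform positive injectivity radius lower bound, and satisfies the hypotheses of Lemma \ref{lem_existence_hmhf_noncompact_inj_positive} with constants depending only on $C, n$. The initial map to use is $\ov\chi^{(p)} := \ov\chi \circ \exp_p^{g'_0}$ on $B(0, 2r_0)$, extended to a uniformly Lipschitz map $\td{\ov\chi}^{(p)} : \R^n \to M$ with constant $C''=C''(C,n)$ using the target-side exponential map and the same cutoff.

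Applying Lemma \ref{lem_existence_hmhf_noncompact_inj_positive} then yields a solution $\chi^{(p)}_t : \R^n \to M$ of the harmonic map heat flow between $(\R^n, g'^{(p)}_t)$ and $(M, g_t)$, defined on a common time interval $[0, \min\{\tau, T\})$ with $\tau = \tau(C,n) > 0$, and Lipschitz constants bounded by $C^*(C,n)$. The restriction of $\chi^{(p)}_t$ to $B(0, r_0)$ does not depend on how we extended outside $B(0, 3r_0/2)$, once one proves a local finite-speed-of-propagation / uniqueness statement for \eqref{eq_hmhf_def_appx}, which in turn follows from the Ricci–DeTurck-type differential inequality \eqref{eq_RdT_norm_evolution} applied to the perturbation $h_t$ built from two candidate solutions with the same initial data; the bounded-Lipschitz bound on each $\chi^{(p)}_t$ plus Lemma \ref{lem_hh_flow_C0_bound} give the required uniqueness in a definite parabolic neighborhood of $0$.

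The main obstacle will be the third step: descending and patching. For two overlapping base points $p, q \in M'$ one must compare $\chi^{(p)}_t \circ (\exp_p^{g'_0})^{-1}$ and $\chi^{(q)}_t \circ (\exp_q^{g'_0})^{-1}$ on the overlap region, which is delicate because the local exponential charts may not be globally embedded and the two pullback metrics $\td g'^{(p)}_t, \td g'^{(q)}_t$ are related only after a partially defined change of variables. My plan is to handle this by shrinking the charts to a uniform scale $r_2 = r_2(C,n) \leq r_1/10$ on which both exponential maps are injective and their transition map is smooth with controlled derivatives, running the uniqueness argument from the previous paragraph on this scale to conclude that the locally descended maps agree, and then defining $\chi_t$ pointwise by this common value. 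Uniform Lipschitz and higher-derivative bounds on $\chi_t$ then follow from the corresponding bounds on each $\chi^{(p)}_t$ together with Lemmas \ref{lem_drift_local_coo} and \ref{lem_hmhf_loc_gradient}, completing the proof with constants $\tau, C^*$ depending only on $C$ and $n$.
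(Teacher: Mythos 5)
Your overall reduction strategy---manufacture a positive injectivity radius bound so Lemma~\ref{lem_existence_hmhf_noncompact_inj_positive} applies, then pass to a limit---is the right idea, but the specific mechanism you propose breaks down at the patching step, and the paper takes a fundamentally different route to avoid exactly this problem.

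The central flaw is the claim that ``the restriction of $\chi^{(p)}_t$ to $B(0,r_0)$ does not depend on how we extended outside $B(0,3r_0/2)$,'' justified by a ``local finite-speed-of-propagation / uniqueness statement'' for the harmonic map heat flow. There is no such statement: (\ref{eq_hmhf_def_appx}) is a quasilinear parabolic system, and parabolic equations propagate information instantaneously. Two solutions of the heat flow on $\R^n\times[0,\tau]$ that agree at time $0$ and solve the same equation on an interior ball $B(0,r_0)\times[0,\tau]$, but whose source metrics differ outside $B(0,3r_0/2)$, will in general differ inside $B(0,r_0)$ at every positive time. The evolution inequality (\ref{eq_RdT_norm_evolution}) that you cite gives a maximum principle for $|h_t|^2$ on a parabolic domain with controlled boundary values; it does not localize without a boundary condition, and here the two candidate solutions impose no matching data on $\partial B(0,r_0)$. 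Consequently the consistency of $\chi^{(p)}_t$ and $\chi^{(q)}_t$ on an overlap is not established, and there is no well-defined descended map $\chi_t$. This is not a technicality one can route around with a sharper estimate---the obstruction is structural.

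The paper sidesteps the need for any patching by constructing a single globally defined approximation rather than a family of local charts. The curvature bound in (ii$\,^{\prime\prime}$), together with completeness, yields an injectivity-radius lower bound that decays at most exponentially in the distance $r$ from a basepoint: $\injrad > c\,e^{-C'_1 r}$. One then conformally rescales, setting $g^{(i)}_t := e^{2C'_1\zeta_i} g_t$ and $g^{\prime(i)}_t := e^{2C'_1\zeta'_i} g'_t$, where $\zeta_i$, $\zeta'_i$ are smooth mollifications of $\max\{0, r-i-1\}$ (resp. $\max\{0, Cr'-i-1\}$). Because the conformal factor grows exponentially where the injectivity radius decays exponentially, $(M,g^{(i)}_0)$ and $(M',g^{\prime(i)}_0)$ acquire a uniform positive injectivity radius lower bound, while the derivative bounds of (ii$\,^{\prime\prime}$) (this is where the extra orders of regularity are spent) carry over to the rescaled metrics with constants depending only on $C$ and $n$. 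One applies Lemma~\ref{lem_existence_hmhf_noncompact_inj_positive} to these rescaled pairs to get a family of solutions $\chi^{(i)}_t$ on a uniform time interval, and since $g^{(i)}_t\equiv g_t$ and $g^{\prime(i)}_t\equiv g'_t$ on balls of radius $\sim i$, the $\chi^{(i)}_t$ solve the exact equation on larger and larger regions. A pointed subsequential limit, using the uniform Lipschitz bound plus the local derivative estimates of Lemmas~\ref{lem_drift_local_coo} and~\ref{lem_hmhf_loc_gradient}, produces the desired global solution. No consistency between overlapping local solutions needs to be checked, because there is only one solution at each stage.
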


\begin{proof}
The solution $(\chi_t)$ arises again via a limit argument with the help of Lemma~\ref{lem_existence_hmhf_noncompact_inj_positive}.
For this purpose we represent $(M, g_0)$ and $(M', g'_0)$ as a limit of Riemannian manifolds with positive injectivity radius.
The method used here can also be found in \cite{Chen:2006wia}.

Choose $p' \in M'$ and $p := \ov\chi (p') \in M$ and denote by $r := d_0 (p, \cdot)$ and $r' := d_0 ( p', \cdot )$ the distance functions to the respective basepoints.
Due to assumptions \ref{con_A.24_ip} and \ref{con_A.24_iip} we have $\injrad > c e^{- C'_1 r}$ on $M$ and $M'$ for some $c = c(C) > 0$ and $C'_1 = C'_1 (C) < \infty$.

Let $i \geq 1$. 
By mollification of  the functions $\zeta^*_i := \max \{ 0,  r - i - 1 \}$ and $\zeta^{\prime *}_i := \max \{ 0, C r' - i - 1 \}$ (for example by application of the heat flow for some uniform time and composition with a cutoff function),  we obtain approximations $\zeta_i \in C^\infty (M)$ and $\zeta'_i \in C^\infty (M')$ such that
\begin{enumerate}
\item $\zeta_i \equiv 0$ on $B(p, 0,  i)$ and $\zeta'_{i} \equiv 0$ on $B(p', 0,  C^{-1} i)$.
\item $\zeta_i > r -  i $ and $\zeta'_i > C r' - i$.
\item $\zeta_i \circ \ov\chi < \zeta'_i + 10$.
\item $|\nabla^m \zeta_i |, |\nabla^m \zeta'_i | < C'$ for all $m = 1, \ldots, 9$, for some $C'_2 = C'_2 (C) < \infty$.
\end{enumerate}
Set $g^{(i)}_t := \exp ( 2 C'_1 \zeta_i) g_t$ and $g^{\prime (i)}_t := \exp ( 2 C'_1 \zeta'_i) g'_t$.
By property (4), assumption (ii$\,^\prime$) of Lemma~\ref{lem_existence_hmhf_noncompact_inj_positive} holds  for $g_t$ and $g'_t$ replaced by $g^{(i )}_t$ and $g^{\prime (i )}_t$ and $C$ replaced by some constant $C'_3 = C'_3 (C) < \infty$.
Moreover, the injectivity radius on $(M, g^{(i)}_0)$ and $(M', g^{\prime (i)}_0)$ is uniformly bounded from below, by a constant that may depend on $i$.
By property (3), the map $\ov\chi$ is moreover $C'_4$-Lipschitz for some $C'_4 (C) < \infty$.

We can now use Lemma~\ref{lem_existence_hmhf_noncompact_inj_positive} to solve the harmonic map heat flow starting from $\ov\chi$ with the background metrics $(g^{\prime (i)}_t)_{t \in [0,T]}$ and $(g^{ (i)}_t)_{t \in [0,T]}$, on a time-interval of the form $[0, \min \{ \tau, T \}]$ for some $\tau = \tau (C, n) > 0$.
Similarly as in the proof of Lemma~\ref{lem_existence_hmhf_noncompact_inj_positive}, these solutions then subsequentially converge to the desired solution of the harmonic map heat flow with background metrics $(g'_t)_{t \in [0,T]}$ and $(g_t)_{t \in [0,T]}$.
\end{proof}

Using Lemmas~\ref{lem_hh_flow_C0_bound} and \ref{ex_hh_flow_version_1}, we can finally prove the main short-time existence result that is used in Section~\ref{sec_extend_comparison}.

\begin{proposition}[Short-time existence of harmonic map heat flow, general form] \label{prop_hh_flow_existence}
For every $n \geq 1$ there is a constant $\ov\eta_n > 0$ such that for any $0 < \eta_0 < \eta_1 < \ov\eta_n $ and $0 < \delta, C < \infty$ there is a constant $\tau = \tau (\eta_0, \eta_1, \delta, C, n) > 0$ such that the following holds.

Let $(g_t)_{t \in [0,T]}, (g'_t)_{t \in [0,T]}$ be smooth families of Riemannian metrics on $n$-dimensional manifolds $M, M'$ and consider a smooth map $\ov\chi : M' \to M$ such that the following holds for some $C' < \infty$.
\begin{enumerate}[label=(\roman*)]
\item \label{con_A.26_i} $(M, g_0)$ and $(M, g'_0)$ are complete.
\item \label{con_A.26_ii} $|{\Rm (g_t)}|, |{\Rm (g'_t)}| \leq C$ on $M$ and $M'$ for all $t \in [0,T]$.
\item \label{con_A.26_iii} $|{\nabla^m_{g_t} \Rm (g_t)}|, |{\nabla^m_{g'_t} \Rm (g'_t)}|, |\nabla_{g_t} \partial_t g_t|, |\nabla^{m}_{g'_t} \partial_t g'_t| \leq C' t^{-m/2} $ on $M$ and $M'$ for all $t \in (0,T]$ and $m = 0, \ldots, 10$.
\item \label{con_A.26_iv} $-\delta g_t \leq \partial_t g_t + 2 \Ric (g_t) \leq \delta g_t$ for all $t \in [0,T]$.
\item \label{con_A.26_v} $-\delta g'_t \leq \partial_t g'_t + 2 \Ric (g'_t) \leq \delta g'_t$ for all $t \in [0,T]$.
\item \label{con_A.26_vi} $\ov\chi$ is a diffeomorphism and
\[  \big| \big( \ov\chi^{-1} \big)^* g'_0 - g_0 \big| \leq \eta_0 . \]
\end{enumerate}
Then the harmonic map heat flow equation
\begin{equation*} \label{eq_hh_flow_existence_equation}
 \partial_t \chi_t = \triangle_{g'_t, g_t} \chi_t, \qquad \chi_0 = \ov\chi 
\end{equation*}
has a smooth solution on the time-interval $[0,T^*]$ for some $\min \{ \tau, T \} \leq T^* \leq T$ and for $h_t := (\chi_t^{-1})^* g'_t - g_t$ we have $|h_t| \leq \eta_1$ for all $t \in [0,T^*]$.
For all $t \in [0,T^*]$, the map $\chi_t$ is a diffeomorphism.
Moreover, if $|h_{T^*}| \leq \eta' < \eta_1$, then $T^* = T$.
\end{proposition}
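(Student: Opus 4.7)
The plan is to deduce Proposition~\ref{prop_hh_flow_existence} from the weaker existence result in Lemma~\ref{ex_hh_flow_version_1} together with the $C^0$-estimate of Lemma~\ref{lem_hh_flow_C0_bound}, via a standard approximation/compactness argument. The essential new difficulty compared to Lemma~\ref{ex_hh_flow_version_1} is that assumption \ref{con_A.26_iii} only provides $|\nabla^m \Rm|, |\nabla^m \partial_t g| \lesssim t^{-m/2}$, so the higher derivative bounds degenerate as $t \searrow 0$ and the hypotheses of Lemma~\ref{ex_hh_flow_version_1} are not directly satisfied. I will circumvent this by constructing approximating data for which the degenerate bounds are replaced by uniform ones, solving the flow for the approximations, and then passing to the limit.

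First, I will fix sequences $\eps_i \searrow 0$ and construct approximating families $(g^{(i)}_t)_{t \in [0,T]}$, $(g'^{(i)}_t)_{t \in [0,T]}$ and maps $\ov\chi^{(i)}$ as follows. For $t \geq \eps_i$ set $g^{(i)}_t := g_t$ and $g'^{(i)}_t := g'_t$; for $t \in [0,\eps_i]$ extend smoothly backward in time while retaining uniform bounds on $|\nabla^m \Rm|$ and $|\nabla^m \partial_t g|$ up to order $10$ (depending on $i$). By a small mollification of $\ov\chi$ on a scale going to zero with $i$, I may also take $\ov\chi^{(i)}$ smooth with $|(\ov\chi^{(i)\,-1})^* g'^{(i)}_0 - g^{(i)}_0| \leq 2\eta_0$ and with all derivatives of $\ov\chi^{(i)}$ bounded (in an $i$-dependent way). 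Assumptions \ref{con_A.24_ip}--\ref{con_A.24_iip} of Lemma~\ref{ex_hh_flow_version_1} then hold for each $i$, so the harmonic map heat flow with these data admits a smooth solution $\chi^{(i)}_t$ on some interval $[0, T^{(i)}_0]$ that is Lipschitz with some $i$-dependent constant. The bound that survives the limit is the one from Lemma~\ref{lem_hh_flow_C0_bound}: applied with parameters $\eta_0, \eta_1$ shifted by $o(1)$, and using Lemma~\ref{lem_hmhf_loc_gradient} to provide the required $|\partial_t h^{(i)}_t| \leq C^* t^{-1/2}$ hypothesis on any compact subset, it gives $|h^{(i)}_t| \leq \eta_1$ on an interval $[0, \min\{\tau, T\}]$ with $\tau = \tau(\eta_0, \eta_1, \delta, C, n)$ independent of $i$. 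This uniform $C^0$-bound is what keeps the flow from degenerating.

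Next I will extract a limit. Since $|h^{(i)}_t| \leq \eta_1 < 1$, the maps $\chi^{(i)}_t$ are uniformly bilipschitz with respect to $(g'^{(i)}_t, g^{(i)}_t)$, and for $t \geq \eps_i$ the background metrics agree with the original ones and satisfy the bounds in \ref{con_A.26_ii}--\ref{con_A.26_iii}. Combining Lemma~\ref{lem_drift_local_coo} (to confine the images $\chi^{(i)}_t(q)$ to compact sets depending only on $t$) with the interior derivative estimates of Lemma~\ref{lem_hmhf_loc_gradient} applied in normal coordinates, I obtain uniform local $C^m$ bounds on $\chi^{(i)}_t$ on any $[\eps, T^*]$ with $\eps > 0$, where $T^* := \min\{\tau, T\}$. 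Arzelà–Ascoli and a diagonal argument yield a smooth limit $\chi_t$ on $(0, T^*]$ solving the harmonic map heat flow for $(g_t, g'_t)$, with $|h_t| \leq \eta_1$. Continuity of $\chi_t$ down to $t = 0$ with $\chi_0 = \ov\chi$ follows from the drift estimate of Lemma~\ref{lem_drift_local_coo} (which is uniform in $i$ once the Lipschitz control is fixed); the fact that $\chi_t$ is a diffeomorphism for every $t \in [0, T^*]$ follows because the differentials are uniformly non-degenerate ($|h_t| \leq \eta_1 < 1$), the pre-image distance distortion is controlled, and $\chi_0$ is a diffeomorphism by hypothesis \ref{con_A.26_vi}, so $\chi_t$ is a proper local diffeomorphism onto a nonempty open and closed subset of the connected target.

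The hardest technical point is to make the uniform $|\partial_t h^{(i)}_t| \leq C' t^{-1/2}$ bound needed in Lemma~\ref{lem_hh_flow_C0_bound} actually independent of $i$, since the approximating metrics $g^{(i)}_t$ have $i$-dependent higher regularity only for $t \leq \eps_i$, precisely the window where the $t^{-1/2}$ factor blows up. I plan to handle this by noting that the time derivative bound on $h^{(i)}_t$ is only needed for the continuity-at-zero step in the weak maximum principle, and for this the weaker integrability of $t^{-1/2}$ together with the bounded curvature bound in \ref{con_A.26_ii} suffices, provided the Lipschitz constant of $\chi^{(i)}_t$ is controlled by Lemma~\ref{lem_existenc_hmhf_compact}-style gradient evolution (which depends only on $C, n$). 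Finally, the continuation clause — that $|h_{T^*}| \leq \eta' < \eta_1$ implies $T^* = T$ — will follow by restarting the construction at time $T^*$: the map $\chi_{T^*}$ is smooth and a diffeomorphism, the shifted families $(g_{t+T^*}), (g'_{t+T^*})$ satisfy hypotheses \ref{con_A.26_i}--\ref{con_A.26_v} with the same constants and now with \emph{smooth} higher-derivative bounds at the new initial time (from \ref{con_A.26_iii} at $t = T^* > 0$), so Lemma~\ref{ex_hh_flow_version_1} applies directly without approximation, extending $\chi_t$ past $T^*$ while preserving $|h_t| \leq \eta_1$, contradicting maximality unless $T^* = T$.
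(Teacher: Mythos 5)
Your plan has the right skeleton — approximate the data, use Lemma~\ref{lem_hh_flow_C0_bound} for a uniform $C^0$-bound, and pass to the limit via Lemmas~\ref{lem_drift_local_coo} and \ref{lem_hmhf_loc_gradient} — and your treatment of the continuation clause at $T^*$ is essentially the paper's. But the approximation scheme you chose is different from the paper's and has a gap you don't address. The paper does not modify the metrics at all: it fixes a sequence $\theta_i \searrow 0$ and runs the flow on $[\theta_i, T]$ with the \emph{original} $(g_t),(g'_t)$ and initial map $\ov\chi$ at time $\theta_i$. At $t=\theta_i>0$, hypothesis \ref{con_A.26_iii} gives finite (though $i$-dependent) higher-derivative bounds, so Lemma~\ref{ex_hh_flow_version_1} applies directly; and hypotheses \ref{con_A.26_iv}--\ref{con_A.26_v} hold on $[\theta_i,T]$ automatically with the original $\delta$, so Lemma~\ref{lem_hh_flow_C0_bound} gives the $i$-independent lower bound $T_i\geq \min\{\theta_i+\tau,T\}$. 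The continuity-at-zero of the limit is then recovered from Lemma~\ref{lem_hmhf_loc_gradient}.

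The gap in your scheme is the almost-Ricci-flow bounds. You extend $(g_t)$ to $[0,\eps_i]$ keeping $|\nabla^m\Rm|,|\nabla^m\partial_t g|$ bounded (with $i$-dependent constants), but you never say how the extension can also satisfy \ref{con_A.26_iv}--\ref{con_A.26_v} with the original constant $\delta$ on $[0,\eps_i]$. This matters: the time $\tau$ in Lemma~\ref{lem_hh_flow_C0_bound} depends on $\delta$ through the evolution inequality (\ref{eq_RdT_norm_evolution}), so if the extended metrics only satisfy the bound with $\delta$ replaced by something of size $C$ near $t=0$, the uniform $\tau$ is not immediate. If you freeze the metric at $\eps_i$ the almost-RF bound degrades to $O(C)$; if you flow backward by exact Ricci flow the bound is $0$ but you then need a separate argument that the $\nabla^m$-bounds and the pointwise closeness $|(\ov\chi^{-1})^*g'^{(i)}_0 - g^{(i)}_0|\to\eta_0$ are preserved. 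Either route is fixable (the extra error is $O(\eps_i)$ and dies in the limit), but it requires an argument you haven't given, and it is precisely the complication the paper's time-shift avoids. By contrast, the point you flag as "the hardest technical point" — that the $|\partial_t h_t|\leq C't^{-1/2}$ hypothesis might only hold with an $i$-dependent $C'$ — is actually harmless: in Lemma~\ref{lem_hh_flow_C0_bound} the constant $C'$ does not enter $\tau$; it is used only to run the continuity argument, so an $i$-dependent $C'$ is fine. Finally, the mollification of $\ov\chi$ is unnecessary: \ref{con_A.26_vi} already makes $\ov\chi$ smooth and uniformly bilipschitz, which is all Lemma~\ref{ex_hh_flow_version_1} needs.
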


\begin{proof}
Fix some sequence $\theta_i \to  0$.
By assumptions \ref{con_A.26_iii} and \ref{con_A.26_vi}, we can find a sequence $\eta_0^{(i)} \to \eta_0$ such that
\[  \big| \big( \ov\chi^{-1} \big)^* g'_{\theta_i} - g_{\theta_i} \big| \leq \eta_0^{(i)} . \]
Let $\tau = \tau (\frac12 (\eta_0 + \eta_1), \eta_1, \delta, C, n )$ be the constant from Lemma~\ref{lem_hh_flow_C0_bound}.
Fix some large $i$ and assume that $\eta_0^{(i)} \leq \frac12 (\eta_0 + \eta_1) $.
By Lemma~\ref{ex_hh_flow_version_1} and assumptions \ref{con_A.26_i}--\ref{con_A.26_iii}, \ref{con_A.26_vi}, we can solve the harmonic map heat flow equation
\begin{equation} \label{eq_RdT_start_later}
 \partial_t \chi^{(i)}_t = \triangle_{g'_t, g_t} \chi^{(i)}_t, \qquad \chi^{(i)}_{\theta_i} = \ov\chi .
\end{equation}
on a time-interval of the form $[\theta_i, T_i]$, where $\theta_i < T_i \leq T$.
Assume that $T_i \leq T$ is chosen maximally with the property that (\ref{eq_RdT_start_later}) has a solution on $[\theta_i, T_i)$ and that for $h^{(i)}_t := (\chi_t^{-1})^* g'_t - g_t$ we have $|h_t| \leq \eta_1$ for all $t \in [\theta_i, T_i)$.
By Lemmas~\ref{lem_hh_flow_C0_bound} and \ref{ex_hh_flow_version_1} we find that $T_i \geq \min \{ \theta_i + \tau, T \}$.
Note that Lemma~\ref{lem_hh_flow_C0_bound} requires a  bound on $|\partial_t h_t|$, which we can obtain from assumption \ref{con_A.26_iii} and Lemmas~\ref{lem_drift_local_coo} and \ref{lem_hmhf_loc_gradient}.
The same bound combined with assumption \ref{con_A.26_vi} also allows us to argue that $\chi^{(i)}_t$ is a diffeomorphism for all $t \in [\theta_i, T_i)$.

We now show that we have smooth convergence of the $(\chi^{(i)}_t)$ to a harmonic map heat flow $(\chi_t)$ on $[0,\min \{ \tau, T\})$, after passing to a subsequence.
Consider some point $q \in M'$ and set $p := \ov\chi (q)$.
By smoothness of $(g_t)$ and $(g'_t)$, we can find some constant $r_{q} > 0$ such that Lemmas~\ref{lem_drift_local_coo} and \ref{lem_hmhf_loc_gradient} are applicable at scale $r_q$ near $q$ and $p$ and at time $\theta_i$, for all $i$. 
So, after passing to a subsequence, $\chi^{i}_t$ converges locally in the $C^{10}$-sense in the neighborhood of every point $(q,0) \in M' \times \{ 0 \}$.
Moreover, by assumption \ref{con_A.26_iii}, for any $t_0 > 0$, Lemma~\ref{lem_drift_local_coo} is applicable at any point of $M \times [t_0, T_i]$ and $M' \times [t_0, T_i]$ at some uniform scale $r_0 = r_0 (t_0, C', n) > 0$, which is independent of $i$.
Iterating this fact, and using Lemma~\ref{lem_hmhf_loc_gradient}, yields local bounds on $\chi^{(i)}_t$ for any $t \in [0, \tau)$.
So after passing to a subsequence, the families of maps $(\chi^{(i)}_t)$ indeed converge to a solution of the harmonic map heat flow $(\chi_t)$ on $[0,\min \{ \tau, T\})$, in the $C^{10}$-sense.
Repeated application of Lemma~\ref{lem_hmhf_loc_gradient} yields higher derivative bounds and implies that the convergence is smooth, after passing to another subsequence.
The bounds in part \ref{ass_A.16_b} of Lemma~\ref{lem_hmhf_loc_gradient} imply that $\chi_0 = \ov\chi$.
The fact that $\chi_t$ is a diffeomorphism for all $t \in [0, \min \{ \tau, T \})$ follows from the fact that it is a limit of diffeomorphisms $\chi^{(i)}_t$ that are uniformly bilipschitz.

It remains to show the last assertion.
So assume by contradiction that $|h_{T^*}| \leq \eta' < \eta_1$, but $T^* < T$.
Then by Lemma~\ref{ex_hh_flow_version_1} we can extend the flow past time $T^*$ and by Lemma~\ref{lem_hh_flow_C0_bound} with $\eta_0 = \eta'$ we have $|h_t | \leq \eta_1$ for $t$ close to $T^*$, contradicting our choice of $T^*$.
\end{proof}

\subsection{Further results}
In the following, we will prove several analytical results that are needed in Section~\ref{sec_extend_comparison}.
The results will mostly build on the computations of Subsection~\ref{subsec_hmhf_main_equations}.

The following proposition provides a bound on the drift of a solution to the harmonic map heat flow, whenever the associated perturbation $h$ is small.

\begin{proposition}[Drift control]
\label{prop_drift_control}
For any $n \geq 1$, $\delta > 0$ and $A < \infty$ there is a constant $\eta = \eta (\delta, A, n) > 0$ such that the following holds.

Let $r > 0$ and $T \leq Ar^2$ and consider Ricci flows $(g_t)_{t \in [0, T]}$ and $(g'_t)_{t \in [0, T]}$ on $n$-dimensional manifolds $M$ and $M'$.
Let $(\phi_t)_{t \in [0, T]}$, $\phi_t : M \to M'$, be a smooth family of diffeomorphisms onto their images whose inverses $\phi_t^{-1} : \phi_t (M) \to M'$ satisfy the harmonic map heat flow equation
\[ \partial_t \phi^{-1}_t = \triangle_{g'_t, g_t} \phi^{-1}_t. \]
Let $x \in M$ and assume that for $x' := \phi_0 (x)$:
\begin{enumerate}[label=(\roman*)]
\item \label{con_A.29_i} $B(x, 0, r)$ is relatively compact in $M$.
\item \label{con_A.29_ii} $|\nabla^m {\Rm}| \leq A r^{-2-m} $ on $B(x, 0, r) \times [0, T]$ for all $m = 0,1,2,3$.
\item \label{con_A.29_iii} $|{\Rm}| \leq A r^{-2}$ on $B(x', 0, r) \times [0, T]$.
\item \label{con_A.29_iv} $- \eta g'_t \leq h_t = \phi^*_t g_t - g'_t \leq \eta g'_t$ for all $t \in [0, T]$.
\end{enumerate}
Then for all $t \in [0,T]$
\[ d_0 (\phi_t (x), \phi_0 (x)) < \delta r. \]
\end{proposition}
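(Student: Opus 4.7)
The plan is to translate the drift problem into an analytic estimate on the associated Ricci--DeTurck perturbation, making essential use of the smallness of $\eta$ in a way that $\text{Lemma}~\ref{lem_drift_local_coo}$ alone cannot. The key identity is (\ref{eq_X_is_dt_chi}): since $\chi_t := \phi_t^{-1}$ satisfies the harmonic map heat flow equation, we have $\partial_t\chi_t\circ \chi_t^{-1}=X_{g_t}(g_t+h_t)$, where (correctly interpreting $h_t$) the perturbation $h_t=\phi_t^*g'_t-g_t$ lives on $M$ and the DeTurck vector field $X_{g_t}(g_t+h_t)$ is, by formula (\ref{eq_def_X_appendix}), expressible as a universal algebraic combination of $g_t^{-1}$, $h_t$ and $\nabla^{g_t}h_t$ that vanishes when $h_t\equiv 0$; in particular $|X|_{g_t}\le C(n)|\nabla^{g_t}h_t|_{g_t}$ when $|h_t|_{g_t}$ is small. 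Differentiating $\phi_t\circ\chi_t=\mathrm{id}_{M'}$ in $t$ at the point $\phi_t(x)$ yields
\[
\partial_t\phi_t(x)=-(d\phi_t)_x\cdot X_{g_t}(g_t+h_t)(x),
\]
so by the near-isometry property (iv) we obtain the pointwise drift velocity bound
\[
|\partial_t\phi_t(x)|_{g'_t}\le C(n)|\nabla^{g_t}h_t|_{g_t}(x).
\]

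First I would set up a bootstrap/continuity argument to ensure that for all $t\in[0,T]$, $\phi_t\bigl(B(x,0,r/2)\bigr)\subset B(x',0,r)$ so that $h_t$ is a well-defined Ricci--DeTurck perturbation on $B(x,0,r/2)\times[0,T]$ with $|h_t|_{g_t}\le 2\eta$. This uses (iv), (i)--(iii), the fact that $\phi_t$ is $(1+O(\eta))$-bilipschitz, and the distance distortion estimate coming from the curvature bounds in (ii), (iii); the drift on $x$ itself is controlled inductively by the final estimate below, and since the drift bound will be linear in $\eta$, taking $\eta$ small closes the bootstrap.

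Second, I would apply the local gradient estimate for the Ricci--DeTurck perturbation equation, Lemma~\ref{lem_loc_gradient_estimate} (with $m=1$), to bound $|\nabla^{g_t}h_t|_{g_t}(x)$. Because that lemma requires the absolute curvature bound $|\nabla^{m'}\mathrm{Rm}|\le r^{-2-m'}$ rather than $A r^{-2-m'}$, I would work at the rescaled scale $\hat r:=c(A,n)\,r$ with $c(A,n)$ small, on which the hypotheses (ii) are met; choosing $\eta$ small enough to stay within the $\eta_m$-threshold of the lemma yields
\[
|\nabla^{g_t}h_t|_{g_t}(x)\le C(A,n)\,t^{-1/2}\eta
\quad\text{for}\quad t\in(0,\tau(A,n)\hat r^2].
\]
For the remaining times $t\in[\tau(A,n)\hat r^2,T]$ with $T\le Ar^2$, the same local estimate applied on time intervals of length $\tau\hat r^2$ (restarting from earlier small-time bounds via Shi-type iteration) produces a bound of the same form with a constant $C(A,n)$.

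Third, I would integrate the velocity bound:
\[
d_{g'_t}\bigl(\phi_t(x),\phi_0(x)\bigr)\le\int_0^t C(A,n)\,s^{-1/2}\eta\,ds=2C(A,n)\,t^{1/2}\eta\le 2C(A,n)\,A^{1/2}r\,\eta,
\]
and convert to $g'_0$-distance using the curvature bound (iii), which gives a distortion factor bounded by $\exp(CAT r^{-2})\le\exp(CA^2)$. Choosing $\eta=\eta(\delta,A,n)>0$ so small that $2C(A,n)A^{1/2}\exp(CA^2)\eta\le\delta$ completes the proof. The main obstacle is the rescaling bookkeeping in the second step together with the simultaneous bootstrap of the first: the Ricci--DeTurck gradient estimate is most cleanly stated at a scale where the curvature is universally bounded, so one must carefully combine the smallness of $\eta$ (which controls the drift velocity) with the rescaling factor $c(A,n)$ (which localizes the PDE estimate) and the distance distortion over time $T\le Ar^2$; none of these ingredients individually produces an arbitrarily small drift, but their product is linear in $\eta$ with coefficient depending only on $A,n$, which is exactly what is needed.
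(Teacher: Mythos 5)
Your proposal follows essentially the same route as the paper's own proof: rescale, use identity (\ref{eq_X_is_dt_chi}) together with (\ref{eq_def_X_appendix}) to bound the drift velocity $|\partial_t\phi_t(x)|$ by $C|\nabla h_t|(x)$, apply the local gradient estimate Lemma~\ref{lem_loc_gradient_estimate} to get $|\nabla h_t|(x)\le C(A,n)\,\eta\,t^{-1/2}$, integrate, convert back to $g'_0$-distance using (iii), and close the continuity argument by choosing $\eta$ small. The extra bookkeeping you add (explicit rescaling to absorb $A$ into the curvature normalization, iteration in time to cover $[0,T]$ with $T\le Ar^2$, and a slightly stronger bootstrap that keeps the image of a ball rather than just $\phi_t(x)$ inside $B(x',0,r)$) is all implicit in the paper's constant $C_1(A,n)$ and its ``as long as $C_4\eta t^{1/2}<1$'' clause, so the two arguments are the same in substance.
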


We note that this proposition is similar to Lemma~\ref{lem_drift_local_coo}.
In fact, this lemma could be used in lieu of Proposition~\ref{prop_drift_control}.
Nevertheless, we have included this proposition since its proof is somewhat shorter and does not use local coordinates.

\begin{proof}
By parabolic rescaling, we may assume without loss of generality that $r = 1$.

Using Lemma~\ref{lem_loc_gradient_estimate}, we obtain that if $\eta$ is sufficiently small depending on $A, n$, then for all $t \in [0, T]$ we have
\begin{equation*} \label{eq_nab_h_drift_proof}
 |\nabla h_t| (x) \leq C_1 \eta t^{-1/2}, 
\end{equation*}
where $C_1 = C_1(A,n) < \infty$.
So by (\ref{eq_X_is_dt_chi}) and (\ref{eq_def_X_appendix}) we obtain
\[ | \partial_t \phi^{-1}_t (\phi_t (x)) | = |X_{g_t} (g_t + h_t)| \leq C_2 \eta t^{-1/2}, \]
where $C_2 = C_2 (A,n) < \infty$.
As $\partial_t \phi_t = - d\phi_t ( \partial_t \phi^{-1}_t \circ \phi_t )$, we obtain that 
\[ | \partial_t \phi_t (x) | \leq C_3 \eta t^{-1/2}, \]
where $C_3 = C_3 (A,n) < \infty$.
Integrating this bound, and taking into account the distance distortion in $M'$ via assumption \ref{con_A.29_iii}, we obtain that $d_0 (\phi_t (x), \phi_0 (x)) < C_4 \eta t^{1/2}$ for some $C_4 = C_4 (A, n) < \infty$, as long as $C_4 \eta t^{1/2} < 1$.
So the proposition follows if $\eta \leq  C_{4}^{-1} \min \{ \delta, (2A)^{-1/2} \}$.
\end{proof}

Next, we derive short-time bounds for solutions to the Ricci-DeTurck equation.
To do this, we first establish the following barrier-type estimate.

\begin{lemma} \label{lem_promote_u_bound}
For any $n \geq 1$ and $A < \infty$ there is a constant $C = C(A, n) < \infty$ such that the following holds.

Let $r > 0$.
Consider smooth families of metrics $(g_t)_{t \in [0, r^2]}$, $(g_t+h_t)_{t \in [0,r^2]}$ on an $n$-dimensional manifold $M$, a point $x \in M$ and a smooth function $u \in C^\infty (B(x,r) \times [0,r^2])$ such that:
\begin{enumerate}[label=(\roman*)]
\item \label{con_A.30_i} $B(x,0,r)$ is relatively compact in $M$.
\item  \label{con_A.30_ii} $\frac12 g_t \leq h_t \leq 2 g_t$ on $B(x,0,r)$ for all $t \in [0,r^2]$.
\item \label{con_A.30_iii} $u$ satisfies the inequality
\begin{equation} \label{eq_u_equation_h_g}
 \partial_t u \leq (g_t + h_t)^{ij} \nabla^{2 , g_t}_{ij} u.  
\end{equation}
\item \label{con_A.30_iv} $|{\Rm (g_t)}|_{g_t}, |\partial_t g_t |_{g_t}  \leq A r^{-2}$ on $B(x,0,r)$ for all $t \in [0, r^2]$.
\item \label{con_A.30_v} $|\nabla^{g_t} \partial_t g_t |_{g_t} \leq A r^{-2} t^{-1/2}$ on $B(x,0,r)$ for all $t \in [0, r^2]$.
\item \label{con_A.30_vi} $|u| \leq 1$ on $B(x,0,r) \times [0,r^2]$.
\end{enumerate}
Then for all $t \in [0,r^2]$ we have
\begin{equation} \label{eq_u_promote_bound}
 u(x,t) \leq C  t r^{-2} + \sup_{B(x,0,r)} u_0. 
\end{equation}
\end{lemma}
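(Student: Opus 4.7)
After parabolic rescaling we may assume $r=1$. Set $\sigma := \sup_{B(x,0,1)} u_0$ and $v := u - \sigma$, so that $v$ satisfies the same inequality \eqref{eq_u_equation_h_g}, we have $|v| \leq 2$ on $B(x,0,1) \times [0,1]$, and $v_0 \leq 0$ on $B(x,0,1)$. The aim is to prove $v(x,t) \leq Ct$ for a constant $C = C(A,n)$.

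The plan is to construct a parabolic supersolution $\psi$ of the form
\[
\psi(y,t) \;=\; \alpha t \;+\; K\, d_0(y,x)^2,
\]
where $d_0$ is the $g_0$-distance to $x$, and $\alpha,K$ are constants depending only on $A,n$ to be determined. First I would verify that $\psi$ dominates $v$ on the parabolic boundary of $B(x,0,\tfrac12) \times [0,1]$: at $t=0$ one has $v_0 \leq 0 \leq \psi_0$, and on the spatial boundary $\psi \geq K/4 \geq 2 \geq v$ provided $K \geq 8$. Second, I would check that $\psi$ is a supersolution, i.e.
\[
\partial_t \psi \;\geq\; (g_t + h_t)^{ij} \nabla^{2,g_t}_{ij} \psi \qquad \text{on } B(x,0,\tfrac12) \times [0,1].
\]
Writing $\nabla^{2,g_t}_{ij} d_0^2 = 2\,\partial_i d_0 \,\partial_j d_0 + 2 d_0\, \nabla^{2,g_t}_{ij} d_0$ and using the metric comparability \ref{con_A.30_ii} together with the Laplacian/Hessian bound $|\nabla^{2,g_0} d_0| \leq C(A,n)/d_0$ (which follows from assumption \ref{con_A.30_iv} through standard Hessian comparison on the portion of $B(x,0,\tfrac12)$ inside the cut locus), one obtains
\[
(g_t+h_t)^{ij}\nabla^{2,g_t}_{ij} (K d_0^2) \;\leq\; C_1(A,n)\, K
\]
pointwise wherever $d_0$ is smooth. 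Choosing $\alpha := C_1(A,n)\,K$ then yields the supersolution inequality at smooth points. Finally, applying the parabolic comparison principle on $B(x,0,\tfrac12) \times [0,1]$ gives $v \leq \psi$ throughout, and evaluating at $y=x$ yields $v(x,t) \leq \alpha t$, i.e. \eqref{eq_u_promote_bound} with $C = \alpha$.

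The principal technical obstacle is that $d_0$ is not everywhere smooth: it has a cut locus, and the Hessian comparison is only pointwise away from it. I would handle this in the standard way by either (a) approximating $d_0$ by a smooth function $\tilde d$ with $\tilde d \leq d_0$, $|\nabla \tilde d| \leq 1$, $\nabla^2 \tilde d \leq C/\tilde d$, obtained from a mollification of $d_0$ in geodesic normal coordinates, or (b) interpreting the comparison argument in the viscosity sense: since $K s^2$ is convex and nondecreasing and $d_0$ is $1$-Lipschitz, $K d_0^2$ is a classical supersolution away from the cut locus and a viscosity supersolution across it, which is enough to run the comparison principle. A secondary point is converting the bound on $\nabla^{2,g_t} d_0$ into one on $\nabla^{2,g_0} d_0$: the difference of Christoffel symbols $\Gamma^{g_t} - \Gamma^{g_0}$ can be estimated by $\int_0^t |\nabla^{g_s}\partial_s g_s|\, ds$, which by assumption \ref{con_A.30_v} is $\lesssim A \int_0^t s^{-1/2} ds \leq 2A$, a bound that is absorbed by enlarging $C_1$. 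Once these technicalities are in place, the rest is routine.
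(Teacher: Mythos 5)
Your proposal is correct and follows essentially the same route as the paper's proof: construct a radial barrier vanishing at $x$ that dominates $u$ (resp.\ $v$) on the parabolic boundary, control its $(g_t+h_t)$-Laplacian via Hessian comparison and the time-integrated bound on $\nabla^{g_t}\partial_t g_t$ from (v), and conclude by the comparison principle. The paper uses a cutoff profile $f(d_0)$ smoothed a priori (Greene--Wu) rather than $d_0^2$ interpreted in the barrier/viscosity sense, but that is an implementation choice, not a different argument.
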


\begin{proof}
Without loss of generality we may assume that $r = 1$.
Fix a function $f : [0, 1] \to [0, \infty)$ such that $f \equiv 0$ on $[0, \frac14]$ and $f (\frac12) > 1$ and $f' \geq 0$ everywhere.
Set $w(y) := f (d_{0} (x,y))$ for all $y \in B(x,0,1)$.
By Hessian comparison and assumptions \ref{con_A.30_i}, \ref{con_A.30_ii} there is a constant $C'_1 = C'_1 (n, A) < \infty$ such that $\nabla^{2,g_0} w \leq C'_1 g_0$ on $B(x,0,1)$ in the barrier sense.
By a local smoothing procedure (see for example \cite{Greene:1972ke}) we can construct a smooth, non-negative $w' \in C^\infty (B(x,0,1))$ such that for some $C'_2 = C'_2 (n,A) < \infty$ we have $|\nabla^{g_0} w'| \leq C'_2$, $\nabla^{2,g_0} w' \leq C'_2 g_0$, $w' \equiv 0$ on $B(x, 0, \frac12)$ and $w' > 1$ on $B(x,0,1) \setminus B(x,0,\frac12)$.

For any vector $v \in T_y M$, $y \in B(x,0,1)$, we have by assumptions \ref{con_A.30_ii}, \ref{con_A.30_iii}
\begin{multline*}
 \bigg| \frac{d}{dt} \nabla^{2,g_t}_{ v,v} w' \bigg| = \bigg| dw' \Big( \Big( \frac{d}{dt} \nabla^{g_t} \Big)(v,v) \Big) \bigg| \\
 = |d w'|_{g_t} \big| g_t^{-1} \big( (\nabla^{g_t}_v \partial_t g_t)(v, \cdot) - \tfrac12 (\nabla^{g_t}_\cdot \partial_t g_t)(v,v) \big) \big|_{g_t} \leq C'_3 t^{-1/2}
\end{multline*}
for some $C'_3 = C'_3 (n,A) < \infty$.
Integrating this bound over $t$ and tracing in $v$, we conclude, using assumption \ref{con_A.30_ii}, that there is a constant $C'_4 = C'_4 (n, A) < \infty$ such that
\begin{equation} \label{eq_laplace_w_bound}
 (g_t + h_t)^{ij} \nabla^{2, g_t}_{ij} w' < C'_4 \qquad \text{on} \qquad B(x,0,1) \quad \text{for all} \quad t \in [0,1]. 
\end{equation}

We now show that for any $\eps > 0$ we have
\begin{equation} \label{eq_max_principle_u_claim}
 u_t < w' + C'_4 t + \sup_{B(x,0,1)} u_0 + \eps 
\end{equation}
on $B(x,0,1)$ for all $t \in [0,1]$.
Evaluating this bound at $x$ and letting $\eps \to 0$ will then imply (\ref{eq_u_promote_bound}).

Note that (\ref{eq_max_principle_u_claim}) trivially holds for $t = 0$ and for $t > 0$ it can only fail on $B(x, 0, \frac12)$ due to assumption \ref{con_A.30_vi},  since $w' > 1$ on $B(x,0,1) \setminus B(x,0,\frac12)$.
Assume by contradiction that (\ref{eq_max_principle_u_claim}) fails for some $t \in [0, 1]$.
As $B(x, 0, \frac12)$ is relatively compact in $M$, we may assume that $t$ is chosen minimal with this property.
Then $t > 0$ and there is a point $y \in B(x, 0, \frac12)$ at which equality holds in (\ref{eq_max_principle_u_claim}).
It follows that at $y$ we have, using (\ref{eq_laplace_w_bound})
\[ \partial_t u_t - (g_t + h_t)^{ij} \nabla^{2, g_t}_{ij} u_t \geq C'_4 - (g_t + h_t)^{ij} \nabla^{2, g_t}_{ij} w' > 0. \]
This, however, contradicts (\ref{eq_u_equation_h_g}).

Therefore (\ref{eq_max_principle_u_claim}) holds on $B(x,0,1)$ for all $t \in [0,1]$ and $\eps > 0$, which finishes the proof. 
\end{proof}

Using Lemma~\ref{lem_promote_u_bound}, we can prove the following short-time bounds for the Ricci-DeTurck flow.

\begin{proposition}[Short-time bounds for Ricci-DeTurck flow] \label{prop_promote_RdT_forward_locally}
For any $n \geq 1$ there is a constant $\eta_0 = \eta_0 (n) > 0$ and for any $A < \infty$ there is a constant $C= C(A, n) < \infty$ such that the following holds.

Let $(g_t)_{t \in [0, r^2]}$, $r > 0$, be a Ricci flow on an $n$-dimensional manifold $M$ and $(h_t)_{t \in [0,r^2]}$ a Ricci-DeTurck flow with background metric $(g_t)_{t \in [0, r^2]}$.
Let $x \in M$ be a point and assume that
\begin{enumerate}[label=(\roman*)]
\item $B(x,0,r)$ is relatively compact in $M$.
\item $|{\Rm (g_t)}| \leq A r^{-2}$ on $B(x,0,r)$ for all $t \in [0, r^2]$.
\item $|h_t| \leq \eta \leq \eta_0$ on $B(x,0,r)$ for all $t \in [0, r^2]$.
\end{enumerate}
Then for all $t \in [0,r^2]$ we have
\[ |h(x,t)|^2 \leq C  \eta^2 t r^{-2} + \sup_{B(x,0,r)} |h_0|^2 . \]

\end{proposition}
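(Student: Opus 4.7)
The strategy is to reduce the claim to Lemma~\ref{lem_promote_u_bound} by running the maximum principle on a weighted version of $|h|^2$. Specializing the general evolution inequality (\ref{eq_RdT_norm_evolution}) from Subsection~\ref{subsec_hmhf_main_equations} to the case where the background $(g_t)$ is an honest Ricci flow (so the parameter $\delta$ there is zero), we have, provided $\eta_0 \leq \ov{\eta}_n$, the pointwise inequality
\[
\partial_t |h|^2 \leq (g_t + h_t)^{ij} \nabla^{2,g_t}_{ij} |h|^2 + C_0 |{\Rm}(g_t)| \cdot |h|^2
\]
on $B(x,0,r) \times [0,r^2]$, with $C_0 = C_0(n)$ universal. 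Under hypothesis (ii) the zeroth-order coefficient is controlled by $C_0 A r^{-2}$, and under hypothesis (iii), after shrinking $\eta_0$ again, the tensor $g_t + h_t$ is uniformly $2$-bilipschitz to $g_t$.

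To absorb the zeroth-order term, I would introduce
\[
u := \eta^{-2} e^{-C_0 A r^{-2} t} |h_t|^2,
\]
which by hypothesis (iii) satisfies $0 \leq u \leq 1$ everywhere on $B(x,0,r) \times [0,r^2]$. A short computation then yields
\[
\partial_t u \leq (g_t + h_t)^{ij} \nabla^{2,g_t}_{ij} u,
\]
which is precisely the differential inequality required by hypothesis (iii) of Lemma~\ref{lem_promote_u_bound}. Hypotheses (i), (ii), (vi) of that lemma follow directly, and hypothesis (iv) follows from the curvature bound together with $\partial_t g_t = -2\Ric(g_t)$.

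The one point needing genuine care is hypothesis (v) of Lemma~\ref{lem_promote_u_bound}, the bound $|\nabla^{g_t} \partial_t g_t|_{g_t} \leq A' r^{-2} t^{-1/2}$: since $\partial_t g_t = -2\Ric(g_t)$ this reduces to controlling $|\nabla \Ric|$, which is not directly assumed. However, Shi's interior estimates, applied on the parabolic region $P(x,0,r) = B(x,0,r) \times [0,r^2]$ where $|{\Rm}| \leq A r^{-2}$, give exactly this form of estimate on any interior ball, in particular on $B(x,0,r/2)$, with a constant $A' = A'(A,n)$. I would therefore apply Lemma~\ref{lem_promote_u_bound} on the shrunken ball $B(x,0,r/2)$ with this modified constant, obtaining
\[
u(x,t) \leq C(A,n) \, t r^{-2} + \sup_{B(x,0,r/2)} u_0.
\]
Translating back through $|h|^2 = \eta^2 e^{C_0 A r^{-2} t} u \leq \eta^2 e^{C_0 A} u$ for $t \in [0,r^2]$ and absorbing the factor $e^{C_0 A}$ into the constant gives the desired bound. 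The main obstacle is purely a bookkeeping one: coordinating the ball shrinkage needed for Shi's estimates with the exponential weight and the constants appearing in Lemma~\ref{lem_promote_u_bound}, and tracking the dependence on $n, A$ to make sure the final constant $C$ depends only on these.
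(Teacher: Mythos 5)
Your proposal is correct and follows essentially the same route as the paper's own proof: specialize (\ref{eq_RdT_norm_evolution}) to the exact Ricci flow case, weight $|h|^2$ by the exponential $e^{-C_0 A r^{-2}t}\eta^{-2}$ to absorb the zeroth-order term, and then invoke Lemma~\ref{lem_promote_u_bound}, using Shi's estimates for hypothesis (v). If anything, you are slightly more explicit than the paper in flagging that Shi's estimates are interior, so the lemma must be applied on a shrunken ball $B(x,0,r/2)$; this is harmless because $\sup_{B(x,0,r/2)}|h_0|^2 \leq \sup_{B(x,0,r)}|h_0|^2$ and the factor from replacing $r$ by $r/2$ is absorbed into $C(A,n)$.
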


\begin{proof}
By (\ref{eq_RdT_norm_evolution}), if $\eta$ is smaller than some dimensional constant, then
\[ \partial_t |h_t|^2 \leq (g_t + h_t)^{ij} \nabla_{ij}^2 |h_t|^2 + C'_1 |h_t|^2 \]
for some constant $C'_1 = C'_1(A,n) < \infty$.
So the proposition follows from Lemma~\ref{lem_promote_u_bound} by setting $u_t := \eta^{-2} e^{- C'_1 t} |h_t|^2$.
Note that assumption \ref{con_A.30_ii} in Lemma~\ref{lem_promote_u_bound} is guaranteed if we choose $\eta_0$ sufficiently small and assumption \ref{con_A.30_v} follows using Shi's estimates.
\end{proof}

Lastly, we prove that solutions to the Ricci-DeTurck perturbation equation remain small in a parabolic neighborhood if they are small on a larger ball at time $0$.
The following proposition also holds for perturbations that arise from almost Ricci flows, as discussed in Subsection~\ref{subsec_hmhf_main_equations}.

\begin{proposition}[Smallness of $h$ at time 0 implies smallness of $h$ at later times]
\label{prop_distortion_stays_small}
For any $n \geq 1$ there is a constant $\eta = \eta(n) > 0$ such that for any $\eps > 0$ and $A < \infty$ there is a constant $0 < \delta = \delta ( \eps, A, n) < 1$ such that the following holds.

Let $r > 0$.
Consider smooth families of metrics $(g_t)_{t \in [0,r^2]}$ and $(g'_t)_{t \in [0,r^2]}$ on $n$-dimensional manifolds $M$ and $M'$, respectively, as well as a smooth family of diffeomorphisms $(\chi_t)_{t \in [0,r^2]}$ between $M'$ and $M$ that satisfies the harmonic map heat flow equation
\[ \partial_t \chi_t = \triangle_{g'_t, g_t} \chi_t. \]
Set $h_t := (\chi^{-1}_t)^* g'_t - g_t$ as in (\ref{eq_associated_perturbation_appendix}) and assume that for some $x \in M$ the following bounds hold on $B(x, 0, \delta^{-1} r)$ for all $t \in [0,r^2]$:
\begin{enumerate}[label=(\roman*)]
\item \label{con_A.36_i} $B(x, 0, \delta^{-1} r)$ is relatively compact in $M$.
\item \label{con_A.36_ii} $|{\Rm}(g_t)| \leq A r^{-2}$.
\item \label{con_A.36_iii} $|\nabla^{g_t} \partial_t g_t| \leq A r^{-1}$.
\item \label{con_A.36_iv} $- \delta g_t \leq \partial_t g_t + 2 \Ric(g_t) \leq \delta g_t$.
\item \label{con_A.36_v} $- \delta g'_t \leq \partial_t g'_t + 2 \Ric(g'_t) \leq \delta g'_t$.
\item \label{con_A.36_vi} $|h_t| \leq \eta$.
\item \label{con_A.36_vii} $|h_0| \leq \delta $.
\end{enumerate}
Then $|h(x,t)| < \eps$ for all $t \in [0,r^2]$.
\end{proposition}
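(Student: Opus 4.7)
The strategy is a contradiction/compactness argument. I would assume the conclusion fails for some $\epsilon_0 > 0$ and $A < \infty$: then there exist sequences $\delta_k \to 0$, scales $r_k > 0$, metric families $(g^k_t)_{t \in [0, r_k^2]}$ on $n$-manifolds $M^k$, analogous target data $(g^{\prime k}_t)$ on $M^{\prime k}$, harmonic map heat flows $(\chi^k_t : M^{\prime k} \to M^k)$, and points $x_k \in M^k$, $t_k \in [0, r_k^2]$ satisfying hypotheses \ref{con_A.36_i}--\ref{con_A.36_vii} with $\delta = \delta_k$, but such that the perturbations $h^k_t := ((\chi^k_t)^{-1})^* g^{\prime k}_t - g^k_t$ satisfy $|h^k(x_k, t_k)|_{g^k_{t_k}} \geq \epsilon_0$. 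The aim is to extract a limit perturbation $h^\infty$ on a limit Ricci flow with $h^\infty_0 \equiv 0$, conclude $h^\infty \equiv 0$ by uniqueness, and contradict $|h^\infty(x_\infty, t_\infty)| \geq \epsilon_0$.

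After parabolically rescaling so that each $r_k = 1$, hypothesis \ref{con_A.36_ii} provides a uniform curvature bound on balls $B(x_k, 0, \delta_k^{-1})$ whose radii tend to infinity, while \ref{con_A.36_iv}--\ref{con_A.36_v} supply almost-Ricci-flow conditions whose defect tends to zero. Passing to a subsequence and pulling back via the exponential map at $x_k$ to handle possible injectivity radius degeneration, I would extract a pointed smooth limit $(M^\infty, (g^\infty_t)_{t \in [0, 1]}, x_\infty)$ which is complete and, in view of the vanishing defect, is an exact Ricci flow. Proposition~\ref{prop_drift_control} applied to $(\chi^k_t)$ controls the displacement $d_0(\chi^k_t(q), \chi^k_0(q))$ uniformly in $k$ for $q$ near $x_k$, so $\chi^k_0(x_k)$ yields a compatible sequence of basepoints in the target, producing a pointed limit $(M^{\prime \infty}, (g^{\prime \infty}_t), x'_\infty)$ and a smooth limit harmonic map heat flow $(\chi^\infty_t)$. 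Choosing $\eta$ small enough depending on $n$ that Lemma~\ref{lem_loc_gradient_estimate} applies, the uniform bound $|h^k| \leq \eta$ yields higher-derivative bounds on $h^k$ on compact subsets, so after a further subsequence $h^k \to h^\infty$ smoothly on compacta, with $h^\infty$ satisfying the exact Ricci-DeTurck perturbation equation (\ref{eq_Ricci_de_Turck}) on the limit flow.

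The crucial observation is that $h^\infty_0 \equiv 0$ on all of $M^\infty$: for any compact $K \subset M^\infty$, the convergence identifies $K$ with a subset of $B(x_k, 0, \delta_k^{-1})$ for $k$ large, on which $|h^k_0| \leq \delta_k \to 0$. Since $h^\infty$ is a bounded solution ($|h^\infty| \leq \eta$) of the exact Ricci-DeTurck equation on a complete Ricci flow of uniformly bounded curvature with $h^\infty_0 \equiv 0$, the standard maximum principle applied to $e^{-Ct} |h^\infty|^2$ using the evolution inequality (\ref{eq_RdT_norm_evolution}) (with $\delta = 0$) yields $h^\infty \equiv 0$ on $M^\infty \times [0, 1]$. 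After extracting a further subsequence with $t_k \to t_\infty \in [0, 1]$, the smooth convergence gives $|h^\infty(x_\infty, t_\infty)| \geq \epsilon_0$, contradicting $h^\infty \equiv 0$. The principal obstacle will be executing this compactness step rigorously: verifying that the pointed smooth limit of the domain flows $(M^k, g^k_t)$ is genuinely complete (not merely well-defined on bounded-radius balls), that compatible pointed convergence of the target flows $(M^{\prime k}, g^{\prime k}_t)$ and of the harmonic map heat flows $(\chi^k_t)$ can be arranged together as claimed, and that the rescaled almost-Ricci-flow parameter vanishes in the limit despite the possibly subtle scaling behavior of hypothesis \ref{con_A.36_iv}.
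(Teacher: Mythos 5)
Your compactness-and-contradiction strategy is a genuinely different route from the paper, which proves the statement \emph{directly} by a barrier/maximum-principle argument: after rescaling to $r=1$, the paper shows via (\ref{eq_RdT_norm_evolution}) that $u_t := e^{-C_1 t}(|h_t|^2 + \delta)$ is a subsolution of a linear parabolic operator, constructs (as in Lemma~\ref{lem_promote_u_bound}) a barrier $w'$ on $B(x,0,\delta^{-1})$ with $w'(x)=0$ and $w'>1$ near the boundary while $(g_t+h_t)^{ij}\nabla^2_{ij} w' \lesssim \delta$, and concludes $u(x,t) \leq C\delta$, which gives $|h(x,t)| < \eps$ once $\delta$ is small. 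The key geometric mechanism is that the spatial ball (radius $\delta^{-1}$ after rescaling) is vastly larger than the parabolic scale of the time interval $[0,1]$, so boundary influence at $x$ is swamped by $\delta$.

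The obstacles you flag at the end of your proposal are precisely where your route runs into trouble, and I do not think they are cosmetic. First, the hypotheses contain no noncollapsing or injectivity-radius assumption. Pulling back by the exponential map to handle injectivity radius degeneration only yields, under the curvature bound $|{\Rm}| \leq A$, a flow on a ball of radius roughly $\pi/\sqrt{A}$ --- a radius that is \emph{bounded independently of $\delta_k$}, not one that goes to $\infty$. On such a bounded ball, knowing $h^\infty_0 \equiv 0$ together with $|h^\infty| \leq \eta$ does \emph{not} force $h^\infty(x_\infty,t)=0$ for $t$ up to $1$: boundary influence over a time interval comparable to the squared radius of the ball need not be negligible, and you have no control over $h^\infty$ on the lateral boundary. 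To rule this out you would have to run exactly the kind of localized barrier/cutoff argument that the paper uses directly, which defeats the purpose of the blowup. Second, the almost-Ricci-flow defect in hypotheses (iv)--(v) does not obviously vanish after parabolic rescaling: under $\tilde g_t := r_k^{-2} g_{r_k^2 t}$ the defect bound $\delta_k g$ rescales to $\delta_k r_k^2 \tilde g$, which need not tend to zero if $r_k \to \infty$. (Internally consistent scaling of hypotheses (iii)--(v), and comparison with the way the proposition is invoked in the proof of Proposition~\ref{Prop_extend_phi_to_t_star} via Claim~5(e) and (g) there, suggests the intended bounds are $A r^{-3}$ and $\delta r^{-2} g$; with those, your rescaling heuristic would be correct, but as written this is a genuine gap in your argument, not in the paper's, since the paper's barrier argument works verbatim once the $r=1$ normalization is meaningful.) In short, even if both concerns are eventually overcome, the compactness wrapper ends up reproducing a barrier argument in the limit; the paper simply performs that barrier argument on the original data and avoids any compactness, collapsing, or completeness discussion entirely.
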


\begin{proof}
By parabolic rescaling we may assume $r = 1$.
The constant $\delta$ will be chosen in the course of the proof.
In the following, we will always assume that $\delta \leq 1$.

By assumptions \ref{con_A.36_ii}, \ref{con_A.36_iv}--\ref{con_A.36_vi} and (\ref{eq_RdT_norm_evolution}), and assuming that $\eta$ is smaller than some dimensional constant, we can find a constant $C_1 = C_1 (A, n) < \infty$ such that $h_t$ satisfies the evolution inequality
\begin{equation*} \label{eq_h_PDE_condensed}
 \partial_t |h_t|^2 \leq   (g_t+h_t)^{ij} \nabla_{ij}^2 |h_t|^2  + C_1 \delta   + C_1   |h_t|^2 . 
\end{equation*}
on $B(x, 0, \delta^{-1} ) \times [0,1]$.
So
\begin{equation} \label{eq_u_def_appendix}
 u_t := e^{-C_1 t} \big( |h_t|^2 +  \delta \big) 
\end{equation}
satisfies the evolution inequality
\[ \partial_t u_t \leq   (g_t+h_t)^{ij} \nabla_{ij}^2 u. \]

We will now derive a bound on $u$ by an argument that is analogous to the proof of Lemma~\ref{lem_promote_u_bound}.
Fix a non-decreasing function $f : [0, 1] \to [0, \infty)$ such that $f \equiv 0$ on $[0, \frac14 ]$ and $f(\frac12) > 1$.
Set $w (y) := f (\delta \cdot d (x, y) )$.
By Hessian comparison and assumption \ref{con_A.36_ii} we obtain that at any $y \in B(x, 0, \delta^{-1})$ in the barrier sense, for $d := d_{0} (x,y)$,
\[ \nabla^{2, g_0} w \leq C_3 \bigg( \delta^2 f'' ( \delta \cdot d ) + C_2 \frac{\cosh (C_{2} d)} {\sinh (C_2 d)} \cdot \delta f' ( \delta \cdot d ) \bigg) g_0 \leq C_4 \cdot \delta, \]
where $C_i = C_i (A, n) < \infty$ for $i = 2,3,4$.

As $|\nabla^{g_0} w | \leq C_{5 } \delta$ for some $C_{5} = C_{5} (A, n) < \infty$, we can argue similarly as in the proof of Lemma~\ref{lem_promote_u_bound}, using assumption \ref{con_A.36_iii}, that there is a smoothing $w' \in C^\infty (B (x, 0, \delta^{-1}))$ of $w$ such that $w' > 1$ on $B(x, 0,  \delta^{-1}) \setminus B(x, 0, \frac12 \delta^{-1})$ and
\[  (g_t + h_t)^{ij} \nabla^{2, g_t}_{ij} w' < C_{6} \cdot \delta \qquad \text{on} \qquad B(x,0,\delta^{-1}) \quad \text{for all} \quad t \in [0,1],  \]
where $C_{6} = C_{6} (A, n) < \infty$.
Compare this inequality with (\ref{eq_laplace_w_bound}).
We can now argue similarly as for (\ref{eq_max_principle_u_claim}) to show that for all $\eps' > 0$ and $t \in [0,1]$
\[  u_t < w' + C_{6} \delta \cdot t + \sup_{B(x,0,\delta^{-1})} u_0 + \eps'.  \]

So by assumption \ref{con_A.36_vii} and letting $\eps' \to 0$, we obtain that for all $t \in [0,1]$
\[ u (x,t) \leq C_5 \delta + \delta + C_1 \delta \leq C_6 \delta. \]
The proposition now follows using (\ref{eq_u_def_appendix}) if $\delta$ is chosen small depending on $A, n$.
\end{proof}

\section{Properties of Bryant solitons} \label{appx_Bryant_properties}
In this appendix we discuss properties of the (normalized) Bryant soliton $(M_{\Bry}, \lb g_{\Bry})$ that are needed in this paper.
In the following we denote by $x_{\Bry} \in M_{\Bry}$ the tip of the Bryant soliton, i.e. the center of rotational symmetry, and denote by $\si :=d_{g_{\Bry}}(\cdot,x_{\Bry})$ the distance function from the tip.

\begin{lemma}[Properties of the Bryant soliton] \label{lem_properties_Bryant}
\label{lem_bryant_geometry}
There is a rotationally  symmetric potential function $f \in C^\infty (M_{\Bry})$ such that 
\begin{align}
\label{eq_soliton_equation} \Ric + \nabla^2 f &\equiv 0 \\
\label{eqn_soliton_conserved} R+|\nabla f|^2 & \equiv R(x_{\Bry}) \\
\label{eqn_soliton_dr} dR &=2\Ric(\nabla f,\cdot)
\end{align}
Moreover, there is a constant $C_B < \infty$ such that the following holds:
If $\si> C_B$, then
\begin{align}
C_B^{-1} \sigma^{-1} &< R < C_B \sigma^{-1} \label{eq_R_asymptotic_Bryant} \\
\Ric &> C_B^{-1}  \si^{-2} g_{\Bry} \quad\text{as quadratic forms}\label{eq_Ric_Bry_lower_bound} \\
- \partial_\sigma R &> C_B^{-1} \sigma^{-2} \label{eq_lower_partial_R_bound} \\
|\nabla f| &< C_B  \label{eq_nab_f_growth} \\
|\nabla R|, |\nabla^2 \Rm |, |\nabla^3 \Rm | &< C_B \label{eq_higher_der_bound_Bry}
\end{align}
The metric $g_{\Bry}$ is a warped product of the form $g_{\Bry} = d\sigma^2 + w^2 (\sigma) g_{S^2}$ such that for $\sigma > C_B$
\begin{equation} \label{eq_w_bound_sqrt}
 C_B^{-1} \sqrt{\sigma} < w (\sigma) < C_B \sqrt{\sigma}. 
\end{equation}
Moreover, for any $\sigma_0 > C_B$  if we consider the normalized function and parameter
\[ \ov{w} := \frac{w}{w(\sigma_0)}, \qquad \ov{\sigma} := \frac{\sigma - \sigma_0}{w(\sigma_0)}, \]
 and if we express $\ov{w}$ in terms of $\ov\sigma$, then for all $\ov\sigma \in [-1, 1]$
\begin{equation} \label{eq_td_w_bounds}
 |\ov{w} (\ov\sigma ) - 1 |,  \quad \bigg| \frac{d \ov{w}}{d \ov\sigma} \bigg| , \quad \bigg| \frac{d^2 \ov{w}}{d \ov\sigma^2} \bigg| < C_B \sigma_0^{-1/2}. 
\end{equation}
In other words, $w^{-2} (\sigma_0) g_{\Bry}$ is geometrically $C^2$-close to a piece of a round cylinder on $M_{\Bry} (\sigma_0 -  w(\sigma_0), \sigma_0 + w(\sigma_0))$.
\end{lemma}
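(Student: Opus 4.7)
The plan is to break the lemma into two conceptually distinct parts: the universal soliton identities (B.2)--(B.4), which hold for any steady gradient Ricci soliton, and the quantitative asymptotic estimates (B.5)--(B.12), which require a careful ODE analysis of the rotationally symmetric steady soliton equation. For the first part, (B.2) is by definition of the potential function $f$ furnished by Bryant's construction (which also gives rotational symmetry about $x_{\Bry}$). Equation (B.3) follows by taking the trace of (B.2) to obtain $R+\triangle f=0$, differentiating along $\nabla f$, and using Hamilton's identity $\nabla R = 2\Ric(\nabla f,\cdot)$ valid for any gradient Ricci soliton; this identity itself is (B.4), which follows from applying the contracted second Bianchi identity to $\nabla^2 f = -\Ric$ together with the commutation $\nabla_i\nabla_j\nabla^j f - \nabla_j\nabla_i\nabla^j f = R_{ij}\nabla^j f$.

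For the asymptotic estimates, I would use rotational symmetry to write $g_{\Bry} = d\sigma^2 + w^2(\sigma)g_{S^2}$ and note that $f$ depends only on $\sigma$. The soliton equation then reduces to a system of ODEs in $w(\sigma)$, $w'(\sigma)$ and $f'(\sigma)$: the radial equation gives $-2w''/w + (f')^2$-type terms, the spherical equation involves $1-(w')^2-ww''+ww'f'$, and the conservation law (B.3) provides a first integral. The standard phase-plane analysis (carried out in Bryant's original paper, and reproduced in Chow--Lu--Ni or the elementary treatment in Appleton cited in the paper) shows that there exists a unique complete solution on $[0,\infty)$ with $w(0)=0$, $w'(0)=1$, and that this solution has the asymptotic behavior $w(\sigma)\sim c\sqrt{\sigma}$, $w'(\sigma)\sim \tfrac{c}{2\sqrt{\sigma}}$, $f'(\sigma)\to -c_0$ for some positive constants.

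Granting these asymptotics, the quantitative bounds follow by routine computation. From the warped product formulas one has $R = -2\big(\tfrac{2w''}{w} + \tfrac{(w')^2 - 1}{w^2}\big)$; inserting $w\sim c\sqrt\sigma$ gives the reciprocal decay $R\sim c'/\sigma$ (hence (B.5)), and the sectional curvatures in the two planes spanned by $\partial_\sigma$ and a sphere direction are each positive and of order $\sigma^{-1}$, which yields (B.6). Differentiating $R$ in $\sigma$ produces the sign and rate in (B.7). The boundedness (B.8) of $|\nabla f|$ comes from $f'\to -c_0$, and (B.9) follows from the same conservation identity together with Shi-type derivative estimates applied in the approximately cylindrical region: once $w\sim\sqrt{\sigma}$ and curvature is $O(\sigma^{-1})$, all higher covariant derivatives of $\Rm$ are uniformly bounded. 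Equation (B.10) is a direct integration of the asymptotic estimate for $w$.

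The final assertion (B.11)--(B.12) is then a rescaling exercise: on $M_{\Bry}(\sigma_0 - w(\sigma_0), \sigma_0 + w(\sigma_0))$ we have $|\sigma - \sigma_0|\leq w(\sigma_0) \sim \sqrt{\sigma_0}$, so the relative variation of $\sigma$ is $O(\sigma_0^{-1/2})$; applying Taylor's theorem to $w$ and $w'$ around $\sigma_0$ with the decay estimates $|w'|=O(\sigma_0^{-1/2})$, $|w''|=O(\sigma_0^{-3/2})$ from the ODE yields the $C^2$ bounds in (B.12) after normalization. The main obstacle I anticipate is not any single estimate but rather bookkeeping the ODE asymptotics with explicit enough constants to extract the single uniform $C_B$; the cleanest route is to quote the asymptotic expansion of the Bryant soliton from \cite{Bryant:gVzfj4nx} or \cite{Appleton:2017vf} and then verify each inequality in turn. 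Since the paper uses $C_B$ only qualitatively, no sharp constants are needed, and the entire verification reduces to inspecting the leading-order asymptotics.
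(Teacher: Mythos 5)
Your proposal matches the paper's proof in structure and substance: (B.2)--(B.4) are the standard gradient steady soliton identities (with the constant in (B.3) pinned down by evaluating at the tip where $\nabla f = 0$), and the quantitative bounds are read off from Bryant's asymptotics $w\sim c\sqrt{\sigma}$ and the curvature decay rates, with (B.9) from Shi's estimates and (B.11)--(B.12) via a rescaled Taylor expansion exactly as in the paper. One small slip worth noting: Bryant's theorem gives the radial sectional curvature $K_R\sim\sigma^{-2}$ and the orbital $K_O\sim\sigma^{-1}$, not both of order $\sigma^{-1}$ as you wrote; this is harmless for the lower bound (B.6), but the $\sigma^{-2}$ rate, coming from $\Ric(\partial_\sigma,\partial_\sigma)=2K_R$, is exactly what makes (B.6) tight, and it is also what drives the paper's clean derivation of (B.7) via $\partial_\sigma R = 2\Ric(\nabla f,\partial_\sigma)$ together with (B.8).
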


\begin{proof}
Identities (\ref{eqn_soliton_conserved}) and (\ref{eqn_soliton_dr}) are standard bounds for a complete gradient steady soliton with bounded curvature, where $f$ satisfies the steady gradient soliton potential equation (\ref{eq_soliton_equation}).
For (\ref{eqn_soliton_conserved}) observe that the left-hand side is constant and $|\nabla f| = 0$ at $x_{\Bry}$.
Identity (\ref{eq_nab_f_growth}) is a direct consequence of (\ref{eqn_soliton_conserved}).

By \cite[Theorem~1]{Bryant:gVzfj4nx} we know that $w \sim c_1\sqrt{\sigma}$ for large $\sigma$ and that the radial and orbital sectional curvatures, $K_R, K_O$ behave like
\[ K_R \sim c_2 \sigma^{-2}, \qquad K_O \sim c_3 \sigma^{-1}. \]
Here $c_1, c_2, c_3$ are positive constants that depend on the normalization of $g_{\Bry}$.
The bounds (\ref{eq_R_asymptotic_Bryant}), (\ref{eq_w_bound_sqrt}) and (\ref{eq_Ric_Bry_lower_bound}) follow immediately.
It also follows that $R$ decays to $0$ at infinity and, therefore, by (\ref{eqn_soliton_conserved}) we have $|\nabla f|^2 \to R(x_{\Bry})$ at infinity.
Combining this with (\ref{eqn_soliton_dr}) yields (\ref{eq_lower_partial_R_bound}).
The bound (\ref{eq_higher_der_bound_Bry}) follows by Shi's estimates.

Lastly, by the Jacobi equation, we obtain
\[ w'' = - K_R w \sim - c_1 c_3 \sigma^{-3/2}. \]
Integrating this bound and using (\ref{eq_w_bound_sqrt}), we obtain that $w' \lb \sim \lb 2 c_1 c_3 \sigma^{-1/2}$.
Rescaling both bounds by $w(\sigma_0)$ implies the bounds on the second and third term in (\ref{eq_td_w_bounds}).
The first bound follows by integration over $[-1,1]$ and observing that $\ov{w} (0) = 1$.
\end{proof}

\section{Properties of \texorpdfstring{$\kappa$}{{\textbackslash}kappa}-solutions}
\label{app_kappa_solution_properties}
In this section we discuss properties of $\kappa$-solutions (Definition~\ref{def_kappa_solution}) that are needed in the paper.  We remind the reader that we are using the curvature scale function $\rho$ from Definition~\ref{def_curvature_scale}.

\begin{lemma}\label{lem_kappa_solution_properties_appendix}
3-dimensional $\kappa$-solutions have the following properties:
\begin{enumerate}[label=(\alph*)]
\item \label{ass_C.1_a} There is a  $ \kappa_0>0$ such that every 3-dimensional $\kappa$-solution $(M^3, (g_t)_{t \in (-\infty,0]})$ is either a $\kappa_0$-solution or  a shrinking round spherical space form.
\item \label{ass_C.1_b} (Compactness)  For any $\ul\kappa>0$, the collection of pointed $\kappa$-solutions $(M^3, \lb (g_t)_{t \in (-\infty,0]}, \lb x)$ with $\kappa\geq\ul\kappa$ and $R(x,0)=1$ is compact in the pointed smooth topology.
\item \label{ass_C.1_c} For every $A<\infty$ there is a constant $C=C(A)<\infty$ such that for any point $(x,t)$ in a $\kappa$-solution $(M^3, (g_t)_{t \in (-\infty,0]})$, we have
$$
C^{-1}\rho(x,t)\leq \rho\leq C\rho(x,t)
$$
on the parabolic neighborhood $P(x,t,A\rho(x,t))$.
\item \label{ass_C.1_d} For every $k,l$, $A<\infty$ there is a constant $C=C(A,k,l)<\infty$ such that for any point $(x,t)$ in a $\kappa$-solution $(M^3, (g_t)_{t \in (-\infty,0]})$ we have $$|\D^k_t\nabla^l \Rm|\leq C_{k,l} \rho^{-2-2k-l} (x,t) $$ on the parabolic neighborhood $P(x,t,A\rho(x,t))$. 
\item \label{ass_C.1_e} $\D_tR\geq 0$ and $-C\leq \D_tR^{-1}\leq 0$ on every $\kappa$-solution for some universal constant $C < \infty$.
\item \label{ass_C.1_f} The shrinking round cylinder is the only 3-dimensional $\kappa$-solution with more than one end.
\end{enumerate}
\end{lemma}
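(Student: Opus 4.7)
The plan is to establish these properties roughly in the listed order, since (a) and (b) are the foundational tools on which (c)--(e) rest by a compactness-contradiction scheme, while (f) requires a separate splitting argument. For (a), I would treat the compact and non-compact cases separately. In the compact case, the strong maximum principle applied to the evolution equation of $\Ric$ on a non-negatively curved ancient solution forces $\Ric>0$ (unless the solution splits locally, which is excluded by compactness), and Hamilton's theorem on compact 3-manifolds with positive Ricci curvature then implies $M$ is a spherical space form and the flow is the shrinking round family. In the non-compact case, the claim is Perelman's universal $\kappa_0$-noncollapsing: any non-compact 3-dimensional $\kappa$-solution is automatically $\kappa_0$-noncollapsed, proved via dimension reduction --- a blow-down of a non-compact $\kappa$-solution splits off lines (using Toponogov with non-negative curvature), reducing to the 2-sphere as asymptotic cross-section, from which a uniform volume-ratio bound can be extracted.

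The technical heart is (b). The main inputs are: (i) \emph{bounded curvature at bounded distance} on a single time-slice of a $\kappa$-solution, namely that if $R(x,0)=1$ then $R(\cdot,0)$ is bounded on $B(x,0,r)$ by a function of $r$ alone; (ii) Shi-type bounds on $\nabla^l\Rm$ derived from (i) combined with the non-negative curvature operator; and (iii) a uniform injectivity radius lower bound coming from $\kappa\geq\underline\kappa$ and volume comparison. Given these, standard Cheeger-Gromov-Hamilton compactness extracts a smooth pointed subsequential limit; I would verify by closedness that the limit is itself a $\kappa$-solution (non-negative curvature operator, completeness, ancient time interval with uniform curvature bounds, $\kappa$-noncollapsedness all pass to the limit). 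The proof of (i) is the place where I expect the main obstacle: it combines a point-picking argument, Hamilton's trace Harnack inequality $\partial_t R+\langle\nabla R, X\rangle+2\Ric(X,X)\geq 0$, and the fact that rescaled limits around ``far-away'' points must split as round cylinders; the contradiction comes from the fact that such cylindrical behaviour is incompatible with the persistence of a large curvature ratio.

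Properties (c), (d), and (e) then follow by compactness and contradiction once (a) and (b) are in hand. For (c), a sequence of putative counterexamples --- points $(x_k,t_k)$ in $\kappa_k$-solutions with $\kappa_k\geq\kappa_0$ violating the scale ratio over $P(x_k,t_k,A\rho(x_k,t_k))$ --- can, after parabolic rescaling so that $R(x_k,t_k)=1$ and a time-translation, be passed to a smooth limit $\kappa$-solution by (b); on the limit, smoothness and the compactness of the relevant parabolic cylinder force a uniform scale-ratio bound, contradicting the sequence. For (d), the same rescaling reduces to the smooth limit, where Shi's estimates and standard local bounds yield the claim. For the first part of (e), $\partial_t R\geq 0$ is immediate from Hamilton's trace Harnack inequality on ancient solutions with non-negative curvature operator; the sharp bound $\partial_t R^{-1}\geq -C$ is equivalent to a uniform upper bound on $\partial_t R/R^2$ at the basepoint of a normalized $\kappa$-solution, which is a continuous function on the compact set from (b) and hence attains a finite maximum.

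Finally, (f) is handled by a splitting argument. If $(M,(g_t))$ is a non-compact 3-dimensional $\kappa$-solution with at least two ends, then for each $t$ the time-$t$ slice $(M,g_t)$ contains a line: one builds it by choosing divergent sequences in distinct ends and using the non-negative curvature, together with compactness, to extract a bi-infinite minimizing geodesic. The Cheeger-Gromoll splitting theorem then yields $(M,g_t)\cong(\Sigma^2,\bar g_t)\times \R$; the factor $\Sigma^2$ carries a 2-dimensional ancient $\kappa$-solution by restriction, and by Hamilton's classification in dimension $2$, it must be the shrinking round 2-sphere. Hence $M$ is a shrinking round cylinder, as claimed. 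The main danger here is dependence of the splitting on $t$, which is resolved by noting that the splitting is rigid: the $\R$-factor is parallel and preserved by the Ricci flow, so the time-slices are all cylindrical and share the same splitting direction.
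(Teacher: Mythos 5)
Your overall route matches the paper's, which disposes of this lemma in two sentences: it simply cites Perelman (Section~11 of \cite{Perelman:2002um}) and Kleiner--Lott (Sections~38--51 of \cite{Kleiner:2008fh}) for (a), (b), (f), and the first half of (e), and then observes that (c), (d), and the second half of (e) follow immediately from the compactness in (b). Your compactness-and-contradiction scheme for (c), (d), and the $\partial_t R^{-1}\geq -C$ bound, and your appeal to the trace Harnack inequality for $\partial_t R\geq 0$, are exactly what the paper has in mind. Your sketch of (b) --- bounded curvature at bounded distance plus Shi estimates plus injectivity radius lower bound plus Cheeger--Gromov--Hamilton compactness --- is also the standard argument underlying the cited sources.

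There is, however, a genuine gap in your treatment of (a) in the compact case. You assert that $\Ric>0$ together with Hamilton's theorem on compact $3$-manifolds of positive Ricci curvature ``implies $M$ is a spherical space form and the flow is the shrinking round family.'' Hamilton's theorem gives convergence of the normalized flow to a round metric at the forward singular time, which yields the topological conclusion, but it does not force the metric to be round at any earlier time, let alone for the whole ancient flow. Indeed there are compact $\kappa$-solutions with strictly positive curvature that are not round at any finite time (Perelman's ``oval'' examples on $S^3$), so the implication you state is false. What assertion (a) actually claims is a \emph{dichotomy}: for a universal $\kappa_0$, a compact $\kappa$-solution either is $\kappa_0$-noncollapsed or is the shrinking round spherical space form. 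Establishing this requires Perelman's asymptotic-soliton argument: if the noncollapsing constant degenerates along a sequence, one extracts a backward parabolic rescaling limit that is a nonflat gradient shrinking soliton, which in the compact positively curved $3$-dimensional case is round; a forward rigidity argument then shows the original solution was round at all times. Your sketch omits this mechanism entirely and would not, as written, produce the required dichotomy; it would at best recover the topology. The analogous step is also what drives your noncompact ``dimension reduction'' sketch, and you should make it explicit there too, since the blow-down/splitting argument is precisely the route to identifying the asymptotic soliton as a cylinder.
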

\begin{proof}
For assertions \ref{ass_C.1_a}, \ref{ass_C.1_b}, \ref{ass_C.1_f}, and the first part of \ref{ass_C.1_e}, see \cite[Section 11]{Perelman:2002um} or \cite[Sections 38-51]{Kleiner:2008fh}.  Assertions \ref{ass_C.1_c}, \ref{ass_C.1_d}, and the second part of \ref{ass_C.1_e} follow immediately from the compactness assertion \ref{ass_C.1_b}.
\end{proof}

The following lemma  is a variation on the geometric definition of canonical neighborhoods used by Perelman in \cite[Subsections 1.5, 4.1]{Perelman:2003tka}.

\begin{lemma} \label{lem_neck_or_cap_kappa_solution}
For every $\delta > 0$ there is a constant $C_0=C_0(\delta) < \infty$ such that the following holds.

If $(M, (g_t)_{t \in (-\infty,0]})$ is a 3-dimensional $\kappa$-solution and $(x,t) \in M \times (-\infty, 0]$, then one of the following holds:
\begin{enumerate}[label=(\alph*)]
\item \label{ass_C.2_a} The point $(x,t)$ is the center of a $\delta$-neck at scale $\rho(x,t)$.
\item \label{ass_C.2_b} There is a compact, connected domain $V \subset M$ with connected (possibly empty) boundary such that the following holds:
\begin{enumerate}[label=(\arabic*)]
\item  $B(x,t,\delta^{-1} \rho(x)) \subset V$.
\item $\rho (y_1,t) <  C_0 \rho(y_2,t)$ for all $y_1, y_2 \in V$.
\item $\diam_t V < C_0 \rho(x,t)$.
\item If $\D V \neq \emptyset$, then: 
	\ben
	\item $\partial V$ is a central $2$-sphere of a $\delta$-neck at time $t$.
	\item Either $V$ is  a 3-disk and $(M,g_t)$ has  strictly positive sectional curvature, or $V$ is diffeomorphic  to a twisted interval bundle over $\R P^2$ and $(M, g_t)$ is a $\Z_2$-quotient of the shrinking round cylinder.
	\item Any two points $z_1, z_2 \in \partial V$ can be connected by a continuous path inside $\partial V$ whose length is less than \[ \min \{ d_t (z_1, x),  d_t (x, z_2) \} - 110 \rho (x,t). \]
	\een
\end{enumerate}
\end{enumerate}
\end{lemma}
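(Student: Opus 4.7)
My plan is to prove the lemma by a standard compactness-and-contradiction argument, reducing the analysis to a classification of the limiting $\kappa$-solution. The key enabling tools are the compactness of $\kappa_0$-solutions (Lemma~\ref{lem_kappa_solution_properties_appendix}\ref{ass_C.1_b}), the fact that the shrinking round cylinder is the only $\kappa$-solution with more than one end (Lemma~\ref{lem_kappa_solution_properties_appendix}\ref{ass_C.1_f}), and the fact that up to a spherical space form, every $\kappa$-solution is a $\kappa_0$-solution (Lemma~\ref{lem_kappa_solution_properties_appendix}\ref{ass_C.1_a}).

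First I would fix $\delta>0$ and suppose for contradiction that for each $k\geq 1$ there is a $\kappa_k$-solution $(M_k,(g_{k,t}))$ with a marked point $(x_k,t_k)$ such that $(x_k,t_k)$ is neither the center of a $\delta$-neck at scale $\rho(x_k,t_k)$, nor the center of a compact domain $V_k$ satisfying (1)--(4) of assertion \ref{ass_C.2_b} with $C_0=k$. If infinitely many of the $M_k$ are shrinking round spherical space forms, the conclusion is immediate after rescaling to $\rho(x_k,t_k)=1$: taking $V_k=M_k$ gives (1)--(3), (4) with universal constants, a contradiction. Otherwise, by Lemma~\ref{lem_kappa_solution_properties_appendix}\ref{ass_C.1_a}, I may assume each $(M_k,(g_{k,t}))$ is a $\kappa_0$-solution. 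Rescaling so that $\rho(x_k,t_k)=1$ and time-shifting so that $t_k=0$, the compactness assertion \ref{ass_C.1_b} provides a subsequential limit $(M_\infty,(g_{\infty,t}),x_\infty)$ with $\rho(x_\infty,0)=1$, which is itself a $\kappa_0$-solution, and by construction $(x_\infty,0)$ is not the center of a $\delta$-neck at scale $1$.

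Next I would classify the limit $(M_\infty,g_{\infty,0},x_\infty)$ and read off the domain $V_\infty$. By Lemma~\ref{lem_kappa_solution_properties_appendix}\ref{ass_C.1_f} the manifold $M_\infty$ has at most one end, since otherwise it would be a round shrinking cylinder and then every point would be a center of a $\delta$-neck. If $M_\infty$ is compact, then by compactness of the class of $\kappa_0$-solutions its diameter and the oscillation of $\rho(\cdot,0)$ are bounded by universal constants; taking $V_\infty=M_\infty$ gives (1)--(3) with empty boundary, and transferring via the approximating diffeomorphisms yields $V_k$ on $M_k$ satisfying (1)--(4) with universal $C_0$, a contradiction. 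If $M_\infty$ is noncompact with exactly one end, then far down that end $g_{\infty,0}$ is asymptotically cylindrical (by the compactness argument, otherwise one extracts a second end or a second $\kappa$-solution limit). Hence for a suitably large threshold one can find a $\delta$-neck with central $2$-sphere $\Sigma_\infty$, bounding a compact cap $V_\infty\ni x_\infty$. Topologically $V_\infty$ is either a $3$-disk or a twisted $I$-bundle over $\R P^2$; in the latter case the orientation double cover splits an $\R$-factor and forces $M_\infty$ to be a $\Z_2$-quotient of the shrinking round cylinder by Lemma~\ref{lem_kappa_solution_properties_appendix}\ref{ass_C.1_f}; in the former case the soul theorem applied to the nonnegatively curved $(M_\infty,g_{\infty,0})$ combined with the noncylindrical property forces strictly positive sectional curvature. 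Approximating diffeomorphisms then produce $V_k\subset M_k$ satisfying (1)--(3), (4a), (4b).

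The main obstacle is arranging property (4c): any two points $z_1,z_2\in\partial V$ must be joinable within $\partial V$ by a curve of length at most $\min\{d(z_1,x),d(x,z_2)\}-110\rho(x)$. The diameter of $\Sigma_\infty$ is comparable to $\rho(x_\infty)$, so the required slack of $110\rho(x_\infty)$ must be extracted by pushing $\Sigma_\infty$ sufficiently far down the end of $M_\infty$ before cutting. To do this, I would use that on a noncompact $\kappa_0$-solution with one end and cylindrical asymptotics, the distance from $x_\infty$ to points on a central $2$-sphere at neck parameter $s$ grows linearly in $s$, while the diameter of the cross-sectional sphere stays bounded; this gives room to choose $\Sigma_\infty$ with $d(x_\infty,\Sigma_\infty)$ arbitrarily large (depending only on $\delta$) relative to $\diam\Sigma_\infty$, making (4c) automatic. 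The diameter bound $\diam_t V<C_0\rho(x,t)$ then determines $C_0=C_0(\delta)$. Smooth convergence transfers all these properties to $V_k\subset M_k$ for large $k$, contradicting the assumption that no such $V_k$ exists with $C_0=k$, and completing the proof.
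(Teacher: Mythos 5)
Your overall strategy is the same as the paper's: normalize $\rho(x_k,0)=1$, invoke compactness of $\kappa_0$-solutions after disposing of the round spherical space forms, pass to a limiting $\kappa$-solution $(M_\infty, g_{\infty,\cdot}, x_\infty)$ that is neither compact nor cylindrical, and read off the cap structure on the limit, then transfer it back to $M_k$ for large $k$. The difference is that the paper, at precisely the step where the structure of the noncompact noncylindrical limit is needed, simply cites \cite[Lemma~59.1]{Kleiner:2008fh} together with its proof and the preceding discussion, which already packages the asymptotic cylinder, the $3$-disk versus twisted-$I$-bundle-over-$\R P^2$ dichotomy, and the associated curvature conclusions. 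You instead attempt to re-derive this structure from scratch, and this is where you run into trouble.

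The concrete gap is in your positivity step: you claim that ``the soul theorem applied to the nonnegatively curved $(M_\infty,g_{\infty,0})$ combined with the noncylindrical property forces strictly positive sectional curvature.'' The soul theorem only constrains topology; it cannot rule out a nonnegatively-but-not-strictly-positively curved metric on $\R^3$. The correct mechanism is Hamilton's strong maximum principle for the curvature operator along the Ricci flow: if some sectional curvature vanishes at one spacetime point of a $\kappa$-solution, the flow locally splits an $\R$-factor, and then (via the de~Rham decomposition of the universal cover and the classification of two-ended $\kappa$-solutions, Lemma~\ref{lem_kappa_solution_properties_appendix}\ref{ass_C.1_f}) the solution is a quotient of the round shrinking cylinder, contradicting the noncylindrical hypothesis. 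Likewise, your one-line justification that the end is ``asymptotically cylindrical (by the compactness argument, otherwise one extracts a second end\ldots)'' is too compressed to stand as a proof; it is exactly the content of the structure theory the paper outsources to Kleiner--Lott. On the positive side, your explicit treatment of property (4c) -- pushing the cut far enough down the cylindrical end so that $\min\{d(z_1,x),d(x,z_2)\}$ dominates $\diam\partial V + 110\rho(x)$ -- is a genuine improvement over the paper's proof, which asserts only that $V_\infty$ yields a domain satisfying items (1)--(3) and the topology/positivity dichotomy, and leaves (4c) implicit.
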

\begin{proof}
Suppose the lemma were false for some $\delta>0$.  
Then there exists a sequence $(M_k, (g_{k,t})_{t \in (-\infty,0]}, x_k)$ of pointed $\kappa_{k}$-solutions and a sequence $C_k\ra\infty$ such the conclusion of the lemma fails at time $0$ for all $k$, where $C_0$ is replaced by $C_k$.  Without loss of generality, we may assume that $\rho(x_k,0)=1$  for all $k$.  
Since the conclusion holds for shrinking round spherical space forms when $C_k$ is sufficiently large, we may assume by assertion \ref{ass_C.1_a} of Lemma~\ref{lem_kappa_solution_properties_appendix} that $\kappa_k \geq \kappa_0 > 0$ for large $k$.
So by assertion \ref{ass_C.1_b} of Lemma~\ref{lem_kappa_solution_properties_appendix}, we may assume that, after passing to a subsequence, the sequence $\{(M_k , (g_{k,t})_{t \in (-\infty,0]},x_k)\}$ converges to a pointed $\kappa$-solution $(M_\infty, (g_{\infty, t})_{t \in (-\infty, 0]},x_\infty)$ in the pointed smooth topology.  
Note that $M_\infty$ must non-compact, and $(M_\infty, \lb (g_t)_{t \in (-\infty, 0]})$ cannot be a shrinking round cylinder, since otherwise assertion \ref{ass_C.2_a} or \ref{ass_C.2_b} will hold for large $k$, contradicting our assumptions.  
Now by \cite[Lemma 59.1]{Kleiner:2008fh}, its proof, and the discussing preceding the statement of that lemma,  there is a compact manifold with boundary $V_\infty\subset M_\infty$ such that $B(x_\infty, 0, 2 \delta^{-1} \rho(x_\infty, 0)) \subset V_\infty$, the boundary $\D V_\infty$ is the central $2$-sphere of a $\frac12 \delta$-neck at time $t$, and either $V_\infty$ is diffeomorphic to a $3$-disk and $(M_\infty , (g_t)_{t \in (-\infty, 0]})$ has strictly positive sectional curvature, or $V_\infty$ is diffeomorphic to a twisted interval bundle over $\R P^2$ and $(M_\infty, (g_t)_{t \in (-\infty, 0]})$ is isometric to a $\Z_2$-quotient of a shrinking round cylinder.  Now for large $k$, the domain $V_\infty$ yields a compact domain with boundary satisfying assertions (i)-(iii).  This is a contradiction.  
\end{proof}

\begin{proposition} \label{prop_dtR_0_Bryant}
Let $(M, (g_t)_{t \in (- \infty, 0]} )$ be $3$-dimensional $\kappa$-solution.
If  $\partial_t R (p, 0) = 0$ for some $p \in M$, then modulo parabolic rescaling there is a pointed isometry of Ricci flows 
$$
(M, (g_t)_{t \in (- \infty, 0]},p )\ra (M_{\Bry},(g_{\Bry,t})_{t \in (- \infty, 0]},x_{\Bry} )\,.
$$
\end{proposition}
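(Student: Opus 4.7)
The plan is to combine Hamilton's rigidity theorem for the trace Harnack inequality with Brendle's classification of three-dimensional $\kappa$-noncollapsed steady gradient Ricci solitons. The hypothesis $\D_t R(p,0) = 0$ places $(p,0)$ at the equality case of the global inequality $\D_t R \geq 0$ from assertion \ref{ass_C.1_e} of Lemma~\ref{lem_kappa_solution_properties_appendix}, and the role of these two rigidity theorems is precisely to convert such a one-point equality into global soliton structure.

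First I would dispose of the model $\kappa$-solutions. For a shrinking round spherical space form one has $R(\cdot,t) \equiv 3/(T-t)$ with $T > 0$, so $\D_t R > 0$ everywhere; the same computation on the spherical factor rules out the shrinking round cylinder. By assertions \ref{ass_C.1_a} and \ref{ass_C.1_f} of Lemma~\ref{lem_kappa_solution_properties_appendix}, these cases are excluded, so $(M, (g_t))$ is $\kappa_0$-noncollapsed with at most one end. Applying the strong maximum principle to Hamilton's evolution equation for the curvature operator, I would further reduce to the case $\Ric > 0$ on $M\times(-\infty,0]$, since the alternative would force a local de Rham splitting and place us back in the cylindrical case just excluded.

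Next I would invoke Hamilton's trace Harnack inequality for ancient solutions with bounded nonnegative curvature operator \cite{Hamilton:1993em},
\[
\D_t R + 2\langle \nabla R, X\rangle + 2\Ric(X,X) \;\geq\; 0 \qquad \text{for every vector } X,
\]
whose optimization in $X$ (using $\Ric > 0$) gives the scalar bound $\D_t R \geq \tfrac12 \Ric^{-1}(\nabla R,\nabla R)$. The hypothesis forces both sides of this inequality to vanish at $(p,0)$, so in particular $\nabla R(p,0) = 0$, and the Harnack is attained with equality at $(p,0)$ by the optimal vector $X_0 := -\tfrac12 \Ric^{-1}(\nabla R)$. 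Hamilton's rigidity theorem for the trace Harnack --- proved by a strong maximum principle argument on the Harnack tensor in \cite{Hamilton:1993em} --- then propagates this pointwise equality to all of $M\times(-\infty,0]$ and produces $f\in C^\infty(M)$ satisfying $\Ric + \nabla^2 f = 0$, with $(g_t)_{t\leq 0}$ realized as the pullback of $g_0$ under the flow of $\nabla f$.

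The final step, which I expect to be the deepest ingredient and which I would simply cite, is Brendle's classification \cite{Brendle:2013dd}: every three-dimensional non-flat, complete, $\kappa$-noncollapsed steady gradient Ricci soliton with positive sectional curvature is homothetic to the Bryant soliton. Applied to $(M, g_0, f)$ this yields $\lambda > 0$ and a diffeomorphism $\Psi\colon M \to M_{\Bry}$ with $\Psi^* g_{\Bry} = \lambda^2 g_0$ and $\Psi_* \nabla f = \nabla f_{\Bry}$, where $f_{\Bry}$ is the potential of Lemma~\ref{lem_bryant_geometry}. Since each flow is obtained from its time-$0$ slice by pulling back along its own soliton vector field, $\Psi$ intertwines the rescaled flow $(\lambda^2 g_{\lambda^{-2}t})_{t\leq 0}$ with $(g_{\Bry,t})_{t\leq 0}$. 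Finally, $\Psi(p) = x_{\Bry}$ follows from $\nabla R(p,0) = 0$ together with the identity $dR = 2\Ric(\nabla f,\cdot)$ from Lemma~\ref{lem_bryant_geometry}, which under $\Ric > 0$ forces $\nabla f(p) = 0$; since $f_{\Bry}$ is strictly radial on $M_{\Bry}$ with unique critical point $x_{\Bry}$, we conclude $\Psi(p) = x_{\Bry}$.
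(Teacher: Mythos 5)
Your proposal is correct and follows essentially the same strategy as the paper: reduce to strictly positive curvature, invoke Hamilton's rigidity for the Harnack inequality to obtain the steady gradient soliton structure, apply Brendle's classification to identify the Bryant soliton, and then use the vanishing of $\nabla f$ at $p$ to pin down the tip. The only cosmetic differences are that the paper passes directly from $\partial_t R(p,0)=0$ to $W(p,0)=0$ (the nullity of the matrix Harnack quadratic form) without the intermediate optimization that gives $\nabla R(p,0)=0$, and identifies the tip via $\partial_t R = 2\Ric(\nabla f,\nabla f)$ rather than via $dR = 2\Ric(\nabla f,\cdot)$ --- both routes equally lead to $\nabla f(p)=0$; also note the rigidity result itself is proved in \cite{hamilton_eternal_solutions} (which the paper reproves via Brendle's vector-bundle strong maximum principle), whereas \cite{Hamilton:1993em} establishes the Harnack inequality.
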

\begin{proof}
The fact that $(M, (g_t)_{t \in (- \infty, 0]} )$ is a steady gradient soliton was shown by Hamilton in \cite{hamilton_eternal_solutions}, where he analyzed the equality case of his matrix Harnack inequality for the Ricci flow (see \cite{Hamilton:1993em}).  Below we have included an alternate proof of this that incorporates several simplifications.   Brendle showed that up to homothety the Bryant soliton is the only $\kappa$-solution that is a gradient steady soliton (see \cite{Brendle:2013dd}).  Finally, for gradient steady solitons we have
$$
\D_tR=dR(\nabla f)=2\Ric(\nabla f,\nabla f)\,,
$$
where $f$ is the soliton potential.  Since $\Ric>0$ on the Bryant soliton, we have $\nabla f(p)=0$.   Because $x_{\Bry}$ is the unique critical point of the soliton potential of $g_{\Bry}$, this forces the homothety $(M,g_0) \ra (M_{\Bry},g_{\Bry}) $ to map $p $ to $x_{\Bry}$. 
\end{proof}

\bigskip

In the remainder of this appendix, we will give a simplified proof of the first part of Proposition~\ref{prop_dtR_0_Bryant}, which was shown by Hamilton in \cite{hamilton_eternal_solutions}.  The proof is based on his matrix Harnack inequality 
(see \cite{Hamilton:1993em}) and Brendle's strong maximum principle in vector bundles (see \cite[sec 2]{Brendle:2008de}).   The reader may also consult a more general treatment of Hamilton's Harnack inequality due to Brendle (see \cite{Brendle:2009fd}), as we will mainly rely on the terminology  developed in this work.  As a preparation we briefly recall the main ideas of Hamilton's proof.
The bound $\partial_t R \geq 0$ follows from the following theorem after passing to the limit $T \to \infty$.

\begin{theorem} \label{Thm:dtRHamilton}
Let $(M, (g_t)_{t \in (- T, 0]} )$ be a $3$-dimensional Ricci flow with complete time-slices and bounded, non-negative sectional curvature.
Then
\begin{equation} \label{eq:dtRHamilton}
\partial_t R \geq - \frac{R}{T+t}.
\end{equation}
\end{theorem}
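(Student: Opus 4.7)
The plan is to derive (\ref{eq:dtRHamilton}) as a direct corollary of the trace version of Hamilton's matrix Harnack inequality, after a time translation that converts the Ricci flow on $(-T,0]$ into one on $(0,T]$. First I would verify that the hypotheses of the matrix Harnack inequality are met: in dimension three, non-negative sectional curvature is equivalent to non-negative curvature operator, and the curvature is bounded by assumption. Hence the matrix Harnack inequality of Hamilton \cite{Hamilton:1993em}, in the form established for complete manifolds with bounded non-negative curvature operator (cf.\ also the general framework of Brendle \cite{Brendle:2009fd}), applies to our flow after the time shift.

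Next I would carry out the shift and the trace. Set $\tau := T+t \in (0,T]$ and let $\tilde g_\tau := g_{\tau - T}$. Then $(\tilde g_\tau)_{\tau \in (0,T]}$ is a Ricci flow on $(0,T]$ with complete time-slices and bounded non-negative curvature operator, and one has $\D_\tau R_{\tilde g} = \D_t R$ at the corresponding points. The trace form of the matrix Harnack inequality for such a flow asserts that
\[ \D_\tau R_{\tilde g} + \frac{R_{\tilde g}}{\tau} + 2\langle \nabla R_{\tilde g}, V\rangle + 2\Ric_{\tilde g}(V,V) \geq 0 \]
for every tangent vector $V$. Specializing to $V=0$ and translating back gives exactly
\[ \D_t R + \frac{R}{T+t} \geq 0, \]
which is (\ref{eq:dtRHamilton}).

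The main obstacle is not the specialization itself but the underlying matrix Harnack inequality in the noncompact setting. Its proof rests on a tensor maximum principle applied to the Harnack tensor on an auxiliary vector bundle; extending this from the compact case to complete backgrounds with bounded curvature requires a careful decay/growth analysis of the Harnack tensor at infinity to invoke the noncompact tensor maximum principle. This step is the substantive content and is handled by the Hamilton/Brendle framework \cite{Hamilton:1993em,Brendle:2008de,Brendle:2009fd}; once that is taken as input, the derivation above reduces to a one-line computation. Passing to the limit $T\to\infty$, which is used in the ensuing discussion to conclude $\D_t R\geq 0$ on $\kappa$-solutions, is then immediate.
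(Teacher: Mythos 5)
Your proof is correct and takes essentially the same route as the paper: both invoke Hamilton's matrix Harnack inequality for complete Ricci flows with bounded non-negative curvature operator and then reduce to the scalar estimate by tracing. The paper happens to present the matrix Harnack tensor $S$ explicitly and traces its $W$-block (because that machinery is reused later in Proposition~\ref{prop_dtR_0_Bryant}), whereas you quote the already-traced form and set $V=0$; this is the same mathematical content. Your observation that in dimension three non-negative sectional curvature is equivalent to non-negative curvature operator is the correct bridge to Hamilton's hypothesis, and the time-shift bookkeeping is fine.
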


The proof of this bound follows from the following matrix Harnack estimate:
Consider the bundle $E = TM \oplus \Lambda_2 TM$ over $M$.
We introduce the following (time-dependent) generalized curvature quantity $S \in \Sym_2 E^*$:
\begin{multline*}
 S_t ( (x, u_1 \wedge u_2 ), (y, v_1 \wedge v_2) ) = W(x, y) + P (u_1 \wedge u_2, y) + P( v_1 \wedge v_2, x) \\ + R( u_1, u_2, v_2, v_1),
\end{multline*}
where
\begin{multline*}
 W (x,y) = (\triangle \Ric) (x, y) - \frac12 \nabla^2_{x, y} R + 2 \sum_{i, j= 1}^3 R (x, e_i, e_j, y) \Ric (e_i, e_j) \\
 - \sum_{i = 1}^3 \Ric( x, e_i ) \Ric( e_i, y) .
\end{multline*}
and
\[ P (u_1, u_2, y) = (\nabla_{u_1} \Ric) (u_2, y) - (\nabla_{u_2} \Ric) (u_1, y). \]
Hamilton (see also \cite{Brendle:2009fd} for the terminology used here) observed that this generalized curvature quantity satisfies an evolution equation that is similar to the evolution equation for the curvature tensor:
\begin{equation*} \label{eq:evolS}
 \td{D}_{\partial_t} S = \td\triangle S + Q(S).
\end{equation*}
Here $\td\triangle = \sum_{i = 1}^3 \td\nabla_{e_i} \td\nabla_{e_i}$ is the connection Laplacian on $\Sym_2 E^*$ with respect to the connection $\td\nabla$ that is induced by the following connection on $E$:
\[ \td{\nabla}_z ( X, \alpha )  = ( \nabla_z X, \nabla_z \alpha + \Ric(z) \wedge X ) \]
and
\begin{multline*}
 \td{D}_{\partial_t} (X, U_1 \wedge U_2) = \big({ ( \partial_t X + \Ric(X) - \tfrac12 (U_1 \wedge U_2) ( \nabla R, \cdot ), \partial_t (U_1 \wedge U_2)} \\ {+ \Ric (U_1) \wedge U_2 + U_1 \wedge \Ric (U_2) }\big). 
\end{multline*}
($X, U_1, U_2$ and $\alpha$ denote time-dependent local sections of $TM$ and $\Lambda_2 TM$, respectively.)
The quadratic part $Q(S)$ is non-negative definite, whenever $S$ is non-negative definite.
By a more general approach, which takes into account the case in which $S$ is indefinite, but bounded from below, Hamilton deduces the following lower bound for the quadratic form $S$:
For all $(x, \alpha) \in E$ we have
\begin{equation} \label{eq_S_bounded_1_T}
 S ( (x, \alpha), (x, \alpha) ) \geq - \frac1{2(T-t)} \Ric (x,x). 
\end{equation}
This implies that
\[ W (x, x) \geq - \frac1{2(T-t)} \Ric (x,x). \]
Tracing this equation in $x$ yields
\begin{equation} \label{eq_trace_W}
 \frac12 \partial_t R = \triangle R - \frac12 \triangle R + |{\Ric}|^2 = \tr W \geq - \frac{1}{2(T-t)} R, 
\end{equation}
which implies (\ref{eq:dtRHamilton}).

We can now present the proof of Proposition~\ref{prop_dtR_0_Bryant}.

\begin{proof}[Proof of Proposition~\ref{prop_dtR_0_Bryant}]
It remains to consider the case in which $\partial_t R(p, \lb0) \lb = \lb 0$ for some $p \in M$.
We first argue that the all sectional curvatures on $M \times (-\infty, 0]$ are positive.
If not, then by a standard strong maximum principle argument, this implies that the flow locally splits off an $\R$-factor.
It follows that the universal covering flow is homothetic to the round shrinking cylinder, in contradiction to $\partial_t R(p, 0) = 0$.

Letting $T \to \infty$ in (\ref{eq_S_bounded_1_T}) we obtain that $S$, and hence $W$ are non-negative definite everywhere on $M \times (- \infty, 0]$.
As $\partial_t R(p, 0) = 0$, we obtain from (\ref{eq_trace_W}) that $W(p,0) = 0$.
So $S(p,0)$ has nullity of at least 3.
On the  other hand, $S(p,0)$ restricted to $0 \oplus \Lambda_2 TM$ is strictly positive definite, as the sectional curvatures at $(p,0)$ are positive.

So the nullity of $S(p,0)$ is equal to $3$.
We can now apply the strong maximum principle due to Brendle (see \cite[sec 2]{Brendle:2008de}) and conclude that for all $(q,t) \in M \times (-\infty, 0]$ the nullity of $S(q,t)$ is $3$ and the nullspace $N_{q,t}$ of $S(q,t)$ forms a time-dependent subbundle in $E$ that is invariant under parallel transport with respect to $\td\nabla$ (in space) and $\td{D}_{\partial_t}$ (in time).

Next, observe that since all sectional curvatures on $M \times (-\infty, 0]$ are positive, the subbundle $0 \oplus \Lambda_2 TM \subset M$ intersects $N$ only in the origin over  every $(q, t) \in M \times (- \infty, 0]$.
So, at every time $t \leq 0$, the vector bundle $E$ is the direct sum of the subbundles $N_{\cdot,0}$ and $0 \oplus \Lambda_2 TM$.
It follows that there is a smooth, time-dependent, section $(F_t)_{t \in (-\infty, 0]}$ of the endomorphism bundle $\End ( TM, \Lambda_2 TM)$ such that for all $t \leq 0$
\[ N_{\cdot, t} = \{ (x, F_t (x)) \;\; : \;\; x \in TM \}. \] 
Let us now express the fact that $N$ is parallel with respect to $\td\nabla$ in terms of $F$, at some fixed time $t \leq 0$.
To do this, let $q \in M$ and $w \in T_q M$ and consider a locally defined vector field $X$  such that at $q$
\[ 0 = \td\nabla_z (X, F(X)) = \big( \nabla_z X, (\nabla_z F)(X) + F(\nabla_z X) + \Ric (z) \wedge X \big). \]
It follows that $\nabla_z X = 0$ and
\begin{equation} \label{eq_nab_F}
 (\nabla_z F)(X)  + \Ric (z) \wedge X = 0. 
\end{equation}
Let $A := \tr_{12} F$ be the trace of the first two factors of $F$ viewed as a section of $T^*M \otimes TM \otimes TM$.
Tracing (\ref{eq_nab_F}) yields
\begin{equation*}
 \nabla_z A - 2 \Ric(z)  = \nabla_z A + \sum_{i=1}^3 \big( \langle \Ric(z), e_i \rangle e_i - \langle e_i, e_i \rangle \Ric(z) \big)   = 0. 
\end{equation*}
So
\[ (\mathcal{L}_{A} g) (x, y) = \langle \nabla_x A, y \rangle + \langle x, \nabla_y A \rangle = 4 \Ric (x, y), \]
which implies that $(M, g_t)$ is a steady soliton.
As $\langle \nabla_x A, y \rangle = 2 \Ric (x, y)$ is symmetric in $x, y$, the vector field $A$ is a gradient vector field if $M$ is simply connected.

We can now apply Brendle's result (see \cite{Brendle:2013dd}) and conclude that the universal cover of $(M, g_t)$ is homothetic to $(M_{\Bry}, g_{\Bry})$ for all $t \leq 0$.
Since all isometries of $M_{\Bry}$ leave $x_{\Bry}$ invariant it follows that $(M, g_t)$ is homothetic to $(M_{\Bry}, g_{\Bry})$ for all $t \leq 0$.
So by uniqueness of Ricci flows with bounded curvature, the flow $(M, (g_t)_{t \in (- \infty, 0]})$ has to be homothetic to the Bryant soliton.
\end{proof}

\section{Smoothing maps}

\begin{lemma}[Smoothing bilipschitz maps between cylinders] \label{lem_smoothing_bilipschitz}
For every $\eps > 0$ there is a constant $\delta > 0$ such that the following holds.

Let $\phi : S^2 \times (0, 3) \to S^2 \times \R$ be a $(1+\delta)$-bilipschitz map, where both cylinders are considered to be round and of the same scale.
Then there is a $(1+\eps)$-bilipschitz map $\td\phi : S^2 \times (0,3) \to S^2 \times \R$ such that $\td\phi = \phi$ on $S^1 \times (0,1)$ and such that $\td\phi |_{S^2 \times (1,2)}$ is an isometry.
\end{lemma}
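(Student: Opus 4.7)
\medskip

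\noindent\textbf{Proof plan.} The plan is to run a compactness--and--contradiction argument combined with a standard smoothing/interpolation construction. The idea is to use that a $(1+\delta)$--bilipschitz self--map of a round cylinder, for $\delta$ small, is uniformly close to an honest isometry, and then to interpolate between $\phi$ and such an isometry by using a cutoff in the cylindrical parameter.

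\emph{Step 1 (Compactness reduction).} Suppose the statement fails for some $\eps_0>0$. Then there exist sequences $\delta_k\searrow 0$ and $(1+\delta_k)$--bilipschitz maps $\phi_k:S^2\times(0,3)\to S^2\times\R$ for which no $(1+\eps_0)$--bilipschitz $\widetilde\phi_k$ with the prescribed properties exists. By post--composing each $\phi_k$ with an isometry of $S^2\times\R$ we may normalize so that $\phi_k(\mathrm{pt}_0,1)$ lies in a fixed compact region. By the Arzel\`a--Ascoli theorem, after passing to a subsequence $\phi_k\to\phi_\infty$ uniformly on compact subsets of $S^2\times(0,3)$, and the limit is $1$--bilipschitz, hence an isometric embedding of $S^2\times(0,3)$ into $S^2\times\R$. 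Since $S^2\times\R$ is homogeneous, $\phi_\infty$ extends uniquely to a global isometry $\psi:S^2\times\R\to S^2\times\R$.

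\emph{Step 2 (Displacement field).} For large $k$, the map $\phi_k$ lies in a small $C^0$--neighborhood of $\psi$ on $S^2\times[\tfrac12,\tfrac52]$, so we may write
\[
\phi_k(y)\;=\;\exp^{S^2\times\R}_{\psi(y)}\bigl(v_k(y)\bigr),
\]
where $v_k$ is a vector field along $\psi$ satisfying $\|v_k\|_{C^0}\to 0$. Because $\phi_k$ is $(1+\delta_k)$--bilipschitz and $\psi$ is a Riemannian isometry, $v_k$ is Lipschitz with Lipschitz constant tending to $0$ (the bilipschitz defect of $\phi_k$ translates, through the bilipschitz behavior of $\exp$ on small vectors, into smallness of the Lipschitz norm of $v_k$).

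\emph{Step 3 (Mollification).} Convolve $v_k$ with a smooth mollifier supported in a ball of radius $\sigma_k\to 0$ (working in local coordinates on $S^2\times\R$, e.g.\ via the product structure and the exponential map on $S^2$) to obtain a smooth vector field $\widetilde v_k$ along $\psi$ on $S^2\times[1-\tfrac14,\tfrac94]$. For suitable $\sigma_k$, both $\|\widetilde v_k\|_{C^0}$ and $\|\nabla\widetilde v_k\|_{C^0}$ tend to $0$, and $\|\widetilde v_k-v_k\|_{C^0}\to 0$.

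\emph{Step 4 (Cutoff interpolation).} Fix a smooth cutoff $\chi:\R\to[0,1]$ with $\chi\equiv 1$ on $(-\infty,\tfrac12)$ and $\chi\equiv 0$ on $(\tfrac34,\infty)$, and let $\mathfrak{t}:S^2\times\R\to\R$ be projection to the $\R$--factor. Using a second cutoff $\eta:\R\to[0,1]$ supported in $[\tfrac12,\tfrac94]$ with $\eta\equiv 1$ on a neighborhood of $[1,2]$, define
\[
w_k(y)\;:=\;\eta(\mathfrak{t}(y))\,\widetilde v_k(y)\;+\;\bigl(1-\eta(\mathfrak{t}(y))\bigr)v_k(y),
\]
which is Lipschitz globally, smooth on the interpolation region, equals $v_k$ on $S^2\times(0,1)$, and equals $\widetilde v_k$ on $S^2\times(1,2)$. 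Set
\[
\widetilde\phi_k(y)\;:=\;\exp^{S^2\times\R}_{\psi(y)}\!\Bigl(\bigl(1-\chi(\mathfrak{t}(y)-1)\bigr)\,w_k(y)\Bigr).
\]
Then $\widetilde\phi_k=\phi_k$ on $S^2\times(0,1)$ (where the exponent is $w_k=v_k$ and the cutoff is $1$), while on $S^2\times(1,2)$ the cutoff vanishes so $\widetilde\phi_k=\psi|_{S^2\times(1,2)}$ is an isometry. On $S^2\times(2,3)$ we extend $\widetilde\phi_k$ to any bilipschitz map (e.g.\ keep it equal to $\psi$, then patch back to $\phi_k$ further out if desired).

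\emph{Step 5 (Bilipschitz verification).} Differentiate $\widetilde\phi_k$ on the smooth regions: the derivative is a small perturbation of $d\psi$ by a term linear in $w_k$, $\nabla w_k$, $\chi'$ and their products with controlled geometric factors. Since $\|w_k\|_{C^1}\to 0$, the bilipschitz constant of $\widetilde\phi_k$ (which on the non--smooth pieces $S^2\times(0,1)$ and possibly $S^2\times(2,3)$ is already close to $1$) can be made $\le 1+\eps_0$ for $k$ sufficiently large. This contradicts the choice of $\phi_k$, proving the lemma.

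\medskip

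\noindent\textbf{Main obstacle.} The delicate point is the interface $S^2\times\{1\}$: we need $\widetilde\phi_k=\phi_k$ \emph{exactly} on $S^2\times(0,1)$ yet also require $\widetilde\phi_k$ to be bilipschitz with constant close to $1$ across this interface, where $\phi_k$ is merely Lipschitz. The two--cutoff construction above addresses this by using the unmollified $v_k$ on $(0,1)$ and the mollified $\widetilde v_k$ on $(1,2)$; continuity across $\{1\}$ is automatic, and the bilipschitz bound across the interface follows because both $v_k$ and $\widetilde v_k$ are uniformly small in $C^0$ (and $v_k$ is Lipschitz with small Lipschitz constant). Getting these smallness estimates to be uniform in a way compatible with the cutoff widths is where the care in choosing the mollification scale $\sigma_k$ relative to $\delta_k$ enters, but this is standard.
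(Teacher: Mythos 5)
Your Step 2 contains a claim that is false and that the rest of the argument depends on; this is a genuine gap, not a fixable detail.

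\textbf{The gap.} You assert that, because $\phi_k$ is $(1+\delta_k)$-bilipschitz and $\psi$ is an isometry, the displacement field $v_k$ is Lipschitz with Lipschitz constant tending to $0$. This is not true. Bilipschitz-ness with constant close to $1$ forces the \emph{singular values} of $d\phi_k$ to be close to $1$, but says nothing about the \emph{rotational part} of $(d\psi)^{-1}d\phi_k$, which can be a rotation by an angle bounded away from $0$. Concretely, working in a small coordinate disk in a time-slice (the same construction works on the cylinder), pick a function $\theta:[0,\infty)\to[0,\theta_0]$ with $\theta\equiv\theta_0$ on $[0,r_1]$, $\theta\equiv 0$ on $[r_2,\infty)$ and $r|\theta'(r)|\le 2\delta$, and set $\phi(x)=R_{\theta(|x|)}x$. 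Then $d\phi$ is $R_\theta$ composed with a shear of size $r|\theta'(r)|\le 2\delta$, so $\phi$ is $(1+O(\delta))$-bilipschitz; and $\|\phi-\id\|_{C^0}\le \theta_0\,r_2$, which is as small as you like once $r_2$ is small. But you can keep $\theta_0$ fixed (say $\theta_0=\pi/2$) while $\delta\searrow 0$ and $\|\phi-\id\|_{C^0}\to 0$, as long as $\log(r_2/r_1)\ge \theta_0/(2\delta)$ --- which only forces $r_1$ to shrink very fast. Inside $B(0,r_1)$ the displacement satisfies $\nabla v = R_{\theta_0}-I$, whose norm $2\sin(\theta_0/2)$ does \emph{not} tend to $0$. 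So $\mathrm{Lip}(v_k)$ need not be small, and the sentence ``Since $\|w_k\|_{C^1}\to 0$'' in Step 5 is unjustified.

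\textbf{Why the interpolation then breaks.} In the region where $\eta\in(0,1)$ your map has derivative approximately $(1-\eta)\,d\phi_k+\eta\,(d\psi+\nabla\widetilde v_k)$, a convex combination of a matrix that may be pointwise close to a rotation $R_{\theta_0}$ with $\theta_0$ bounded away from $0$, and a matrix close to $d\psi\approx I$. For rotations, $\frac12(R_{\theta_0}+I)=\cos(\theta_0/2)\,R_{\theta_0/2}$, so all singular values of the average equal $\cos(\theta_0/2)$, which is bounded away from $1$ (and can even vanish). Hence $\widetilde\phi_k$ need not be $(1+\eps_0)$-bilipschitz in the transition zone, regardless of how you choose $\sigma_k$ and the cutoffs. (The paper's statement is precisely that such degeneration does not occur for the construction they use, and this must be \emph{proved}, not derived from a nonexistent $C^1$ smallness of $v_k$.)

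\textbf{How the paper avoids this.} The paper never interpolates between the raw $\phi$ and a smoothing of it. Instead it mollifies $\phi$ directly at a \emph{spatially varying} scale $\beta\rho$, where $\rho$ is a cutoff that vanishes on $S^2\times(0,1)$ (so $\phi'_\beta=\phi$ there) and equals $1$ on $S^2\times(1.5,3)$ (so the mollification scale is the fixed positive number $\beta$, not a vanishing $\sigma_k$). Two independent facts are then proved by a blow-up/compactness argument (Claim part (d)): first, that $\phi'_\beta$ is $(1+\eps')$-bilipschitz everywhere --- including the transition band where $\rho$ goes from $0$ to $1$ --- with no appeal to Lipschitz bounds on a displacement field; and second, that $d\phi'_\beta$ is $\eps'$-close to $d\,\id$ on $S^2\times(1.5,2.8)$, which uses the key relation $\alpha\le\overline\alpha(\eps',\beta)$, i.e., the $C^0$-closeness to an isometry is taken small \emph{relative to the fixed mollification scale} $\beta$. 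Only after that does the paper glue $\phi'_\beta$ to the identity, and the gluing region $(1.5,2)$ is one where \emph{both} endpoints already have derivative $\eps'$-close to $d\,\id$, so the interpolated derivative stays close to $d\,\id$. The contrast with your argument is exactly that the paper's gluing interpolates between two $C^1$-controlled pieces, whereas yours interpolates between a $C^1$-controlled piece and a merely Lipschitz piece whose gradient you cannot control.

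(There is also a small but real bookkeeping error in your Step 4: with the cutoffs as stated, $1-\chi(\mathfrak t-1)=0$ for $\mathfrak t<3/2$, so $\widetilde\phi_k=\psi$ --- not $\phi_k$ --- on $S^2\times(0,1)$; you want the opposite sign. This is fixable, but even after fixing it the gap above remains.)
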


\begin{proof}
Let $\alpha > 0$ be a small constant whose value we will determine in the course of the proof, depending on $\eps$.
A limit argument implies that if
\begin{equation} \label{eq_delta_less_alpha_mollification}
 \delta \leq \ov\delta (\alpha),
\end{equation}
then there is an isometric embedding  $\psi : S^2 \times (.1,2.9) \to S^2 \times \R$ such that $d (\phi (x), \chi (x)) \lb < \lb \alpha$ for all $x \in S^2 \times (.1, 2.9)$.
After replacing $\phi$ with $\ov\chi^{-1} \circ \phi$, where $\ov\chi : S^2 \times \R \to S^2 \times \R$ is the isometric extension of $\chi$, we may assume without loss of generality that
\begin{equation} \label{eq_phi_close_to_isometry_alpha}
 d (\phi (x), x) < \alpha \qquad \text{for all} \qquad x \in S^2 \times (.1, 2.9). 
\end{equation}

Next, we will carry out a mollification procedure on $S^2 \times (0,3)$ producing a family of bilipschitz maps $\phi'_\beta : S^2 \times (0,2.9) \to S^2 \times \R$ such that $\phi'_\beta = \phi$ on $S^2 \times (0, 1)$ and such that $\phi'_\beta$ has improved regularity on $S^2 \times (1.5,2.9)$.  
This would be a completely standard mollification procedure, except for the fact that the scale of the mollification varies slowly.
For this purpose, we fix a smooth cutoff function $\rho : S^2 \times (0,3) \to [0,1]$, depending only on the $(0,3)$-factor, such that $\rho \equiv 0$ on $S^2 \times (0, 1)$ and $\rho \equiv 1$ on $S^2 \times (1.5,3)$.
Let moreover, $0 < \beta < .01$ be a constant whose value we will fix in the course of the proof.
The function $\beta \rho$ will determine the scale at which we mollify $\phi$.

Let $X:=S^2\times \R\subset\R^3\times\R= \R^4$ be the standard embedding.  Our mollification construction is similar to that in \cite{Karcher_mollification}.
However, in our case we can simplify the construction by using the embedding $X \subset \R^4$ and the nearest point projection $\proj_{X} : \R^4 \setminus (\{ (0,0,0 ) \} \times \R) \to X$.
Let $\psi : [0, \infty) \to [0,1]$ be a smooth cutoff function such that $\psi  \equiv 1$ on $[0, \frac12]$ and $\psi \equiv 0$ on $[1, \infty)$.
Set
\[ a := \int_{\R^3} \psi (|v|) dv. \]
Then we can define $\phi'_\beta : S^2 \times (0,2.8) \to S^2 \times \R$ as follows:
\begin{equation} \label{eq_phi_prime_beta_def}
 \phi'_\beta (x) := \proj_{X} \bigg( \int_{T_xX}  \phi \big( \exp_x (\beta \rho v) \big) a^{-1}\psi (|v|) dv \bigg). 
\end{equation}

\begin{claim*}

\mbox{}
\begin{enumerate}[label=(\alph*)]
\item \label{ass_D.1_cl_a} $\phi'_\beta$ is smooth.
\item \label{ass_D.1_cl_b} $\phi'_\beta \equiv \phi$ on $\{ \rho = 0 \}$ along with all higher derivatives.
\item \label{ass_D.1_cl_c} $d (\phi'_\beta (x), x) < 3(\alpha + 2 \beta)$, for all $x \in S^2 \times (.1, 2.9)$, assuming $\alpha, \delta < .01$.
\item \label{ass_D.1_cl_d} For any $\eps'> 0$, the following holds if $\beta \leq \ov\beta(\eps')$, $\delta \leq \ov\delta (\eps')$ and $\alpha \leq \ov\alpha (\eps', \beta)$.
The map $\phi'_\beta$ is $(1+\eps')$-bilipschitz and for all $x \in S^2 \times (1.5, 2.8)$ we have
\[ \big| (d\phi'_{\beta})_x - (d \id_{S^2 \times \R})_x \big| < \eps'. \]
Here we compare both differentials within the ambient space $\R^4$.
\end{enumerate}
\end{claim*}

\begin{proof}
Assertion \ref{ass_D.1_cl_a} follows from the definition of $\phi'_\beta$ and assertion \ref{ass_D.1_cl_b} holds since all derivatives of $\rho$ vanish on $\{ \rho = 0 \}$.

For assertion \ref{ass_D.1_cl_c} observe that $ \int_{T_xX}  \phi ( \exp_x (\beta \rho v) ) a^{-1}\psi (|v|) dv$ is the center of mass of a distribution that is supported on 
\[ \phi (B_X(x, \beta )) \subset B_{\R^4}(\phi(x), 2 \beta) \subset B_{\R^4}(x, \alpha + 2\beta ). \]
Due to the convexity of the latter ball, the center of mass must be contained in the same ball, and hence the nearest point projection lies in $B_{\R^4}(x,2(\alpha+2\beta))$.  Since $\beta<.01$ we have $B_{\R^4}(x,2(\alpha+2\beta))\cap X\subset B_X(x,3(\alpha+2\beta))$. 

We now prove assertion \ref{ass_D.1_cl_d} using a contradiction argument.
Assume that assertion \ref{ass_D.1_cl_d} was false for some fixed $\eps' > 0$.
Choose a sequence $\delta_i, \alpha_i, \beta_i \to 0$ such that $\alpha_i / \beta_i \to 0$.
Then we can find a sequence of $(1+\delta_i)$-bilipschitz maps satisfying (\ref{eq_phi_close_to_isometry_alpha}) for $\alpha = \alpha_i$ and points $x_i \in S^2 \times (.1, 2.8)$ such that one of the following holds:
\begin{enumerate}[label=(\Alph*)]
\item We have
\begin{equation} \label{eq_not_bilipschitz_eps_p}
 \big| |(d\phi'_{i, \beta_i})_{x_i} (v_i) | - 1 \big| \geq \eps' 
\end{equation}
 for some unit tangent vector $v_i$ at $x_i$.
 \item We have $x_i \in S^2 \times (1.5,2.9)$ and
\begin{equation} \label{eq_not_straight}
  \big| (d\phi'_{i, \beta_i})_{x_i} - (d \id_{S^2 \times \R})_{x_i} \big| \geq \eps'. 
\end{equation}
 \end{enumerate}
By assertion \ref{ass_D.1_cl_b} we have $\rho (x_i) > 0$ for large $i$ in case (A) and $\rho (x_i) = 1$ in the case (B), by the definition of $\rho$.
Moreover, by passing to a subsequence, we may assume that one of the above cases holds for all $i$.

Consider the rescaled metric $g_i := (\beta_i \rho(x_i))^{-2} g_{S^2 \times \R}$.
Then the sequences of pointed manifolds $\{ (S^2 \times (0,3), \lb g_i, \lb x_i)\}$ and $\{(S^2 \times (0,3), \lb g_i, \lb \phi_i (x_i))\}$ converge in the pointed smooth topology to pointed Euclidean space.
Moreover, with respect to the corresponding rescaling, the maps $\phi_i $ converge in the pointed topology to maps $\phi_\infty  : \R^3 \to \R^3$, after passing to a subsequence.
As the $\phi_i$ are $(1+\delta_i)$-bilipschitz, their limit $\phi_\infty$ must be a Euclidean isometry.
Furthermore, note that in case (B) we have $d_{g_i} ( \phi_i (y), y) \leq \alpha_i / \beta_i \to 0$, so in this case we even have $\phi_\infty = \id_{\R^3}$.

On the other hand, due to the mollification procedure (\ref{eq_phi_prime_beta_def}), the maps $\phi'_{i, \beta_i}$ converge smoothly to a map $\phi'_\infty$ with  
\[ \phi'_\infty (x) =  \int_{\R^3} a^{-1} \psi (|v|) \phi_\infty ( x +  v) dv. \]
This implies that $\phi'_\infty$ is also a Euclidean isometry, and in case (B) we even have $\phi'_\infty = \id_{\R^3}$.
This, however, contradicts the smooth convergence and (\ref{eq_not_bilipschitz_eps_p}) or (\ref{eq_not_straight}).
\end{proof}

We now apply a standard gluing procedure on $S^2 \times (1.5,2)$ to obtain a map $\td\phi : S^2 \times (0,3) \to S^2 \times \R$ that agrees with $\phi'_\beta$ on $S^2 \times (0,1.5)$ and with $\id_{S^2 \times \R}$ on $S^2 \times (2, 3)$.
In order to ensure that this map is $(1+\eps)$-Lipschitz, we use assertions \ref{ass_D.1_cl_c} and \ref{ass_D.1_cl_d} of the Claim and assume that 
\[ \alpha + 2 \beta \leq c (\eps), \qquad \eps' \leq c(\eps) \]
for some constant $c (\eps) > 0$.
We now verify that we can choose $\alpha, \beta, \delta$ such that these bounds, the conditions of assertions \ref{ass_D.1_cl_c} and \ref{ass_D.1_cl_d} of the Claim and (\ref{eq_phi_close_to_isometry_alpha}) hold.
Choose $\eps' \leq c (\eps)$ and then $\beta \leq \min \{ \ov\beta (\eps'), \frac14 c \}$, where $\ov\beta (\eps')$ denotes the upper bound from assertion \ref{ass_D.1_cl_d} of the Claim.
Next, choose $\alpha \leq \min \{ \ov\alpha (\eps', \beta), \frac14 c \}$, where $\ov\alpha (\eps', \beta)$ denotes the upper bound from assertion \ref{ass_D.1_cl_d} of the Claim.
Lastly, we choose $\delta \leq \ov\delta (\eps')$ according to assertion \ref{ass_D.1_cl_d} of the Claim and $\delta \leq \ov\delta(\alpha)$ according to (\ref{eq_delta_less_alpha_mollification}).
\end{proof}

\bibliography{uniqueness}{}
\bibliographystyle{amsalpha}

\end{document}